\DeclareFontFamily{U}{mathx}{\hyphenchar\font45}
\DeclareFontShape{U}{mathx}{m}{n}{
<5> <6> <7> <8> <9> <10>
<10.95> <12> <14.4> <17.28> <20.74> <24.88>
mathx10}{}
\DeclareSymbolFont{mathx}{U}{mathx}{m}{n}
\DeclareMathAccent{\widecheck}{0}{mathx}{"71}
\numberwithin{equation}{section}
\newtheorem{theorem}{Theorem}[section]
\newtheorem{lemma}[theorem]{Lemma}
\newtheorem{proposition}[theorem]{Proposition}
\newtheorem{corollary}[theorem]{Corollary}
\newtheorem{definition}[theorem]{Definition}
\newtheorem{remark}[theorem]{Remark}
\newtheorem{conjecture}[theorem]{Conjecture}
\newtheorem*{thmM0}{Theorem M0}
\newtheorem*{thmM1}{Theorem M1}
\newtheorem*{thmM2}{Theorem M2}
\newtheorem*{thmM3}{Theorem M3}
\newtheorem*{thmM4}{Theorem M4}
\DeclareMathOperator{\sRic}{Ric}
\def\upa{\wideparen{u}}
\def\amr{\mathring{\a}}
\def\aamr{\mathring{\aa}}
\def\amrc{\widecheck{\amr}}
\def\aamrc{\widecheck{\aamr}}
\def\as{\slashed{\a}}
\def\aas{\underline{\slashed{\a}}}
\def\asc{\widecheck{\as}}
\def\aasc{\widecheck{\aas}}
\def\dkbaco{\widecheck{\a^{(1)}}}
\def\dkbbco{\widecheck{\b^{(1)}}}
\def\dkbrhoco{\widecheck{\rho^{(1)}}}
\def\dkbsico{\widecheck{\si^{(1)}}}
\def\dkbaaco{\widecheck{\aa^{(1)}}}
\def\dkbbbco{\widecheck{\bb^{(1)}}}
\def\dkbaac{\widecheck{\dkb\aa}}
\def\dkbbbc{\widecheck{\dkb\bb}}
\def\dkbrhoc{\widecheck{\dkb\rho}}
\def\dkbsic{\widecheck{\dkb\si}}
\def\dkbbc{\widecheck{\dkb\b}}
\def\dkbac{\widecheck{\dkb\a}}
\def\psic{\widecheck{\psi}}
\def\Gaw{\Ga_w}
\def\okk{\mathring{\OO}}
\def\Omomc{\widecheck{\Om\om}}
\def\Omombc{\widecheck{\Om\omb}}
\def\mubc{\widecheck{\mub}}
\def\dko{\dkb^{\leq 1}}
\def\be#1{\begin{equation}\label{#1}}
\def\eeq{\end{equation}}
\def\Jc{\widecheck{J}}
\def\Jbc{\widecheck{\Jb}}
\def\Lc{\widecheck{L}}
\def\sfr{\mathfrak{s}}
\def\sk{\mathfrak{s}}
\def\OOO{(\OO_{(0)})}
\def\Jb{\underline{J}}
\def\mum{[\mu]}
\def\mubm{[\mub]}
\def\mumc{\widecheck{\mum}}
\def\mubmc{\widecheck{\mubm}}
\def\Gag{\Ga_g}
\def\Gab{\Ga_b}
\def\Gaa{\Ga_a}
\def\kk{\mathcal{K}}
\def\a{{\alpha}}
\def\b{{\beta}}
\def\Ric{\mathbf{Ric}}
\def\be{{\beta}}
\def\ga{\gamma}
\def\Ga{\Gamma}
\def\de{\delta}
\def\De{\Delta}
\def\ep{\epsilon}
\def\la{\lambda}
\def\si{\sigma}
\def\Si{\Sigma}
\def\sm{\setminus}
\def\om{\omega}
\def\Om{\Omega}
\def\th{\theta}
\def\ze{\zeta}
\def\nab{\nabla}
\def\pr{{\partial}}
\def\les{\lesssim}
\def\c{\cdot}
\def\bg{\g}
\def\Fb{{\underline{F}}}
\def\AA{{\mathcal A}}
\def\MM{{\mathcal M}}
\def\UU{{\mathcal U}}
\def\EE{{\mathcal E}}
\def\OO{{\mathcal O}}
\def\KK{{\mathcal K}}
\def\RR{{\mathcal R}}
\def\AA{{\mathcal A}}
\def\D{{\bf D}}
\def\K{{\bf K}}
\def\M{{\bf M}}
\def\O{{\bf O}}
\def\R{{\bf R}}
\def\K{{\bf K}}
\def\g{{\bf g}}
\def\f12{{\frac 1 2}}
\def\fl{{\mathfrak{L}}}
\def\lot{\mbox{l.o.t.}}
\def\Lb{{\underline{L}}}
\def\mub{{\underline{\mu}}}
\def\trch{\tr\chi}
\def\chih{{\widehat \chi}}
\def\chib{{\underline \chi}}
\def\chibh{{\underline{\chih}}}
\def\etab{{\underline \eta}}
\def\omb{{\underline{\om}}}
\def\bb{{\underline{\b}}}
\def\aa{\protect\underline{\a}}
\def\xib{{\underline \xi}}
\def\Xib{\underline{\Xi}}
\def\ub{{\underline{u}}}
\def\cuv{{C_u^V}}
\def\Cb{{\underline{C}}}
\def\ucuv{{\Cb_{\ub}^V}}
\def\bGa{\Ga}
\DeclareMathOperator{\tr}{tr}
\DeclareMathOperator{\grad}{grad}
\DeclareMathOperator{\sdiv}{div}
\DeclareMathOperator{\curl}{curl}
\def\bfO{\mathbf{O}}
\def\trchb{\tr\chib}
\def\Div{\mathbf{Div}}
\def\hot{\widehat{\otimes}}
\def\fb{\protect\underline{f}}
\def\err{\mbox{Err}}
\def\ov{\overline}
\DeclareMathOperator{\osc}{Osc}
\def\RRb{{\underline{\RR}}}
\newcommand{\hch}{{\widehat{\chi}}}
\newcommand{\hchb}{\widehat{\underline{\chi}}}
\def\f12{\frac 1 2}
\def\Rk{\mathfrak{R}}
\def\sk{\mathfrak{s}}
\def\dkb{ \, \mathfrak{d}     \mkern-9mu /}
\def\dk{\mathfrak{d}}
\def\OOb{{\underline{\OO}}}
\DeclareFontFamily{U}{mathx}{\hyphenchar\font45}
\DeclareFontShape{U}{mathx}{m}{n}{
      <5> <6> <7> <8> <9> <10>
      <10.95> <12> <14.4> <17.28> <20.74> <24.88>
      mathx10
      }{}
\DeclareSymbolFont{mathx}{U}{mathx}{m}{n}
\DeclareMathAccent{\widecheck}{0}{mathx}{"71}
\def\Vpa{\wideparen{V}}
\def\gac{\widecheck{\ga}}
\def\ombc{\widecheck{\omb}}
\def\Gac{{\widecheck{\Ga}}}
\def\Rc{\widecheck R}
\def\ac{\widecheck{\a}}
\def\rhoc{\widecheck{\rho}}
\def\bc{\widecheck{\beta}}
\def\bu{\underline{b}}
\def\bcc{{\widecheck{\bu}}}
\def\bbc{\widecheck{\bb}}
\def\aac{\widecheck{\aa}}
\def\Omc{\widecheck{\Omega}}
\def\omc{\widecheck{\omega}}
\def\ombc{\underline{\widecheck{\omega}}}
\def\trchc{\widecheck{\tr\chi}}
\def\trchbc{\widecheck{\tr\chib}}
\def\etac{\widecheck{\eta}}
\def\etabc{\widecheck{\etab}}
\def\muc{\widecheck{\mu}}
\def\zec{\widecheck{\ze}}
\def\inc{\widecheck{\in}}
\def\sic{\widecheck{\si}}
\def\hchc{\widecheck{\hch}}
\def\hchbc{\widecheck{\hchb}}
\def\dd{\mathfrak{d}}
\def\Ub{\underline{U}}
\def\vkab{\underline{\varkappa}}
\begin{document}
\title{Kerr stability in external regions}
\author{Dawei Shen\footnote{Email adress: dawei.shen@polytechnique.edu \par \indent\hspace{0.26cm} Laboratoire Jacques-Louis Lions, Sorbonne Universit\'e, 75252 Paris, France}}
\maketitle
\begin{abstract}
In 2003, Klainerman and Nicol\`o \cite{Kl-Ni} proved the stability of Minkowski in the case of the exterior of an outgoing null cone. Relying on the method used in \cite{Kl-Ni}, Caciotta and Nicol\`o \cite{Caciotta} proved the stability of Kerr spacetime in \emph{external regions}, i.e. outside an outgoing null cone far away from the Kerr \emph{event horizon}. In this paper, we give a new proof of \cite{Caciotta}. Compared to \cite{Caciotta}, we reduce the number of derivatives needed in the proof, simplify the treatment of the last slice, and provide a unified treatment of the decay of initial data which contains in particular the initial data considered by Klainerman and Szeftel in \cite{KS:main}. Also, concerning the treatment of curvature estimates, similar to \cite{ShenMink}, we replace the vectorfield method used in \cite{Kl-Ni,Caciotta} by $r^p$--\emph{weighted estimates} introduced by Dafermos and Rodnianski in \cite{Da}.

{\centering\subsubsection*{\small Keywords}}
\small\noindent Double null foliation, Geodesic foliation, Kerr stability, Linearization procedure, Peeling properties, $r^p$--weighted estimates
\end{abstract}
\tableofcontents
\section{Introduction}
\subsection{Einstein vacuum equations and the Cauchy problem}
A Lorentzian $4$--manifold $(\MM,\g)$ is called a vacuum spacetime if it solves the Einstein vacuum equations:
\begin{equation}\label{EVE}
    \Ric(\g)=0\quad \mbox{ in }\MM,
\end{equation}
where $\Ric$ denotes the Ricci tensor of the Lorentzian metric $\g$. The Einstein vacuum
equations are invariant under diffeomorphisms, and therefore one considers equivalence classes of solutions. Expressed in general coordinates, \eqref{EVE} is a nonlinear geometric coupled system of second order partial differential equations for $\g$. In suitable coordinates, for example so-called wave coordinates, it can be shown that \eqref{EVE} is hyperbolic and hence admits an initial value formulation. \\ \\
The corresponding initial data for the Einstein vacuum equations is given by specifying a
triplet $(\Si,g,k)$ where $(\Si,g)$ is a Riemannian $3$--dimension manifold and $k$ is a symmetric $2$--tensor on $\Si$ satisfying the constraint equations:
\begin{align}
    \begin{split}\label{constraintk}
        R&=|k|^2-(\tr k)^2,\\
        D^j k_{ij}&=D_i(\tr k),
    \end{split}
\end{align}
where $R$ denotes the scalar curvature of $g$, $D$ denotes the Levi-Civita connection of $g$ and
\begin{align*}
    |k|^2:=g^{ad}g^{bc}k_{ab}k_{cd},\qquad\quad \tr k:=g^{ij}k_{ij}.
\end{align*}
In the future development $(\MM,\g)$ of such initial data $(\Si,g,k)$, $\Si\subset \MM$ is a spacelike hypersurface with induced metric $g$ and second fundamental form $k$.
\subsubsection{Local existence theorem}
The seminal well-posedness results for the Cauchy problem obtained in \cite{choquet,choquetgeroch} ensure that for any smooth Cauchy data, there exists a unique smooth maximal globally hyperbolic development $(\MM,\g)$ solution of Einstein equations \eqref{EVE} such that $\Si\subset \MM$ and $g$, $k$ are respectively the first and second fundamental forms of $\Si$ in $\MM$. \\ \\
The simplest example of a vacuum spacetime is Minkowski spacetime:
\begin{equation*}
    \MM=\mathbb{R}^4,\qquad \g=-dt^2+(dx^1)^2 +(dx^2)^2+(dx^3)^2,
\end{equation*}
for which Cauchy data are given by
\begin{equation*}
    \Si=\mathbb{R}^3,\qquad g=(dx^1)^2+(dx^2)^2+(dx^3)^2,\qquad k=0.
\end{equation*}
\subsubsection{Special solutions}
Minkowski spacetime is the simplest solution of \eqref{EVE}. We recall two other relevant solutions of \eqref{EVE}:
\begin{itemize}
    \item Schwarzschild spacetime\\
    \eqref{EVE} admits a remarkable family of explicit, stationary, solutions given by the one parameter family of Schwarzschild solutions of mass $M>0$
    \begin{equation}
    \g_M=-\left(1-\frac{2M}{r}\right)dt^2+\left(1-\frac{2M}{r}\right)^{-1}dr^2+r^2d\si_{\mathbb{S}^2}.
    \end{equation}
Though the metric seems singular at $r = 2M$, it turns out that one can glue together two regions $r>2M$ and $r<2M$ of the Schwarzschild metric to obtain a metric which is smooth along $\mathcal{H}=\{r=2M\}$, called the Schwarzschild \emph{event horizon}. The portion of $r<2M$ to the future of the hypersurface $t=0$ is a \emph{black hole} whose future boundary $r=0$ is singular. The similar region to the past of $t=0$ is called a \emph{white hole}. The region $r>2M$, called the \emph{domain of outer communication} (DOC), is free of singularities. The Penrose diagram of Schwarzschild space is recalled in Figure \ref{SchwarzchildPenrose}.
\begin{figure}
    \centering
    \includegraphics[width=0.7\textwidth]{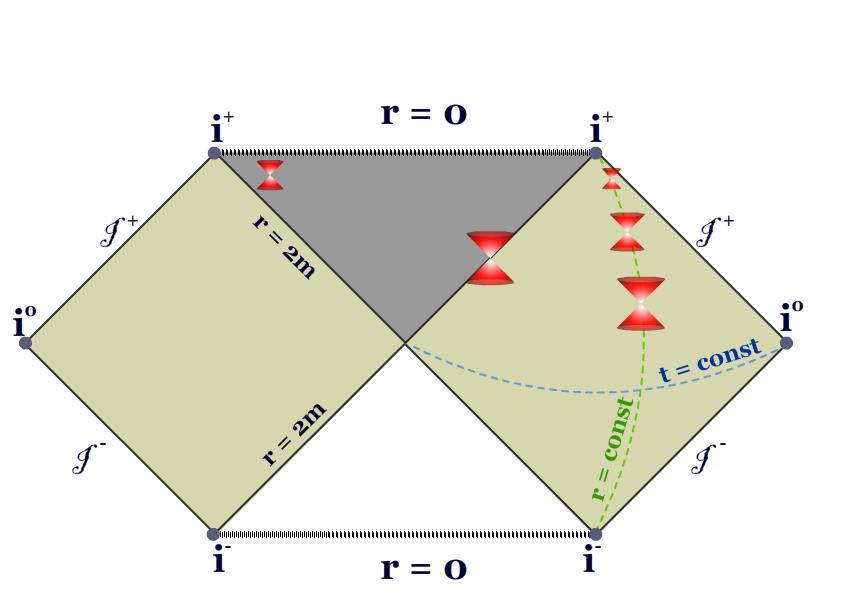}
    \caption{Complete Penrose diagram of Schwarzschild spacetime}
    \label{SchwarzchildPenrose}
\end{figure}
\item Kerr spacetime\\
The Schwarzschild family is included in a larger two parameter family of solutions $K(a,M)$ discovered by Kerr in \cite{Kerr}. A given Kerr spacetime, with $0\leq|a|\leq M$ has a well defined domain of outer communication $r>r_+:=M+(M^2-a^2)^\f12$. In Boyer-Lindquist coordinates, well adapted to $r>r_+$, the Kerr metric has the form:
\begin{align}\label{BL}
 \g_{a,M}=-\frac{(\De-a^2\sin^2\th)}{q^2}dt^2-\frac{4aMr}{q^2}\sin^2\th dt d\varphi+\frac{q^2}{\De}dr^2+q^2d\th^2+\frac{\Si^2}{q^2}\sin^2\th d\varphi^2,
\end{align}
with
\begin{align*}
    q^2=r^2+a^2\cos 2\th,\quad\De= r^2+a^2-2Mr,\quad\Si^2=(r^2+a^2)^2-a^2(\sin\th)^2\De.
\end{align*}
See Figure \ref{KerrPenrose} for the Penrose diagram of Kerr spacetime.
\begin{figure}
    \centering
    \includegraphics[width=1\textwidth]{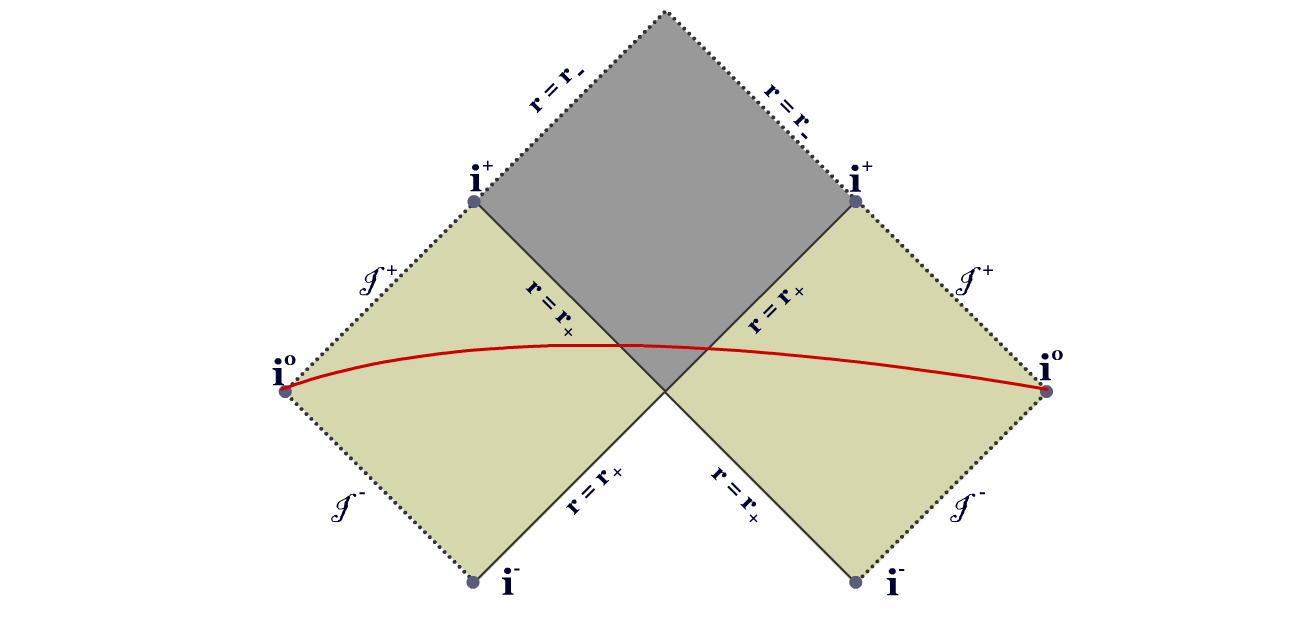}
    \caption{Penrose diagram of Kerr for $0<|a|<M$. The surface $r=r_+$, the larger root of $\De=0$, is the event horizon of the black hole, $r>r_+$ the domain of outer communication, $\mathscr{I}^+$ is the future null infinity, corresponding to $r =+\infty$.}
    \label{KerrPenrose}
\end{figure}
\end{itemize}
\subsection{Stability of Kerr conjecture}
\begin{conjecture}[Stability of Kerr conjecture]\label{kerrconj}
Vacuum initial data sets, sufficiently close to Kerr initial data, have a maximal development with complete future null infinity\footnote{This means, roughly, that observers which are  far away from the black hole  may live forever.} and with domain of outer communication which approaches (globally) a nearby Kerr solution.
\end{conjecture}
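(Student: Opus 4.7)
The plan is to establish the conjecture via a continuity (bootstrap) argument on the maximal Cauchy development in which the \emph{final} Kerr parameters $(a_\infty, M_\infty)$ are themselves unknowns, determined together with the solution. I would partition the bootstrap spacetime into four regions: an \emph{external} piece (the regime treated by the present paper), a far ingoing piece approaching future null infinity $\mathscr{I}^+$, an intermediate \emph{trapping}/photon--sphere region, and a \emph{redshift} region adjacent to the event horizon $\HH^+$, matching them through generally covariant modulated (GCM) spheres in the spirit of the Klainerman--Szeftel framework.

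The first substantial step is to construct an adapted double null (or ingoing geodesic) foliation on each region, together with GCM conditions that dynamically pin down $a_\infty$ and $M_\infty$, extending outward the external foliation already produced in this paper. I would then linearize the Bianchi and null structure equations around $K(a_\infty,M_\infty)$, extract the Teukolsky equations for the extreme curvature components $\ac$ and $\aac$, and apply a Chandrasekhar--type transformation to pass to a generalized Regge--Wheeler system whose potential is well behaved at the photon sphere.

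Decay for the Regge--Wheeler system is obtained by combining a Morawetz / integrated local energy estimate that absorbs the trapping obstruction (losing one derivative and exploiting the subextremal structure of $K(a_\infty,M_\infty)$), the Dafermos--Rodnianski redshift vectorfield near $\HH^+$, and $r^p$ weighted estimates in a neighbourhood of $\mathscr{I}^+$ to propagate decay into the wave zone. These estimates are then transferred back to the full curvature tensor and to the Ricci coefficients through the Bianchi pair and the null transport equations; the resulting control is glued to the external estimates of this paper on the common outgoing cone, closing the global picture and simultaneously identifying the asymptotic parameters $(a_\infty,M_\infty)$ toward which the solution converges.

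The hardest step is the nonlinear closure. One must (i) synchronize the GCM gauge with the dynamically determined final parameters without sacrificing the regularity of the frame, (ii) compensate the loss of one derivative at trapping by commuting with a sufficiently rich family of approximate Killing vectorfields of Kerr, and (iii) control the nonlinear coupling between $\ac$ and $\aac$ while preserving the delicate peeling/decay hierarchy used in the linear step. At present this program is complete only for $|a_\infty|/M_\infty$ small; extending it to the full subextremal range $|a_\infty|<M_\infty$ would in addition demand a superradiance--compatible Morawetz estimate for Teukolsky, which remains open.
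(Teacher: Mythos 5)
This statement is a conjecture, not a theorem of the paper: the paper explicitly records that the Stability of Kerr conjecture is open in general, proves only the external-region statement (Region II, $r\geq R_0$ with $M/R_0\ll 1$), and cites \cite{KS:Kerr1,KS:Kerr2,KS:main,GKS,Shen} for the case $|a|\ll M$. Your proposal is therefore not being compared against a proof in the paper, and on its own terms it is a programme outline rather than a proof. Its architecture is essentially the known Klainerman--Szeftel/Giorgi--Klainerman--Szeftel scheme (GCM spheres with dynamically determined final parameters $(a_\infty,M_\infty)$, Teukolsky for $\ac,\aac$, Chandrasekhar transformation to a generalized Regge--Wheeler system, Morawetz plus redshift plus $r^p$ estimates, transfer back through Bianchi pairs and null transport), which is precisely what establishes the conjecture only in the slowly rotating regime.

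The genuine gap is the one you yourself concede in the last sentence: for the full subextremal range the Morawetz/integrated local energy decay estimate for the Teukolsky system is not available, because superradiance prevents a conserved nonnegative energy and the trapped set is frequency-dependent rather than a physical-space photon sphere, so the commutation/multiplier construction you invoke in step (ii) does not exist at present; without it the entire chain (decay for Regge--Wheeler, transfer to curvature, nonlinear closure, identification of $(a_\infty,M_\infty)$) cannot be run. A proof cannot end with ``this remains open,'' so the proposal does not establish the conjecture; at best it reproves the small-$|a|$ results already cited. Note also that none of these difficulties arise in the theorem this paper actually proves, since in the external region $r\geq R_0\gg M$ there is no horizon, no trapping and no superradiance, which is exactly why the $r^p$-weighted estimates of Section \ref{curvatureestimates} suffice there.
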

The stability of Kerr has been proved in the case of small angular momentum, $|a|\ll M$, in the sequence of works by Klainerman-Szeftel \cite{KS:Kerr1}, \cite{KS:Kerr2}, \cite{KS:main}, Giorgi-Klainerman-Szeftel \cite{GKS} and the author \cite{Shen}, but the conjecture is open in general. For a complete introduction to this problem, we refer the reader to \cite{KS:Survey} and to the introduction in \cite{KS:main} and \cite{GKS}.\\ \\
In order to attack the conjecture, it is convenient to divide the domain of outer communication to the future of an initial spacelike hypersurface $\Si_0$ into three causal regions as in Figure \ref{figmain}:
\begin{figure}
  \centering
  \includegraphics[width=0.9\textwidth]{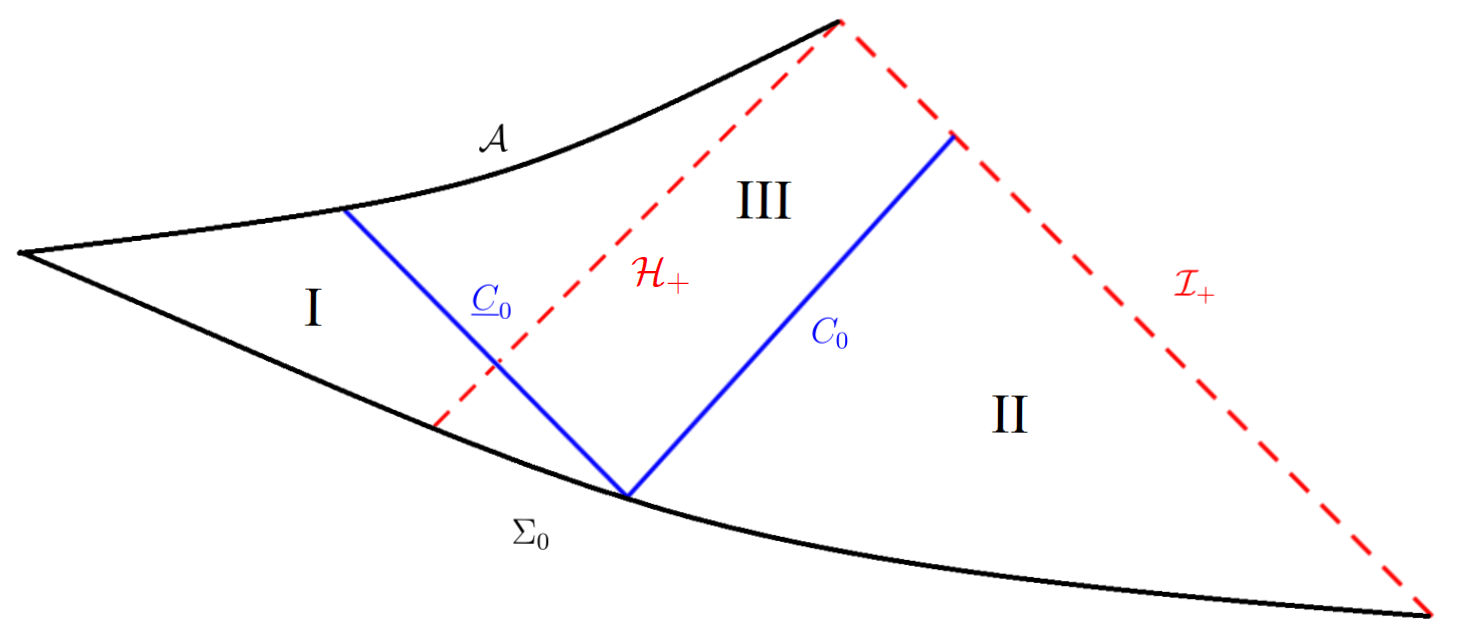}
  \caption{The Penrose diagram of a perturbation of Kerr}\label{figmain}
\end{figure}
\begin{itemize}
    \item Region I lies to the future of $\Si_0$ and to the past of an ingoing null cone $\Cb_0$. The stability of Kerr in spacetime region I is a direct consequence of the local existence results in \cite{choquet,choquetgeroch}.
    \item Region II lies to the future of $\Si_0$ and to the past of an outgoing null cone $C_0$ sufficiently far away from the event horizon. See Section \ref{kerrexternal} for a discussion of Kerr stability in this region. The goal of this paper is to provide a new proof of the stability of Kerr in Region II.
    \item Region III lies to the future of the ingoing null cone $\Cb_0$ and of the outgoing null cone $C_0$. The stability of Kerr in region III for small angular momentum $|a|\ll M$ was proved recently in the sequence of works \cite{KS:Kerr1}, \cite{KS:Kerr2}, \cite{KS:main}, \cite{GKS} and \cite{Shen}.
\end{itemize}
As mentioned above, the goal of this paper is to provide a new proof of the stability of Kerr in Region II. In the next section, we review the corresponding problem for Minkowksi.
\subsection{Minkowski stability in \texorpdfstring{\cite{Kl-Ni}}{}}\label{resultkn}
In 1993, Christodoulou and Klainerman \cite{ch-kl} proved the stability of Minkowski for the Einstein-vacuum equations, a milestone in the domain of mathematical general relativity. In 2003, Klainerman and Nicol\`o \cite{Kl-Ni} gave a second proof of this result in the exterior of an outgoing cone. Moreover, Klainerman and Nicol\`o \cite{knpeeling} showed that under stronger asymptotic decay and regularity properties than those used in \cite{ch-kl,Kl-Ni}, asymptotically flat initial data sets lead to solutions of the Einstein vacuum equations which have strong peeling properties. For more references on the stability of Minkowski for Einstein vacuum equations or Einstein equations coupled to matter, see for example the introduction of \cite{ShenMink}.\\ \\
In this section, we present the main result of \cite{Kl-Ni}, which is the motivation for the analog results in Kerr recalled in Section \ref{kerrexternal} and for our main result stated in Section \ref{simpleversion}. First, we recall the definition of a \emph{maximal hypersurface}, which plays an important role in the statements of the main theorems in \cite{Kl-Ni}.
\begin{definition}\label{def6.1}
An initial data $(\Si,g,k)$ is posed on a maximal hypersurface if it satisfies
\begin{equation}
    \tr k=0.
\end{equation}
In this case, we say that $(\Si,g,k)$ is a maximal initial data set, and the constraint equations \eqref{constraintk} reduce to
\begin{equation}
    R=|k|^2,\qquad \sdiv k=0,\qquad \tr k=0.
\end{equation}
\end{definition}We introduce the notion of \emph{$s$--asymptotically flat initial data}.
\begin{definition}\label{def6.3}
Let $s$ be a real number with $s>3$. We say that a data set $(\Sigma_0,g,k)$ is $s$--asymptotically flat if there exists a coordinates system $(x^1,x^2,x^3)$ defined outside a sufficiently large compact set such that\footnote{The notation $f=o_l(r^{-m})$ means $\pr^\a f=o(r^{-m-|\a|})$, $|\a|\leq l$.}
\begin{align}
    \begin{split}\label{old1.3}
        {g}&=\left(1-\frac{2M}{r}\right)^{-1} dr^2+ r^2 d\si_{\mathbb{S}^2}+o_4(r^{-\frac{s-1}{2}}),\\
        {k}&=o_3(r^{-\frac{s+1}{2}}).
    \end{split}
\end{align}
\end{definition}
We also introduce the following functional associated to any asymptotically flat initial data set:
\begin{equation}
    J_0(\Sigma_0,g,k):=\sup_{\Sigma_0}\Big((d_0^2+1)^3 |\sRic|^2 \Big)+\int_{\Sigma_0}\sum_{l=0}^3(d_0^2+1)^{l+1}|D^l k|^2 +\int_{\Sigma_0}\sum_{l=0}^1 (d_0^2+1)^{l+3}|D^l B|^2,\label{6.4}
\end{equation}
where $d_0$ is the geodesic distance from a fixed point $O\in\Sigma_0$, and $B_{ij}:=(\curl\widehat{\overline{R}})_{ij}$ is the \emph{Bach tensor}, $\widehat{\overline{R}}$ is the traceless part of $\sRic$. Now, we can state the main theorem of \cite{Kl-Ni}.
\begin{theorem}[Klainerman-Nicol\`o \cite{Kl-Ni}]\label{knmain}
Consider an initial data set $(\Sigma_0,g,k)$, $4$--asymptotically flat in the sense of Definition \ref{def6.3} and maximal, and assume that $J_0(\Sigma_0,g,k)$ is bounded. Then, given a sufficiently large compact set $K\subset\Sigma_0$ such that $\Sigma_0 \setminus K$ is diffeomorphic to $\mathbb{R}^3\setminus \overline{B}_1$, and assuming that $J_0(\Si_0,g,k)$ is small enough, there exists a unique development $(\M,\bg)$ with the following properties:
\begin{enumerate}
    \item $(\M,\bg)$ can be foliated by a double null foliation $\{C_u\}$ and $\{\Cb_\ub\}$ whose outgoing leaves $C_u$ are complete.
    \item We have detailed control of all the quantities associated with the double null foliation of the spacetime, see Theorem 3.7.1 of \cite{Kl-Ni}.
\end{enumerate}
\end{theorem}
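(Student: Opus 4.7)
The plan is to follow the general strategy used in \cite{Kl-Ni,ShenMink}: construct a double null foliation in the exterior of an initial outgoing cone $C_0$ via a continuity (last-slice) argument, make bootstrap assumptions for the Ricci coefficients and null curvature components, and improve them using null structure equations together with $r^p$--weighted curvature estimates. The underlying smallness will be measured by $J_0(\Sigma_0,g,k)$, which by the constraint equations \eqref{constraintk} and maximality yields smallness for $\Ric$, $k$, $B$ and, after commutations with $D$, their derivatives on $\Sigma_0$.

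First, I would use the initial data assumptions \eqref{old1.3} together with the bound on $J_0$ to construct on $\Sigma_0$ an asymptotically flat radial foliation by geodesic spheres $\{S_0\}$, and from it initial values on $\Sigma_0\cap C_0$ for all the connection coefficients and null curvature components. This requires carefully integrating the constraint equations along the $\Sigma_0$--foliation and translating the weighted Sobolev control encoded in $J_0$ (involving $(d_0^2+1)^{l+1}|D^l k|^2$ and $(d_0^2+1)^{l+3}|D^l B|^2$) into decay rates in $r$ compatible with $s=4$. Next I would set up the bootstrap region $V_*$ foliated by a double null foliation $\{C_u,\Cb_\ub\}$, with $u$ defined by an outgoing optical function whose level sets on $\Sigma_0$ agree with the radial distance, and with $\Cb_\ub$ emanating from a \emph{last slice} chosen to be maximal in a suitable sense. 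On $V_*$, bootstrap assumptions for the $\OOk$-norms of the renormalized Ricci coefficients $\Gag,\Gab$ and for the $r^p$-weighted energies of the curvature components $\a,\b,\rho,\si,\bb,\aa$ are posited.

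The core of the proof is to improve these bootstrap assumptions. The Ricci coefficient estimates come from the null structure equations, integrated along the appropriate null direction: $\tr\chi$ from Raychaudhuri, $\chih$ from Codazzi, $\eta,\etab$ from transport equations in the $L$ direction, and $\omb,\tr\chib,\chibh$ from the ingoing transport equations, with initial data provided on $C_0$ (from $\Sigma_0$) and on the last slice. The curvature estimates are obtained via the Bianchi identities and the Dafermos--Rodnianski $r^p$--method as in \cite{Da,ShenMink}: applying multipliers $r^p L$ for $p\in[0,2]$ (and conjugate multipliers for the ingoing components) to the Bell--Robinson-type identities produces hierarchy of $r$--weighted energies which, coupled with the transport estimates for the Ricci coefficients, close provided $J_0$ is small enough. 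Commuting with the angular operators $\nabb$, $\dkb$ and the null vectors $r L,\Lb$ up to the order required by the nonlinear terms (here one gains the derivative reduction announced in the abstract by exploiting the renormalized quantities and the $r^p$--method) yields the higher order estimates.

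Finally, the last slice and the continuity argument must be handled. I would define the last slice as the future maximal slice $\Cb_{\ub_*}$ along which the bootstrap fails to strictly improve, derive on it (independent) estimates using the fact that it is itself a null hypersurface with prescribed foliation, and show that the improved bulk estimates together with the last-slice estimates contradict the failure assumption unless $V_*$ coincides with the full region outside $C_0$; this provides completeness of the outgoing leaves $C_u$ as $\ub\to+\infty$. Uniqueness follows from standard domain of dependence and the gauge rigidity of the double null foliation. The step I expect to be the main obstacle is the simultaneous closure of the curvature and Ricci coefficient estimates at the sharp regularity level claimed, since with the reduced number of derivatives the $r^p$--weighted estimates must be run using only renormalized curvature quantities $\Rc$ and with the top-order nonlinear error terms controlled by integration by parts rather than by brute commutations, and this has to be made compatible with the geometric treatment of the last slice.
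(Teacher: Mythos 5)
The paper does not prove Theorem \ref{knmain} at all: it is recalled verbatim from Klainerman--Nicol\`o, with the detailed statement delegated to Theorem 3.7.1 of \cite{Kl-Ni}, so the only ``proof'' in the paper is the citation. Your sketch is therefore not a reconstruction of the cited proof but essentially the strategy of \cite{ShenMink} and of the present paper's main theorem: bootstrap on a double null foliation in the exterior region, Ricci coefficients from the null structure equations, curvature from $r^p$--weighted estimates \`a la Dafermos--Rodnianski applied to the Bianchi pairs, and a last-slice continuity argument with reduced regularity. That route does yield the conclusion (indeed a stronger one, cf.\ Remark \ref{peelingcqg}, which records that \cite{ShenMink} extends the theorem to all $s>3$ by exactly this method), but it is genuinely different from what \cite{Kl-Ni} actually do: their curvature estimates come from the vectorfield method applied to the Bel--Robinson tensor (the $r^p$--method postdates their book), they control more derivatives of the Ricci coefficients than of curvature, and for that reason the last slice $\Cb_*$ carries a \emph{canonical} foliation rather than a geodesic-type one. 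What your approach buys is the derivative reduction and a simpler last-slice treatment; what the original approach buys is robustness without the linearized/renormalized Bianchi-pair structure. One point to fix in your write-up: the last slice is an incoming \emph{null} cone, so describing it as ``chosen to be maximal in a suitable sense'' is a misnomer --- maximality ($\tr k=0$) is a hypothesis on the spacelike initial hypersurface $\Sigma_0$ only, while the choice to be made on $\Cb_*$ is the choice of its $2$--sphere foliation (canonical in \cite{Kl-Ni}, geodesic-type in Definition \ref{geodesicfoliation} here), and this choice is precisely what lets the last-slice estimates close.
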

\begin{remark}\label{peelingcqg}
\cite{knpeeling} and \cite{ShenMink} extend Theorem \ref{knmain} to the case of $s$--asymptotically flat initial data with $s>7$ and $s>3$ respectively. See also \cite{Hintz} for a proof in the case $3<s<5$ using wave coordinates, constraint damping, spacetime compactification, $b$--Sobolev spaces and Nash-Moser iteration.
\end{remark}
\subsection{Previous works of Kerr stability in external regions}\label{kerrexternal}
In 2009, Nicol\`o \cite{Nicolopeeling} proved that the asymptotic behaviour of the Riemann components is in agreement with the \emph{Peeling theorem} if the corrections to the Kerr initial data decay sufficiently fast. In 2010, Caciotta and Nicol\`o \cite{Caciotta} proved the nonlinear stability of the Kerr spacetime for any $|a|\leq M$ and appropriate class of initial data close to Kerr, in an external region where $r\geq R_0$ and $\frac{M}{R_0}\ll 1$. Their main results can be stated as follows.
\begin{theorem}[Caciotta-Nicol\`o \cite{Caciotta}]\label{CN10}
Assume that the initial data are given on $\Si_0$ outside of a ball centered on the origin of radius $R_0$, denoted $K:=B_{R_0}$. They are perturbations of the initial data of a Kerr spacetime with mass $M$ and angular momentum $a$ satisfying
\begin{align*}
    \frac{M}{R_0}\ll 1,\qquad |a|\leq M.
\end{align*}
Let $s>7$, and assume that the initial data satisfies for $q\geq 4$:
\begin{align}
\begin{split}\label{sperturbation}
    g_{ij}&=(g_{Kerr})_{ij}+o_{q+1}(r^{-\frac{s-1}{2}}),\\
    k_{ij}&=(k_{Kerr})_{ij}+o_q(r^{-\frac{s+1}{2}}).
\end{split}
\end{align}
Then, the initial data set $(\Si_0,g,k)$ has a unique development $(\MM,\g)$ defined outside $I^+(K)$, the domain of influence of $K$. Moreover, $\MM$ can be foliated by a canonical double null foliation $(u,\ub)$ whose outgoing leaves $C_u$ are complete and suitable norms of Ricci coefficients and curvature components are sufficiently small.
\end{theorem}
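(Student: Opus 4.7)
The plan is to follow the double null foliation bootstrap scheme of \cite{Kl-Ni} adapted to Kerr as in \cite{Caciotta}, but replacing the vectorfield curvature estimates by the $r^p$-weighted Dafermos-Rodnianski method along the lines of \cite{ShenMink}. I first set up a linearization procedure: along a double null foliation $(u,\ub)$ of the spacetime to be constructed, each Ricci coefficient and null curvature component $Q$ is decomposed as $Q = Q_{\kerr} + \widecheck{Q}$, where $Q_{\kerr}$ is the corresponding quantity on the background Kerr spacetime with parameters $(a,M)$ pulled back to the foliation, and the checked quantity measures the perturbation. With $r$-weights dictated by peeling, I define bootstrap norms $\OO$ and $\RR$ controlling respectively the Ricci coefficient perturbations $\Gac$ and the null curvature perturbations $(\ac,\bc,\rhoc,\sic,\bbc,\aac)$, and postulate $\OO + \RR \leq \eps^{1/2}$ on a spacetime slab $\MM$ foliated by outgoing cones $C_u$ and ingoing cones $\Cb_\ub$, bounded to the future by an auxiliary last slice $\Sigma_*$.

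The proof then proceeds in four stages that together improve the bootstrap to $\OO + \RR \lesssim \eps$. First, an initial layer construction converts the decay assumption \eqref{sperturbation} into Kerr-adapted smallness for all geometric quantities along the initial cone $C_0$; the unified hypothesis $s>7$ is designed to accommodate both the class of \cite{Kl-Ni} and that of Klainerman-Szeftel in \cite{KS:main}. Second, the last slice $\Sigma_*$ is constructed as a canonical hypersurface by a geodesic foliation together with a few transport-type normalizations at its asymptotic sphere, avoiding the elliptic system on $\Sigma_*$ used in \cite{Caciotta}; this is the simplification of the last slice treatment announced in the abstract. Third, $r^p$-weighted energy estimates are derived for each Bianchi pair $(\ac,\bc)$, $(\bc,(\rhoc,\sic))$, $((\rhoc,\sic),\bbc)$, $(\bbc,\aac)$ using multipliers of the form $r^p L$ in the Dafermos-Rodnianski range $0\leq p\leq 2$, supplemented by a global timelike multiplier to control the basic energy; this yields $\RR \lesssim \eps + \OO(\OO + \RR)$ and, thanks to the $r^p$-hierarchy, a reduction in the number of derivatives compared with the vectorfield method. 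Fourth, the null structure equations are integrated as transport equations along $L$ and $\Lb$ to control $\Gac$, hence $\OO$, in terms of $\RR$ and initial data.

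The main obstacle will be step three. The $r^p$-weighted identities applied to the Bianchi system linearized around Kerr produce error terms carrying factors of $aM/r^3$ that are absent in Schwarzschild and in the Minkowski setting of \cite{ShenMink}. For $|a|\leq M$ with $M/R_0 \ll 1$ these terms are small on the external region $r\geq R_0$, but they couple different Bianchi pairs and therefore prevent a strictly pair-by-pair closure, forcing a simultaneous estimate with a carefully tuned hierarchy of $r$-weights. A secondary difficulty is achieving the reduced number of derivatives announced in the introduction: the highest-order estimate must avoid taking a top derivative of $\aac$, which dictates that the pair $(\bbc,\aac)$ be estimated last, directly from the previous three pairs via integration by parts in the ingoing direction rather than through an independent energy identity.
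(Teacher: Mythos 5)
Your high-level scheme (linearization around Kerr, bootstrap norms $\OO$, $\RR$, an initial-layer comparison, a non-canonical foliation of the last slice, $r^p$-weighted estimates for the Bianchi pairs, and transport plus elliptic estimates for the Ricci coefficients) is indeed the strategy of the paper. But there is a genuine gap in your step three, and it is precisely at the case $s>7$ covered by the statement. First, restricting the multipliers to the Dafermos--Rodnianski range $0\leq p\leq 2$ plus a global timelike multiplier cannot produce the strongly $r$-weighted curvature fluxes that the Ricci-coefficient estimates and the peeling conclusions require; the paper instead exploits the full hierarchy of weights tied to the initial decay, taking $p=s,4,2,0$ (for $s\in[4,6]$) for the four pairs. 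Second, even with the correct hierarchy, the structure of the pair $(\ac,\bc)$ caps the admissible weight: in the divergence identity the bulk coefficient of $\bc$ is proportional to $6-p$, so one cannot take $p>6$, and hence for $s>7$ the Bianchi-pair estimates alone do not yield the decay of $\ac$ (nor, by a dual mechanism, of $\aac$) demanded by Theorem \ref{CN10}. The paper closes this by a separate argument (Appendix \ref{secd}): $r^p$-weighted estimates for the linearized Teukolsky equations, carried out on the renormalized quantities $\amr=r^{-4}\nab_4(r^5\a)$, $\as=rd_2\a$ (and their underlined analogues), together with a Poincar\'e inequality on the spheres, which recovers the $r^{-(7/2)+}$-type behaviour of $\ac$ and the improved decay of $\aac$. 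Without this ingredient, or a substitute for it, your proof does not reach the $s>7$ conclusion you are asked to prove.

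Two smaller points. The coupling terms of size $M/r^3$ (and the $a$-dependent terms) that worry you do appear in the linearized Bianchi equations, but they do not force a simultaneous estimate: the paper absorbs them pair by pair, in a fixed order, using the bootstrap assumption $\RR\leq\ep$ and the smallness of $M/R_0$, and the pair $(\bbc,\aac)$ is still handled through its own weighted identity (with $p=0$), not by integration by parts from the other pairs. Also, the last slice here is the ingoing null cone $\Cb_*$, and what is avoided relative to \cite{Caciotta} is the canonical foliation (i.e.\ the elliptic choice of the foliation itself); elliptic Hodge estimates on the $2$-spheres of the geodesic-type foliation of $\Cb_*$ are still used and are needed to control top-order quantities such as $\nab^2\trchbc$ there.
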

The goal of this paper is to provide a new proof of Theorem \ref{CN10}.
\subsection{Rough version of the main theorem}\label{simpleversion}
In this section, we state a simple version of our main theorem. For the precise statement, see Theorem \ref{maintheorem}.
\begin{theorem}[Main Theorem, first version]\label{firstmain}
Let $K=B_{R_0}\subset\Si_0$ a ball centered at the origin of radius $R_0$ and let $\g_{a,M}$ a Kerr metric with mass $M>0$ and angular momentum $a$ satisfying
\begin{align*}
    \frac{M}{R_0}\ll 1,\qquad |a|\leq M.
\end{align*}
Let $s>3$ and let an initial data set $(\Si_0,g,k)$ satisfying \eqref{sperturbation} with $q=2$. Let $\KK_{(0)}$\footnote{$\KK_{(0)}$ is defined in Section \ref{initiallayer}.} an initial layer region near $\Si_0\sm K$ and assume that the initial data in $\KK_{(0)}$ are perturbations from the initial data of $\g_{a,M}$ of size $\ep_0\ll \frac{M}{R_0}$.\footnote{See \eqref{l2condition} for the explicit initial data assumptions and see Section \ref{'norms} for the definitions of the norms on the initial data.} \\ \\
Then, the initial data set $(\Si_0,g,k)$ has a unique development $(\MM,\g)$ in its future domain of dependence with the following properties:
\begin{itemize}
    \item $\MM$ can be foliated by a double null foliation $(u,\ub)$ whose outgoing leaves $C_u$ are complete for all $u\leq u_0$;
    \item we have detailed control of all the quantities associated with the double null foliation of the spacetime, see Theorem \ref{maintheorem}.
\end{itemize}
\end{theorem}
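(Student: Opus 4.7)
The proof will be carried out by a continuity (bootstrap) argument on a family of double-null foliated subregions that exhaust the future of $\Si_0$ to the exterior of $I^+(K)$. The plan is as follows.

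\medskip
\textbf{Step 1 (Linearization and initial layer).} First, I would use the initial data assumption \eqref{sperturbation} with $q=2$ together with the local existence theory in the initial layer $\KK_{(0)}$ to produce an initial outgoing null cone $C_0$ and initial ingoing null cone $\Cb_0$, each carrying a double null foliation that is $\ep_0$-close to the corresponding null data of $\g_{a,M}$. Following the linearization procedure, I then split all Ricci coefficients and null curvature components as $\bGa = \bGa_{a,M} + \bGac$ and analogously $\Rc$, so that the checked quantities are of size $O(\ep_0 + M/R_0)$. The norms on initial data from Section \ref{'norms} then translate into initial bounds for these checked quantities along $C_0$ and $\Cb_0$.

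\medskip
\textbf{Step 2 (Bootstrap region and continuity argument).} I would consider the bootstrap region $\MM(u_*,\ub_*)$ bounded by $C_0,\Cb_0,C_{u_*},\Cb_{\ub_*}$, and impose weighted $L^2$-- and $L^\infty$--bootstrap assumptions on $\bGac$ and $\Rc$, of the schematic form $\|\bGac\|, \|\Rc\|\le 2C\ep_0$, with $r^p$-weights encoding the desired peeling/decay. Let $\mathcal{U}$ be the set of $(u_*,\ub_*)$ for which such a double null foliation exists with bootstrap constants halved, i.e.\ bounded by $C\ep_0$. Nonemptiness is guaranteed by the local existence step, while openness is standard. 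The core of the proof is closedness: improving the bootstrap assumptions strictly. Once $\mathcal{U}$ is shown to cover the full range $u\le u_0$, $\ub\le +\infty$, the completeness of the outgoing leaves $C_u$ follows.

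\medskip
\textbf{Step 3 (Canonical last slice and null foliation).} The outgoing optical function $u$ is propagated from $C_0$ via the eikonal equation, but $\ub$ must be initialized on a ``last slice'' $\Cb_{\ub_*}$. Here the plan is to simplify the last slice construction of \cite{Caciotta} by prescribing canonical conditions directly in terms of the Kerr mass aspect and torsion (GCM-type conditions), reducing the problem on the last slice to a transport system coupled with a scalar elliptic equation on the spheres $S_{u,\ub_*}$. The reduction in the number of derivatives compared to \cite{Caciotta} will come at this stage, since fewer transport integrations in the $\Lb$-direction need to be commuted.

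\medskip
\textbf{Step 4 (Curvature via $r^p$--weighted estimates; Ricci via transport).} For curvature, instead of the vector field/Bel-Robinson approach of \cite{Kl-Ni,Caciotta}, I would apply the $r^p$--weighted estimates of Dafermos-Rodnianski to the linearized Bianchi pair system satisfied by the checked null components. Each $r^p$--hierarchy produces a weighted energy identity along $C_u$ and $\Cb_{\ub}$; together with commuted versions (using $\dkb$--calculus adapted to Kerr) they yield the weighted curvature bounds on the bootstrap region. For Ricci coefficients, the null structure equations give transport equations for each $\bGac$ with right-hand sides that are quadratic in $\bGac$ plus linear in $\Rc$; integrating these along $L$ or $\Lb$ and recovering top-order angular derivatives via Hodge systems on $S_{u,\ub}$, one closes the coupled system and improves the bootstrap, provided $\ep_0\ll M/R_0$ as assumed. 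A unified treatment of the decay exponent $s>3$ (as in \cite{ShenMink}) is incorporated by tracking the admissible range of $p$ in the $r^p$--hierarchy.

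\medskip
\textbf{Main obstacle.} The principal difficulty is the simultaneous closure of curvature and Ricci estimates at the reduced regularity: the $r^p$--weighted estimates on a Kerr (not Schwarzschild) background generate error terms coming from the $a\ne 0$ contribution of the background (non-commuting angular operators, trapped-like terms at highest weight), which must be absorbed using the largeness of $R_0$ relative to $M$ and the fact that Region II stays far from the horizon. A secondary obstacle, technically independent, is making the last slice construction compatible with this reduced regularity, i.e.\ ensuring the elliptic/transport system that defines canonical $\ub$ on $\Cb_{\ub_*}$ can be solved with the same number of derivatives used elsewhere in the bootstrap.
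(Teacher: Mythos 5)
Your overall architecture coincides with the paper's: linearize all Ricci coefficients and curvature components around the Kerr values pulled back through an identification map, run a bootstrap/continuity argument on the region bounded by the last slice, estimate curvature through $r^p$--weighted estimates applied to the linearized Bianchi pairs (with a hierarchy of weights $p$ tracking the decay exponent $s$), and recover the Ricci coefficients by transport along $L$, $\Lb$ plus Hodge systems on the spheres, closing with an extension/local-existence step. Where you genuinely diverge is the last slice. You propose prescribing canonical, GCM-type conditions (mass aspect and torsion) on $\Cb_{\ub_*}$, which is essentially the canonical-foliation route of Caciotta--Nicol\`o; the paper instead deliberately replaces this by a \emph{geodesic type} foliation, i.e.\ the much weaker gauge conditions $\Omc=0$, $\bcc^A=0$ on $\Cb_*$ (Definition \ref{geodesicfoliation}), so that the last-slice analysis reduces to pure transport in the $e_3$ direction plus standard elliptic estimates, with no elliptic equation to solve to define the foliation itself. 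Your route is probably viable but buys nothing here, and your justification for it is misplaced: the reduction in the number of derivatives does not come from the last-slice construction, it comes from the $r^p$--weighted curvature estimates, which allow the same number of derivatives (one) to be controlled for curvature and for Ricci coefficients; it is precisely this equality that makes the canonical foliation unnecessary and the geodesic type foliation sufficient. If you insist on GCM-type conditions you must additionally show the associated elliptic/transport system is solvable at this low regularity, which is extra work the paper avoids.

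A second point you gloss over in Step 1 is not a mere bookkeeping translation of norms: the initial-layer data are expressed in the foliation $(u_{(0)},\ub)$ of $\KK_{(0)}$, while the bootstrap estimates require curvature flux $\Rk_0$ on $\Si_0\sm K$ with respect to the new foliation $(u,\ub)$ constructed backwards from the last slice. Comparing the two requires the null frame transformation formulae (with transition data $f$, $\la$) and the oscillation norms $\osc$ controlling $u'-u$, $x'^A-x^A$, $r'-r$ and $f$ in the layer; this is the content of Theorem M0 in the paper and it must itself be closed inside the bootstrap (the $\osc$ quantities are bootstrapped and improved together with $\OO$). Without this ingredient your claim that the Section \ref{'norms} norms ``translate into initial bounds'' for the checked quantities along $\Si_0\sm K$ in the bootstrap frame is unjustified.
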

\begin{remark}
    In the particular case of $s$--asymptotically flat initial data with $s>7$, we reobtain the results of \cite{Caciotta}.
\end{remark}
\begin{remark}
Let $C_{u_0}$ be the outgoing null cone located near $\pr K$ and choose $s=4+2\de_B$ with $\de_B>0$. Then, the estimates of Theorem \ref{firstmain} on $C_{u_0}$ are consistent with the initial data considered in \cite{KS} and \cite{KS:main}.
\end{remark}
\begin{remark}
    Theorem \ref{firstmain} is the analog for the Kerr spacetime of the result for Minkowski in \cite{ShenMink}.
\end{remark}
The proof of Theorem \ref{firstmain} has the same structure as in \cite{Kl-Ni,Caciotta} and is closely related to the one in \cite{ShenMink}. Below, we compare the proof of this paper and that of \cite{Caciotta}:
\begin{enumerate}
\item To estimate the norms of curvature components, \cite{Caciotta} uses the vectorfield method introduced in \cite{ch-kl,Kl-Ni}. Here, we estimate the curvature norms by the $r^p$--weighted estimate method introduced in \cite{Da}. This allows for a simple treatment of the curvature estimates.
\item \cite{Caciotta} uses fourth order derivatives of Ricci coefficients and third order derivatives of curvature components. Thanks to the use of $r^p$--weighted estimates, we only need first order derivatives of both Ricci coefficients and curvature components.
\item In order to control one more derivative for Ricci coefficients compared to curvature, \cite{Caciotta} relies on the canonical foliation on the last slice $\Cb_*$. Since we control the same number of derivatives for curvature and Ricci coefficients, instead of introducing the canonical foliation, we use a geodesic type foliation to simplify the estimates on the last slice $\Cb_*$. 
\end{enumerate}
\subsection{Structure of the paper}\label{structurepaper}
\begin{itemize}
    \item In Section \ref{preliminaries}, we recall the fundamental notions and basic equations used in this paper. We also introduce the linearization procedure and the linearized equations.
    \item In Section \ref{mainsection}, we state the precise version of the main theorem. We then state intermediate results, and rely on them to prove the main theorem. The rest of the paper focuses on the proof of these intermediary results. In Sections \ref{curvatureestimates}--\ref{initiallast}, we consider the case $s\in [4,6]$ and deal with the other cases in Appendices \ref{secc} and \ref{secd}.
    \item In Section \ref{curvatureestimates}, we apply $r^p$-weighted estimates to the linearized Bianchi equations to control curvature.
    \item In Section \ref{Ricciestimates}, we estimate the Ricci coefficients using the linearized null structure equations.
    \item In Section \ref{initiallast}, we estimate the curvature in the initial layer region $\kk_{(0)}$. Then, we estimate Ricci coefficients on the last slice $\Cb_*$. We also show how to extend the spacetime in the context of a bootstrap argument.
    \item In Appendix \ref{appa}, we provide the derivation of the linearized equations stated in Section \ref{seclinearequation}.
    \item In Appendix \ref{secc}, we prove Theorem \ref{firstmain} in the case $s\in(3,4)$.
    \item In Appendix \ref{secd}, we prove Theorem \ref{firstmain} in the case $s>6$ by applying $r^p$--weighted estimates to the linearized Teukolsky equation.
\end{itemize}
\section{Preliminaries}\label{preliminaries}
\subsection{Geometric set-up}\label{ssec7.1}
\subsubsection{Double null foliation}\label{doublenullsection}
We first introduce the geometric setup. The material in this section is classical, see for example Chapter 3 in \cite{Kl-Ni}.\\ \\ 
We denote $(\MM,\bg)$ a spacetime $\MM$ with the Lorentzian metric $\g$ and $\D$ its Levi-Civita connection. Let $u$ and $\ub$ be two optical functions on $\MM$, that is\footnote{{Here, $\grad$ denotes the spacetime gradient.}}
\begin{equation*}
    \g(\grad u,\grad u)=\g(\grad\ub,\grad\ub)=0,
\end{equation*}
The spacetime $\MM$ is foliated by the level sets of $u$ and $\ub$ respectively, and the functions $u,\ub$ are required to increase towards the future. We use $C_u$ to denote the outgoing null cones which are the level sets of $u$ and use $\Cb_\ub$ to denote the ingoing null cones which are the level sets of $\ub$. We denote
\begin{equation}
    S(u,\ub):=C_u\cap\Cb_\ub,
\end{equation}
which is a spacelike $2$--sphere. We define the area radius of $S(u,\ub)$:
\begin{equation}
    r(u,\ub):=\sqrt{\frac{|S(u,\ub)|}{4\pi}}.
\end{equation}
We introduce the vector fields $L$ and $\Lb$ by
\begin{equation*}
    L:=-\grad u,\qquad \Lb:= -\grad\ub.
\end{equation*}
We define {a} positive scalar function $\Om$ by:
\begin{equation*}
    \bg(L,\Lb)=-\frac{1}{2\Om^2},
\end{equation*}
called the lapse function. We then define the normalized null pair $(e_3,e_4)$ by
\begin{equation*}
   e_3=2\Om\Lb,\quad e_4=2\Om L.
\end{equation*}
{We also define the following vector fields:}
\begin{equation*}
    \underline{N}=\Om e_3, \qquad N=\Om e_4.
\end{equation*}
The flows generated by $\underline{N}$ and $N$ preserve the double null foliation. On a given two sphere $S(u,\ub)$, we choose a local frame $(e_1,e_2)$, and we call $(e_1,e_2,e_3,e_4)$ a null frame. As a convention, throughout the paper, we use capital Latin letters $A,B,C,...$ to denote an index from 1 to 2 and Greek letters $\alpha,\beta,\gamma,...$ to denote an index from 1 to 4, e.g. $e_A$ denotes either $e_1$ or $e_2$.\\ \\
The spacetime metric $\g$ induces a Riemannian metric $\ga$ on $S(u,\ub)$. Using $(u,\ub)$, we can introduce a local coordinates system $(u,\ub,x^A)$ on $\MM$ with $e_4(x^A)=0$. The metric $\g$ takes the form:
\begin{equation}\label{metricg}
\g=-2\Om^2(d\ub\otimes du+du\otimes d\ub)+\ga_{AB}(dx^A-\bu^Adu)\otimes (dx^B-\bu^Bdu),
\end{equation}
and we have $\underline{N}=\pr_u+\bu$ and $N=\pr_{\ub}$ where
\begin{equation*}
    \bu:=\bu^A \pr_{x^A}.
\end{equation*}
We recall the null decomposition of the Ricci coefficients and curvature components of the null frame $(e_1,e_2,e_3,e_4)$. We have:
\begin{align}
\begin{split}\label{defga}
\chib_{AB}&=\g(\D_A e_3, e_B),\qquad\quad \chi_{AB}=\g(\D_A e_4, e_B),\\
\xib_A&=\frac 1 2 \g(\D_3 e_3,e_A),\qquad\quad\,\,  \xi_A=\frac 1 2 \g(\D_4 e_4, e_A),\\
\omb&=\frac 1 4 \g(\D_3e_3 ,e_4),\qquad\quad \,\,\,\,\, \,\om=\frac 1 4 \g(\D_4 e_4, e_3), \\
\etab_A&=\frac 1 2 \g(\D_4 e_3, e_A),\qquad\quad\, \, \eta_A=\frac 1 2 \g(\D_3 e_4, e_A),\\
\ze_A&=\frac 1 2 \g(\D_{e_A}e_4, e_3),
\end{split}
\end{align}
and
\begin{align}
\begin{split}\label{defr}
\a_{AB} &=\R(e_A, e_4, e_B, e_4) , \qquad \,\,\,\aa_{AB} =\R(e_A, e_3, e_B, e_3), \\
\b_{A} &=\frac 1 2 \R(e_A, e_4, e_3, e_4), \qquad\,\,\; \bb_{A}=\frac 1 2 \R(e_A, e_3, e_3, e_4),\\
\rho&= \frac 1 4 \R(e_3, e_4, e_3, e_4), \qquad\,\;\;\,\,\,\;\si =\frac{1}{4}{^*\R}( e_3, e_4, e_3, e_4),
\end{split}
\end{align}
where $\R$ denotes the curvature tensor of $\g$ and $^*\R$ denotes the Hodge dual of $\R$. The null second fundamental forms $\chi, \chib$ are further decomposed in their traces $\trch$ and $\trchb$, and traceless parts $\hch$ and $\hchb$:
\begin{align*}
\trch&:=\de^{AB}\chi_{AB},\qquad\quad \,\hch_{AB}:=\chi_{AB}-\frac{1}{2}\de_{AB}\trch,\\
\trchb&:=\de^{AB}\chib_{AB},\qquad\quad \, \hchb_{AB}:=\chib_{AB}-\frac{1}{2}\de_{AB}\trchb.
\end{align*}
We define the horizontal covariant operator $\nab$ as follows:
\begin{equation*}
\nab_X Y:=\D_X Y-\frac{1}{2}\chib(X,Y)e_4-\frac{1}{2}\chi(X,Y)e_3.
\end{equation*}
We also define $\nab_4 X$ and $\nab_3 X$ to be the horizontal projections:
\begin{align*}
\nab_4 X&:=\D_4 X-\frac{1}{2} \g(X,\D_4e_3)e_4-\frac{1}{2} \g(X,\D_4e_4)e_3,\\
\nab_3 X&:=\D_3 X-\frac{1}{2} \g(X,\D_3e_3)e_3-\frac{1}{2} \g(X,\D_3e_4)e_4.
\end{align*}
A tensor field $\psi$ defined on $\MM$ is called tangent to $S$ if it is a priori defined on the spacetime $\MM$ and all the possible contractions of $\psi$ with either $e_3$ or $e_4$ are zero. We use $\nab_3\psi$ and $\nab_4 \psi$ to denote the projection to $S(u,\ub)$ of usual derivatives $\D_3\psi$ and $\D_4\psi$. As a direct consequence of \eqref{defga}, we have the Ricci formulae:
\begin{align}
\begin{split}\label{ricciformulas}
    \D_A e_B&=\nab_A e_B+\frac{1}{2}\chi_{AB} e_3+\frac{1}{2}\chib_{AB}e_4,\\
    \D_A e_3&=\chib_{AB}e_B+\ze_A e_3,\\
    \D_A e_4&=\chi_{AB}e_B-\ze_A e_4,\\
    \D_3 e_A&=\nab_3 e_A+\eta_A e_3+\xib_A e_4,\\
    \D_4 e_A&=\nab_4 e_A+\etab_A e_4+\xi_A e_4,\\
    \D_3 e_3&=-2\omb e_3+2\xib_B e_B,\\ 
    \D_3 e_4&=2\omb e_4+2\eta_B e_B,\\
    \D_4 e_4&=-2\om e_4+2\xi_B e_B,\\
    \D_4 e_3&=2\om e_3+2\etab_B e_B.
\end{split}
\end{align}
In addition to 
\begin{equation}
    \xi=\xib=0,
\end{equation}
the following identities hold for a double null foliation:
\begin{align}
\begin{split}
    \nabla\log\Omega&=\frac{1}{2}(\eta+\etab),\qquad\;\;\;\,\omega=-\frac{1}{2}\D_4(\log\Omega), \qquad\;\;\;\,\omb=-\frac{1}{2}\D_3(\log\Omega),\\ \label{6.6}
    \eta&=\ze+\nabla\log\Omega,\qquad \etab=-\ze+\nabla\log\Omega,
\end{split}
\end{align}
see for example (6) in \cite{kr}.
\subsubsection{Geometry of Kerr spacetime}\label{kerrgeometry}
In this section, we recall the basic notions of the geometry of Kerr spacetime used in the sequel. See Section 4 in \cite{pi} and Appendix A in \cite{Daluk} for more details. \\ \\
Let $\MM_{Kerr}$ be a $4$--dimensional manifold diffeomorphic to $\mathbb{R}^2\times \mathbb{S}^2$ which describe the Kerr black hole. The Kerr metric $\g_{a,M}$ with mass $M$ and angular momentum $a$ is a smooth metric on $\MM_{Kerr}$, which takes the following form in the Boyer-Lindquist local coordinates $(t,r,\th,\phi)$:
\begin{align}
    \begin{split}\label{Kerrmetric}
        \g_{a,M}=&-\left(1-\frac{2Mr}{\Si}\right)dt\otimes dt +\frac{\Si}{\De}dr\otimes dr+\Si\, d\th\otimes d\th ,\\
        &+R^2\sin^2\th\, d\phi\otimes d\phi-\frac{2Mar\sin^2\th}{\Si}(d\phi\otimes dt+dt\otimes d\phi),
    \end{split}
\end{align}
where
\begin{equation*}
    \Si:=r^2+a^2\cos^2\th,\qquad R^2:=r^2+a^2+\frac{2Ma^2r\sin^2\th}{\Si}, \qquad \De:=r^2+a^2-2Mr.
\end{equation*}
We denote
\begin{equation*}
    r_+:=M+(M^2-a^2)^\f12,
\end{equation*}
and recall that $r=r_+$ defines the event horizons of Kerr black hole. Note that \eqref{Kerrmetric} is valid in $r>r_+$.
\begin{remark}
Note that $(\th,\phi)$ are spherical coordinates on $S(t,r)$ and have a coordinate singularity at $\th=0,\pi$. We thus also introduce
\begin{equation*}
    x^1:=\sin\th\cos\phi,\qquad x^2:=\sin\th\sin\phi.
\end{equation*}
Then, $(x^1,x^2)$ are regular coordinates when $0\leq \th\leq \frac{\pi}{3}$ and $\frac{2\pi}{3}\leq \th\leq \pi$. Hence, $S(t,r)$ can be covered by three regular coordinates charts. In the sequel, we focus on the spherical coordinates chart $(\th,\phi)$.
\end{remark}
\begin{proposition}
    We can define two new functions $r_*=r(r,\th)$ and $\th_*=\th_*(r,\th)$ such that the Kerr metric takes the following form in $r>r_+$:
\begin{align}
\begin{split}\label{metricpi}
    \g_{a,M}=&\frac{\De}{R^2}\left(-dt\otimes dt + dr_*\otimes dr_* \right)+\frac{\ell^2}{R^2}d\th_* \otimes d\th_*\\
    &+R^2\sin^2\th \left(d\phi-\frac{2Mar}{\Si R^2}dt\right)\otimes \left(d\phi-\frac{2Mar}{\Si R^2}dt\right),
\end{split}
\end{align}
where
\begin{align*}
    \ell&:=-Ma\sqrt{\sin^2\th_*-\sin^2\th}\sqrt{(r^2+a^2)^2-a^2\sin^2\th_*\De},\\
    \De&:=r^2+a^2-2Mr,\\
    R^2&:=r^2+a^2+\frac{2Ma^2r\sin^2\th}{\Si},\\
    \Si&:=r^2+a^2\cos^2\th.
\end{align*}
\end{proposition}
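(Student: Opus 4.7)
The plan is to pass from the Boyer-Lindquist expression \eqref{Kerrmetric} to the target form \eqref{metricpi} by treating the $(t,\phi)$-block and the $(r,\th)$-block separately: the first by a direct completion of the square, the second by a change of radial and polar coordinates tied to the hidden Carter--St\"ackel symmetry of Kerr.

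For the $(t,\phi)$-block I would write
\[
-\left(1-\tfrac{2Mr}{\Si}\right)dt^2-\tfrac{4Mar\sin^2\th}{\Si}\,dt\,d\phi+R^2\sin^2\th\,d\phi^2=R^2\sin^2\th\left(d\phi-\tfrac{2Mar}{\Si R^2}\,dt\right)^{\!2}+\FF\,dt^2
\]
and verify by direct computation that $\FF=-\De/R^2$. Clearing denominators reduces the claim to the identity $\Si^2\De-4M^2a^2r^2\sin^2\th=\Si R^2(\Si-2Mr)$, which, after using the easily-checked relation $\Si R^2=\Si(r^2+a^2)+2Ma^2 r\sin^2\th$, collapses to the defining identity $\Si=r^2+a^2\cos^2\th$.

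For the $(r,\th)$-block I would seek a diffeomorphism $(r,\th)\mapsto(r_*,\th_*)$ of the meridional half-plane recasting $\tfrac{\Si}{\De}\,dr^2+\Si\,d\th^2$ as $\tfrac{\De}{R^2}\,dr_*^2+\tfrac{\ell^2}{R^2}\,d\th_*^2$. Postulating $dr_*=A\,dr+B\,d\th$ and $d\th_*=C\,dr+D\,d\th$, the pointwise matching produces three algebraic equations (two diagonal and one orthogonality condition, killing the cross term), while exactness of $dr_*,\,d\th_*$ imposes the integrability constraints $\pr_\th A=\pr_r B$ and $\pr_\th C=\pr_r D$. This overdetermined system is consistent precisely because of Kerr's St\"ackel separability: the same $r/\th$ decoupling that produces the Carter constant forces the required product structure on the solution, and the factor $\sqrt{(r^2+a^2)^2-a^2\sin^2\th_*\De}$ appearing in $\ell$ is recognisable as the radial ``$\Si_{BL}^2$''--type combination with $\th$ replaced by the new variable $\th_*$. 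I would follow the explicit construction of Section~4 of \cite{pi}.

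The main obstacle is the second step, and in particular the verification of global integrability of $dr_*$ and $d\th_*$ on $\{r>r_+\}$, so that they really integrate to honest scalar coordinates. The restriction $r>r_+$ ensures $\De>0$ and prevents any conformal factor from degenerating, and the St\"ackel structure of Kerr controls the compatibility conditions; the only remaining coordinate issue at $\sin\th=0$ is the one already present in Boyer-Lindquist and is handled by the standard regular-chart argument indicated after \eqref{Kerrmetric}. Once $(r_*,\th_*)$ are in hand, the final matching of coefficients in \eqref{metricpi} is immediate.
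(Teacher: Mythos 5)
Your completion of the square in the $(t,\phi)$-block is correct: the identity you need, $\Si^2\De-4M^2a^2r^2\sin^2\th=\Si R^2(\Si-2Mr)$, does reduce via $\Si R^2=\Si(r^2+a^2)+2Ma^2r\sin^2\th$ to $\Si=r^2+a^2\cos^2\th$, so the leftover $dt\otimes dt$ coefficient is exactly $-\De/R^2$. This part is sound, and note that the paper itself offers no independent argument for the proposition: its proof is a citation to Appendices A.1--A.3 of \cite{Daluk}, which implement the Pretorius--Israel construction of \cite{pi}. So in spirit your route is the same as the paper's.

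The weak point is your treatment of the $(r,\th)$-block. Counting three pointwise matching equations against two exactness constraints for $A,B,C,D$ and then asserting that St\"ackel separability ``forces'' consistency is not an argument: the unknowns $r_*,\th_*$ enter the matching equations themselves (through $\ell$, which depends on $\th_*$, and through the Jacobian), so the system is not a fixed linear-algebraic one, and separability of the geodesic flow does not by itself produce the coordinates. What actually happens in \cite{pi} and \cite{Daluk} is that $r_*$ is obtained as a solution of the eikonal equation for $u=t-r_*$, namely $\De(\pr_r r_*)^2+(\pr_\th r_*)^2=\big((r^2+a^2)^2-a^2\De\sin^2\th\big)/\De$, solved by separation of variables; $\th_*$ is then defined through the separation constant (constant along the relevant characteristics), and both the explicit expression for $\ell$ and the vanishing of the $dr_*\otimes d\th_*$ cross term are consequences of that construction rather than inputs to an ansatz. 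Since you explicitly defer to Section~4 of \cite{pi} for this step---exactly as the paper defers to \cite{Daluk}---your proposal is acceptable as a proof-by-citation; but the heuristic you offer in place of the construction would not stand on its own, and you never verify the stated formula for $\ell$ nor the absence of a surviving cross term, which is the only genuinely nontrivial content of the proposition.
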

\begin{proof}
    See Appendices A.1--A.3 in \cite{Daluk}.
\end{proof}
We can also write the Kerr metric in double null foliation. We define
\begin{align*}
    \ub_{Kerr}:=t+r_*,\qquad u_{Kerr}:=t-r_*,\qquad \phi_*:=\phi-h(r_*,\th_*),
\end{align*}
where
\begin{align*}
    \frac{d h}{dr_*}(r_*,\th_*)=\frac{2Mar}{\Si R^2}.
\end{align*}
We easily obtain in $r>r_+$\footnote{See (A.42) in \cite{Daluk} and notice the exchange of role between $\ub$ and $u$.}
\begin{align*}
    \g_{a,M}=&-2\Om_{Kerr}^2(d\ub_{Kerr}\otimes du_{Kerr}+du_{Kerr}\otimes d\ub_{Kerr})\\
    &+(\ga_{Kerr})_{AB}(dx_{Kerr}^A-\bu_{Kerr}^A\, du_{Kerr})\otimes (dx_{Kerr}^B-\bu_{Kerr}^B\, du_{Kerr}),
\end{align*}
where $(x_{Kerr}^1,x_{Kerr}^2)=(\th_*,\phi_*)$ or $(x_{Kerr}^1,x_{Kerr}^2)=(\sin\th_*\cos\phi_*,\sin\th_*\sin\phi_*)$.\\ \\ 
We also denote
\begin{align*}
    L_{Kerr}:=-\grad u_{Kerr},\qquad \Lb_{Kerr}:=-\grad \ub_{Kerr},\qquad \g(L_{Kerr},\Lb_{Kerr})=-\frac{1}{2\Om_{Kerr}^2},
\end{align*}
and
\begin{align*}
    (e_4)_{Kerr}:=2\Om_{Kerr}\,L_{Kerr},\qquad (e_3)_{Kerr}:=2\Om_{Kerr}\,\Lb_{Kerr}.
\end{align*}
The leaves of $\MM_{Kerr}$ are defined by
\begin{equation*}
    S_{Kerr}(u_{Kerr},\ub_{Kerr}):=(C_{Kerr})_{u_{Kerr}}\cap (\Cb_{Kerr})_{\ub_{Kerr}},
\end{equation*}
where $(C_{Kerr})_{u_{Kerr}}$ and $(\Cb_{Kerr})_{\ub_{Kerr}}$ respectively denote the outgoing and ingoing null cones of the $(u_{Kerr},\ub_{Kerr})$--foliation in $\MM_{Kerr}$. Let also $r_{Kerr}$ denote the area radius of the $S(u_{Kerr},\ub_{Kerr})$, i.e.
\begin{equation*}
    r_{Kerr}:=\sqrt{\frac{|S(u_{Kerr},\ub_{Kerr})|}{4\pi}}.
\end{equation*}
Note that $r_{Kerr}$ is a function of $r_*(r,\th)$.\\ \\
We define
\begin{equation*}
    \Si_{Kerr}:=\{p\in \MM_{Kerr}/\, u_{Kerr}+\ub_{Kerr}=0\},
\end{equation*}
and
\begin{equation*}
    w_{Kerr}:=\frac{\ub_{Kerr}-u_{Kerr}}{2}.
\end{equation*}
Let $\de_0>0$ be a fixed small constant. The initial layer region of $\MM_{Kerr}$ is defined by
\begin{equation}\label{de0df}
    (\KK_{(0)})_{Kerr}:=\big\{p\in \MM_{Kerr}/\,0\leq u_{Kerr}+\ub_{Kerr}\leq 2\de_0\big\}\setminus I^+(K_{Kerr}),
\end{equation}
where 
\begin{align}\label{R0constant}
    K_{Kerr}:=\{p\in\Si_{Kerr}/\, w_{Kerr}(p)\leq R_0\},
\end{align}
$I^+(K_{Kerr})$ denotes the future domain of influence of $K_{Kerr}$ and $R_0>0$ is a constant.
\subsubsection{Decay of quantities in Kerr spacetime}
In this section, we provide the decay of quantities associated to the double null foliation in $\MM_{Kerr}$ induced by $(u_{Kerr},\ub_{Kerr})$ introduced in Section \ref{kerrgeometry}.
\begin{definition}\label{bigOnotation}
For any $S_{Kerr}$--tangent tensor field $h_{Kerr}$ defined in $\MM_{Kerr}$, we denote
\begin{equation}
    h_{Kerr}=O_q^p,
\end{equation}
if for any integer $l\geq 0$
\begin{align*}
    |\nab^{\leq l}h_{Kerr}|\les\frac{M^p}{r_{Kerr}^{q+l}},
\end{align*}
where
\begin{equation*}
    \nab:=\left\{\nab_{\pr_{\ub_{Kerr}}},\, \nab_{\pr_{u_{Kerr}}},\, r_{Kerr}^{-1}\nab_{\pr_{x_{Kerr}^A}}\right\},
\end{equation*}
where the coordinates $(u_{Kerr},\ub_{Kerr},x_{Kerr}^A)$ and the area radius $r_{Kerr}$ of $S(u_{Kerr},\ub_{Kerr})$ have been introduced in Section \ref{kerrgeometry}.
\end{definition}
\begin{proposition}\label{decayGammaKerr}
The Ricci coefficients of the $(u_{Kerr},\ub_{Kerr})$ double null foliation satisfy in $\MM_{Kerr}$:
\begin{align*}
(\tr\chi)_{Kerr}-\frac{2}{r_{Kerr}}&=O_2^1,\qquad\;\,\;(\tr\chib)_{Kerr}+\frac{2}{r_{Kerr}}=O_2^1,\\
\om_{Kerr}&=O_2^1,\qquad\qquad \qquad\qquad\omb_{Kerr}=O_2^1,\\
\chih_{Kerr}&=O_3^2,\qquad \qquad\quad\;\;\qquad \;\,\chibh_{Kerr}=O_3^2,\\
(\nab\tr\chi)_{Kerr}&=O_4^2,\qquad \quad\qquad\; (\nab \tr\chib)_{Kerr}=O_4^2,\\
\eta_{Kerr}&=O_3^2,\qquad\qquad\quad\;\;\qquad\;\; \etab_{Kerr}=O_3^2,\\
(\nab\om)_{Kerr}&=O_4^2,\qquad\qquad\;\, \qquad(\nab\omb)_{Kerr}=O_4^2,\\
\bu_{Kerr}&=O_2^2,\qquad\quad\;\;\;\; \qquad\Om_{Kerr}-\f12=O_1^1.
\end{align*}
\end{proposition}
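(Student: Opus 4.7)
The plan is to verify each decay statement by direct computation starting from the explicit double null form of the Kerr metric given in the preceding proposition (see also Appendix A of \cite{Daluk} and \cite{pi} where these calculations are carried out in detail). The three basic ingredients are: the null lapse $\Om_{Kerr}$ read off from the coefficient of $du_{Kerr}\otimes d\ub_{Kerr}$ in \eqref{metricpi}, the induced angular metric $(\ga_{Kerr})_{AB}$ on $S_{Kerr}(u_{Kerr},\ub_{Kerr})$, and the shift $\bu_{Kerr}^A$.

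First, the identity $\Om_{Kerr}^2=\frac{\De}{4R^2}$, combined with $\De=r^2+a^2-2Mr$ and $R^2=r^2+a^2+O(Ma^2/r)$, yields $\Om_{Kerr}-\f12=O_1^1$ by expanding in powers of $M/r$ and $a^2/r^2$ at fixed $\th$. The null pair $((e_3)_{Kerr},(e_4)_{Kerr})=2\Om_{Kerr}(\Lb_{Kerr},L_{Kerr})$ is then computed, and the shift vector satisfies $\bu_{Kerr}^{\phi_*}=\frac{2Mar}{\Si R^2}=O_2^2$ after using $|a|\leq M$.

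Next, the Ricci coefficients are computed from the definitions \eqref{defga} applied to this frame. Since the angular part of the Kerr metric behaves as $r^2\,d\si_{\mathbb{S}^2}+O(a^2)$, one finds $\chi_{AB}=\frac{1}{r}\ga_{AB}+O_3^2$, which gives $\tr\chi-\frac{2}{r}=O_2^1$ and $\hch=O_3^2$ (the tracefree correction being driven by the $a^2\cos^2\th$ terms inside $q^2$, bounded by $M^2$ since $|a|\leq M$); the analogous statements for $\chib,\tr\chib,\hchb$ follow identically. The horizontal one-forms $\eta,\etab$ are computed from $\nab_3 e_4,\nab_4 e_3$ via \eqref{ricciformulas}, and each of the relevant frame-dragging contributions is of size $Ma/r^3$, hence $O_3^2$. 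From $\om=-\f12\D_4\log\Om$ together with the expansion of $\Om_{Kerr}$, one reads $\om_{Kerr}=O_2^1$ and similarly for $\omb_{Kerr}$. Each angular derivative brings an extra factor of $r^{-1}$, which accounts for $\nab\tr\chi,\nab\tr\chib,\nab\om,\nab\omb$ being $O_4^2$.

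The main obstacle in this verification is bookkeeping rather than conceptual: one must carefully track mixed powers of $M$ and $a$ throughout, repeatedly using $|a|\leq M$ to absorb $a^\ell$ into $M^\ell$ so as to match the $O_q^p$ notation of Definition \ref{bigOnotation}. For higher order derivatives, the crucial observation is that the Kerr metric coefficients are smooth functions of $r_{Kerr}$ and $\th_*$ admitting asymptotic expansions in $1/r_{Kerr}$; consequently each of the operators $\pr_{\ub_{Kerr}},\pr_{u_{Kerr}},r_{Kerr}^{-1}\pr_{x_{Kerr}^A}$ gains one power of $r_{Kerr}^{-1}$ when applied to any term of the expansion, so the $\nab^{\leq l}$ bounds follow by induction from the $l=0$ case.
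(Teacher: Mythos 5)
Your overall plan is the right one: the paper itself offers no computation here and simply defers to Section 2.2.3 of \cite{Caciotta} and Appendix A.4 of \cite{Daluk}, which carry out exactly the kind of explicit verification from the double null form \eqref{metricpi} that you outline, and your identification of $\Om_{Kerr}^2=\frac{\De}{4R^2}$ and of the shift $\bu_{Kerr}^{\phi_*}=\frac{2Mar}{\Si R^2}$ is correct. However, there are two concrete gaps. The main one concerns the four gradient statements. From $\om_{Kerr}=O_2^1$ and ``each angular derivative brings an extra factor of $r^{-1}$'' you can only conclude $\nab\om_{Kerr}=O_3^1$, i.e.\ $M/r^3$, whereas the proposition asserts the strictly stronger bound $O_4^2$, i.e.\ $M^2/r^4$ (and likewise for $\nab\tr\chi$, $\nab\tr\chib$, $\nab\omb$). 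The extra gain does not come from differentiation at all: it comes from the structural fact that the leading $M/r^2$ corrections to $\tr\chi$, $\tr\chib$, $\om$, $\omb$ are spherically symmetric (they coincide with the Schwarzschild values, which depend only on the area radius), so the angle-dependent parts of these scalars are driven by the $Ma$- and $a^2$-weighted terms (e.g.\ $\frac{2Ma^2r\sin^2\th}{\Si}$ inside $R^2$, and the $\th$-dependence of $r_*,\th_*$) and are of size $M^2/r^3$; only then does one angular derivative give $M^2/r^4$. Your argument as written never isolates this angularly varying part, so it does not prove the $O_4^2$ statements.

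The second, smaller, issue is the intermediate claim $\chi_{AB}=\frac{1}{r}\ga_{AB}+O_3^2$. This is inconsistent with the conclusion you draw from it: if it were true, then $\tr\chi-\frac{2}{r}$ would itself be $O_3^2$, which is false — already in Schwarzschild one has $\tr\chi=\frac{2}{r}\sqrt{1-\frac{2M}{r}}$, so the trace correction is genuinely of size $M/r^2$. The correct decomposition is $\chi_{AB}=\frac{1}{2}\tr\chi\,\ga_{AB}+\hch_{AB}$ with $\tr\chi-\frac{2}{r}=O_2^1$ (angle-independent at leading order, as needed above) and $\hch=O_3^2$; the trace and traceless parts must be estimated separately rather than lumped into a single $O_3^2$ remainder. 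With these two points repaired (and the minor remark that for $\bu_{Kerr}$ it is the $\ga$-norm, not the coordinate component, that is $O_2^2$), your verification matches the computations in the cited references.
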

\begin{proof}
    See for example Section 2.2.3 in \cite{Caciotta} and Appendix A.4 in \cite{Daluk}.
\end{proof}
\begin{proposition}\label{decayRKerr}
The curvature components of the $(u_{Kerr},\ub_{Kerr})$ double null foliation satisfy in $\MM_{Kerr}$:
\begin{align*}
    \a_{Kerr} &=O_5^3, \qquad\qquad \,\aa_{Kerr} =O_5^3,\\
    \b_{Kerr} &=O_4^2, \qquad\qquad \,\bb_{Kerr} =O_4^2,\\
    \rho_{Kerr}&=O_3^1, \qquad\qquad\; \si_{Kerr} =O_4^2.
\end{align*}
\end{proposition}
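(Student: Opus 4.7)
The plan is to avoid computing the Riemann tensor of Kerr entry by entry and instead exploit the fact that Kerr is algebraically special of Petrov type D. In the Kinnersley principal null frame, the only non-vanishing Weyl curvature scalar is
\[
\Psi_2^{PN} = -\frac{M}{(r-ia\cos\th)^3},
\]
so that $\a^{PN} = \b^{PN} = \bb^{PN} = \aa^{PN} = 0$, while $\rho^{PN} = O_3^1$ and $\si^{PN} = O_4^2$, the latter carrying the extra $M$--weight through the factor of $a$ in the imaginary part together with $|a|\le M$. This already accounts for the announced decay of $\rho_{Kerr}$ and $\si_{Kerr}$ up to perturbative corrections.

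The next step is to determine the null frame transformation relating the Kinnersley frame to the $(u_{Kerr},\ub_{Kerr})$ double null frame $((e_3)_{Kerr}, (e_4)_{Kerr})$ of \eqref{metricpi}. Because the principal null congruence of Kerr is twisting (equivalently, not hypersurface-orthogonal), $l^{PN}$ is not tangent to the cones $u_{Kerr} = \mathrm{const}$, so the transformation is not merely a boost and an angular rotation: it also involves class I and class II parameters $f,\fb$ that are $S$--tangent of size $O_1^1$ (that is, $|f|+|\fb| \les M/r$, using $|a|\le M$), together with a boost $\la = 1+O(M/r)$. These parameters can be read off from the explicit tortoise functions $r_*(r,\th)$, $\th_*(r,\th)$ and twist $h(r_*,\th_*)$ appearing in \eqref{metricpi}; this is the only genuinely computational step in the argument.

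Applying the standard null frame transformation laws (see e.g.\ \cite{KS:Kerr1}), and using that $\a^{PN} = \b^{PN} = \bb^{PN} = \aa^{PN} = 0$, the leading-order outputs are
\[
\a \sim |f|^2\rho^{PN},\;\; \b \sim f\rho^{PN},\;\; \bb \sim \fb\rho^{PN},\;\; \aa \sim |\fb|^2\rho^{PN},\;\; \rho = \rho^{PN}+\cdots,\;\; \si = \si^{PN}+\cdots,
\]
and substituting the sizes $\rho^{PN}=O_3^1, \si^{PN}=O_4^2$ and $f,\fb=O_1^1$ delivers exactly the announced decay $\a_{Kerr},\aa_{Kerr}=O_5^3$, $\b_{Kerr},\bb_{Kerr}=O_4^2$, $\rho_{Kerr}=O_3^1$, $\si_{Kerr}=O_4^2$. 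Derivative bounds in the sense of Definition \ref{bigOnotation} follow identically, since $\Psi_2^{PN}, \la, f, \fb$ are explicit algebraic functions of $r,\th$ for which every projected derivative gains a factor $r^{-1}$. The main obstacle is thus bookkeeping in the middle step: one has to check uniformly in $\th$ (in particular across the poles, where the regular charts $(x^1,x^2)$ introduced after \eqref{Kerrmetric} should be used) that no piece of $\la, f, \fb$ grows faster than $O(M/r)$. This verification is elementary but lengthy and has been carried out in Section 2.2.3 of \cite{Caciotta} and Appendix A.4 of \cite{Daluk}, to which one can defer for the details.
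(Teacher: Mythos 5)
Your proposal is sound and reaches the stated decay rates, but it follows a genuinely different route from the paper: the paper simply invokes the direct computation of the curvature components of the $(u_{Kerr},\ub_{Kerr})$ double null frame carried out in Section 3.3.5 of \cite{Caciotta}, whereas you exploit the Petrov type D structure (only $\Psi_2=-M(r-ia\cos\th)^{-3}$ survives in the principal null frame) and then transfer to the double null frame via the transformation formulae with parameters $\la-1,f,\fb=O(M/r)$. Your approach has the merit of explaining structurally why the hierarchy $O_5^3,\,O_4^2,\,O_3^1,\,O_4^2$ appears: each curvature component is $\Psi_2\sim M/r^3$ multiplied by the appropriate power of the frame tilt, whose dominant part is the azimuthal aberration of size $\sim a\sin\th/r\les M/r$ coming from the twist $h(r_*,\th_*)$ (the $\th$--tilt from $\pr_\th r_*$ is only $O(a^2/r^2)$), and it reuses the same explicit comparison of frames that underlies Proposition \ref{decayGammaKerr}. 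Two caveats: first, the references you defer to for the quantitative frame comparison (Section 2.2.3 of \cite{Caciotta} and Appendix A.4 of \cite{Daluk}) establish the Ricci coefficient and metric decay of the double null foliation rather than literally the Kinnersley--to--double-null parameters $(\la,f,\fb)$, so a short explicit computation of those parameters from $r_*,\th_*,h$ is still owed, whereas the section the paper cites (3.3.5 of \cite{Caciotta}) states exactly the curvature decays being claimed; second, for the derivative bounds required by Definition \ref{bigOnotation} one should note that $\nab_{\pr_{u_{Kerr}}},\nab_{\pr_{\ub_{Kerr}}}$ act on stationary, axisymmetric algebraic functions of $(r,\th)$ through $\pr_{r_*}$ only, and that covariant angular derivatives bring in connection coefficients already controlled by Proposition \ref{decayGammaKerr}, so the "each derivative gains $r^{-1}$" claim is correct but deserves this one-line justification.
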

\begin{proof}
    See Section 3.3.5 of \cite{Caciotta}.
\end{proof}
\subsubsection{Initial layer region}\label{initiallayer}
Let $\Si_0$ be a spacelike hypersurface, called the \emph{initial hypersurface}. Let $K\subset\Si_0$ be a compact subset such that $\Si_0\setminus K$ is diffeomorphic to $\mathbb{R}^3 \setminus \ov{B_1}$ where $\ov{B_1}$ is the unit closed ball in $\mathbb{R}^3$. We fix a radial foliation on the initial hypersurface $\Sigma_0 \setminus K$ by the level sets of a scalar function $w$, its leaves are denoted by
\begin{equation*}
    S_{(0)}(w_1)=\{p\in\Si_0/\, w(p)=w_1\},
\end{equation*}
where $w_1\geq 0$. We assume that
\begin{equation*}
    \pr K=\{p\in\Si_0/\,w(p)=R_0\},\qquad K=\{p\in\Si_0/\, w(p)\leq R_0\},
\end{equation*}
where $R_0>0$ is the constant appearing in \eqref{R0constant}.\\ \\
We now construct an \emph{initial layer region} $\kk_{(0)}$ near $\Si_0\sm K$ endowed with a double null foliation $(u_{(0)},\ub)$ as in Section \ref{doublenullsection} satisfying
\begin{align*}
    u_{(0)}(p)=-w(p),\qquad\ub(p)=w(p),\qquad\mbox{on}\;\;\Si_0\setminus K.
\end{align*}
We then define
\begin{equation}
    \kk_{(0)}:=\big\{p\big/\, 0\leq u_{(0)}(p)+\ub(p)\leq 2\de_0\big\},
\end{equation}
where $\de_0>0$ is the small constant appearing in \eqref{de0df}. The double null foliation $(u_{(0)},\ub)$ is called the \emph{initial layer foliation}. Its leaves are denoted by
\begin{equation}
    S_{(0)}(u_{(0)},\ub):=(C_{(0)})_{u_{(0)}}\cap \Cb_\ub,
\end{equation}
where $(C_{(0)})_{u_{(0)}}$ is the outgoing null cone emanating from $S_{(0)}(u_{(0)},-u_{(0)})$ and $\Cb_\ub$ is the ingoing null cone emanating from $S_{(0)}(-\ub,\ub)$. By construction, we have
\begin{equation}
    \Si_0\sm K=\big\{p\in \kk_{(0)}/\, u_{(0)}(p)+\ub(p)=0\big\},\qquad S_{(0)}(\ub)=S_{(0)}(-\ub,\ub).
\end{equation}
We extend the domain of the function $w$ to $\KK_{(0)}$ as follows:
\begin{align*}
    w(p):=\frac{\ub(p)-u_{(0)}(p)}{2},\qquad\quad \forall p\in\KK_{(0)}.
\end{align*}
Also, as in Section \ref{doublenullsection}, $\kk_{(0)}$ is covered by coordinates systems $(u_{(0)},\ub,x^A_{(0)})$ with $L_{(0)}(x^A_{(0)})=0$ and the metric $\g$ in $\kk_{(0)}$ has the form:
\begin{equation}
    \g=-2\Om_{(0)}^2(d\ub\otimes du_{(0)}+du_{(0)}\otimes d\ub)+(\ga_{(0)})_{AB}(dx^A_{(0)}-\bu_{(0)}^A du_{(0)})\otimes(dx^B_{(0)}-\bu_{(0)}^B du_{(0)}).
\end{equation}
Let $\Phi_{(0)}$ be the map defined on $\Si_0$ such that
\begin{align*}
    \Phi_{(0)}:\Si_0&\longrightarrow\Si_{Kerr},\\
    p&\longrightarrow \Phi_{(0)}(p)\big/\, u_{Kerr}(\Phi_{(0)}(p))=u(p),\;\ub_{Kerr}(\Phi_{(0)}(p))=\ub(p),\; x^A_{Kerr}(\Phi_{(0)}(p))=x^A_{(0)}(p).
\end{align*}
The map $\Phi_{(0)}$ being defined on $\Si_0$, we now extend it to $\kk_{(0)}$. We denote $L_{(0)}$ and $L_{Kerr}$ respectively the generator of $(C_{(0)})_{u_{(0)}}$ and $C_{Kerr}(u_{Kerr})$. For any $p\in \kk_{(0)}$, we denote $\phi^{L_{(0)}}_t(p)$ the flow of $L_{(0)}$ emanating from $p$ to the past. Then, we denote
\begin{equation*}
    p_{(0)}:=\phi^{L_{(0)}}_t(p)\cap\Si_0,\qquad q_{(0)}:=\Phi_{(0)}(p_{(0)}).
\end{equation*}
Next, we denote $\phi^{L_{Kerr}}_t(q_{(0)})$ the flow of $L_{Kerr}$ emanating from $q_{(0)}$ to the future. We define
\begin{equation*}
    q:=\phi^{L_{Kerr}}_t(q_{(0)})\cap S_{Kerr}\left(u_{Kerr}=u_{(0)}(p),\ub_{Kerr}=\ub(p)\right).
\end{equation*}
Finally, we define
\begin{equation}\label{Phi0}
    \Phi_{(0)}(p)=q.
\end{equation}
By construction, we have for any $p\in\KK_{(0)}$\footnote{Note that the identity $x^A_{Kerr}(\Phi_{(0)}(p))=x^A_{(0)}(p)$ follows from the fact that it holds on $\Si_0$ and from the identities $L_{(0)}(x^A_{(0)})=0$ and $L_{Kerr}(x^A_{Kerr})=0$.}
\begin{align*}
    \Phi_{(0)}(p)\in\kk_{Kerr},\quad u_{Kerr}(\Phi_{(0)}(p))=u(p),\quad \ub_{Kerr}(\Phi_{(0)}(p))=\ub(p),\quad x^A_{Kerr}(\Phi_{(0)}(p))=x^A_{(0)}(p).
\end{align*}
which implies that the map $\Phi_{(0)}$ is equivalent to an identification of $\KK_{(0)}$ and $\KK_{Kerr}$ via the values of the coordinates functions $(u_{(0)},\ub,x^1_{(0)},x^2_{(0)})$ and $(u_{Kerr},\ub_{Kerr},x^1_{Kerr},x^2_{Kerr})$.\footnote{See Section 4.3 in \cite{Daluk} for more explanations concerning this identification.}
\subsubsection{Bootstrap region}\label{bootregion}
For every value $\ub_*>R_0$, 
$$S_*:=S_{(0)}(-\ub_*,\ub_*)$$
denotes a leaf of the $(u_{(0)},\ub)$--foliation on $\Si_0\sm K$. Emanating from this sphere $S_*$, we construct an ingoing cone $\Cb_*$ called \emph{the last slice}. For an optical function $u$ defined on $\Cb_*$ such that $u=-\ub_*$ on $S_*$, the level sets of $u$ induce a foliation on $\Cb_*$, and its leaves are denoted by $S(u,\ub_*)$. We will take a particular choice of the coordinates systems $(u,x^A)$ on $\Cb_*$ in the context of a geodesic type foliation, see Definition \ref{geodesicfoliation}, with $x^A=x^A_{(0)}$ on $S_*$. \\ \\
We define on $\Cb_*$ a map $\Phi$\footnote{Notice that $\Phi=\Phi_{(0)}$ on $S_*$ since $u=u_{(0)}$ and $x^A=x^A_{(0)}$ on $S_*$.} by
\begin{align*}
    \Phi:\Cb_*&\longrightarrow (\Cb_*)_{Kerr},\\
    p&\longrightarrow \Phi(p)\big/\, u_{Kerr}(\Phi(p))=u(p),\; \ub_{Kerr}(\Phi(p))=\ub(p),\; x^A_{Kerr}(\Phi(p))=x^A(p),
\end{align*}
where
\begin{equation*}
    (\Cb_*)_{Kerr}=\big\{p\in \MM_{Kerr}/\,\ub_{Kerr}=\ub_*\big\}.
\end{equation*}\\
For every $u\geq -\ub_*$, we construct an outgoing null cone emanating from $S(u,\ub_*)$ on $\Cb_*$ to the past, denoted $C_u$. Then, $C_u\cap\Si_0$ is a sphere on $\Si_0$, which in general does not coincide with the leaves of the $(u_{(0)},\ub)$--foliation on $\Si_{0}$. We also define $\Cb_\ub$ the ingoing null cone emanating from $S_{(0)}(-\ub,\ub)$. Hence, we have constructed a double null foliation $(u,\ub)$ in $\kk$ and its leaves are given by:
\begin{equation}
    S(u,\ub):=C_u\cap \Cb_\ub.
\end{equation}
For every $(u,\ub)$, we define\footnote{For any set $A$, $J^\pm(A)$ denotes the causal future (and resp. causal past) of $A$.}
\begin{equation}
    V(u,\ub):=J^-(S(u,\ub))\cap J^+(\Si_0).
\end{equation}
Next, we denote
\begin{equation}
    \KK:=V(u_0,\ub_*),\qquad u_0=-R_0,
\end{equation}
called the \emph{bootstrap region}, see Figure \ref{figboot}.
\begin{figure}
  \centering
  \includegraphics[width=1\textwidth]{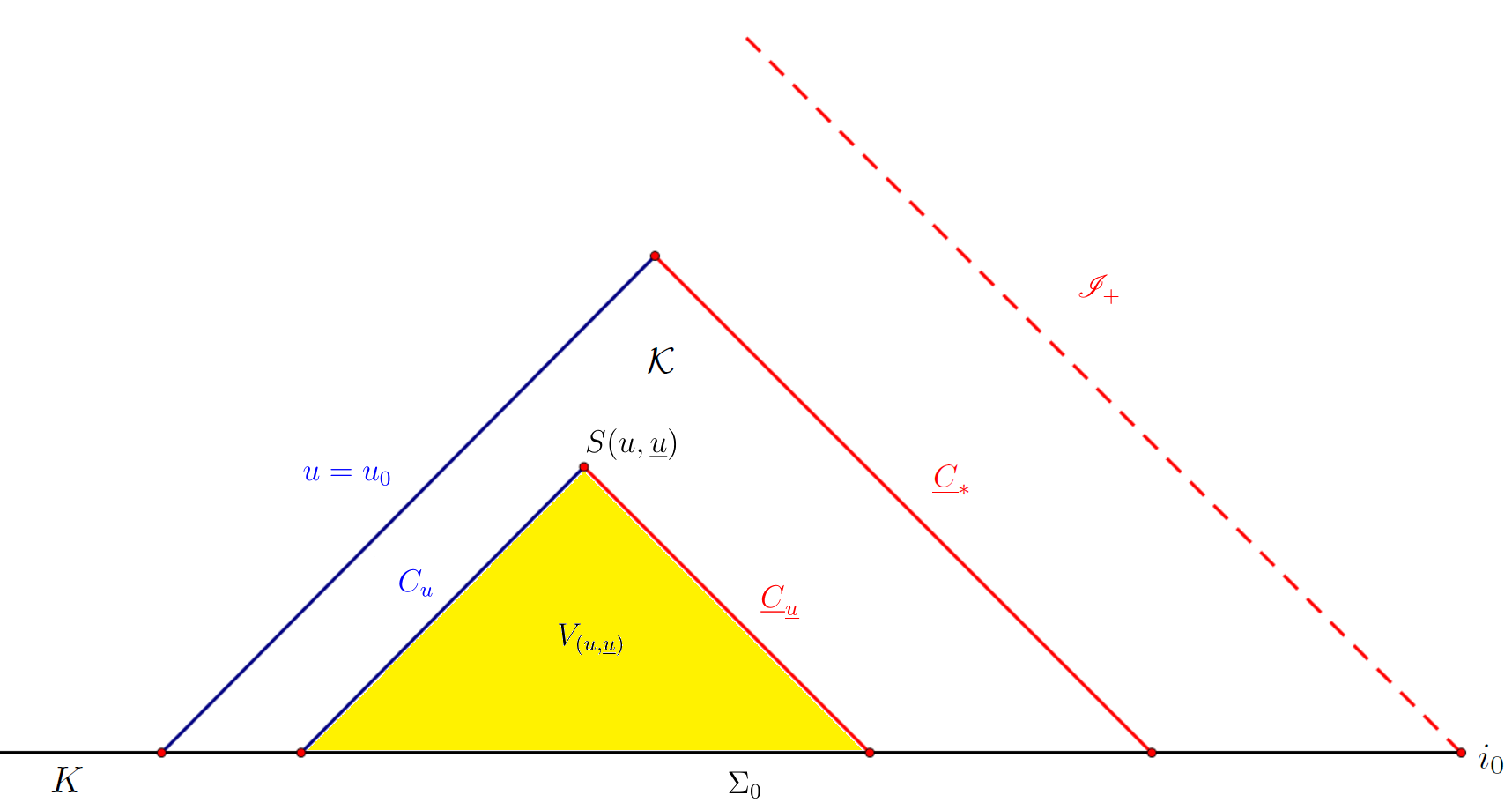}
  \caption{Bootstrap region $\KK$}\label{figboot}
\end{figure}
Remark that $\KK$ is covered by the double null foliation $(u,\ub)$, and that the boundary of $\KK$ consists of:
\begin{itemize}
\item a bounded subset of $\Si_0 \setminus K$;
\item a part of the outgoing null cone $C_{u_0}$;
\item a part of the last slice $\Cb_*$.
\end{itemize}
Recalling that the map $\Phi$ is defined on $\Cb_*$, we now extend it to $\kk$. As in Section \ref{doublenullsection}, we denote $L$ the generator of $C_u$ and extend $x^A$ to $\kk$ by
$$
L(x^A)=0,\qquad A=1,2.
$$
For any $p\in\kk$, we denote $\phi^L_t(p)$ the flow of $L$ emanating from $p$ to the future. Then, we denote
\begin{equation*}
    p_*:=\phi^L_t(p)\cap \Cb_*,\qquad q_*:=\Phi(p_*).
\end{equation*}
Next, we denote $\phi^{L_{Kerr}}_t(q_*)$ the flow of $L_{Kerr}$ emanating from $q_*$ to the past. We denote
\begin{equation*}
    q:=\phi^{L_{Kerr}}_t(q_*)\cap S_{Kerr}(u(p),\ub(p)).
\end{equation*}
Finally, we define
\begin{equation}\label{Phi}
    \Phi(p)=q.
\end{equation}
Thus, we have extended the domain of definition of $\Phi$ to $\kk$ and we have for any $p\in\kk$\footnote{Note that the identity $x^A_{Kerr}(\Phi(p))=x^A(p)$ follows from the fact that it holds on $\Cb_*$ and from the identities $L(x^A)=0$ and $L_{Kerr}(x^A_{Kerr})=0$.}
\begin{align}
\begin{split}\label{Phicondition}
\Phi(p)\in\MM_{Kerr},\quad u_{Kerr}(\Phi(p))=u(p),\quad \ub_{Kerr}(\Phi(p))=\ub(p),\quad x^A_{Kerr}(\Phi(p))=x^A(p).
\end{split}
\end{align}
In other words, the expression of the application $\Phi$ in the local coordinates charts $(u,\ub,x^1,x^2)$ and $(u_{Kerr},\ub_{Kerr},x^1_{Kerr},x^2_{Kerr})$ is the identity map, or equivalently
\begin{equation*}
    u=u_{Kerr}\circ\Phi,\qquad \ub=\ub_{Kerr}\circ\Phi,\qquad x^A=x^A_{Kerr}\circ\Phi,\qquad A=1,2.
\end{equation*}
\subsection{Integral formulae}\label{ssecave}
We define the $S$-average of scalar functions.
\begin{definition}\label{average}
Given any scalar function $f$, we denote its average by
\begin{equation*}
    \ov{f}:=\frac{1}{|S|}\int_{S}f\,d\ga.
\end{equation*}
\end{definition}
We recall the following integral formulae, which will be used repeatedly in this paper.
\begin{lemma}\label{dint}
Let $S(u,\ub)$ a leaf of a double null foliation $(u,\ub)$. For any scalar function $f$, the following equations hold:
\begin{align*}
    \Om e_4\left(\int_{S(u,\ub)} f d\ga\right) &= \int_{S(u,\ub)} \left(\Om e_4(f) + \Omega \tr \chi f \right) d\ga ,\\
    \Om e_3 \left(\int_{S(u,\ub)} f d\ga\right) &= \int_{S(u,\ub)} \left(\Om e_3(f)+ \Om\tr\chib f \right) d\ga .
\end{align*}
Taking $f=1$, we obtain
\begin{equation*}
    e_4(r)=\frac{\overline{\Omega\tr\chi}}{2\Omega}r,\qquad e_3(r)=\frac{\overline{\Omega\tr\chib}}{2\Omega}r,\label{e3e4r}
\end{equation*}
where we recall that $r$ is the \emph{area radius} defined by
\begin{equation*}
    r(u,\ub)=\sqrt{\frac{|S(u,\ub)|}{4\pi}}.
\end{equation*}
\end{lemma}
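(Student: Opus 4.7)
The plan is to work in the adapted coordinate system $(u,\ub,x^A)$ on $\MM$ introduced in Section \ref{doublenullsection}, in which the vector fields that generate the flow along the foliation take the explicit form $N=\pr_\ub$ and $\underline{N}=\pr_u+\bu^A\pr_{x^A}$, while the induced metric on $S(u,\ub)$ is $\ga_{AB}$ with area element $d\ga=\sqrt{\det\ga}\,dx^1dx^2$. Since the integral $\int_{S(u,\ub)} f\,d\ga$ is a function of $(u,\ub)$ only, applying $\Om e_4=N$ and $\Om e_3=\underline N$ to it reduces to applying $\pr_\ub$ and $\pr_u+\bu^A\pr_A$ respectively, and the proof will consist in differentiating under the integral sign.

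The geometric input I need is the Lie derivative of $\ga$ along the two generators. Using $(\Lie_X \g)(Y,Z)=\g(\D_Y X,Z)+\g(\D_Z X,Y)$ with $Y,Z$ tangent to $S$, the Ricci formulae in \eqref{ricciformulas} give $(\Lie_N\g)(\pr_A,\pr_B)=2\Om\chi_{AB}$ and $(\Lie_{\underline N}\g)(\pr_A,\pr_B)=2\Om\chib_{AB}$ (the extra terms involving $X(\Om)$ die because $e_4,e_3$ are $\g$-orthogonal to $\pr_A$). For the outgoing case, $[\pr_\ub,\pr_A]=0$ so $\pr_\ub\ga_{AB}=2\Om\chi_{AB}$, hence $\pr_\ub\sqrt{\det\ga}=\tfrac12\ga^{AB}\pr_\ub\ga_{AB}\sqrt{\det\ga}=\Om\tr\chi\sqrt{\det\ga}$, and differentiation under the integral yields the first formula directly.

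The ingoing case is the main obstacle because $\underline{N}$ has a nontrivial tangential component $\bu$: expanding $(\Lie_{\underline N}\g)(\pr_A,\pr_B)$ using $[\underline N,\pr_A]=-(\pr_A\bu^C)\pr_C$ produces
\[
\pr_u\ga_{AB}+\bu^C\pr_C\ga_{AB}+(\pr_A\bu^C)\ga_{CB}+(\pr_B\bu^C)\ga_{AC}=2\Om\chib_{AB},
\]
from which I would contract with $\ga^{AB}$ and use $\ga^{AB}\pr_C\ga_{AB}=2\,\pr_C\!\log\sqrt{\det\ga}$ together with the identity $\sdiv\bu=\tfrac{1}{\sqrt{\det\ga}}\pr_C(\sqrt{\det\ga}\,\bu^C)$ to obtain $\pr_u\sqrt{\det\ga}=(\Om\tr\chib-\sdiv\bu)\sqrt{\det\ga}$. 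Plugging this into $\pr_u\int f\,d\ga$ gives three terms; integrating $-\int f\sdiv\bu\,d\ga$ by parts on the closed surface $S$ produces $+\int\bu^A\pr_Af\,d\ga$, which combines with $\int\pr_u f\,d\ga$ into $\int\underline N(f)\,d\ga=\int\Om e_3(f)\,d\ga$, yielding the second formula.

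Finally, for the consequences about $e_3(r)$ and $e_4(r)$, I would simply apply the two identities with $f\equiv 1$. Since $|S(u,\ub)|=4\pi r^2$, the left-hand sides become $8\pi r\,\Om e_4(r)$ and $8\pi r\,\Om e_3(r)$, while the right-hand sides are $\overline{\Om\tr\chi}\cdot 4\pi r^2$ and $\overline{\Om\tr\chib}\cdot 4\pi r^2$ by Definition \ref{average}; dividing by $8\pi r\Om$ gives the stated expressions. No delicate estimate is involved beyond the integration-by-parts step above, which is the only nontrivial point of the whole argument.
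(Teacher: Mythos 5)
Your argument is correct: the first-variation computation $\pr_{\ub}\sqrt{\det\ga}=\Om\trch\sqrt{\det\ga}$ and $\pr_u\sqrt{\det\ga}=(\Om\trchb-\sdiv\bu)\sqrt{\det\ga}$, followed by the integration by parts of the $\sdiv\bu$ term to reassemble $\Om e_3(f)=\pr_u f+\bu^A\pr_A f$, is exactly the standard proof, and the $f=1$ specialization is handled correctly. The paper itself gives no proof but cites Lemma 3.1.3 of \cite{Kl-Ni}, which proceeds by the same first-variation-of-area argument, so your proposal matches the intended route.
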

\begin{proof}
See Lemma 3.1.3 in \cite{Kl-Ni}.
\end{proof}
\subsection{Hodge systems}\label{ssec7.2}
\begin{definition}\label{tensorfields}
For tensor fields defined on a $2$--sphere $S$, we denote by $\sfr_0:=\sfr_0(S)$ the set of pairs of scalar functions, $\sfr_1:=\sfr_1(S)$ the set of $1$--forms and $\sfr_2:=\sfr_2(S)$ the set of symmetric traceless $2$--tensors.
\end{definition}
\begin{definition}\label{def7.2}
Given $\xi\in\sfr_1$, we define its Hodge dual
\begin{equation*}
    {^*\xi}_A := \in_{AB}\xi^B.
\end{equation*}
Clearly $^*\xi \in \sfr_1$ and
\begin{equation*}
    ^*(^*\xi)=-\xi.
\end{equation*}
Given $U \in \sfr_2$, we define its Hodge dual
\begin{equation*}
{^*U}_{AB}:= \in_{AC} {U^C}_B.
\end{equation*}
Observe that $^*U\in\sfr_2$ and
\begin{equation*}
     ^*(^*U)=-U.
\end{equation*}
\end{definition}
\begin{definition}
    Given $\xi,\eta\in\sfr_1$, we denote
\begin{align*}
    \xi\cdot \eta&:= \de^{AB}\xi_A \eta_B,\\
    \xi\wedge \eta&:= \in^{AB} \xi_A \eta_B,\\
    (\xi\hot \eta)_{AB}&:=\xi_A \eta_B +\xi_B \eta_A -\de_{AB}\xi\cdot \eta,
\end{align*}
where $\xi\hot\eta\in\sk_{2}$. Given $\xi\in \sfr_1$, $U\in \sfr_2$, we denote
\begin{align*}
    (\xi \cdot U)_A:= \de^{BC} \xi_B U_{AC}.
\end{align*}
Given $U,V\in \sfr_2$, we denote
\begin{align*}
    U\wedge V:=\in^{AB}U_{AC}V_{CB}.
\end{align*}
\end{definition}
\begin{definition}
    For a given $\xi\in\sfr_1$, we define the following differential operators:
    \begin{align*}
        \sdiv \xi&:= \de^{AB} \nab_A\xi_B,\\
        \curl \xi&:= \in^{AB} \nab_A \xi_B,\\
        (\nab\hot\xi)_{AB}&:=\nab_A \xi_B+\nab_B \xi_A-\de_{AB}(\sdiv\xi).
    \end{align*}
\end{definition}
\begin{definition}
    We define the Hodge type operators, as introduced in Section 2.2 in \cite{ch-kl}:
    \begin{itemize}
        \item $d_1$ takes $\sfr_1$ into $\sfr_0$ and is given by:
        \begin{equation*}
            d_1 \xi :=(\sdiv\xi,\curl \xi),
        \end{equation*}
        \item $d_2$ takes $\sfr_2$ into $\sfr_1$ and is given by:
        \begin{equation*}
            (d_2 U)_A := \nab^A U_{AB}, 
        \end{equation*}
        \item $d_1^*$ takes $\sfr_0$ into $\sfr_1$ and is given by:
        \begin{align*}
            d_1^*(f,f_*):=-\nab_A f +\in_{AB} \nab_B f_*,
        \end{align*}
        \item $d_2^*$ takes $\sfr_1$ into $\sfr_2$ and is given by:
        \begin{align*}
            d_2^* \xi := -\frac{1}{2} \nab \hot \xi.
        \end{align*}
    \end{itemize}
\end{definition}
Then, we have the following identities:
\begin{align}
    \begin{split}\label{dddd}
        d_1^*d_1&=-\De_1+\mathbf{K},\qquad \qquad d_1d_1^*=-\De_0,\\
        d_2^*d_2&=-\f12 \De_2+\mathbf{K},\qquad\quad\; d_2d_2^*=-\f12(\De_1+\mathbf{K}),
    \end{split}
\end{align}
where $\mathbf{K}$ denotes the Gauss curvature of $S$. See for example (2.2.2) in \cite{ch-kl} for a proof of \eqref{dddd}. 
\begin{definition}\label{dfdkb}
We define weighted angular derivatives $\dkb$ as follows:
\begin{align*}
    \dkb U &:= rd_2 U,\qquad \forall U\in \sk_2,\\
    \dkb \xi&:= rd_1 \xi,\qquad\,\,\, \forall \xi\in \sk_1,\\
    \dkb f&:= rd_1^* f,\qquad \,\,\forall f\in \sk_0.
\end{align*}
Also, we denote for any tensor $h\in\sk_k$, $k=0,1,2$,
\begin{equation*}
    h^{(0)}=h,\qquad h^{(1)}=(h,r\nab h).
\end{equation*}
\end{definition}
\subsection{Elliptic estimates for Hodge systems}
\begin{definition}\label{Lpnorms}
For a tensor field $f$ on a $2$--sphere $S$, we denote its $L^p$--norm by
\begin{equation}
    |f|_{L^p}:=\left(\int_S |f|^p \right)^\frac{1}{p},
\end{equation}
where $|f|^2:=f^{A_1...A_k}f_{A_1...A_k}$ for a $k$--tensor $f$ on $S$.
\end{definition}
\begin{proposition}[$L^p$ estimates for Hodge systems]\label{ellipticest}
Assume that $S$ is an arbitrary compact 2-surface satisfying $c_1\leq r^2K\leq c_2$ where $K$ denotes its Gauss curvature and $0<c_1\leq c_2<+\infty$ are two positive constants. Then the following statements hold for all\footnote{In practice, we only use Proposition \ref{ellipticest} with $p=2$ and $p=4$.} $p\in(1,+\infty)$:
\begin{enumerate}
\item Let $\phi\in\sfr_0$ be a solution of $\De\phi=f$. Then we have
\begin{align*}
    |\nabla^2 \phi|_{L^p}+r^{-1}|\nabla \phi|_{L^p}+r^{-2} |\phi-\overline{\phi}|_{L^p}\lesssim |f|_{L^p}.
\end{align*}
\item Let $\xi\in\sfr_1$ be a solution of $d_1\xi=(f,f_*)$. Then we have
\begin{align*}
    \int_S |\nab\xi|^p+r^{-p}|\xi|^p\lesssim \int_S |f|^p +|f_*|^p.
\end{align*}
\item Let $U\in\sfr_2$ be a solution of $d_2 U=f$. Then we have
\begin{align*}
    \int_S |\nab U|^p+r^{-p}|U|^p\lesssim \int_S|f|^p.
\end{align*}
\end{enumerate}
\end{proposition}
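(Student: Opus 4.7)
The plan is to reduce all three statements to $L^p$ elliptic regularity for the scalar Laplacian on $S$, exploiting the bounded geometry hypothesis $c_1\le r^2K\le c_2$ together with the Hodge-type identities \eqref{dddd}.

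First I would handle the $L^2$ case, which is immediate from integration by parts. For part (2), pair $d_1\xi=(f,f_*)$ with $\xi$ and use $d_1^*d_1=-\Delta_1+\mathbf{K}$ to get
\begin{equation*}
\int_S |f|^2+|f_*|^2\,d\gamma = \int_S \xi\cdot d_1^*d_1\xi\,d\gamma = \int_S \bigl(|\nabla\xi|^2+\mathbf{K}|\xi|^2\bigr)\,d\gamma.
\end{equation*}
The pinch $\mathbf{K}\ge c_1/r^2$ then yields $\int_S|\nabla\xi|^2+r^{-2}|\xi|^2\lesssim\int_S|f|^2+|f_*|^2$. The $L^2$ version of part (3) is identical, using $d_2^*d_2=-\tfrac12\Delta_2+\mathbf{K}$. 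For part (1) at $p=2$, one multiplies $\Delta\phi=f$ by $\phi-\overline\phi$, integrates by parts, and combines with the Poincaré inequality $\int_S|\phi-\overline\phi|^2\lesssim r^2\int_S|\nabla\phi|^2$, which follows from the spectral gap controlled by the lower bound on $\mathbf{K}$.

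For general $p\in(1,\infty)$, the strategy is to scale out the factor of $r$ and invoke standard Calderón-Zygmund theory. Rescaling the metric $\widetilde\gamma:=r^{-2}\gamma$ produces a compact surface of unit area radius whose Gauss curvature is pinched between $c_1$ and $c_2$; hence the rescaled surface admits a finite cover by coordinate charts in which $\widetilde\gamma$ is uniformly comparable to the Euclidean metric with uniform $C^1$ bounds. On each chart, standard $L^p$ Calderón-Zygmund regularity for the scalar Laplacian gives
\begin{equation*}
|\nabla^2\widetilde\phi|_{L^p}+|\nabla\widetilde\phi|_{L^p}+|\widetilde\phi-\overline{\widetilde\phi}|_{L^p}\;\lesssim\;|\widetilde f|_{L^p},
\end{equation*}
and patching via a partition of unity yields part (1) after rescaling back and inserting the powers of $r$.

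Parts (2) and (3) for general $p$ then follow by reduction to part (1) via Hodge decomposition on $S$. Write $\xi=d_1^*(\phi,\phi_*)$; then $d_1\xi=-d_1 d_1^*(\phi,\phi_*)=\Delta(\phi,\phi_*)=(f,f_*)$ by \eqref{dddd}, so applying part (1) to $(\phi,\phi_*)$ controls $|\nabla^2(\phi,\phi_*)|_{L^p}$, $r^{-1}|\nabla(\phi,\phi_*)|_{L^p}$ and $r^{-2}|(\phi,\phi_*)-\overline{(\phi,\phi_*)}|_{L^p}$, which in turn bounds $|\nabla\xi|_{L^p}$ and $r^{-1}|\xi|_{L^p}$ as required. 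Part (3) is analogous, writing $U=d_2^*\xi$ and using $d_2 d_2^*=-\tfrac12(\Delta_1+\mathbf{K})$ together with the already established part (2) to solve for $\xi$, then applying the scalar estimate once more.

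The main obstacle is the transfer from the universal Euclidean Calderón-Zygmund inequality to the surface $S$, which requires the bounded geometry hypothesis to produce uniform coordinate charts; this is where the two-sided curvature pinch $c_1\le r^2\mathbf{K}\le c_2$ is essential, since it simultaneously bounds the injectivity radius from below, provides the Poincaré constant, and ensures that the Hodge decomposition and $L^p$-boundedness of the associated Riesz transforms hold with constants independent of the specific sphere $S$ considered.
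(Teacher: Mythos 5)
Your overall strategy (reduce everything to $L^p$ Calder\'on--Zygmund theory for the scalar Laplacian in uniform charts, with the $L^2$ identities coming from \eqref{dddd}) is essentially the standard route; the paper itself offers no proof, simply citing Corollary 2.3.1.1 of \cite{ch-kl}, which is proved in that spirit. Your $L^2$ step, the uniform-chart argument (curvature pinching plus Klingenberg gives an injectivity radius bound, hence harmonic charts with uniform $C^{1,\alpha}$ control), and the reduction of part (2) via $\xi=d_1^*(\phi,\phi_*)$ are all sound: since $\mathbf{K}>0$ forces $S\cong\mathbb{S}^2$, there are no harmonic $1$--forms, $\sdiv\xi$ and $\curl\xi$ have zero mean, so the potentials exist and $d_1d_1^*=-\De$ turns part (2) into two scalar Poisson equations. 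One point you gloss over in part (1): patching the interior chart estimates produces lower-order terms, so what you actually get is $|\nabla^2\phi|_{L^p}\lesssim |f|_{L^p}+r^{-1}|\nabla\phi|_{L^p}+r^{-2}|\phi-\overline{\phi}|_{L^p}$, and these must then be bounded by $|f|_{L^p}$ separately (for $p\geq 2$ this follows from the $L^2$ bounds plus two-dimensional Sobolev embedding with uniform constant, for $1<p<2$ by duality); this should be said, but it is a routine repair.

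The genuine gap is part (3). You write $U=d_2^*\xi$, so that $d_2d_2^*\xi=-\frac12(\De_1+\mathbf{K})\xi=f$, and then claim to "use the already established part (2) to solve for $\xi$". Part (2) estimates solutions of the \emph{first-order} system $d_1\xi=(f,f_*)$; it says nothing about a $\xi$ satisfying the \emph{second-order} equation $(\De_1+\mathbf{K})\xi=-2f$, and in particular cannot deliver the $L^p$ bound on $\nabla^2\xi$ that you need in order to control $\nabla U=\nabla d_2^*\xi$. Nor can you feed this equation back into the scalar estimate directly: writing $\xi=d_1^*(\phi,\phi_*)$ and commuting gives $\De_1 d_1^*(\phi,\phi_*)=d_1^*(\De\phi,\De\phi_*)+\mathbf{K}\,d_1^*(\phi,\phi_*)$, so because $\mathbf{K}$ is not constant the system does not decouple into clean Poisson equations; the $\mathbf{K}\xi$ terms must be carried along and absorbed as lower order using the $L^2$ identity $\int_S|\nabla U|^2+2\mathbf{K}|U|^2=2\int_S|f|^2$ and Sobolev, none of which appears in your sketch. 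The cleaner fix is to bypass potentials for part (3) altogether: $d_2$ is itself a first-order elliptic operator with trivial kernel on the sphere, so the same uniform-chart Calder\'on--Zygmund argument you invoke for the scalar Laplacian applies directly to the system $d_2U=f$, giving $|\nabla U|_{L^p}\lesssim |f|_{L^p}+r^{-1}|U|_{L^p}$ locally, after which $r^{-1}|U|_{L^p}$ is controlled by the $L^2$ identity and Sobolev embedding exactly as above (and the same remark streamlines part (2)). As written, however, the step "part (2) solves for $\xi$" is not a valid inference.
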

\begin{proof}
See Corollary 2.3.1.1 of \cite{ch-kl}.
\end{proof}
\subsection{Null structure equations}
We recall the standard null structure equations in a double null foliation.
\begin{proposition}\label{standardnull}
We have the following null structure equations in a double null foliation:
\begin{align*}
\nab_4 \eta =& -\chi\cdot(\eta-\etab)-\beta, \\
\nab_3\etab=& -\chib\cdot(\etab-\eta)+\bb, \\
\nab_4\hch +(\tr\chi)\hch =& -2\omega \hch -\alpha,\\
\nab_4\tr\chi +\frac{1}{2}(\tr\chi)^2 =&-|\hch|^2-2\omega \tr\chi,\\
\nab_3\hchb +(\tr\chib)\hchb=&-2\omb \hchb -\aa,\\
\nab_3\tr\chib +\frac{1}{2}(\tr\chib)^2=&-|\hchb|^2-2\omb\tr\chib,\\
\nab_4\hchb+\frac{1}{2}(\tr\chi)\hchb=&\nab\hot\etab+2\omega\hchb-\frac{1}{2}(\tr\chib)\hch+\etab\hot\etab,\\
\nab_3\hch+\frac{1}{2}(\tr\chib)\hch=&\nab\hot\eta+2\omb\hchb-\frac{1}{2}(\tr\chi)\hchb+\eta\hot\eta,\\
\nab_4\tr\chib+\frac{1}{2}(\tr\chi)\tr\chib=& 2\omega\tr\chib+2\rho-\hch\cdot\hchb+2\sdiv\etab+2|\etab|^2,\\
\nab_3\tr\chi+\frac{1}{2}(\tr\chib)\tr\chi=& 2\omb\tr\chi+2\rho-\hch\cdot\hchb+2\sdiv\eta+2|\eta|^2.
\end{align*}
We also have the Codazzi equations
\begin{align*}
\sdiv\hch=&\frac{1}{2}\nab\tr\chi-\ze\cdot \left(\hch-\f12 \tr\chi\right)-\b,\\
\sdiv\hchb=&\f12\nab\tr\chib+\ze\cdot\left(\hchb-\f12 \tr\chib\right)+\bb,
\end{align*}
the torsion equation
\begin{align*}
\curl \eta=& -\curl \etab=\sigma+\f12 \hchb\wedge \hch,
\end{align*}
and the Gauss equation
\begin{align*}
\K=-\frac{1}{4}\tr\chi\tr\chib+\frac{1}{2}\hchb\cdot\hch-\rho.
\end{align*}
Moreover, we have
\begin{align*}
\nab_4 \omb=& 2\omega\omb+\frac{3}{4}|\eta-\etab|^2-\frac{1}{4}(\eta-\etab)\cdot(\eta+\etab)-\frac{1}{8}|\eta+\etab|^2+\frac{1}{2}\rho,\\
\nab_3 \omega=& 2\omega\omb+\frac{3}{4}|\eta-\etab|^2+\frac{1}{4}(\eta-\etab)\cdot(\eta+\etab)-\frac{1}{8}|\eta+\etab|^2+\frac{1}{2}\rho.
\end{align*}
\end{proposition}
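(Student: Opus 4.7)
The plan is to derive all of the stated equations in a uniform way from the Ricci formulas \eqref{ricciformulas} and the defining relations \eqref{defga}--\eqref{defr} for the null Ricci coefficients and curvature components, together with the general curvature identity
\[
\R(X,Y)Z = \D_X\D_Y Z - \D_Y\D_X Z - \D_{[X,Y]}Z,
\]
and the first Bianchi identity. Each equation corresponds to reading off a particular null frame component of $\R$ after careful bookkeeping.

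First I would treat the transport equations for $\chi$, $\chib$, $\eta$, $\etab$. The idea is to take $\D_4$ (resp.\ $\D_3$) of the relation $\chi_{AB}=\g(\D_A e_4,e_B)$ (resp.\ $\chib_{AB}=\g(\D_A e_3,e_B)$) and use \eqref{ricciformulas} to commute derivatives and express the result in terms of Ricci coefficients and the components $\R(e_A,e_4,e_B,e_4)=\a_{AB}$ (resp.\ $\aa_{AB}$). The trace and traceless parts then give the four equations for $\nab_4\tr\chi$, $\nab_4\hch$, $\nab_3\tr\chib$, $\nab_3\hchb$. An analogous computation with mixed indices, using $\R(e_A,e_3,e_B,e_4)$ which decomposes into $\rho$ and $\si$, yields the cross equations for $\nab_4\hchb$, $\nab_3\hch$, $\nab_4\tr\chib$, $\nab_3\tr\chi$. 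The equations for $\nab_4\eta$ and $\nab_3\etab$ come from taking $\D_4$ of $\eta_A=\f12\g(\D_3 e_4,e_A)$ and using the component $\R(e_A,e_4,e_3,e_4)=2\b_A$.

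Next, the Codazzi equations follow from the first Bianchi identity for $\R(e_A,e_B,e_C,e_4)$: since this equals $\in_{AB}\dual\b_C$-type contractions, tracing in $(B,C)$ produces $\sdiv\hch$ on one side and $\nab\tr\chi$ plus $\b$ on the other, with $\ze$ appearing via $\D_A e_4$. The torsion equation is obtained from $\R(e_A,e_B,e_3,e_4)$, which by the defining relation for $\si$ yields $\curl\eta$ once one expands $\nab_A\eta_B-\nab_B\eta_A$ using \eqref{ricciformulas}. The Gauss equation is the statement that the intrinsic Gauss curvature of $S(u,\ub)$, computed via $R^S_{ABAB}$ using the induced connection, differs from $\g(\R(e_A,e_B)e_A,e_B)$ by the product of second fundamental forms, which contributes the $-\frac14 \tr\chi\tr\chib +\frac12 \hch\c\hchb$ term, while the ambient contribution is $-\rho$.

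Finally, the equations for $\nab_4\omb$ and $\nab_3\om$ are obtained from the identities $\om=-\f12\D_4\log\Om$, $\omb=-\f12\D_3\log\Om$ in \eqref{6.6}: applying $\D_3$ or $\D_4$ and using the propagation identities for $\log\Om$ (via $\nab\log\Om=\f12(\eta+\etab)$) together with commutator formulas coming from \eqref{ricciformulas} produces the quadratic expressions in $\eta\pm\etab$ and the $\f12\rho$ contribution. The hard part in the whole proof is purely organizational: keeping track of signs and of the torsion-like lower order terms involving $\ze$, $\eta$, $\etab$ that are produced when commuting $\nab$ with $\nab_3$ or $\nab_4$, and consistently projecting back onto $S(u,\ub)$. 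Once this bookkeeping is fixed, each equation is a direct identification of a curvature component. Because the proposition is classical, I would also cite Chapter 3 of \cite{Kl-Ni} or \cite{ch-kl} for the detailed computation rather than reproduce every line.
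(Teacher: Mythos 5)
The paper does not actually rederive these identities: its proof is the single line ``See (3.1)--(3.5) in \cite{kr}''. Your plan --- a sketch of the classical derivation together with a citation to Chapter~3 of \cite{Kl-Ni} or to \cite{ch-kl} --- is therefore essentially the same move, and your outline for the transport equations (curvature identity applied to $\chi_{AB}=\g(\D_Ae_4,e_B)$, $\chib_{AB}=\g(\D_Ae_3,e_B)$, $\eta_A=\f12\g(\D_3e_4,e_A)$, with $\a$, $\aa$, $\rho$, $\si$, $\b$, $\bb$ read off from the corresponding frame components), for the Codazzi and torsion equations (components $\R(e_A,e_B,e_C,e_4)$ and $\R(e_A,e_B,e_3,e_4)$), and for the Gauss equation is the standard one and is fine.

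One step of your sketch, as literally written, would not work: for $\nab_4\omb$ and $\nab_3\om$ you propose to differentiate $\om=-\f12\D_4(\log\Om)$, $\omb=-\f12\D_3(\log\Om)$ and use commutator formulas coming from \eqref{ricciformulas}. But the commutator acting on the scalar $\log\Om$ contains no curvature: $[\nab_4,\nab_3]\log\Om=2\om\nab_3\log\Om-2\omb\nab_4\log\Om-4\ze\cdot\nab\log\Om$ produces only terms quadratic in $\eta\pm\etab$ (and $\om\omb$), so this route determines only the difference $\nab_4\omb-\nab_3\om$ and can never generate the $\f12\rho$ appearing on the right-hand side of each equation separately. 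To obtain the two equations individually you must return to the frame definitions $\om=\f14\g(\D_4e_4,e_3)$, $\omb=\f14\g(\D_3e_3,e_4)$ and apply the curvature identity to the frame vectors themselves, so that $\rho=\f14\R(e_3,e_4,e_3,e_4)$ enters; this is how it is done in the references you (and the paper) cite. With that correction --- or by simply resting on the citation, as the paper does --- the rest of your argument is sound.
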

\begin{proof}
See (3.1)--(3.5) in \cite{kr}.
\end{proof}
\begin{definition}\label{defauxi}
We define the mass aspect functions as follows:
\begin{align}\label{defmassaspect}
    \mu:=-\sdiv\eta+\frac{1}{2}\hch\c\hchb-\rho,\qquad \mub:=-\sdiv\etab+\f12\hch\c\hchb-\rho.
\end{align}
We also define the following renormalized quantities which will be used in Sections \ref{Ricciestimates} and \ref{initiallast}:
\begin{align}
    \begin{split}\label{auxillaryquantities}
            \mum&:=\mu+\frac{1}{4}\trch\,\trchb,\qquad\qquad\mubm:=\mub+\frac{1}{4}\trch\,\trchb.
    \end{split}
\end{align}
\end{definition}
\begin{proposition}\label{cornull}
We have the following null structure equations:
\begin{align}
\begin{split}\label{cornulleq}
    \Om \nab_4(\Om \trch)+\f12(\Om\trch)^2&=-4\Om\trch (\Om\om)-\Om^2|\hch|^2,\\
    \Om \nab_3(\Om \trchb)+\f12(\Om\trchb)^2&=-4\Om\trchb (\Om\om)-\Om^2|\hchb|^2,\\
    \Om\nab_4(\Om\trchb)&=-2\Om^2[\mub]+2\Om^2|\etab|^2,\\
    \Om\nab_3(\Om\trch)&=-2\Om^2[\mu]+2\Om^2|\eta|^2.
\end{split}
\end{align}
We have the following equations for the renormalized mass aspect functions:
\begin{align}
\begin{split}\label{eqauxillary}
    \Om\nab_4\mum+\Om\trch\mum&=\Om [F]+\Om(\trch\,\rho-2\eta\c\b),\\
    [F]&:=\hch\c(\nab\hot\eta)+(\eta-\etab)\c(\nab\trch+\trch\,\ze)+\f12\trch(|\eta|^2-|\etab|^2)\\
    &-\f12\left(\trchb|\hch|^2+\trch(\hch\c\hchb)\right)+2\eta\c\hch\c\etab,\\ \\
    \Om\nab_3\mubm+\Om\trchb\mubm&=\Om [\Fb]+\Om(\trchb\,\rho+2\etab\c\bb),\\
    [\Fb]&:=\hchb\c(\nab\hot\etab)+(\etab-\eta)\c(\nab\trchb-\trchb\,\ze)+\f12\trchb(|\etab|^2-|\eta|^2)\\
    &-\f12\left(\trchb|\hchb|^2+\trchb(\hch\c\hchb)\right)+2\etab\c\hchb\c\eta.
\end{split}
\end{align}
\end{proposition}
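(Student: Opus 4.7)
The plan is to derive each equation in Proposition \ref{cornull} by combining the standard null structure equations of Proposition \ref{standardnull} with the identities $e_4(\Om)=-2\om\,\Om$ and $e_3(\Om)=-2\omb\,\Om$, which follow directly from the formulas $\om=-\f12\D_4(\log\Om)$ and $\omb=-\f12\D_3(\log\Om)$ in \eqref{6.6}.

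For the four transport equations in \eqref{cornulleq}, the approach is a direct computation. By the Leibniz rule,
\begin{equation*}
    \Om\nab_4(\Om\trch)=-2(\Om\om)(\Om\trch)+\Om^2\nab_4\trch,
\end{equation*}
and substituting $\nab_4\trch+\f12(\trch)^2=-|\hch|^2-2\om\trch$ from Proposition \ref{standardnull} yields the first identity. The second equation follows identically with the pair $(e_3,\omb)$ replacing $(e_4,\om)$. For the third, the same Leibniz expansion together with the standard equation for $\nab_4\trchb$ produces
\begin{equation*}
    -\tfrac{1}{2}\Om^2\trch\,\trchb+2\Om^2\rho-\Om^2\hch\cdot\hchb+2\Om^2\sdiv\etab+2\Om^2|\etab|^2;
\end{equation*}
recognizing the combination $-\sdiv\etab+\f12\hch\cdot\hchb-\rho-\tfrac{1}{4}\trch\,\trchb=-[\mub]$ collapses this to the stated right-hand side. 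The fourth equation is its mirror image with $e_3\leftrightarrow e_4$.

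For the two mass aspect evolution equations in \eqref{eqauxillary}, I would differentiate the definition $\mum=-\sdiv\eta+\f12\hch\cdot\hchb-\rho+\tfrac{1}{4}\trch\,\trchb$ along $\Om e_4$. Four ingredients enter: (i) the transport equation for $\eta$ from Proposition \ref{standardnull}, combined with the commutator $[\nab_4,\sdiv]$ acting on $1$--forms on $S(u,\ub)$, which produces terms in $\nab\trch$, in $\chi\cdot\nab\eta$, and cubic Ricci products; (ii) the transport equations for $\hch$ and $\hchb$; (iii) the first two identities of \eqref{cornulleq} already obtained for $\Om\trch$ and $\Om\trchb$; (iv) the null Bianchi identity for $\rho$, namely
\begin{equation*}
    \nab_4\rho+\tfrac{3}{2}\trch\,\rho=\sdiv\b+\ze\cdot\b+2\eta\cdot\b-\tfrac{1}{2}\hchb\cdot\a.
\end{equation*}
The divergence $\sdiv\b$ produced by this Bianchi identity cancels against a $\sdiv\b$ arising from $-\nab_4\sdiv\eta$ after invoking the Codazzi equation $\sdiv\hch=\f12\nab\trch-\ze\cdot(\hch-\f12\trch)-\b$ from Proposition \ref{standardnull}. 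The renormalization $\tfrac{1}{4}\trch\,\trchb$ is precisely tuned so that the bare coefficient $\tfrac{3}{2}\trch$ in front of $\rho$ becomes the transport coefficient $\trch$ on the left, producing $\Om\nab_4\mum+\Om\trch\,\mum$. What remains on the right is exactly $\Om[F]+\Om(\trch\,\rho-2\eta\cdot\b)$. The equation for $\mubm$ along $e_3$ is obtained by the parallel computation with barred and unbarred quantities exchanged.

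The main obstacle is algebraic bookkeeping: one must carefully control the cubic Ricci terms generated by $[\nab_4,\sdiv]\eta$, by $\nab_4(\hch\cdot\hchb)$, and by $\tfrac{1}{4}\nab_4(\trch\,\trchb)$, and check that they assemble into the precise expression $[F]$ rather than into a slightly different cubic polynomial. The cleanest organization is to keep all scalar quantities grouped with their $\Om$--factors throughout the calculation, and to invoke the identity $\eta-\etab=2\ze$ from \eqref{6.6} only at the final simplification step, so that the left-hand side emerges naturally in the form $\Om\nab_4\mum+\Om\trch\,\mum$ with no stray $\Om$--derivatives to reabsorb.
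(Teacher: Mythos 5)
Your proposal is essentially the paper's own route: the four equations \eqref{cornulleq} are exactly the Leibniz computation from Proposition \ref{standardnull} together with $e_4(\Om)=-2\om\Om$, $e_3(\Om)=-2\omb\Om$ from \eqref{6.6}, and for \eqref{eqauxillary} the paper simply cites Lemma 4.3.2 of \cite{Kl-Ni}, whose proof is precisely the direct differentiation of $\mum$ and $\mubm$ that you outline (transport equation for $\eta$, the commutator $[\nab_4,\sdiv]$, the equations for $\hch,\hchb,\trch,\trchb$, the Bianchi equation for $\rho$, and Codazzi). Two small slips are worth fixing. First, your recognition identity for the third equation has wrong signs: since $[\mub]=-\sdiv\etab+\f12\hch\cdot\hchb-\rho+\tfrac{1}{4}\trch\,\trchb$, the right-hand side you produce collapses because $2\sdiv\etab-\hch\cdot\hchb+2\rho-\f12\trch\,\trchb=-2[\mub]$, not via the combination with $-\tfrac{1}{4}\trch\,\trchb$ that you wrote. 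Second, the $\sdiv\b$ cancellation in the $\mum$ equation is direct rather than via Codazzi: the $-\b$ in $\nab_4\eta=-\chi\cdot(\eta-\etab)-\b$ yields $+\sdiv\b$ inside $-\nab_4\sdiv\eta$, which cancels the $-\sdiv\b$ coming from $-\nab_4\rho$; Codazzi enters afterwards, when expanding $\sdiv\bigl(\chi\cdot(\eta-\etab)\bigr)$, to trade $\sdiv\hch$ for $\f12\nab\trch-\ze\cdot\bigl(\hch-\f12\trch\bigr)-\b$, which is what generates the $(\eta-\etab)\cdot(\nab\trch+\trch\,\ze)$ and $\eta\cdot\b$ terms of $[F]$. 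Finally, your derivation of the second equation correctly gives $-4\Om\trchb\,(\Om\omb)$; the $\Om\om$ in the stated \eqref{cornulleq} is a typo in the paper (compare the linearized version in Proposition \ref{nullstructureA}), so the discrepancy is not yours.
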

\begin{proof}
Notice that \eqref{cornulleq} follows directly from Proposition \ref{standardnull} and \eqref{6.6}. See Lemma 4.3.2 in \cite{Kl-Ni} for the proof of \eqref{eqauxillary}.
\end{proof}
\subsection{Bianchi equations}
We recall the Bianchi equations in a double null foliation.
\begin{proposition}\label{standardbianchi}
The Bianchi equations take the following form in a double null foliation:
\begin{align*}
\nab_4\aa+\frac{1}{2}\tr\chi\,\aa&=-\nab\hat{\otimes}\bb+4\omega\aa-3(\hchb\rho-{^*\hchb}\sigma)+(\zeta-4\etab)\hat{\otimes}\bb,\\
\nab_3\bb+2\tr\chib\,\bb&=-\sdiv\aa-2\omb\bb+(2\ze-\eta)\cdot\aa,\\
\nab_4\bb+\tr\chi\,\bb&=-\nabla\rho+{^*\nabla\sigma}+2\omega\bb+2\hchb\cdot\beta-3(\etab\rho-{^*\etab}\si),\\
\nab_3\rho+\frac{3}{2}\tr\chib\,\rho&=-\sdiv\bb-\frac{1}{2}\hch\cdot\aa+\zeta\cdot\bb-2\eta\cdot\bb,\\
\nab_4\rho+\frac{3}{2}\tr\chi\,\rho&=\sdiv\beta-\frac{1}{2}\hchb\cdot\alpha+\zeta\cdot\b+2\etab\cdot\b,\\
\nab_3\si+\frac{3}{2}\tr\chib\,\si&=-\curl\bb+\frac{1}{2}\hch\cdot{^*\aa}-\zeta\cdot{^*\bb}+2\eta\cdot{^*\bb},\\
\nab_4\si+\frac{3}{2}\tr\chi\,\si&=-\curl\b+\frac{1}{2}\hchb\cdot{^*\alpha}-\zeta\cdot{^*\beta}-2\etab\cdot{^*\b},\\
\nab_3\beta+\tr\chib\,\b&=\nabla\rho+{^*\nabla}\si+2\omb\beta+2\hch\cdot\bb+3(\eta\rho+{^*\eta}\si),\\
\nab_4\beta+2\tr\chi\,\b&=\sdiv\alpha-2\omega\beta+(2\zeta+\etab)\a,\\
\nab_3\a+\f12\tr\chib\,\a&=\nab\hat{\otimes}\beta+4\omb\a-3(\hch\rho+{^*\hch}\sigma)+(\ze+4\eta)\hat{\otimes}\b.
\end{align*}
\end{proposition}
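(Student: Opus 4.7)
The plan is to derive the null-decomposed Bianchi equations directly from the second Bianchi identity combined with the Einstein vacuum equations \eqref{EVE}. Since $\Ric(\g)=0$ in $\MM$, the Riemann tensor $\R$ coincides with the Weyl tensor, and the twice-contracted second Bianchi identity $\D_{[\mu}\R_{\nu\sigma]\alpha\beta}=0$ reduces, after one trace, to the divergence-free condition
\begin{equation*}
\D^\mu \R_{\mu\nu\alpha\beta}=0.
\end{equation*}
This is the system from which all ten equations of the proposition will be extracted by a uniform projection procedure onto the null frame $(e_1,e_2,e_3,e_4)$.

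The concrete steps I would follow. First, I would fix one null curvature component at a time, according to the definitions \eqref{defr}, and compute its $\nab_3$-- or $\nab_4$--derivative by differentiating the defining contraction of $\R$; for instance, for $\nab_4\aa_{AB}$ one starts from $\aa_{AB}=\R(e_A,e_3,e_B,e_3)$ and uses the Ricci formulae \eqref{ricciformulas} to replace each $\D_4 e_\mu$ in terms of horizontal derivatives and connection coefficients, producing $\nab_4\aa_{AB}-\text{(Ricci terms)}=\D_4\R(e_A,e_3,e_B,e_3)$. Second, I would convert the right-hand side $\D_4\R$ into $\D_3\R$-- and $\D_A\R$--type derivatives by invoking $\D^\mu\R_{\mu\nu\alpha\beta}=0$ contracted with an appropriate frame vector; the identity $\D^\mu\R_{\mu A 3 B}=0$, for example, yields the divergence $\nab\hot\bb$ term in the $\nab_4\aa$ equation after one symmetrizes in $A,B$. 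Third, I would project onto the symmetric traceless part whenever the left-hand side lives in $\sfr_2$, and onto the trace/anti-trace decomposition $(\rho,\si)$ whenever the equation concerns a rank-zero object, absorbing the remainders as lower-order Ricci-coefficient terms. The $\nab_3$ equations follow by the same procedure with the role of $e_3,e_4$ (and $\chi\leftrightarrow\chib$, $\eta\leftrightarrow\etab$, $\om\leftrightarrow\omb$, $\a\leftrightarrow\aa$, $\b\leftrightarrow\bb$) interchanged.

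The principal obstacle is bookkeeping rather than any genuine conceptual difficulty: one must carefully track the traceless projections (so that the $\a$-- and $\aa$--equations stay in $\sfr_2$), the correct sign conventions of $\hot$, $^*$, and $\wedge$ as set up in Section \ref{ssec7.2}, and the reshuffling of terms such as $\chi\cdot\bb$ or $\hchb\rho-{}^*\hchb\si$ that arise when commuting $\D_4$ past the frame vectors via \eqref{ricciformulas}. Systematically checking the contribution from $\om,\omb,\ze,\eta,\etab$ in each of the ten identities is the only delicate point, and in particular one must verify that the $\ze$--contributions combine correctly with $\eta$ or $\etab$ through the identity $\eta-\etab=2\ze$ following from \eqref{6.6}, which is what produces the shape of the $(2\ze-\eta)$, $(2\ze+\etab)$, and $(\ze-4\etab)$, $(\ze+4\eta)$ coefficients appearing in the statement.

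Since the computation is entirely classical, I would not reproduce every contraction. Instead, after carrying out the derivation for one representative case in each family, namely the $\nab_4\aa$ equation, the $\nab_4\b$ equation, and the $\nab_4\rho$, $\nab_4\si$ equations, I would obtain the remaining six equations by the $3\leftrightarrow4$ symmetry described above and refer to the standard references, e.g.\ Chapter 7 of \cite{ch-kl} and (3.6)--(3.15) in \cite{kr}, for the remaining routine verifications.
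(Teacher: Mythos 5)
Your proposal is correct in substance and follows essentially the same route as the paper, whose proof is simply a citation of Proposition 3.2.4 in \cite{Kl-Ni}, i.e. the classical null decomposition of the Bianchi identities that you sketch and for which you also defer the bookkeeping to \cite{ch-kl,kr}.

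One caveat worth fixing in your write-up: the single divergence condition $\D^\mu\R_{\mu\nu\alpha\beta}=0$, contracted as you indicate, does not directly produce the equations whose right-hand sides are of $\nab\hot$ or $\curl$ type (the $\nab_4\aa$, $\nab_3\a$ and the two $\si$ equations). For instance, in your representative example $\D^\mu\R_{\mu A3B}=0$ the term $-\frac12\D_3\R_{4A3B}$ contributes $\nab_3$ of the $(\rho,\si)$ components, so this contraction couples the $\nab_4\aa$ equation with the $\nab_3(\rho,\si)$ equations rather than yielding the stated identity with $-\nab\hot\bb$ alone. The standard remedy, as in Chapter 7 of \cite{ch-kl}, is to use in addition the divergence of the dual $^*\R$ (equivalently the full second Bianchi identity, which holds identically for the Riemann tensor and in vacuum is equivalent to the vanishing of the dual divergence); these supply precisely the curl-type equations, and with that ingredient added your projection scheme, the Ricci formulae \eqref{ricciformulas}, and the relation $\eta-\etab=2\ze$ from \eqref{6.6} give all ten equations as stated.
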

\begin{proof}
See Proposition 3.2.4 in \cite{Kl-Ni}.
\end{proof}
\subsection{Transport equations for metric components}
We introduce the following transport equations for metric components.
\begin{proposition}\label{metricequations}
The metric components satisfy the following equations:
\begin{align*}
    \nab_4(\ga_{AB})=&2\chi_{AB},\\
    \nab_4(\bu^A)=&-4\Om\ze^A,\\
    \nab_4(\in_{AB})=&\trch\in_{AB},\\
    \nab_3(\ga_{AB})=&2\chib_{AB}-\Om^{-1}\pr_A(\bu^C)\ga_{BC}-\Om^{-1}\pr_B(\bu^C)\ga_{AC},\\
    \nab_3(\in_{AB})=&\trchb\in_{AB}-\Om^{-1}\pr_A(\bu^C)\in_{BC}-\Om^{-1}\pr_B(\bu^C)\in_{BC}.
\end{align*}
\end{proposition}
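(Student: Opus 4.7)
The plan is to work throughout in the double null coordinate system $(u,\ub,x^A)$ of Section~\ref{doublenullsection}, in which $N=\Om e_4=\pr_{\ub}$ and $\underline{N}=\Om e_3=\pr_u+\bu^A\pr_{x^A}$, and to reduce each of the five identities to a computation with commutators of coordinate vector fields together with the Ricci formulae \eqref{ricciformulas} and the auxiliary identities \eqref{6.6}.

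For the three $\nab_4$-equations, the crucial fact is $[\pr_{\ub},\pr_{x^A}]=0$. Writing $\ga_{AB}=\g(\pr_{x^A},\pr_{x^B})$, metric compatibility gives
$$\pr_{\ub}(\ga_{AB})=\g(\D_{\pr_{x^A}}\pr_{\ub},\pr_{x^B})+\g(\pr_{x^A},\D_{\pr_{x^B}}\pr_{\ub}),$$
and substituting $\pr_{\ub}=\Om e_4$ together with $\D_A e_4=\chi_{AB}e_B-\ze_A e_4$ from \eqref{ricciformulas} (the $e_4$-piece drops by orthogonality to $\pr_{x^A}$) delivers $e_4(\ga_{AB})=2\chi_{AB}$. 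For the shift, I would evaluate the commutator $[N,\underline{N}]=\pr_{\ub}(\bu^A)\pr_{x^A}$ in two ways: directly from the coordinate expressions and through $\D_N\underline{N}-\D_{\underline{N}}N$. Expanding the latter with $\D_4 e_3=2\om e_3+2\eta_B e_B$ and $\D_3 e_4=2\omb e_4+2\etab_B e_B$ from \eqref{ricciformulas}, the longitudinal coefficients cancel by $\om=-\f12 e_4(\log\Om)$ and $\omb=-\f12 e_3(\log\Om)$ from \eqref{6.6}, leaving $[N,\underline{N}]=2\Om^2(\etab-\eta)^B e_B=-4\Om^2\ze^B e_B$ thanks to $\eta-\etab=2\ze$; matching horizontal components yields $e_4(\bu^A)=-4\Om\ze^A$. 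The identity for the volume form then follows from $\in_{AB}=\sqrt{\det\ga}\,\epsilon_{AB}$ and $\f12\ga^{CD}e_4(\ga_{CD})=\trch$, once the $\ga_{AB}$-equation is in hand.

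The two $\nab_3$-equations follow the same scheme, with the essential new feature that $\underline{N}=\pr_u+\bu^A\pr_{x^A}$ is not a coordinate field and hence does not commute with $\pr_{x^A}$. The anholonomy $[\underline{N},\pr_{x^A}]=-\pr_{x^A}(\bu^C)\pr_{x^C}$ contributes two extra terms in
$$(\Lie_{\underline{N}}\g)(\pr_{x^A},\pr_{x^B})=\underline{N}(\ga_{AB})+\pr_A(\bu^C)\ga_{CB}+\pr_B(\bu^C)\ga_{AC},$$
which, equated with $2\Om\chib_{AB}$ (the analogue of the $\nab_4$-computation done with $\D_A e_3=\chib_{AB}e_B+\ze_A e_3$) and divided through by $\Om$, produces exactly the stated $\nab_3(\ga_{AB})$ formula. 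The correction term for $\in_{AB}$ arises from the same commutator in identical fashion.

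The only genuinely delicate point is the bookkeeping of signs and the careful translation between the coordinate basis $\pr_{x^A}$ and the orthonormal horizontal frame $e_A$ when identifying components; because each identity is tensorial on $S(u,\ub)$, this is a notational matter rather than a substantive difficulty, and the verification is otherwise routine.
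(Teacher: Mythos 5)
Your proof is correct and, for the first, fourth and fifth identities, essentially reproduces the paper's own argument: metric compatibility of $\D$, the vanishing commutator $[\pr_{\ub},\pr_{x^A}]=0$, the anholonomy $[\Om e_3,\pr_{x^A}]=-\pr_{x^A}(\bu^C)\pr_{x^C}$, and the Ricci formulae \eqref{ricciformulas}; your Lie-derivative phrasing of the $\nab_3$ equations is just a repackaging of the same expansion. There are two small genuine differences. For $\nab_4(\bu^A)=-4\Om\ze^A$ the paper simply cites (3.1.62) of \cite{Kl-Ni}, whereas you supply the standard derivation by computing $[N,\Nb]$ once in coordinates (giving $\pr_{\ub}(\bu^A)\pr_{x^A}$) and once via \eqref{ricciformulas} and \eqref{6.6} (giving $-4\Om^2\ze^Be_B$); this is correct and makes the proposition self-contained. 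For $\nab_4(\in_{AB})=\trch\in_{AB}$ you deduce the identity from the $\ga$-equation through $\in_{AB}=\sqrt{\det\ga}\,\epsilon_{AB}$, while the paper repeats the covariant computation and then invokes the two-dimensional identity ${\chi_A}^C\in_{CB}+{\chi_B}^C\in_{AC}=\trch\in_{AB}$; the two routes are equivalent, and the same $2\times 2$ trace identity reconciles the shift correction terms in the $\nab_3$ equation for $\in_{AB}$. One transcription slip to fix: you quote $\D_4e_3=2\om e_3+2\eta_Be_B$ and $\D_3e_4=2\omb e_4+2\etab_Be_B$, i.e.\ with $\eta$ and $\etab$ interchanged relative to \eqref{ricciformulas}; your final formula $[N,\Nb]=2\Om^2(\etab-\eta)^Be_B=-4\Om^2\ze^Be_B$ is nevertheless the correct one, so the slip does not propagate, but as literally quoted it would produce the opposite sign.
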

\begin{proof}
We have from $[\pr_\ub,\pr_A]=0$ that
\begin{align*}
    0&=\nab_{\pr_\ub}(\ga)\left(\frac{\pr}{\pr x^A},\frac{\pr}{\pr x^B}\right)=\frac{\pr}{\pr\ub}\ga_{AB}-\ga\left(\D_{\pr_{\ub}}\pr_{A},\pr_B\right)-\ga\left(\pr_A,\D_{\pr_{\ub}}\pr_B\right)\\
    &=\Om\nab_4(\ga_{AB})-\Om\g(\D_{e_4}\pr_A,\pr_B)-\Om\g(\pr_A,\D_{e_4}\pr_B)\\
    &=\Om\nab_4(\ga_{AB})-2\Om\chi_{AB},
\end{align*}
which implies
\begin{equation*}
    \nab_4(\ga_{AB})=2\chi_{AB}
\end{equation*}
as stated in the first equation. For the proof of the second equation, see (3.1.62) in \cite{Kl-Ni}. \\ \\
We have from $[\pr_\ub,\pr_A]=0$ that
\begin{align*}
    0&=\nab_{\pr_{\ub}}(\in)\left(\frac{\pr}{\pr x^A},\frac{\pr}{\pr x^B}\right)=
    \frac{\pr}{\pr \ub}\in_{AB}-\in\left(\D_{\pr_{\ub}}\pr_{A},\pr_B\right)-\in\left(\pr_A,\D_{\pr_{\ub}}\pr_B\right)\\
    &=\frac{\pr}{\pr \ub}\in_{AB}-\in\left(\D_{\pr_{A}}\pr_{\ub},\pr_B\right)-\in\left(\pr_A,\D_{\pr_B}\pr_{\ub}\right)\\
    &=\frac{\pr}{\pr \ub}\in_{AB}-\Om\in({\chi_A}^C\pr_C,\pr_B)-\Om\in(\pr_{A},{\chi_{B}}^C\pr_C)\\
    &=\Om\nab_4(\in_{AB})-\Om{\chi_{A}}^C\in_{CB}-\Om{\chi_{B}}^C \in_{AC},
\end{align*}
which implies
\begin{align*}
    \nab_4(\in_{AB})={\chi_{A}}^C\in_{CB}+{\chi_{B}}^C \in_{AC}=\trch\in_{AB}.
\end{align*}
Next, we compute
\begin{align*}
    0&=\left(\nab_{\Om e_3}\ga\right)\left(\frac{\pr}{\pr x^A},\frac{\pr}{\pr x^B}\right)\\
    &=\Om\nab_3(\ga_{AB})-\ga\left(\nab_{\Om e_3}\pr_A,\pr_B\right)-\ga\left(\pr_A,\nab_{\Om e_3}\pr_B\right)\\
    &=\Om\nab_3(\ga_{AB})-\Om\g\left(\D_{\pr_A}e_3,\pr_B\right)-\Om\g\left(\pr_A,\D_{\pr_B}e_3\right)-\g([\Om e_3,\pr_A],\pr_B)-\g(\pr_A,[\Om e_3,\pr_B])\\
    &=\Om\nab_3(\ga_{AB})-2\Om\chib_{AB}-\g([\bu^B\pr_C,\pr_A],\pr_B)-\g(\pr_A,[\bu^C\pr_C,\pr_B])\\
    &=\Om\nab_3(\ga_{AB})-2\Om\chib_{AB}+\pr_A(\bu^C)\ga_{BC}+\pr_B(\bu^C)\ga_{AC},
\end{align*}
which implies
\begin{equation}\label{duga}
    \nab_3(\ga_{AB})=2\chib_{AB}-\Om^{-1}\pr_A(\bu^C)\ga_{BC}-\Om^{-1}\pr_B(\bu^C)\ga_{AC}.
\end{equation}
Similarly, we have
\begin{equation*}
    \nab_3(\in_{AB})=\trchb\in_{AB}-\Om^{-1}\pr_A(\bu^C)\in_{BC}-\Om^{-1}\pr_B(\bu^C)\in_{AC}.
\end{equation*}
This concludes the proof of Proposition \ref{metricequations}.
\end{proof}
\subsection{The linearized equations}\label{seclinearequation}
\subsubsection{Linearization procedure}
Let $\kk$ the bootstrap region defined in Section \ref{bootregion}. By construction, we have
\begin{equation*}
    L(x^A)=0.
\end{equation*}
Hence, the metric in $\kk$ takes the following form, see \eqref{metricg}:
\begin{equation}\label{gkerr}
    \g=-2\Omega^2(d\ub\otimes du+du\otimes d\ub)+\ga_{AB}(dx^A-\bu^Adu)\otimes(dx^B-\bu^Bdu).
\end{equation}
The inverse metric components are given by:
\begin{align}
\begin{split}\label{Gkerr}
    \g^{AB}&=\ga^{AB},\qquad\qquad\quad \g^{u\ub}=-\frac{1}{2\Om^2},\quad\qquad\; \g^{\ub A}=-\frac{\bu^A}{2\Om^2},\\
    \g^{uu}&=\g^{\ub\ub}=0,\qquad\;\quad \g^{uA}=0,\qquad\qquad\quad\, A,B=1,2.
\end{split}
\end{align}
Let $X$ be a tensor defined on $S:=S(u,\ub)$. We denote $X_{Kerr}$ the corresponding $S_{Kerr}$--tangent tensor in $\MM_{Kerr}$. Then, its pullback $\Phi^* X_{Kerr}$\footnote{Recall that the map $\Phi:\KK\to\MM_{Kerr}$ is defined in \eqref{Phi} and satisfies \eqref{Phicondition}.} is a $S$--tangent tensor in $\kk$ denoted by
\begin{equation}
X_K := \Phi^* X_{Kerr}.
\end{equation}
We now define the linearization of $X$ as follows
\begin{equation}
\widecheck{X}:= X-X_K.
\end{equation}
We also denote $\nab_K$, (resp. $\Ga_K$, $\sdiv_K$ and $\curl_K$) the Levi-Civita connection, (resp. Christoffel symbols, divergence operator and curl operator) of the metric $\g_K=\Phi^*(\g_{Kerr})$.\\ \\
The following lemma plays an important role in the linearization of the equations.
\begin{lemma}\label{checke4e3}
We have the following formulae:
\begin{align*}
    \widecheck{\Om e_4}&:=\Om e_4-\Om_K(e_4)_K=0,\\
    \widecheck{\Om e_3}&:=\Om e_3-\Om_K(e_3)_K=\bcc^A\pr_{x^A}.
\end{align*}
\end{lemma}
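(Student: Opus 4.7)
The approach is a direct computation using the explicit coordinate form of the metric both in the bootstrap region $\kk$ and in Kerr, exploiting the fact that the identification $\Phi$ is literally the identity in the double null coordinate charts, cf.\ \eqref{Phicondition}.

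First I would compute $N := \Om e_4$ and $\Nb := \Om e_3$ directly from the metric \eqref{gkerr}. Since $L = -\grad u$, we have $L^{\mu} = -\g^{\mu u}$, and using the explicit inverse \eqref{Gkerr} this gives $L^u = 0$, $L^{\ub} = \frac{1}{2\Om^2}$, $L^A = 0$. Thus
\begin{equation*}
N = \Om e_4 = 2\Om^2 L = \pr_{\ub}.
\end{equation*}
Similarly $\Lb^\mu = -\g^{\mu\ub}$ gives $\Lb^u = \frac{1}{2\Om^2}$, $\Lb^{\ub}=0$, $\Lb^A = \frac{\bu^A}{2\Om^2}$, hence
\begin{equation*}
\Nb = \Om e_3 = 2\Om^2 \Lb = \pr_u + \bu^A\pr_{x^A}.
\end{equation*}

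Next, I would observe that exactly the same formulae hold in $\MM_{Kerr}$ because the double null expression of $\g_{a,M}$ recalled in Section \ref{kerrgeometry} has the same structural form as \eqref{gkerr}, with $(\Om_{Kerr},\bu_{Kerr}^A,\ga_{Kerr})$ playing the role of $(\Om,\bu^A,\ga)$. Thus in the coordinates $(u_{Kerr},\ub_{Kerr},x^A_{Kerr})$,
\begin{equation*}
\Om_{Kerr}(e_4)_{Kerr} = \pr_{\ub_{Kerr}}, \qquad \Om_{Kerr}(e_3)_{Kerr} = \pr_{u_{Kerr}} + \bu_{Kerr}^A\pr_{x^A_{Kerr}}.
\end{equation*}

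Now I would pull these back by $\Phi$. By \eqref{Phicondition}, the expression of $\Phi$ in the local charts $(u,\ub,x^1,x^2)$ and $(u_{Kerr},\ub_{Kerr},x^1_{Kerr},x^2_{Kerr})$ is the identity map, so $\Phi^*\pr_{\ub_{Kerr}} = \pr_{\ub}$, $\Phi^*\pr_{u_{Kerr}}=\pr_u$, and $\Phi^*\pr_{x^A_{Kerr}} = \pr_{x^A}$. Pullback of scalars is just composition, so $\Om_K = \Om_{Kerr}\circ \Phi$ and $(\bu_K)^A = \bu_{Kerr}^A\circ\Phi$. Hence
\begin{equation*}
\Om_K (e_4)_K = \pr_{\ub}, \qquad \Om_K (e_3)_K = \pr_u + (\bu_K)^A\pr_{x^A}.
\end{equation*}
Combining with the previous formulae, $\widecheck{\Om e_4} = \pr_{\ub} - \pr_{\ub} = 0$, and $\widecheck{\Om e_3} = \bu^A\pr_{x^A} - (\bu_K)^A\pr_{x^A} = \bcc^A\pr_{x^A}$, as claimed.

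The only potential subtlety is bookkeeping, namely making sure that the pullback of a coordinate vector field by a map which is coordinate-wise the identity is indeed the corresponding coordinate vector field on the domain, and that the definition $\bcc := \bu - \bu_K$ introduced in the paper matches the angular coefficient appearing in $\widecheck{\Om e_3}$. Both are immediate, so no substantive obstacle is expected; the lemma is essentially a consequence of the choice $L(x^A)=0$ made in Section \ref{bootregion} together with the coordinate-based definition of $\Phi$.
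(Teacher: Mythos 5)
Your proposal is correct and follows essentially the same route as the paper: both derive $\Om e_4=\pr_{\ub}$ and $\Om e_3=\pr_u+\bu^A\pr_{x^A}$ (you via the inverse metric components, the paper via evaluating $e_4(u),e_4(\ub),e_4(x^A)$, which is the same computation), and then use that $\Phi$ is the identity in the coordinate charts \eqref{Phicondition} to identify $\Om_K(e_4)_K=\pr_{\ub}$ and $\Om_K(e_3)_K=\pr_u+\bu_K^A\pr_{x^A}$, yielding the stated formulae.
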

\begin{proof}
Notice that we have
\begin{align}
\begin{split}
    e_4(u)&=0,\qquad\;\,\, e_4(\ub)=\g(e_4,\grad \ub)=\g(2\Omega L,-\Lb)=\Omega^{-1}, \\
    e_3(\ub)&=0,\qquad\;\,\, e_3(u)=\g(e_3,\grad u)=\g(2\Omega \Lb,-L)=\Omega^{-1} ,\\
    e_4(x^A)&=0,\qquad e_3(x^A)=\Omega^{-1}\bu^A.
\end{split}
\end{align}
Hence, we obtain
\begin{equation}
\Om e_4=\Om e_4(u)\pr_u +\Om e_4(\ub)\pr_{\ub}+\Om e_4(x^A)\pr_{x^A}=\pr_\ub.
\end{equation}
Thus, we infer
\begin{align*}
\Phi_*(\Om e_4)=\Phi_*(\pr_\ub)=\pr_{\ub_{Kerr}}=\Om_{Kerr}(e_4)_{Kerr},
\end{align*}
which implies
\begin{equation*}
    \Om_K (e_4)_K=\Phi^*(\Om_{Kerr}(e_4)_{Kerr})=\Om e_4.
\end{equation*}
Hence, we deduce
\begin{equation}\label{diff4}
\widecheck{\Om e_4}=\Om e_4-\Om_K(e_4)_K=\pr_{\ub}-\pr_{\ub}=0.
\end{equation}
Similarly, we have
\begin{align*}
    \Om e_3=\pr_u +\bu^A\pr_{x^A},
\end{align*}
which implies
\begin{align*}
    \Phi_*(\Om e_3)=\pr_{u_{Kerr}}+(\Phi^{-1})^*(\underline{b}^A) \pr_{x^A_{Kerr}}.
\end{align*}
Hence, we obtain
\begin{align}
\begin{split}\label{diff3}
    \widecheck{\Om e_3}&=\Om e_3-\Om_K(e_3)_K=\Phi_*^{-1}\left(\Phi_*(\Om e_3)-\Om_{Kerr}(e_3)_{Kerr}\right)\\
    &=\Phi_*^{-1}\left(((\Phi^{-1})^*\bu^A-\bu_{Kerr}^A)\pr_{x^A_{Kerr}}\right)\\
    &=(\bu^A-\bu^A_K)\pr_{x^A}=\bcc^A\pr_{x^A}.
\end{split}
\end{align}
This concludes the proof of Lemma \ref{checke4e3}.
\end{proof}
\begin{proposition}\label{useful}
Denoting $\Ga$ and $\Ga_K$ respectively the Christoffel symbols of $\nab$ and $\nab_K$. For any $U\in\sk_2(S)$ and $\phi\in\sk_1(S)$ where $S=S(u,\ub)$, we have the following formulae:
\begin{align}
\begin{split}\label{diffnab}
    (\nab-\nab_K)_A\,U_{BC}=&-(\Ga-\Ga_K)_{AB}^D\,U_{CD}-(\Ga-\Ga_K)_{AC}^D\,U_{BD},\\
    (\nab-\nab_K)_A\,\phi_{B}=&-(\Ga-\Ga_K)_{AB}^C\,\phi_{C},\\
    (\nab-\nab_K)_{\pr_{\ub}}U_{AB}=&-(\bGa-\bGa_K)_{\ub A}^C U_{CB}-(\bGa-\bGa_K)_{\ub B}^C U_{CA},\\
    (\nab-\nab_K)_{\pr_{\ub}}\phi_A=&-(\bGa-\bGa_K)_{\ub A}^B \,\phi_B,\\
    (\nab-\nab_K)_{\pr_{u}}U_{AB}=&-(\bGa-\bGa_K)_{u A}^CU_{CB}-(\bGa-\bGa_K)_{u B}^CU_{CA},\\
    (\nab-\nab_K)_{\pr_{u}}\phi_A=&-(\bGa-\bGa_K)_{u A}^B \,\phi_B.
\end{split}
\end{align}
We also have:
\begin{align}
\begin{split}\label{diffdiv}
(\sdiv-\sdiv_K)\,U=&(\ga^{AB}-\ga^{AB}_K)\nab_AU_{B\bullet}-\ga_K^{AB}(\Ga-\Ga_K)_{AB}^D\,U_{\bullet D}\\
&-\ga_K^{AB}(\Ga-\Ga_K)_{A\bullet}^D\,U_{BD}, \\
    (\sdiv-\sdiv_K)\,\phi=&(\ga^{AB}-(\ga_K)^{AB})\nab_A\phi_{B}-(\ga_K)^{AB}(\Ga-\Ga_K)_{AB}^C\phi_{C}, \\
    (\curl-\curl_K)\,\phi=&(\in^{AB}-(\in_K)^{AB})\nab_A\phi_{B}-(\in_K)^{AB}(\Ga-\Ga_K)_{AB}^C\phi_{C},\\
    (\nab\hot\phi-\nab_K\hot\phi_K)_{AB}=&(\nab\hot\widecheck{\phi})_{AB}-2(\Ga-\Ga_K)_{AB}^C(\phi_K)_C\\
    +&\delta_{AB}[(\ga^{CD}-(\ga_K)^{CD})\nab_C(\phi_K)_D-(\ga_K)^{CD}(\Ga-\Ga_K)_{CD}^E(\phi_K)_E].
\end{split}
\end{align}
\end{proposition}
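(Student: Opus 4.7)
The proposition is a bundle of algebraic identities comparing two Levi-Civita connections on the same underlying smooth manifold (namely $\KK$ with coordinate chart $(u,\ub,x^A)$), one associated with $\g$ and the other with $\g_K=\Phi^*\g_{Kerr}$. The plan is to treat the ten identities in three groups: (i) the angular identities for $\nab_A$ acting on $\sk_1$ and $\sk_2$; (ii) the transverse identities for $\nab_{\pr_\ub}$ and $\nab_{\pr_u}$; (iii) the operator identities involving $\sdiv$, $\curl$ and $\nab\hot$. In each case the derivation is a direct coordinate computation, the only input being the definition of the Christoffel symbols and the linearization $\widecheck X=X-X_K$.

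\textbf{Group (i).} In the coordinate chart $(x^1,x^2)$ on $S(u,\ub)$ both connections are expressed by the same formula, so for $\phi\in\sk_1$ one has
\begin{equation*}
\nab_A \phi_B = \pr_A \phi_B - \Ga_{AB}^C \phi_C,\qquad (\nab_K)_A \phi_B = \pr_A\phi_B - (\Ga_K)_{AB}^C \phi_C,
\end{equation*}
and subtracting gives the first one-form identity. For $U\in\sk_2$ I would apply the same formula with two Christoffel terms, one for each lower index, and subtract; this yields the first line of \eqref{diffnab}. The partial derivatives $\pr_A$ cancel because both connections are built from the \emph{same} coordinate chart $x^A$, which is crucial.

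\textbf{Group (ii).} Recalling from Lemma \ref{checke4e3} that $\pr_\ub=\Om e_4$ and $\pr_u+\bu^A\pr_{x^A}=\Om e_3$, the derivatives $\nab_{\pr_\ub}$ and $\nab_{\pr_u}$ acting on an $S$-tangent tensor are the horizontal projections of $\D_{\pr_\ub}$ and $\D_{\pr_u}$ onto $TS$. Writing these in the coordinate basis $(u,\ub,x^A)$, the corresponding coefficients are the spacetime Christoffel symbols with first index $u$ or $\ub$ and the other two indices in $\{1,2\}$, which I denote $\bGa_{\ub A}^C$, $\bGa_{u A}^C$. Then exactly as in Group (i), for $\phi\in\sk_1$ and $U\in\sk_2$,
\begin{equation*}
\nab_{\pr_\ub}\phi_A = \pr_\ub \phi_A - \bGa_{\ub A}^B \phi_B,
\end{equation*}
and similarly for $U$ with two correction terms; subtracting the $\g_K$-version gives the desired formulae \eqref{diffnab}. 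The only subtle point, and the main obstacle in this step, is justifying that the horizontal projection of $\D_{\pr_\ub}$ involves only the coordinate Christoffel symbols $\bGa_{\ub A}^C$ (and no contribution from $e_3,e_4$ components of $\D_{\pr_\ub}\pr_A$); this follows because $U,\phi$ are $S$-tangent, so contractions with $e_3,e_4$ vanish.

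\textbf{Group (iii).} For $\sdiv U=\ga^{AB}\nab_A U_{B\bullet}$ and its analogue with $(\ga_K, \nab_K)$, I would write
\begin{equation*}
(\sdiv-\sdiv_K)U=(\ga^{AB}-\ga_K^{AB})\nab_A U_{B\bullet}+\ga_K^{AB}(\nab_A-(\nab_K)_A)U_{B\bullet},
\end{equation*}
then substitute the Group (i) identity for the last factor to obtain the two $(\Ga-\Ga_K)$ correction terms. The same template produces the $\curl$ identity, swapping $\ga^{AB}$ for $\in^{AB}$ and $\ga_K^{AB}$ for $(\in_K)^{AB}$. The $\nab\hot\phi$ identity is slightly more involved: starting from $(\nab\hot\phi)_{AB}=\nab_A\phi_B+\nab_B\phi_A-\de_{AB}\sdiv\phi$ and writing $\phi=\widecheck\phi+\phi_K$, the linear part in $\widecheck\phi$ reconstitutes $\nab\hot\widecheck\phi$, the quadratic part from the difference of connections produces $-2(\Ga-\Ga_K)_{AB}^C(\phi_K)_C$, and the $\sdiv$ correction produces exactly the $\de_{AB}[\cdots]$ bracket using the $\sdiv-\sdiv_K$ formula already derived. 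The main obstacle here is bookkeeping: carefully decomposing each difference as \emph{(inverse metric difference acting on $\g$-quantities) $+$ ($\g_K$-quantities acted on by the connection difference)} and not mixing the two.
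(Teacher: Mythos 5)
Your proposal is correct and takes essentially the same route as the paper's own proof: a direct coordinate computation in which the partial-derivative terms cancel upon subtraction, leaving only the Christoffel differences, followed by the splitting of $(\sdiv-\sdiv_K)$ (and likewise $\curl$, $\nab\hot$) into an inverse-metric difference acting on $\nab$-quantities plus $\ga_K$ contracted with the connection difference. The paper writes out only the first identity of \eqref{diffnab} and of \eqref{diffdiv} and declares the rest "similar," so your additional remarks on the $\pr_u,\pr_\ub$ directions and the $\nab\hot$ bookkeeping simply fill in details the paper leaves to the reader.
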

\begin{proof}
The proof follows from a direct computation. For example, we have
\begin{align*}
    (\nab-\nab_K)_A\,U_{BC}=&\left(\frac{\pr}{\pr x^A}(U_{BC})-\Ga_{AB}^D \,U_{CD}-\Ga_{AC}^D\,U_{BD}\right)\\
    &-\left(\frac{\pr}{\pr x^A}(U_{BC})-(\Ga_K)_{AB}^D\,U_{CD}-(\Ga_K)_{AC}^D\,U_{BD}\right)\\
    =&-(\Ga-\Ga_K)_{AB}^D\,U_{CD}-(\Ga-\Ga_K)_{AC}^D\,U_{BD},
\end{align*}
which implies the first equation in \eqref{diffnab}. The proof of the other equations in \eqref{diffnab} are similar. We also have
\begin{align*}
    (\sdiv-\sdiv_K)U_C&=\ga^{AB}\nab_AU_{BC}-\ga_K^{AB}(\nab_K)_AU_{BC}\\
    &=(\ga^{AB}-\ga^{AB}_K)\nab_AU_{BC}+\ga_K^{AB}(\nab_AU_{BC}-(\nab_K)_AU_{BC})\\
    &=(\ga^{AB}-\ga^{AB}_K)\nab_AU_{BC}-\ga_K^{AB}(\Ga-\Ga_K)_{AB}^D\,U_{CD}-\ga_K^{AB}(\Ga-\Ga_K)_{AC}^D\,U_{BD},
\end{align*}
which implies the first equation in \eqref{diffdiv}. The proof of the other equations in \eqref{diffdiv} are similar and left to the reader. This concludes the proof of Proposition \ref{useful}.
\end{proof}
\begin{lemma}\label{bGa}
We have the following identities:
\begin{align*}
    \bGa_{\ub A}^B=&\f12 \ga^{BC}\pr_{\ub}(\ga_{AC}),\\
    \bGa_{uA}^B=&\f12\ga^{BC}\left(\pr_u(\ga_{AC})-\frac{\pr}{\pr x^A}\left(\ga_{CD} \bu^D\right)+\frac{\pr}{\pr x^C}\left(\ga_{AD}\bu^D\right)\right)+\frac{\bu^B}{4\Om^2}\left(2\frac{\pr}{\pr x^A}(\Om^2)-\frac{\pr}{\pr \ub}(\ga_{AD}\bu^D)\right),\\
    \bGa_{BC}^A=&\f12\ga^{AD}\left(\frac{\pr \ga_{BD}}{\pr x^C}+\frac{\pr \ga_{CD}}{\pr x^B}-\frac{\pr \ga_{BC}}{\pr x^D}\right) +\frac{\bu^A}{4\Om^2}\frac{\pr\ga_{BC}}{\pr\ub}.
\end{align*}
\end{lemma}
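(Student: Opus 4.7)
The plan is to apply the standard formula
\begin{equation*}
\bGa_{\mu\nu}^\rho = \frac{1}{2}\g^{\rho\sigma}\left(\frac{\pr \g_{\nu\sigma}}{\pr x^\mu}+\frac{\pr \g_{\mu\sigma}}{\pr x^\nu}-\frac{\pr \g_{\mu\nu}}{\pr x^\sigma}\right)
\end{equation*}
directly to the metric \eqref{gkerr} and the inverse metric \eqref{Gkerr}. First I would read off the covariant components from \eqref{gkerr} by expanding $\ga_{AB}(dx^A-\bu^A du)\otimes(dx^B-\bu^B du)$: this gives $\g_{u\ub}=\g_{\ub u}=-2\Om^2$, $\g_{\ub\ub}=0$, $\g_{AB}=\ga_{AB}$, $\g_{\ub A}=0$, $\g_{uA}=-\ga_{AC}\bu^C$, $\g_{uu}=\ga_{AB}\bu^A\bu^B$. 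The nonzero contravariant entries are $\g^{AB}=\ga^{AB}$, $\g^{u\ub}=-\frac{1}{2\Om^2}$ and $\g^{\ub A}=-\frac{\bu^A}{2\Om^2}$, so in each formula below only two values of the dummy index $\sigma$ can contribute.

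For $\bGa_{\ub A}^B$, the two candidate values are $\sigma=C$ and $\sigma=\ub$. Since $\g_{A\ub}=\g_{\ub\ub}=0$, the second is killed, and the first survives only through $\pr_\ub \g_{AC}=\pr_\ub \ga_{AC}$, producing exactly $\frac{1}{2}\ga^{BC}\pr_\ub(\ga_{AC})$. For $\bGa_{uA}^B$, both $\sigma=C$ and $\sigma=\ub$ contribute. The $\sigma=C$ piece reduces to $\frac{1}{2}\ga^{BC}$ times $\pr_u(\ga_{AC})-\pr_A(\ga_{CD}\bu^D)+\pr_C(\ga_{AD}\bu^D)$ after substituting $\g_{uC}=-\ga_{CD}\bu^D$. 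The $\sigma=\ub$ piece uses $\pr_A \g_{u\ub}=-2\pr_A(\Om^2)$ and $\pr_\ub \g_{uA}=-\pr_\ub(\ga_{AD}\bu^D)$, which combine with the prefactor $\frac{1}{2}\g^{B\ub}=-\frac{\bu^B}{4\Om^2}$ to give the announced $\frac{\bu^B}{4\Om^2}(2\pr_A(\Om^2)-\pr_\ub(\ga_{AD}\bu^D))$.

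For $\bGa_{BC}^A$, the index $\sigma=u$ is eliminated because $\g^{Au}=0$. The $\sigma=D$ contribution produces the standard two-dimensional Christoffel expression $\frac{1}{2}\ga^{AD}(\pr_B\ga_{CD}+\pr_C\ga_{BD}-\pr_D\ga_{BC})$ since $\g_{BD}=\ga_{BD}$ etc. The $\sigma=\ub$ contribution feels only the third term $-\pr_\ub \g_{BC}=-\pr_\ub\ga_{BC}$ because $\g_{C\ub}=\g_{B\ub}=0$, and multiplication by $\frac{1}{2}\g^{A\ub}=-\frac{\bu^A}{4\Om^2}$ delivers the correction $\frac{\bu^A}{4\Om^2}\pr_\ub\ga_{BC}$.

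The computation is entirely algebraic and presents no real obstacle. The only place where care is needed is the bookkeeping of signs and factors of two coming from the off-diagonal block $\g_{u\ub}=-2\Om^2$, $\g^{u\ub}=-\frac{1}{2\Om^2}$ and from the identity $\g_{uA}=-\ga_{AC}\bu^C$, which is where both the $\frac{\bu^B}{4\Om^2}$ and $\frac{\bu^A}{4\Om^2}$ corrections originate; I would double-check each sign by verifying on a simple degenerate case (e.g.\ $\bu\equiv 0$ and $\Om$ constant) that the formulae reduce to the expected Christoffel symbols of the induced metric $\ga$.
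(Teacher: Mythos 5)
Your proposal is correct and coincides with the paper's own proof: both apply the coordinate formula $\bGa_{\mu\nu}^\rho=\f12\g^{\rho\sigma}(\pr_\mu\g_{\nu\sigma}+\pr_\nu\g_{\mu\sigma}-\pr_\sigma\g_{\mu\nu})$ directly to the components read off from \eqref{gkerr} and the inverse components \eqref{Gkerr}, with the only surviving contributions coming from $\sigma=C$ and $\sigma=\ub$ exactly as you describe. Your sign bookkeeping for the contributions of $\g_{u\ub}=-2\Om^2$, $\g_{uA}=-\ga_{AC}\bu^C$ and $\g^{B\ub}=-\frac{\bu^B}{2\Om^2}$ matches the paper's computation.
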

\begin{proof}
We have from \eqref{gkerr} and \eqref{Gkerr}
\begin{align*}
    \bGa_{\ub A}^B=\f12\g^{BC}\left(\frac{\pr\g_{\ub C}}{\pr x^A}+\frac{\pr\g_{A C}}{\pr\ub}-\frac{\pr\g_{\ub A}}{\pr x^C}\right)+\f12\g^{B\ub}\left(\frac{\pr\g_{\ub\ub}}{\pr x^A}+\frac{\pr\g_{A \ub}}{\pr\ub}-\frac{\pr\g_{\ub A}}{\pr \ub}\right)=\f12\ga^{BC}\pr_{\ub}(\ga_{AC}),
\end{align*}
and similarly
\begin{align*}
    \bGa_{uA}^B=&\f12\g^{BC}\left(\frac{\pr\g_{uC}}{\pr x^A}+\frac{\pr\g_{AC}}{\pr u}-\frac{\pr\g_{uA}}{\pr x^C}\right)+\f12\g^{B\ub}\left(\frac{\pr\g_{u\ub}}{\pr x^A}+\frac{\pr\g_{A\ub}}{\pr u}-\frac{\pr\g_{uA}}{\pr \ub}\right)\\
    =&\f12\ga^{BC}\left(-\frac{\pr}{\pr x^A}\left(\ga_{CD} \bu^D\right)+\frac{\pr\ga_{AC}}{\pr u}+\frac{\pr}{\pr x^C}\left(\ga_{AD}\bu^D\right)\right)+\frac{\bu^B}{4\Om^2}\left(2\frac{\pr}{\pr x^A}(\Om^2)-\frac{\pr}{\pr\ub}(\ga_{AD}\bu^D)\right).
\end{align*}
Finally, we compute
\begin{align*}
    \bGa_{BC}^A=&\f12\ga^{AD}\left(\frac{\pr\ga_{BD}}{\pr x^C}+\frac{\pr \ga_{CD}}{\pr x^B}-\frac{\pr\ga_{BC}}{\pr x^D}\right) +\f12\g^{A\ub}\left(\frac{\pr\g_{B\ub}}{\pr x^C}+\frac{\pr\g_{C\ub}}{\pr x^B}-\frac{\pr\g_{BC}}{\pr\ub}\right)\\
    =&\f12\ga^{AD}\left(\frac{\pr\ga_{BD}}{\pr x^C}+\frac{\pr\ga_{CD}}{\pr x^B}-\frac{\pr\ga_{BC}}{\pr x^D}\right) +\frac{\bu^A}{4\Om^2}\frac{\pr\ga_{BC}}{\pr\ub}.
\end{align*}
This concludes the proof of Lemma \ref{bGa}.
\end{proof}
The following tensors play an important role in the linearization of the null structure equations and Bianchi equations.
\begin{definition}\label{dfGamma}
We define the following  tensors\footnote{Remark that the difference of two Christoffel symbols is a tensor.}
\begin{align}
    \begin{split}
        \Jbc_{A}^B:=(\Ga-\Ga_K)_{u A}^B,\qquad \Jc_A^B:=(\Ga-\Ga_K)_{\ub A}^B, \qquad \Lc_{AB}^C:=(\Ga-\Ga_K)_{AB}^C.
    \end{split}
\end{align}
\end{definition}
\subsubsection{Schematic notation \texorpdfstring{$\Gag$}{} and  \texorpdfstring{$\Gab$}{}}
We introduce the following schematic notations for metric components, Ricci coefficients and curvature components.
\begin{definition}\label{gammag}
We divide the linearized quantities into two parts:
\begin{align*}
    \Gag:=\Gamma_g^{(0)}&:=\left\{\etac,\,\etabc,\,\zec,\,\hchc,\,\trchc,\,\trchbc,\,\omc,\,r^{-1}\bcc,\,r^{-1}\Omc,\,r^{-1}\gac,\,r^{-1}\inc,\,r^{-2}\widecheck{r}\right\},\\
    \Gab:=\Gamma_b^{(0)}&:=\{\hchbc,\,\ombc\}.
\end{align*}
We also denote:
\begin{align*}
    \Gag^{(1)}&:=(r\nab)^{\leq 1}\Gag^{(0)}\cup\{\Jc,\,\Lc,\,r\bc,\,r\rhoc,\,r\sic\},\\
    \Gab^{(1)}&:=(r\nab)^{\leq 1}\Gab^{(0)}\cup\{\Jbc,\,r\bbc\}.
\end{align*}
\end{definition}
\begin{remark}
    Throughout this paper, $\Gag$ decays better than $\Gab$, see Lemma \ref{estGagba}.
\end{remark}
\subsubsection{Schematic notation \texorpdfstring{$\O^p_q$}{}}
\begin{definition}\label{KerrO}
For any $S$--tangent tensor field $X_K=\Phi^*(X_{Kerr})$, we denote
\begin{equation*}
    X_K=\O^p_q,
\end{equation*}
if for any integer $l\geq 0$ we have
\begin{equation*}
    |(\pr_K)^l X_K|\les\frac{M^p}{r_K^{q+l}},
\end{equation*}
where $r_K=\Phi^*(r_{Kerr})$ and
\begin{equation*}
    \pr_K :=\left\{(\nab_K)_{(e_3)_K},\,\nab_K,\,(\nab_K)_{(e_4)_K}\right\}.
\end{equation*}
\end{definition}
\begin{proposition}\label{decayGamma}
We have the following formulae:
\begin{align*}
\tr_K\chi_{K}-\frac{2}{r_K}&=\bfO_2^1,\qquad\qquad\,\tr_K\chib_{K}+\frac{2}{r_K}=\bfO_2^1,\\
\om_{K}&=\bfO_2^1,\qquad\qquad\qquad\;\,\qquad\omb_K=\bfO_2^1,\\
\hch_{K}&=\bfO_3^2,\qquad\qquad\qquad\qquad\;\, \hchb_{K}=\bfO_3^2,\\
\nab_K\tr_K\chi_K&=\bfO_4^2,\qquad\qquad\quad \nab_K\tr_K\chib_K=\bfO_4^2,\\
\eta_{K}&=\bfO_3^2,\qquad\qquad\qquad\qquad\,\,\,\etab_{K}=\bfO_3^2,\\
\bu_{K}&=\bfO_2^2,\qquad\qquad\qquad\;\, \Om_{K}-\f12=\bfO_1^1.
\end{align*}
We also have:
\begin{align*}
    \a_{K} &=\bfO_5^3, \qquad\qquad\qquad\aa_{K}=\bfO_5^3,\\
    \b_{K} &=\bfO_4^2, \qquad\qquad\qquad \bb_{K} =\bfO_4^2,\\
    \rho_{K}&=\bfO_3^1, \qquad\qquad\qquad \si_{K} =\bfO_4^2.
\end{align*}
\end{proposition}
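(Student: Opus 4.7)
The strategy is to transfer the Kerr estimates of Propositions \ref{decayGammaKerr} and \ref{decayRKerr} to $\kk$ via the pullback map $\Phi$. The crucial observation is that by \eqref{Phicondition}, in the coordinate charts $(u,\ub,x^A)$ on $\kk$ and $(u_{Kerr},\ub_{Kerr},x^A_{Kerr})$ on $\MM_{Kerr}$, the map $\Phi$ is the identity. Consequently $\Phi^*\pr_{\ub_{Kerr}}=\pr_\ub$, $\Phi^*\pr_{u_{Kerr}}=\pr_u$, $\Phi^*\pr_{x^A_{Kerr}}=\pr_{x^A}$, and $r_K=\Phi^* r_{Kerr}$ satisfies $r_K(p)=r_{Kerr}(\Phi(p))$. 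Since $\g_K=\Phi^*\g_{Kerr}$ is isometric to the Kerr metric, its Levi-Civita connection, null frame, and Ricci/curvature components are exactly the pullbacks of the Kerr objects.

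The first step is to record, for any $S_K$-tangent tensor $X_K=\Phi^* X_{Kerr}$ in $\kk$, the identity
\begin{equation*}
|\nab_{\pr_{\ub}}^{a}\nab_{\pr_{u}}^{b}(r_K^{-1}\nab_{\pr_{x^A}})^{c}X_K|\;=\;\bigl|\nab_{\pr_{\ub_{Kerr}}}^{a}\nab_{\pr_{u_{Kerr}}}^{b}(r_{Kerr}^{-1}\nab_{\pr_{x^A_{Kerr}}})^{c}X_{Kerr}\bigr|\circ\Phi,
\end{equation*}
which follows immediately from $\nab_K=\Phi^*\nab_{Kerr}$. Hence Propositions \ref{decayGammaKerr} and \ref{decayRKerr} imply, verbatim, the analogous estimates for $\tr_K\chi_K-2/r_K$, $\hch_K$, $\omb_K$, $\eta_K$, $\bu_K$, $\Om_K-\tfrac12$ and for $\a_K,\bb_K,\rho_K,\si_K$, but only with respect to the coordinate derivatives $\{\nab_{\pr_\ub},\nab_{\pr_u},r_K^{-1}\nab_{\pr_{x^A}}\}$.

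The second step is to convert these coordinate-derivative estimates into estimates with respect to $\pr_K=\{(\nab_K)_{(e_3)_K},\nab_K,(\nab_K)_{(e_4)_K}\}$ appearing in Definition \ref{KerrO}. Here one uses Lemma \ref{checke4e3} to identify $\Om_K(e_4)_K=\pr_\ub$ and $\Om_K(e_3)_K=\pr_u+\bu_K^A\pr_{x^A}$, together with the splitting of $\nab_K$ off the coordinate derivative $\nab_{\pr_{x^A}}$ via the horizontal projection (which introduces only terms quadratic in the Kerr null second fundamental forms $\chi_K,\chib_K$). At first order this is immediate; at higher orders one proceeds inductively, and every commutator term that arises is itself a product of objects from the list $\{\tr_K\chi_K,\hch_K,\tr_K\chib_K,\hchb_K,\Om_K,\bu_K\}$ whose decay has already been established at the previous order. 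Leibniz then closes the induction.

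The one place that requires mild care is the conversion factor $\Om_K$ when relating $(e_4)_K,(e_3)_K$ to coordinate derivatives: since $\Om_K-\tfrac12=\O^1_1$, we have $\Om_K^{-1}$ uniformly bounded and smooth on $\kk$, so all powers of $\Om_K$ contribute harmless $O(1)$ factors. Similarly, the inductive argument requires that derivatives of $\ga_K$ are themselves bounded in terms of $\chi_K,\chib_K$ via Proposition \ref{metricequations}, which propagates the decay to all orders. There is no genuine obstacle beyond careful bookkeeping; the real content is packaged into the already-established Kerr estimates of Propositions \ref{decayGammaKerr} and \ref{decayRKerr}.
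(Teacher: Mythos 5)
Your proposal is correct and takes essentially the same approach as the paper: the paper's proof is the single sentence that the claim ``follows directly from Propositions \ref{decayGammaKerr}, \ref{decayRKerr} and Definition \ref{KerrO},'' and your argument simply makes explicit the suppressed details, namely the diffeomorphism-pullback of the Kerr estimates via $\Phi$ (using that $\Phi$ is the identity in the coordinate charts of \eqref{Phicondition}) and the conversion between the coordinate-derivative set of Definition \ref{bigOnotation} and the frame-derivative set $\pr_K$ of Definition \ref{KerrO}, which is harmless because $\Om_K$ is bounded above and below and $\bu_K=\O^2_2$ contributes only lower-order corrections.
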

\begin{proof}
    It follows directly from Propositions \ref{decayGammaKerr}, \ref{decayRKerr} and  Definition \ref{KerrO}.
\end{proof}
\begin{remark}\label{ignoreremark}
In the sequel, we consider the following conventions:
\begin{itemize}
    \item For a quantity $h$ satisfying the same or even better decay as $\Ga_{b}$, (resp. $\Gag$), we write
    \begin{equation*}
        h\in\Gab,\quad  (\mbox{resp. } h\in \Gag).
    \end{equation*}
    \item For a sum of schematic notations, we ignore the terms that decay better. For example, we write
    \begin{equation*}
        \Gag+\Gab=\Gab,\qquad \Gag\c\O^p_q+\Gab\c\O^p_q=\Gab\c\O^p_q,
    \end{equation*}
    since $\Gag$ decays better than $\Gab$ throughout this paper.
\end{itemize}
\end{remark}
\begin{lemma}\label{expressionGaR}
    We have the following formulae:
    \begin{align*}
        \trch-\frac{2}{r}&=\O^1_2+\Gag,\qquad\quad\;\,\trchb+\frac{2}{r}=\O^1_2+\Gab,\\
        \hch&=\O^2_3+\Gag,\qquad\qquad\quad\quad\,\, \hchb=\O^2_3+\Gab,\\
        \eta&=\O^2_3+\Gag,\qquad\qquad\qquad\;\,\etab=\bfO_3^2+\Gag,\\
        \om&=\O^1_2+\Gag,\qquad\qquad\qquad\;\,\omb=\O^1_2+\Gab,\\
        \bu&=\bfO_2^2+r\Gag,\qquad\qquad \Om-\frac{1}{2}=\bfO_2^1+r\Gag,\\
        \ga&=\bfO_0^0+r\Gag,\qquad\qquad\qquad\; \in=\bfO_0^0+r\Gag.
    \end{align*}
    We also have:
    \begin{align*}
        \a&=\O^3_5+\ac,\qquad \quad\qquad\quad \aa=\O^3_5+\aac,\\
        \b&=\O^2_4+\bc,\qquad\quad \qquad\quad \,\bb=\bfO_4^2+\bbc,\\
        \rho&=\bfO_3^1+\rhoc,\qquad \quad\quad\quad\;\quad \si=\bfO_4^2+\sic.
    \end{align*}
\end{lemma}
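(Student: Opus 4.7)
The plan is to combine the linearization procedure from Section \ref{seclinearequation}, the Kerr decay rates in Proposition \ref{decayGamma}, and the classification of checked quantities in Definition \ref{gammag}. Every geometric quantity $X$ in the statement admits by construction the trivial splitting $X = X_K + \widecheck{X}$, so the task reduces to identifying the $\O^p_q$ summand with $X_K$ and the remaining term with an element of $\Gag$ or $\Gab$, possibly carrying an $r$-weight.

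For the curvature components the desired formulae are immediate: $\ac := \a - \a_K$ gives $\a = \a_K + \ac$, and Proposition \ref{decayGamma} supplies $\a_K = \O_5^3$. The same one-step argument handles $\aa$, $\b$, $\bb$, $\rho$, and $\si$.

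For the scalar Ricci coefficients shifted by a multiple of $1/r$, the mild complication is that the reference Kerr value is expressed with $r_K$ rather than $r$. For $\trch$ we decompose
\[
\trch - \frac{2}{r} \;=\; \trchc + \Bigl(\tr_K\chi_K - \frac{2}{r_K}\Bigr) + \Bigl(\frac{2}{r_K} - \frac{2}{r}\Bigr),
\]
where Proposition \ref{decayGamma} gives the middle term as $\O_2^1$, and the third term equals $-2\widecheck{r}/(r\,r_K)$, which lies in $\Gag$ because $r^{-2}\widecheck{r} \in \Gag$ by Definition \ref{gammag} and $r \simeq r_K$ on $\KK$. Combined with $\trchc \in \Gag$, this proves the first formula. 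The entries for $\trchb + 2/r$, $\hch$, $\hchb$, $\eta$, $\etab$, $\om$, $\omb$ follow in exactly the same way, the only distinction being whether the checked remainder sits in $\Gag$ or in $\Gab$, as read off from Definition \ref{gammag}.

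For the four metric entries, Definition \ref{gammag} weights the linearizations $\bcc$, $\Omc$, $\gac$, $\inc$ by $r^{-1}$, so each of them belongs to $r\Gag$. Adding the Kerr contributions from Proposition \ref{decayGamma} (together with the Minkowski background carried by $\ga_K$ and $\in_K$) produces the final four formulae. No substantive obstacle is expected: the lemma is essentially a bookkeeping statement repackaging Proposition \ref{decayGamma} and Definition \ref{gammag} into the schematic notation required by the nonlinear estimates in later sections, and the only subtle point, the discrepancy between $r$ and $r_K$, is uniformly handled by the inclusion $r^{-2}\widecheck{r} \in \Gag$.
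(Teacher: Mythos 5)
Your proposal is correct and follows the same route as the paper, which simply observes that the lemma is a direct consequence of Definitions \ref{gammag}, \ref{KerrO} and Proposition \ref{decayGamma}; your additional treatment of the $r$ versus $r_K$ discrepancy via $r^{-2}\widecheck{r}\in\Gag$ (the harmless sign slip in $\frac{2}{r_K}-\frac{2}{r}=\frac{2\widecheck{r}}{r r_K}$ is irrelevant for the schematic inclusion) just makes explicit what the paper leaves implicit.
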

\begin{proof}
    It follows directly from Definitions \ref{gammag}, \ref{KerrO} and Proposition \ref{decayGamma}.
\end{proof}
The following lemma allows us to compute the derivatives of $\bfO_q^p$.
\begin{lemma}\label{newuseful}
    Let $X_K$ be a $S$--tangent tensor field satisfying
    \begin{equation*}
        X_K=\O^p_q.
    \end{equation*}
    Then, we have
    \begin{align*}
        \Om\nab_3 X_K&=\O^p_{q+1}+\bfO_q^p\c\Gag+\bfO_{q}^p\c\Jbc+\bfO_q^p\c\Lc,\\
        \Om\nab_4 X_K&=\O^p_{q+1}+\bfO_{q}^p\c\Jc,\\
        \nab X_K&=\O^p_{q+1}+\bfO_{q}^p\c\Lc.
    \end{align*}
\end{lemma}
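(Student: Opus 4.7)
The three identities are all of the form ``(physical derivative of $X_K$) $=$ (Kerr derivative of $X_K$) $+$ (connection/frame correction)''. The plan is to write each physical derivative as a Kerr derivative plus an explicit defect, and then identify the defect with one of $\Gag$, $\Jc$, $\Jbc$, $\Lc$ using Definition \ref{dfGamma} and Lemmas \ref{checke4e3}, \ref{expressionGaR}. The Kerr derivative is then $\O^p_{q+1}$ by the assumption $X_K=\O^p_q$ together with Definition \ref{KerrO} (since $\nab_K$, $\Om_K(e_3)_K$, $\Om_K(e_4)_K$ all lie in the collection $\pr_K$, up to bounded factors like $\Om_K=\f12+\O^1_1$).

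First, for the $\nab_4$ identity, Lemma \ref{checke4e3} gives $\Om e_4=\Om_K(e_4)_K=\pr_\ub$. Therefore
\[
\Om\nab_4 X_K=\nab_{\pr_\ub}X_K=(\nab_K)_{\pr_\ub}X_K+(\nab-\nab_K)_{\pr_\ub}X_K.
\]
The first term equals $\Om_K(\nab_K)_{(e_4)_K}X_K=\O^p_{q+1}$. The second, by the third and fourth lines of \eqref{diffnab} in Proposition \ref{useful}, is a contraction of $(\bGa-\bGa_K)_{\ub A}^B=\Jc_A{}^B$ (Definition \ref{dfGamma}) against components of $X_K=\O^p_q$, i.e.\ $\O^p_q\cdot\Jc$. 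This yields the claimed formula for $\Om\nab_4X_K$.

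Next, for the angular derivative, I just write $\nab_AX_K=(\nab_K)_AX_K+(\nab-\nab_K)_AX_K$. The first two lines of \eqref{diffnab} identify the defect as a contraction of $(\Ga-\Ga_K)_{AB}^C=\Lc_{AB}^C$ with $X_K$, giving $\O^p_q\cdot\Lc$, while $(\nab_K)_AX_K=\O^p_{q+1}$ by Definition \ref{KerrO}. For the $\nab_3$ identity, Lemma \ref{checke4e3} now gives the non-trivial defect $\Om e_3=\Om_K(e_3)_K+\bcc^A\pr_{x^A}$, so
\[
\Om\nab_3X_K=\nab_{\pr_u}X_K+\bu^A\nab_AX_K
=(\nab_K)_{\pr_u}X_K+(\nab-\nab_K)_{\pr_u}X_K+\bu^A\nab_AX_K.
\]
The first piece is $\Om_K(\nab_K)_{(e_3)_K}X_K-\bu_K^A(\nab_K)_AX_K=\O^p_{q+1}$ (using $\bu_K=\O^2_2$ to absorb the second term). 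The second piece is $\O^p_q\cdot\Jbc$ by \eqref{diffnab} with the identification $(\bGa-\bGa_K)_{uA}^B=\Jbc_A{}^B$. For the last piece I decompose $\bu=\bu_K+\bcc$: the $\bu_K$ contribution combines $\O^2_2$ with the previous step, producing errors strictly better than needed, while the $\bcc$ contribution uses $r^{-1}\bcc\in\Gag$ (Definition \ref{gammag}) together with the just-established expansion $\nab X_K=\O^p_{q+1}+\O^p_q\cdot\Lc$ to yield $r\Gag\cdot(\O^p_{q+1}+\O^p_q\cdot\Lc)=\Gag\cdot\O^p_q+\Gag\cdot\O^p_{q-1}\cdot\Lc$, which is absorbed into $\O^p_q\cdot\Gag+\O^p_q\cdot\Lc$ per the conventions of Remark \ref{ignoreremark}.

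The only real subtlety is bookkeeping in the $\nab_3$ case, where the frame correction $\bcc$ acts on $\nab X_K$; one must use the already-derived $\nab X_K$ expansion inside $\bu^A\nab_AX_K$ and exploit the schematic weight conversion $r\cdot\O^p_{q+1}=\O^p_q$. All remaining terms are strictly better (extra factors of $M/r$) and are discarded by Remark \ref{ignoreremark}. No new estimates beyond Proposition \ref{useful} and Lemma \ref{checke4e3} are needed.
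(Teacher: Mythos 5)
Your proposal is correct and follows essentially the same route as the paper: split each physical derivative into the Kerr one (which is $\O^p_{q+1}$ by Definition \ref{KerrO}) plus the connection/frame defects, identified with $\Jc$, $\Jbc$, $\Lc$ via Proposition \ref{useful} and Definition \ref{dfGamma}, with the $\bcc$-term from Lemma \ref{checke4e3} producing the extra $\Gag\c\O^p_q$ in the $\nab_3$ case. Your coordinate bookkeeping ($\Om e_3=\pr_u+\bu^A\pr_{x^A}$ with $\bu=\bu_K+\bcc$) is just a reorganization of the paper's decomposition $\nab_{\Om e_3}-\nab_{\Om_K(e_3)_K}$ plus $(\nab-\nab_K)_{\Om_K(e_3)_K}$, and the absorptions you invoke ($M^2/r$ factors, quadratic terms) match the paper's conventions.
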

\begin{proof}
    We have from Lemma \ref{checke4e3} and Proposition \ref{useful}
    \begin{align*}
        \Om\nab_3 X_K-\Om_K\nab_{(e_3)_K}X_K&=\left(\nab_{\Om e_3}-\nab_{\Om_K(e_3)_K}\right)X_K+\left(\nab_{\Om_K(e_3)_K}-(\nab_K)_{\Om_K(e_3)_K}\right)X_K\\
        &=\bcc^A\nab_{\pr_{x^A}}X_K+(\nab-\nab_K)_{\pr_u}X_K+\bu^A_K(\nab-\nab_K)_{\pr_{x^A}}X_K\\
        &=\bcc^A(\nab-\nab_K)_{\pr_{x^A}}X_K+\bcc^A(\nab_K)_{\pr_{x^A}}X_K+\Jbc\c\bfO_q^p+\Lc\c\bfO_q^p\\
        &=\Gag\c\bfO_q^p+\Jbc\c\bfO_q^p+\Lc\c\bfO_q^p,
    \end{align*}
    where we used Remark \ref{ignoreremark} to ignore the terms having better decay. Combining with Definition \ref{KerrO}, we obtain
    \begin{equation*}
        \Om\nab_3 X_K=\bfO_{q+1}^p+\Gag\c\bfO_q^p+\Jbc\c\bfO_q^p+\Lc\c\bfO_q^p.
    \end{equation*}
    Similarly, we have from Lemma \ref{checke4e3} and Proposition \ref{useful}
    \begin{align*}
        \Om\nab_4X_K-\Om_K\nab_{(e_4)_K}X_K&=\left(\nab_{\Om e_4}-\nab_{\Om_K(e_4)_K}\right)X_K+\left(\nab_{\Om_K(e_4)_K}-(\nab_K)_{\Om_K(e_4)_K}\right)X_K\\
        &=(\nab-\nab_K)_{\pr_\ub}X_K\\
        &=\Jc\c\bfO_q^p.
    \end{align*}
    Combining with Definition \ref{KerrO}, we deduce
    \begin{equation*}
        \Om\nab_4X_K=\bfO_{q+1}^p+\Jc\c\bfO_q^p.
    \end{equation*}
    Finally, we have from Proposition \ref{useful} and Definition \ref{KerrO}
    \begin{align*}
        \nab X_K=\nab_K X_K+(\nab-\nab_K)X_K=\bfO_{q+1}^p+\Lc\c\bfO_q^p. 
    \end{align*}
    This concludes the proof of Lemma \ref{newuseful}.
\end{proof}
\begin{corollary}\label{newuse}
    We have the following schematic computation:
    \begin{equation*}
        \nab(\bfO_q^p\c\Gag)=\bfO_{q+1}^p\c\Gag^{(1)},\qquad \nab(\bfO_q^p\c\Gab)=\bfO_{q+1}^p\c\Gab^{(1)}.
    \end{equation*}
\end{corollary}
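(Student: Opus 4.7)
The identity is a direct Leibniz-type consequence of Lemma \ref{newuseful} combined with the definitions of $\Gag^{(1)}$ and $\Gab^{(1)}$. My plan is to apply $\nab$ as a derivation on the schematic product $\O_q^p\c\Gag$, control each resulting factor separately, and then repackage the output using the $\Gag^{(1)}$ notation together with the absorption conventions of Remark \ref{ignoreremark}.

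First, I would use the Leibniz rule to write
\begin{align*}
\nab(\O_q^p\c\Gag)=\nab(\O_q^p)\c\Gag+\O_q^p\c\nab(\Gag).
\end{align*}
For the first term, Lemma \ref{newuseful} gives $\nab X_K=\O_{q+1}^p+\O_q^p\c\Lc$ whenever $X_K=\O_q^p$, so $\nab(\O_q^p)\c\Gag=\O_{q+1}^p\c\Gag+\O_q^p\c\Lc\c\Gag$. Since $\Gag\subset\Gag^{(1)}$ and $\Lc\in\Gag^{(1)}$, and since the extra factor of $\Gag$ in the second summand decays at least like an additional $r^{-1}$ and may be absorbed (Remark \ref{ignoreremark}), both pieces fit into $\O_{q+1}^p\c\Gag^{(1)}$.

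For the second term, I would appeal to Definition \ref{gammag}: by construction $\Gag^{(1)}\supset(r\nab)^{\leq 1}\Gag^{(0)}$, which means precisely that $r\nab(\Gag)\subset\Gag^{(1)}$, i.e.\ $\nab(\Gag)$ is schematically an element of $r^{-1}\Gag^{(1)}$. Hence
\begin{align*}
\O_q^p\c\nab(\Gag)=r^{-1}\O_q^p\c\Gag^{(1)}=\O_{q+1}^p\c\Gag^{(1)},
\end{align*}
where in the last step I use that $r_K\sim r$ in $\KK$ so that an explicit $r^{-1}$ factor converts $\O_q^p$ to $\O_{q+1}^p$ in Definition \ref{KerrO}. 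Adding the two contributions yields the first claim. The argument for $\Gab$ is completely parallel: one replaces $\Gag$ by $\Gab$, notices that $\Jbc,r\bbc\in\Gab^{(1)}$ do not enter here because $\nab$ rather than $\nab_3$ is applied, and uses that $(r\nab)^{\leq 1}\Gab^{(0)}\subset\Gab^{(1)}$ by definition.

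There is essentially no obstacle in this corollary beyond careful bookkeeping of the schematic notation: one must make sure that every ``lower-order'' product that appears after Leibniz (such as $\Lc\c\Gag$, or $\Gag\c\Gag$ coming from expanding $\nab(\Gag)$ in coordinates) is indeed dominated, in the sense of Remark \ref{ignoreremark}, by the target $\O_{q+1}^p\c\Gag^{(1)}$ (resp.\ $\O_{q+1}^p\c\Gab^{(1)}$). Once this bookkeeping is in place, no additional PDE input is needed.
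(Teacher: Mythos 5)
Your proposal is correct and follows essentially the same route as the paper: Leibniz rule, Lemma \ref{newuseful} for $\nab\O_q^p=\O_{q+1}^p+\O_q^p\c\Lc$, the inclusion $r\nab\Ga_i\subset\Ga_i^{(1)}$ from Definition \ref{gammag}, and absorption of the better-decaying products via Remark \ref{ignoreremark}. No gaps.
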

\begin{proof}
    We have from Lemma \ref{newuseful}
    \begin{align*}
        \nab(\bfO_q^p\c\Ga_i)&=\bfO_q^p\c\nab\Ga_i+\nab\bfO_q^p\c\Ga_i\\
        &=\bfO_{q+1}^p\c\Ga_i^{(1)}+(\bfO_{q+1}^p+\bfO_q^p\c\Lc)\c\Ga_i\\
        &=\bfO_{q+1}^p\c\Ga_i^{(1)},
    \end{align*}
    for $i=g,b$. This concludes the proof of Corollary \ref{newuse}.
\end{proof}
\begin{remark}
    In the remainder of this paper, Lemma \ref{newuseful} and Corollary \ref{newuse} will be used frequently without explicit mention.
\end{remark}
\subsubsection{The linearized null structure equations}
In this section, we introduce the following linearized null structure equations, which play a fundamental role in Ricci coefficients estimates.
\begin{proposition}\label{nullstructure}
We have the following linearized null structure equations:
\begin{align*}
\nab_4\,\widecheck{\Om\trch}+\tr\chi\,\widecheck{\Om\trch}=&\bfO_1^0\c\widecheck{\Om\om}+\fl[\nab_4\,\trchc]+\err[\nab_4\,\trchc],\\
\fl[\nab_4\,\trchc]:=&\Gag\c\bfO_2^1,\\
\err[\nab_4\,\trchc]:=&\Gag\cdot\Gag,\\\\
\nab_3\,\widecheck{\Om\trchb}+\tr\chib\,\widecheck{\Om\trchb}=&\bfO_1^0\c\widecheck{\Om\omb}+\fl[\nab_3\,\trchc]+\err[\nab_3\,\trchc],\\
\fl[\nab_3\,\trchbc]:=&\Gag\c\bfO_2^1,\\
\err[\nab_3\,\trchbc]:=&\Gab\c\Gab,\\\\
\nab_4(\widecheck{\Om\trchb})=&-2\Om\mubmc+\fl[\nab_4\,\trchbc]+\err[\nab_4\,\trchbc],\\
\fl[\nab_4\,\trchbc]:=&\Gag\c\bfO_3^2,\\
\err[\nab_4\,\trchbc]:=&\Gag\c\Gag,\\\\
\nab_3(\widecheck{\Om\trch})=&-2\Om\mumc+\fl[\nab_3\,\trchc]+\err[\nab_3\,\trchc],\\
\fl[\nab_3\,\trchc]:=&\Gag\c\bfO_3^2,\\
\err[\nab_3\,\trchc]:=&\Gag\c\Gag.
\end{align*}
We also have the linearized Codazzi equations:
\begin{align*}
\sdiv(\hchc)=&-\bc+\bfO_0^0\c\nab\trchc+\bfO_1^0\c\zec+\fl[\sdiv(\widecheck{\hch})]+\err[\sdiv(\widecheck{\hch})],\\
\fl[\sdiv(\widecheck{\hch})]:=&\Gag\c\bfO_3^2+\bfO_3^2\c\Lc,\\
\err[\sdiv(\widecheck{\hch})]:=&\Gag\c\Gag,\\ \\
\sdiv(\widecheck{\hchb})=&\bbc+\bfO_0^0\c\nab\trchbc+\bfO_1^0\c\zec+\fl[\sdiv(\widecheck{\hchb})]+\err[\sdiv(\widecheck{\hchb})],\\
\fl[\sdiv(\widecheck{\hchb})]:=&\Gab\c\bfO_3^2+\bfO_3^2\c\Lc,\\
\err[\sdiv(\widecheck{\hchb})]:=&\Gab\c\Gag.
\end{align*}
The linearized mass aspect functions are given by:
\begin{align*}
    \sdiv\etac&=-\muc-\rhoc+\fl[\sdiv(\etac)]+\err[\sdiv(\etac)],\\
    \fl[\sdiv(\etac)]&=\Gab\c\bfO_3^2+\bfO_3^2\c\Lc,\\
    \err[\sdiv(\etac)]&=\Gag\c\Gab,\\ \\
    \sdiv\etabc&=-\mubc-\rhoc+\fl[\sdiv(\etabc)]+\err[\sdiv(\etabc)],\\
    \fl[\sdiv(\etabc)]&=\Gab\c\bfO_3^2+\bfO_3^2\c\Lc,\\
    \err[\sdiv(\etabc)]&=\Gag\c\Gab.
\end{align*}
The linearized torsion equations are given by:
\begin{align*}
\curl\etac=&\widecheck{\si}+\fl[\curl\etac]+\err[\curl\etac],\\
\fl[\curl\etac]:=&\Gab\c\bfO_3^2+\bfO_3^2\c\Lc,\\
\err[\curl\etac]:=&\Gab\c\Gag,\\ \\
\curl\etabc=&-\widecheck{\si}+\fl[\curl\etabc]+\err[\curl\etabc],\\
\fl[\curl\etabc]:=&\Gab\c\bfO_3^2+\bfO_3^2\c\Lc,\\
\err[\curl\etabc]:=&\Gab\c\Gag.
\end{align*}
Moreover, we have
\begin{align*}
\Om\nab_4(\widecheck{\Om\omb})=&\f12\widecheck{\Om^2\rho}+\fl[\nab_4\ombc]+\err[\nab_4\ombc],\\
\fl[\nab_4\ombc]:=&\Gag\c\bfO_2^1,\\
\err[\nab_4\ombc]:=&\Gag\c\Gag,\\ \\
\Om\nab_3(\widecheck{\Om\om})=&\f12\widecheck{\Om^2\rho}+\fl[\nab_3\omc]+\err[\nab_3\omc],\\
\fl[\nab_3\omc]:=&\Gag\c\bfO_2^1,\\
\err[\nab_3\omc]:=&\Gag\c\Gag.
\end{align*}
\end{proposition}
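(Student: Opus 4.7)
The plan is to derive each linearized identity by taking the corresponding nonlinear equation from Proposition \ref{standardnull} or Proposition \ref{cornull}, writing the same equation for the Kerr quantities (which are valid since $(\MM_{Kerr},\g_{a,M})$ is vacuum and the Kerr foliation is double null), subtracting, and then sorting the resulting terms according to the schematic conventions of Definition \ref{gammag} and Remark \ref{ignoreremark}. The two key tools are Lemma \ref{checke4e3}, which gives $\widecheck{\Om e_4}=0$ and $\widecheck{\Om e_3}=\bcc^A\pr_{x^A}$, and Proposition \ref{useful}--Lemma \ref{bGa}, which express the differences $\nab-\nab_K$, $\sdiv-\sdiv_K$, $\curl-\curl_K$, $\nab\hot-\nab_K\hot$ in terms of the tensors $\Jc,\Jbc,\Lc$ of Definition \ref{dfGamma}. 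Lemma \ref{expressionGaR} then allows us to split every background quantity as $\O^p_q+\Gag$ (or $+\Gab$), and Lemma \ref{newuseful}/Corollary \ref{newuse} handle differentiations of the $\O^p_q$ parts.

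First I would treat the transport equations in the conformally renormalized form \eqref{cornulleq}, since the conformal form is closed in $\Om\trch,\Om\trchb,\Om\om,\Om\omb$ and automatically produces the factors $\widecheck{\Om\om},\widecheck{\Om\omb},\widecheck{\Om\trch},\widecheck{\Om\trchb}$ appearing on the right-hand sides. For $\Om\nab_4(\Om\trch)$, since $\Om e_4=\Om_K(e_4)_K$ acts on scalars, the transport term becomes $\Om\nab_4\widecheck{\Om\trch}$ exactly; for the quadratic terms I expand bilinear expressions using $XY-X_KY_K=\Xc Y_K+X\Yc$ and then split each background factor via Lemma \ref{expressionGaR} into an $\O^p_q$ leading piece (yielding the stated $\fl$ terms) and a $\Gag$ or $\Gab$ remainder (yielding the quadratic $\err$ terms, after absorbing the doubly-checked products by Remark \ref{ignoreremark}). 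The equations \eqref{eqauxillary} for $\Om\nab_4(\Om\trchb)$ and $\Om\nab_3(\Om\trch)$ are handled identically, producing the $-2\Om\mubmc$ and $-2\Om\mumc$ principal terms directly. The equations for $\Om\nab_4(\Om\omb)$ and $\Om\nab_3(\Om\om)$ follow from the corresponding identities in Proposition \ref{standardnull} rewritten in conformal form via \eqref{6.6}, again with the quadratic $\eta,\etab$ terms becoming $\Gag\c\Gag$ errors.

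For the Codazzi, mass aspect, and torsion identities I would not transport but simply linearize the algebraic/spatial identities from Proposition \ref{standardnull} and Definition \ref{defauxi}. Writing $\sdiv\hch-\sdiv_K\hch_K=\sdiv\hchc+(\sdiv-\sdiv_K)\hch_K$ and applying the second line of \eqref{diffdiv} to the Kerr background piece, the correction is a sum of terms of the form $(\ga-\ga_K)\nab\O^2_3$ and $\Lc\c\O^2_3$, each of which is $\Gag\c\O^3_2+\O^3_2\c\Lc$ after using Lemma \ref{expressionGaR}; the nonlinear right-hand side of the Codazzi equation produces the $-\bc$ term together with $\O^0_1\c\zec$, $\nab\trchc$ and quadratic $\Gag\c\Gag$ pieces in the standard way. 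The torsion equation $\curl\eta=\si+\f12\hchb\wedge\hch$ is handled identically using the third line of \eqref{diffdiv}, yielding $\widecheck{\si}$ as the principal term and $\Gag\c\Gab$ as error; the $\etab$ version is analogous. The identities $\sdiv\etac=-\muc-\rhoc+\cdots$ follow directly from Definition \ref{defauxi} upon linearizing $\mu=-\sdiv\eta+\f12\hch\c\hchb-\rho$ (and similarly for $\mub$), the $\frac12\hch\c\hchb$ term being $\O^2_3\c\Gab+\Gag\c\O^2_3+\Gag\c\Gab$, which contributes only to $\fl$ and $\err$.

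The main obstacle is simply the careful bookkeeping to ensure that every term lands in the correct schematic class: distinguishing linear $\fl$ contributions (one linearized factor times a background $\O^p_q$ or the tensors $\Jc,\Jbc,\Lc$) from quadratic $\err$ contributions ($\Ga_i\c\Ga_j$ with $i,j\in\{g,b\}$), and correctly applying Remark \ref{ignoreremark} to absorb the faster-decaying terms. In particular, one has to verify in the $\nab_3\widecheck{\Om\trchb}$ and $\sdiv\widecheck{\hchb}$ equations that all errors decay at least like $\Gab\c\Gab$ respectively $\Gab\c\Gag$ (not worse), which is where the asymmetry between $\Gag$ and $\Gab$ in Definition \ref{gammag} is used. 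Once this bookkeeping is carried out equation by equation, every identity in the proposition follows directly.
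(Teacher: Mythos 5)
Your proposal follows essentially the same route as the paper: the paper's Appendix A.1 proves the more precise Proposition \ref{nullstructureA} by subtracting the Kerr analogue of each equation of Propositions \ref{standardnull} and \ref{cornull}, using Lemma \ref{checke4e3} (i.e. $\widecheck{\Om e_4}=0$, $\widecheck{\Om e_3}=\bcc^A\pr_{x^A}$) for the transport directions and Proposition \ref{useful} for the differences $\sdiv-\sdiv_K$, $\curl-\curl_K$, $\nab-\nab_K$, and then passes to the schematic form via Proposition \ref{decayGamma}, Definition \ref{gammag} and Remark \ref{ignoreremark}, exactly as you outline. Apart from a harmless transposition of indices in a couple of your schematic symbols (you write $\O^3_2$ where $\O_3^2$, i.e. $M^2/r^3$, is meant), the plan is correct and matches the paper's proof.
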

\begin{proof}
See Appendix \ref{pfnull}.
\end{proof}
\begin{remark}
    In the linearized equations of Proposition \ref{nullstructure}, as well as in the other linearized equations below, $\fl[...]$ contains linear terms which display more decay in $\frac{M}{r}$ and $\err[...]$ contains the nonlinear terms.
\end{remark}
\begin{proposition}\label{equationsmumub}
We have the following linearized equations:
\begin{align*}
\Om\nab_4\mumc+\Om\trch\mumc:=&\Om\trch\,\rhoc+\fl[\nab_4\mumc]+\err[\nab_4\mumc],\\
\fl[\nab_4\mumc]:=&\bfO_3^2\c (d_2^*\etac,\nab\trchc,\bc)+\bfO_3^1\c(\Gag,\Lc),\\
\err[\nab_4\mumc]:=&-2\hchc\c(d_2^*\etac)+(\etac-\etabc)\c\nab\trchc-2\widecheck{\Om\eta}\c\bc+r^{-1}\Gag\c\Lc+r^{-1}\Gag\c\Gab,\\ \\
\Om\nab_3\mubmc+\Om\trchb\mubmc:=&\Om\trchb\,\rhoc+\fl[\nab_3\mubmc]+\err[\nab_3\mubmc],\\
\fl[\nab_3\mubmc]:=&\bfO_3^2\c (d_2^*\etabc,\nab\trchbc,\bbc)+\bfO_3^1\c(\Gag,\Lc),\\
\err[\nab_3\mubmc]:=&-2\hchbc\c(d_2^*\etabc)+(\etabc-\etac)\c\nab\trchbc+2\widecheck{\Om\etab}\c\bbc+r^{-1}\Gab\c\Lc+r^{-1}\Gab\c\Gab.
\end{align*}
\end{proposition}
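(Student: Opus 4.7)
The plan is to derive the linearized equations directly from the nonlinear transport equations for $\mum$ and $\mubm$ stated in Proposition \ref{cornull}, by subtracting the corresponding Kerr equations and carefully reorganizing the remainders according to their decay. I will write out the argument for $\mumc$; the equation for $\mubmc$ is completely analogous, with the underlined versions of the operators and the obvious sign change from $-2\eta\c\b$ to $+2\etab\c\bb$.

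First I would apply the nonlinear identity
\[
\Om\nab_4\mum+\Om\trch\,\mum=\Om[F]+\Om(\trch\,\rho-2\eta\c\b)
\]
to the physical spacetime and, via the pullback $\Phi^*$, write down the same identity satisfied by the Kerr quantities $\mum_K$, $\Om_K$, $\trch_K$, $\rho_K$, $\eta_K$, $\b_K$, where the transport operator is $\Om_K(\nab_K)_{(e_4)_K}$. Subtracting yields an equation for $\mumc=\mum-\mum_K$. Using Lemma \ref{checke4e3} (which gives $\widecheck{\Om e_4}=0$) together with Lemma \ref{newuseful}, the mismatch between the transport operators $\Om\nab_4$ and $\Om_K\nab_{(e_4)_K}$ acting on the Kerr factor $\mum_K=\O^1_3$ produces only terms of type $\Jc\c\O^1_3$, which are absorbed into $\fl[\nab_4\mumc]$. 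Similarly, the second-order factors $\Om\trch-\Om_K\trch_K$ acting on $\mum_K$ split schematically as $\Gag\c\O^1_3+\O^1_3\c\Gag$, again contributing to $\fl$ or to the $\err$ quadratic part depending on whether one or both factors are check quantities.

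Next I would handle the source term $\Om\trch\,\rho$, which is the leading piece. Writing $\rho=\rho_K+\rhoc$, $\Om=\Om_K+\widecheck\Om$, $\trch=\trch_K+\widecheck{\trch}$ and using Lemma \ref{expressionGaR} together with Proposition \ref{decayGamma}, the dominant contribution $\Om\trch\,\rhoc$ is precisely the main term on the right of the displayed identity, while $\Omc\trch_K\rho_K$ and $\trchc\,\Om_K\rho_K$ feed into $\fl[\nab_4\mumc]$ as $\Gag\c\O^1_3$, and products of two check quantities feed into $\err[\nab_4\mumc]$ as $r^{-1}\Gag\c\Gab$ after extracting an $r^{-1}$. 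The term $-2\Om\eta\c\b$ is split as $-2\Om_K\eta_K\c\bc-2\Om_K\etac\c\b_K-2\widecheck{\Om\eta}\c\bc+\text{Kerr}$, with the Kerr piece cancelling, the first two terms absorbed in $\fl$ as $\O^2_3\c\bc+\O^2_4\c\etac$, and the last retained in $\err$ exactly as stated.

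Finally, I would expand the flux $F$ following its explicit formula. Each of the five contributions ($\hch\c\nab\hot\eta$, $(\eta-\etab)\c(\nab\trch+\trch\ze)$, $\trch(|\eta|^2-|\etab|^2)$, $\trchb|\hch|^2+\trch\hch\c\hchb$, and $\eta\c\hch\c\etab$) is quadratic in Ricci coefficients; substituting the decomposition of Lemma \ref{expressionGaR} into the Kerr part plus a check part gives, in each slot, one Kerr factor times a check factor (which falls into $\fl[\nab_4\mumc]$ with structure $\O^2_3\c (d_2^*\etac,\nab\trchc)$ or $\O^1_3\c\Lc$ when accounting for connection differences via Proposition \ref{useful}) and a genuinely quadratic check-check part landing in $\err[\nab_4\mumc]$. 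The terms $-2\hchc\c d_2^*\etac$ and $(\etac-\etabc)\c\nab\trchc$ are kept in explicit quadratic form in $\err$ since they involve top-order derivatives of check quantities and therefore will matter in the later estimates.

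The step requiring the most care is the last one: handling $F$ cleanly so that no schematic term is misclassified between $\fl$ and $\err$. The subtlety is that $\nab\hot\eta$ and $\nab\trch$, when linearized via Proposition \ref{useful}, generate additional $\Lc\c\eta_K$ and $\Lc\c\nab_K\trch_K$ contributions (connection-difference terms) which must be correctly placed in $\fl[\nab_4\mumc]$ as $\O^1_3\c\Lc$; missing these would be the main source of error, and the book-keeping is what Corollary \ref{newuse} and Remark \ref{ignoreremark} are designed to make systematic.
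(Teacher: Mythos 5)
Your route is the paper's own: subtract the pulled-back Kerr copy of the transport equations of Proposition \ref{cornull}, use Lemma \ref{checke4e3} to compare the transport operators, linearize $[F]$, $\nab\hot\eta$ and $\nab\trch$ through Proposition \ref{useful} (which is indeed where the $\Lc$ contributions enter), and sort the remainders into $\fl$ and $\err$ exactly as in Appendix \ref{equationsmu}. At that level the proposal is a faithful reproduction of the paper's argument, including the sign change $-2\eta\c\b\to+2\etab\c\bb$ for the $\mubmc$ equation.

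Two bookkeeping points, one of which is substantive. First, since $\mum$ is a scalar and $\Om e_4=\Om_K(e_4)_K=\pr_\ub$ exactly, there is no transport-operator mismatch at all in the $\nab_4$ equation, so no $\Jc\c\mum_K$ terms arise (for the $\nab_3$ equation the only mismatch is the $\bcc^A\pr_{x^A}\mubm_K$ term from Lemma \ref{checke4e3}); this is fortunate, because the stated $\fl$ only involves $(\Gag,\Lc)$ and would not cover a genuine $\Jc$ contribution. Second, your claim that $\mum_K=\O_3^1$ is incorrect: by \eqref{auxillaryquantities} and Proposition \ref{decayGamma}, $\mum_K=-\rho_K+\frac14\tr_K\chi_K\,\tr_K\chib_K+\O_4^2=-\frac{1}{r_K^2}+\O_3^1$, so it is only $\O_2^0$. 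Consequently the cross term $\widecheck{\Om\trch}\,\mum_K$ produced by linearizing $\Om\trch\,\mum$ is not of the form $\Gag\c\O_3^1$ as you assert; combined with $\widecheck{\Om\trch}\,\rho_K$ it equals $\widecheck{\Om\trch}\,(\rho_K-\mum_K)=r_K^{-2}\,\widecheck{\Om\trch}+\Gag\c\O_3^1$, and the leading piece $r^{-2}\widecheck{\Om\trch}$ is an $\O_2^0\c\Gag$ term that the stated $\fl[\nab_4\mumc]$ does not formally accommodate. This is precisely the delicate point of the derivation (the paper's own computation in Appendix \ref{equationsmu} passes over this cross term without comment), so your write-up should either record it explicitly or explain why it is harmless in the way the equation is later used, e.g.\ via the bound on $\widecheck{\Om\trch}$ of Proposition \ref{propOmtrch}, which is established before the $\mumc$ equation is integrated; as written, your classification of that term rests on a wrong Kerr decay rate rather than on an argument.
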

\begin{proof}
See Appendix \ref{equationsmu}.
\end{proof}
\subsubsection{The linearized Bianchi equations}
In this section, we introduce the linearized Bianchi equations, which play a fundamental role in curvature estimates.
\begin{proposition}\label{Bianchieq}
We have the following linearized Bianchi equations:
\begin{align*}
\nab_4\aac+\f12\tr\chi\,\aac
=&\nab\hot\bbc+\fl[\nab_4\aac]+\err[\nab_4\aac],\\
\fl[\nab_4\aac]:=&\Gab^{(1)}\c\bfO_3^1+\bfO_2^1\c\aac+\bfO_3^2\c(\bbc,\rhoc,\sic),\\
\err[\nab_4\aac]:=&\Gag\c(\aac,\bbc)+\Gab\c(\rhoc,\sic),\\ \\
\nab_{3}\bbc+2\tr\chib\,\bbc=&-\sdiv\aac+\fl[\nab_{3}\bbc]+\err[\nab_{3}\bbc],\\
\fl[\nab_{3}\bbc]:=&\Gab^{(1)}\c\bfO_4^2+\bfO_2^1\c\bbc+\bfO_3^2\c\aac,\\
\err[\nab_3\bbc]:=&\Gab\c\bbc+\Gag\c\aac,\\ \\
\nab_4\bbc+\trch\,\bbc=&-\nab\rhoc+{^*\nab}\sic+\fl[\nab_4\bbc]+\err[\nab_4\bbc],\\
\fl[\nab_4\bbc]:=&\Gag^{(1)}\c\bfO_3^1+\bfO_2^1\c\bbc+\bfO_3^2\c(\rhoc,\sic,\bc),\\
\err[\nab_4\bbc]:=&\Gag\c(\bbc,\rhoc,\sic)+\Gab\c\bc,\\ \\
\nab_3(\rhoc,\sic)+\frac{3}{2}\trchb\,(\rhoc,\sic)&=-d_1\bbc+\fl[\nab_3(\rhoc,\sic)]+\err[\nab_3(\rhoc,\sic)],\\
\fl[\nab_3(\rhoc,\sic)]:=&\Gag^{(1)}\c\bfO_3^1+\bfO_3^2\c(\aac,\bbc), \\
\err[\nab_3(\rhoc,\sic)]:=&\Gag\c(\aac,\bbc),\\ \\
\nab_4(\rhoc,-\sic)+\frac{3}{2}\trch(\rhoc,-\sic)
=&d_1\bc+\fl[\nab_4(\rhoc,-\sic)]+\err[\nab_4(\rhoc,-\sic)],\\
\fl[\nab_4(\rhoc,-\sic)]:=&\Gag^{(1)}\c\bfO_3^1+\bfO_3^2\c(\ac,\bc),\\
\err[\nab_4(\rhoc,-\sic)]:=&\Gab\c\ac+\Gag\c\bc,\\ \\
\nab_3\bc+\trchb\,\bc=&\nab\rhoc+{^*\nab\sic}+\fl[\nab_3\bc]+\err[\nab_3\bc],\\
\fl[\nab_3\bc]:=&\Gag^{(1)}\c\bfO_3^1+\bfO_2^1\c\bc+\bfO_3^2\c(\bbc,\rhoc,\sic),\\
\err[\nab_3\bc]:=&\Gab\c\bc+\Gag\c(\bbc,\rhoc,\sic),\\ \\
\nab_4\bc+2\trch\,\bc=&\sdiv\ac+\fl[\nab_4\bc]+\err[\nab_4\bc],\\
\fl[\nab_4\bc]:=&\Gag^{(1)}\c\bfO_4^2+\bfO_2^1\c(\bc,\ac),\\
\err[\nab_4\bc]:=&\Gag\c(\bc,\ac),\\ \\
\nab_3\ac+\f12\trchb\,\ac=&\nab\hot\bc+\fl[\nab_3\ac]+\err[\nab_3\ac],\\
\fl[\nab_3\ac]:=&\Gag^{(1)}\c\bfO_3^1+\bfO_2^1\c\ac+\bfO_3^2\c(\rhoc,\sic,\bc),\\
\err[\nab_3\ac]:=&\Gab\c\ac+\Gag\c\bc.
\end{align*}
\end{proposition}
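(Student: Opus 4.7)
The plan is to derive each linearized Bianchi identity by substituting $\psi = \psi_K + \psic$ for every curvature component and $\Ga = \Ga_K + \Gac$ (schematically) for every Ricci coefficient and metric component into the corresponding standard equation of Proposition \ref{standardbianchi}, then subtracting the Kerr version of the same equation. Because the Kerr metric satisfies the vacuum Bianchi identities, the Kerr quantities satisfy their own Bianchi system exactly, so after subtraction only differences remain. The main structural observation is that the principal linear parts of the form $\nab_4 \psic$, $\nab_3 \psic$, $\tr\chi\,\psic$, $\nab \psic$, etc.\ are the ones displayed on the left-hand sides of each stated equation, while every other term is either classified into $\fl[\cdots]$ (linear but with extra $M/r$ decay) or $\err[\cdots]$ (genuinely quadratic in the checked quantities).

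The three technical ingredients are Lemma \ref{checke4e3}, Proposition \ref{useful}, and Lemma \ref{newuseful}. First, since $\widecheck{\Om e_4} = 0$ and $\widecheck{\Om e_3} = \bcc^A \pr_{x^A}$, replacing $\Om\nab_4$ and $\Om\nab_3$ acting on $\psi_K$ by $\Om_K (\nab_K)_{(e_4)_K}$ and $\Om_K (\nab_K)_{(e_3)_K}$ produces error terms of type $\bcc \c \nab \psi_K \in \Gag\c\O^p_{q+1}$, which by Lemma \ref{newuseful} and Remark \ref{ignoreremark} are absorbed into the $\fl[\cdots]$ pieces (since $\bcc \in r\Gag$). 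Second, for the Hodge-type operators $\sdiv$, $\curl$, $\nab\hot$ appearing on the right-hand sides, Proposition \ref{useful} gives the difference with the Kerr operators entirely in terms of $\Lc$, $\gac$, $\inc$ multiplied by $\nab \psi_K$ or $\psi_K$, which again fall under $\O^p_q\c\Lc$ or $\O^p_q\c\Ga$ and are therefore $\fl$ terms. Third, every product $\chi\c\psi = (\chi_K+\chic)\c(\psi_K+\psic)$ expands into a Kerr-Kerr part (cancelled by subtraction), two mixed terms (which are linear in the checked quantities and absorbed into $\fl$ via Proposition \ref{decayGamma} and Lemma \ref{expressionGaR}), and a genuine quadratic piece landing in $\err$.

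Concretely, I would carry out the derivation equation by equation in the order listed: $\nab_4\aac$, $\nab_3\bbc$, $\nab_4\bbc$, $\nab_3(\rhoc,\sic)$, $\nab_4(\rhoc,-\sic)$, $\nab_3\bc$, $\nab_4\bc$, $\nab_3\ac$. For each one, the steps are: (i) write the full expanded right-hand side of the standard Bianchi equation; (ii) identify the Kerr background part and subtract it; (iii) split all products using $\psi = \psi_K + \psic$ and $\Ga = \Ga_K + \Gac$; (iv) move the principal linear $\nab$ and $\trch$/$\trchb$ contributions to the left; (v) separate the remaining terms into $\fl$ (containing $\psic$ or $\Gac$ multiplied by $\O^p_q$-type Kerr quantities, plus the Hodge correction terms from Proposition \ref{useful}) and $\err$ (quadratic in checked quantities). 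The decay weights stated in $\fl$ follow directly from Proposition \ref{decayGamma}: e.g., in $\nab_4\aac$ the term $-3(\hchb\rho - {}^*\hchb\si)$ produces $\hchb_K\c\rhoc \in \O^2_3\c\rhoc$ and $\hchbc\c\rho_K \in \Gab\c\O^1_3$, which match the $\O^3_2\c(\bbc,\rhoc,\sic)$ and $\Gab^{(1)}\c\O^1_3$ entries in $\fl$, while the crossed checked term $\hchbc\c\rhoc$ lands in $\err$.

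The routine obstacle is not conceptual but combinatorial: one must track the several schematic conventions simultaneously (Definition \ref{gammag} splitting into $\Gag$ vs.\ $\Gab$, Definition \ref{KerrO} for $\O^p_q$, and the absorption rules of Remark \ref{ignoreremark}) and verify that each term is placed in the class with the sharpest admissible decay. In particular the distinction between $\Gag$ and $\Gab$ matters for pairings such as $\hchb\c\a$ versus $\hch\c\aa$, where the former produces $\Gab\c\ac$ and the latter $\Gag\c\aac$; getting the $g$/$b$ bookkeeping right here is the main place where care is required. Beyond this, since $\nab_4\ac$ does not appear in the list (it is the Teukolsky direction, handled separately in Appendix \ref{secd}), no further complications arise, and the derivation is a direct expansion whose details are deferred to Appendix \ref{appa}.
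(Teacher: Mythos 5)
Your proposal is correct and follows essentially the same route as the paper: the paper's Appendix A.3 linearizes each standard Bianchi equation by subtracting its Kerr pullback, using Lemma \ref{checke4e3} and Proposition \ref{useful} for the frame and connection differences and Lemma \ref{expressionGaR}, Proposition \ref{decayGamma} and Remark \ref{ignoreremark} for the schematic $\fl$/$\err$ bookkeeping, exactly as you describe. The only difference is organizational: the paper packages the subtraction step once and for all in the general Lemma \ref{linbianchi} (producing the explicit $\Jc,\Jbc,\Lc$ correction terms) and then applies it equation by equation, rather than redoing the expansion ad hoc for each equation.
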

\begin{proof}
See Appendix \ref{pfBianchi}.
\end{proof}
\begin{proposition}\label{Bianchieqdkb}
    We have the following linearized equations:\footnote{Recall that $\dkb$ is introduced in Definition \ref{dfdkb}.}
\begin{align*}
\nab_4\dkbaac+\f12\trchb\dkbaac=&-\nab\hot\dkbbbc+\fl[\nab_4\dkbaac]+\err[\nab_4\dkbaac],\\
\fl[\nab_4\dkbaac]=&\Gab^{(1)}\c\bfO_3^1+\bfO_2^1\c\aac^{(1)}+\bfO_3^2\c(\bbc^{(1)},\rhoc^{(1)},\sic^{(1)}),\\
\err[\nab_4\dkbaac]=&\Gag^{(1)}\c(\aac,\bbc)+\Gab^{(1)}\c(\rhoc,\sic)+\Gag\c(\aac^{(1)},\bbc^{(1)})+\Gab\c(\rhoc^{(1)},\sic^{(1)}),\\ \\
\nab_3\widecheck{\dkb\bb}+2\trchb\widecheck{\dkb\bb}=&-\sdiv\widecheck{\dkb\aa}+\fl[\nab_3\widecheck{\dkb\bb}]+\err[\nab_3\widecheck{\dkb\bb}],\\
\fl[\nab_3\widecheck{\dkb\bb}]:=&\Gab^{(1)}\c\bfO_4^2+\bfO_2^1\c\bbc^{(1)}+\bfO_3^2\c\aac^{(1)},\\
\err[\nab_3\widecheck{\dkb\bb}]:=&\Gab^{(1)}\c\bbc+\Gab\c\bbc^{(1)}+\Gag^{(1)}\c\aac+\Gag\c\aac^{(1)},\\\\
\nab_4\dkbbbc+\trch\dkbbbc:=&-\nab\dkbrhoc+{^*\nab}\dkbsic+\fl[\nab_4\dkbbbc]+\err[\nab_4\dkbbbc],\\
\fl[\nab_4\dkbbbc]:=&\Gag^{(1)}\c\bfO_3^1+\bfO_2^1\c\bbc^{(1)}+\bfO_3^2\c(\rhoc^{(1)},\sic^{(1)},\bc^{(1)}),\\
\err[\nab_4\dkbbbc]:=&\Gag^{(1)}\c(\bbc,\rhoc,\sic)+\Gag\c(\bbc^{(1)},\rhoc^{(1)},\sic^{(1)})+\Gab^{(1)}\c\bc+\Gab\c\bc^{(1)},\\ \\
\nab_3(\dkbrhoc,\dkbsic)+\frac{3}{2}\trchb(\dkbrhoc,\dkbsic)=&-d_1\dkb\bbc+\fl[\nab_3(\dkbrhoc,\dkbsic)]+\err[\nab_3(\dkbrhoc,\dkbsic)],\\
\fl[\nab_3(\dkbrhoc,\dkbsic)]:=&\Gab^{(1)}\c\bfO_3^1+\bfO_3^2\c(\aac^{(1)},\bbc^{(1)},\rhoc^{(1)},\sic^{(1)}),\\
\err[\nab_3(\dkbrhoc,\dkbsic)]:=&\Gag^{(1)}\c(\aac,\bbc)+\Gag\c(\aac^{(1)},\bbc^{(1)})+\Gab\c(\rhoc^{(1)},\sic^{(1)})+\Gab^{(1)}\c(\rhoc,\sic),\\\\
\nab_4(\dkbrhoc,-\dkbsic)+\frac{3}{2}\trch(\dkbrhoc,-\dkbsic)=&d_1\dkbbc+\fl[\nab_4(\dkbrhoc,-\dkbsic)]+\err[\nab_4(\dkbrhoc,-\dkbsic)],\\
\fl[\nab_4(\dkbrhoc,-\dkbsic)]:=&\Gag^{(1)}\c\bfO_3^1+\bfO_3^2\c(\ac^{(1)},\bc^{(1)},\rhoc^{(1)},\sic^{(1)}),\\
\err[\nab_4(\dkbrhoc,-\dkbsic)]:=&\Gab^{(1)}\c\ac+\Gab\c\ac^{(1)}+\Gag^{(1)}\c(\bc,\rhoc,\sic)+\Gag\c(\bc^{(1)},\rhoc^{(1)},\sic^{(1)}),\\ \\
\nab_3\dkb\bc+\trchb\,\dkb\bc=&\nab\dkb\rhoc+{^*\nab\dkb\sic}+\fl[\nab_3\dkbbc]+\err[\nab_3\dkbbc],\\
\fl[\nab_3\dkbbc]:=&\Gag^{(1)}\c\bfO_3^1+\bfO_2^1\c\bc^{(1)}+\bfO_3^2\c(\bbc^{(1)},\rhoc^{(1)},\sic^{(1)}),\\
\err[\nab_3\dkbbc]:=&\Gab^{(1)}\c\bc+\Gab\c\bc^{(1)}+\Gag^{(1)}\c(\bbc,\rhoc,\sic)+\Gag\c(\bbc^{(1)},\rhoc^{(1)},\sic^{(1)}),\\ \\
\nab_4\dkb\bc+2\trch\dkb\bc=&\sdiv\dkb\ac+\fl[\nab_4\dkbbc]+\err[\nab_4\dkbbc],\\
\fl[\nab_4\dkbbc]:=&\Gag^{(1)}\c\bfO_4^2+\bfO_2^1\c(\bc^{(1)},\ac^{(1)}),\\
\err[\nab_4\dkbbc]:=&\Gag^{(1)}\c(\bc,\ac)+\Gag\c(\bc^{(1)},\ac^{(1)}),\\ \\
\nab_3\dkb\ac+\f12\trchb\dkb\ac=&\nab\hot\dkb\bc+\fl[\nab_3\dkbac]+\err[\nab_3\dkbac],\\
\fl[\nab_3\dkbac]:=&\Gag^{(1)}\c\bfO_3^1+\bfO_2^1\c\ac^{(1)}+\bfO_3^2\c(\rhoc^{(1)},\sic^{(1)},\bc^{(1)}),\\
\err[\nab_3\dkbac]:=&\Gab^{(1)}\c\ac+\Gab\c\ac^{(1)}+\Gag^{(1)}\c\bc+\Gag\c\bc^{(1)}.
\end{align*}
\end{proposition}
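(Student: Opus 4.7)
The plan is to derive each equation by applying the appropriate weighted Hodge operator $\dkb = rd_1^*$, $rd_1$, or $rd_2$ (according to Definition \ref{dfdkb}) to the corresponding linearized Bianchi equation of Proposition \ref{Bianchieq}, and then commuting through $\nab_3$ or $\nab_4$. All the equations have the same structure, so I describe the method for one representative case and indicate the modifications for the others.

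Take for instance the equation for $\nab_3\aac$ in Proposition \ref{Bianchieq}:
\begin{equation*}
\nab_3\ac+\tfrac12\trchb\,\ac=\nab\hot\bc+\fl[\nab_3\ac]+\err[\nab_3\ac].
\end{equation*}
Since $\ac$ is a symmetric traceless $2$--tensor, I apply $\dkb = rd_2$ to both sides. The principal work is to compute the commutator $[d_2,\nab_3]$ and to handle the factor of $r$. For the $r$ factor, Lemma \ref{dint} gives $e_3(r)=\tfrac{\overline{\Omega\trchb}}{2\Omega}r$, and writing $\trchb=-\tfrac{2}{r}+\O^1_2+\Gab$ (Lemma \ref{expressionGaR}) introduces only $\O^1_2$- and $\Gab$-type corrections, hence a contribution of the form $\O^1_2\cdot\ac^{(1)} + \Gab\cdot\ac^{(1)}$. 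For the commutator $[d_2,\nab_3]$, the Ricci formulae \eqref{ricciformulas} together with the standard angular commutation lemma (see e.g.\ \cite{Kl-Ni}) produce terms schematically of type $\chib\cdot\nab\ac$, $\eta\cdot\nab_3\ac$, $\nab\chib\cdot\ac$, plus curvature terms from $[\nab,\nab_3]$. Linearizing each Ricci coefficient via Lemma \ref{expressionGaR} splits every such term into an $\O$-part (going into $\fl$) and a $\Gamma$-part (going into $\err$); Lemma \ref{newuseful} and Corollary \ref{newuse} ensure that $\nab$ acting on $\O^p_q$ and on $\Gamma\cdot\O^p_q$ behaves as expected.

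Next, on the right-hand side I commute $\dkb$ with $\nab\hot$: a direct computation using Proposition \ref{useful} yields $\dkb(\nab\hot\bc) = \nab\hot\dkb\bc + \text{[commutator]}$, where the commutator consists of Gauss-curvature terms (absorbed via $\mathbf{K} = -\tfrac{1}{4}\trch\trchb+\tfrac12\hchb\c\hch-\rho$, again of type $\O^1_2+\Gab\cdot\O$) acting on $\bc$; after multiplication by $r$ this is exactly the sort of mixed term $\O^2_1\cdot\bc^{(1)}+\O^2_3\cdot\bc^{(1)}+\Gamma^{(1)}\cdot\bc$ that appears in the stated $\fl$ and $\err$. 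Finally, for the lower-order terms $\fl[\nab_3\ac]$ and $\err[\nab_3\ac]$, I apply $\dkb = r d_2$ and use Corollary \ref{newuse} to pick up one extra angular derivative on any $\Gamma$ (producing $\Gamma^{(1)}$) and on any $\O^p_q$ (producing again $\O^p_q$ modulo harmless $\Lc$-contributions, which are themselves in $\Gag^{(1)}$). Collecting everything and re-sorting terms according to the conventions of Remark \ref{ignoreremark} gives exactly the stated equation for $\nab_3\dkbac$.

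The remaining seven equations are handled identically: for the $\nab_4$ equations one uses $e_4(r)=\tfrac{\overline{\Omega\trch}}{2\Omega}r$ together with $\trch-\tfrac{2}{r}=\O^1_2+\Gag$ (which produces $\Gag^{(1)}$-type error terms rather than $\Gab^{(1)}$), and one commutes $\dkb$ through the principal operators $-\sdiv$, $-\nab\rhoc+{}^*\nab\sic$, $-d_1\bbc$, $d_1\bc$, $\sdiv\ac$, $\nab\hot$ using Proposition \ref{useful} and the Hodge identities \eqref{dddd}. The main obstacle is not conceptual but bookkeeping: one must verify in each case that every commutator contribution indeed lands in the declared $\fl$ or $\err$ class, in particular that the weights in $M$ and $r$ match and that the $\Gag/\Gab$ dichotomy (where $\Gag$ decays strictly better than $\Gab$) is respected so that the scheme of Remark \ref{ignoreremark} absorbs the subdominant pieces. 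Once this is carefully carried out term by term—most efficiently by treating the $\nab_3$-equations and the $\nab_4$-equations as two parallel batches—the full statement follows.
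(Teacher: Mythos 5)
Your toolkit (commuting $\dkb$ with $\nab_3,\nab_4$, Hodge identities, the schematic calculus of Lemma \ref{newuseful}) is the right one, but the order of operations you chose is exactly what the paper is careful to avoid, and it breaks the argument in two ways. First, applying $\dkb$ to the already linearized equations of Proposition \ref{Bianchieq} yields, at best, transport equations for $\dkb\ac$, $\dkb\bc$, etc., whereas the proposition is about the different quantities $\widecheck{\dkb\a}$, $\widecheck{\dkb\b}$, etc.; these differ by $(\dkb-\dkb_K)\psi_K=r\Lc\c\psi_K$ (cf.\ Lemma \ref{totallycheck}), and converting an equation for one into an equation for the other requires $\nab_3$ or $\nab_4$ of $\Lc$, which is not controlled. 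Second, and more seriously, the $\fl$ and $\err$ terms of Proposition \ref{Bianchieq} already contain $\Gag^{(1)}\c\O_3^1$ and $\Gab^{(1)}\c\O_3^1$, and by Definition \ref{gammag} the classes $\Gag^{(1)}$, $\Gab^{(1)}$ include the Christoffel differences $\Jc$, $\Jbc$, $\Lc$. Hitting these terms with $\dkb=r\nab$ produces $\Gag^{(2)}$-type quantities and in particular $r\nab\Jc$, $r\nab\Jbc$, $r\nab\Lc$, which appear neither in the stated $\fl/\err$ classes nor in the norms: $\OO_{\bGa}$ in Section \ref{onorms} controls only the undifferentiated $\Jc,\Jbc,\Lc$, and the $\OO$ norms control only one derivative of the Ricci coefficients. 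Your claim that applying $\dkb$ ``picks up one extra angular derivative on any $\Gamma$, producing $\Gamma^{(1)}$'' is only valid when the input is $\Gamma^{(0)}$; applied to the terms that are already $\Gamma^{(1)}$ it loses a derivative. This is precisely the loss of derivative flagged in the Remark following Proposition \ref{Bianchieqdkb}.

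The paper circumvents this by reversing the two steps: Lemma \ref{dkbcomm} first commutes $\dkb$ with the \emph{nonlinear} Bianchi equations of Proposition \ref{standardbianchi}, so that all derivatives fall on full geometric quantities (giving $h^{[1]}$ with at most one derivative of checked quantities after linearization), and only afterwards Lemma \ref{linbianchi} linearizes the commuted equations, a step which introduces $\Jc$, $\Jbc$, $\Lc$ only undifferentiated, contracted against Kerr background values. To make your argument work you would need to adopt this commute-first-then-linearize order (or else control an additional derivative of $\Jc,\Jbc,\Lc$ and of the Ricci coefficients, which the whole scheme is designed to avoid).
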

\begin{remark}
    Notice that Proposition \ref{Bianchieqdkb} involves the quantities $\dkbac$, $\dkbbc$, $\dkbrhoc$, $\dkbsic$, $\dkbbbc$ and $\dkbaac$ instead of $\dkb\ac$, $\dkb\bc$, $\dkb\rhoc$, $\dkb\sic$, $\dkb\bbc$ and $\dkb\aac$. This allows to avoid a loss of a derivative of $\Jc$, $\Jbc$ and $\Lc$ in the linearization procedure. Indeed, we have for example
    \begin{equation*}
        \dkbbc=\dkb\bc+(\Jc,\Jbc,\Lc)\cdot \b_K,
    \end{equation*}
    and the norm $\OO_\Ga$ in Section \ref{onorms} only controls $(\Jc,\Jbc,\Lc)$ but not its first order derivatives.
\end{remark}
\begin{proof}
See Appendix \ref{pfBianchi}.
\end{proof}
\subsubsection{Linearized equations for the metric components}
\begin{proposition}\label{metriceqre}
We have the following transport equations:
\begin{align*}
        \nab_4(\gac_{AB})-\trch(\gac_{AB})=&\bfO_{-2}^0\c(\hchc,\trchc)+O(r)\Omc+r^3\Gag\c\Gag,\\
        \nab_4(\Omc)=&\bfO_0^0\c\widecheck{\Om\om}+\bfO_1^1\c\Gag+r\Gag\c\Gag,\\
        \nab_4(\bcc^A)=&\bfO_1^0\c\zec+\bfO_3^2\c\Gag+\Gag\c\Gag,\\
        \nab_4(\inc_{AB})-\trch(\inc_{AB})=&\bfO_{-2}^0\c\trchc+r^3\Gag\c\Gag.
\end{align*}
Moreover, we have the following transport equations in the case $\Omc=0$ and $\bcc=0$:\footnote{The conditions $\Omc=0$ and $\bcc=0$ hold on $\Cb_*$, see Definition \ref{geodesicfoliation}.}
\begin{align*}
    \nab_3(\gac_{AB})-\trchb(\gac_{AB})=&\bfO_{-2}^0\c(\widecheck{\trchb},\hchbc)+\bfO_3^2\c\gac_{AB},\\
    \nab_3(\inc_{AB})-\trchb(\inc_{AB})=&\bfO_{-2}^0\c\trchbc+\O^2_3\c\inc_{AB}.
\end{align*}
\end{proposition}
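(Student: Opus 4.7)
The strategy is to linearize each of the five transport equations in Proposition \ref{metricequations} by subtracting its pullback from Kerr via $\Phi$, then to organize the resulting difference into a linear leading term in the checked quantities, lower-order linear remainders controlled by $\O^p_q\cdot\Ga$, and quadratic remainders of the form $\Ga\cdot\Ga$. The key technical tools are Lemma \ref{checke4e3}, which identifies $\widecheck{\Om e_4}=0$ and $\widecheck{\Om e_3}=\bcc^A\pr_{x^A}$, Lemma \ref{newuseful} for commuting $\nab_4$, $\nab_3$ and $\nab$ past Kerr-tangent tensors, the schematic identifications collected in Lemma \ref{expressionGaR}, and Remark \ref{ignoreremark} to discard subleading contributions.

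For the $\nab_4\ga_{AB}$ equation, I would first rewrite $\chi_{AB}=\hch_{AB}+\f12\trch\,\ga_{AB}$, so that Proposition \ref{metricequations} becomes $\nab_4\ga_{AB}-\trch\,\ga_{AB}=2\hch_{AB}$. Subtracting the pullback of the Kerr analog and using Lemma \ref{newuseful} to pass between $\nab_4(\ga_K)$ and $(\nab_K)_{(e_4)_K}(\ga_K)$, the right-hand side produces the principal linear contribution in $\hchc$ and $\trchc$ with coefficients in $\O^0_{-2}$ (reflecting the $\ga$-contraction), while the mismatch between $\Om e_4$ and $\Om_K(e_4)_K$ introduces an $O(r)\Omc$ factor through the rescaling. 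Nonlinear interactions, notably $\trchc\cdot\gac$ and products of $\Gag$ quantities coming from the connection difference $(\nab-\nab_K)\ga_K$, are collected into a single $r^3\Gag\cdot\Gag$ error after inserting the $r^{-1}$ weight that places $\gac$ in $r\Gag$. The $\nab_4\Omc$ equation follows in the same manner from $e_4(\Om)=-2\Om\om$ derived from \eqref{6.6}, by writing $\Om\om=\widecheck{\Om\om}+\Om_K\om_K$ and expanding. The $\nab_4\bcc^A$ equation linearizes directly from $\nab_4\bu^A=-4\Om\ze^A$, with $\zec$ as principal term, and $\nab_4\inc_{AB}$ proceeds exactly like $\gac_{AB}$ except that no traceless contribution appears.

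The two $\nab_3$ equations require the additional assumptions $\Omc=0$ and $\bcc=0$, which imply $\Om=\Om_K=\f12+\O^1_2$ and $\bu=\bu_K=\O^2_2$. With these identifications, the problematic terms $\Om^{-1}\pr_A(\bu^C)\ga_{BC}$ in Proposition \ref{metricequations} linearize into $\O^2_3\cdot\gac_{AB}$ upon subtraction, since $\pr(\bu_K)=\O^2_3$ and the corresponding Kerr piece of $\ga$ cancels the Kerr contribution, producing precisely the $\O^2_3\cdot\gac_{AB}$ (respectively $\O^2_3\cdot\inc_{AB}$) term in the statement. The leading $2\chib_{AB}$ on the right linearizes as in the $\nab_4$ case to yield $\O^0_{-2}\cdot(\widecheck{\trchb},\hchbc)$, and the vanishing of $\bcc$ and $\Omc$ kills the quadratic $r^3\Gag\cdot\Gag$ errors that appear in the $\nab_4$ analog, explaining their absence in these two equations.

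The main bookkeeping obstacle will be the careful tracking of the $r$-weights enforced by Definition \ref{gammag}: because $\gac$ and $\inc$ sit in $r\Gag$ rather than in $\Gag$, every quadratic expression involving these quantities must be rescaled before being absorbed into the schematic $r^3\Gag\cdot\Gag$ remainder, and one must verify that this rescaling is consistent with the decay of the Kerr coefficients supplied by Proposition \ref{decayGamma}. Once this weight bookkeeping is done systematically, each of the six equations reduces to a direct algebraic computation from its unlinearized counterpart.
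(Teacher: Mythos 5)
Your proposal is correct and takes essentially the same route as the paper, which in Appendix \ref{pfmetric} proves the precise identities of Proposition \ref{metriceq} by subtracting the pulled-back Kerr transport equations (using $\widecheck{\Om e_4}=0$, $\widecheck{\Om e_3}=\bcc^A\pr_{x^A}$ and Proposition \ref{useful}) and then converts them to the schematic form via Proposition \ref{decayGamma} and Definition \ref{gammag}, exactly as you outline. One phrase should be adjusted: since $\Om e_4=\Om_K(e_4)_K$ exactly, the $O(r)\Omc$ term does not come from a mismatch of these vector fields but from the cross term $\Omc\cdot\chi_K$ (equivalently the $\Om$ versus $\Om_K$ rescaling) produced when splitting $\widecheck{\Om\chi}$ and passing from $\Om\nab_4$ back to $\nab_4$.
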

\begin{proof}
See Appendix \ref{pfmetric}.
\end{proof}
\subsection{Commutator identities}
We recall the following commutator identities for scalar functions, see Proposition 4.8.1 of \cite{Kl-Ni}.
\begin{proposition}\label{commkn}
For a scalar function $f$, we have
\begin{align*}
    [\Om\nab_4,\nabla]f&=-\Om\chi\cdot\nabla f,\\
    [\Om\nab_3,\nabla]f&=-\Om\chib\cdot\nabla f,\\
    [\nabla_4,\nabla_3]f&=2\omega\nabla_3 f -2\omb\nabla_4 f-4\zeta\cdot\nabla f.
\end{align*}
\end{proposition}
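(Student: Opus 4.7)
The plan is to reduce each of the three identities to a computation of the Lie bracket of the relevant pair of frame vector fields and then apply the bracket to $f$. Since $f$ is a scalar, $\nab_A f = e_A(f)$ and $\nab_3 f = e_3(f)$, so once the bracket is pinned down the identities follow by bookkeeping of the correction terms that appear because $\nab_4$ (resp. $\nab_3$) acts on the horizontal $1$-form $\nab f$ rather than on a scalar.

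For the first identity I would start from the Ricci formulae \eqref{ricciformulas} (recalling $\xi=0$) to compute
\begin{align*}
[\Om e_4, e_A] &= \Om\,\D_4 e_A - e_A(\Om)\,e_4 - \Om\,\D_A e_4 \\
&= \Om\nab_4 e_A - \Om\chi_{AB}\,e_B + \bigl(\Om\etab_A + \Om\ze_A - e_A(\Om)\bigr) e_4.
\end{align*}
The coefficient of $e_4$ vanishes by the identity $\etab = -\ze + \nab\log\Om$ from \eqref{6.6}, which rewrites as $\Om(\etab_A + \ze_A) = e_A(\Om)$. Expanding $([\Om\nab_4,\nabla]f)_A$ using the definition of $\nab_4$ acting on the horizontal $1$-form $\nab f$ produces the combination $\Om[e_4,e_A]f - \Om(\nab_4 e_A)(f) - e_A(\Om)e_4(f)$; substituting the computed bracket makes every term cancel except $-\Om\chi_{AB}\nab^B f$, which is the claim. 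The second identity follows by the identical scheme with $(e_4,\chi,\etab)$ replaced by $(e_3,\chib,\eta)$, the vanishing of the $e_3$-coefficient now being supplied by $\eta = \ze + \nab\log\Om$, again from \eqref{6.6}.

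For the third identity, since $f$ is a scalar, $[\nab_4,\nab_3]f = [e_4,e_3]f$, so the computation reduces to the Lie bracket of $e_4$ and $e_3$. Using \eqref{ricciformulas},
\begin{align*}
[e_4,e_3] \;=\; \D_4 e_3 - \D_3 e_4 \;=\; 2\om\, e_3 - 2\omb\, e_4 + 2(\etab - \eta)_A\, e_A,
\end{align*}
and $\etab - \eta = -2\ze$, which follows at once from \eqref{6.6}, turns this into $[e_4,e_3] = 2\om\,e_3 - 2\omb\,e_4 - 4\ze_A e_A$. Applying to $f$ yields the stated formula.

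The argument is purely algebraic and I do not foresee any genuine obstacle. The only delicate point, shared by all three steps, is recognizing the cancellations involving $\Om$ and the Ricci coefficients $\eta,\etab,\ze$ that are provided by the relations in \eqref{6.6}; once these are identified, the three identities amount to a direct reading of the Ricci formulae \eqref{ricciformulas}.
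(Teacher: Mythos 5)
Your computation is correct: the bracket calculations from \eqref{ricciformulas} (with $\xi=\xib=0$) together with the relations \eqref{6.6} give exactly the three stated identities, and the cancellations you invoke — the vanishing of the $e_4$- and $e_3$-coefficients via $\Om(\etab_A+\ze_A)=e_A(\Om)$ and $\Om(\eta_A-\ze_A)=e_A(\Om)$, and $\etab-\eta=-2\ze$ for the last identity — are precisely the right ones. The paper gives no internal proof (it simply cites Proposition 4.8.1 of \cite{Kl-Ni}), and your direct verification is the standard argument behind that reference.
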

We also need the commutation identities for more general tensor fields. For this, we record the following commutation lemma.
\begin{lemma}\label{commck}
Let $U_{a_1...a_k}$ be an $S$-tangent $k$-covariant tensor on $(\MM,\g)$. Then
\begin{align*}
    [\nabla_4,\nabla_b]U_{a_1...a_k}&=-\chi_{BC}\nabla_c U_{a_1...a_k}+F_{4ba_1...a_k},\\
    F_{4ba_1...a_k}&:=(\zeta_b+\etab_b)\nabla_4U_{a_1...a_k}+\sum_{i=1}^k (\chi_{a_ib}\,\etab_c-\chi_{BC}\,\etab_{a_i}+\in_{a_ic}\,{^*\beta}_b)U_{a_1...c...a_k},\\
    [\nabla_3,\nabla_b]U_{a_1...a_k}&=-\chib_{BC}\nabla_c U_{a_1...a_k}+F_{3ba_1...a_k},\\
    F_{3ba_1...a_k}&:=(\eta_b-\zeta_b)\nabla_3U_{a_1...a_k}+\sum_{i=1}^k (\chib_{a_ib}\,\eta_c-\chib_{BC}\,\eta_{a_i}+\in_{a_ic}\,{^*\bb}_b)U_{a_1...c...a_k},\\
    [\nabla_3,\nabla_4]U_{a_1...a_k}&=F_{34a_1...a_k},\\
    F_{34a_1...a_k}&:=-2\omega\nabla_3 U+2\omb\nabla_4 U+4\zeta_b\nabla_b U_{a_1...a_k}+2\sum_{i=1}^k(\etab_{a_i}\,\eta_c-\etab_{a_i}\,\eta_c+\in_{a_ic}\,\sigma)U_{a_1...c...a_k}.
\end{align*}
\end{lemma}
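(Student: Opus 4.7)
The plan is to derive these commutator identities by first relating the projected connection $\nab$ to the ambient Levi-Civita connection $\D$ via the Ricci formulae \eqref{ricciformulas}, then invoking the spacetime curvature identity $[\D_\mu,\D_\nu]U_{a_1\ldots a_k}=-\sum_i R_{\sigma a_i\mu\nu}\,U_{a_1\ldots}{}^{\sigma}{}_{\ldots a_k}$, and finally repackaging the resulting terms using the null decomposition \eqref{defr} of the Riemann tensor together with the Ricci coefficients introduced in \eqref{defga}.

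For $[\nab_4,\nab_b]U_{a_1\ldots a_k}$, I first expand $\nab_b U$ as the horizontal projection of $\D_b U$, noting that $U$ is $S$-tangent so its contractions with $e_3,e_4$ vanish. Applying $\nab_4$ and using $\D_b e_4=\chi_b{}^c e_c-\ze_b e_4$ from \eqref{ricciformulas}, the leading non-commuting term $-\chi_{bc}\nab^c U$ appears from the $e_c$ piece of $\D_b e_4$ crossing past the projection. The remaining difference is $[\D_4,\D_b]U$, to which I apply the spacetime Ricci identity; its Riemann contribution splits by the value of the free summation index $\sigma$: the tangential component $\sigma=c$ produces $R(e_c,e_{a_i},e_4,e_b)$ which by the null decomposition equals $\in_{a_ic}\,{}^*\b_b$ up to a trace contributing to the $\chi\,\etab$ pair, while the normal components $\sigma\in\{3,4\}$ feed into the $(\ze_b+\etab_b)\nab_4 U$ and the $\chi\,\etab$ corrections. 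Collecting everything into $F_{4b a_1\ldots a_k}$ gives the stated identity.

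The second identity for $[\nab_3,\nab_b]$ follows by the symmetric computation under the interchange $e_3\leftrightarrow e_4$, $\chi\leftrightarrow\chib$, $\eta\leftrightarrow\etab$, $\b\leftrightarrow\bb$, $\ze\leftrightarrow-\ze$. For $[\nab_3,\nab_4]U$, both derivatives are longitudinal, so no $\chi$ or $\chib$ leading term survives: the projection corrections from $\D_3 e_A=\nab_3 e_A+\eta_A e_3$ and $\D_4 e_A=\nab_4 e_A+\etab_A e_4$ together with $\D_3 e_4=2\omb e_4+2\eta_Be_B$ and $\D_4 e_3=2\om e_3+2\etab_Be_B$ yield the scalar-type piece $-2\om\nab_3 U+2\omb\nab_4 U+4\ze\c\nab U$ (matching Proposition \ref{commkn}), while the spacetime commutator $[\D_3,\D_4]U$ contributes, through $R(e_\sigma,e_{a_i},e_3,e_4)$, the $\etab\,\eta$ cross terms on each index and the Hodge-dual piece $\in_{a_ic}\,\si$ assembled into $F_{34 a_1\ldots a_k}$.

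The main obstacle is purely bookkeeping: keeping track of the many Ricci-coefficient correction terms and their signs, and correctly splitting the null components of the Riemann tensor across the tangent and normal directions for each free index $a_i$. There is no conceptual difficulty once Proposition \ref{commkn} is in hand. A convenient shortcut is to apply the scalar commutator to $U_{a_1\ldots a_k}V^{a_1}\cdots V^{a_k}$ for arbitrary horizontal test vectors $V$ and then use the Ricci formulae to peel off the action on each tensor slot; this automates the scalar leading terms and reduces the lemma to identifying the index-contraction corrections on a single factor, which must then be summed over $i$.
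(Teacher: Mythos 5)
Your route is sound, but note that the paper does not carry out this computation at all: its proof of Lemma \ref{commck} is a one-line citation of Lemma 7.3.3 in \cite{ch-kl}, so what you are sketching is essentially the proof of the cited result rather than an alternative to the paper's argument. As a derivation it works: writing $\nab$ as the horizontal projection of $\D$, using the Ricci formulae \eqref{ricciformulas}, applying the spacetime Ricci identity for $[\D_\mu,\D_\nu]$, and decomposing the curvature via \eqref{defr} (using the vacuum equations so only $\a,\b,\rho,\si,\bb,\aa$ appear) does produce the stated formulas, and your polarization trick with test vectors $V$ is a legitimate way to reduce to the scalar case of Proposition \ref{commkn} plus single-slot corrections. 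Two bookkeeping attributions in your sketch are off, though they would self-correct in the actual computation: the term $(\ze_b+\etab_b)\nab_4U$ does not come from the normal ($\sigma\in\{3,4\}$) components of the Riemann contribution but from the $e_4$-component of the frame commutator $[e_4,e_b]=\D_4e_b-\D_be_4$, i.e.\ from the projection/torsion bookkeeping; likewise the pair $\chi_{a_ib}\,\etab_c-\chi_{bc}\,\etab_{a_i}$ arises from the $e_3$-component $\tfrac12\chi_{ba_i}e_3$ of $\D_be_{a_i}$ subsequently hit by $\D_4e_3=2\om e_3+2\etab_Ce_C$ (and its counterpart), not from a trace of the curvature term — in vacuum the horizontal-horizontal-$4$-$b$ Riemann component contributes exactly $\in_{a_ic}\,{}^*\b_b$ with nothing left over. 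With those corrections your plan reproduces the lemma; the paper's citation simply buys the same identities without redoing the (purely mechanical) computation.
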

\begin{proof}
It is a direct consequence of Lemma 7.3.3 in \cite{ch-kl}.
\end{proof}
\begin{corollary}\label{commutation}
We have the following schematic commutation identities:
\begin{align*}
[r\nabla,\Om\nab_4]&=\Gag\c r\nab+\bfO_3^2\c r\nab+\Gag^{(1)}+\bfO_3^2,\\
[r\nabla,\Om\nabla_3]&=\Gab\c r\nab+\bfO_3^2\c r\nab+\Gab^{(1)}+\bfO_3^2,\\
[\nab,\Om\nab_4]&=-\frac{1}{2}\Om\trch\nab+\Gag\cdot\nab+\bfO_3^2\c\nab+r^{-1}\Gag^{(1)}+\bfO_4^2,\\
[\nab,\Om\nab_3]&=-\frac{1}{2}\Om\trchb\nab+\Gab\cdot\nab+\bfO_3^2\c\nab+r^{-1}\Gab^{(1)}+\bfO_4^2,\\
[\nab_4,\nab_3]&=\Gag\c\nab_3+\Gab\c(\nab_4,\nab)+\bfO_2^1\c(\nab_3,\nab_4,\nab)+r^{-1}\Gag^{(1)}+\bfO_4^2,\\
[\Om{}\nab_4,\Om\nab_3]&=\Gab\c\nab+\bfO_3^2\c\nab+r^{-1}\Gag^{(1)}+\O_4^2.
\end{align*}
\end{corollary}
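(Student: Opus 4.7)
The approach is to expand each commutator by the Leibniz rule so as to isolate the $\nab$-action of the lapse $\Om$ and the weight $r$, apply the general commutation identities of Lemma \ref{commck}, and then reduce every occurring Ricci coefficient and curvature component to schematic form via Lemma \ref{expressionGaR} and Remark \ref{ignoreremark}. The computation is essentially routine once the bookkeeping of $\Gag$ vs.\ $\Gab$ and of the $r$-weights is handled consistently.

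I would first treat $[\nab,\Om\nab_4]$. Using the Leibniz rule,
\[
[\nab_b,\Om\nab_4]=\nab_b(\Om)\,\nab_4+\Om\,[\nab_b,\nab_4],
\]
the first term is handled via $\nab\log\Om=\f12(\eta+\etab)$ from \eqref{6.6}, so it contributes a piece of type $\Gag\c\nab_4$ that, after a further use of the null frame identities, gets absorbed into the lower-order error $r^{-1}\Gag^{(1)}+\O_4^2$. For the second term, Lemma \ref{commck} gives the principal contribution $\Om\chi\c\nab=\f12\Om\trch\,\nab+\Om\hch\c\nab$, and inserting $\hch=\O_3^2+\Gag$ from Lemma \ref{expressionGaR} yields the stated $-\f12\Om\trch\,\nab$ together with $\Gag\c\nab$ and $\O_3^2\c\nab$. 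The remaining $F_{4b}$-terms of Lemma \ref{commck} are products of $\ze,\etab,\chi,\beta$ with $U$ and its derivatives; substituting the decompositions of Lemma \ref{expressionGaR} and discarding subdominant factors per Remark \ref{ignoreremark} gathers them into $r^{-1}\Gag^{(1)}+\O_4^2$. The identity for $[\nab,\Om\nab_3]$ is proved identically, except $\hchb=\O_3^2+\Gab$ and $\omb=\O_2^1+\Gab$ now force $\Gab$ in place of $\Gag$.

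For the weighted commutators $[r\nab,\Om\nab_4]$ and $[r\nab,\Om\nab_3]$, I would pick up the extra terms $-\Om\nab_4(r)\nab$ and $-\Om\nab_3(r)\nab$ from the Leibniz rule. By Lemma \ref{dint}, $\Om\nab_4(r)=\f12 r\,\overline{\Om\trch}$ and $\Om\nab_3(r)=\f12 r\,\overline{\Om\trchb}$, and these cancel the leading $-\f12\Om\trch\,r\nab$, resp.\ $-\f12\Om\trchb\,r\nab$, from the previous step modulo the average-free parts $\widecheck{\Om\trch}\in\Gag$, resp.\ $\widecheck{\Om\trchb}\in\Gag$ for $e_4$ and similarly for $e_3$. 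Multiplying the estimate for $[\nab,\Om\nab_4]$ by $r$ then produces $\Gag\c r\nab+\O_3^2\c r\nab$ with residual $r\cdot r^{-1}\Gag^{(1)}+r\cdot\O_4^2=\Gag^{(1)}+\O_3^2$, matching the claimed form.

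For $[\nab_4,\nab_3]$ and $[\Om\nab_4,\Om\nab_3]$ I would apply the third identity of Lemma \ref{commck} directly, handling the $\Om$-weights of the latter through $\nab_4\Om=-2\om\Om$ and $\nab_3\Om=-2\omb\Om$ from \eqref{6.6}. The leading coefficients $-2\om,\,2\omb,\,4\ze$ decompose into Kerr parts absorbed by $\O_2^1\c(\nab_3,\nab_4,\nab)$ and into linearized parts of type $\Gag$ or $\Gab$ via Lemma \ref{expressionGaR}, while the tensorial correction $F_{34}$ is again absorbed into the lower-order error using Remark \ref{ignoreremark}. The only genuine obstacle is bookkeeping the $\Gag$/$\Gab$ split in each identity: quantities carrying an $e_4$-derivative or built from $\hch,\om,\eta,\etab,\ze$ contribute $\Gag$ and improve the decay, whereas $\hchb,\omb$, which appear precisely when an $e_3$-derivative is present, force the appearance of $\Gab$ in $[\nab,\Om\nab_3]$, $[r\nab,\Om\nab_3]$, $[\nab_4,\nab_3]$ and $[\Om\nab_4,\Om\nab_3]$; once this is tracked carefully the stated identities follow.
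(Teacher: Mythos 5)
Your route is the same as the paper's one-line proof: Leibniz rule, the commutation formulae of Lemma \ref{commck}, the schematic decompositions of Lemma \ref{expressionGaR}, the identities \eqref{6.6}, and Lemma \ref{dint} for the $r$-weights. There is, however, one step where your argument as written would fail: the disposal of the first-order remainder terms. The Leibniz term $\nab_b(\Om)\nab_4U=\Om(\nab_b\log\Om)\nab_4U$ and the term $(\ze_b+\etab_b)\nab_4U$ inside $F_{4b}$ are first order in $U$; they can neither be ``absorbed into the lower-order error $r^{-1}\Gag^{(1)}+\O_4^2$'', which consists of purely multiplicative (zeroth-order) terms, nor be hidden as a residual $(\Gag+\O_3^2)\c\nab_4$, since the stated right-hand side contains no $\nab_4$ at all. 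The actual mechanism -- and the reason \eqref{6.6} is cited -- is an exact cancellation: $\ze+\etab=\nab\log\Om$ kills these two terms against each other, and likewise $\eta-\ze=\nab\log\Om$ removes the $\nab_3U$ term of $F_{3b}$ in $[\nab,\Om\nab_3]$. The same point is decisive for the last identity: in $[\Om\nab_4,\Om\nab_3]$ the pieces $2\om\nab_3U$ and $2\omb\nab_4U$ of $F_{34}$ must cancel exactly against the Leibniz contributions $\Om\nab_4(\Om)\nab_3U-\Om\nab_3(\Om)\nab_4U$, using $\om=-\f12\nab_4\log\Om$ and $\omb=-\f12\nab_3\log\Om$; your description (``Kerr parts absorbed by $\O_2^1\c(\nab_3,\nab_4,\nab)$'') is correct for $[\nab_4,\nab_3]$ but would leave $\nab_3,\nab_4$ terms in $[\Om\nab_4,\Om\nab_3]$ that are not there.

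A secondary imprecision: for the weighted commutators the residual after Lemma \ref{dint} is $\f12 r\big(\Om\trch-\overline{\Om\trch}\big)\nab$, not $\widecheck{\Om\trch}\,r\nab$; the difference is the Kerr average-free part $(\Om\trch)_K-\overline{(\Om\trch)_K}$, which is $\O_3^2$ only because $\nab_K\tr_K\chi_K=\O_4^2$ and $\nab\Om_K=\O_3^2$ (Proposition \ref{decayGamma}). It lands in the $\O_3^2\c r\nab$ term you already have, so the conclusion is unaffected, but it should be said. Apart from these points, your bookkeeping of $\Gag$ versus $\Gab$ and of the zeroth-order terms (via $r\bc,r\rhoc,r\sic\in\Gag^{(1)}$ and $r\bbc\in\Gab^{(1)}$) is consistent with the statement.
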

\begin{proof}
    It follows directly from Lemmas \ref{commck} and \ref{expressionGaR} together with the identities \eqref{6.6}.
\end{proof}
\subsection{Sobolev inequalities}
\begin{definition}\label{L2flux}
{For a scalar function $h$ on a null cone $C\in\{C_u,\Cb_\ub\}$, we define its integral on $C$ as follows:\footnote{The factor $\Om^2$ has been added since we have from Section \ref{doublenullsection} that $\pr_\ub=2\Om^2L$ and $\pr_u+\underline{b}^A\pr_A=2\Om^2\Lb$, where $L$ and $\Lb$ denote the generators of $C_u$ and $\Cb_\ub$ respectively.}
\begin{align*}
    \int_{C_u} h:=\int d\ub\int_{S(u,\ub)}\Om^2h,\qquad \int_{\Cb_\ub} h:=\int du\int_{S(u,\ub)}\Om^2h.
\end{align*}}
For a tensor field $h$ on a null cone $C$, we define its $L^2$--flux:
\begin{equation}
    \|h\|_{2,C}:= \left(\int_C |h|^2 \right)^\frac{1}{2}.
\end{equation}
\end{definition}
We recall the following Sobolev inequalities.
\begin{proposition}\label{sobolevkn}
Let $F$ be a tensor field tangent to $S:=S(u,\ub)$ at every point. We have the following estimates:
\begin{align*}
    |rF|_{4,S}&\les{|rF|_{4,S(u,\ub_0(u))}+}\|F\|_{2,C_u\cap V(u,\ub)}+\|r\nab F\|_{2,C_u\cap V(u,\ub)}+\|r\nab_4F\|_{2,C_u\cap V(u,\ub)},\\
    |r^\frac{1}{2}|u|^\frac{1}{2}F|_{4,S}&\les{|rF|_{4,S(u_0(\ub),\ub)}+}\|F\|_{2,\Cb_\ub\cap V(u,\ub)}+\|r\nab F\|_{2,\Cb_\ub\cap V(u,\ub)}+|u|\|\nab_3F\|_{2,\Cb_\ub\cap V(u,\ub)},
\end{align*}
{where $S(u,\ub_0(u))$ (resp. $S(u_0(\ub),\ub)$) denotes the unique leaf of $C_u$ (resp. $\Cb_\ub$), which located in the future of $\Si_0$ and touches $\Si_0\sm K$.}
\end{proposition}
\begin{proof}
See Corollary 3.2.1.1 in \cite{ch-kl} and Corollary 4.1.1 in \cite{Kl-Ni}.
\end{proof}
We also need the following standard Sobolev inequalities, see Lemma 4.1.3 in \cite{Kl-Ni}.
\begin{proposition}\label{standardsobolev}
Let $F$ be a tensor field tangent to the sphere $S:=S(u,\ub)$. Then, we have
\begin{equation*}
\sup_{S} r^{\frac{1}{2}}|F|\les\left(\int_S |F|^4 +|r\nabla F|^4\right)^{\frac{1}{4}}.
\end{equation*}
\end{proposition}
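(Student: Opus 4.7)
The plan is to deduce this inequality from the standard 2D Sobolev--Morrey embedding $W^{1,4}\hookrightarrow L^\infty$ by a constant conformal rescaling, tracking the $r$-weights carefully. First, I would introduce the rescaled metric $\tilde\ga:=r^{-2}\ga$ on $S(u,\ub)$. Because the rescaling factor $r^{-2}$ is constant on $S$, the two Levi--Civita connections coincide, $\tilde\nab=\nab$, and the rescaled sphere has area $4\pi$ (by the definition of the area radius). Its Gauss curvature is $\tilde K=r^2 K$, which is bounded from above and below by the ellipticity assumption used throughout the paper (cf.\ the hypothesis of Proposition \ref{ellipticest}); in particular $(S,\tilde\ga)$ has uniformly controlled geometry.

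Second, on the rescaled sphere one has the unweighted Sobolev inequality
\[
\sup_{S}|F|_{\tilde\ga}\les\Bigl(\int_S\bigl(|F|_{\tilde\ga}^4+|\tilde\nab F|_{\tilde\ga}^4\bigr)\,d\mathrm{vol}_{\tilde\ga}\Bigr)^{1/4}
\]
with a universal constant depending only on the bounds for $\tilde K$. This follows by covering $(S,\tilde\ga)$ with a finite atlas of coordinate charts of uniformly bounded diameter and metric distortion, applying in each chart Morrey's inequality $W^{1,4}(\mathbb{R}^2)\hookrightarrow C^{0,1/2}(\mathbb{R}^2)$ (valid since $p=4>\dim=2$) to each component of $F$, and summing via a smooth partition of unity.

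Third, I would translate the estimate back to $(S,\ga)$. For a $k$-covariant tensor $F$, the relations
\[
|F|_{\tilde\ga}=r^{k}|F|_\ga,\qquad |\tilde\nab F|_{\tilde\ga}=r^{k+1}|\nab F|_\ga,\qquad d\mathrm{vol}_{\tilde\ga}=r^{-2}\,d\mathrm{vol}_\ga
\]
hold pointwise. Substituting, the rescaled inequality becomes
\[
\sup_{S} r^{k}|F|_\ga\les r^{k-\frac{1}{2}}\Bigl(\int_S\bigl(|F|_\ga^4+|r\nab F|_\ga^4\bigr)\,d\mathrm{vol}_\ga\Bigr)^{1/4},
\]
and dividing by $r^{k-1/2}$ yields exactly the claimed estimate. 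The weight $r^{1/2}$ on the left simply reflects the fact that $S(u,\ub)$ has area $\sim r^2$, while the $r$ inside $|r\nab F|$ is the scaling weight of $\nab$.

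I expect the only nontrivial point to be justifying that the constant in the rescaled Sobolev inequality is independent of $(u,\ub)$; this hinges on the uniform Gauss curvature bound, which provides a uniformly controlled atlas on the rescaled sphere. Once this is in place, Morrey's embedding and the partition-of-unity argument are entirely classical. A variant of the proof, closer to the reference cited, would instead iterate the $L^2$-to-$L^p$ Sobolev inequality derived from the isoperimetric inequality on $(S,\tilde\ga)$ and conclude by Hölder, avoiding any explicit use of Morrey's embedding.
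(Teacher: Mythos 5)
Your scaling bookkeeping is correct: with $\tilde\ga=r^{-2}\ga$ (a constant factor on each $S(u,\ub)$, so $\tilde\nab=\nab$) one has $|F|_{\tilde\ga}=r^k|F|_\ga$, $|\tilde\nab F|_{\tilde\ga}=r^{k+1}|\nab F|_\ga$, $d\mathrm{vol}_{\tilde\ga}=r^{-2}d\mathrm{vol}_\ga$, and the unweighted $W^{1,4}\hookrightarrow L^\infty$ estimate on the unit-area sphere transforms exactly into the stated weighted inequality, so the proposal is sound. It is, however, a genuinely different route from the one the paper invokes: the paper gives no proof and cites Lemma 4.1.3 of Klainerman--Nicol\`o, where (as in Christodoulou--Klainerman, Chapter 3.2) the estimate is obtained by iterating $L^2$--$L^p$ Sobolev inequalities derived from the isoperimetric inequality on $S$, with constants depending only on the isoperimetric constant of the surface; this is the variant you mention at the end. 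The trade-off is the following. The isoperimetric route needs no coordinate charts, no partition of unity, and only control of the isoperimetric constant, which is what is naturally propagated along the foliation; your route instead needs a uniform atlas on the rescaled sphere, and the step ``two-sided bound on $r^2K$ $\Rightarrow$ charts of uniformly bounded metric distortion'' is the only place where real work hides: a lower curvature bound gives a diameter bound and Klingenberg gives an injectivity radius bound, but to compare $\int|\partial(\mbox{components})|^4$ with $\int|\nab F|_\ga^4$ you also need Christoffel-symbol bounds in the charts, i.e. $C^1$ (not just $C^0$) control of the metric components, which from $K\in L^\infty$ alone requires harmonic coordinates (giving $g\in W^{2,p}$, hence $\Gamma$ bounded) or, in the context of this paper, the explicit closeness of $\ga$ to $\ga_K$. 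If you flag and discharge that point, your argument is complete and has the merit of reducing everything to the classical Morrey embedding; the reference's argument is the more economical one in the geometric setting at hand.
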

\section{Main theorem}\label{mainsection}
\subsection{Fundamental norms}\label{fundamentalnorms}
Our result holds for initial data sets satisfying \eqref{sperturbation} with $s>3$. We will focus on the case $s\in[4,6]$ in Sections \ref{mainsection}--\ref{initiallast}, and we postpone the necessary adaptations to the case $s\in(3,4)$ and $s>6$ to Appendices \ref{secc} and \ref{secd}. The norms in Sections \ref{Rnorms}--\ref{Ostar} are defined with respect to the foliation $(u,\ub)$ in the bootstrap region $\kk$ while the norms in Section \ref{'norms} are defined with respect to the foliation $(u_{(0)},\ub)$ in the initial data layer $\kk_{(0)}$.
\subsubsection{\texorpdfstring{$\RR$}{} norms (Curvature components)}\label{Rnorms}
We define
\begin{equation*}
\RR_0^{S}:=\RR_0^S[\bc],\qquad\RRb_0^{S}:=\RRb_0^{S}[\bc]+\RRb_0^{S}[\rhoc,\sic]+\RRb_0^{S}[\bbc],
\end{equation*}
where
\begin{align*}
    \RR_0^S[\bc]&:=\sup_\KK\sup_{p\in[2,4]}|r^{\frac{7}{2}-\frac{2}{p}}|u|^{\frac{s-4}{2}}\bc|_{p,S(u,\ub)},\\
    \RRb_0^S[\bc]&:=\sup_\KK\sup_{p\in[2,4]}|r^{\frac{s+2}{2}-\frac{2}{p}}|u|^{\f12}\bc|_{p,S(u,\ub)},\\
    \RRb_0^S[\rhoc,\sic]&:=\sup_\KK\sup_{p\in[2,4]}|r^{3-\frac{2}{p}}|u|^{\frac{s-3}{2}} (\rhoc,\sic) |_{p,S(u,\ub)},\\
    \RRb_0^S[\bbc]&:=\sup_\KK\sup_{p\in[2,4]}|r^{2-\frac{2}{p}}|u|^{\frac{s-1}{2}}\bbc |_{p,S(u,\ub)}.
\end{align*}
Then we define the norms of flux of curvature components. We denote
\begin{equation*}
    \RR:=\sum_{q=0}^1(\RR_{q}+\RRb_{q})+\RR_0^S+\RRb_0^S,
\end{equation*}
where
\begin{align*}
    \RR_{q}&:=\left(\RR_{q}[\ac]^2+\RR_{q}[\bc]^2 +\RR_{q}[(\rhoc,\sic)]^2 + \RR_{q}[\bbc]^2 \right)^{\f12} ,\\
        \RRb_{q}&:=\left(\RRb_{q}[\bc]^2+\RRb_{q}[(\rhoc,\sic)]^2+\RRb_{q}[\bbc]^2 +
        \RRb_{q}[\aac]^2\right)^{\f12},
\end{align*}
with
\begin{align*}
    \mathcal{R}_{q}[w]&:= \sup_{\mathcal{K}} \mathcal{R}_{q} [w](u,\ub), \\
    \underline{\mathcal{R}}_{q} [w]&:= \sup_{\mathcal{K}} \underline{\mathcal{R}}_{q} [w](u,\ub).
\end{align*}
We denote
\begin{equation}
   V:=V(u,\ub),\qquad C_u^V:=C_u\cap V,\qquad\Cb_\ub^V:=\Cb_\ub\cap V.
\end{equation}
It remains to define $\mathcal{R}_{q}[w](u,\ub)$ and $\underline{\mathcal{R}}_{q}[w](u,\ub)$:
\begin{align*}
    \RR_{q}[\ac](u,\ub)&:= \Vert r^{\frac{s}{2}} (r\nab)^q\ac\Vert_{2,\cuv},\\
    \RR_{q}[\bc](u,\ub)&:= \Vert r^{2}|u|^{\frac{s-4}{2}}(r\nab)^q\bc\Vert_{2,\cuv},\\
    \RR_{q}[(\rhoc,\sic)](u,\ub)&:=\Vert r|u|^{\frac{s-2}{2}} (r\nab)^q(\rhoc,\sic) \Vert_{2,\cuv},\\
    \RR_{q}[\bbc](u,\ub)&:= \Vert |u|^{\frac{s}{2}} (r\nab)^q\bbc \Vert_{2,\cuv} ,\\
    \RRb_{q}[\bc](u,\ub)&:= \Vert r^{\frac{s}{2}} (r\nab)^q\bc \Vert_{2, \ucuv},\\
    \RRb_{q}[(\rhoc,\sic)](u,\ub)&:=|u|^{\frac{s-4}{2}}\Vert {r^2}(r\nab)^q(\rhoc,\sic) \Vert_{2,\ucuv},\\
    \RRb_{q}[\bbc](u,\ub)&:=|u|^{\frac{s-2}{2}}\Vert {r}(r\nab)^q\bbc\Vert_{2,\ucuv},\\
    \RRb_{q}[\aac](u,\ub)&:= |u|^{\frac{s}{2}}\Vert(r\nab)^q\aac\Vert_{2,\ucuv}.
\end{align*}
\subsubsection{\texorpdfstring{$\OO$}{} norms (Ricci coefficients and metric components)} \label{onorms}
We denote for any linearized quantity:
\begin{equation}\label{defOOq}
    \OO_q(\widecheck{X}):=\sup_{\KK}\sup_{p\in [2,4]}\OO_q^{p,S}(\widecheck{X})(u,\ub).
\end{equation}
We first define
\begin{align*}
\OO_{[1]}:=\OO_1+\OO_0+\OOb_1+\OOb_0+\sum_{q=0}^2\OO_q(\Omc),
\end{align*}
where
\begin{align*}
\OO_q&:=\OO_q(\widecheck{\Om\trch})+\OO_q(\widecheck{\hch})+\OO_q(\etac)+\OO_q(\widecheck{\Om\omb}),\qquad\, q=0,1,\\
\OOb_q&:=\OO_q(\widecheck{\Om\trchb})+\OO_q(\widecheck{\chibh})+\OO_q(\etabc)+ \OO_q(\widecheck{\Om\om}),\qquad q=0,1.
\end{align*}
It remains to define $\OO_q^{p,S}(\widecheck{X})(u,\ub)$:
\begin{align*}
    \OO_q^{p,S}(\widecheck{\Om\trch})(u,\ub)&:=|r^{2+q-\frac{2}{p}}|u|^{\frac{s-3}{2}}\nab^q(\widecheck{\Om\trch})|_{p,S(u,\ub)} ,\\
    \OO_q^{p,S}(\widecheck{\Om\trchb})(u,\ub)&:=|r^{2+q-\frac{2}{p}}|u|^{\frac{s-3}{2}} \nab^q(\widecheck{\Om\trchb})|_{p,S(u,\ub)}, \\
  \OO_q^{p,S}(\hchc)(u,\ub)&:=|r^{2+q-\frac{2}{p}}|u|^{\frac{s-3}{2}}\nab^q \hchc|_{p,S(u,\ub)}, \\
     \OO_q^{p,S}(\hchbc)(u,\ub)&:=|r^{1+q-\frac{2}{p}}|u|^{\frac{s-1}{2}}\nabla^q\hchbc|_{p,S(u,\ub)}, \\
    \OO_q^{p,S}(\etac)(u,\ub)&:=|r^{2+q-\frac{2}{p}}|u|^{\frac{s-3}{2}}\nabla^q \etac|_{p,S(u,\ub)}, \\
    \OO_q^{p,S}(\etabc)(u,\ub)&:=|r^{2+q-\frac{2}{p}}|u|^{\frac{s-3}{2}}\nabla^q \etabc|_{p,S(u,\ub)}, \\
 \OO_q^{p,S}(\widecheck{\Om\om})(u,\ub)&:=|r^{2+q-\frac{2}{p}}|u|^\frac{s-3}{2}\nabla^q\widecheck{\Om\om}|_{p,S(u,\ub)}, \\
\OO_q^{p,S}(\widecheck{\Om\omb})(u,\ub)&:=|r^{1+q-\frac{2}{p}}|u|^\frac{s-1}{2}\nabla^q\widecheck{\Om\omb}|_{p,S(u,\ub)},\\
\OO_q^{p,S}(\Omc)(u,\ub)&:=|r^{1+q-\frac{2}{p}}|u|^{\frac{s-3}{2}}\nab^q\Omc|_{p,S(u,\ub)}.
\end{align*}
Next, we define the norms of metric components:
\begin{align*}
\OO_{\ga}:=&\sum_{q=0}^1\left(\OO_q(\gac)+\OO_q(\inc)+\OO_q(\bcc)\right),
\end{align*}
where
\begin{align*}
    \OO_q^{p,S}(\gac)(u,\ub)&:=|r^{-1+q-\frac{2}{p}}|u|^{\frac{s-3}{2}}\nab^q\gac_{AB}|_{p,S(u,\ub)},\\
\OO_q^{p,S}(\inc)(u,\ub)&:=|r^{-1+q-\frac{2}{p}}|u|^{\frac{s-3}{2}}\nab^q\inc_{AB}|_{p,S(u,\ub)},\\
\OO_q^{p,S}(\bcc)(u,\ub)&:=|r^{2+q-\frac{2}{p}}|u|^{\frac{s-3}{2}}\nab^q\,\bcc^A|_{p,S(u,\ub)}.
\end{align*}
We also define
\begin{align*}
\OO_\bGa:=\OO_0(\Jc)+\OO_0(\Jbc)+\OO_0(\Lc),
\end{align*}
where
\begin{align*}
\OO_0^{p,S}(\Lc)(u,\ub)&:=|r^{2-\frac{2}{p}}|u|^{\frac{s-3}{2}}\Lc|_{p,S(u,\ub)},\\
\OO_0^{p,S}(\Jc)(u,\ub)&:=|r^{2-\frac{2}{p}}|u|^{\frac{s-3}{2}}\Jc|_{p,S(u,\ub)},\\
\OO_0^{p,S}(\Jbc)(u,\ub)&:=|r^{1-\frac{2}{p}}|u|^{\frac{s-1}{2}}\Jbc|_{p,S(u,\ub)}.
\end{align*}
Finally, we denote
\begin{equation}
    \OO:= \OO_{[1]}+\OO_{\ga}+\OO_{\bGa}.
\end{equation}
\subsubsection{\texorpdfstring{$\okk(\Si_0)$}{} norms and \texorpdfstring{$\Rk_0$}{} norms (Initial data for the foliation of \texorpdfstring{$\kk$}{})}\label{initialnorms}
Notice that for every $\ub$, there exists a unique leaf $S(u_0(\ub),\ub)$ of $\Cb_\ub$, which located in the future of $\Si_0$ and touches $\Sigma_0\setminus K$. Moreover, we have
\begin{equation*}
    S(u_0(\ub),\ub)\subset \kk_{(0)}.
\end{equation*}
We define the following norms on the union of spheres $S(u_0(\ub),\ub)$:
\begin{align*}
    \okk_q(\Gac)&:=\sup_{p\in[2,4]}\sup_{\Si_0\setminus K}{\okk}_q^{p,S}(\Gac)(\ub),\quad q=0,1.
\end{align*}
We denote
\begin{align*}
    \okk(\Si_0):=\okk_{1}+\okk_0,
\end{align*}
where
\begin{align*}
\okk_q&:=\okk_q(\widecheck{\Om\trchb})+\okk_q(\Omomc)+\okk_q(\etabc),\qquad q=0,1.
\end{align*}
It remains to define ${\okk}_q^{p,S}(\Gac)(\ub)$:
\begin{align*}
    \okk_q^{p,S}(\widecheck{\Om\trchb})(\ub)&:=|{r}^{\frac{s+1}{2}+q-\frac{2}{p}}\nab^q(\widecheck{\Om\trchb})|_{p,S(u_0(\ub),\ub)},\\
    \okk_q^{p,S}(\etabc)(\ub)&:=|{r}^{\frac{s+1}{2}+q-\frac{2}{p}}\nab^q\etabc|_{p,S(u_0(\ub),\ub)},\\
    \okk_q^{p,S}(\Omomc)(\ub)&:=|r^{\frac{s+1}{2}+q-\frac{2}{p}}{\nab}^q(\Omomc)|_{p,S(u_0(\ub),\ub)}.
\end{align*}
We can extend the foliation $(u,\ub)$ to a neighborhood of $\Si_0\sm K$ in $J^-(\Si_0\sm K)$ such that it is well defined on $\Si_0\sm K$. The curvature flux on $\Si_0\sm K$ is defined by:
\begin{equation*}
    \Rk_0^2:=\int_{\Si_0\setminus K}\sum_{l=0}^1 r^{s}\left(|\dd^l\ac|^2+|\dd^l\bc|^2+|\dd^l(\rhoc,\sic)|^2+|\dd^l\bbc|^2+|\dd^l \aac|^2\right),
\end{equation*}
where
\begin{equation*}
    \dd:=(r\nabla,r\nabla_4,r\nabla_3).
\end{equation*}
\subsubsection{\texorpdfstring{$\OO^*(\Cb_*)$}{} norms (Ricci coefficients and metric components on \texorpdfstring{$\Cb_*$}{})}\label{Ostar}
For any linearized quantity $\Gac$ on $\Cb_*$, we define
\begin{equation*}
    \OO_q^{*}(\Gac):=\sup_{p\in [2,4]}\sup_{\Cb_*}\OO_q^{*p,S}(\Gac)(u).
\end{equation*}
We denote
\begin{equation*}
    \OO^{*}(\Cb_*):=\OO^{*}_1+\OO_0^*+\OO_\ga^*+\OO_\Ga^*+\OO^*_2(\trchbc)+\OO_1^*(\mubmc),
\end{equation*}
where
\begin{align*}
\OO^*_{\ga}:=&\sum_{q=0}^2\left(\OO_q^{*}(\gac)+\OO_q^{*}(\inc)\right),\\
\OO^*_\Ga:=&\sum_{q=0}^1\left(\OO_q^{*}(\Lc)+\OO_q^{*}(\Jbc)\right),\\
\OO^*_q:=&\OO_q^{*}(\trchc)+\OO_q^{*}(\hchc)+\OO_q^{*}(\zec)+\OO_q^{*}(\trchbc)+\OO_q^{*}(\hchbc),\qquad q=0,1.
\end{align*}
It remains to define $\OO_q^{*p,S}(\Gac)(u)$ for $q=0,1,2$:
\begin{align*}
        \OO_q^{*p,S}(\hchc)(u)&:=|r^{2+q-\frac{2}{p}}|u|^{\frac{s-3}{2}}\nab^q\hchc|_{p,S(u,\ub_*)},\\
        \OO_q^{*p,S}(\hchbc)(u)&:=|r^{1+q-\frac{2}{p}}|u|^{\frac{s-1}{2}}\nab^q\hchbc|_{p,S(u,\ub_*)},\\
        \OO_q^{*p,S}(\trchc)(u)&:=|r^{2+q-\frac{2}{p}}|u|^{\frac{s-3}{2}}\nab^q\trchc|_{p,S(u,\ub_*)},\\
        \OO_q^{*p,S}(\trchbc)(u)&:=|r^{2+q-\frac{2}{p}}|u|^{\frac{s-3}{2}}\nab^q\trchbc|_{p,S(u,\ub_*)},\\
        \OO_q^{*p,S}(\zec)(u)&:=|r^{2+q-\frac{2}{p}}|u|^{\frac{s-3}{2}}\nab^q\etac|_{p,S(u,\ub_*)},\\
        \OO_q^{*p,S}(\gac)(u)&:=|r^{1+q-\frac{2}{p}}|u|^{\frac{s-3}{2}}\nab^q\gac|_{p,S(u,\ub_*)},\\
        \OO_q^{*p,S}(\inc)(u)&:=|r^{1+q-\frac{2}{p}}|u|^{\frac{s-3}{2}}\nab^q\inc|_{p,S(u,\ub_*)},\\
        \OO_q^{*p,S}(\Lc)(u)&:=|r^{2+q-\frac{2}{p}}|u|^{\frac{s-3}{2}}\nab^q\Lc|_{p,S(u,\ub_*)},\\
        \OO_q^{*p,S}(\Jbc)(u)&:=|r^{1+q-\frac{2}{p}}|u|^{\frac{s-1}{2}}\nab^q\Jbc|_{p,S(u,\ub_*)},\\
        \OO_q^{*p,S}(\mubmc)(u)&:=|r^{3+q-\frac{2}{p}}|u|^{\frac{s-3}{2}}\nab^q\mubmc|_{p,S(u,\ub_*)}.
\end{align*}
We also define the flux of curvature components on $\Cb_*$ as follows:
\begin{align*}
    \RRb(\Cb_*):=\RRb_{1}\big|_{\Cb_*}+\RRb_{0}\big|_{\Cb_*},
\end{align*}
where $\RRb_{q}$, $q=0,1$ are defined in Section \ref{Rnorms}.
\subsubsection{\texorpdfstring{$\OO_{(0)}$}{} norms and \texorpdfstring{$\Rk_{(0)}$}{} norms (Initial data for the foliation of \texorpdfstring{$\KK_{(0)}$}{})}\label{'norms}
In this section, all the norms are defined with respect to the initial layer foliation $(u_{(0)},\ub)$ of $\kk_{(0)}$. We define
\begin{align*}
    \OO_{(0)}:=\sum_{q=0}^2\sum_{\Gac_{(0)}}\OOO_q(\Gac_{(0)}),
\end{align*}
where
\begin{equation*}
    \OOO_q(\Gac_{(0)}):=\sup_{p\in[2,4]}\sup_{\kk_{(0)}}\OOO_q^{p,S}(\Gac_{(0)}),
\end{equation*}
with
\begin{align*}
\OOO_q^{p,S}(\Gac_{(0)}):=\left|r_{(0)}^{\frac{s+1}{2}-\frac{2}{p}}(r_{(0)}\nab_{(0)})^q\Gac_{(0)}\right|_{p,S_{(0)}(u_{(0)},\ub)},
\end{align*}
and
\begin{equation*}
    \Gac_{(0)}\in\left\{\trchc_{(0)},\trchbc_{(0)},\hchc_{(0)},\hchbc_{(0)},\etac_{(0)},\etabc_{(0)},\omc_{(0)},\ombc_{(0)},r_{(0)}^{-1}\gac_{(0)},r_{(0)}^{-1}\inc_{(0)},r^{-1}_{(0)}\bcc_{(0)},r^{-1}_{(0)}\Omc_{(0)}\right\}.
\end{equation*}
The initial curvature flux on $\Si_0\sm K$ is defined by:
\begin{equation*}
    \Rk_{(0)}^2:=\int_{\Si_0\setminus K}\sum_{l=0}^2 r_{(0)}^{s}\left(|\dd_{(0)}^l\ac_{(0)}|^2+|\dd_{(0)}^l\bc_{(0)}|^2+|\dd_{(0)}^l(\rhoc_{(0)},\sic_{(0)})|^2+|\dd_{(0)}^l\bbc_{(0)}|^2+|\dd_{(0)}^l\aac_{(0)}|^2\right),
\end{equation*}
where
\begin{equation*}
    \dd_{(0)}:=(r_{(0)}\nab_{(0)},r_{(0)}(\nab_{(0)})_4,r_{(0)}(\nab_{(0)})_3).
\end{equation*}
\subsubsection{\texorpdfstring{$\osc$}{} norms (Oscillation control)}\label{Oscnorms}
We have two foliations $(u,\ub)$ and $(u_{(0)},\ub)$ in the initial layer region $\KK_{(0)}$ which will be compared using $\osc$ norms. We denote $(f,\la)$ the change of frame from $(u,\ub)$ to $(u_{(0)},\ub)$ introduced in Lemma \ref{changelemma}. We define
\begin{equation*}
    \osc:=\osc(f)+\osc(u)+\osc(x)+\osc(r),
\end{equation*}
where
\begin{align*}
    \osc(f)&:=\sup_{\kk_{(0)}}|(r_{(0)})^{\frac{s-1}{2}}\dk_{(0)}^{\leq 1}f|,\\
    \osc(u)&:=\sup_{\kk_{(0)}}|(r_{(0)})^{\frac{s-3}{2}}(u'-u)|,\\
    \osc(x)&:=\sup_{\kk_{(0)}}\sum_{A=1}^2|(r_{(0)})^{\frac{s-1}{2}}(x_{(0)}^A-x^A)|,\\
    \osc(r)&:=\sup_{\kk_{(0)}}|(r_{(0)})^{\frac{s-3}{2}} (r_{(0)}-r)|,
\end{align*}
with 
\begin{equation*}
    \dk_{(0)}:=\big\{r_{(0)}(\nab_{(0)})_3,r_{(0)}\nab_{(0)},r_{(0)}(\nab_{(0)})_4\big\}.
\end{equation*}
\subsection{Smallness constants}
Before starting our main theorem, we first introduce the following constants that will be used throughout this paper.
\begin{itemize}
    \item The constant $M>0$ and the constant $a$ are the mass and angular momentum of the initial Kerr spacetime relative to which our initial perturbation is measured.
    \item $R_0>0$ measures the radius of the compact set $K\subset\Si_0$ relative to which the external region is defined by $\MM:=D^+(\Si_0\sm K)$, the future domain of dependence of $\Si_0\sm K$.
    \item The size of the initial data layer norm is measured by $\ep_0>0$.
    \item The size of the bootstrap assumption norms are measured by $\ep>0$.
    \item $\de_0>0$ measures the height of the initial layer region $\KK_{(0)}$.
\end{itemize}
In what follows $M$ and $a$ are fixed constants such that $0\leq |a|\leq M$. We define
\begin{equation}
    \EE_0:=\frac{M}{R_0},\qquad \ep:=\EE_0^{-\frac{1}{3}}\ep_0.
\end{equation}
We take $\ep_0$ small enough and $R_0$ large enough such that
\begin{equation}\label{epep0}
    \ep^2\ll \ep\EE_0 \ll \ep\EE_0^\f12 \ll \ep_0 \ll \ep \ll \EE_0\ll \de_0<1.
\end{equation}
\begin{remark}
In \eqref{epep0}, $A\ll B$ means that $CA<B$ where $C$ is the largest universal constant among all the constants involved in the proof via $\les$.
\end{remark} 
\subsection{Main theorem}\label{secmain}
The goal of this paper is to prove the following theorem, which is similar to Theorem 4.1 in \cite{Caciotta}.
\begin{theorem}[Main theorem, version 2]\label{maintheorem}
Let $\g_{a,M}$ a Kerr metric in $\MM_{Kerr}$ with $|a|\leq M$. Let 
$\kk_{(0)}$ the initial layer region and $\Phi_{(0)}$ the map defined in Section \ref{initiallayer}. We also assume that\footnote{The norms $\OO_{(0)}$ and $\Rk_{(0)}$ are defined in Section \ref{'norms}.}
\begin{align}
\begin{split}\label{l2condition}
    \OO_{(0)}\leq\ep_0,\qquad\Rk_{(0)}\leq\ep_0.
\end{split}
\end{align}
Then, $\KK_{(0)}$ has a unique future development $(\MM,\g)$, defined {inside its future domain of dependence}, with the following properties:
\begin{enumerate}
    \item $(\MM,\g)$ can be foliated by a double null foliation $(u,\ub)$. Moreover, the outgoing cones $C_u$ are complete for all $u\leq u_0$.
    \item The norms $\OO$, $\RR$ satisfy
    \begin{equation}\label{finalest}
    \OO\les\ep_0,\qquad\RR\les\ep_0.
    \end{equation}
\end{enumerate}
\end{theorem}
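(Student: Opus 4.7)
\textbf{Proof proposal for Theorem \ref{maintheorem}.} The plan is to run a continuity/bootstrap argument on the size of the bootstrap region $\KK$ defined in Section \ref{bootregion}. Let $\mathcal{U}$ denote the set of parameters $\ub_* > R_0$ for which a double null foliation $(u,\ub)$ exists on $\KK = V(u_0,\ub_*)$ with a well-defined last slice $\Cb_*$ endowed with a geodesic foliation (Definition \ref{geodesicfoliation}) satisfying the bootstrap assumption
\begin{equation}\label{bootassump}
\OO + \RR \leq \ep.
\end{equation}
Local existence near $\Si_0$, via Choquet-Bruhat together with the initial-data smallness \eqref{l2condition}, guarantees $\mathcal{U}$ is nonempty and that \eqref{finalest} already holds in a neighborhood of $\Si_0$. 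The goal is then to show $\mathcal{U}$ is open and closed by improving \eqref{bootassump} to $\OO + \RR \lesssim \ep_0$, which by \eqref{epep0} is strictly smaller than $\ep$.

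The closing argument has four modular steps, carried out in Sections \ref{curvatureestimates}--\ref{initiallast}. First, I would apply the $r^p$-weighted estimate method of Dafermos-Rodnianski to the linearized Bianchi system of Propositions \ref{Bianchieq} and \ref{Bianchieqdkb}. Applied in the pairs $(\aac,\bbc)$, $(\bbc,\rhoc,\sic)$, $(\rhoc,\sic,\bc)$, $(\bc,\ac)$ in the appropriate weighted $L^2$ norms, and absorbing the Kerr-background and nonlinear error terms $\fl[\cdot]+\err[\cdot]$ via \eqref{bootassump} and the decay tables of Proposition \ref{decayGamma}, this produces a bound
\begin{equation*}
\RR \lesssim \Rk_0 + \RRb(\Cb_*) + \OO\cdot\RR + (\text{lower-order}),
\end{equation*}
with one derivative $\dkb$ also estimated without loss because the linearized equations of Proposition \ref{Bianchieqdkb} are cast in terms of $\dkbac,\dots,\dkbaac$. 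Second, using the curvature control thus obtained, I would integrate the linearized null structure equations of Propositions \ref{nullstructure} and \ref{equationsmumub} along the null directions: transport $\nab_4$-equations from data on $\Si_0\sm K$ for $\Ga_g$ quantities, and $\nab_3$-equations from the last slice $\Cb_*$ for $\Ga_b$ quantities. The mass-aspect renormalizations $\mumc,\mubmc$ recover the missing derivative of $\trchc,\trchbc$ via the elliptic Hodge systems of Proposition \ref{ellipticest}, avoiding the loss that would otherwise require a higher derivative count. This yields $\OO \lesssim \okk(\Si_0)+\OO^*(\Cb_*)+\RR+(\OO+\RR)^2$.

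Third, the initial data terms $\okk(\Si_0)$ and $\Rk_0$ must be controlled by the assumed initial-layer quantities $\OO_{(0)}$ and $\Rk_{(0)}$. Since the $(u,\ub)$ foliation of $\KK$ need not coincide with the layer foliation $(u_{(0)},\ub)$ of $\KK_{(0)}$, I would perform a change-of-frame (with parameters $(f,\la)$ of Lemma \ref{changelemma}) and bound the oscillation norm $\osc$ of Section \ref{Oscnorms} by $\ep_0$, using transport of $f$ along $\nab_3$ and the smallness of $\de_0$. This transfers $\OO_{(0)}+\Rk_{(0)} \leq \ep_0$ into $\okk(\Si_0)+\Rk_0 \lesssim \ep_0$. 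Fourth, the last-slice norm $\OO^*(\Cb_*)$ is handled on the geodesic foliation of $\Cb_*$: since $\Omc=\bcc=0$ on $\Cb_*$ simplifies the transport equations of Proposition \ref{metriceqre}, and one only needs the same number of derivatives for Ricci coefficients as for curvature, the $\nab_3$-transport system along $\Cb_*$ starting from $S_*\subset\Si_0$ closes, giving $\OO^*(\Cb_*)+\RRb(\Cb_*) \lesssim \ep_0$. Combining these four steps and absorbing quadratic terms using \eqref{epep0} yields $\OO+\RR\lesssim \ep_0$, strictly improving \eqref{bootassump}. Openness of $\mathcal{U}$ then follows from the standard local existence theorem applied near $\Cb_*$ to extend the foliation slightly past $\ub_*$, and closedness follows from the uniform estimates; the completeness of the $C_u$ cones is an immediate consequence of the global control on the lapse $\Om$ in $\OO$.

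The main obstacle I anticipate is the coupling between the four modules, in particular ensuring that the weighted norms are chosen consistently so that all error terms of the form $\O_q^p\cdot\Ga$, $\Ga\cdot\Ga$ in Propositions \ref{nullstructure}, \ref{equationsmumub}, \ref{Bianchieq}, \ref{Bianchieqdkb} are integrable in $r$ and $|u|$ with the exponents of Section \ref{fundamentalnorms} for the full range $s\in[4,6]$; the borderline weights on $\aac$ and $\bc$ (which dictate the choice of $r^p$-weights) and the boundary terms at $\Cb_*$ for the outgoing Bianchi pairs will require the most careful bookkeeping. The cases $s\in(3,4)$ and $s>6$, deferred to Appendices \ref{secc} and \ref{secd}, are expected to require respectively a reduction in the $r^p$-range and the use of $r^p$-weights on the linearized Teukolsky equation to gain the extra powers of $r$.
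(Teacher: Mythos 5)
Your proposal follows essentially the same route as the paper: a continuity argument in $\ub_*$ whose induction step is closed by exactly the four modules the paper isolates as Theorems M0--M3 ($r^p$-weighted estimates for the linearized Bianchi pairs, transport plus Hodge-elliptic estimates with the mass-aspect renormalization for the Ricci coefficients, a change-of-frame/oscillation comparison to transfer the layer data, and the geodesic-type foliation analysis on $\Cb_*$), followed by a local-existence extension past $\Cb_*$ (Theorem M4). The only structural nuance is that the paper's bootstrap set also carries $\osc\leq\ep$ and $\OO^*(\Cb_*)\leq\ep$ alongside $\OO,\RR\leq\ep$ (and obtains $\RRb(\Cb_*)$ from the curvature flux of Theorem M1 rather than from the last-slice transport), which is how the interdependence you flag among the four modules is resolved; your sketch folds this in implicitly but would need to state it to avoid circularity.
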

\begin{remark}
    {We denote $(u_{\ub_*},\ub_{\ub_*})$ the double null foliation associated to the bootstrap region $V(u_0,\ub_*)$. The bounds \eqref{finalest} allow us to show that $(u_{\ub_*},\ub_{\ub_*})$ converges to $(u_\infty,\ub_\infty)$\footnote{Here, $(u_\infty,\ub_\infty)$ denotes the double null foliation of final spacetime.} as $\ub_{*}\to\infty$ with all the bootstrap bounds valid on $V(u_0, +\infty)$ for the quantities associated to the double null foliation $(u_\infty, \ub_\infty)$.}
\end{remark}
The proof of Theorem \ref{maintheorem} is given in Section \ref{proofmain}. It hinges on a sequence of basic theorems stated in Section \ref{Mnumber}, concerning estimates for $\OO$ and $\RR$ norms.
\begin{remark}
Theorem \ref{maintheorem} is proved in Section \ref{proofmain} for $s\in [4,6]$, and extended to $s\in(3,4)$ and $s>6$ in Appendices \ref{secc} and \ref{secd}.
\end{remark}
\subsection{Intermediate results}\label{Mnumber}
\begin{thmM0}
Assume that
\begin{equation}\label{assum13}
    \OO_{(0)}\leq\ep_0,\qquad \Rk_{(0)}\leq\ep_0,\qquad\OO\leq\ep,\qquad \osc\leq\ep.
\end{equation}
Then, we have
\begin{equation}\label{con13}
\Rk_0\les\ep_0.
\end{equation}
\end{thmM0}
Theorem M0 is proved in Section \ref{osc}. The proof is based on null frame transformation formulae introduced by Klainerman and Szeftel in \cite{KS:Kerr1}, see Proposition \ref{transformation} in Section \ref{secnullframe}.
\begin{thmM1}
Assume that
\begin{equation}
    \OO_{(0)}\leq\ep_0,\qquad \Rk_0\leq\ep_0,\qquad \OO\leq\ep,\qquad \RR\leq\ep,\qquad \osc\leq\ep.
\end{equation}
Then, we have
\begin{equation}
    \RR\les \ep_0.
\end{equation}
\end{thmM1}
Theorem M1 is proved in Section \ref{curvatureestimates}. The proof is based on the $r^p$--weighted estimate method introduced by Dafermos and Rodnianski in \cite{Da}.
\begin{thmM2}
Assume that $\Cb_*$ is foliated by the geodesic type foliation of Definition \ref{geodesicfoliation} and that we have the following assumptions:
\begin{align*}
     \OO_{(0)}(\Cb_*\cap\Si_0)\leq\ep_0,\qquad\RRb(\Cb_*)\les\ep_0,\qquad {\OO}^{*}(\Cb_*)\leq\ep.
\end{align*}
Then, we have
\begin{align*}
     \OO^*(\Cb_*)\les\ep_0.
\end{align*}
\end{thmM2}
Theorem M2 is proved in Section \ref{lastslice}. The proof is done by integrating the transport equations along the null generator $\Lb$ of the last slice $\Cb_*$ and applying elliptic estimates on the $2$--spheres of the geodesic type foliation of $\Cb_*$.
\begin{thmM3}
Assume that
\begin{align}\label{assM313}
        \OO_{(0)}\leq\ep_0,\qquad\RR\leq\ep_0,\qquad\OO^*(\Cb_*)\leq \ep_0, \qquad\OO\leq\ep, \qquad\osc\leq\ep.
\end{align}
Then, we have
\begin{equation*}
    \OO\les\ep_0,\qquad \osc\les\ep_0.
\end{equation*}
\end{thmM3}
Theorem M3 is proved in Section \ref{Ricciestimates}. The proof is done by integrating the transport equations along the outgoing and ingoing null cones and applying elliptic estimate on the $2$--spheres of the double null foliation of the spacetime $\kk$.
\begin{thmM4}
We consider the spacetime $\KK$ and its double null foliation $(u,\ub)$ which satisfies the assumptions:
\begin{equation}
\OO\les\ep_0,\qquad \RR\les \ep_0,\qquad \osc\les\ep_0,\qquad \OO^*(\Cb_*)\les\ep_0.
\end{equation}
Then, we can extend the spacetime $\KK=V(u_0,\ub_*)$ and the double null foliation to a new spacetime $\wideparen{\KK}=\wideparen{V}(u_0,\ub_*+\nu)$, where $\nu>0$ is sufficiently small, and an associated double null foliation $(\upa,\ub)$. Moreover, the new foliation $(\upa,\ub)$ is geodesic on the new last slice $\Cb_{**}:=\Cb_{\ub_*+\nu}$ and the new corresponding norms satisfy
\begin{align}
    \wideparen{\OO}\les\ep_0,\qquad\wideparen{\RR}\les\ep_0,\qquad \wideparen{\osc}\les\ep_0,\qquad \wideparen{\OO}^*(\Cb_{**})\les\ep_0.
\end{align}
\end{thmM4}
Theorem M4 is proved in Section \ref{ext}. The proof is based on local existence type arguments.
\begin{remark}
When compared to \cite{Caciotta}, we have the following similarities and differences:
\begin{enumerate}
\item We treat the general decay $s>3$ in Theorem \ref{maintheorem} while the main result of \cite{Caciotta} corresponds to the particular case $s>7$ in Theorem \ref{maintheorem}.
\item In \cite{Caciotta}, fourth order derivatives of Ricci coefficients and third order derivatives of curvature are estimated. In this paper, we only estimate first order derivatives of both Ricci coefficients and curvature respectively in Theorems M2--M3 and M1.
\item To estimate the curvature components in Theorem M1, instead of using the classcial vectorfield method as in \cite{Caciotta}, we use $r^p$--weighted estimates of \cite{Da}.
\item On the last slice $\Cb_*$, a canonical foliation is used in \cite{Caciotta} while we use a geodesic type foliation in Theorem M2.
\item The estimates of Ricci coefficients and the extension argument, i.e. the proof of Theorems M3 and M4, are similar to \cite{Caciotta}.
\end{enumerate}
\end{remark}
\subsection{Proof of the main theorem}\label{proofmain}
Now, we use Theorems M0--M4 to prove Theorem \ref{maintheorem}.
\begin{definition}\label{bootstrap}
Let $\aleph(\ub_*)$ the set of spacetimes $\KK$ associated with a double null foliation $(u,\ub)$ which satisfy the following properties:
\begin{enumerate}
    \item $\KK=V(u_0,\ub_*)$.
    \item The foliation on $\Cb_*$ is the geodesic type foliation of Definition \ref{geodesicfoliation}.
    \item We have the following bootstrap assumptions:
\begin{align}
    \OO\leq\epsilon,\qquad\RR\leq\epsilon,\qquad\osc\leq\ep,\qquad \OO^*(\Cb_*)\leq\ep.\label{B2}
\end{align}
\end{enumerate}
\end{definition}
\begin{definition}\label{defboot}
We denote $\UU$ the set of values $\ub_*$ such that $\aleph(\ub_*)\ne\emptyset$.
\end{definition}
Applying the local existence result of Theorem 10.2.1 in \cite{ch-kl} and the assumption $\OO_{(0)}\leq\ep_0$, we deduce that \eqref{B2} holds if $\ub_*$ is sufficiently close to $R_0$. So, we have $\mathcal{U}\ne\emptyset$.\\ \\
Define $\ub_*$ to be the supremum of the set $\mathcal{U}$. We want to prove $\ub_*=\infty$. We assume by contradiction that $\ub_*$ is finite. In particular, we may assume that $\ub_*\in\mathcal{U}$. We consider the region $\KK=V(u_0,\ub_*)$. Recall that we have
\begin{equation*}
    \OO_{(0)}\leq\ep_0,\qquad \Rk_{(0)}\leq\ep_0.
\end{equation*}
Combining with \eqref{B2} and applying Theorem M0, we obtain
\begin{equation}\label{Rk0est}
    \Rk_0\les\ep_0.
\end{equation}
Then, we have from Theorem M1
\begin{equation}\label{RR}
    \RR\les\ep_0.
\end{equation}
Applying Theorem M2, we obtain
\begin{align}\label{OO**}
    \OO^{*}(\Cb_*)\les\ep_0.
\end{align}
Combining \eqref{B2}, \eqref{RR}, \eqref{OO**} and Theorem M3, we deduce
\begin{equation}
    \OO\les\ep_0,\qquad \osc\les\ep_0.
\end{equation}
Next, applying Theorem M4, we can extend $\KK$ to $\wideparen{\KK}:=\wideparen{V}(u_0,\ub_*+\nu)$ endowed with a new double null foliation $(\upa,\ub)$ and
\begin{equation*}
    \wideparen{\OO}(\wideparen{\KK})\les\ep_0,\qquad\widetilde{\RR}(\wideparen{\KK})\les\ep_0,\qquad \wideparen{\osc}(\wideparen{\KK})\les\ep_0,\qquad \wideparen{\OO}^*(\Cb_{**}\cap\wideparen{\kk})\les\ep_0,
\end{equation*}
where $\Cb_{**}=\Cb_{\ub_*+\nu}$ is the new last slice. Thus, the region $\wideparen{V}(u_0,\ub_*+\nu)$ with the new double null foliation $(\wideparen{u},\ub)$ satisfies all the properties in Definition \ref{bootstrap}, and so $\aleph(\ub_*+\nu)\ne\emptyset$, a contradiction. So, we have $\ub_*=\infty$, which implies property 1 of Theorem \ref{maintheorem}. Moreover, we also have
\begin{equation}
    \OO\les\ep_0,\qquad \RR\les\ep_0,
\end{equation}
in the whole external region, which implies property 2 of Theorem \ref{maintheorem}. This concludes the proof of Theorem \ref{maintheorem}.
\subsection{Conclusions}
The following proposition is a consequence of Theorem \ref{maintheorem}.
\begin{proposition}\label{gdiff}
Under the assumptions of Theorem \ref{maintheorem}, we have the following estimate in the external region\footnote{See Figure \ref{figconclu} for a description of the external region.}:
\begin{equation}
    |\g-\g_K|_{\infty,S(u,\ub)}\les\frac{\ep_0}{r|u|^{\frac{s-3}{2}}}.\label{gdiffO}
\end{equation}
\end{proposition}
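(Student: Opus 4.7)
The plan is to compute $\g-\g_K$ directly in the coordinates $(u,\ub,x^A)$ by using the explicit form \eqref{gkerr} for both $\g$ and $\g_K$, and then to upgrade the $L^4(S)$ control provided by $\OO\les\ep_0$ (Theorem \ref{maintheorem}) to pointwise bounds via the sphere Sobolev inequality of Proposition \ref{standardsobolev}. Carrying out the subtraction in coordinates gives
\begin{align*}
(\g-\g_K)_{AB} &= \gac_{AB},\\
(\g-\g_K)_{u\ub} &= -2(\Om+\Om_K)\Omc,\\
(\g-\g_K)_{uA} &= -\ga_{AB}\bcc^B-\gac_{AB}\bu_K^B,\\
(\g-\g_K)_{uu} &= \ga_{AB}(\bu^A\bu^B-\bu_K^A\bu_K^B)+\gac_{AB}\bu_K^A\bu_K^B,
\end{align*}
with $(\g-\g_K)_{\ub\ub}=(\g-\g_K)_{\ub A}=0$. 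Thus every coordinate component of $\g-\g_K$ is polynomial in the linearized quantities $\Omc$, $\gac_{AB}$, $\bcc^A$, with coefficients given by the Kerr background factors $\Om_K-\tfrac12=\O_1^1$, $\bu_K=\O_2^2$, $\ga_K=\O_0^0$ from Proposition \ref{decayGamma}.

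The second step is to apply Proposition \ref{standardsobolev} with $p=4$ and $q=0,1$ to $\Omc$, $\bcc^A$ and $r^{-1}\gac_{AB}$, using the definitions in Section \ref{onorms} and $\OO\les\ep_0$. Reading off the $r$- and $|u|$-weights in each $\OO$-norm yields
\begin{align*}
|\Omc|_{\infty,S(u,\ub)} &\les \frac{\ep_0}{r|u|^{(s-3)/2}}, &
|\bcc^A|_{\infty,S(u,\ub)} &\les \frac{\ep_0}{r^2|u|^{(s-3)/2}}, &
|\gac_{AB}|_{\infty,S(u,\ub)} &\les \frac{r\,\ep_0}{|u|^{(s-3)/2}}.
\end{align*}

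The final step is to measure $\g-\g_K$ intrinsically, e.g.\ by evaluating it in a null frame $(e_3,e_4,e_A)$ with $e_A$ orthonormal for $\ga$ on $S(u,\ub)$. Since $\ga\sim r^2\,d\sigma_{\SSS^2}$, each angular lower index $A$ of the frame carries an extra factor $r^{-1}$ relative to the coordinate index, so the angular block $\gac(e_A,e_B)$ picks up a factor $r^{-2}$ and acquires size $\ep_0/(r|u|^{(s-3)/2})$. The $e_3e_4$-component is controlled directly by $|\Omc|$ (using also $\Om e_4=\Om_K(e_4)_K$ from Lemma \ref{checke4e3}), and the mixed $uA$, $uu$ cross-components are dominated by products of the form $|\bu_K||\gac|/r$, $|\ga||\bcc|/r$ and $|\bu_K|^2|\gac|/r^2$, each of which is again $\les \ep_0/(r|u|^{(s-3)/2})$ thanks to the Kerr decay estimates. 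Combining the three blocks produces \eqref{gdiffO}.

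The only thing one really has to be careful about is the $r$-bookkeeping between coordinate and frame components; but this is precisely why the $r$-weights in the norms of Section \ref{onorms} are what they are, and once they are tracked the proof reduces to a one-line Sobolev embedding followed by an algebraic combination of the already-established estimates. No new analytical input is needed beyond Theorem \ref{maintheorem} and Propositions \ref{standardsobolev} and \ref{decayGamma}.
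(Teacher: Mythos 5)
Your proposal is correct and follows essentially the same route as the paper: write both $\g$ and $\g_K$ in the $(u,\ub,x^A)$ coordinates of \eqref{gkerr}, subtract component by component, and bound the resulting expressions in $\Omc$, $\gac_{AB}$, $\bcc^A$ using the $\OO$-norms together with the Kerr decay of Proposition \ref{decayGamma}. Your extra care with the Sobolev embedding (Proposition \ref{standardsobolev}) and the $r$-weights when passing from coordinate to frame components is precisely what the paper leaves implicit, and your slightly different splitting of the $uA$ and $uu$ cross terms is an equivalent algebraic rearrangement.
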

\begin{figure}
  \centering
  \includegraphics[width=1\textwidth]{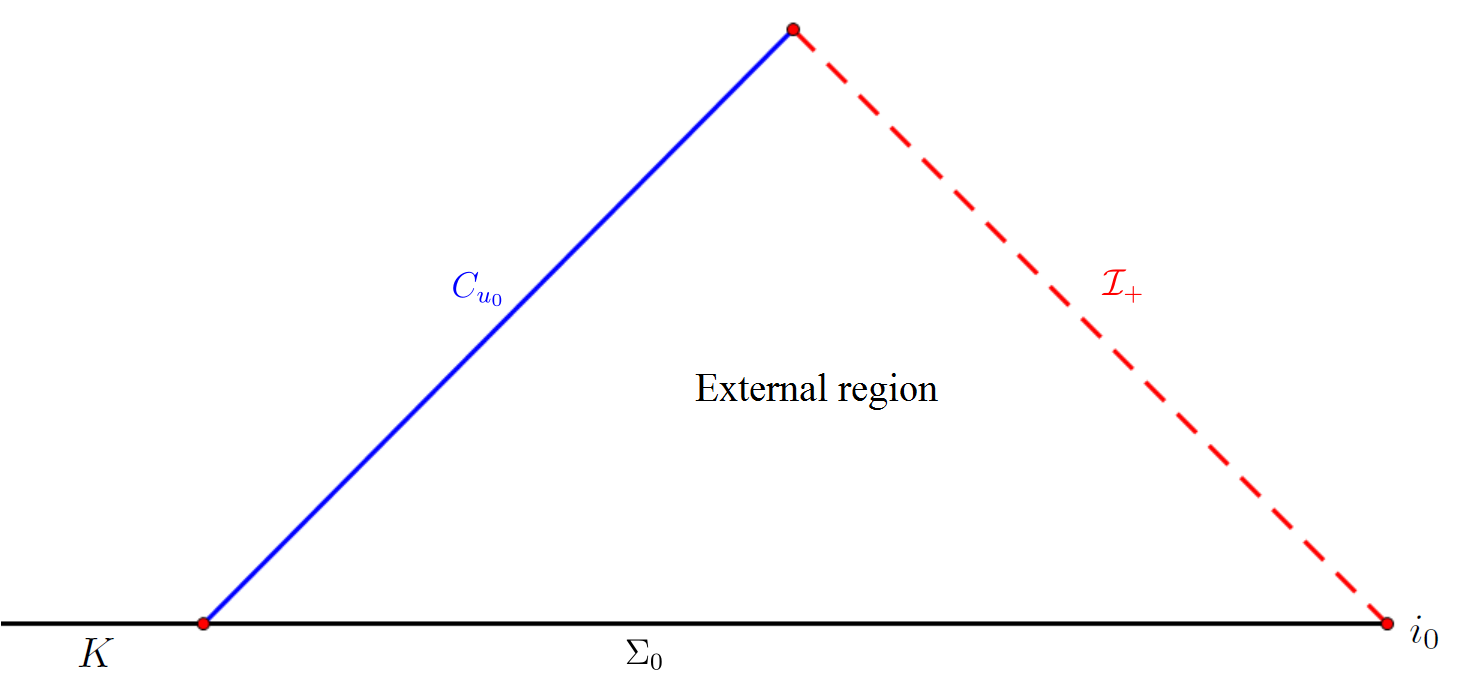}
  \caption{External region}\label{figconclu}
\end{figure}
\begin{proof}
We recall that we have
\begin{align*}
    \g=-2\Om^2(d\ub\otimes du+du\otimes d\ub)+\ga_{AB}(dx^A-\bu^Adu)\otimes(dx^B-\bu^Bdu),
\end{align*}
and
\begin{align*}
    \g_K=-2\Om_K^2(d\ub\otimes du+du\otimes d\ub)+(\ga_K)_{AB}(dx^A-\bu_K^Adu)\otimes(dx^B-\bu_K^Bdu).
\end{align*}
We compare these two metrics as follows:
\begin{align*}
    \g_{\ub\ub}-(\g_K)_{\ub\ub}&=0,\\
    \g_{\ub u}-(\g_K)_{\ub u}&=-4\Om^2+4\Om_K^2=-4\Omc(\Om+\Om_K)= O\left(\frac{\ep_0}{r|u|^{\frac{s-3}{2}}}\right),\\
    \g_{\ub A}-(\g_K)_{\ub A}&=0,\\
    \g_{uu}-(\g_K)_{uu}&=\gac_{AB}\bu^A\bu^B+(\ga_K)_{AB}(\bu^A\bu^B-\bu_K^A\bu_K^B)=O\left(\frac{\ep_0}{r|u|^{\frac{s-3}{2}}}\right),\\
    \g_{uA}-(\g_K)_{uA}&=-\gac_{AB}\bu^B-(\ga_K)_{AB}\bcc^B=O\left(\frac{\ep_0}{|u|^{\frac{s-3}{2}}}\right),\\
    \g_{AB}-(\g_K)_{AB}&=\gac_{AB}=O\left(\frac{\ep_0 r}{|u|^{\frac{s-3}{2}}}\right).
\end{align*}
This concludes the proof of Proposition \ref{gdiff}.
\end{proof}
\begin{remark}
Theorem \ref{maintheorem} contains also a number of important conclusions following from \eqref{finalest}: peeling properties, complete future infinity, Bondi mass formula and so on, see Chapter 8 in \cite{Kl-Ni}. See 
also Section 3.8 in \cite{KS:main} for various conclusions of the Kerr stability in the entire domain of outer communication.
\end{remark}
\section{Curvature estimates (Theorem M1)}\label{curvatureestimates}
In this section, we prove Theorem M1 by the $r^p$--weighted estimate method introduced in \cite{Da} and applied to Bianchi equations in \cite{Hol10a}, \cite{KS} and \cite{GKS}. For convenience, we recall the statement below.
\begin{thmM1}
Assume that
\begin{equation}\label{asm1}
    \OO_{(0)}\leq\ep_0,\qquad\Rk_0\les\ep_0,\qquad \OO\leq\ep,\qquad \RR\leq\ep,\qquad \osc\leq\ep.
\end{equation}
Then, we have
\begin{equation}
    \RR\les\ep_0.
\end{equation}
\end{thmM1}
In the sequel, we denote\footnote{See Figure \ref{fig4} for a description of $V(u,\ub)$, $\cuv$ and $\ucuv$.}
\begin{equation}
   V:=V(u,\ub),\qquad C_u^V:=C_u\cap V,\qquad\Cb_\ub^V:=\Cb_\ub\cap V.
\end{equation}
The following lemma allows us to estimate the linear terms and error terms in Propositions \ref{Bianchieq} and \ref{Bianchieqdkb}.
\begin{figure}
  \centering
  \includegraphics[width=1\textwidth]{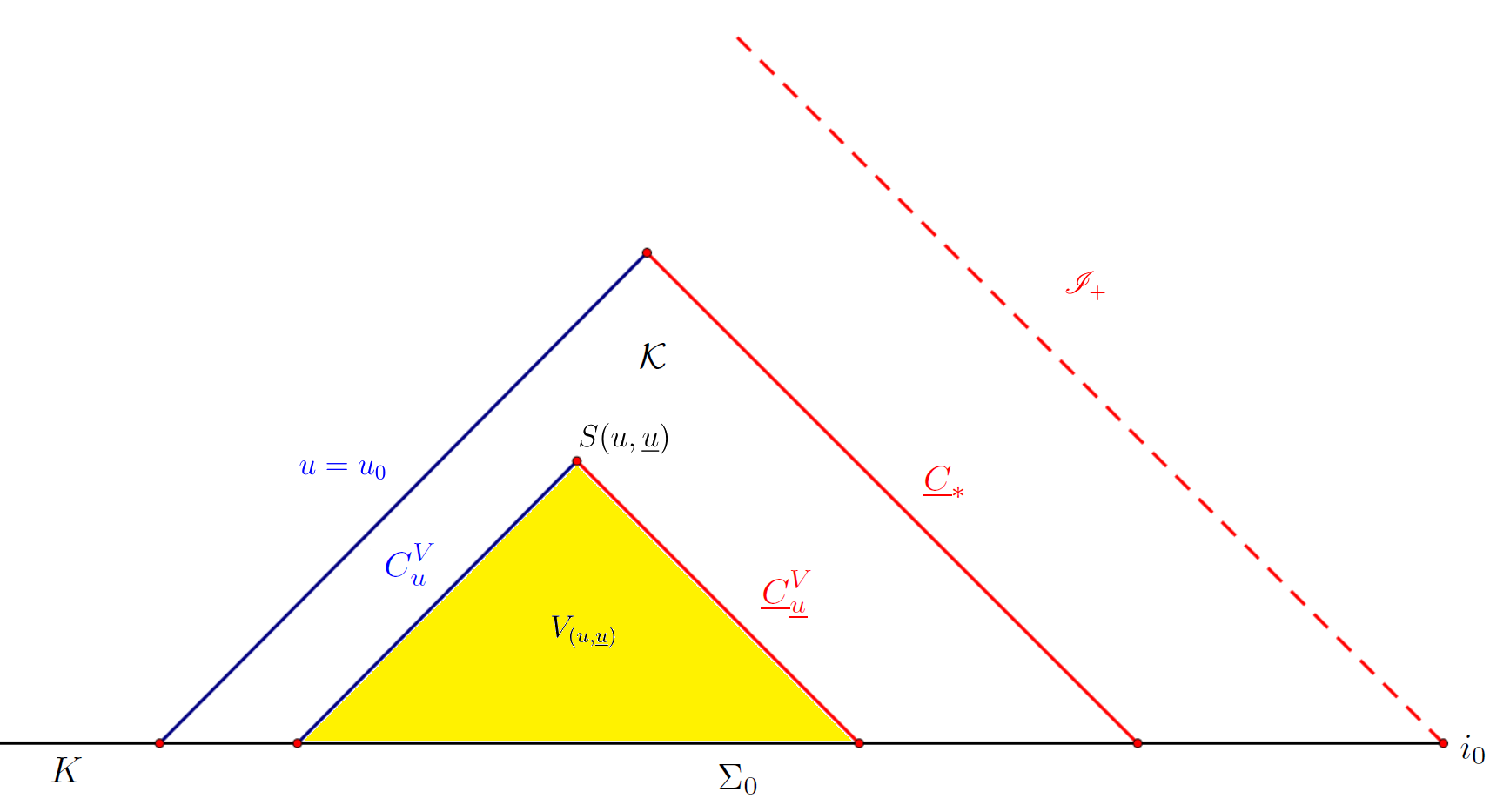}
  \caption{Domain of integration}\label{fig4}
\end{figure}
\begin{lemma}\label{estGagba}
Under the assumptions $\OO\leq\ep$ and $\RR\leq\ep$, we have
\begin{align}
\begin{split}
    r^2|u|^{\frac{s-3}{2}}|\Gag|_{\infty,S}&\les\ep,\qquad\qquad\quad r|u|^{\frac{s-1}{2}}|\Gab|_{\infty,S}\les\ep\\
    |r^{2-\frac{2}{p}}|u|^\frac{s-3}{2}\Gag^{(1)}|_{p,S}&\les\ep,\qquad\quad\, |r^{1-\frac{2}{p}}|u|^\frac{s-1}{2}\Gab^{(1)}|_{p,S}\les\ep,\qquad\quad p\in [2,4].
\end{split}
\end{align}
\end{lemma}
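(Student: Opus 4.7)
The plan is to deduce all four inequalities by direct unpacking of the $\OO$ and $\RR$ norm definitions from Section \ref{fundamentalnorms}, combined with the sphere Sobolev inequality of Proposition \ref{standardsobolev}. Every entry of $\Gag$, $\Gab$, $\Gag^{(1)}$ and $\Gab^{(1)}$ is already embedded in one of the bootstrap norms, so the proof reduces to checking that the declared weights match.

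For the two $L^\infty$ bounds, I would apply $\sup_S r^{1/2}|F|\lesssim |F|_{4,S}+|r\nab F|_{4,S}$ to each entry of $\Gag$ and $\Gab$ in turn. The right-hand side is bounded by the $p=4$ specializations of $\OO_0$ and $\OO_1$, both contained in $\OO\leq\ep$. Reading off the $p=4$ weights from Section \ref{onorms}, each component $\Ga$ of $\Gag$ satisfies $|\Ga|_{4,S}+|r\nab\Ga|_{4,S}\lesssim \ep\,r^{-3/2}|u|^{-(s-3)/2}$, so Sobolev yields $|\Ga|_\infty\lesssim\ep\,r^{-2}|u|^{-(s-3)/2}$, which is the first claim. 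The $\Gab$ entries $\hchbc, \ombc$ are treated identically, with one fewer power of $r$ and weight $|u|^{(s-1)/2}$, producing $|\Ga|_\infty \lesssim \ep\,r^{-1}|u|^{-(s-1)/2}$, which is the second claim. The inverse-$r$ entries $r^{-1}\bcc, r^{-1}\Omc, r^{-1}\gac, r^{-1}\inc, r^{-2}\widecheck{r}$ are handled by the same computation, since their $\OO$-norm weights have been calibrated precisely to absorb the $r^{-k}$ prefactors into the uniform weight $r^{2}|u|^{(s-3)/2}$.

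For the $L^p$ bounds on $\Gag^{(1)}$ and $\Gab^{(1)}$ I would split each set into three families: (i) the derivative pieces $(r\nab)^{\leq 1}\Gag^{(0)}$ and $(r\nab)^{\leq 1}\Gab^{(0)}$, directly controlled by $\OO_0+\OO_1$, the extra factor of $r$ carried by $r\nab$ shifting the $\OO_1$-weight into the $\Gag^{(1)}$- or $\Gab^{(1)}$-weight exactly; (ii) the tensors $\Jc, \Lc, \Jbc$, directly controlled by $\OO_\bGa$ with already matching weights; and (iii) the curvature terms, where $r\rhoc, r\sic, r\bbc$ fit exactly into $\RRb_0^S$. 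The only weight mismatch arises for $r\bc$: the target weight $r^{3-2/p}|u|^{(s-3)/2}$ differs from the $\RR_0^S[\bc]$ weight $r^{7/2-2/p}|u|^{(s-4)/2}$ by a factor $(|u|/r)^{1/2}$, which is uniformly bounded on $\KK$ since $r\sim (\ub+|u|)/2\geq |u|/2$ in the exterior region. I expect no real obstacle here: the entire lemma is a bookkeeping computation reflecting the careful calibration of the norms in Section \ref{fundamentalnorms}, together with one application of the sphere Sobolev inequality, and the only geometric input beyond this is the elementary fact $r\gtrsim|u|$ in $\KK$.
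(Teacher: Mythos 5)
Your proposal is correct and follows essentially the same route as the paper, whose proof is precisely this norm bookkeeping: unpack the $\OO$, $\OO_\bGa$, $\RR_0^S$ and $\RRb_0^S$ definitions and apply the sphere Sobolev inequality of Proposition \ref{standardsobolev}, with the only weight mismatch (for $r\bc$) absorbed by $|u|\les r$ on $\KK$. Your treatment is in fact more explicit than the paper's one-line argument, which simply cites the assumptions, Definition \ref{gammag} and Propositions \ref{sobolevkn} and \ref{standardsobolev}.
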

\begin{proof}
It follows directly from the assumption \eqref{asm1}, Definition \ref{gammag} and Proposition \ref{standardsobolev}.
\end{proof}
\subsection{General Bianchi pairs}
The following lemma provides the general structure of Bianchi pairs. It will be used repeatedly in Section \ref{curvatureestimates}. See Lemma 8.24 in \cite{KS} for an analog under the assumption of axial polarization and Lemma 15.3.8 in \cite{GKS} for an analog in the context of non integrable foliations.
\begin{lemma}\label{keypoint}
Let $k=1,2$ and $a_{(1)}$, $a_{(2)}$ real numbers. Then, we have the following properties.
\begin{enumerate}
    \item Let $\psi_{(1)}, h_{(1)}\in\sk_k$ and $\psi_{(2)},h_{(2)}\in\sk_{k-1}$ satisfying
    \begin{align}
    \begin{split}\label{bianchi1}
        \nab_3(\psi_{(1)})+a_{(1)}\trchb\,\psi_{(1)}&=-kd_k^*(\psi_{(2)})+h_{(1)},\\
        \nab_4(\psi_{(2)})+a_{(2)}\trch\,\psi_{(2)}&=d_k(\psi_{(1)})+h_{(2)}.
    \end{split}
    \end{align}
Then, the pair $(\psi_{(1)},\psi_{(2)})$ satisfies for any real number $p$
\begin{align}
\begin{split}\label{div}
       &\Div(r^p |\psi_{(1)}|^2e_3)+k\Div(r^p|\psi_{(2)}|^2e_4)\\
       +&\left(2a_{(1)}-1-\frac{p}{2}\right)r^{p}\trchb|\psi_{(1)}|^2+k\left(2a_{(2)}-1-\frac{p}{2}\right)r^{p}\trch|\psi_{(2)}|^2\\
       =&2k r^p\sdiv(\psi_{(1)}\cdot\psi_{(2)})
       +2r^p\psi_{(1)}\cdot h_{(1)}+2kr^p\psi_{(2)}\cdot h_{(2)}-2r^p\omb|\psi_{(1)}|^2\\
       -&2kr^p\om|\psi_{(2)}|^2+pr^{p-1}\left(e_3(r)-\frac{r}{2}\trchb\right)|\psi_{(1)}|^2+kpr^{p-1}\left(e_4(r)-\frac{r}{2}\tr\chi\right)|\psi_{(2)}|^2.
\end{split}
\end{align}
    \item Let $\psi_{(1)}, h_{(1)}\in\sk_{k-1}$ and $\psi_{(2)},h_{(2)}\in\sk_k$ satisfying
    \begin{align}
        \begin{split}\label{bianchi2}
        \nab_3(\psi_{(1)})+a_{(1)}\trchb\,\psi_{(1)}&=d_k(\psi_{(2)})+h_{(1)},\\
        \nab_4(\psi_{(2)})+a_{(2)}\trch\,\psi_{(2)}&=-kd_k^*(\psi_{(1)})+h_{(2)}.
        \end{split}
    \end{align}
Then, the pair $(\psi_{(1)},\psi_{(2)})$ satisfies for any real number $p$
\begin{align}
\begin{split}\label{div2}
       &k\Div(r^p |\psi_{(1)}|^2e_3)+\Div(r^p|\psi_{(2)}|^2e_4)\\
       +&k\left(2a_{(1)}-1-\frac{p}{2}\right)r^{p}\trchb|\psi_{(1)}|^2+\left(2a_{(2)}-1-\frac{p}{2}\right)r^{p}\trch|\psi_{(2)}|^2\\
       =&2 r^p\sdiv(\psi_{(1)}\cdot\psi_{(2)})
       +2kr^p\psi_{(1)}\cdot h_{(1)}+2r^p\psi_{(2)}\cdot h_{(2)}-2k r^p\omb|\psi_{(1)}|^2\\
       -&2r^p\om|\psi_{(2)}|^2+k pr^{p-1}\left(e_3(r)-\frac{r}{2}\trchb\right)|\psi_{(1)}|^2+pr^{p-1}\left(e_4(r)-\frac{r}{2}\trch\right)|\psi_{(2)}|^2.
\end{split}
\end{align}
\end{enumerate}
\end{lemma}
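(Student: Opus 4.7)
The plan is to prove both statements by direct computation; I would treat case (1) in detail and observe that case (2) is recovered by a symmetric argument after permuting the roles of the two weights $1$ and $k$.

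For case (1), I would first expand the two spacetime divergences on the left-hand side using the Leibniz identity
$$\Div(r^p|\psi_{(1)}|^2 e_3)=pr^{p-1}e_3(r)|\psi_{(1)}|^2+2r^p\psi_{(1)}\c\nab_3\psi_{(1)}+r^p|\psi_{(1)}|^2\Div(e_3),$$
and similarly for $e_4$. Using the Ricci formulae \eqref{ricciformulas} together with $\xi=\xib=0$, I would compute the spacetime divergences of the null generators,
$$\Div(e_3)=\trchb-2\omb,\qquad \Div(e_4)=\trch-2\om,$$
by summing $\g^{\mu\nu}\g(\D_\mu e_3,e_\nu)$ over the null frame. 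Substituting the two evolution equations of \eqref{bianchi1} to eliminate $\nab_3\psi_{(1)}$ and $\nab_4\psi_{(2)}$ then yields
\begin{align*}
\Div(r^p|\psi_{(1)}|^2 e_3)=&(1-2a_{(1)})r^p\trchb|\psi_{(1)}|^2+pr^{p-1}e_3(r)|\psi_{(1)}|^2\\
&-2kr^p\psi_{(1)}\c d_k^*\psi_{(2)}+2r^p\psi_{(1)}\c h_{(1)}-2r^p\omb|\psi_{(1)}|^2,
\end{align*}
and an analogous identity for $\Div(r^p|\psi_{(2)}|^2 e_4)$ involving $+2r^p\psi_{(2)}\c d_k\psi_{(1)}$.

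Next, taking the linear combination with weights $1$ and $k$, the two angular terms combine through the pointwise $L^2$-adjoint identity between the Hodge operators introduced in Section \ref{ssec7.2}: for $\psi_{(1)}\in\sk_k$ and $\psi_{(2)}\in\sk_{k-1}$,
$$\psi_{(2)}\c d_k\psi_{(1)}-\psi_{(1)}\c d_k^*\psi_{(2)}=d_1(\psi_{(1)}\c\psi_{(2)}),$$
where the right-hand side is the angular divergence of the natural contraction, and the cross-term coefficient is exactly $2k$ after the combination.

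The only remaining step is to reconcile the trace coefficient $(1-2a_{(1)})$ produced by the computation with the stated coefficient $(2a_{(1)}-1-p/2)$. For this I would split
$$pr^{p-1}e_3(r)|\psi_{(1)}|^2=\frac{p}{2}r^p\trchb|\psi_{(1)}|^2+pr^{p-1}\left(e_3(r)-\frac{r}{2}\trchb\right)|\psi_{(1)}|^2,$$
and similarly for the $e_4(r)$ term, and move the $r^p\trchb|\psi_{(1)}|^2$ and $r^p\trch|\psi_{(2)}|^2$ contributions to the left-hand side. This gives precisely \eqref{div}. Case (2) is treated identically: one multiplies the first equation of \eqref{bianchi2} by $2kr^p\psi_{(1)}$ and the second by $2r^p\psi_{(2)}$, so that the adjoint identity once again produces the cross-term $2r^p d_1(\psi_{(1)}\c\psi_{(2)})$ with coefficient $2$ (not $2k$), and the rest of the bookkeeping is identical. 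There is no real obstacle here: the lemma is an exercise in carefully tracking the weights $r^p$ and the coefficients produced by the Leibniz rule, the formula for $\Div(e_3)$, $\Div(e_4)$, and the adjunction of $d_k$ with $d_k^*$.
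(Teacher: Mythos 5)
Your proof is correct, and it is the natural direct computation; note that the paper gives no internal proof of Lemma \ref{keypoint} but simply cites Lemma 4.2 of \cite{ShenMink}, where the argument is precisely the one you carry out: the Leibniz rule for $\Div(r^p|\psi|^2e_3)$ and $\Div(r^p|\psi|^2e_4)$, the identities $\Div(e_3)=\trchb-2\omb$ and $\Div(e_4)=\trch-2\om$ (valid here since $\xi=\xib=0$), substitution of \eqref{bianchi1} resp.\ \eqref{bianchi2}, the pointwise adjunction $\psi_{(2)}\cdot d_k\psi_{(1)}-\psi_{(1)}\cdot d_k^*\psi_{(2)}=\sdiv(\psi_{(1)}\cdot\psi_{(2)})$, and the splitting $pr^{p-1}e_3(r)=\frac{p}{2}r^p\trchb+pr^{p-1}\left(e_3(r)-\frac{r}{2}\trchb\right)$ together with its $e_4$ analogue. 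Two small remarks. First, in case (2) your own weighting (multiplying the first equation by $2kr^p\psi_{(1)}$ and the second by $2r^p\psi_{(2)}$, the latter already carrying the factor $k$ inside $-kd_k^*\psi_{(1)}$) produces the cross term $2k\,r^pd_1(\psi_{(1)}\cdot\psi_{(2)})$, not $2\,r^pd_1(\psi_{(1)}\cdot\psi_{(2)})$ as you assert; your parenthetical ``coefficient $2$ (not $2k$)'' is an arithmetic slip that happens to reproduce the coefficient printed in \eqref{div2}. This discrepancy is immaterial, since that term is a pure angular divergence on the closed spheres $S(u,\ub)$ and is discarded upon integration in Proposition \ref{keyintegral}, the only place the lemma is used, but you should not claim the factor $k$ cancels. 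Second, for $k=1$ the ``contraction'' $\psi_{(1)}\cdot\psi_{(2)}$ in your adjunction identity should be read as the $1$--form $f\,\xi+f_*\,{}^*\xi$ built from the pair of scalars and the $1$--form of the pair, which is consistent with the schematic meaning of $d_1(\psi_{(1)}\cdot\psi_{(2)})$ in the statement; with that reading your identity holds in both cases $k=1,2$.
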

\begin{proof}
See Lemma 4.2 in \cite{ShenMink}.
\end{proof}
\begin{remark}
    Note that the linearized Bianchi equations can be written as systems of equations of the type \eqref{bianchi1} and \eqref{bianchi2}. In particular
    \begin{itemize}
        \item the Bianchi pair $(\ac,\bc)$ satisfies \eqref{bianchi1} with $k=2$, $a_{(1)}=\frac{1}{2}$, $a_{(2)}=2$,
        \item the Bianchi pair $(\bc,(\rhoc,-\sic))$ satisfies \eqref{bianchi1} with $k=1$, $a_{(1)}=1$, $a_{(2)}=\frac{3}{2}$,
        \item the Bianchi pair $((\rhoc,\sic),\bbc)$ satisfies \eqref{bianchi2} with $k=1$, $a_{(1)}=\frac{3}{2}$, $a_{(2)}=1$,
        \item the Bianchi pair $(\bbc,\aac)$ satisfies \eqref{bianchi2} with $k=2$, $a_{(1)}=2$, $a_{(2)}=\frac{1}{2}$.
    \end{itemize}
\end{remark}
\begin{proposition}\label{keyintegral}
For $\psi_{(1)}$, $\psi_{(2)}$ and $h_{(1)}$, $h_{(2)}$ satisfying \eqref{bianchi1} or \eqref{bianchi2}, we have the following properties for all $(u,\ub)\in\kk$:\\\\
(a) In the case $2+p-4a_{(1)}>0$ and $4a_{(2)}-2-p>0$, we have
\begin{align}
\begin{split}\label{caseone}
&\int_{\cuv}r^p |\psi_{(1)}|^2+\int_\ucuv r^p|\psi_{(2)}|^2 +\int_{V}r^{p-1}|\psi_{(1)}|^2+r^{p-1}|\psi_{(2)}|^2 \\ 
\les &\int_{\Sigma_0 \cap V} r^p |\psi_{(1)}|^2 + r^p|\psi_{(2)}|^2+\int_{V} r^p|\psi_{(1)}||h_{(1)}|+r^p|\psi_{(2)}||h_{(2)}|.
\end{split}
\end{align}
(b) In the case $2+p-4a_{(1)}>0$ and $4a_{(2)}-2-p=0$, we have
\begin{align}
\begin{split}\label{casetwo}
&\int_{\cuv}r^p |\psi_{(1)}|^2+\int_\ucuv r^p|\psi_{(2)}|^2 +\int_{V} r^{p-1}|\psi_{(1)}|^2\\
\les &\int_{\Sigma_0 \cap V} r^p |\psi_{(1)}|^2+r^p|\psi_{(2)}|^2+\int_{V} r^p|\psi_{(1)}||h_{(1)}|+r^p|\psi_{(2)}||h_{(2)}|.
\end{split}
\end{align}
(c) In the case $2+p-4a_{(1)}\leq 0$ and $4a_{(2)}-2-p\geq 0$, we have
\begin{align}
\begin{split}\label{casethree}
&\int_{\cuv}r^p|\psi_{(1)}|^2+\int_\ucuv r^p|\psi_{(2)}|^2\\ 
\les&\int_{\Sigma_0\cap V}r^p|\psi_{(1)}|^2+r^p|\psi_{(2)}|^2+\int_{V}r^{p-1}|\psi_{(1)}|^2+r^p|\psi_{(1)}||h_{(1)}|+r^p|\psi_{(2)}||h_{(2)}|.
\end{split}
\end{align}
(d) In the case $2+p-4a_{(1)}> 0$ and $4a_{(2)}-2-p\leq 0$, we have
\begin{align}
\begin{split}\label{casefour}
&\int_{\cuv}r^p|\psi_{(1)}|^2+\int_\ucuv r^p|\psi_{(2)}|^2+\int_{V}r^{p-1}|\psi_{(1)}|^2\\
\les&\int_{\Sigma_0\cap V}r^p|\psi_{(1)}|^2+r^p|\psi_{(2)}|^2+\int_{V}r^{p-1}|\psi_{(2)}|^2+r^p|\psi_{(1)}||h_{(1)}|+r^p|\psi_{(2)}||h_{(2)}|.
\end{split}
\end{align}
\end{proposition}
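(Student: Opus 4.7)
The strategy is to integrate the pointwise divergence identity \eqref{div} (respectively \eqref{div2}) of Lemma \ref{keypoint} over the spacetime region $V=V(u,\ub)$ whose boundary consists of $\Sigma_0\cap V$, $C_u^V$, and $\Cb_\ub^V$. Applying the divergence theorem to $\Div(r^p|\psi_{(1)}|^2 e_3)$ produces the positive flux $\int_{C_u^V}r^p|\psi_{(1)}|^2$ on the outgoing null boundary plus a controlled $\int_{\Sigma_0\cap V}r^p|\psi_{(1)}|^2$ contribution, and similarly $k\Div(r^p|\psi_{(2)}|^2 e_4)$ yields the flux $\int_{\Cb_\ub^V}r^p|\psi_{(2)}|^2$ plus an initial-slice term. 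The angular divergence $2kr^p d_1(\psi_{(1)}\cdot\psi_{(2)})$ integrates to zero on each sphere $S(u,\ub)$ by Stokes, and the $r^p\psi_{(i)}\cdot h_{(i)}$ terms move directly to the right as inhomogeneous source terms.

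The heart of the argument is the sign analysis of the two bulk terms. By Lemma \ref{expressionGaR} we have $\trchb=-\tfrac{2}{r}+\O^1_2+\Gab$ and $\trch=\tfrac{2}{r}+\O^1_2+\Gag$, so to leading order the $\psi_{(1)}$ bulk contribution on the left equals $(2+p-4a_{(1)})\int_V r^{p-1}|\psi_{(1)}|^2$, while the $\psi_{(2)}$ bulk is $k(4a_{(2)}-2-p)\int_V r^{p-1}|\psi_{(2)}|^2$. In case (a) both coefficients are strictly positive, hence both bulks are coercive and stay on the left, yielding \eqref{caseone}. In case (b) the $\psi_{(2)}$ bulk vanishes identically, giving \eqref{casetwo}. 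In case (c) the $\psi_{(1)}$ bulk has the unfavorable sign (or vanishes) and is moved to the right, producing the $\int_V r^{p-1}|\psi_{(1)}|^2$ term in \eqref{casethree}; the $\psi_{(2)}$ bulk remains coercive but is simply discarded from the left. Case (d) is the mirror situation yielding \eqref{casefour}.

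The remaining terms $-2r^p\omb|\psi_{(1)}|^2$, $-2kr^p\om|\psi_{(2)}|^2$, and $pr^{p-1}\bigl(e_3(r)-\tfrac{r}{2}\trchb\bigr)|\psi_{(1)}|^2$ together with its $e_4$ counterpart must be shown to be lower order. Lemma \ref{dint} gives $e_3(r)-\tfrac{r}{2}\trchb=\tfrac{r}{2}\bigl(\overline{\Omega\trchb}/\Omega-\trchb\bigr)$, which by Definition \ref{gammag} lies in $\Gab+\O^1_2$, and similarly $\omb\in\Gab+\O^1_2$, $\om\in\Gag+\O^1_2$; Lemma \ref{estGagba} together with Proposition \ref{decayGamma} then provides an extra smallness factor of order $\epsilon|u|^{-(s-1)/2}+M/r$ relative to the leading $r^{p-1}$ bulk. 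These contributions are absorbed into the coercive bulk terms whenever available, and in the borderline subcases (where the relevant bulk coefficient vanishes) they are closed by a Gronwall argument in $\ub$ applied to $F(\ub):=\int_{\Cb_\ub^V}r^p|\psi_{(2)}|^2$, using the integrability of the smallness factor in the corresponding null direction.

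The main technical obstacle is the careful bookkeeping in cases (c) and (d), where the wrong-sign bulk term cannot be absorbed and reappears as $\int_V r^{p-1}|\psi_{(i)}|^2$ on the right-hand side: these estimates become useful only once chained a posteriori with the estimates from (a)--(b) applied at a smaller value of $p$, in the hierarchical $r^p$--weighted scheme of \cite{Da}. A secondary subtlety is the precise signed measure on the null boundaries when invoking the divergence theorem, for which we follow the standard formalism developed in Section~3 of \cite{Kl-Ni}.
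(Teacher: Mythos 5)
Your proposal is correct and takes essentially the same route as the paper: integrate the divergence identities \eqref{div}--\eqref{div2} of Lemma \ref{keypoint} over $V$, use $\trch\approx \frac{2}{r}$, $\trchb\approx-\frac{2}{r}$ for the case-by-case sign analysis of the bulk terms, and treat the $\om$, $\omb$, $e_3(r)-\frac{r}{2}\trchb$, $e_4(r)-\frac{r}{2}\trch$ contributions as small errors via Lemma \ref{estGagba}. The only (cosmetic) difference is in closing those zeroth-order errors: you absorb them into the coercive bulk when available and invoke a Gronwall argument in $\ub$ in the borderline directions, whereas the paper bounds them by $\EE_0$ times the suprema of the null fluxes (see \eqref{Gaapsi}) and absorbs them into the left-hand side after taking suprema in $u$ and $\ub$ — the two mechanisms are equivalent for $\ep,\EE_0\ll 1$.
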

\begin{remark}
In the sequel, For a sum of terms of the form $\Ga\cdot R$, we ignore the terms having the same or even better decay. For example, we write
\begin{align*}
    \Gag\cdot \b+\Gag\cdot\a =\Gag\cdot\b,\qquad \Gag\c(\rhoc,\sic)+\Gag\c\bb=\Gag\c\bb,
\end{align*}
since $\a$ decays better than $\b$ and $\rhoc$ and $\sic$ decay better than $\bb$.
\end{remark}
\begin{proof}
We obtain from Proposition \ref{expressionGaR} that
\begin{align*}
    \om=\Gag+\O_2^1,\qquad \omb=\Gab+\O_2^1.
\end{align*}
Applying Lemma \ref{dint}, we obtain 
\begin{align}\label{e3r}
    e_3(r)-\frac{r}{2}\trchb=\frac{\ov{\Om\trchb}}{2\Om}r-\frac{r}{2}\trchb=\frac{r}{2\Om}(\ov{\Om\trchb}-\Om\trchb)=r\Gag+\O_2^1,
\end{align}
and similarly
\begin{align}\label{e4r}
    e_4(r)-\frac{r}{2}\trch= r\Gag+\O_2^1.
\end{align}
Applying Lemma \ref{estGagba}, we infer
\begin{align}
\begin{split}\label{Gaapsi}
&\int_V r^p\left(|\omb| |\psi_{(1)}|^2+|\om||\psi_{(2)}|^2\right)\\
&+\int_V r^{p-1}\left(e_3(r)-\frac{r}{2}\trchb\right)|\psi_{(1)}|^2+r^{p-1}\left(e_4(r)-\frac{r}{2}\trchb\right)|\psi_{(2)}|^2\\
\les& \int_V r^p\left(\Gab+\O_2^1\right)|\psi_{(1)}|^2+\int_V r^p\left(\Gag+\O_2^1\right)|\psi_{(2)}|^2\\
\les& M\int_V r^{p-2}|\psi_{(1)}|^2+\ep\int_V r^{p-1}|u|^{-1}|\psi_{(1)}|^2+M\int_V r^{p-2}|\psi_{(2)}|^2\\
\les& M\int_{u_0(\ub)}^u\left(|u|^{-2}\int_\cuv r^{p}|\psi_{(1)}|^2\right) du+M\int_{|u|}^\ub\left(|\ub|^{-2}\int_\ucuv r^{p}|\psi_{(2)}|^2\right)d\ub\\
\les& M\int_{u_0(\ub)}^u |u|^{-2}du \left(\sup_u \int_\cuv r^{p}|\psi_{(1)}|^2\right)+M\int_{|u|}^\ub|\ub|^{-2}d\ub\left(\sup_\ub\int_\ucuv r^{p}|\psi_{(2)}|^2\right)\\
\les& \EE_0\left(\sup_u \int_\cuv r^{p}|\psi_{(1)}|^2\right)+\EE_0\left(\sup_\ub\int_\ucuv r^{p}|\psi_{(2)}|^2\right).
\end{split}
\end{align}
Integrating \eqref{div} or \eqref{div2} and recalling that 
\begin{align*}
\trch-\frac{2}{r}=\Gag+\O_2^1,\qquad\trchb+\frac{2}{r}=\Gag+\O_2^1,
\end{align*}
we obtain
\begin{align*}
&\int_{\cuv}r^p |\psi_{(1)}|^2+\int_\ucuv r^p|\psi_{(2)}|^2 +\int_{V} (2+p-4a_{(1)})r^{p-1}|\psi_{(1)}|^2+(4a_{(2)}-2-p)r^{p-1}|\psi_{(2)}|^2 \\ 
\les &\int_{\Sigma_0 \cap V} r^p |\psi_{(1)}|^2+r^p|\psi_{(2)}|^2+\int_{V} r^p|\psi_{(1)}||h_{(1)}|+r^p|\psi_{(2)}||h_{(2)}|\\
&+\EE_0\sup_u\int_{\cuv}r^p |\psi_{(1)}|^2+\EE_0\sup_{\ub}\int_\ucuv r^p|\psi_{(2)}|^2.
\end{align*}
Taking the supremum in $u$ and $\ub$ and applying \eqref{Gaapsi}, we obtain for $\ep$ and $\EE_0$ small enough
\begin{align}
\begin{split}\label{psipsipsi}
&\sup_u\int_{\cuv}r^p |\psi_{(1)}|^2+\sup_\ub\int_\ucuv r^p|\psi_{(2)}|^2\\ +&\int_{V}(2+p-4a_{(1)})r^{p-1}|\psi_{(1)}|^2+(4a_{(2)}-2-p)r^{p-1}|\psi_{(2)}|^2 \\ 
\les &\int_{\Sigma_0 \cap V} r^p |\psi_{(1)}|^2+r^p|\psi_{(2)}|^2+\int_{V} r^p|\psi_{(1)}||h_{(1)}|+r^p|\psi_{(2)}||h_{(2)}|.
\end{split}
\end{align}
This concludes the proof of Proposition \ref{keyintegral}.
\end{proof}
The following lemma allows us to compare $\dkb\psic$ and $\widecheck{\dkb\psi}$.
\begin{lemma}\label{totallycheck}
    We denote for $\psi\in\{\a,\b,\rho,\si,\bb,\aa\}$
    \begin{align}\label{totallycheckdf}
        \widecheck{\psi^{(1)}}:=(\psic,\widecheck{\dkb\psi}).
    \end{align}
    Then, we have for $p<7$
    \begin{align}
    \begin{split}\label{compare}
        \int_\cuv r^p |\psic^{(1)}|^2&\les \int_\cuv r^p |\widecheck{\psi^{(1)}}|^2 +\frac{\ep_0^2}{|u|^{s-p}},\\
        \int_\ucuv r^p |\psic^{(1)}|^2&\les \int_\ucuv r^p |\widecheck{\psi^{(1)}}|^2 +\frac{\ep_0^2}{|u|^{s-p}}.
    \end{split}
    \end{align}
\end{lemma}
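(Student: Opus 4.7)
The plan hinges on the observation that both $\psic^{(1)}=(\psic,r\nab\psic)$ and $\widecheck{\psi^{(1)}}=(\psic,\widecheck{\dkb\psi})$ share the $\psic$--component, so the task reduces to bounding $\int r^p|r\nab\psic|^2$ by $\int r^p|\widecheck{\dkb\psi}|^2$ up to an admissible remainder. I will proceed in two steps: first decompose $\widecheck{\dkb\psi}=\dkb\psic+E$ for an explicit Kerr--background error term $E$, then invert $\dkb\psic\mapsto r\nab\psic$ via the sphere--level Hodge estimates of Proposition \ref{ellipticest}.

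For the decomposition, using $\dkb=rd_k$ with $d_k\in\{d_2,d_1,d_1^*\}$ according to the rank of $\psi$, together with the identity $(\dkb\psi)_K=r_K(d_k)_K\psi_K$, I write
\begin{equation*}
\widecheck{\dkb\psi}\;=\;\dkb\psic\;+\;r\bigl(d_k-(d_k)_K\bigr)\psi_K\;+\;(r-r_K)(d_k)_K\psi_K.
\end{equation*}
Proposition \ref{useful} expresses $d_k-(d_k)_K$ as a contraction of $\ga^{AB}-\ga_K^{AB}$ against $\nab\psi_K$ plus a contraction of $\Lc$ against $\psi_K$. Since $r^{-1}(\ga-\ga_K)$, $\Lc$ and $r^{-2}\widecheck{r}$ all lie in the schematic class $\Gag^{(1)}$, this yields $E=\widecheck{\dkb\psi}-\dkb\psic=\Gag^{(1)}\c\psi_K^{(1)}$ with $\psi_K$ satisfying the pointwise decay of Proposition \ref{decayGamma}.

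For the elliptic step, Proposition \ref{ellipticest} applied to the Hodge operator $d_k$ on each sphere produces
\begin{equation*}
\int_S|r\nab\psic|^2\;\les\;\int_S|\dkb\psic|^2+\int_S|\psic|^2,
\end{equation*}
where the $|\psic|^2$ term is already present on the right-hand side of the desired inequality. Combined with the decomposition, weighting by $r^{p-2}$ and integrating in $\ub$ on $\cuv$ (respectively in $u$ on $\ucuv$) gives the required inequality up to the residual $\int r^p|E|^2$.

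To control this residual, apply H\"older on each sphere, $|E|_{2,S}\les|\Gag^{(1)}|_{4,S}\,|\psi_K^{(1)}|_{4,S}$. Lemma \ref{estGagba} gives $|\Gag^{(1)}|_{4,S}\les\ep r^{-3/2}|u|^{-(s-3)/2}$, while Proposition \ref{decayGamma} supplies the Kerr pointwise decay. The slowest--decaying case is $\psi_K\in\{\rho_K,\si_K\}=\O_3^1$, producing an integrand of order $\ep^2M^2r^{p-8}|u|^{-(s-3)}$ along $\cuv$; the $\ub$--integral converges precisely when $p<7$, which is the stated hypothesis and identifies the origin of the threshold. The resulting estimate takes the form $\ep^2M^2|u|^{-(s-p+4)}$, and the relations $\ep=\EE_0^{-1/3}\ep_0$ together with $|u|\gtrsim R_0$ reduce it to $\ep_0^2/|u|^{s-p}$ as required; the $\ucuv$ case is handled analogously. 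The main obstacle is the careful bookkeeping of the schematic errors uniformly across $\psi\in\{\a,\b,\rho,\si,\bb,\aa\}$ and the verification that the worst $(\rho,\si)$--case still sits strictly below the threshold $p<7$.
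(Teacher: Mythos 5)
Your overall skeleton — write $\widecheck{\dkb\psi}=\dkb\psic+E$ with $E=(\dkb-\dkb_K)\psi_K$ estimated through Proposition \ref{useful}, recover $r\nab\psic$ from $\dkb\psic$ via Proposition \ref{ellipticest}, then integrate on $\cuv$ and $\ucuv$ and absorb the remainder using $\ep=\EE_0^{-1/3}\ep_0$ — is exactly the paper's argument (the elliptic step is left implicit there). The gap is in the schematic bookkeeping of $E$. Since $\dkb=rd_k$, every contribution carries an overall factor of $r$: $r\Lc\c\psi_K$, $r(\ga^{-1}-\ga_K^{-1})\c\nab\psi_K$ and $\widecheck{r}\,(d_k)_K\psi_K$ are all of the schematic size $\Gag^{(1)}\c\O^p_{q-1}$ when $\psi_K=\O^p_q$, not $\Gag^{(1)}\c\psi_K^{(1)}$ as you claim; your expression is smaller than the true error by one power of $r$. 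This matters because you then declare the worst case to be $\psi_K\in\{\rho_K,\si_K\}=\O_3^1$: with the corrected error $\Gag^{(1)}\c\O_2^1$ the spherical integrand becomes $\ep^2M^2r^{p-6}|u|^{-(s-3)}$, whose $\ub$--integral only converges for $p<5$, so your argument, once the missing $r$ is restored, does not reach the stated range $p<7$ (and in particular would not cover the application with $p=s\in(5,6]$). Your quoted integrand $r^{p-8}$ and threshold $p<7$ come out right only because the dropped factor of $r$ compensates the wrong choice of worst case.

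The correct accounting, as in the paper, splits the cases: for $\psi\in\{\rho,\si\}$ the linearization commutes with $\dkb$ (the quantities are scalars), so there is no error term at all; for $\psi\in\{\a,\b,\bb,\aa\}$ the worst Kerr decay is $\b_K,\bb_K=\O_4^2$, and the error is $r\Lc\c\psi_K=\Gag^{(1)}\c\O_3^2$, which gives precisely the integrand $\ep^2M^4r^{p-8}|u|^{-(s-3)}$, the threshold $p<7$, and the bound $M^4\ep^2|u|^{p-s-4}\les\EE_0^{4}\ep^2|u|^{-(s-p)}\les\ep_0^2|u|^{-(s-p)}$. The remaining ingredients of your proposal (the H\"older step with Lemma \ref{estGagba}, the elliptic recovery of $r\nab\psic$, the treatment of $\ucuv$, and the smallness reduction) are fine once the error term is corrected in this way.
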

\begin{remark}
    Note that Proposition \ref{Bianchieqdkb} involves the quantities $\dkbaac$, $\dkbbbc$, $\dkbrhoc$, $\dkbsic$ $\dkbbc$ and $\dkbac$, while the curvature norms $\RR_1$ and $\RRb_1$ involve $\dkb\aac$, $\dkb\bbc$, $\dkb\rhoc$, $\dkb\sic$ $\dkb\bc$ and $\dkb\ac$. We thus need to compare $\dkb\psic$ and $\widecheck{\dkb\psi}$ which is the purpose of Lemma \ref{totallycheck}.
\end{remark}
\begin{proof}
For $\psi\in\{\rho,\si\}$ which is a scalar function, we have
\begin{align*}
    \widecheck{\dkb\psi}=\dkb\psic.
\end{align*}
The estimate \eqref{compare} is trivial in this case. Next, we focus on the case $\psi\in\{\a,\b,\bb,\aa\}$. We have from Proposition \ref{useful}
\begin{align*}
    \widecheck{\dkb\psi}=\dkb\psi-\dkb_K\psi_K=\dkb\psic+(\dkb-\dkb_K)\psi_K=\dkb\psic+r\Lc\c\psi_K.
\end{align*}
Recalling that $\psi_K=\O_4^2$, we deduce
    \begin{align*}
        \int_{\cuv} r^p|\dkb\psic|^2 &\les\int_\cuv r^p |\widecheck{\dkb\psi}|^2+\int_\cuv r^p|\Gag^{(1)}\c\O_3^2|^2\\
        &\les \int_\cuv r^p |\widecheck{\dkb\psi}|^2+M^4\int_\cuv r^{p-6}|\Gag^{(1)}|^2\\
        &\les \int_\cuv r^p |\widecheck{\dkb\psi}|^2+M^4\int_{|u|}^\ub r^{p-8} d\ub\int_S |r\Gag^{(1)}|^2\\
        &\les\int_\cuv r^p |\widecheck{\dkb\psi}|^2+M^4\ep^2\int_{|u|}^\ub r^{p-8}\frac{d\ub}{|u|^{s-3}}\\
        &\les\int_\cuv r^p|\widecheck{\dkb\psi}|^2+M^4\ep^2|u|^{p-s-4}\\
        &\les \int_\cuv r^p|\widecheck{\dkb\psi}|^2+\frac{\EE_0^4\ep^2}{|u|^{s-p}}\\
        &\les \int_\cuv r^p|\widecheck{\dkb\psi}|^2+\frac{\ep_0^2}{|u|^{s-p}},
    \end{align*}
    which implies the first inequality in \eqref{compare}. The second inequality in \eqref{compare} is similar. This concludes the proof of Lemma \ref{totallycheck}.
\end{proof}
\begin{remark}
The proof of Theorem M1 is analogous to Theorem M1 in \cite{ShenMink}. In particular, the estimates for the error terms $\err[...]$ in Propositions \ref{Bianchieq} and \ref{Bianchieqdkb} have already been done in \cite{ShenMink}. In the proof of Theorem M1 below, we thus focus on the linear terms $\fl[...]$.
\end{remark}
\subsection{Estimates for the Bianchi pair \texorpdfstring{$(\ac,\bc)$}{}}
\begin{proposition}\label{estab}
We have the following estimate:
\begin{equation}
\RR_0[\ac]^2+\RRb_0[\bc]^2+\RR_1[\ac]^2+\RRb_1[\bc]^2\les\ep_0^2.\label{abs}
\end{equation}
\end{proposition}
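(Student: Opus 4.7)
The plan is to apply the $r^p$--weighted identity of Lemma \ref{keypoint} to the Bianchi pair $(\ac,\bc)$, which fits the structure \eqref{bianchi1} with $k=2$, $a_{(1)}=1/2$, $a_{(2)}=2$. With these parameters, the conditions in Proposition \ref{keyintegral} reduce to $p>0$ and $6-p\geq 0$, so case (a) for $s\in[4,6)$ and case (b) for $s=6$ both yield
\begin{equation*}
\int_\cuv r^s|\ac|^2+\int_\ucuv r^s|\bc|^2 \les \int_{\Si_0\cap V} r^s(|\ac|^2+|\bc|^2) + \int_V r^s\bigl(|\ac||h_{(1)}|+|\bc||h_{(2)}|\bigr),
\end{equation*}
where $h_{(1)}=\fl[\nab_3\ac]+\err[\nab_3\ac]$ and $h_{(2)}=\fl[\nab_4\bc]+\err[\nab_4\bc]$. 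By the definitions in Section \ref{Rnorms} the left hand side is exactly $\RR_0[\ac]^2+\RRb_0[\bc]^2$, while the initial data term is controlled by $\Rk_0^2\les\ep_0^2$ thanks to Theorem M0.

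Next I would bound the spacetime error integrals. The linear pieces in $\fl[\nab_3\ac]$ and $\fl[\nab_4\bc]$ have schematic structure $\Gag^{(1)}\c\O^p_q + \O^p_q\c(\text{curvature})$ and therefore carry extra factors of $M/r^j$ through the $\O^p_q$ and of $\ep$ through the $L^p$ bounds on $\Gag^{(1)}$ provided by Lemma \ref{estGagba}. Cauchy--Schwarz in spacetime then converts each contribution either into an $\ep_0^2$ term (when paired against an initial flux norm) or into a small multiple ($\EE_0$ or $\ep$) of the curvature norms $\RR_0[\ac]$, $\RRb_0[\bc]$, $\RR[\bc]$, $\RRb[\rhoc,\sic]$, which can be absorbed on the left hand side or are controlled by the bootstrap assumption $\RR\leq\ep$. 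The nonlinear errors $\err[\nab_3\ac]$ and $\err[\nab_4\bc]$ have the same schematic form as in \cite{ShenMink} and are treated identically. This gives $\RR_0[\ac]^2+\RRb_0[\bc]^2\les\ep_0^2$.

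For the first derivative estimates I would run the same argument on the derived pair $(\dkbac,\dkbbc)=(\widecheck{\dkb\a},\widecheck{\dkb\b})$, whose linearized equations in Proposition \ref{Bianchieqdkb} again have the form \eqref{bianchi1} with the same $k$, $a_{(1)}$, $a_{(2)}$, so that the identical weighted flux identity is available. This controls $\|r^{s/2}\widecheck{\dkb\a}\|_{2,\cuv}$ and $\|r^{s/2}\widecheck{\dkb\b}\|_{2,\ucuv}$, which by Lemma \ref{totallycheck} differ from the target norms $\RR_1[\ac]$ and $\RRb_1[\bc]$ by an additive $\ep_0^2$. The main obstacle lies in the borderline case $s=6$, where case (b) of Proposition \ref{keyintegral} no longer supplies a bulk spacetime integral of $\psi_{(2)}=\bc$, so that any error term requiring such a bulk bound must instead be recovered from the flux norm $\RRb_0[\bc]$ already obtained or from the other components of $\RR$; beyond that, the calculation is a matter of keeping the weights in $r$ and $|u|$ aligned when pairing $\Gag^{(1)}$, $\O^p_q$ and $\ac,\bc$ to ensure every error contribution closes against $\ep_0^2$.
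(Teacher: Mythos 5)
Your proposal follows essentially the same route as the paper: the $r^p$--weighted identity for the Bianchi pair $(\ac,\bc)$ with $k=2$, $a_{(1)}=\tfrac12$, $a_{(2)}=2$, $p=s$, initial data controlled via Theorem M0, linear terms bounded through Lemma \ref{estGagba} with the $M/r$ and $\ep$ smallness, nonlinear terms as in \cite{ShenMink}, and Lemma \ref{totallycheck} to pass from $\widecheck{\dkb\a},\widecheck{\dkb\b}$ to $\dkb\ac,\dkb\bc$. The only cosmetic difference is that the paper runs the zeroth- and first-order quantities simultaneously through the combined notation $\widecheck{\a^{(1)}},\widecheck{\b^{(1)}}$ rather than in two passes, and it never needs the $\bc$ bulk term, so the $s=6$ borderline you flag is handled exactly as you suggest, via the $\ucuv$ flux under the bootstrap.
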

\begin{proof}
We recall the following equations from Propositions \ref{Bianchieq} and \ref{Bianchieqdkb}:
\begin{align}
    \begin{split}\label{bianchiab}
\nab_3\dkbaco+\f12\tr\chib\,\widecheck{\a^{(1)}}=&-2d^*_2(\widecheck{\b^{(1)}})+\fl[\nab_3\widecheck{\a^{(1)}}]+\err[\nab_3\widecheck{\a^{(1)}}],\\
\nab_4\widecheck{\b^{(1)}}+2\tr\chi\,\widecheck{\b^{(1)}}=&d_2(\widecheck{\a^{(1)}})+\fl[\nab_4\widecheck{\b^{(1)}}]+\err[\nab_4\widecheck{\b^{(1)}}].
    \end{split}
\end{align}
Applying \eqref{caseone} with $\psi_{(1)}=\widecheck{\a^{(1)}}$, $\psi_{(2)}=\widecheck{\b^{(1)}}$, $a_{(1)}=\f12$, $a_{(2)}=2$, $h_{(1)}=\fl[\nab_3\widecheck{\a^{(1)}}]+\err[\nab_3\widecheck{\a^{(1)}}]$, $h_{(2)}=\fl[\nab_4\widecheck{\b^{(1)}}]+\err[\nab_4\widecheck{\b^{(1)}}]$ and $p=s$ we obtain
\begin{align}
\begin{split}
   &\int_{\cuv} r^s |\widecheck{\a^{(1)}}|^2 + \int_{\ucuv} r^s |\widecheck{\b^{(1)}}|^2+\int_{V} r^{s-1} |\widecheck{\a^{(1)}}|^2 \\ 
    \les &\int_{\Si_0\cap V}r^s |\widecheck{\a^{(1)}}|^2+r^s |\widecheck{\b^{(1)}}|^2+\int_{V} r^s|\widecheck{\a^{(1)}}| \Big|\fl[\nab_3\widecheck{\a^{(1)}}]+\err[\nab_3\widecheck{\a^{(1)}}]\Big|\\
    &+\int_V r^s|\widecheck{\b^{(1)}}|\Big|\err[\nab_4\widecheck{\b^{(1)}}]+\err[\nab_4\widecheck{\b^{(1)}}]\Big|.\label{AB}
\end{split}
\end{align}
We recall from Proposition \ref{Bianchieq}\footnote{We ignore the terms which decay better.}
\begin{align}
\begin{split}\label{flab}
    &\int_V r^s |\dkbaco|\fl[\nab_3\dkbaco]|+r^s|\dkbbco||\fl[\nab_4\dkbbco]|\\
    \les& \int_V r^s\left(\frac{M}{r^3}|\Gag^{(1)}||\dkbaco|+\frac{M^2}{r^3}|\dkbaco||\dko\rhoc|+\frac{M}{r^2}|\dko\bc|^2\right).
\end{split}
\end{align}
We estimate \eqref{flab} term by term. For the first term:
\begin{align*}
    \int_V M r^{s-3}|\dkbaco||\Gag^{(1)}|&\les M\int_{u_0(\ub)}^u du \left(\int_\cuv r^s|\dkbaco|^2\right)^\f12\left(\int_\cuv r^{s-6}|\Gag^{(1)}|^2\right)^\f12\\
    &\les M\ep\int_{u_0(\ub)}^u du \left(\int_{|u|}^\ub r^{s-8}|r\Gag^{(1)}|^2_{2,S} d\ub\right)^\f12\\
    &\les M\ep^2\int_{u_0(\ub)}^u du \left(\int_{|u|}^\ub r^{s-8}|u|^{3-s} d\ub\right)^\f12\\
    &\les M\ep^2\int_{u_0(\ub)}^u |u|^{-2}du\\
    &\les \ep_0^2.
\end{align*}
For the second term:
\begin{align*}
    \int_V M^2 r^{s-3}|\dkbaco||\rhoc^{(1)}|&\les M^2\int_{u_0(\ub)}^u du \left(\int_\cuv r^s|\dkbaco|^2\right)^\f12\left(\int_\cuv r^2|\rhoc^{(1)}|^2\right)^\f12 r^{\frac{s}{2}-4}\\
    &\les M^2\int_{u_0(\ub)}^u du \frac{\ep^2}{|u|^{\frac{s-2}{2}}} |u|^{\frac{s}{2}-4}\\
    &\les M^2 \ep^2 |u|^{-2}\\
    &\les \ep_0^2. 
\end{align*}
For the last term:
\begin{align*}
    \int_V M r^{s-2}|\dkbbco|^2 &\les M \int_{|u|}^\ub\frac{d\ub}{r^2}\int_\ucuv r^s |\dkbbco|^2\les M\ep^2\int_{|u|}^\ub\frac{d\ub}{r^2}\les\ep_0^2.
\end{align*}
Injecting all the estimates into \eqref{flab}, we deduce
\begin{equation}\label{estflab}
\int_V r^s|\dkbaco||\fl[\nab_3\dkbaco]|+r^s|\dkbbco||\fl[\nab_4\dkbbco]|\les\ep_0^2.
\end{equation}
Next, we proceed as in Proposition 4.7 in \cite{ShenMink} to obtain
\begin{equation}\label{esterrab}
    \int_V r^s \left(|\dkbaco||\err[\nab_3\dkbaco]|+|\dkbbco||\err[\nab_4\dkbbco]|\right)\les\ep_0^2.
\end{equation}
Combining \eqref{estflab} and \eqref{esterrab}, we obtain
\begin{equation}\label{estabfinal}
\int_{\cuv} r^s |\dkbaco|^2 + \int_{\ucuv} r^s |\dkbbco|^2 + \int_{V}r^{s-1} |\dkbaco|^2 \les\ep_0^2.
\end{equation}
Applying Lemma \ref{totallycheck}, we deduce \eqref{abs}. This concludes the proof of Proposition \ref{estab}.
\end{proof}
\subsection{Estimates for the Bianchi pair \texorpdfstring{$(\bc,(\rhoc,\sic))$}{}{}}
\begin{proposition}\label{estbr}
We have the following estimate:
\begin{equation}
\RR_0[\bc]^2+\RRb_0[(\rhoc,\sic)]^2+\RR_1[\bc]^2+\RRb_1[(\rhoc,\sic)]^2\les \ep_0^2.
\end{equation}
\end{proposition}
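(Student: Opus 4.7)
The proof will parallel that of Proposition \ref{estab}: I identify the relevant Bianchi pair, apply the weighted energy identity of Proposition \ref{keyintegral} at the right weight, and bound the resulting linear and nonlinear remainders.

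From Propositions \ref{Bianchieq} and \ref{Bianchieqdkb}, the pair
\[
(\psi_{(1)}, \psi_{(2)}) := (\dkbbco, (\dkbrhoco, -\dkbsico))
\]
satisfies a system of the form \eqref{bianchi1} with $k=1$, $a_{(1)} = 1$, $a_{(2)} = \frac{3}{2}$, the inhomogeneities $h_{(1)}, h_{(2)}$ gathering the $\fl[\cdot] + \err[\cdot]$ terms produced there. Since $4 a_{(2)} - 2 - p$ vanishes precisely at $p = 4$, the canonical weight here is $p = 4$, corresponding to case (b) of Proposition \ref{keyintegral}. To recover the $|u|^{s-4}$ factor built into the target norms $\RR_0[\bc]$, $\RRb_0[(\rhoc,\sic)]$, $\RR_1[\bc]$, $\RRb_1[(\rhoc,\sic)]$, I rescale
\[
\Psi_{(1)} := |u|^{\frac{s-4}{2}}\psi_{(1)}, \qquad \Psi_{(2)} := |u|^{\frac{s-4}{2}}\psi_{(2)}.
\]
Since $e_4(u) = 0$ and $e_3(u) = \Om^{-1}$ on the double null foliation, the rescaled pair obeys the same structural system with one additional zeroth order term $-\frac{s-4}{2}|u|^{-1}\Om^{-1}\Psi_{(1)}$ appended to $h_{(1)}$.

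Applying \eqref{casetwo} with $p = 4$ to $(\Psi_{(1)}, \Psi_{(2)})$ produces an energy identity whose LHS
\[
\int_\cuv r^4 |\Psi_{(1)}|^2 + \int_\ucuv r^4 |\Psi_{(2)}|^2 + \int_V r^3 |\Psi_{(1)}|^2
\]
identifies, in its lower-order slot, with $\RR_0[\bc]^2 + \RRb_0[(\rhoc,\sic)]^2$; the analogous identity for the $\dkb$-derived components yields $\RR_1[\bc]^2 + \RRb_1[(\rhoc,\sic)]^2$ after Lemma \ref{totallycheck} is invoked to pass from $\dkbbco, \dkbrhoco, \dkbsico$ back to $\dkb\bc, \dkb\rhoc, \dkb\sic$. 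The initial data contribution is bounded using $r \geq |u|$ on $\Si_0 \setminus K$, which gives $r^4 |u|^{s-4} \leq r^s$ and hence
\[
\int_{\Si_0 \cap V} r^4 (|\Psi_{(1)}|^2 + |\Psi_{(2)}|^2) \les \int_{\Si_0} r^s \bigl(|\dkbbco|^2 + |(\dkbrhoco, \dkbsico)|^2\bigr) \les \Rk_0^2 \les \ep_0^2.
\]
The linear terms $|u|^{\frac{s-4}{2}}\fl[\cdot]$ are estimated as in the computation \eqref{flab} of the proof of Proposition \ref{estab}, using the already-obtained bound \eqref{abs} whenever $\ac$ or $\bc$ appears together with the Kerr decays of Proposition \ref{decayGamma}; the nonlinear $\err[\cdot]$ terms are treated via Lemma \ref{estGagba} and Propositions \ref{sobolevkn}--\ref{standardsobolev}, exactly as in Proposition 4.7 of \cite{ShenMink} invoked at \eqref{esterrab}.

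The delicate point, which I expect to be the main obstacle, is the contribution of the rescaling correction $-\frac{s-4}{2}|u|^{-1}\Om^{-1}\Psi_{(1)}$: paired with $\Psi_{(1)}$ it produces the bulk remainder $\int_V r^4 |u|^{-1} |\Psi_{(1)}|^2$, which is not dominated pointwise by the favourable bulk term $\int_V r^3 |\Psi_{(1)}|^2$ on the LHS, since $r$ is not uniformly comparable to $|u|$ in $V(u,\ub)$. I intend to close this by slicing
\[
\int_V r^4 |u|^{-1} |\Psi_{(1)}|^2 = \int \frac{du'}{|u'|} \int_{C_{u'}^V} r^4 |\Psi_{(1)}|^2,
\]
taking the supremum in $u'$ of the inner flux and exploiting $|u'| \geq R_0 \gg 1$ in $\KK$ to make the remaining $u'$-integration harmless, in the same spirit as the absorption step \eqref{Gaapsi} in the proof of Proposition \ref{keyintegral}.
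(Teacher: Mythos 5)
Your overall strategy is the same as the paper's up to one inserted step: the paper also takes the pair $(\dkbbco,(\dkbrhoco,-\dkbsico))$ with $k=1$, $a_{(1)}=1$, $a_{(2)}=\frac{3}{2}$ and applies \eqref{casetwo} at the borderline weight $p=4$, then invokes Lemma \ref{totallycheck}. But the paper never rescales by $|u|^{\frac{s-4}{2}}$: it applies the estimate on each fixed region $V(u,\ub)$ and reads off the decay $\int_{\cuv}r^4|\dkbbco|^2+\int_{\ucuv}r^4|(\dkbrhoco,\dkbsico)|^2\les\ep_0^2|u|^{4-s}$ directly, the factor $|u|^{4-s}$ coming from the initial data term $\int_{\Si_0\cap V}r^4(\cdots)\les|u|^{4-s}\int_{\Si_0\cap V}r^s(\cdots)$ and from the fact that every bulk integral in \eqref{estflbrho}--\eqref{esterrbrho} is itself $\les\ep_0^2|u|^{4-s}$; since $|u|$ is frozen on $\cuv$ and $|u'|\geq|u|$, $r\geq r(u,\ub)$ on $\ucuv$, this is exactly the content of the norms $\RR_q[\bc]$, $\RRb_q[(\rhoc,\sic)]$, and no commutation with $|u|$-weights ever arises.

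The genuine gap is your handling of the term created by the rescaling. The slicing bound $\int_V r^4|u'|^{-1}|\Psi_{(1)}|^2\leq\big(\sup_{u'}\int_{C_{u'}^V}r^4|\Psi_{(1)}|^2\big)\int_{u_0(\ub)}^{u}\frac{du'}{|u'|}$ does not close: $\int_{u_0(\ub)}^{u}\frac{du'}{|u'|}\approx\log(\ub/|u|)$ is unbounded as $\ub\to\infty$, whereas the absorption in \eqref{Gaapsi} works precisely because the integrand there is $M|u'|^{-2}$, producing the small constant $\EE_0$. A factor $\log(\ub/|u|)$ with coefficient of order $s-4$ can neither be absorbed into the left-hand flux supremum nor be estimated through the bootstrap (you would get $\ep^2\log(\ub/|u|)$, which is not $\les\ep_0^2$); and, as you yourself note, $r\gtrsim|u|$ in $\KK$ makes pointwise absorption into $\int_V r^3|\Psi_{(1)}|^2$ go the wrong way. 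The step is nevertheless easy to repair once signs are tracked: with $u<0$ one has $e_3(|u|)=-\Om^{-1}$, so the extra term in $h_{(1)}$ is $-\frac{s-4}{2}|u|^{-1}\Om^{-1}\Psi_{(1)}$ and its contribution to the right-hand side of \eqref{div} is $-(s-4)\int_V r^4\Om^{-1}|u|^{-1}|\Psi_{(1)}|^2\leq0$ for the range $s\in[4,6]$ treated here, hence it can simply be moved to the left and dropped rather than estimated. With that observation (or by dispensing with the rescaling altogether, as the paper does), the rest of your outline — initial data via $r\gtrsim|u|$ on $\Si_0$, linear terms as in \eqref{estflbrho} (note \eqref{flab} is the computation for the pair $(\ac,\bc)$ of Proposition \ref{estab}), and error terms as in \cite{ShenMink} — goes through.
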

\begin{proof}
We recall from Propositions \ref{Bianchieq} and \ref{Bianchieqdkb}:
\begin{align*}
\nab_4(\dkbrhoco,-\dkbsico)+\frac{3}{2}\tr\chi(\dkbrhoco,-\dkbsico)&=d_1\bc+\fl[\nab_4\dkbrhoco,-\nab_4\dkbsico]+\err[\nab_4\dkbrhoco,-\nab_4\dkbsico],\\
\nab_3\dkbbco+\tr\chib\,\dkbbco&=-d_1^*(\dkbrhoco,-\dkbsico)+\fl[\nab_3\dkbbco]+\err[\nab_3\dkbbco],
\end{align*}
where 
\begin{equation*}
\fl[\nab_4\dkbrhoco,-\nab_4\dkbsico]=\Gag^{(1)}\c\O_3^1+\O_3^2\c\rhoc^{(1)},\qquad \fl[\nab_3\dkbbco]=\Gag^{(1)}\c\O_3^1+\O_2^1\c\bc^{(1)}+\O_3^2\c\bbc^{(1)}.
\end{equation*}
Applying \eqref{casetwo} with $\psi_{(1)}=\dkbbco$, $\psi_{(2)}=(\dkbrhoco,-\dkbsico)$, $a_{(1)}=1$, $a_{(2)}=\frac{3}{2}$, $h_{(1)}=\fl[\nab_3\dkbbco]+\err[\nab_3\dkbbco]$, $h_{(2)}=\fl[\nab_4\dkbrhoco,-\nab_4\dkbsico]+\err[\nab_4\dkbrhoco,-\nab_4\dkbsico]$ and $p=4$, we infer
\begin{align}
\begin{split}\label{brint}
&\int_{\cuv} r^4|\dkbbco|^2+\int_{\ucuv}r^{4}|(\dkbrhoco,\dkbsico)|^2 \\
\les &\int_{\Si_0\cap V}\left(r^4|\dkbbco|^2+r^{4}|(\dkbrhoco,\dkbsico)|^2\right)+\int_V r^4\left|\dkbbco\c\left(\fl[\nab_3\dkbbco]+\err[\nab_3\dkbbco]\right)\right|\\
&+\int_V r^{4} \left|(\dkbrhoco,\dkbsico)\c \left(\fl[\nab_4\dkbrhoco,\nab_4\dkbsico]+\err[\nab_4\dkbrhoco,\nab_4\dkbsico]\right)\right|\\
\les &|u|^{4-s}\ep_0^2+\int_V r^4\left|\dkbbco\c\left(\fl[\nab_3\dkbbco]+\err[\nab_3\dkbbco]\right)\right|\\
&+\int_V r^{4}\left|(\dkbrhoco,\dkbsico)\c \left(\fl[\nab_4\dkbrhoco,\nab_4\dkbsico]+\err[\nab_4\dkbrhoco,\nab_4\dkbsico]\right)\right|.
\end{split}
\end{align}
Notice that we have
\begin{align}
\begin{split}\label{estflbrho}
&\int_V r^4 |\dkbbco||\fl[\nab_3\dkbbco]|+r^4|(\dkbrhoco,\dkbsico)||\fl[\nab_4\dkbrhoco,-\nab_4\dkbsico]|\\
\les&\int_V Mr^2|\bc^{(1)}|^2+M^2r|\bc^{(1)}||\bbc^{(1)}|+Mr|\Gag^{(1)}||\rhoc^{(1)}|\\
\les& M\int_{|u|}^\ub\frac{d\ub}{r^2}\int_\ucuv r^4|\bc^{(1)}|^2+M^2\int_{|u|}^\ub \frac{d\ub}{r^\frac{s}{2}}\int_{\ucuv} |r^\frac{s}{2}\bc^{(1)}||r\bbc^{(1)}|\\
&+M\int_{|u|}^\ub \frac{d\ub}{r^2} \int_\ucuv |r\Gag^{(1)}||r^2\rhoc^{(1)}|\\
\les& M\int_{|u|}^\ub\frac{d\ub}{r^2}\frac{\ep^2}{|u|^{s-4}}+M^2\int_{|u|}^\ub \frac{d\ub}{r^\frac{s}{2}}\left(\int_{\ucuv} |r^\frac{s}{2}\bc^{(1)}|\right)^\frac{1}{2}\left(\int_\ucuv|r\bbc^{(1)}|^2\right)^\frac{1}{2}\\
&+M\int_{|u|}^\ub \frac{d\ub}{r^2}\left(\int_{u_0(\ub)}^u du |r\Gag^{(1)}|^2_{2,S}\right)^\frac{1}{2}\left(\int_\ucuv |r^2\rhoc^{(1)}|^2\right)^\frac{1}{2}\\
\les &\frac{M\ep^2}{r|u|^{s-4}}+M^2\ep\int_{|u|}^\ub \frac{d\ub}{r^\frac{s}{2}}\frac{\ep}{|u|^{\frac{s-2}{2}}}+M\int_{|u|}^\ub \frac{d\ub}{r^2} \frac{\ep}{|u|^\frac{s-4}{2}}\frac{\ep}{|u|^\frac{s-4}{2}}\\
\les &\frac{M \ep^2}{r|u|^{s-4}}+\frac{M^2\ep^2}{|u|^{s-2}}+\frac{M\ep^2}{r|u|^{s-4}}\\
\les &\frac{\ep_0^2}{|u|^{s-4}}.
\end{split}
\end{align}
Next, we proceed as in Proposition 4.8 in \cite{ShenMink} to obtain
\begin{align}
\begin{split}\label{esterrbrho}
\int_V r^{4}|\dkbbco||\err[\nab_3\dkbbco]|+r^4|(\dkbrhoco,\dkbsico)|| \err[\nab_4\dkbrhoco,\nab_4\dkbsico]|\les\frac{\ep_0^2}{|u|^{s-4}}.
\end{split}
\end{align}
Combining \eqref{brint}, \eqref{estflbrho} and \eqref{esterrbrho}, we deduce
\begin{equation}
    \int_{\cuv} r^{4} |\dkbbco|^2+\int_{\ucuv}r^{4}|(\dkbrhoco,\dkbsico)|^2 \les\frac{\ep_0^2}{|u|^{s-4}}.
\end{equation}
Combining with Lemma \ref{totallycheck}, this concludes the proof of Proposition \ref{estbr}.
\end{proof}
\subsection{Estimates for the Bianchi pair \texorpdfstring{$((\rhoc,\sic),\bbc)$}{}}\label{ssec9.3}
\begin{proposition}\label{estrb}
We have the following estimates:
\begin{equation}\label{estrhobb}
\RR_0[(\rhoc,\sic)]^2+\RRb_0[\bbc]^2+\RR_1[(\rhoc,\sic)]^2+\RRb_1[\bbc]^2\les\ep_0^2.
\end{equation}
\end{proposition}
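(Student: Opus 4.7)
The plan is to mirror the strategy of Propositions \ref{estab} and \ref{estbr}, applied to the Bianchi pair $\bigl((\dkbrhoco,\dkbsico),\dkbbbco\bigr)$ supplied by Proposition \ref{Bianchieqdkb}. This pair fits the structure \eqref{bianchi2} of Lemma \ref{keypoint} with $k=1$, $a_{(1)}=\frac{3}{2}$, $a_{(2)}=1$. Matching the weights in $\RR_1[(\rhoc,\sic)]$ and $\RRb_1[\bbc]$ forces the choice $p=2$, which places us in case (c) of Proposition \ref{keyintegral} (since $2+p-4a_{(1)}=-2\leq 0$ and $4a_{(2)}-2-p=0$). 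This produces
\begin{align*}
\int_\cuv r^2|(\dkbrhoco,\dkbsico)|^2+\int_\ucuv r^2|\dkbbbco|^2 \les \int_{\Si_0\cap V} r^2\bigl(|(\dkbrhoco,\dkbsico)|^2+|\dkbbbco|^2\bigr)+\int_V r|(\dkbrhoco,\dkbsico)|^2+\mathcal{E},
\end{align*}
with $\mathcal{E}$ gathering the linear $\fl[\cdots]$ and nonlinear $\err[\cdots]$ contributions from the two Bianchi equations.

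The initial data integral is controlled by $\Rk_0^2/|u|^{s-2}\les\ep_0^2/|u|^{s-2}$, using the bound $\Rk_0\les\ep_0$ of Theorem M0. For the terms in $\mathcal{E}$ I would follow the scheme of Propositions \ref{estab} and \ref{estbr}: each product of a factor in $\Gag^{(1)}$, $\Gab^{(1)}$ or $\O^p_q$ with a linearized curvature component is estimated using Lemma \ref{estGagba}, Proposition \ref{decayGamma} and the bootstrap $\OO,\RR\leq\ep$, the resulting $r$- and $|u|$-powers combining (after use of \eqref{epep0} to trade $\ep^2$ for $\ep_0^2$) into a bound of size $\ep_0^2/|u|^{s-2}$.

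The new difficulty, absent from Propositions \ref{estab}--\ref{estbr}, is the bulk term $\int_V r|(\dkbrhoco,\dkbsico)|^2$ on the right-hand side; this is structural, due to case (c) not providing a gain bulk integral for $\psi_{(1)}$ on the left, in contrast to case (b) used in Proposition \ref{estbr}. The key observation is that Proposition \ref{estbr} already yields the flux bound $\int_{\Cb_{\ub'}\cap V} r^4|(\dkbrhoco,\dkbsico)|^2\les\ep_0^2/|u|^{s-4}$ for every $\ub'\in[|u|,\ub]$. Writing $r=r^4\cdot r^{-3}$, using $r\gtrsim \ub'+|u|$ on $\Cb_{\ub'}\cap V$, and applying Fubini in $\ub'$, I would bound
\begin{align*}
\int_V r|(\dkbrhoco,\dkbsico)|^2\les \int_{|u|}^\ub \frac{1}{(\ub'+|u|)^3}\cdot\frac{\ep_0^2}{|u|^{s-4}}\,d\ub' \les \frac{\ep_0^2}{|u|^2\cdot|u|^{s-4}}=\frac{\ep_0^2}{|u|^{s-2}},
\end{align*}
which is of the desired size.

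Finally, an application of Lemma \ref{totallycheck} converts the bound on $(\dkbrhoco,\dkbsico,\dkbbbco)$ into one for $(\rhoc^{(1)},\sic^{(1)},\bbc^{(1)})$, and the definitions in Section \ref{Rnorms} then yield \eqref{estrhobb}. The hard part will be the Fubini argument reducing the bulk spacetime term to the $\Cb_{\ub'}$-flux bounds of Proposition \ref{estbr}; once that is in place, the remainder of the proof is a direct adaptation of Propositions \ref{estab} and \ref{estbr}.
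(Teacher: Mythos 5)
Your proposal is correct and follows essentially the same route as the paper: case (c) of Proposition \ref{keyintegral} with $p=2$ applied to the pair $((\dkbrhoco,\dkbsico),\dkbbbco)$, with the initial-data term bounded by $\Rk_0$ and the linear/error terms treated as in Propositions \ref{estab}--\ref{estbr}. In particular, your treatment of the bulk term $\int_V r|(\dkbrhoco,\dkbsico)|^2$ — writing $r=r^4\cdot r^{-3}$, slicing along the ingoing cones, and invoking the $r^4$-weighted flux of Proposition \ref{estbr} — is exactly the paper's argument, so the step you flagged as the hard part is precisely what the paper does, followed by Lemma \ref{totallycheck}.
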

\begin{proof}
We recall the following equations from Propositions \ref{Bianchieq} and \ref{Bianchieqdkb}:
\begin{align*}
\nab_4\dkbbbco+\tr\chi\,\dkbbbco&=d_1^*(\dkbrhoco,\dkbsico)+\fl[\nab_4\dkbbbco]+\err[\nab_4\dkbbbco],\\
\nab_3(\dkbrhoco,\dkbsico)+\frac{3}{2}\trchb\,(\dkbrhoco,\dkbsico)&=-d_1\dkbbbco+\fl[\nab_3\dkbrhoco,\nab_3\dkbsico]+\err[\nab_3\dkbrhoco,\nab_3\dkbsico],
\end{align*}
where
\begin{align*}
\fl[\nab_3\dkbrhoco,\nab_3\dkbsico]=\Gab^{(1)}\c\O_3^1+\O_3^2\c\aac^{(1)},\qquad \fl[\nab_4\dkbbco]=\Gag^{(1)}\c\O_3^1+\O_2^1\c\bbc^{(1)}.
\end{align*}
Applying \eqref{casethree} with $\psi_{(1)}=(\dkbrhoco,\dkbsico)$, $\psi_{(2)}=\dkbbbco$, $a_{(1)}=1$, $a_{(2)}=\frac{3}{2}$, $h_{(1)}=\fl[\nab_3\dkbrhoco,\nab_3\dkbsico]+\err[\nab_3\dkbrhoco,\nab_3\dkbsico]$, $h_{(2)}=\fl[\nab_4\dkbbbco]+\err[\nab_4\dkbbbco]$ and $p=2$, we obtain
\begin{align*}
&\int_{\cuv} r^2|(\dkbrhoco,\dkbsico)|^2+\int_{\ucuv}r^2|\dkbbbco|^2 \\
\les &\int_{\Si_0\cap V}r^2\left(|(\dkbrhoco,\dkbsico)|^2+|\dkbbbco|^2\right)+\int_V r|(\dkbrhoco,\dkbsico)|^2\\
&+\int_V r^2|(\dkbrhoco,\dkbsico)||\fl[\nab_3\dkbrhoco,\nab_3\dkbsico]+\err[\nab_3\dkbrhoco,\nab_3\dkbsico]|\\
&+\int_V r^2|\dkbbbco||\fl[\nab_4\dkbbbco]+\err[\nab_4\dkbbbco]|.
\end{align*}
First, we have
\begin{align}
\begin{split}\label{initialbr}
    \int_{\Si_0\cap V}r^2\left(|(\dkbrhoco,\dkbsico)|^2+|\dkbbbco|^2\right)&\les |u|^{2-s}\int_{\Si_0\cap V}r^s\left(|(\dkbrhoco,\dkbsico)|^2+|\dkbbbco|^2\right)\les\frac{\ep_0^2}{|u|^{s-2}}.
\end{split}
\end{align}
Notice that we have from Proposition \ref{estbr} that
\begin{align}
\begin{split}\label{bulkr}
\int_V r|(\dkbrhoco,\dkbsico)|^2 \les \int_{|u|}^{\ub}r^{-3}d\ub\int_\ucuv r^4|(\dkbrhoco,\dkbsico)|^2\les\int_{|u|}^{\ub}r^{-3}d\ub \frac{\ep_0^2}{|u|^{s-4}}
\les\frac{\ep_0^2}{|u|^{s-2}}.
\end{split}
\end{align}
Moreover, we have
\begin{align}
\begin{split}\label{estflrhob}
&\int_V r^2|(\dkbrhoco,\dkbsico)||\fl[\nab_3\dkbrhoco,\nab_3\dkbsico]|+r^2|\dkbbbco||\fl[\nab_4\dkbbbco]|\\
\les& \int_V Mr^{-1}|\rhoc^{(1)}||\Gab^{(1)}|+M^2r^{-1}|\rhoc^{(1)}||\aac^{(1)}|\\
\les & M\int_{|u|}^\ub \frac{d\ub}{r^3}\int_{\ucuv} |r^2\rhoc^{(1)}||\Gab^{(1)}|+M^2\int_{|u|}^\ub \frac{d\ub}{r^3}\int_{\ucuv}|r^2\rhoc^{(1)}||\aac^{(1)}|\\
\les& M\int_{|u|}^\ub\frac{d\ub}{r^3} \left(\int_\ucuv |r^2\rhoc^{(1)}|^2\right)^\frac{1}{2}\left(\int_{u_0(\ub)}^u du|\Gab^{(1)}|^2_{2,S}\right)^\frac{1}{2}\\
&+M^2\int_{|u|}^\ub \frac{d\ub}{r^3}\left(\int_{\ucuv}|r^2\rhoc^{(1)}|^2\right)^\frac{1}{2}\left(\int_\ucuv|\aac^{(1)}|^2\right)^\frac{1}{2}\\
\les& M\int_{|u|}^\ub\frac{d\ub}{r^3} \frac{\ep}{|u|^{\frac{s-4}{2}}}\frac{\ep}{|u|^\frac{s-2}{2}}+M^2\int_{|u|}^\ub \frac{d\ub}{r^3} \frac{\ep}{|u|^\frac{s-4}{2}}\frac{\ep}{|u|^\frac{s}{2}}\\
\les&\frac{M\ep^2}{|u|^{s-1}}+\frac{M^2\ep^2}{|u|^{s}}\\
\les&\frac{\ep_0^2}{|u|^{s-2}}.
\end{split}
\end{align}
Next, we have from Proposition 4.12 in \cite{ShenMink} that
\begin{equation}\label{esterrrhob}
    \int_V r^{2}|(\dkbrhoco,\dkbsico)||\err[\nab_3\dkbrhoco,\nab_3\dkbsico]|+r^2|\dkbbbco||\err[\nab_4\dkbbbco]|\les\frac{\ep_0^2}{|u|^{s-2}}.
\end{equation}
Combining \eqref{initialbr}, \eqref{bulkr}, \eqref{estflrhob} and \eqref{esterrrhob}, we obtain
\begin{align*}
    \int_{\cuv}r^2|(\dkbrhoco,\dkbsico)|^2+\int_{\ucuv}r^2|\dkbbbco|^2\les\frac{\ep_0^2}{|u|^{s-2}}.
\end{align*}
Combining with Lemma \ref{totallycheck}, this concludes the proof of Proposition \ref{estrb}.
\end{proof}
\subsection{Estimates for the Bianchi pair \texorpdfstring{$(\bbc,\aac)$}{}}\label{ssec9.4}
\begin{proposition}\label{estba}
We have the following estimate:
\begin{equation}\label{estbbaa}
    \RR_0[\bbc]^2+\RRb_0[\aac]^2+\RR_1[\bbc]^2+\RRb_1[\aac]^2\les\ep_0^2.
\end{equation}
\end{proposition}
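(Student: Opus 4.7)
The plan is to apply case (c) of Proposition \ref{keyintegral} (equation \eqref{casethree}) to the Bianchi pair $(\widecheck{\dkb\bb},\widecheck{\dkb\aa})$, which has the structure \eqref{bianchi2} with $k=2$, $a_{(1)}=2$ and $a_{(2)}=\tfrac{1}{2}$. The conditions $2+p-4a_{(1)}\leq 0$ and $4a_{(2)}-2-p\geq 0$ both reduce to $p\leq 0$, and since the norms $\RR_{0,1}[\bbc]$, $\RRb_{0,1}[\aac]$ carry no $r$--weight but only a $|u|^{s/2}$ weight, the natural (and sharpest) choice is the borderline value $p=0$. With this choice, \eqref{casethree} reads schematically
\begin{align*}
\int_{\cuv}|\widecheck{\dkb\bb}|^2+\int_{\ucuv}|\widecheck{\dkb\aa}|^2
\les& \int_{\Si_0\cap V}\!\!\!|\widecheck{\dkb\bb}|^2+|\widecheck{\dkb\aa}|^2 +\int_V r^{-1}|\widecheck{\dkb\bb}|^2\\
&+\int_V|\widecheck{\dkb\bb}|\bigl(|\fl[\nab_3\widecheck{\dkb\bb}]|+|\err[\nab_3\widecheck{\dkb\bb}]|\bigr)\\
&+\int_V|\widecheck{\dkb\aa}|\bigl(|\fl[\nab_4\widecheck{\dkb\aa}]|+|\err[\nab_4\widecheck{\dkb\aa}]|\bigr),
\end{align*}
and the goal is to bound the right--hand side by $\ep_0^2/|u|^s$.

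I would proceed as follows. First, the initial data piece $\int_{\Si_0\cap V}|\widecheck{\dkb\bb}|^2+|\widecheck{\dkb\aa}|^2$ is controlled by factoring out $r^{-s}$ (since $r\sim|u|$ on $\Si_0\sm K$) and invoking $\Rk_0\les\ep_0$ from the hypotheses. Second, the bulk term $\int_V r^{-1}|\widecheck{\dkb\bb}|^2$ is handled by writing it as $\int_{|u|}^{\ub}r^{-3}d\ub'\int_{\Cb_{\ub'}^V}r^2|\widecheck{\dkb\bb}|^2$ and inserting the already established bound $\int_\ucuv r^2|\widecheck{\dkb\bb}|^2\les\ep_0^2/|u|^{s-2}$ from Proposition \ref{estrb}; the $\ub'$--integral of $r^{-3}$ then produces the desired gain $|u|^{-2}$. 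Third, the $\fl[\cdots]$ contributions, which are schematically products of Kerr background quantities $\O_q^p$ with $\Gag^{(1)}$, $\Gab^{(1)}$ or $(\bbc^{(1)},\aac^{(1)},\rhoc^{(1)},\sic^{(1)})$, are estimated by Cauchy--Schwarz in $\ub$ on each $\ucuv$ slab, combining Lemma \ref{estGagba} for the Ricci factors with Propositions \ref{estab}--\ref{estrb} for the curvature factors. Finally, the nonlinear $\err[\cdots]$ terms are controlled exactly as in the analogous step (Proposition 4.14) of \cite{ShenMink}. Once the estimate for $\widecheck{\dkb\bb},\widecheck{\dkb\aa}$ is in hand, Lemma \ref{totallycheck} with $p=0$ upgrades it to the statement \eqref{estbbaa} in terms of $\bbc^{(1)}$ and $\aac^{(1)}$.

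The main obstacle is the linear contribution $\int_V|\widecheck{\dkb\aa}|\,|\fl[\nab_4\widecheck{\dkb\aa}]|$: among its terms is $\O_3^2\c(\bbc^{(1)},\rhoc^{(1)},\sic^{(1)})$, which after Cauchy--Schwarz produces integrals of the form $\int_V M^2 r^{-3}|\widecheck{\dkb\aa}|\,|(\rhoc^{(1)},\sic^{(1)},\bbc^{(1)})|$. When one factor is routed through Proposition \ref{estrb} (giving $\ep_0^2/|u|^{s-2}$ with weight $r^2$), the remaining factor is precisely the quantity $\int_\ucuv|\widecheck{\dkb\aa}|^2$ we are trying to estimate. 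We must therefore be careful to either route this as a double integration so that the $\aac$--factor is bounded by $\sup_\ub\int_\ucuv|\widecheck{\dkb\aa}|^2$ times $\int_{|u|}^{\ub} d\ub'/r^2\les 1/|u|$ (using the pure $M$--smallness $M^2\cdot|u|^{-1}\les \EE_0^2$ to absorb into the left--hand side), or to extract a factor of $\EE_0$ via $M/r\les\EE_0$ before pairing. The same absorption trick handles the $\O_2^1\c\aac^{(1)}$ contribution in $\fl[\nab_4\dkbaac]$, and this is the only step where the external--region smallness $\EE_0\ll 1$ (rather than $\ep\ll 1$) is genuinely used in an essential way.
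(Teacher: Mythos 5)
Your proposal is correct and follows essentially the same route as the paper: case (c) of Proposition \ref{keyintegral} with $p=0$ applied to the pair $(\widecheck{\dkb\bb},\widecheck{\dkb\aa})$, the initial-data and bulk terms handled exactly as you describe (the latter via Proposition \ref{estrb}), the nonlinear terms deferred to \cite{ShenMink}, and Lemma \ref{totallycheck} to pass to $\bbc^{(1)},\aac^{(1)}$. The only (harmless) difference is that the ``obstacle'' you flag needs no absorption into the left-hand side: the paper simply bounds $\int_{\ucuv}|\aac^{(1)}|^2\les\ep^2/|u|^{s}$ by the bootstrap assumption $\RR\leq\ep$ and converts $\ep^2$ into $\ep_0^2$ through the extra factor $M/r\les\EE_0$, which is precisely your second alternative.
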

\begin{proof}
We recall the following Bianchi equations from Propositions \ref{Bianchieq} and \ref{Bianchieqdkb}:
\begin{align}
\begin{split}
   \nab_4\dkbaaco+\frac{1}{2}\trch\,\dkbaaco&=-\nab\hot\dkbbbco+\fl[\nab_4\dkbaaco]+\err[\nab_4\dkbaaco],\\
\nab_3\bbc+2\trchb\,\dkbbbco&=-\sdiv\dkbaaco+\fl[\nab_3\dkbbbco]+\err[\nab_3\dkbbbco],\label{babian}
\end{split}
\end{align}
where
\begin{align*}
    \fl[\nab_4\dkbaaco]=\Gab^{(1)}\c\O_3^1+\O_2^1\c\aac^{(1)},\qquad \fl[\nab_3\dkbbbco]=\Gab^{(1)}\c\O_4^2+\O_2^1\c\bbc^{(1)}.
\end{align*}
Applying \eqref{casethree} with $\psi_{(1)}=\dkbbbco$, $\psi_{(2)}=\dkbaaco$, $a_{(1)}=2$, $a_{(2)}=\frac{1}{2}$, $h_{(1)}=\fl[\nab_3\dkbbbco]+\err[\nab_3\dkbbbco]$, $h_{(2)}=\fl[\nab_4\dkbaaco]+\err[\nab_4\dkbaaco]$ and $p=0$, we obtain
\begin{align}
\begin{split}
&\int_{\cuv}|\dkbbbco|^2+\int_{\ucuv} |\dkbaaco|^2 \\
\les&\int_{\Si_0\cap V}|\dkbbbco|^2+|\dkbaaco|^2+\int_V r^{-1}|\dkbbbco|^2\\
&+\int_V |\dkbbbco||\fl[\nab_3\dkbbbco]+\err[\nab_3\dkbbbco]|+|\dkbaaco||\fl[\nab_4\dkbaaco]+\err[\nab_4\dkbaaco]|.\label{bap}
\end{split}
\end{align}
First, we have
\begin{align*}
    \int_{\Si_0\cap V}|\dkbbbco|^2+|\dkbaaco|^2\les |u|^{-s}\int_{\Si_0\cap V}r^s\left(|\dkbbbco|^2+|\dkbaaco|^2\right)\les \frac{\ep_0^2}{|u|^s}.
\end{align*}
We recall from Proposition \ref{estrb} that
\begin{align*}
    \int_V r^{-1}|\dkbbbco|^2\les\int_{|u|}^\ub \frac{d\ub}{r^3}\int_{\ucuv} |r\bbc^{(1)}|^2\les \int_{|u|}^\ub \frac{d\ub}{r^3}\frac{\ep_0^2}{|u|^{s-2}}\les \frac{\ep_0^2}{|u|^s}.
\end{align*}
Next, we estimate
\begin{align*}
    &\int_V |\dkbbbco||\fl[\nab_3\dkbbbco]|+|\dkbaaco||\fl[\nab_4\dkbaaco]|\\
    \les& \int_V Mr^{-3}|\aac^{(1)}||\Gab^{(1)}|+Mr^{-2}|\aac^{(1)}|^2\\
    \les&M\int_{|u|}^\ub\frac{d\ub}{r^3}\left(\int_{\ucuv}|\aac^{(1)}|^2\right)^\frac{1}{2}\left(\int_{u_0(u)}^u|\Gab^{(1)}|^2_{2,S}\right)^\frac{1}{2}+M\int_{|u|}^\ub\frac{d\ub}{r^2}\int_{\ucuv}|\aac^{(1)}|^2\\
    \les&M\int_{|u|}^\ub\frac{d\ub}{r^3}\frac{\ep}{|u|^\frac{s}{2}}\frac{\ep}{|u|^{\frac{s-2}{2}}}+M\int_{|u|}^\ub\frac{d\ub}{r^2}\frac{\ep^2}{|u|^s}\\
    \les&\frac{\ep_0^2}{|u|^s}.
\end{align*}
We recall from Proposition 4.13 in \cite{ShenMink}
\begin{equation*}
    \int_V |\dkbbbco||\err[\nab_3\dkbbbco]|+|\dkbaaco||\err[\nab_4\dkbaaco]|\les\frac{\ep_0^2}{|u|^s}.
\end{equation*}
Hence, we obtain
\begin{align*}
    \int_{\cuv}|\dkbbbco|^2+\int_{\ucuv}|\dkbaaco|^2\les\frac{\ep_0^2}{|u|^s}.
\end{align*}
This concludes the proof of Proposition \ref{estba}.
\end{proof}
\subsection{End of the proof of Theorem M1}\label{proofrp}
\begin{proposition}\label{estRS}
We have the following estimate:
\begin{equation}
    \RR_0^{S}+\RRb_0^{S}\les \RR_0+\RR_1+\RRb_0+\RRb_1.
\end{equation}
\end{proposition}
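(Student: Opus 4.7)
The plan is to prove Proposition~\ref{estRS} as a direct Sobolev embedding: the $L^p$ sphere norms on the left are controlled by the $L^2$ flux norms (with one angular derivative) on the right, using Proposition~\ref{sobolevkn}, with transversal derivatives eliminated via the Bianchi equations of Proposition~\ref{Bianchieq}.

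First, I would reduce to the endpoint $p=4$. Indeed, Hölder on a sphere $S=S(u,\ub)$ (of area $\sim r^2$) gives $|f|_{p,S}\les r^{2/p-1/2}|f|_{4,S}$ for $p\in[2,4]$, and the definitions in Section~\ref{Rnorms} have been arranged so that the weight $r^{\frac{k}{2}-\frac{2}{p}}$ appearing in each sphere norm absorbs exactly this gain. Hence, once the $p=4$ estimates are established, the $p\in[2,4]$ cases follow by interpolation on $S$.

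For the outgoing component, I would apply the first inequality of Proposition~\ref{sobolevkn} with the weighted unknown $F:=r^2|u|^{\frac{s-4}{2}}\bc$, so that $|rF|_{4,S}=|r^3|u|^{\frac{s-4}{2}}\bc|_{4,S}$ is precisely $\RR_0^S[\bc]$ at $p=4$. Using $e_4(u)=0$ and $|e_4(r)|\les 1$, one gets
\begin{equation*}
\|F\|_{2,\cuv}\les\RR_0[\bc],\qquad \|r\nab F\|_{2,\cuv}\les\RR_1[\bc].
\end{equation*}
The critical term is $\|r\nab_4 F\|_{2,\cuv}$: after differentiating the weight (producing a harmless multiple of $F$) one is left with $r^3|u|^{\frac{s-4}{2}}\nab_4\bc$, which I substitute using the Bianchi equation $\nab_4\bc+2\trch\,\bc=\sdiv\ac+\fl[\nab_4\bc]+\err[\nab_4\bc]$. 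The trace term contributes $\les\RR_0[\bc]$, the angular term $r^3|u|^{\frac{s-4}{2}}\sdiv\ac=r^2|u|^{\frac{s-4}{2}}(r\nab)\ac$ is bounded by $\RR_1[\ac]$ using $|u|\les r$ (so $r^2|u|^{\frac{s-4}{2}}\les r^{s/2}$), and the remaining $\fl,\err$ terms are absorbed by the bootstrap $\OO\leq\ep$ via Lemma~\ref{estGagba}.

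The ingoing components $\bc$, $(\rhoc,\sic)$, $\bbc$ are treated symmetrically with the second inequality of Proposition~\ref{sobolevkn}, choosing weights so that $|r^{1/2}|u|^{1/2}F|_{4,S}$ matches the corresponding sphere norm in $\RRb_0^S$. The $L^2$ and tangential derivative terms give $\RRb_0$ and $\RRb_1$ of the same component, while the term $|u|\|\nab_3 F\|_{2,\ucuv}$ is converted via the relevant Bianchi equation ($\nab_3\bc$, $\nab_3(\rhoc,\sic)$, $\nab_3\bbc$) into an angular derivative of the next component in the hierarchy, controlled by the corresponding $\RRb_1$; the error terms are again absorbed. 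I expect the main (essentially bookkeeping) obstacle to be verifying, component by component, that the weight exponents balance — in particular keeping track of the trade $|u|^\alpha\les r^\alpha$ when transferring between the different curvature weights. Once this weight matching is verified, the proposition follows by summing the estimates for all five components.
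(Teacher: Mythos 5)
Your proposal follows exactly the paper's (very brief) proof: the sphere norms are obtained from the Sobolev inequalities of Proposition \ref{sobolevkn}, with the transversal derivatives $\nab_4$, $\nab_3$ eliminated via the linearized Bianchi equations of Propositions \ref{Bianchieq}--\ref{Bianchieqdkb} and the lower-order and error terms absorbed using the bootstrap assumptions, so your weight bookkeeping (using $|u|\les r$) is precisely the content the paper leaves implicit. One small remark: for the ingoing components it is cleanest to apply Proposition \ref{sobolevkn} to the unweighted curvature components and then multiply the inequality by the weight $r^a|u|^b$ evaluated at $S(u,\ub)$ — matching how $\RRb_q[(\rhoc,\sic)]$ and $\RRb_q[\bbc]$ are defined, with weights outside the flux integral — since inserting the corresponding pointwise weights into $F$ would force you to pull out along $\Cb_\ub$ weights that grow toward $\Si_0$, which does not follow directly from the definitions.
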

\begin{proof}
It is sufficient to prove that
\begin{equation}
\RR_0^{S}\lesssim\RR_0+\RR_1,\qquad\RRb_0^{S}\les\RRb_0+\RRb_1,
\end{equation}
which is a direct consequence of Propositions \ref{Bianchieq}, \ref{Bianchieqdkb} and \ref{sobolevkn}.\footnote{Note that the initial conditions in Proposition \ref{sobolevkn} can be controlled as in Lemma 6.6 of \cite{ShenMink}.}
\end{proof}
Remark that Propositions \ref{estab}--\ref{estRS} imply
\begin{equation*}
\RR^2\les \ep_0^2.
\end{equation*}
This concludes the proof of Theorem M1.
\section{Ricci coefficients estimates (Theorem M3)}\label{Ricciestimates}
The goal of this section is to prove Theorem M3, which we recall below for convenience.
\begin{thmM3}
Assume that
\begin{align}\label{assM31}
    \OO_{(0)}\leq\ep_0,\qquad \RR\les\ep_0,\qquad\OO^*(\Cb_*)\les\ep_0,\qquad \OO\leq\ep,\qquad \osc\leq\ep.
\end{align}
Then, we have
\begin{equation}\label{conM34}
    \OO\les\ep_0,\qquad \quad\osc\les\ep_0.
\end{equation}
\end{thmM3}
\begin{remark}
Throughout Section \ref{Ricciestimates}, we denote
\begin{align*}
    S_0(\ub)&:=S(u_0(\ub),\ub),\qquad S_*(u):=S(u,\ub_*),\qquad S:=S(u,\ub).
\end{align*}
\end{remark}
\subsection{Preliminaries}
\subsubsection{Null frame transformation}\label{secnullframe}
In order to control $\osc$, we will need to compare the double null frame of $\kk$ and the one of $\kk_{(0)}$ in $\kk_{(0)}$. To this end, we will rely on the null frame transformations introduced in \cite{KS} and \cite{KS:Kerr1}.
\begin{lemma}\label{changelemma}
Given two null frames $(e_3,e_4,e_1,e_2)$ and $(e'_3,e'_4,e'_1,e'_2)$ associated to double-null foliations, assume that they have the same generator $\Lb$ for the ingoing direction. Then, a null frame transformation from the null frame $(e_3,e_4,e_1,e_2)$ to $(e'_3,e'_4,e'_1,e'_2)$ can be written in the form:
\begin{align}
    \begin{split}\label{change}
        e'_4&=\lambda\left(e_4+f^Be_B+\frac{1}{4}|f|^2e_3\right),\\
        e'_3&=\lambda^{-1}e_3,\\
        e'_A&=e_A+\frac{1}{2}f_Ae_3,
    \end{split}
\end{align}
where 
\begin{align}\label{lambdaOmOm'}
    \la=\frac{\Om}{\Om'}
\end{align}
is a scalar function and $f$ is a $1$--form. Moreover, the inverse transform of \eqref{change} is given by
\begin{align}
\begin{split}
e_4&=\la^{-1}e'_4-f^A e_A'+\frac{\la}{4}|f|^2 e_3',\\
e_3&=\la e'_3,\\
e_A&=e'_A-\frac{\la}{2}f_Ae'_3. \label{change'}
\end{split}
\end{align} 
\end{lemma}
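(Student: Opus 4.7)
The plan is to impose the null frame orthogonality relations systematically on the expansion of the primed frame in the unprimed one, exploiting the fact that both frames share the ingoing null generator $\Lb$.

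First, since $e_3 = 2\Om\Lb$ and $e_3' = 2\Om'\Lb$ by construction of the normalized null pairs (see Section \ref{doublenullsection}), one immediately gets $e_3' = (\Om'/\Om)e_3 = \la^{-1} e_3$ with $\la := \Om/\Om'$, which is the second line of \eqref{change} and also records \eqref{lambdaOmOm'}. Second, to determine the horizontal vectors, I would expand $e_A' = a_A^B e_B + b_A e_3 + c_A e_4$ in the unprimed frame. The fact that $e_A'$ is tangent to $S'(u',\ub) \subset \Cb_\ub$ gives $\g(e_A', \Lb) = 0$, hence $\g(e_A', e_3) = 0$, which forces $c_A = 0$. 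The orthonormality of $(e_1', e_2')$ then forces $(a_A^B)$ to be an $O(2)$ matrix. Exploiting the natural rotational freedom in the choice of angular frame $(e_1', e_2')$ inside $TS'$, one may set $a_A^B = \de_A^B$, so that $e_A' = e_A + \f12 f_A e_3$ with the $S$--tangent $1$--form $f$ defined by $f_A := 2 b_A$. This yields the third line of \eqref{change}.

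Third, writing $e_4' = \al e_3 + \be e_4 + \ga^A e_A$, I would impose in order: the normalization $\g(e_4', e_3') = -2$, which gives $\be = \la$; the orthogonality $\g(e_4', e_A') = 0$, which combined with the formula for $e_A'$ just derived gives $\ga_A = \la f_A$; and the null condition $\g(e_4', e_4') = 0$, which then forces $\al = \la|f|^2/4$. Collecting these three identities yields $e_4' = \la\bigl(e_4 + f^B e_B + \frac{1}{4}|f|^2 e_3\bigr)$, which is the first line of \eqref{change}. The inverse formula \eqref{change'} then follows from \eqref{change} by straightforward algebraic inversion, treating $(\la, f)$ as known and solving successively for $e_3$, $e_A$, and finally $e_4$.

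The main subtle point is the angular rotation in the second step: without a canonical choice, the transformation would in general contain an $O(2)$ factor in the angular sector. Absorbing this into the choice of $(e_1', e_2')$ is harmless for the applications in the sequel, since the scalar geometric quantities (traces, norms, divergences) appearing in the forthcoming estimates are invariant under such rotations, but it must be noted in order to justify the precise form \eqref{change}.
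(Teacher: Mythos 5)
Your derivation is correct, but it takes a different route from the paper. The paper's proof is essentially a citation: it invokes the general null frame transformation formula of Lemma 3.1 in \cite{KS:Kerr1}, specialized to $\fb=0$ (the vanishing of the second transition $1$--form being exactly the statement that $e_3'$ and $e_3$ are parallel, both along the common generator $\Lb$), observes that \eqref{change'} follows from \eqref{change} by algebraic inversion, and reads off \eqref{lambdaOmOm'} from $e_3=2\Om\Lb$, $e_3'=2\Om'\Lb$. You instead rebuild the transformation from first principles: $e_3'=\la^{-1}e_3$ from the shared generator; the absence of an $e_4$--component in $e_A'$ from tangency of $e_A'$ to the common ingoing cone $\Cb_\ub$ (since $\g(e_A',\Lb)=0$ and $\g(e_3,e_4)=-2$); and the coefficients of $e_4'$ from the normalization $\g(e_3',e_4')=-2$, the orthogonality $\g(e_4',e_A')=0$ and the null condition $\g(e_4',e_4')=0$, which indeed yield $\be=\la$, $\ga_A=\la f_A$, $\al=\frac{\la}{4}|f|^2$ in that order. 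Your handling of the residual $O(2)$ rotation in the angular sector is the right caveat to make, and is consistent with the lemma's phrasing that the transformation \emph{can be written} in the form \eqref{change}. What the paper's approach buys is brevity and compatibility with the general framework of \cite{KS:Kerr1} (where $f$, $\fb$, $\la$ are all nontrivial); what your approach buys is a self-contained argument that makes transparent exactly which hypothesis (the shared $\Lb$) kills which degrees of freedom, at the cost of having to discuss the frame-rotation ambiguity explicitly.
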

\begin{proof}
Applying Lemma 3.1 in \cite{KS:Kerr1} with $\fb=0$, we obtain \eqref{change}. Notice that \eqref{change'} is a direct consequence of \eqref{change}. Recalling
\begin{align*}
    e'_3=2\Om' \Lb,\qquad e_3=2\Om \Lb,
\end{align*}
we deduce immediately \eqref{lambdaOmOm'}. This concludes the proof of Lemma \ref{changelemma}.
\end{proof}
\begin{proposition}\label{transformation}
Under the null frame transformation \eqref{change}, some of the Ricci coefficients transform as follows:
\begin{align*}
\la^{-1}\trch'&=\trch+\sdiv'f+\lot,\\
\curl'f&=\lot,\\
\la \chib'&=\chib,\\
\eta'&=\eta+\frac{1}{2}\la\nab_{e'_3}'f+\lot,\\
\etab'&=\etab+\frac{1}{4}\trchb\, f+\lot,\\
\la^{-2}\xi'&=\xi+\frac{1}{2}\nab_{\la^{-1}e'_4}'f+\frac{1}{4}\trch\, f+\lot ,\\
\la^{-1}\om'&=\om+\lot,
\end{align*}
where the terms denoted by $\lot$ have the following schematic form:
\begin{align*}
    \lot=\Gab\cdot f+\O_2^1\c f.
\end{align*}
The curvature components transform as follows:
\begin{align*}
\lambda^{-2}\alpha'&=\alpha+(f\widehat{\otimes}\beta-{^*f}\widehat{\otimes}{^*\beta})+\left(f\widehat{\otimes}f-\frac{1}{2}{^*f}\widehat{\otimes}{^*f}\right)\rho+\frac{3}{2}(f\widehat{\otimes}{^*f}){^*\rho}+l.o.t.,\\
\lambda^{2}\aa'&=\aa,\\
\lambda^{-1}\beta'&=\beta+\frac{3}{2}(f\rho+{^*f}\sigma)+\lot,\\
\lambda\bb'&=\bb-\frac{1}{2}\aa\cdot f,\\
\rho'&=\rho-f\cdot\bb+\lot,\\
\sigma'&=\sigma-f\cdot{^*\bb}+\lot,
\end{align*}
where the terms denoted by $\lot$ have the following schematic form:
\begin{equation}
   \lot=O(f^3)(\rho,\sigma)+O(f^2)(\alpha,\beta,\bb,\aa).
\end{equation}
\end{proposition}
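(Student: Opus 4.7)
\medskip
\noindent\textbf{Proof proposal for Proposition \ref{transformation}.}

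The plan is a direct computation: substitute the frame transformation formulas \eqref{change} into the definitions \eqref{defga} of the Ricci coefficients and \eqref{defr} of the curvature components, then expand using the Ricci formulae \eqref{ricciformulas} of the original null frame, and finally collect the principal terms while controlling the remainders. Since transformation laws of exactly this type have been derived in \cite{KS:Kerr1} (Proposition 3.3 therein), the main content of the proof is to identify which remainders can be absorbed into the schematic class $\lot=\Gab\cdot f+\O_2^1\c f$ for Ricci coefficients, and $\lot=O(f^3)(\rho,\si)+O(f^2)(\a,\b,\bb,\aa)$ for curvature. This relies on the fact that under the assumptions \eqref{assM31} we have control of $f$ via $\osc(f)$ and of the background via Lemma \ref{expressionGaR}.

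First I would treat the identities that follow almost immediately from the special form $e'_3=\la^{-1}e_3$. From $\chib'_{AB}=\frac12\g(\D_{A'}e'_3,e'_B)+\text{perm.}$ and $e'_A=e_A+\tfrac12 f_Ae_3$, together with $\D_Ae_3=\chib_{AB}e_B+\ze_Ae_3$ and $\D_3e_3=-2\omb e_3$ from \eqref{ricciformulas}, one obtains $\la\chib'=\chib$ after cancellations (and similarly $\la^2\aa'=\aa$ from $\aa'_{AB}=\R(e'_A,e'_3,e'_B,e'_3)$ using the tensoriality of $\R$). The transformations of $\trch'$ and $\curl'f$ come from writing $\chi'_{AB}=\g(\D_{A'}e'_4,e'_B)$, plugging $e'_4=\la(e_4+f^Ce_C+\tfrac14|f|^2e_3)$, and extracting $\sdiv'f$ from the $\nab f$ contribution while the symmetric vs. antisymmetric parts yield the $\trch$ and $\curl f$ laws.

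Next I would treat $\eta'$, $\etab'$, $\xi'$ and $\om'$. The identity for $\etab'$ is simplest: expanding $\etab'_A=\tfrac12\g(\D_{4'}e'_3,e'_A)$ with $e'_4=\la(e_4+fe+\tfrac14|f|^2e_3)$, $e'_3=\la^{-1}e_3$ gives $\etab$ plus a term coming from $\D_{fe}e_3=\chib_{AB}f^Be_A+(\text{along }e_3)$, producing $\frac14\trchb f+\lot$. For $\eta'$ one uses $\eta'_A=\tfrac12\g(\D_{3'}e'_4,e'_A)$ and the leading term is $\tfrac12\la\nab'_{e'_3}f$ coming from differentiating the $f^Be_B$ piece of $e'_4$ in the $e'_3$--direction. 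The identity for $\xi'$ is derived analogously from $\xi'_A=\tfrac12\g(\D_{4'}e'_4,e'_A)$, where the two terms $\tfrac12\nab'_{\la^{-1}e'_4}f$ and $\tfrac14\trch f$ come respectively from $\D_{e'_4}(\la f^Be_B)$ and from $\D_{e'_4}(\tfrac{\la}{4}|f|^2e_3)$ using $\D_4e_3=2\om e_3+2\etab_Be_B$. The transformation of $\om'$ follows from $\om'=-\tfrac14\g(\D_{4'}e'_4,e'_3)$ and the fact that all the $f$--dependent cross terms cancel to highest order, leaving $\la^{-1}\om'=\om+\lot$.

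For the curvature identities I would use the multilinearity of $\R$: each substitution of the form $e'_4\mapsto \la(e_4+f^Be_B+\tfrac14|f|^2e_3)$ generates a polynomial in $f$ applied to the original curvature components. For instance, $\la^{-1}\b'_A=\tfrac12\R(e'_A,e'_4,e'_3,e'_4)/\la$ expands to $\b+\tfrac32(f\rho+{}^*f\si)$ after using the algebraic Bianchi identities to rewrite $\R(f,e_3,e_3,e_4)$, $\R(f,e_3,e_4,f)$, and $\R(e_A,f,e_3,e_4)$ in terms of $\rho$ and $\si$; higher powers of $f$ contract with $\b,\bb,\a,\aa$ and are absorbed into $\lot$. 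The formulas for $\lambda\bb'$, $\rho'$, $\sigma'$ and $\lambda^{-2}\a'$ follow from exactly the same kind of expansion; the $\lot$ structure for curvature is forced by counting how many factors of $f$ appear against each Weyl component.

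The only genuinely delicate point, and the main obstacle, is verifying that every remainder produced in these expansions fits the schematic templates claimed for $\lot$. For Ricci coefficients this requires that all terms of the form $f\cdot\chi$, $f\cdot\eta$, $f^2\cdot(\trch,\om,\etab)$, etc., are either of order $\Gab\cdot f$ (when the background factor belongs to $\Gab$) or of order $\O_2^1\cdot f$ (when the background factor is Kerr); this is handled by invoking Lemma \ref{expressionGaR} and using that $f$ itself is controlled by $\osc(f)$. For curvature the remainders naturally separate into $O(f^3)(\rho,\si)$ and $O(f^2)(\a,\b,\bb,\aa)$ by the pigeonhole count of how many of the four slots of $\R$ are filled with $f$--terms versus $e_3,e_4$. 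Once this bookkeeping is carried out (and it parallels Proposition 3.3 in \cite{KS:Kerr1}), the stated formulas follow.
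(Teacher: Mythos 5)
Your route differs from the paper's in character: the paper does not recompute anything, it simply observes that all the formulae except the one for $\om$ follow from Proposition 3.3 of \cite{KS:Kerr1} specialized to $\fb=0$, and refers to Proposition 6.2 of \cite{ShenMink} for the $\om$ law. Your plan of substituting \eqref{change} into \eqref{defga}--\eqref{defr} and expanding with \eqref{ricciformulas} is a legitimate self-contained alternative, and for $\chib'$, $\aa'$, $\bb'$, $\trch'$, $\curl'f$, $\eta'$, $\etab'$, $\xi'$ and the remaining curvature components your sketch is essentially the computation carried out in those references; the exact identities $\la\chib'=\chib$, $\la^2\aa'=\aa$, $\la\bb'=\bb-\tfrac12\aa\cdot f$ indeed drop out of multilinearity and $\xib=0$ as you indicate.

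The genuine gap is in your treatment of $\om'$. Expanding $\om'=\tfrac14\g(\D_{4'}e'_4,e'_3)$ with $e'_4=\la\big(e_4+f^Be_B+\tfrac14|f|^2e_3\big)$ produces, from the $e'_4(\la)$ part of $\D_{e'_4}e'_4$ paired against $\g(e_4,e_3)$, a term proportional to $\nab_{4'}(\log\la)$. This term has nothing to do with $f$: it is not of the schematic form $\Gab\cdot f+\O_2^1\cdot f$, and it does not cancel against any $f$--dependent cross term, so your assertion that ``all the $f$--dependent cross terms cancel to highest order'' does not dispose of it. It is precisely this term that forces the separate argument: one must use the structural relation $\la=\Om/\Om'$ from \eqref{lambdaOmOm'} together with $\om=-\tfrac12\D_4\log\Om$ and $\om'=-\tfrac12\D_{4'}\log\Om'$ (so that $\nab_{4'}\log\la$ can be re-expressed through $\om$, $\om'$ and terms in the admissible $\lot$ class, and the resulting $\om'$ contribution absorbed to the left-hand side). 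This is exactly why the paper singles out the $\om$ formula and invokes Proposition 6.2 of \cite{ShenMink}, which ``uses in addition that $\la=\frac{\Om}{\Om'}$''. As written, your proposal never uses \eqref{lambdaOmOm'}, so the identity $\la^{-1}\om'=\om+\lot$ is not justified; the rest of the bookkeeping (which background factors land in $\Gab\cdot f$ versus $\O_2^1\cdot f$, and the $O(f^2)$, $O(f^3)$ count for curvature) is routine and acceptable at the level of a sketch.
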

\begin{proof}
All the transformation formulae, with the exception of the one for $\om$, follow immediately from Proposition 3.3 in \cite{KS:Kerr1}, in view of the fact that $\fb=0$. See Proposition 6.2 in \cite{ShenMink} for the proof of the transformation formula for $\om$ which uses in addition that $\la=\frac{\Om}{\Om'}$.
\end{proof}
\subsubsection{Difference of the pull backs of Kerr values}
The goal of this section is to compare the difference of the pull backs of Kerr values by $\Phi$ and $\Phi_{(0)}$ in $\kk_{(0)}$ which will appear in the control of $\osc$. In the remainder of this paper, for any quantity $X$, we denote the quantity $X_{(0)}$ associated to the initial data layer region $\kk_{(0)}$ by
\begin{align}\label{primenotation1}
    X':=X_{(0)},
\end{align}
to simplify the notations. For example:
\begin{equation}\label{primenotation2}
    u':=u_{(0)},\qquad \eta':=\eta_{(0)},\qquad \OO':=\OO_{(0)},\qquad \mathfrak{R}':=\mathfrak{R}_{(0)},\qquad S':=S_{(0)},\qquad \kk':=\kk_{(0)}.
\end{equation}
\begin{lemma}\label{estKK'}
Let $X$ be a scalar defined in $\kk'$. We denote
\begin{align*}
    X'_K:={\Phi'}^*(X_{Kerr}),\qquad X_K:=\Phi^*(X_{Kerr}),
\end{align*}
and we assume that
\begin{align*}
    X_{K}=\O_q^k.
\end{align*}
Then, we have:
\begin{align*}
    \sup_{S}|X'_K-X_K|\les \frac{M^k\ep}{r^{q+\frac{s-1}{2}}}\les\frac{\ep_0}{r^{q-k+\frac{s-1}{2}}}.
\end{align*}
\end{lemma}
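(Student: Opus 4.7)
My plan is to express $X_K$ and $X'_K$ as $X_{Kerr}$ evaluated at two nearby points in $\MM_{Kerr}$, and then extract the difference from the fundamental theorem of calculus combined with the $\osc$ bounds and the $\O^k_q$ decay of $X_{Kerr}$. By construction $\Phi$ and $\Phi'=\Phi_{(0)}$ express in local coordinates as the identity, and the two double null foliations in $\kk_{(0)}$ share the same function $\ub$ (by the construction in Section \ref{bootregion} extending the initial layer foliation). Thus for any $p\in\kk_{(0)}$ the two pulled-back Kerr points have coordinates $(u(p),\ub(p),x^A(p))$ and $(u'(p),\ub(p),x'^A(p))$, and, using the notation \eqref{primenotation1}--\eqref{primenotation2},
\[
X'_K(p)-X_K(p) \;=\; \int_0^1\!\Bigl[(u'-u)\,\pr_{u_{Kerr}}X_{Kerr} \;+\; (x'^A-x^A)\,\pr_{x^A_{Kerr}}X_{Kerr}\Bigr](\gamma(t))\,dt,
\]
where $\gamma(t)$ is the affine coordinate segment joining the two Kerr points.

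The derivative factors are controlled from Definition \ref{bigOnotation}: since the schematic $\nab$ contains $\nab_{\pr_{u_{Kerr}}}$ and $r_{Kerr}^{-1}\nab_{\pr_{x^A_{Kerr}}}$, the hypothesis $X_{Kerr}=\O^k_q$ yields $|\pr_{u_{Kerr}}X_{Kerr}|\les M^k/r^{q+1}$ and $|\pr_{x^A_{Kerr}}X_{Kerr}|\les M^k/r^{q}$. The coordinate differences are then controlled by $\osc\leq\ep$ from \eqref{assM31} together with Section \ref{Oscnorms}, giving $|u'-u|\les \ep/r^{(s-3)/2}$ and $|x'^A-x^A|\les \ep/r^{(s-1)/2}$. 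Substituting into the above identity and taking the supremum over $S$ immediately yields the first bound,
\[
|X'_K-X_K|\les \frac{M^k}{r^{q+1}}\cdot\frac{\ep}{r^{(s-3)/2}}+\frac{M^k}{r^q}\cdot\frac{\ep}{r^{(s-1)/2}}\les\frac{M^k\ep}{r^{q+(s-1)/2}}.
\]

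For the second inequality I would invoke the smallness hierarchy \eqref{epep0}: since $r\geq R_0$ and $M=\EE_0 R_0$, one has $M/r\leq \EE_0$, so $M^k/r^k\leq \EE_0^k\leq \EE_0^{1/3}$ when $k\geq 1$ (which is the case in all applications below, where $X_{Kerr}$ is a Ricci or curvature component of Kerr), and combining with $\ep_0=\EE_0^{1/3}\ep$ gives $M^k\ep\les\ep_0\,r^k$, i.e.\ $\frac{M^k\ep}{r^{q+(s-1)/2}}\les\frac{\ep_0}{r^{q-k+(s-1)/2}}$. I do not expect any genuine obstacle: the argument is essentially bookkeeping, and the $\osc$ norms of Section \ref{Oscnorms} were calibrated with exactly the right weights to reproduce the $\frac{M^k\ep}{r^{q+(s-1)/2}}$ decay on the right-hand side. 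The only mildly delicate point is ensuring that the interpolating segment $\gamma(t)$ remains in a common coordinate chart on which $X_{Kerr}$ is smooth, which is immediate because $\ep\ll 1$ makes the coordinate perturbations negligible with respect to the uniform chart overlap described in Section \ref{kerrgeometry}.
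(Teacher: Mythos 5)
Your proposal is correct and follows essentially the same route as the paper: bound $|X_{Kerr}(\Phi'(p))-X_{Kerr}(\Phi(p))|$ by the coordinate derivatives of $X_{Kerr}$ (controlled via $\O^k_q$) times the coordinate differences $|u'-u|$, $|x'^A-x^A|$ (controlled via $\osc(u)$, $\osc(x)\leq\ep$), then absorb $M^k/r^k\les\EE_0^k$ into $\ep_0$ using \eqref{epep0}. Your explicit remark that the second inequality uses $k\geq 1$ is consistent with the paper, which leaves this implicit since all applications have $k\geq 1$.
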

\begin{proof}
Let $p\in\KK$ which has the coordinates $(u,\ub,x^1,x^2)$ and resp. $(u',\ub,{x'}^1,{x'}^2)$ in two different foliations. Then, we have
\begin{align}
\begin{split}\label{estKK'scalar}
    |X'_K(p)-X_K(p)|&\les |{\Phi'}^*(X_{Kerr})(p)-{\Phi}^*(X_{Kerr})(p)|\\
    &\les|X_{Kerr}(\Phi'(p))-X_{Kerr}(\Phi(p))|\\
    &\les\left|\frac{\pr X_{Kerr}}{\pr u_{Kerr}}(u'-u)\right|+\left|\frac{\pr X_{Kerr}}{\pr x^A_{Kerr}}({x'}^A-x^A)\right|\\
    &\les\frac{M^k}{r^{q+1}}|u'-u|+\frac{M^k}{r^q}\left|{x'}^A-x^A\right|\\
    &\les\frac{M^k}{r^{q+1}}\frac{\osc(u)}{r^\frac{s-3}{2}}+\frac{M^k}{r^q}\frac{\osc(x)}{r^\frac{s-1}{2}}\\
    &\les\frac{M^k}{r^{q+1}}\frac{\ep}{r^\frac{s-3}{2}}+\frac{M^k}{r^q}\frac{\ep}{r^\frac{s-1}{2}}\\
    &\les\frac{\ep_0}{r^{q-k+\frac{s-1}{2}}},
\end{split}
\end{align}
where we used \eqref{assM31} to bound $\osc(u)$ and $\osc(x)$. This concludes the proof of Lemma \ref{estKK'}.
\end{proof}
\subsubsection{Evolution lemma}
\begin{lemma}[Evolution lemma]\label{evolution}
Assume that the spacetime $\KK$ is foliated by $(u,\ub)$.\\ \\
(1) Let $U,F$ be $k$-covariant $S$--tangent tensor fields satisfying the outgoing evolution equation
\begin{equation}
    \nab_4 U +\la_0\trch U= F,
\end{equation}
where $\lambda_0\geq 0$. Denoting $\lambda_1=2(\lambda_0-\frac{1}{p})$, we have along $C_u$
\begin{equation}\label{transubU}
|r^{\la_1}U|_{p,S}\les |r^{\la_1}U|_{p,S_*(u)}+\int_{\ub(p)}^{\ub_*(p)}  |r^{\la_1}F|_{p,S}(u,\ub')d\ub',
\end{equation}
where $\ub_*(p)$ is such that $C_{u(p)}\cap \Cb_*=S(u(p),\ub_*(p))$ and where $S_*(u)=S(u(p),\ub_*(p))$.\\\\
(2) Let $V,\Fb$ be k-covariant $S$--tangent tensor fields satisfying the ingoing evolution equation
\begin{equation}
\nab_3 V+\lambda_0\trchb V=\Fb.
\end{equation}
Denoting $\lambda_1 = 2(\lambda_0 - \frac{1}{p})$, we have along $\Cb_\ub$
\begin{equation}\label{transuV}
    |r^{\lambda_1}V|_{p,S} \les |r^{\lambda_1}V|_{p,S_0(\ub)}+ \int_{u_0(\ub)}^{u}  |r^{\lambda_1}\underline{F}|_{p,S}(u',\ub)du',
\end{equation}
where $S(u_0(\ub),\ub) := C_{u_0(\ub)} \cap {\Cb_\ub} \subset \mathcal{K}$ is the unique sphere of $\Cb_\ub$ which is located in the future of $\Si_0$ and touches $\Si_0$ and where $S_0(\ub)=S(u_0(\ub),\ub)$.
\end{lemma}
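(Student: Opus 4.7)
\medskip

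\noindent\textbf{Plan of proof for Lemma \ref{evolution}.} The two parts are proved by the same method (transport estimates on $S$-integrals combined with the area radius evolution), so I describe part (1); part (2) is obtained by swapping $e_4 \leftrightarrow e_3$ and carrying the additional $\bu^A \partial_{x^A}$ term that appears in $\Omega e_3$ via Lemma~\ref{checke4e3}.

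The plan is first to derive a scalar transport equation for the $S$-integral $\int_S |U|^p\,d\ga$. Since $|U|^p$ is a scalar, Lemma~\ref{dint} gives
\begin{equation*}
\Om e_4\!\left(\int_{S} |U|^p\,d\ga\right) = \int_S \Bigl(\Om e_4(|U|^p) + \Om\trch\,|U|^p\Bigr)d\ga.
\end{equation*}
Expanding $e_4(|U|^p) = p|U|^{p-2}\langle U, \nab_4 U\rangle$ and substituting the transport equation $\nab_4 U = -\la_0\trch\,U + F$ yields
\begin{equation*}
\Om e_4\!\left(\int_{S} |U|^p\,d\ga\right) = \int_S \Bigl((1-p\la_0)\Om\trch\,|U|^p + p\Om|U|^{p-2}\langle U,F\rangle\Bigr)d\ga.
\end{equation*}

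Next, I would combine this with the evolution of $r^{p\la_1}$ from Lemma~\ref{dint}, namely $\Om e_4(r^{p\la_1}) = \tfrac{p\la_1}{2}\overline{\Om\trch}\,r^{p\la_1}$. The choice $p\la_1 = 2p\la_0 - 2$ is precisely tuned so that the mean-value part of $\Om\trch$ cancels the coefficient $(1-p\la_0)\Om\trch$ up to its oscillation $\Om\trch - \overline{\Om\trch}$. This produces the cleaned-up transport equation
\begin{equation*}
\Om e_4\!\left(r^{p\la_1}\!\int_S |U|^p d\ga\right) = r^{p\la_1}\!\int_S\!\Bigl((p\la_0-1)(\overline{\Om\trch}-\Om\trch)|U|^p + p\Om|U|^{p-2}\langle U,F\rangle\Bigr)d\ga.
\end{equation*}
Using $\Om e_4 = \pr_\ub$ (Lemma~\ref{checke4e3}) this is an ODE along the generator of $C_u$ in the variable $\ub$.

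The last step is integration from $\ub(p)$ to $\ub_*(p)$, using $|U|^{p-2}|\langle U,F\rangle| \leq |U|^{p-1}|F|$ and Hölder on $S$ to bound the forcing by $|r^{\la_1}U|_{p,S}^{p-1}|r^{\la_1}F|_{p,S}$. Dividing by $|r^{\la_1}U|_{p,S}^{p-1}$ (after introducing a small regularization to avoid division by zero) converts the $p$-th power estimate into the desired linear estimate \eqref{transubU}. The residual error term
\begin{equation*}
r^{p\la_1}\!\int_S (p\la_0-1)(\overline{\Om\trch}-\Om\trch)|U|^p d\ga
\end{equation*}
is handled by Grönwall: by Lemma~\ref{expressionGaR} and Definition~\ref{gammag}, $\overline{\Om\trch}-\Om\trch \in \Ga_g$, which under the bootstrap bound $\OO\leq \ep$ is integrable in $\ub$ with a small constant, absorbable on the left.

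\medskip

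\noindent\textbf{Main obstacle.} The computation itself is routine; the only delicate points are (i) ensuring the algebraic cancellation between $(1-p\la_0)\Om\trch$ and $\tfrac{p\la_1}{2}\overline{\Om\trch}$, which forces the exact relation $\la_1 = 2(\la_0-\tfrac{1}{p})$, and (ii) for part~(2), tracking the extra commutator term $\bu^A\pr_{x^A}$ coming from $\Om e_3 = \pr_u + \bu^A\pr_{x^A}$; this extra piece lives in $\Gag$ (Lemma~\ref{expressionGaR}) and is again absorbed by the Grönwall step using $\OO\leq\ep$. No derivatives of $U$ beyond those in the hypothesis are used.
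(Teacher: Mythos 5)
Your argument is correct and coincides with the standard proof of this transport lemma: the paper simply cites Lemma 4.1.5 of \cite{Kl-Ni}, whose proof is exactly your computation (evolution of $\int_S|U|^p$ via Lemma \ref{dint}, the cancellation forced by $\la_1=2(\la_0-\tfrac1p)$ against $e_4(r)=\frac{\overline{\Om\trch}}{2\Om}r$, H\"older plus division by $|r^{\la_1}U|_{p,S}^{p-1}$, and Gr\"onwall for the oscillation term $\overline{\Om\trch}-\Om\trch$). No substantive difference from the paper's route, so nothing further is needed.
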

\begin{proof}
See Lemma 4.1.5 in \cite{Kl-Ni}.
\end{proof}
\subsubsection{Bootstrap control in the initial data layer}
\begin{lemma}\label{estGagbO}
We denote
\begin{equation}
    \Gaw:=\Gag\cup\Gab\cup\Gag'\cup\Gab'.
\end{equation}
Then, we have the following estimates in $\KK'$:
\begin{align}
\begin{split}
    \sup_{\kk'} r^{\frac{s+1}{2}}|\Gaw|&\les\ep,\\
    \sup_{S\subset \kk'} r^\frac{s+1}{2}|r^{-\frac{2}{p}}\Gaw^{(1)}|_{p,S}&\les\ep,\qquad p\in [2,4],
\end{split}
\end{align}
Moreover, we have 
\begin{align*}
    f\in r\Gaw,\qquad\la\in r\Gaw, \qquad u'-u\in r^2\Gaw,\qquad r'-r\in r^2\Gaw.
\end{align*}
\end{lemma}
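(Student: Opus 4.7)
The strategy is to estimate each of the four pieces of $\Gaw=\Gag\cup\Gab\cup\Gag'\cup\Gab'$ separately using the different assumptions of Theorem M3, then handle the change-of-frame quantities $f,\la,u'-u,r'-r$ via the $\osc$ norm. Throughout, we exploit the geometric fact that the initial layer region is defined by $0\le u'+\ub\le 2\de_0$, so that on $\kk_{(0)}$ one has $|u|,|u'|\sim r\sim r_{(0)}$ up to the small discrepancy controlled by $\osc(u)\les\ep$ and $\osc(r)\les\ep$.

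First, I would estimate the primed quantities $\Gag',\Gab'$. These are controlled directly by the initial layer assumption $\OO_{(0)}\le\ep_0$ (see Section \ref{'norms}), whose norms are, by definition, of the form $r_{(0)}^{\frac{s+1}{2}-\frac{2}{p}}|(r_{(0)}\nab_{(0)})^{\le 1}\Gac'|_{p,S'}\le\ep_0$. Applying the Sobolev inequality (Proposition \ref{standardsobolev}) converts this into the pointwise bound $r_{(0)}^{\frac{s+1}{2}}|\Gag'|,\ r_{(0)}^{\frac{s+1}{2}}|\Gab'|\les\ep_0\le\ep$, and the $L^p$--bound on $\Gaw^{(1)}$ in the statement is just the $\OO_{(0)}$ norm itself. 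Since $r\sim r_{(0)}$ by $\osc(r)\le\ep$, this yields the required bounds on $\Gag',\Gab'$.

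Next, I would estimate the unprimed Ricci coefficients $\Gag,\Gab$ on $\kk_{(0)}$. The bootstrap assumption $\OO\le\ep$ together with Lemma \ref{estGagba} gives the decay $r^2|u|^{\frac{s-3}{2}}|\Gag|\les\ep$ and $r|u|^{\frac{s-1}{2}}|\Gab|\les\ep$, and similarly for the $L^p$ norms of the first derivatives. On the initial layer region we have $|u|\sim r$, hence both weights collapse to $r^{\frac{s+1}{2}}$ after multiplication, producing the desired estimate $r^{\frac{s+1}{2}}|\Gaw|\les\ep$ and its $L^p$ analogue for $\Gaw^{(1)}$.

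Finally, for the change-of-frame quantities, $f$ is controlled directly by $\osc(f)\le\ep$, which reads $r^{\frac{s-1}{2}}|\dk_{(0)}^{\le 1}f|\les\ep$; since $\frac{s-1}{2}=\frac{s+1}{2}-1$, this is exactly the statement $f\in r\Gaw$. In the same way, $\osc(u)$ and $\osc(r)$ are normalized so that $u'-u$ and $r'-r$ fall in $r^2\Gaw$. The only subtle point is $\la$: I would use $\la=\Om/\Om'$ (Lemma \ref{changelemma}), write $\la-1=(\Om-\Om')/\Om'$, and bound $\Om-\Om'$ by $\Omc-\Omc'+(\Om_K-\Om_K')$, where the first piece is controlled by $\OO_0(\Omc)$ (unprimed) and $\OO_{(0)}$ (primed), and the second by Lemma \ref{estKK'} applied to $\Om_K=\O^1_1+\frac{1}{2}$; since $\Om',\Om\sim\f12$ stay bounded below, division is harmless. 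This gives $\la-1\in r\Gaw$, which is the statement up to the fact that the paper's convention absorbs the constant part. The principal technical point throughout is the consistent identification of $r$ with $r_{(0)}$, which is justified once and for all by $\osc(r)\les\ep$; no other obstacle arises, since every bound reduces to the $\OO$, $\OO_{(0)}$, or $\osc$ norm after this identification.
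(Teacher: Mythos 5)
Your argument is correct and follows essentially the same route as the paper, whose proof simply invokes Lemma \ref{estGagba} together with the assumptions $\OO_{(0)}\leq\ep_0$, $\OO\leq\ep$, $\osc\leq\ep$ of \eqref{assM31}; your write-up merely makes explicit the key identification $|u|\sim r\sim r_{(0)}$ in the initial layer and the reading of $\la$ (i.e.\ $\la-1$) via $\Om-\Om'$, both of which are implicit in the paper's one-line proof.
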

\begin{proof}
It follows directly from Lemma \ref{estGagba} and the assumption \eqref{assM31}.
\end{proof}
\subsection{Estimates for \texorpdfstring{$\OO_{[1]}$}{}, \texorpdfstring{$\OO_\ga$}{} and \texorpdfstring{$\osc$}{} }\label{Ricci01}
In the remainder of Section \ref{Ricciestimates}, we always assume $p\in [2,4]$.
\subsubsection{Estimate for \texorpdfstring{$\okk_{0,1}(\Omomc)$}{}}
\begin{proposition}\label{estOOb0}
We have the following estimate for $q=0,1$:
\begin{equation}\label{OObep0}
|r^{\frac{s+1}{2}-\frac{2}{p}}(r\nab)^q\Omomc|_{p,S_0(\ub)}\les\ep_0,
\end{equation}
where we recall that $S_0(\ub)$ is the unique leaf of $\Cb_\ub$ which is located in the future of $\Si_0$ and touches $\Si_0$.
\end{proposition}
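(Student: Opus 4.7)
The sphere $S_0(\ub)$ lies both on $\Cb_\ub$ (as a leaf of the bootstrap $(u,\ub)$--foliation) and inside the initial layer region $\kk_{(0)}$, where the primed foliation $(u',\ub)$ is defined and the primed linearization $(\Omomc)':=\Om'\om'-(\Om\om)'_K$ is controlled by $\OO_{(0)}\leq\ep_0$. The plan is to decompose
\[
\Omomc=\underbrace{(\Om\om-\Om'\om')}_{T_1}+\underbrace{(\Omomc)'}_{T_2}+\underbrace{((\Om\om)'_K-(\Om\om)_K)}_{T_3}
\]
and to bound the three pieces separately. Since the induced connection on the $2$-sphere $S_0(\ub)$ is intrinsic to the induced metric, and both foliations share this sphere, $\nab=\nab'$ there, and the same decomposition will drive the $q=1$ case after applying $r\nab$.

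First I would analyze $T_1$. Lemma~\ref{changelemma} gives $\la=\Om/\Om'$, hence $\Om/\la=\Om'$, so the outgoing transformation formula of Proposition~\ref{transformation},
\[
\la^{-1}\om'=\om+\lot,\qquad \lot=\Gab\c f+\O_2^1\c f,
\]
multiplied by $\Om$ produces the clean identity $\Om\om-\Om'\om'=-\Om\c\lot$. Using Lemma~\ref{estGagbO} (which furnishes $f\in r\Gaw$ and $|\Gab|\les\ep\,r^{-(s+1)/2}$ on $S_0(\ub)$, where $|u|\sim r$), one finds $|T_1|\les\ep^2 r^{-s}+M\ep\,r^{-(s+3)/2}$, and hence $|r^{(s+1)/2-2/p}T_1|_{p,S_0(\ub)}\les\ep^2+\EE_0\ep\leq\ep_0$ by the smallness relations $\ep^2\ll\ep_0$ and $\EE_0\ep=\EE_0^{2/3}\ep_0\leq\ep_0$. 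The term $T_2$ is by definition the primed linearization, so it is directly controlled by the $\OOO_0$--component of $\OO_{(0)}$. For $T_3$, since $(\Om\om)_{Kerr}=\O_2^1$, Lemma~\ref{estKK'} yields $|T_3|\les\ep_0\,r^{-(s+1)/2}$, which suffices. The case $q=1$ then follows the same lines after commuting $r\nab$ with the decomposition: derivatives of $T_2$ are absorbed by the $\OOO_1$--component of $\OO_{(0)}$; derivatives of $T_3$ are produced by iterating the Taylor argument underlying Lemma~\ref{estKK'}, exploiting $\pr_K(\Om\om)_K=\O_3^1$; and derivatives of $T_1$ generate the quantities $\dk_{(0)}f$ (controlled by $\osc(f)\leq\ep$) and $\Gab^{(1)}$ (controlled by Lemma~\ref{estGagbO}), each coming with an extra factor of $r^{-1}$ as needed.

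The only delicate point is the algebraic structure of $T_1$: the cancellation that reduces $\Om\om-\Om'\om'$ to $-\Om\c\lot$ relies crucially on the identity $\la=\Om/\Om'$, and the required smallness of the leading nonlinear term $\O_2^1\c f$ is obtained only because $f$ itself carries an intrinsic factor of $r$ (namely $f\in r\Gaw$). Once this structural point is recognized, the remaining bookkeeping is a routine application of Lemmas~\ref{estKK'} and~\ref{estGagbO} together with the initial data smallness.
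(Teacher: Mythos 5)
Your route is essentially the paper's own proof: the same three-term decomposition of $\Omomc$, the same use of Proposition \ref{transformation} together with $\la=\Om/\Om'$ to reduce $\Om\om-\Om'\om'$ to $f\c\Gaw+\O_2^1\c f$, Lemma \ref{estKK'} for the difference of the two Kerr pullbacks, and the initial-layer smallness for the primed linearization; the $q=0$ bookkeeping and the smallness bookkeeping $\ep^2+\EE_0\ep\les\ep_0$ match the paper.

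There is, however, one incorrect assertion in your justification of the $q=1$ case. The sphere $S_0(\ub)$ is a leaf of the bootstrap foliation $(u,\ub)$ only: it touches $\Si_0$ but is \emph{not} a leaf of the initial-layer foliation $(u',\ub)$ (the two foliations share the ingoing cones $\Cb_\ub$, not their sphere foliations, since $u\ne u'$ in general), so it is not true that $\nab=\nab'$ there. The correct relation is the inverse frame transformation \eqref{change'}, $e_A=e'_A-\frac{\la}{2}f_Ae'_3$, so when you commute $r\nab$ with $T_2$ and $T_3$ you pick up an extra term of the form $r f_A\,e'_3(\cdot)$ which is \emph{not} directly controlled by the $\OOO_1$--component of $\OO_{(0)}$ (those norms contain only angular derivatives on the primed spheres). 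The gap is repairable, and the repair is exactly what the paper does: since $f\in r\Gaw$, the correction $\frac{\la}{2}f_Ae'_3(\Om'\om')$ is of the schematic size $\Gaw\c\Gaw^{(1)}+\O_2^1\c\Gaw^{(1)}$, hence lower order, once $e'_3(\Om'\om')$ is bounded using the background decay and the primed structure equations; the paper's computation $e_A(\Om\om)=e'_A(\Om'\om')-\frac{\la}{2}f_Ae'_3(\Om'\om')+\dots$ makes this explicit. A second, smaller point: the norms $\OOO_q$ are $L^p$ norms on the \emph{primed} spheres, so to convert them into bounds on the different sphere $S_0(\ub)$ you should pass through sup-norm bounds via Proposition \ref{standardsobolev}; this is why $\OO_{(0)}$ carries two angular derivatives and why the paper invokes $|e'_A(\widecheck{\Om'\om'})|_{\infty,S_0(\ub)}$ in the $q=1$ step.
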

\begin{proof}
We recall from Proposition \ref{transformation} and $\la=\frac{\Om}{\Om'}$ that
\begin{align}\label{changeOmom}
    \Om'\om'=\Om\om+f\c\Gaw+\O_2^1\c f.
\end{align}
Notice from Proposition \ref{decayGamma} and Lemma \ref{estKK'} that
\begin{align*}
    |\Om_K\om_K-\Om_K'\om_K'|\les\frac{\ep_0}{r^\frac{s+1}{2}}.
\end{align*}
Hence, we have
\begin{align*}
    |\widecheck{\Om'\om'}-\widecheck{\Om\om}|&\les|\Om'\om'-\Om\om|+|\Om'_K\om'_K-\Om_K\om_K|\\
    &\les|\Gaw\c f|+|\O_2^1\c f|+\frac{\ep_0}{r^\frac{s+1}{2}}\\
    &\les\frac{\ep_0}{r^\frac{s+1}{2}}.
\end{align*}
Thus, we infer
\begin{align*}
    |r^{\frac{s+1}{2}-\frac{2}{p}}\widecheck{\Om\om}|_{p,S_0(\ub)}\les |r^{\frac{s+1}{2}-\frac{2}{p}}\widecheck{\Om'\om'}|_{p,S_0(\ub)}+\ep_0\les\ep_0.
\end{align*}
We have from \eqref{changeOmom} and \eqref{change'}
\begin{align*}
    e_A(\Om\om)&=e'_A(\Om'\om')-\frac{\la}{2}f_Ae_3'(\Om'\om')+\Gaw^{(1)}\c f+\Gaw\c f^{(1)}+\O_2^1\c f^{(1)}\\
    &=e'_A(\Om'\om')+\Gaw\c\Gaw^{(1)}+\O_2^1\c\Gaw^{(1)}.
\end{align*}
Notice that we have from Lemma \ref{estKK'}
\begin{equation*}
    |e_A(\Om_K\om_K)-e_A'(\Om'_K\om'_K)|\les\frac{\ep_0}{r^\frac{s+3}{2}}. 
\end{equation*}
Hence, we have
\begin{align*}
    |r^{-\frac{2}{p}}e_A(\Omomc)|_{p,S_0(\ub)}\les |e'_A(\widecheck{\Om'\om'})|_{\infty,S_0(\ub)}+\frac{M}{r^2}\frac{\ep}{r^\frac{s+1}{2}}+\frac{\ep^2}{r^{s+1}}+\frac{\ep_0}{r^\frac{s+3}{2}}\les\frac{\ep_0}{r^\frac{s+3}{2}},
\end{align*}
which implies
\begin{equation*}
    |r^{\frac{s+3}{2}-\frac{2}{p}}e_A(\Omomc)|_{p,S_0(\ub)}\les\ep_0.
\end{equation*}
This concludes the proof of Proposition \ref{estOOb0}.
\end{proof}
\subsubsection{Estimates for \texorpdfstring{$\OO_{0}(\widecheck{\Om\om})$}{} and \texorpdfstring{$\OO_{0}(\widecheck{\Om\omb})$}{}}\label{omcombc}
\begin{proposition}\label{propomcombc}
We have the following estimates:
\begin{align}
    |r^{2+\frac{s-3}{6}-\frac{2}{p}}|u|^{\frac{s-3}{3}}\widecheck{\Om\om}|_{p,S}&\les\ep_0,\label{estomc}\\
    |r^{2-\frac{2}{p}}|u|^{\frac{s-3}{2}}\widecheck{\Om\omb}|_{p,S}&\les\ep_0.\label{estombc}
\end{align}
\end{proposition}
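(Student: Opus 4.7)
The plan is to integrate the two transport equations of Proposition \ref{nullstructure},
\[
\Om\nab_3(\Omomc)=\tfrac12\widecheck{\Om^2\rho}+\fl[\nab_3\omc]+\err[\nab_3\omc],\qquad
\Om\nab_4(\Omombc)=\tfrac12\widecheck{\Om^2\rho}+\fl[\nab_4\ombc]+\err[\nab_4\ombc],
\]
and to apply Lemma \ref{evolution} with $\la_0=0$ (so $\la_1=-2/p$). Both equations have the same structure: a vanishing $\trchb/\trch$ coefficient and a curvature source $\tfrac12\widecheck{\Om^2\rho}$, with linear and nonlinear corrections of size $\Gag\c\O_2^1$ and $\Gag\c\Gag$ respectively, controlled by Lemma \ref{estGagba} via the bootstrap $\OO\le\ep$. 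The curvature source is controlled on each sphere by Theorem M1 through the norm $\RRb_0^S[\rhoc,\sic]$, yielding $|\widecheck{\Om^2\rho}|_{p,S}\les\ep_0\,r^{2/p-3}|u|^{-(s-3)/2}$.

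For \eqref{estombc} I apply the outgoing evolution lemma along $C_u$, integrating forward in $\ub'$ from the sphere $S(u,\ub^0(u))\subset\Sigma_0$ where $C_u$ touches the initial slice, to $S(u,\ub)$. Initial data on this sphere, obtained from $\OO_{(0)}\le\ep_0$ combined with the foliation comparison provided by $\osc\le\ep$ and the null frame transformation of Lemma \ref{changelemma}, give $|\Omombc|_{p,S(u,\ub^0(u))}\les\ep_0\,r^{2/p-(s+1)/2}$, which equals $\ep_0\,r^{2/p-2}|u|^{-(s-3)/2}$ since $r\sim|u|$ on $\Sigma_0$. Integrating the curvature term over $\ub'\in[\ub^0(u),\ub]$ along $C_u$ (where $r(u,\ub')\sim\ub'$ in the external region) and absorbing the $\fl$ and $\err$ contributions by smallness yields \eqref{estombc}.

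For \eqref{estomc} I apply the ingoing evolution lemma along $\Cb_\ub$, integrating forward in $u'$ from $S_0(\ub)$ to $S(u,\ub)$, with initial data provided directly by Proposition \ref{estOOb0}. The initial term then contributes $\ep_0\,r^{2/p-(s+1)/2}$ at $S(u,\ub)$ (using $r\sim\ub$ in the external region). The curvature term contributes, after integration $\int_{|u|}^{\ub}v^{-(s-3)/2}\,dv$, a term of order $\ep_0\,r^{2/p-3}\bigl[\ub^{(5-s)/2}+|u|^{(5-s)/2}\bigr]$ for $s\neq 5$, with a logarithmic factor at $s=5$. I then observe that each of these contributions $\ep_0\,r^{-a_i+2/p}|u|^{-b_i}$ is dominated by $\ep_0\,r^{-a+2/p}|u|^{-b}$ with $(a,b)=(2+(s-3)/6,(s-3)/3)$, because in each case the inequality $r^{a_i-a}|u|^{b_i-b}\ge 1$ reduces to $r\ge|u|$, valid in the external region (with the standard bound $\log x\le C_\ep x^\ep$ treating the $s=5$ case). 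This yields \eqref{estomc}.

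The main obstacle lies in obtaining the correct initial value of $\Omombc$ on $\Sigma_0$, since $\OO_{(0)}\le\ep_0$ directly controls only $\widecheck{\Om'\omb'}$ in the initial-layer foliation $(u_{(0)},\ub)$, whereas we need the bound in the $(u,\ub)$ foliation. This transfer is carried out via Lemma \ref{changelemma} and the bootstrap control $\osc\le\ep$, introducing transformation error terms of schematic form $\Gaw\c f+\O_2^1\c f$ that must be carefully absorbed into the $\ep_0$ estimate. The peculiar weight split $r^{2+(s-3)/6}|u|^{(s-3)/3}$ appearing in \eqref{estomc} is precisely tailored so that every contribution produced by the transport integration along $\Cb_\ub$---each of which has total weight $(s+1)/2$ but with different $r$-$|u|$ balance---can be uniformly absorbed using the single inequality $r\ge|u|$.
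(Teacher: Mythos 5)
Your treatment of \eqref{estomc} is essentially the paper's: integrate the $\nab_3$ equation for $\widecheck{\Om\om}$ forward in $u$ from $S_0(\ub)$ with data from Proposition \ref{estOOb0}, and absorb the various contributions of total weight $\frac{s+1}{2}$ into the split $r^{2+\frac{s-3}{6}}|u|^{\frac{s-3}{3}}$ using $r\geq |u|$. That part is correct.

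The treatment of \eqref{estombc} has a genuine gap: you integrate the $\nab_4$ equation for $\widecheck{\Om\omb}$ \emph{forward} in $\ub$ from $\Si_0$, and this cannot produce the stated $r^{-2}|u|^{-\frac{s-3}{2}}$ decay. Since the equation $\nab_4\widecheck{\Om\omb}=\O_0^0\c\rhoc+\dots$ has no damping term $\trch\,\widecheck{\Om\omb}$ (i.e.\ $\la_0=0$, $\la_1=-\frac2p$), the only quantity that propagates along $C_u$ is the area-normalized norm $|r^{-2/p}\widecheck{\Om\omb}|_{p,S}$; any extra $r$-weight such as $r^{2}$ turns into an anti-damped equation whose forward Gronwall factor is $(r/r_0)^2$ and destroys the estimate. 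Concretely, your initial term at $C_u\cap\Si_0$ (where $r\sim|u|$) contributes $\ep_0|u|^{-\frac{s+1}{2}}$ to $|r^{-2/p}\widecheck{\Om\omb}|_{p,S}$, and the source integral $\int_{\ub^0(u)}^{\ub} r'^{-3}|u|^{-\frac{s-3}{2}}d\ub'$ is dominated by its lower endpoint and also gives only $\ep_0|u|^{-\frac{s+1}{2}}$. Hence your argument yields $|\widecheck{\Om\omb}|_{p,S}\les\ep_0\,r^{2/p}|u|^{-\frac{s+1}{2}}$, which is weaker than \eqref{estombc} by the unbounded factor $(r/|u|)^2$, and is in fact not even enough to recover the bootstrap norm $\OO_0(\widecheck{\Om\omb})$ (which needs $r^{-1}|u|^{-\frac{s-1}{2}}$) or the later uses of \eqref{estombc} in Propositions \ref{propnabtrchtrchbc} and \ref{propOmtrch}. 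The paper instead integrates \emph{backwards} from the last slice $\Cb_*$, exactly as Lemma \ref{evolution}(1) is stated: there $\Omc=0$ and $\ombc=0$ for the geodesic-type foliation (Lemma \ref{ombzero}), so the data term on $S_*(u)$ is harmless, and the source integral $\int_{\ub}^{\ub_*}r'^{-3}d\ub'\sim r^{-2}$ is evaluated at the near endpoint, which is precisely what supplies the $r^{-2}$ weight at $S(u,\ub)$. Your auxiliary step of transferring initial data for $\widecheck{\Om\omb}$ to $\Si_0$ via Lemma \ref{changelemma} is therefore not only delicate (Proposition \ref{transformation} does not even record the transformation of $\omb$) but unnecessary once the integration is run from $\Cb_*$.
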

\begin{proof}
We recall from Proposition \ref{nullstructure}
\begin{align*}
\nab_4(\widecheck{\Om\omb})=&\O_0^0\c\rhoc+\Gag\c\O_2^1+\Gag\c\Gag+r\Gag\c\rhoc.
\end{align*}
Applying Lemma \ref{evolution}, we infer
\begin{align*}
    |r^{-\frac{2}{p}}\widecheck{\Om\omb}|_{p,S}&\les|r^{-\frac{2}{p}}\widecheck{\Om\omb}|_{p,S_*(u)}+\int_{\ub}^{\ub_*}\left(|r^{-\frac{2}{p}}\rhoc|_{p,S}+\frac{M}{r^2}|r^{-\frac{2}{p}}\Gag|_{p,S}+|r^{-\frac{2}{p}}\Gag\c\Gag|_{p,S}\right) d\ub \\
    &\les\frac{\ep_0}{r^2|u|^{\frac{s-3}{2}}}+\int_{\ub}^{\ub_*}\left(\frac{\ep_0}{r^3|u|^{\frac{s-3}{2}}}+\frac{\ep M}{r^4|u|^{\frac{s-3}{2}}}+\frac{\ep^2}{r^4|u|^{s-3}}\right)d\ub\\
    &\les\frac{\ep_0}{r^2|u|^{\frac{s-3}{2}}},
\end{align*}
which implies \eqref{estombc}. Next, we recall from Proposition \ref{nullstructure}
\begin{align*}
\nab_3(\widecheck{\Om\om})=&\O_0^0\c\rhoc+\Gag\c\O_2^1+\Gag\c\Gag+r\Gag\c\rhoc.
\end{align*}
Applying Lemma \ref{evolution} and Proposition \ref{estOOb0}, we deduce
\begin{align*}
    |r^{-\frac{2}{p}}\widecheck{\Om\om}|_{p,S}&\les|r^{-\frac{2}{p}}\widecheck{\Om\om}|_{p,S_0(\ub)}+\int_{u_0(\ub)}^{u}\left(|r^{-\frac{2}{p}}\rhoc|_{p,S}+\frac{M}{r^2}|r^{-\frac{2}{p}}\Gag|_{p,S}+|r^{-\frac{2}{p}}\Gag\c\Gag|_{p,S}\right)du \\
    &\les\frac{\ep_0}{r^{\frac{s+1}{2}}}+\int_{u_0(\ub)}^{u}\left(\frac{\ep_0}{r^3|u|^{\frac{s-3}{2}}}+\frac{\ep M}{r^3|u|^{\frac{s-1}{2}}}+\frac{\ep^2}{r^4|u|^{s-3}}\right)du\\
    &\les\frac{\ep_0}{r^{2+\frac{s-3}{6}}|u|^\frac{s-3}{3}},
\end{align*}
which implies \eqref{estomc}. This concludes the proof of Proposition \ref{propomcombc}.
\end{proof}
\subsubsection{Estimates for \texorpdfstring{$\OO_1(\widecheck{\Om\om})$}{} and \texorpdfstring{$\OO_1(\widecheck{\Om\omb})$}{}}\label{nabomcombc}
\begin{proposition}\label{propnabomcombc}
We have the following estimates:
\begin{align}
    |r^{3+\frac{s-3}{6}-\frac{2}{p}}|u|^{\frac{s-3}{3}}\nab\widecheck{\Om\om}|_{p,S}(u,\ub)&\les\ep_0,\label{estomc1}\\
    |r^{2-\frac{2}{p}}|u|^{\frac{s-1}{2}}\nab\widecheck{\Om\omb}|_{p,S}(u,\ub)&\les\ep_0.\label{estombc1}
\end{align}
\end{proposition}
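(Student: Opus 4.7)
The plan mirrors Proposition \ref{propomcombc} but at one higher order of angular derivative. First I would commute the transport equations of Proposition \ref{nullstructure} for $\widecheck{\Om\omb}$ and $\widecheck{\Om\om}$ with $\nab$. Using the $[\nab,\Om\nab_4]$ and $[\nab,\Om\nab_3]$ identities from Corollary \ref{commutation}, this produces transport equations of the form
\begin{align*}
\nab_4(\nab\widecheck{\Om\omb}) + \tfrac{1}{2}\trch\,\nab\widecheck{\Om\omb} &= \tfrac{1}{2\Om}\nab\widecheck{\Om^2\rho} + \nab\fl[\nab_4\ombc] + \nab\err[\nab_4\ombc] + \Ek_1,\\
\nab_3(\nab\widecheck{\Om\om}) + \tfrac{1}{2}\trchb\,\nab\widecheck{\Om\om} &= \tfrac{1}{2\Om}\nab\widecheck{\Om^2\rho} + \nab\fl[\nab_3\omc] + \nab\err[\nab_3\omc] + \Ek_2,
\end{align*}
where the commutator remainders $\Ek_1,\Ek_2$ schematically take the form $\Gag\c\nab\widecheck{\Om\omb} + \O_3^2\c\nab\widecheck{\Om\omb} + r^{-1}\Gag^{(1)}\widecheck{\Om\omb} + \O_4^2\widecheck{\Om\omb}$ (with $\Gab$ in place of $\Gag$ for the $\om$-equation). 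All these terms are acceptable thanks to Lemma \ref{estGagba}, the $q=0$ bounds of Proposition \ref{propomcombc}, and the bootstrap assumption $\OO\leq\ep$.

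Next I would apply Lemma \ref{evolution} with $\la_0=\tfrac{1}{2}$, $\la_1=1-\tfrac{2}{p}$. For $\nab\widecheck{\Om\omb}$ I integrate along $C_u$ backwards from $\Cb_*$; since $\Omc=0$ on $\Cb_*$ (by the geodesic-type foliation) and $\D_3$ is tangent to $\Cb_*$, one has $\ombc=0$ on $\Cb_*$, whence $\widecheck{\Om\omb}=0$ and consequently $\nab\widecheck{\Om\omb}=0$ on $\Cb_*$, so the initial-data term vanishes. For $\nab\widecheck{\Om\om}$ I integrate along $\Cb_\ub$ forward from $S_0(\ub)\subset\Sigma_0\setminus K$, with the initial-data term controlled by $\okk_1(\Omomc)\les\ep_0$ from Proposition \ref{estOOb0}. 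The $\nab\fl$ and $\nab\err$ contributions are treated term-by-term exactly as in Proposition \ref{propomcombc}, the extra angular derivative being absorbed by the $\Gag^{(1)}$ and $\Gab^{(1)}$ sphere $L^p$ bounds of Lemma \ref{estGagba}.

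The main obstacle is the leading source $\Om_K^2\nab\rhoc$ contained in $\nab\widecheck{\Om^2\rho}$. The curvature norms only yield $L^2$-flux control of $\nab\rhoc$, namely $\|r^2|u|^{(s-2)/2}\nab\rhoc\|_{2,\ucuv}\les\ep_0$ from $\RRb_1[(\rhoc,\sic)]$ (and an analogous bound on $C_u$ via $\RR_1[(\rhoc,\sic)]$), but no direct sphere $L^p$ bound. I would handle the bulk integrals $\int|r^a\nab\rhoc|_{p,S}\,d\ub$ and $\int|r^a\nab\rhoc|_{p,S}\,du$ via Cauchy--Schwarz in the null variable, trading them against the available $L^2$ fluxes at the cost of a half-power of $\ub$ or $|u|$, supplemented for $p=4$ by the Sobolev trace inequality of Proposition \ref{sobolevkn}. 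The improved $|u|^{(s-1)/2}$-weight in the target for $\nab\widecheck{\Om\omb}$ and the specific $r^{3+(s-3)/6}|u|^{(s-3)/3}$-weight for $\nab\widecheck{\Om\om}$ are chosen precisely so that Cauchy--Schwarz closes, which is the crux of the argument.
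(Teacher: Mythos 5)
Your reduction to transport equations for $\nab\widecheck{\Om\omb}$, $\nab\widecheck{\Om\om}$ and your identification of the obstacle (no direct sphere $L^p$ control of $\nab\rhoc$) are correct, and the Cauchy--Schwarz trade against the fluxes $\RR_1[(\rhoc,\sic)]$, $\RRb_1[(\rhoc,\sic)]$ can indeed close the $p=2$ case. But the proposition (and the norm $\OO_1$, cf.\ the convention $p\in[2,4]$ at the start of Section \ref{Ricci01}) requires the full range $p\in[2,4]$, and there your argument has a genuine gap: Cauchy--Schwarz in the null variable only converts $\int |\nab\rhoc|_{2,S}$ into an $L^2$ flux, while for $p=4$ you would need $|\nab\rhoc|_{4,S}$ on each sphere. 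The Sobolev trace inequality of Proposition \ref{sobolevkn} cannot supply this: applied to $F=\nab\rhoc$ it requires $\Vert r\nab^2\rhoc\Vert_{2,\cuv}$ and $\Vert r\nab_4\nab\rhoc\Vert_{2,\cuv}$, i.e.\ two derivatives of curvature, whereas the norms $\RR$, $\RRb$ only control $q\leq 1$ derivatives (and applying it instead to $\nab\widecheck{\Om\omb}$ itself would need two derivatives of the Ricci coefficient, also unavailable). This derivative loss is precisely why the $p=4$ case cannot be reached by integrating the commuted transport equation directly, and the $p=4$ estimate is needed downstream (e.g.\ for the $L^\infty$ bounds via Proposition \ref{standardsobolev} and for the nonlinear terms).

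The paper's proof avoids the curvature-derivative source altogether by a renormalization: one introduces $\ombc^\dagger$ (resp.\ $\omc^\dagger$) solving $\nab_4\ombc^\dagger=\f12\Om\sic$ with $\ombc^\dagger=0$ on $\Cb_*$ (resp.\ $\nab_3\omc^\dagger=\f12\Om\sic$ with $\omc^\dagger=0$ on $S_0(\ub)$), and combines the commuted $\om$-equation with the Bianchi equation for $\bbc$ (resp.\ $\bc$) so that the $\nab\rhoc$ and ${^*\nab}\sic$ terms cancel in $\vkab=-r\nab\widecheck{\Om\omb}+r{^*\nab}\ombc^\dagger-\f12 r\bbc$ (resp.\ $\varkappa=r\nab\widecheck{\Om\om}+r{^*\nab}\omc^\dagger-\f12 r\bc$). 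The transport equation for $\vkab$ then has sources controlled in sphere $L^p$ by Lemma \ref{estGagba}, the evolution lemma gives $L^p$ bounds on $\vkab$ for all $p\in[2,4]$, and the Hodge system $d_1^*(\widecheck{\Om\omb},\ombc^\dagger)=r^{-1}\vkab+\f12\bbc$ together with the elliptic estimates of Proposition \ref{ellipticest} (plus a Gronwall argument to absorb the $\Gag\c r\nab\ombc^\dagger$ term) yields the stated estimates, with $\bbc$, $\bc$ on spheres controlled by $\RRb_0^S[\bbc]$, $\RR_0^S[\bc]$. Without this renormalization-plus-elliptic step (or an equivalent mechanism recovering sphere $L^4$ control of the angular derivative), your argument does not prove the proposition as stated.
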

\begin{proof}
We recall from Propositions \ref{nullstructure}
\begin{align*}
    \Om\nab_4(\widecheck{\Om\omb})=\f12\widecheck{\Om^2\rho}+\Gag\c \O_2^1+\Gag\c\Gag.
\end{align*}
Differentiating it by $r\nab$ and applying Corollary \ref{commutation}, we infer
\begin{align}\label{nab4ombc}
    \nab_4(r\nab\widecheck{\Om\omb})=\frac{\Om}{2}r\nab\rhoc+\Gag^{(1)}\c \O_2^1+\Gag\c\Gag^{(1)}.
\end{align}
Since we cannot control $\nab\rhoc$ in $L^p$ directly, we use a renormalization argument. We recall from Proposition \ref{Bianchieq}
\begin{align*}
    \nab_4\bbc+\trch\,\bbc=-\nab\rhoc+{^*\nab}\sic+\Gab^{(1)}\c \O_3^1+r^{-1}\Gag\c\Gab^{(1)},
\end{align*}
which implies from \eqref{e4r}
\begin{align}\label{nab4rbbc}
    \nab_4(\Om r\bbc)+\f12\trch (\Om r\bbc)=-\Om r\nab\rhoc+\Om r{^*\nab}\sic+\Gab^{(1)}\c\O_2^1+\Gag\c\Gab^{(1)}.
\end{align}
We define $\ombc^\dagger$ by
\begin{align*}
    \nabla_4\ombc^\dagger=\f12\Om\sic,\qquad\quad \ombc^\dagger=0\quad \mbox{ on }\Cb_*.
\end{align*}
Applying Proposition \ref{commkn}, we deduce
\begin{align}\label{nab4ombcdagger}
    \Om\nab_4({r{^*\nab}\ombc^\dagger})&=r{^*\nab}(\Om\nab_4\ombc^\dagger)+[\Om\nab_4,r{^*\nab}]\ombc^\dagger=\f12\Om r{^*\nabla}\sic+\Gag\c r\nab\ombc^\dagger+\Gag\c\Gag^{(1)}.
\end{align}
We introduce a new quantity
\begin{equation*}
    \vkab:=-r\nabla\widecheck{\Om\omb}+r{^*\nabla\ombc^\dagger}-\f12 r\bbc.
\end{equation*}
Then, we obtain from \eqref{nab4ombc}, \eqref{nab4rbbc} and \eqref{nab4ombcdagger}
\begin{align}
\begin{split}\label{eqvarkappa}
        \nab_4\underline{\varkappa}&=-\nab_4(r\nab\widecheck{\Om\omb})+\nab_4(r{^*\nab\ombc^\dagger})-\f12\nab_4 (r\bbc)\\
        &=\Gab^{(1)}\c O\left(\frac{M}{r^2}\right)+\Gag\c\Gab^{(1)}+\Gag\c r\nab\ombc^\dagger.    
\end{split}
\end{align}
Applying Lemma \ref{evolution}, we obtain
\begin{align*}
|r^{-\frac{2}{p}}\underline{\varkappa}|_{p,S}&\les|r^{-\frac{2}{p}}\underline{\varkappa}|_{p,S_*(u)}+\int_{\ub}^{\ub_*}\left(|r^{-\frac{2}{p}}\Gag\c\Gab^{(1)}|_{p,S}+\frac{M}{r^2}|r^{-\frac{2}{p}}\Gab^{(1)}|_{p,S}+|r^{-\frac{2}{p}}\Gag\c r\nab\ombc^\dagger|_{p,S}\right)d\ub\\
&\les\frac{\ep_0}{r|u|^\frac{s-1}{2}}+\int_{\ub}^{\ub_*}\left(\frac{\ep^2}{r^3|u|^{s-2}}+\frac{M}{r^3}\frac{\ep}{|u|^\frac{s-1}{2}}+\frac{\ep}{r^2|u|^{\frac{s-3}{2}}}|r^{1-\frac{2}{p}}\nab\ombc^\dagger|_{p,S}\right)d\ub\\
&\les\frac{\ep_0}{r|u|^\frac{s-1}{2}}+\int_{\ub}^{\ub_*}\left(\frac{\ep}{r^2|u|^{\frac{s-3}{2}}}|r^{1-\frac{2}{p}}\nab\ombc^\dagger|_{p,S}\right)d\ub.
\end{align*}
Denoting $\langle\omb\rangle:=(\widecheck{\Om\omb},\ombc^\dagger)$, then\footnote{Recall that $d_1^*(f,g)=-\nabla f+{^*\nab}g.$}
\begin{align}
    \begin{split}
    d_1^*\langle\omb\rangle&=r^{-1}\underline{\varkappa}+\frac{1}{2}\bbc.\label{hoho}
    \end{split}
\end{align}
Applying Proposition \ref{ellipticest} to \eqref{hoho}, we obtain
\begin{align*}
|r^{1-\frac{2}{p}}\nab\Omombc|_{p,S}+|r^{1-\frac{2}{p}}\nab\ombc^\dagger|_{p,S}&\les|r^{-\frac{2}{p}}\underline{\varkappa}|_{p,S}+|r^{1-\frac{2}{p}}\bb|_{p,S}\\
&\les\frac{\ep_0}{r|u|^{\frac{s-1}{2}}}+\int_{\ub}^{\ub_*}\left(\frac{\ep}{r^2|u|^{\frac{s-3}{2}}}|r^{1-\frac{2}{p}}\nab\ombc^\dagger|_{p,S}\right)d\ub.
\end{align*}
By Gronwall inequality, we deduce
\begin{align*}
    |r^{1-\frac{2}{p}}\nab\Omombc|_{p,S}+|r^{1-\frac{2}{p}}\nab\ombc^\dagger|_{p,S}\les\frac{\ep_0}{r|u|^{\frac{s-1}{2}}},
\end{align*}
which implies \eqref{estombc1}. \\ \\
Next, we recall from Proposition \ref{nullstructure}
\begin{align*}
    \Om\nab_3(\Omomc)=\frac{1}{2}\widecheck{\Om^2\rho}+\Gag\c\O_2^1+\Gag\c\Gag.
\end{align*}
Differentiating it by $r\nab$ and applying Corollary \ref{commutation}, we infer
\begin{align}\label{nab3omc}
    \nab_3(r\nab\widecheck{\Om\om})=\frac{\Om}{2}r\nab\rhoc+\Gag^{(1)}\c \O_2^1+\Gag\c\Gag^{(1)}.
\end{align}
We recall from Proposition \ref{Bianchieq}
\begin{align*}
    \nab_3\bc+\trchb\,\bc=\nab\rhoc+{^*\nab}\sic+\Gag^{(1)}\c\O_3^1+r^{-1}\Gag\c\Gab^{(1)},
\end{align*}
which implies from \eqref{e3r}
\begin{align}\label{nab3Omrb}
    \nab_3(\Om r\bc)+\f12\trchb (\Om r\bc)=\Om r\nab\rhoc+\Om r{^*\nab}\sic+\Gag^{(1)}\c\O_2^1+\Gag\c\Gab^{(1)}.
\end{align}
We define $\omc^\dagger$ by
\begin{align*}
    \nab_3\omc^\dagger=\f12\Om\sic,\qquad\quad\omc^\dagger=0\quad \mbox{ on }S_0(\ub)\;\;\; \forall \ub.
\end{align*}
Applying Proposition \ref{commkn}, we deduce
\begin{align}\label{nab3omcdagger}
    \nab_3({r{^*\nabla}\omc^\dagger})=\f12 r{^*\nabla}\sic+\Gab\c r\nab\omc^\dagger+\Gag\c\Gab^{(1)}.
\end{align}
Combining \eqref{nab3omc}, \eqref{nab3Omrb} and \eqref{nab3omcdagger}, we obtain
\begin{align}
\begin{split}\label{eqvarka}
    \nab_3\varkappa=\Gab^{(1)}\c\O_2^1+\Gag\c\Gab^{(1)}+\Gab\c r\nab\omc^\dagger,    
\end{split}
\end{align}
where
\begin{equation}
    \varkappa:=r\nabla\widecheck{\Om\om}+r{^*\nabla\omc^\dagger}-\f12 r\bc.
\end{equation}
Applying Lemma \ref{evolution}, we infer
\begin{align*}
    |r^{1-\frac{2}{p}}\varkappa|_{p,S}&\les |r^{1-\frac{2}{p}}\varkappa|_{p,S_0(\ub)}+\int_{-\ub}^u\left(|r^{-\frac{2}{p}}\Gag\c\Gab^{(1)}|_{p,S}+\frac{M}{r^2}|r^{-\frac{2}{p}}\Gab^{(1)}|_{p,S}+|r^{-\frac{2}{p}}\Gab\c r\nab\omc^\dagger|_{p,S}\right)du \\
    &\les\frac{\ep_0}{r^\frac{s-1}{2}}+\int_{-\ub}^u \left(\frac{\ep^2}{r^3|u|^{s-2}}+\frac{M\ep}{r^3|u|^\frac{s-1}{2}}+\frac{\ep}{r|u|^\frac{s-1}{2}}|r^{1-\frac{2}{p}}\nab\omc^\dagger|_{p,S}\right)d\ub \\
    &\les \frac{\ep_0}{r^{2+\frac{s-3}{6}}|u|^\frac{s-3}{3}}+\int_{-\ub}^u \frac{\ep}{r|u|^\frac{s-1}{2}}|r^{1-\frac{2}{p}}\nab\omc^\dagger|_{p,S}d\ub.
\end{align*}
Applying Proposition \ref{ellipticest}, we obtain
\begin{align*}
    |r^{2-\frac{2}{p}}\nab\Omomc|_{p,S}+|r^{2-\frac{2}{p}}\nab\omc^\dagger|_{p,S}&\les |r^{1-\frac{2}{p}}\varkappa|_{p,S}+|r^{2-\frac{2}{p}}\bc|_{p,S}+\int_{-\ub}^u \frac{\ep}{r|u|^\frac{s-1}{2}}|r^{1-\frac{2}{p}}\nab\omc^\dagger|_{p,S} d\ub \\
    &\les \frac{\ep_0}{r^{2+\frac{s-3}{6}}|u|^\frac{s-3}{3}}+\int_{-\ub}^u \frac{\ep}{r|u|^\frac{s-1}{2}}|r^{1-\frac{2}{p}}\nab\omc^\dagger|_{p,S}d\ub.
\end{align*}
By Gronwall inequality, we deduce
\begin{align*}
    |r^{2-\frac{2}{p}}\nab\Omomc|_{p,S}+|r^{2-\frac{2}{p}}\nab\omc^\dagger|_{p,S}\les\frac{\ep_0}{r^{2+\frac{s-3}{6}}|u|^\frac{s-3}{3}},
\end{align*}
which implies \eqref{estomc1}. This concludes the proof of Proposition \ref{propnabomcombc}.
\end{proof}
\subsubsection{Estimate for \texorpdfstring{$\OO_{0,1}(\Omc)$}{}}
\begin{proposition}\label{estOmc0}
We have the following estimate for $q=0,1$:
\begin{equation}
    \left|r^{1-\frac{2}{p}}(r\nab)^q|u|^{\frac{s-3}{2}}\Omc\right|_{p,S} \les\ep_0.
\end{equation}
\end{proposition}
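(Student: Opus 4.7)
The plan is to integrate the transport equation from Proposition \ref{metriceqre},
\begin{equation*}
    \nab_4\Omc \;=\; \O_0^0\c\widecheck{\Om\om}+\O_1^1\c\Gag+r\Gag\c\Gag,
\end{equation*}
along the outgoing null cones $C_u$, starting from the last slice $\Cb_*$. The geodesic type foliation of Definition \ref{geodesicfoliation} forces $\Omc\equiv 0$ on $\Cb_*$ (see the footnote in Proposition \ref{metriceqre}), so the boundary contribution on $S_*(u)=C_u\cap\Cb_*$ vanishes. Applying Lemma \ref{evolution} with $\la_0=0$ then gives
\begin{equation*}
    |r^{-2/p}\Omc|_{p,S}(u,\ub) \;\les\; \int_\ub^{\ub_*} \bigl|r^{-2/p}\nab_4\Omc\bigr|_{p,S}(u,\ub')\,d\ub'.
\end{equation*}

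For the dominant source $\widecheck{\Om\om}$, Proposition \ref{propomcombc} yields $|r^{-2/p}\widecheck{\Om\om}|_{p,S}\les\ep_0\,r^{-2-(s-3)/6}|u|^{-(s-3)/3}$. Integrating in $\ub'$ produces a contribution of order $\ep_0\,r^{-1-(s-3)/6}|u|^{-(s-3)/3}$, which is bounded by the target $\ep_0\,r^{-1}|u|^{-(s-3)/2}$ precisely by trading the excess $r^{-(s-3)/6}$ for $|u|^{-(s-3)/6}$ via the pointwise inequality $|u|\leq r$ in $\kk$. The remaining contributions $\O_1^1\c\Gag$ and $r\Gag\c\Gag$ are dominated directly by Lemma \ref{estGagba} together with the smallness relations $M\ep\ll\ep_0 R_0$ and $\ep^2\ll\ep_0$. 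This yields the $q=0$ estimate.

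For $q=1$, I would apply $r\nab$ to the transport equation and commute using Proposition \ref{commkn} (since $\Omc$ is a scalar), producing a transport equation of the form
\begin{equation*}
    \nab_4(r\nab\Omc)+\tfrac{1}{2}\trch\,(r\nab\Omc) \;=\; \O_0^0\c r\nab\widecheck{\Om\om}+\O_q^k\c\Gag^{(1)}+r\Gag\c\Gag^{(1)}+\mathrm{lot}\c r\nab\Omc,
\end{equation*}
where the last term originates from the commutator and is absorbed via a Gronwall argument. The boundary term on $\Cb_*$ vanishes since $\Omc=0$ there forces $r\nab\Omc=0$. The principal source $r\nab\widecheck{\Om\om}$ is controlled by Proposition \ref{propnabomcombc}, which exhibits the same structural decay mismatch as Proposition \ref{propomcombc}, and the identical $|u|\leq r$ conversion closes the estimate after applying Lemma \ref{evolution}.

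The main obstacle is that the available bounds on $\widecheck{\Om\om}$ and $r\nab\widecheck{\Om\om}$ from Propositions \ref{propomcombc}--\ref{propnabomcombc} lose a factor $r^{(s-3)/6}$ in $r$-decay compared to the target $\Gag$-decay, while gaining a factor $|u|^{-(s-3)/6}$ in $|u|$-decay. After the $\ub'$-integration along $C_u$ (which restores one power of $r$), this mismatch is exactly balanced by the geometric fact $|u|\leq r$ valid throughout $\kk$. The estimate closes only by invoking this pointwise inequality at the right moment of the integration, which is the delicate point of the argument.
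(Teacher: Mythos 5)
Your proposal follows essentially the same route as the paper: integrate the $\nab_4$ transport equation for $\Omc$ (and its $r\nab$-commuted version, via Proposition \ref{commkn}/Corollary \ref{commutation}) along $C_u$ from the last slice using Lemma \ref{evolution}, control the dominant source by Propositions \ref{propomcombc}--\ref{propnabomcombc}, absorb the $\O_1^1\c\Gag$ and $r\Gag\c\Gag$ terms via Lemma \ref{estGagba} and \eqref{epep0}, and trade the excess $r$-decay for $|u|$-decay using $|u|\les r$ in $\kk$. The only cosmetic differences are that the paper keeps the boundary term on $S_*(u)$ and bounds it (rather than noting it vanishes since $\Omc=0$ on $\Cb_*$), and that the actual commutation produces no net $\f12\trch\,(r\nab\Omc)$ term, though carrying such a term through Lemma \ref{evolution} would be harmless.
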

\begin{proof}
We recall from Proposition \ref{metriceqre}
\begin{align*}
    \nab_4\Omc=&\O_0^0\c\widecheck{\Om\om}+\O_1^1\c\Gag+r\Gag\c\Gag,
\end{align*}
which implies from Corollary \ref{commutation}
\begin{align*}
    \nab_4(r\nab\Omc)=&\O_0^0\c r\nab(\widecheck{\Om\om})+\O_1^1\c\Gag^{(1)}+r\Gag\c\Gag^{(1)}.
\end{align*}
Applying Lemma \ref{evolution}, we have
\begin{align*}
|r^{-\frac{2}{p}}\Omc|_{p,S}&\les|r^{-\frac{2}{p}}\Omc|_{p,S_*(u)}+\int_\ub^{\ub_*}\left(|r^{-\frac{2}{p}}\omc|_{p,S}+\frac{M}{r}|r^{-\frac{2}{p}}\Gag|_{p,S}+|r^{1-\frac{2}{p}}\Gag\c\Gag|\right)d\ub\\
&\les\frac{\ep_0}{r|u|^\frac{s-3}{2}}+\int_\ub^{\ub_*}\left(\frac{\ep_0}{r^{2}|u|^\frac{s-3}{2}}+\frac{M}{r^2}\frac{\ep}{r|u|^{\frac{s-3}{2}}}+\frac{\ep^2}{r^3|u|^{s-3}}\right)d\ub\\
&\les \frac{\ep_0}{r|u|^\frac{s-3}{2}},
\end{align*}
and similarly
\begin{align*}
|r^{1-\frac{2}{p}}\nab\Omc|_{p,S}&\les|r^{1-\frac{2}{p}}\nab\Omc|_{p,S_*(u)}+\int_\ub^{\ub_*}\left(|r^{1-\frac{2}{p}}\nab\Omomc|_{p,S}+\frac{M}{r}|r^{-\frac{2}{p}}\Gag^{(1)}|_{p,S}+|r^{1-\frac{2}{p}}\Gag\c\Gag^{(1)}|\right)d\ub\\
&\les \frac{\ep_0}{r|u|^\frac{s-3}{2}}.
\end{align*}
This concludes the proof of Proposition \ref{estOmc0}.
\end{proof}
\subsubsection{Estimate for \texorpdfstring{$\osc(u)$}{}}
\begin{proposition}\label{osc0ucontrol}
    We have the following estimate:
    \begin{equation}
        \sup_{\kk'}r^\frac{s-3}{2}|u'-u|\les\ep_0.
    \end{equation}
\end{proposition}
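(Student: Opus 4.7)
I will derive a transport equation for $u'-u$ along the outgoing null direction $e_4$ of the $(u,\ub)$-foliation and then integrate it. Since the level sets of $u$ are the outgoing cones $C_u$, we have $e_4(u)=0$, so $e_4(u'-u)=e_4(u')$. To evaluate the right-hand side, I invert the null frame transformation of Lemma \ref{changelemma} (which is parameterized only by $(f,\la)$ since both frames share the ingoing direction $\Lb=-\grad\ub$) and apply $e'_4(u')=0$, $e'_A(u')=0$, $e'_3(u')=(\Om')^{-1}$. Using $\la=\Om/\Om'$, this gives $e_A(u') = -\tfrac{1}{2}f_A\,\Om(\Om')^{-2}$ and $e_3(u') = \Om(\Om')^{-2}$, and solving the relation
\[
0 \;=\; e'_4(u') \;=\; \la\bigl(e_4(u') + f^B e_B(u') + \tfrac{1}{4}|f|^2 e_3(u')\bigr)
\]
yields
\[
e_4(u'-u) \;=\; \frac{\Om\,|f|^2}{4\,(\Om')^2}.
\]
By Lemma \ref{estGagbO}, $f\in r\Gaw$ with $|f|\les\epsilon\,r^{-(s-1)/2}$, so the right-hand side is $O(\epsilon^2 r^{-(s-1)})$. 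In the $(u,\ub,x^A)$ coordinates this becomes $\pr_\ub(u'-u)=\Om\,e_4(u'-u)\les\epsilon^2 r^{-(s-1)}$.

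Next I would integrate this equation along each outgoing cone $C_u$, at fixed $u$ and $x^A$, starting from an initial transverse sphere on which $u'-u$ has already been bounded. A natural candidate is the sphere where $S_0(\ub)=S(u_0(\ub),\ub)$ touches $\Si_0$: on $\Si_0$ we have $u'=-\ub$ (since $\Si_0=\{u_{(0)}+\ub=0\}$), while $u=u_0(\ub)$ on $S_0(\ub)$, so the initial value of $u'-u$ is $-\ub-u_0(\ub)$. Integrating gives
\[
|u'-u|(u,\ub,x) \;\les\; |\ub+u_0(\ub)| \;+\; \frac{\epsilon^2}{r^{s-2}}.
\]
Multiplying by $r^{(s-3)/2}$ and invoking the smallness hierarchy \eqref{epep0}, the bulk term satisfies $\epsilon^2 r^{-(s-1)/2}\les \epsilon^2 R_0^{-(s-1)/2}\ll\epsilon_0$, since $\epsilon^2=\EE_0^{-2/3}\epsilon_0^2$ and $R_0=M/\EE_0$. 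Thus up to the initial contribution the estimate $r^{(s-3)/2}|u'-u|\les\epsilon_0$ follows.

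\textbf{Main obstacle.} The principal difficulty lies in estimating $|\ub+u_0(\ub)|$, which measures how the leaf $S_0(\ub)$ of the $(u,\ub)$-foliation pulled back from $\Cb_*$ through $\kk$ deviates from the corresponding leaf $\Cb_\ub\cap\Si_0$ of the initial layer foliation. The idea is to exploit the already established control on the last slice, $\OO^*(\Cb_*)\les\epsilon_0$, and propagate it along $\Cb_\ub$ from $\Cb_*$ back to $\Si_0$ using the null structure equations in the $\nab_3$ direction (notably on $\Om$ and on the Ricci coefficients appearing in $\nab_3(\ub+u_0)$), which should yield $|\ub+u_0(\ub)|\les\epsilon_0\,r^{-(s-3)/2}$ on $\Si_0\sm K$. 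All remaining ingredients are routine: the change-of-frame computation from Lemma \ref{changelemma}, the pointwise bound on $f$ from Lemma \ref{estGagbO}, and the smallness relations \eqref{epep0}.
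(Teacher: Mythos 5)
There is a genuine gap. Your transport computation along $e_4$ is correct (and indeed gives a purely quadratic source $\tfrac{\Om}{4(\Om')^2}|f|^2$, whereas the paper's choice of direction produces a linear term in $\Om'-\Om$), but this reduction does almost no work: along $C_u$ the quantity $u'+\ub$ increases at essentially unit rate, so the outgoing integration only traverses the thin layer $\kk'$ (height $2\de_0$), and the ``initial value'' you are left with on $\Si_0$, namely $-(\ub+u_0(\ub))=(u'-u)|_{\Si_0}$, is exactly the oscillation between the two foliations that the proposition is supposed to bound. In other words, you have reduced the statement on $\kk'$ to the statement on $\Si_0$ plus a negligible quadratic correction, and the entire content of the proposition sits in the step you label ``the main obstacle.''

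Your sketch for that step does not work as stated. First, $u'=u_{(0)}$ is only defined in the thin layer $\kk_{(0)}$, and an ingoing cone $\Cb_\ub$ with $\ub<\ub_*$ never meets $\Cb_*$, so one cannot ``propagate along $\Cb_\ub$ from $\Cb_*$ back to $\Si_0$''; second, control of Ricci coefficients (or of $\Om$) via $\nab_3$ equations does not by itself control the difference of the two optical functions. The only place where the $(u,\ub)$ and $(u_{(0)},\ub)$ foliations are anchored to each other is at $\ub=\ub_*$ (both equal $-\ub_*$ on $S_*$, with matching angular coordinates), so the difference $u'-u$ must be transported \emph{within the layer} radially out to $\ub=\ub_*$, i.e.\ along a direction tangent to the level sets of $u'+\ub$ such as $\Om'(e'_4-e'_3)$. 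This is precisely the paper's proof: one finds $(\Om'e'_4-\Om'e'_3)(u'-u)=\O_0^0\cdot(\Om'-\Om)+r^2\Gaw\cdot\Gaw$, bounds $|\Om'-\Om|\les\ep_0\,r^{-\frac{s-1}{2}}$ using Proposition \ref{estOmc0} (the already improved bound on $\Omc$), the initial layer assumption on $\Omc'$, and Lemma \ref{estKK'} for $\Om'_K-\Om_K$, and then integrates $\int_w^{\ub_*}\ep_0\,\rho^{-\frac{s-1}{2}}d\rho\les\ep_0\,r^{-\frac{s-3}{2}}$. Any correct completion of your plan would have to reproduce this argument (the case of the slice $\Si_0=\{u'+\ub=0\}$ is exactly the estimate of $\ub+u_0(\ub)$ you need), so as written the proposal is missing the essential step rather than offering an alternative route.
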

\begin{proof}
We have from \eqref{change}
\begin{align*}
    (\Om'e'_4-\Om'e'_3)(u'-u)=&-1-\left(\Om'e'_4-\Om'e'_3\right)u\\
    =&-1-\Om\left(e_4+f^B e_B+\frac{1}{4}|f|^2 e_3\right)u+\frac{{\Om'}^2}{\Om^2}\\
    =&-\frac{1}{4\Om}|f|^2+\frac{{\Om'}^2-\Om^2}{\Om^2}\\
    =&\O_0^0\c(\Om'-\Om)+r^2\Gaw\c\Gaw.
\end{align*}
Applying Propositions \ref{estOmc0}, \ref{standardsobolev} and Lemma \ref{estKK'}, we obtain
\begin{equation*}
    |\Om'-\Om|\les|\Omc'-\Omc|+\left|\left(\Om_K'-\f12\right)-\left(\Om_K-\f12\right)\right| \les \frac{\ep_0}{r^\frac{s-1}{2}}.
\end{equation*}
Integrating it along $e'_4-e'_3$, we deduce
\begin{align*}
    |u'-u|_{\infty,S'}\les\int_{w}^{\ub_*}\left(\frac{\ep_0}{{r}^{\frac{s-1}{2}}} +\frac{\ep^2}{r^{s-1}}\right) dw\les \frac{\ep_0}{{r}^{\frac{s-3}{2}}}.
\end{align*}
This concludes the proof of Proposition \ref{osc0ucontrol}.
\end{proof}
\subsubsection{Estimate for \texorpdfstring{$\OO_{0,1}(\widecheck{\Om\trch})$}{}}\label{Omtrch}
\begin{proposition}\label{propOmtrch}
We have the following estimate for $q=0,1$:
\begin{align}
    \left|r^{2-\frac{2}{p}}|u|^{\frac{s-3}{2}}(r\nab)^q\widecheck{\Om\trch}\right|_{p,S}&\les\ep_0.\label{estOmtrch} 
\end{align}
\end{proposition}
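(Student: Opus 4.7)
\textbf{Proof proposal for Proposition \ref{propOmtrch}.}

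The plan is to apply the outgoing transport argument of Lemma \ref{evolution} to the linearized null structure equation
\begin{equation*}
\nab_4\widecheck{\Om\trch}+\trch\,\widecheck{\Om\trch}=\O_1^0\c\widecheck{\Om\om}+\fl[\nab_4\trchc]+\err[\nab_4\trchc]
\end{equation*}
from Proposition \ref{nullstructure}, integrating along $C_u$ from the last slice $\Cb_*$ back to the sphere $S(u,\ub)$. This equation fits the framework of Lemma \ref{evolution} with $\la_0=1$ and $\la_1=2-\tfrac{2}{p}$, so
\begin{equation*}
\left|r^{2-\frac{2}{p}}\widecheck{\Om\trch}\right|_{p,S}\les \left|r^{2-\frac{2}{p}}\widecheck{\Om\trch}\right|_{p,S_*(u)}+\int_{\ub}^{\ub_*}\left|r^{2-\frac{2}{p}}\left(\O_1^0\c\widecheck{\Om\om}+\fl+\err\right)\right|_{p,S}(u,\ub')\,d\ub'.
\end{equation*}

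The data term on $S_*(u)$ is bounded by combining $\OO_0^*(\trchc)\les\ep_0$ with the fact that $\Omc=0$ on $\Cb_*$ (geodesic type foliation): expanding $\widecheck{\Om\trch}=\Om_K\trchc+\Omc\trch$ and using $\trch_K=\O_2^1$ yields the target $\ep_0/|u|^{\frac{s-3}{2}}$ at $\ub_*$. For the bulk I estimate each piece separately. The leading linear term $\O_1^0\c\widecheck{\Om\om}$ is controlled by Proposition \ref{propomcombc}, giving an integrand $\les M\ep_0/(r^{1+\frac{s-3}{6}}|u|^{\frac{s-3}{3}})$, which upon integrating over $\ub'\in[\ub,\ub_*]$ (dominated by the lower endpoint since $1+\tfrac{s-3}{6}>1$) contributes $\les M\ep_0/(r(u,\ub)^{\frac{s-3}{6}}|u|^{\frac{s-3}{3}})\les \ep_0/|u|^{\frac{s-3}{2}}$ using $r\geq|u|$ in the external region. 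The term $\fl[\nab_4\trchc]=\Gag\c\O_2^1$ is handled via the bootstrap $\OO\leq\ep$ and Lemma \ref{estGagba}, producing after integration $\les M\ep/(r|u|^{\frac{s-3}{2}})\les\EE_0\ep\les\ep_0$; finally $\err=\Gag\c\Gag$ is quadratically small and poses no difficulty.

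For the derivative estimate ($q=1$), I commute $r\nab$ through using Corollary \ref{commutation} to obtain a transport equation of the schematic form
\begin{equation*}
\nab_4(r\nab\widecheck{\Om\trch})+\trch\,r\nab\widecheck{\Om\trch}=r\nab\!\left(\O_1^0\c\widecheck{\Om\om}+\fl+\err\right)+(\Gag+\O_3^2)\c r\nab\widecheck{\Om\trch}+r^{-1}(\Gag^{(1)}+\O_3^2)\c\widecheck{\Om\trch},
\end{equation*}
and apply Lemma \ref{evolution} a second time. The data on $\Cb_*$ is bounded by $\OO_1^*(\trchc)\les\ep_0$ (again using $\Omc=\nab\Omc=0$ on $\Cb_*$). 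The principal new source $\O_1^0\c r\nab\widecheck{\Om\om}$ is controlled by Proposition \ref{propnabomcombc}; the zero-th order $\widecheck{\Om\trch}$ contributions arising from commutators are absorbed by the bound just established for $q=0$; the remaining $r\nab\widecheck{\Om\trch}$ terms carry small coefficients $\Gag+\O_3^2$ and are absorbed via Gronwall, analogously to the renormalization arguments of Propositions \ref{propomcombc}--\ref{propnabomcombc}.

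The main delicate point is the balance of decays in the principal source: Proposition \ref{propomcombc} only gives $\widecheck{\Om\om}\sim r^{-2-(s-3)/6}|u|^{-(s-3)/3}$, whose $|u|$-decay is strictly weaker than the target $r^{-2}|u|^{-(s-3)/2}$ for $\widecheck{\Om\trch}$. The mechanism that saves the argument is that the $\ub'$-integration converts the surplus $r^{-(s-3)/6}$ decay into an $r^{-(s-3)/6}$ factor evaluated at the lower endpoint $(u,\ub)$, which combined with the external-region bound $r\geq|u|$ restores the missing $|u|^{(s-3)/6}$ — a geometric feature of the outgoing-from-$\Cb_*$ setup which is precisely why this argument is specific to the external region.
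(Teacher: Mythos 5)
Your proposal is correct and follows essentially the same route as the paper: integrating the linearized $\nab_4$ equation for $\widecheck{\Om\trch}$ from the last slice via Lemma \ref{evolution}, using Proposition \ref{propomcombc} (resp. \ref{propnabomcombc} after commuting with $r\nab$ via Corollary \ref{commutation}) for the $\widecheck{\Om\om}$ source, and recovering the $|u|^{-(s-3)/2}$ weight from the $\ub$--integration combined with $r\gtrsim|u|$, exactly as in the paper. The only cosmetic difference is that for $q=1$ the paper absorbs the commutator terms directly into the schematic $\Gag^{(1)}\c\O_2^1+\Gag\c\Gag^{(1)}$ bounded by the bootstrap, rather than invoking Gronwall, but this is immaterial.
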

\begin{proof}
We recall from Proposition \ref{nullstructure}
\begin{align}\label{Omtrchraych}
\nab_4(\widecheck{\Om\trch})+\trch\,\widecheck{\Om\trch}=\O_1^0\c\widecheck{\Om\om}+\Gag\c\O_2^1+\Gag\c\Gag.
\end{align}
Applying Propositions \ref{propomcombc} and  Lemma \ref{evolution}, we infer
\begin{align*}
    |r^{2-\frac{2}{p}}\widecheck{\Om\trch}|_{p,S}&\les |r^{2-\frac{2}{p}}\widecheck{\Om\trch}|_{p,S_*(u)}+\int_\ub^{\ub_*}\left( |r^{1-\frac{2}{p}}\widecheck{\Om\om}|_{p,S}+\frac{M}{r^2}|r^{2-\frac{2}{p}}\Gag|_{p,S}+|r^{2-\frac{2}{p}}\Gag\c\Gag|_{p,S}\right)d\ub\\
    &\les\frac{\ep_0}{|u|^\frac{s-3}{2}}+\int_\ub^{\ub_*}\left( \frac{\ep_0}{r^{1+\frac{s-3}{6}}|u|^{\frac{s-3}{3}}}+\frac{M}{r^2}\frac{\ep}{|u|^\frac{s-3}{2}}+\frac{\ep^2}{r^2|u|^{s-3}}\right)d\ub\\
    &\les\frac{\ep_0}{|u|^{\frac{s-3}{2}}}.
\end{align*}
Next, differentiating \eqref{Omtrchraych} by $r\nab$ and applying Corollary \ref{commutation}, we obtain
\begin{align*}
\nab_4(r\nab\widecheck{\Om\trch})+\trch(r\nab\widecheck{\Om\trch})=\O_0^0\c\nab\widecheck{\Om\om}+\Gag^{(1)}\c\O_2^1+\Gag\c\Gag^{(1)}.
\end{align*}
Applying Lemma \ref{evolution}, we deduce
\begin{align*}
    &\;\;\;\;\,|r^{3-\frac{2}{p}}\nab\widecheck{\Om\trch}|_{p,S}\\
    &\les |r^{3-\frac{2}{p}}\nab\widecheck{\Om\trch}|_{p,S_*(u)}+\int_{\ub}^{\ub_*}\left(|r^{2-\frac{2}{p}}\nab\Omomc|_{p,S}+M|r^{-\frac{2}{p}}\Gag^{(1)}|_{p,S}+|r^{2-\frac{2}{p}}\Gag\c\Gag^{(1)}|_{p,S}\right)d\ub\\
    &\les \frac{\ep_0}{|u|^\frac{s-3}{2}}+\int_{\ub}^{\ub_*}\left(\frac{\ep_0}{r^{1+\frac{s-3}{6}}|u|^\frac{s-3}{3}}+\frac{M}{r^2|u|^\frac{s-3}{2}}+\frac{\ep^2}{r^2|u|^{s-3}}\right)d\ub\\
    &\les\frac{\ep_0}{|u|^\frac{s-3}{2}}.
\end{align*}
This concludes the proof of Proposition \ref{propOmtrch}.
\end{proof}
\subsubsection{Estimate for \texorpdfstring{$\okk_{0}(\widecheck{\Om^{-1}\trchb})$}{}}
\begin{proposition}\label{OOb0Omtrchb}
We have the following estimate:
\begin{align}
    \begin{split}
        |r^{\frac{s+1}{2}-\frac{2}{p}}\widecheck{\Om^{-1}\trchb}|_{p,S_0(\ub)}\les \ep_0.
    \end{split}
\end{align}
\end{proposition}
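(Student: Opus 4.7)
The plan is to exploit the fact that on the sphere $S_0(\ub)=\Sigma_0\cap \Cb_\ub$ the two double null foliations $(u,\ub)$ and $(u',\ub)$ share the ingoing null cone $\Cb_\ub$, and therefore share the ingoing null generator $\Lb$. This places us in the setting of Lemma \ref{changelemma} with $\la=\Om/\Om'$. The key observation is that Proposition \ref{transformation} gives the \emph{exact} identity $\la\,\chib'=\chib$ — with no lower order error terms — which, upon tracing with the induced metric on $S_0(\ub)$ and using $\la=\Om/\Om'$, produces the pointwise identity
\begin{align*}
\Om^{-1}\trchb \;=\; (\Om')^{-1}\trchb' \qquad \text{on } S_0(\ub).
\end{align*}
In other words, $\Om^{-1}\trchb$ is exactly foliation-invariant under changes of frame sharing $\Lb$, in contrast to $\Om\om$ in Proposition \ref{estOOb0} which is invariant only up to corrections of the form $f\cdot\Gaw+\O_2^1\cdot f$. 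This is why the authors estimate $\widecheck{\Om^{-1}\trchb}$ rather than $\widecheck{\Om\trchb}$.

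Given this identity, I would decompose
\begin{align*}
\widecheck{\Om^{-1}\trchb}
\;=\;\widecheck{(\Om')^{-1}\trchb'}\;+\;\Big[{\Phi'}^{*}\big((\Om^{-1}\trchb)_{Kerr}\big)-\Phi^{*}\big((\Om^{-1}\trchb)_{Kerr}\big)\Big]
\end{align*}
and control each bracket separately. For the first bracket, I would use the algebraic identity
\begin{align*}
\widecheck{(\Om')^{-1}\trchb'} \;=\; (\Om')^{-1}\trchbc' \;-\; \frac{\Omc'}{\Om'\,\Om_K'}\,\trchb_K',
\end{align*}
which reduces the bound to the initial data assumption $\OO_{(0)}\leq\ep_0$ — directly controlling $|r^{(s+1)/2-2/p}\trchbc'|_{p,S_0(\ub)}$ and $|r^{(s-1)/2-2/p}\Omc'|_{p,S_0(\ub)}$ via the norms of $\trchbc_{(0)}$ and $r_{(0)}^{-1}\Omc_{(0)}$ — combined with the Kerr decay $\trchb_K'=\O_1^1$ and $\Om_K'=\frac{1}{2}+\O_1^1$ from Proposition \ref{decayGamma}. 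For the second bracket, I would apply Lemma \ref{estKK'} to the scalar $X=\Om^{-1}\trchb$, noting that in Kerr $\Om^{-1}\trchb = -4/r+\O_2^1$ is of class $\O_1^0$, which gives the pointwise bound $|X_K-X_K'|\les \ep_0\,r^{-(s+1)/2}$; integrating over $S_0(\ub)$ and multiplying by the weight $r^{(s+1)/2-2/p}$ yields $\ep_0$.

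The main subtlety lies in justifying the exactness of $\la\,\chib'=\chib$ in the present geometric setup, i.e.\ ensuring that the $\tfrac{1}{2}f_A e_3$ correction in $e'_A=e_A+\tfrac{1}{2}f_Ae_3$ does not pollute the identity for $\trchb$. This reflects the intrinsic geometric fact that $\chib$ is the null second fundamental form of $\Cb_\ub$ relative to its generator $\Lb$, and hence depends only on the common ingoing null hypersurface and its generator (up to rescaling by $\la$), not on the particular choice of cross-sectional $2$--spheres foliating $\Cb_\ub$ — so no $f$--dependent error terms appear. Once this exactness is accepted, the rest of the argument is purely algebraic, assembling the bound from already-established initial data control and the Kerr pullback comparison of Lemma \ref{estKK'}.
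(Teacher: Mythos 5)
Your overall strategy coincides with the paper's: the exact transformation identity $\la\chib'=\chib$ of Proposition \ref{transformation}, together with $\la=\Om/\Om'$, gives ${\Om'}^{-1}\trchb'=\Om^{-1}\trchb$; one then writes $\widecheck{\Om^{-1}\trchb}=\widecheck{{\Om'}^{-1}\trchb'}+\big({\Om'}_K^{-1}\tr_K\chib'_K-\Om_K^{-1}\tr_K\chib_K\big)$, controls the first term from the initial layer assumption $\OO_{(0)}\leq\ep_0$ (your algebraic reduction to $\trchbc'$ and $\Omc'$ is fine), and the second by comparing the two pullbacks of the Kerr value. Your ``main subtlety'' about the exactness of $\la\chib'=\chib$ is a non-issue: it is precisely what Proposition \ref{transformation} asserts.

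The genuine gap is in your treatment of the Kerr-pullback difference. You invoke Lemma \ref{estKK'} for $X=\Om^{-1}\trchb$, whose Kerr value is $-\frac{4}{r}+\O_2^1$, i.e.\ of class $\O_1^0$: the leading term carries no power of $M$. For $k=0$ the final inequality of Lemma \ref{estKK'} is not valid, since the passage from $\frac{M^k\ep}{r^{q+\frac{s-1}{2}}}$ to $\frac{\ep_0}{r^{q-k+\frac{s-1}{2}}}$ rests on $M^k\ep\les\ep_0\,r^k$, which needs $k\geq1$ (recall $\ep=\EE_0^{-1/3}\ep_0\gg\ep_0$, so $\ep\EE_0\ll\ep_0$ but $\ep\not\les\ep_0$). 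Concretely, the $\pr_u$--term in \eqref{estKK'scalar} is of size $\frac{1}{r^2}|u'-u|$, and with only the bootstrap bound $\osc(u)\leq\ep$ this yields $\frac{\ep}{r^{\frac{s+1}{2}}}$, an $\ep$ rather than the required $\ep_0$. The paper closes exactly this point by \emph{not} applying Lemma \ref{estKK'}, but redoing the computation of \eqref{estKK'scalar} and inserting the improved estimate $\sup_{\kk'}r^{\frac{s-3}{2}}|u'-u|\les\ep_0$ of Proposition \ref{osc0ucontrol}, which is already available at this stage of Section \ref{Ricci01}; the angular term is harmless because $|\pr_{x^A}(\Om^{-1}\trchb)_{Kerr}|\les\frac{M^2}{r^3}$ carries two powers of $M$, so the bootstrap bound $\osc(x)\leq\ep$ suffices there. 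With this substitution your argument goes through; also note the minor slip that $\trchb'_K$ is $\O_1^0$, not $\O_1^1$ (harmless for your bound on the first bracket).
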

\begin{proof}
We have from Proposition \ref{transformation}
\begin{align*}
    {\Om'}^{-1}\trchb'=\Om^{-1}\trchb,
\end{align*}
which implies
\begin{equation}\label{diffeOmtrchb}
\widecheck{{\Om'}^{-1}\trchb'}=\widecheck{\Om^{-1}\trch}+\Om^{-1}_K\tr_K\chib_K-{\Om'}_K^{-1}\tr_K\chib'_K.
\end{equation}
Proceeding as in \eqref{estKK'scalar} and applying Propositions \ref{standardnull} and \ref{osc0ucontrol}
\begin{align*}
    \left|\Om^{-1}_K\tr_K\chib_K-{\Om'}_K^{-1}\tr_K\chib'_K\right|&\les \left|\pr_u (\Om^{-1}\tr\chib)_{Kerr}\right||u'-u|+\left|\pr_{x^A}(\Om^{-1}\tr\chib)_{Kerr}\right||{x'}^A-x^A|\\
    &\les\frac{1}{r^2}|u'-u|+\frac{M^2}{r^3} |r\Gaw|\\
    &\les \frac{1}{r^2}\frac{\ep_0}{r^\frac{s-3}{2}}+\frac{M^2}{r^3}\frac{\ep}{r^\frac{s-1}{2}}\\
    &\les \frac{\ep_0}{r^\frac{s+1}{2}}.
\end{align*}
Injecting it into \eqref{diffeOmtrchb}, we deduce
\begin{align*}
    \sup_{\kk'}r^\frac{s+1}{2}|\widecheck{\Om^{-1}\trchb}|\les  \sup_{\kk'}r^\frac{s+1}{2}|\widecheck{{\Om'}^{-1}\trchb'}|+\sup_{\kk'}r^\frac{s+1}{2}|\Om^{-1}_K\tr_K\chib_K-{\Om'}_K^{-1}\tr_K\chib'_K|\les\ep_0.
\end{align*}
This concludes the proof of Proposition \ref{OOb0Omtrchb}.
\end{proof}
\subsubsection{Estimate for \texorpdfstring{$\OO_{0}(\widecheck{\Om\trchb})$}{}}\label{nabtrchctrchbc}
\begin{proposition}\label{propnabtrchtrchbc}
We have the following estimate:
\begin{align}
    |r^{2-\frac{2}{p}}|u|^{\frac{s-3}{2}}\widecheck{\Om\trchb}|_{p,S}\les\ep_0.\label{esttrchbc1}
\end{align}
\end{proposition}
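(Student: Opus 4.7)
The plan is to integrate the ingoing null structure equation
\[
\nab_3(\widecheck{\Om\trchb})+\trchb\,\widecheck{\Om\trchb}=\O_1^0\c\Omombc+\fl[\nab_3\,\trchbc]+\err[\nab_3\,\trchbc],
\]
from Proposition \ref{nullstructure} (with $\fl=\Gag\c\O_2^1$ and $\err=\Gab\c\Gab$), along the ingoing cone $\Cb_\ub$, using Lemma \ref{evolution}(2) with $\la_0=1$ and $\la_1=2-\frac{2}{p}$. The mirror of this argument for $\widecheck{\Om\trch}$ was done in Proposition \ref{propOmtrch} by integrating $\nab_4$ from the last slice $\Cb_*$; here the natural starting surface is instead the initial sphere $S_0(\ub)\subset\Si_0$, and we must therefore first produce initial data on that sphere.

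For the initial datum, Proposition \ref{OOb0Omtrchb} controls $\widecheck{\Om^{-1}\trchb}$ rather than $\widecheck{\Om\trchb}$ (this is natural since $\Om^{-1}\trchb$ is invariant under the null frame transformation of Proposition \ref{transformation}). I convert between the two using the algebraic identity
\[
\widecheck{\Om\trchb}=(\Om+\Om_K)\Omc\cdot\Om^{-1}\trchb+\Om_K^2\cdot\widecheck{\Om^{-1}\trchb},
\]
and combine Proposition \ref{OOb0Omtrchb} with the $\Omc$ bound from Proposition \ref{estOmc0}, using $|\Om^{-1}\trchb|\les r^{-1}$. Since $r\sim|u|$ on $S_0(\ub)$, this produces
\[
\left|r^{2-\frac{2}{p}}\widecheck{\Om\trchb}\right|_{p,S_0(\ub)}\les\frac{\ep_0}{|u|^{\frac{s-3}{2}}}.
\]

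The right-hand side contributions are then handled as follows. The principal linear term $\O_1^0\c\Omombc$ is bounded using Proposition \ref{propomcombc}, giving $|r^{1-\frac{2}{p}}\Omombc|_{p,S}\les\ep_0/(r|u|^{(s-3)/2})$; its $u'$-integral along $\Cb_\ub$ is $\les\ep_0/|u|^{(s-3)/2}$. The linear term $\fl=\Gag\c\O_2^1$ is bounded using the bootstrap $|\Gag|\les\ep/(r^2|u|^{(s-3)/2})$ and $|\O_2^1|\les M/r^2$, yielding a contribution $\les M\ep/(r|u|^{(s-3)/2})\les\ep\EE_0/|u|^{(s-3)/2}\les\ep_0/|u|^{(s-3)/2}$ after integration, since $\ep\EE_0\ll\ep_0$. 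The error $\err=\Gab\c\Gab$ uses the bootstrap $|\Gab|\les\ep/(r|u|^{(s-1)/2})$ and integrates to $\ep^2/|u|^{s-2}$.

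The main obstacle will be to absorb this last nonlinear contribution into the target bound $\ep_0/|u|^{(s-3)/2}$. Since $\Gab$ has only been controlled by the bootstrap (and not yet improved to $\ep_0$ at this stage of the proof), one must verify that $\ep^2/|u|^{s-2}\les\ep_0/|u|^{(s-3)/2}$ on the bootstrap region $|u|\geq R_0\sim M/\EE_0$. This reduces to $\ep^2\les\ep_0\,R_0^{(s-1)/2}\sim\ep_0\EE_0^{-(s-1)/2}M^{(s-1)/2}$; given $\ep=\ep_0\EE_0^{-1/3}$ and the hierarchy $\ep_0\ll\EE_0\ll 1$ from \eqref{epep0}, this holds comfortably for all $s\geq 4$. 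Putting the three pieces together closes the estimate and yields \eqref{esttrchbc1}.
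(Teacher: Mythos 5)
Your proposal is correct and follows essentially the same route as the paper: integrate the $\nab_3$ equation for $\widecheck{\Om\trchb}$ from Proposition \ref{nullstructure} along $\Cb_\ub$ via Lemma \ref{evolution}, obtain the initial datum on $S_0(\ub)$ by converting Proposition \ref{OOb0Omtrchb}'s bound on $\widecheck{\Om^{-1}\trchb}$ using the $\Omc$ estimate of Proposition \ref{estOmc0}, and control the $\Omombc$, $\Gag\c\O_2^1$ and $\Gab\c\Gab$ contributions exactly as the paper does, closing with the hierarchy \eqref{epep0}. Your extra absorption check for the quadratic term is precisely what the paper uses implicitly ($\ep^2\ll\ep_0$), so there is no genuine difference in method.
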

\begin{proof}
Next, we recall from Proposition \ref{nullstructure}
\begin{align*}
\nab_3(\widecheck{\Om\trchb})+\trchb\,\widecheck{\Om\trchb}=\O_1^0\c\widecheck{\Om\omb}+\Gag\c\O_2^1+\Gag\c\Gab.
\end{align*}
Notice that we have from Propositions \ref{estOmc0} and \ref{OOb0Omtrchb}
\begin{align*}
    |r^{2-\frac{2}{p}}\widecheck{\Om\trchb}|_{p,S_0(\ub)}\les|r^{2-\frac{2}{p}}\widecheck{\Om^{-1}\trchb}|_{p,S_0(\ub)}+|r^{2-\frac{2}{p}}\widecheck{\Om^2}|_{p,S_0(\ub)}+|r^{2-\frac{2}{p}}\widecheck{\Om\trchb}\,\widecheck{\Om^2}|_{p,S_0(\ub)}\les\frac{\ep_0}{|u|^{\frac{s-3}{2}}}.
\end{align*}
Applying Lemma \ref{evolution}, we infer
\begin{align*}
    |r^{2-\frac{2}{p}}\widecheck{\Om\trchb}|_{p,S}&\les |r^{2-\frac{2}{p}}\widecheck{\Om\trchb}|_{p,S_0(\ub)}+\int_{u_0(\ub)}^{u}\left( |r^{1-\frac{2}{p}}\Omombc|_{p,S}+\frac{M}{r^2}|r^{2-\frac{2}{p}}\Gag|_{p,S}+|r^{2-\frac{2}{p}}\Gab\c\Gab|_{p,S}\right)du\\
    &\les\frac{\ep_0}{|u|^\frac{s-3}{2}}+\int_{u_0(\ub)}^{u}\left(\frac{\ep_0}{|u|^{\frac{s-1}{2}}}+\frac{M}{r^2}\frac{\ep}{|u|^\frac{s-3}{2}}+\frac{\ep^2}{|u|^{s-1}}\right)du\\
    &\les\frac{\ep_0}{|u|^{\frac{s-3}{2}}}.
\end{align*}
This concludes the proof of Proposition \ref{propnabtrchtrchbc}.
\end{proof}
\subsubsection{Estimate for \texorpdfstring{$\OO_{0,1}(\etac)$}{} }\label{etaetab}
\begin{proposition}\label{propetaetab}
We have the following estimate for $q=0,1$:
\begin{align}
    |r^{2-\frac{2}{p}}|u|^{\frac{s-3}{2}}(r\nab)^q\etac|_{p,S}&\les\ep_0.\label{estetac}
\end{align}
\end{proposition}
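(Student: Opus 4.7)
The plan is to follow the same pattern as in the preceding propositions (estimates for $\Omomc$, $\widecheck{\Om\trch}$, etc.), but since Proposition \ref{nullstructure} does not give a $\nab_4$ transport equation for $\etac$ directly, we instead exploit the Hodge system
\begin{align*}
\sdiv\etac &= -\muc-\rhoc+\fl[\sdiv\etac]+\err[\sdiv\etac],\\
\curl\etac &= \sic+\fl[\curl\etac]+\err[\curl\etac].
\end{align*}
By Proposition \ref{ellipticest}, the bound on $\etac$ and $r\nab\etac$ in $L^p(S)$ reduces to a bound on $|r(\muc,\rhoc,\sic)|_{p,S}$. The curvature pieces $\rhoc,\sic$ are already controlled with the correct weights by Theorem M1 (which gives $\RR\les\ep_0$) combined with the Sobolev inequalities of Proposition \ref{standardsobolev}. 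Thus the entire difficulty is concentrated in controlling $\muc$.

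To handle $\muc$, I would use the decomposition $\muc=\mumc-\tfrac14\widecheck{\trch\,\trchb}$, where the second piece has already been estimated via Propositions \ref{propOmtrch} and \ref{propnabtrchtrchbc}. For $\mumc$, I would integrate the transport equation from Proposition \ref{equationsmumub},
\[
\Om\nab_4\mumc+\Om\trch\,\mumc=\Om\trch\,\rhoc+\fl[\nab_4\mumc]+\err[\nab_4\mumc],
\]
backwards along $C_u$ from the last slice $\Cb_*$ using Lemma \ref{evolution}. The driving term $\Om\trch\,\rhoc$ contributes at the level of $\ep_0\,r^{-3}|u|^{-(s-2)/2}$, the linear error $\fl[\nab_4\mumc]=\O_3^2\c(d_2^*\etac,\nab\trchc,\bc)+\O_3^1\c(\Gag,\Lc)$ is controlled by already-established $\OO$-norms and the curvature flux bound from Theorem M1, and the nonlinear piece $\err[\nab_4\mumc]$ is quadratic in $\Gag,\Gab$ and therefore absorbed. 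The only subtlety is that the linear term contains $d_2^*\etac$, so the argument must be run in a closed bootstrap loop — either by a Gronwall step or, cleaner, by using the already-obtained $L^p$ bound on $\etac$ in Section \ref{Omtrch}-style intermediate sup-norms to break the circularity (treating it as an $\O_3^2$ multiplied quantity of size $\ep$).

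Step zero of the argument is to produce initial data for $\mumc$ on $\Cb_*$. On the geodesic foliation of $\Cb_*$ we have $\Omc=0$ and $\bcc=0$, which via Lemma \ref{changelemma} forces $\etac=\zec$ on $\Cb_*$; combined with the expansion
\[
\mum=-\sdiv\eta+\tfrac12\hch\c\hchb-\rho+\tfrac14\trch\,\trchb,
\]
we express $\mumc\big|_{\Cb_*}$ as a polynomial in $\zec,\hchc,\hchbc,\trchc,\trchbc,\rhoc$ and their Kerr backgrounds. All of these are controlled with the required weights by $\OO^*(\Cb_*)\les\ep_0$ and $\RR\les\ep_0$, together with the commutator identity and Codazzi relation for one derivative. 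This step yields $|r^{3-2/p}|u|^{(s-3)/2}\mumc|_{p,S_*(u)}\les\ep_0$.

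Applying Lemma \ref{evolution} to the $\mumc$ transport equation then gives $|r^{3-2/p}|u|^{(s-3)/2}\mumc|_{p,S}\les\ep_0$ on all of $\kk$. Combining with the estimates on $\widecheck{\trch\,\trchb}$ from Section \ref{Omtrch}, we obtain the same bound for $\muc$, and inserting into the Hodge system together with Proposition \ref{ellipticest} produces both the $q=0$ and $q=1$ estimates for $\etac$ simultaneously. The main obstacle I anticipate is controlling the linear term $\O_3^2\c d_2^*\etac$ in $\fl[\nab_4\mumc]$ without a genuine loss of derivatives: here one exploits that the coefficient decays like $Mr^{-3}$, so the integral picks up an extra $M/r$ factor and a Gronwall argument (analogous to the one used for $\ombc^\dagger$ in Proposition \ref{propnabomcombc}) closes the loop.
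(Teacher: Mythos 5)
Your proposal follows essentially the same route as the paper: transport the renormalized mass aspect $\mumc$ along $\nab_4$ from the last slice via Lemma \ref{evolution} (with data on $\Cb_*$ controlled by the $\OO^*(\Cb_*)$ norms), then feed $\mumc$, $\rhoc$, $\sic$ into the div--curl system for $\etac$ and apply Proposition \ref{ellipticest} to obtain both the $q=0$ and $q=1$ bounds. The only cosmetic difference is that no Gronwall step is needed for the $\O_3^2\c d_2^*\etac$ term: as you note in your ``cleaner'' alternative, the bootstrap assumption $\OO\le\ep$ together with the extra $M/r$ decay yields a contribution of size $\ep\EE_0\ll\ep_0$, which is exactly how the paper absorbs it.
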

\begin{proof}
We recall from Proposition \ref{equationsmumub}
\begin{align}
\begin{split}\label{[mu]}
\nab_4\widecheck{[\mu]}+\trch\widecheck{[\mu]}=&\O_1^0\c\rhoc+\O_3^1\c\Gag^{(1)}+r^{-1}\Gab\c\Gag^{(1)}.
\end{split}
\end{align}
Applying Lemma \ref{evolution}, we obtain
\begin{align*}
|r^{2-\frac{2}{p}}\widecheck{[\mu]}|_{p,S}\les&|r^{2-\frac{2}{p}}\widecheck{[\mu]}|_{p,S_*(u)}+\int_{\ub}^{\ub_*}\left(|r^{1-\frac{2}{p}}\rhoc|_{p,S}+|r^{1-\frac{2}{p}}\Gab\c\Gag^{(1)}|_{p,S}+\frac{M}{r^3}|r^{2-\frac{2}{p}}\Gag^{(1)}|_{p,S}\right)d\ub\\
\les&\frac{\ep_0}{r|u|^{\frac{s-3}{2}}}+\int_{\ub}^{\ub_*}\left(\frac{\ep_0}{r^2|u|^\frac{s-3}{2}}+\frac{\ep^2}{r^2|u|^{s-1}}+\frac{M}{r^3}\frac{\ep}{|u|^\frac{s-3}{2}}\right)d\ub\\
\les&\frac{\ep_0}{r|u|^\frac{s-3}{2}}.
\end{align*}
We have from Proposition \ref{nullstructure}
\begin{align*}
    \sdiv\etac&=-\muc-\rhoc+\O_3^2\c\Gab^{(1)}+\Gag\c\Gab.
\end{align*}
We recall from \eqref{auxillaryquantities} that
\begin{align*}
[\mu]=\mu+\frac{1}{4}\trch\,\trchb,
\end{align*}
which implies
\begin{equation}
    \mumc=\muc+\O_1^0\c(\trchc,\trchbc,r^{-1}\Omc)+\Gag\c\Gag.
\end{equation}
Thus, we obtain
\begin{align}
\begin{split}\label{divetac}
    \sdiv\etac=-\mumc-\rhoc+\O_3^2+\O_1^0\c(\widecheck{\Om\trch},\widecheck{\Om\trchb},r^{-1}\Omc)+\Gag\c\Gab.
\end{split}
\end{align}
Recall the linearized torsion equation in Proposition \ref{nullstructure}:
\begin{align}
\begin{split}\label{curletac}
\curl\etac=\widecheck{\si}+\O_3^2\c\Gab^{(1)}+\Gab\c\Gag.
\end{split}
\end{align}
Combining \eqref{divetac} and \eqref{curletac}, applying Propositions \ref{ellipticest}, \ref{estOmc0} and \ref{propOmtrch}, we obtain for $q=0,1$:
\begin{align*}
    &|r^{2+q-\frac{2}{p}}\nab^q\etac|_{p,S}\\
    \les& |r^{3-\frac{2}{p}}(\widecheck{[\mu]},\rhoc,\sic)|_{p,S}+|r^{2-\frac{2}{p}}(\widecheck{\Om\trch},\widecheck{\Om\trchb},r^{-1}\Omc)|_{p,S}+\frac{M^2}{r^3}|r^{3-\frac{2}{p}}\Gab^{(1)}|_{p,S}+|r^{3-\frac{2}{p}}\Gag\c\Gab|_{p,S}\\
    \les& \frac{\ep_0}{|u|^{\frac{s-3}{2}}}.
\end{align*}
This concludes the proof of Proposition \ref{propetaetab}.
\end{proof}
\subsubsection{Estimate for \texorpdfstring{$\osc(f)$}{}}
\begin{proposition}\label{estf}
    We have the following estimate:
    \begin{equation}
        \sup_{\kk'}|r^{\frac{s-1}{2}} \dk^{\leq 1}f|\les \ep_0.
    \end{equation}
\end{proposition}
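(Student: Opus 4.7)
The plan is to extract $f$ via a Hodge system on the spheres of the initial-layer foliation using the null frame transformation formulae of Proposition \ref{transformation}, and then to recover the null derivatives $\nab'_{e_3'}f$ and $\nab'_{e_4'}f$ algebraically from the analogous transformations of $\eta$ and of $\xi$ (exploiting $\xi=\xi'=0$). The Hodge system comes from the transformations of $\trch$ and of $f$ itself:
\begin{align*}
\sdiv' f &= \la^{-1}\trch'-\trch + \Gaw\cdot f + \O_2^1\cdot f,\\
\curl' f &= \Gaw\cdot f + \O_2^1\cdot f.
\end{align*}

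Writing $\la^{-1}\trch'-\trch = \Om^{-1}(\Om'\trch'-\Om\trch)$, the RHS of the divergence equation is decomposed as
\[
\Om'\trch'-\Om\trch = \widecheck{\Om'\trch'}-\widecheck{\Om\trch}+\bigl((\Om\trch)'_K-(\Om\trch)_K\bigr),
\]
in which the first difference is controlled by the initial-layer bound $\OO_{(0)}\leq \ep_0$, the second by Proposition \ref{propOmtrch}, and the pullback difference by Lemma \ref{estKK'}, whose hypotheses are supplied by Proposition \ref{osc0ucontrol} (for $\osc(u)$) and the bootstrap assumption on $\osc(x)$. This yields $|\sdiv' f|_{p,S'}+|\curl' f|_{p,S'} \les \ep_0\, r^{-(s+1)/2+2/p}$ modulo lower-order contributions of the form $\Gaw\cdot f+\O_2^1\cdot f$ whose coefficients decay strictly faster than $r^{-1}$ at infinity by Lemma \ref{estGagbO}. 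The elliptic estimate of Proposition \ref{ellipticest} applied on each sphere $S'\subset\kk'$ then gives, after absorbing the l.o.t.\ terms for $r\geq R_0$ with $R_0$ large,
\[
r^{1-2/p}|\nab' f|_{p,S'} + r^{-2/p}|f|_{p,S'} \les \ep_0\, r^{-(s-1)/2}.
\]

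The null derivatives follow algebraically from Proposition \ref{transformation}. The vanishing of $\xi$ and $\xi'$ together with $\la^{-2}\xi'=\xi+\frac{1}{2}\nab'_{\la^{-1}e_4'}f+\frac{1}{4}\trch\,f+(\mathrm{l.o.t.})$ yields $\nab'_{e_4'} f = -\frac{1}{2}\la\,\trch\,f-(\mathrm{l.o.t.})$, purely algebraic in $f$; likewise $\eta' = \eta+\frac{1}{2}\la\,\nab'_{e_3'}f+(\mathrm{l.o.t.})$ gives $\la\,\nab'_{e_3'} f = 2(\eta'-\eta)+(\mathrm{l.o.t.})$, where $\eta'-\eta=\etac'-\etac+(\eta'_K-\eta_K)$ is controlled by $\OO_{(0)}\leq\ep_0$, by Proposition \ref{propetaetab}, and by Lemma \ref{estKK'} respectively. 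Combining these $L^p$ bounds with the Sobolev embedding of Proposition \ref{standardsobolev} upgrades them to $L^\infty$ (using the second-order control available in $\OO_{(0)}$) and the supremum over $\kk'$ concludes the proof.

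The main technical difficulty is the uniform bookkeeping of the two distinct Kerr pullbacks $\Phi^*X_{Kerr}$ and ${\Phi'}^*X_{Kerr}$, which enter every comparison of the form $\widecheck{X'}-\widecheck{X}=(X'-X)-(X'_K-X_K)$; this is handled systematically by Lemma \ref{estKK'}. A secondary subtlety is that the l.o.t.\ terms on the RHS of Proposition \ref{transformation} contain $f$ itself, so the Hodge elliptic estimate closes only because their coefficients $\Gaw$ and $\O_2^1$ decay strictly faster than the LHS weight $r^{-1}|f|_{p,S'}$, which ultimately rests on the smallness $\EE_0=M/R_0\ll 1$.
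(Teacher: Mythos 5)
Your route differs from the paper's in the tangential step: you keep the div--curl system for $f$ at first order and invoke Proposition \ref{ellipticest}(2), whereas the paper differentiates the transformed Codazzi relation once, keeps the resulting $\frac{f}{2r^2}$ term on the left, and works with the coercive operator $-\De_1+\frac{1}{2r^2}$ (via $d_1^*d_1=-\De_1+\mathbf{K}$ and the Gauss equation), which yields $(r\nab)^{q}f$ for $q=0,1,2$ in $L^p$ in one stroke. The recovery of $\nab_{e_3'}f$ from the $\eta$-transformation and of $\nab_{e_4'}f$ from $\xi=\xi'=0$ is identical to the paper. Your first-order variant is viable in principle, but note two points. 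First, Lemma \ref{estKK'} as stated only gives the $\ep_0$-weighted bound when the Kerr quantity carries at least one factor of $M$ ($k\geq 1$); the undifferentiated comparison $(\Om\trch)'_K-(\Om\trch)_K$ has a $k=0$ leading piece $\sim r^{-2}(u'-u)$, so you must rerun the lemma's proof with the improved bound of Proposition \ref{osc0ucontrol} (as the paper does for $\Om^{-1}\trchb$ in Proposition \ref{OOb0Omtrchb}); you gesture at this, but it is not an application of the lemma as stated. The paper's differentiated identity sidesteps this because $e_A(\Om\trch)_{Kerr}=\O_4^2$ has $k=2$.

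The genuine gap is the final $L^\infty$ upgrade for the angular derivative. The statement requires $\sup_{\kk'}r^{\frac{s-1}{2}}|r\nab f|\les\ep_0$, and Proposition \ref{standardsobolev} converts this into an $L^4$ bound on $\nab f$ \emph{and} on $r\nab^2 f$; your elliptic step only produces one derivative of $f$ ($q=0,1$), and the "second-order control available in $\OO_{(0)}$" does not substitute for $\nab^2 f$ itself. To close this you must differentiate the div--curl system once, i.e.\ estimate $e_A(\sdiv f)$ and $e_A(\curl f)$, converting $e_A$ into $e_A'$ via \eqref{change'}; this produces precisely the $\frac{f_A}{2r^2}$ contribution from $-\frac{\la}{2}f_Ae_3'(\Om'\trch')$ that the paper absorbs into $-\De_1+\frac{1}{2r^2}$ (alternatively, with your zeroth-order bound $|f|\les\ep_0 r^{-\frac{s-1}{2}}$ already in hand, you may move it to the right-hand side). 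Either way, the second angular derivative cannot be skipped, and supplying it essentially reproduces the paper's computation \eqref{divfest}--\eqref{curlfest}.
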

\begin{proof}
We have from Proposition \ref{transformation}
\begin{align*}
    \Om'\trch'=\Om\trch+\sdiv f+f\c \Gaw+\O_2^1\c f.
\end{align*}
Applying \eqref{change'} and Proposition \ref{nullstructure}, we have
\begin{align*}
    e_A(\Om\trch)&=e_A(\Om'\trch')-e_A(\sdiv f)+r^{-1}f\c\Gaw^{(1)}+r^{-1}\Gaw\c f^{(1)}+\O_3^1\c f^{(1)}\\
    &=e'_A(\Om'\trch')-\frac{\la}{2}f_A e'_3(\Om'\trch')-e_A(\sdiv f)+\Gaw\c\Gaw^{(1)}+\O_2^1\c\Gaw^{(1)}\\
    &=e'_A(\Om'\trch')+\frac{\Om'}{4}f_A\trch'\,\trchb'-e_A(\sdiv f)+\Gaw\c\Gaw^{(1)}+\O_2^1\c\Gaw^{(1)}.
\end{align*}
Recall from Lemma \ref{estKK'} that
\begin{align*}
    |e'_A(\Om'_K\tr_K\chi'_K)-e_A(\Om_K\tr_K\chi_K)|\les \frac{\ep_0}{r^\frac{s+3}{2}}.
\end{align*}
Thus, we infer
\begin{align*}
    &\left|r^{-\frac{2}{p}}\left(e_A(\sdiv f)-\frac{\Om'}{4}f_A\trch'\trchb'+e_A(\widecheck{\Om\trch})-e'_A(\widecheck{\Om'\trch'})\right)\right|_{p,S}\\
    \les& |e'_A(\Om'_K\tr_K\chi'_K)-e_A(\Om_K\tr_K\chi_K)|_{\infty,S}+\frac{M^2}{r^3}|r^{-\frac{2}{p}}\c\Gaw^{(1)}|+|r^{-\frac{2}{p}}\Gaw\c\Gaw^{(1)}|_{p,S}\\
    \les&\frac{\ep_0}{r^\frac{s+3}{2}}+\frac{M^2}{r^3}\frac{\ep}{r^\frac{s+1}{2}}+\frac{\ep^2}{r^{s+1}}\\
    \les& \frac{\ep_0}{r^\frac{s+3}{2}}.
\end{align*}
Recall that
\begin{align*}
    -\frac{\Om'}{4}f\trch'\trchb'=\frac{f}{2r^2}+\O_3^1\c f+\Gaw\c\Gaw.
\end{align*}
Hence, we deduce from Proposition \ref{propOmtrch}
\begin{align}
\begin{split}\label{divfest}
    &\;\;\;\,\,\, \left|r^{-\frac{2}{p}}\left(e_A(\sdiv f)+\frac{f}{2r^2}\right)\right|_{p,S}\\
    &\les|r^{-\frac{2}{p}}e_A(\widecheck{\Om\trch})|_{p,S}+|e'_A(\widecheck{\Om'\trch'})|_{\infty,S}+\frac{M}{r^2}|r^{-\frac{2}{p}}\c\Gaw^{(1)}|_{p,S}+|r^{-\frac{2}{p}}\Gaw\c\Gaw^{(1)}|_{p,S}+\frac{\ep_0}{r^\frac{s+3}{2}}\\
    &\les\frac{\ep_0}{r^\frac{s+3}{2}}+\frac{M}{r^2}\frac{\ep}{r^\frac{s+1}{2}}+\frac{\ep^2}{r^{s+1}}\\
    &\les\frac{\ep_0}{r^\frac{s+3}{2}}.
\end{split}
\end{align}
Next, we have from Proposition \ref{transformation}
\begin{align*}
    \curl f=\O_2^1\c\Gaw+r\Gaw\c\Gaw,
\end{align*}
which implies
\begin{align}
\begin{split}\label{curlfest}
    |r^{-\frac{2}{p}}e_A(\curl f)|_{p,S}&\les \frac{M^2}{r^3}|r^{-\frac{2}{p}}\c\Gaw^{(1)}|_{p,S}+|r^{-\frac{2}{p}}\Gaw\c\Gaw^{(1)}|_{p,S}\\
    &\les \frac{M^2}{r^3}\frac{\ep}{r^\frac{s+1}{2}}+\frac{\ep^2}{r^{s+1}}\\
    &\les \frac{\ep_0}{r^\frac{s+3}{2}}.
\end{split}
\end{align}
Combining \eqref{divfest} and \eqref{curlfest}, we deduce\footnote{Recall that $d_1^*d_1f=d_1^*(\sdiv f,\curl f)=-\nab\sdiv f+{^*\nab}\curl f$.}
\begin{align*}
\left|r^{-\frac{2}{p}}\left(d_1^*d_1 f-\frac{f}{2r^2}\right)\right|_{p,S}\les\frac{\ep_0}{r^\frac{s+3}{2}}.
\end{align*}
We have from Proposition \ref{standardnull}
\begin{align*}
    d_1^* d_1 f-\frac{f}{2r^2}&=(-\De_1+\mathbf{K})f-\frac{f}{2r^2}\\
    &=-\De_1f-\frac{f}{4}\trch\trchb-\frac{f}{2r^2}+r^{-1}\Gaw^{(1)}\c f+\O_3^1\c f\\
    &=-\De_1 f+\frac{f}{2r^2}+\O_2^1\c \Gaw^{(1)}+\Gaw\c\Gaw^{(1)}.
\end{align*}
Thus, we infer
\begin{align*}
    \left|r^{-\frac{2}{p}}\left(-\De_1+\frac{1}{2r^2}\right)f\right|_{p,S}&\les\left|r^{-\frac{2}{p}}\left(d_1^*d_1-\frac{1}{2r^2}\right)f\right|_{p,S}+\frac{M}{r^2}|r^{-\frac{2}{p}}\Gaw^{(1)}|_{p,S}+|r^{-\frac{2}{p}}\Gaw\c\Gaw^{(1)}|_{p,S}\\
    &\les \frac{\ep_0}{r^\frac{s+3}{2}}+\frac{M}{r^2}\frac{\ep}{r^\frac{s+1}{2}}+\frac{\ep^2}{r^{s+1}}\\
    &\les \frac{\ep_0}{r^\frac{s+3}{2}}.
\end{align*}
Applying Proposition \ref{ellipticest}, we deduce for $q=0,1,2$,
\begin{align*}
    |r^{-\frac{2}{p}}(r\nab)^q f|_{p,S}\les\frac{\ep_0}{r^\frac{s+3}{2}}.
\end{align*}
Then, applying Proposition \ref{standardsobolev}, we obtain
\begin{equation}\label{finftynab}
    \sup_{S\subset\kk'}|r^\frac{s+3}{2}(r\nab)^{\leq 1}f|_{\infty,S}\les\ep_0.
\end{equation}
Next, we recall from Proposition \ref{transformation}
\begin{align}\label{nab3feta}
    \la \nab_{e_3} f=2\eta'-2\eta+\O_1^1\c\Gaw+r\Gaw\c\Gaw.
\end{align}
Applying Lemma \ref{estKK'}, we obtain
\begin{align*}
    |\eta_K'(e'_A)-\eta_K(e_A)|\les\frac{\ep_0}{r^\frac{s+1}{2}}.
\end{align*}
Applying Proposition \ref{propetaetab} and \eqref{finftynab}, we deduce
\begin{align}
\begin{split}\label{nab3finfty}
    |r^{-\frac{2}{p}}\nab_{e_3}f|_{p,S}&\les |r^{-\frac{2}{p}}(\etac',\etac)|_{p,S}+M|r^{-1-\frac{2}{p}}\Gaw|_{p,S}+|r^{1-\frac{2}{p}}\Gaw\c\Gaw|_{p,S}+\frac{\ep_0}{r^\frac{s+1}{2}}\\
    &\les\frac{\ep_0}{r^\frac{s+1}{2}}+\frac{M\ep}{r^\frac{s+3}{2}}+\frac{\ep^2}{r^s}\\
    &\les\frac{\ep_0}{r^\frac{s+1}{2}}.
\end{split}
\end{align}
Finally, we recall from Proposition \ref{transformation}
\begin{equation*}
    \frac{1}{2}\nab_{\la^{-1}e_4}f=-\frac{1}{4}\trch\, f+\O_1^1\c\Gaw+r\Gaw\c\Gaw,
\end{equation*}
which implies from \eqref{finftynab}
\begin{equation}\label{nab4finfty}
    \sup_{\kk'} r^\frac{s+1}{2}|\nab_{e_4}f|\les \sup_{\kk'}r^\frac{s-1}{2}|f|+\sup_{\kk'}\left(\frac{M\ep}{r}+\frac{\ep^2}{r^\frac{s-3}{2}}\right)\les \ep_0.
\end{equation}
Combining \eqref{finftynab}, \eqref{nab3finfty} and \eqref{nab4finfty}, we deduce
\begin{equation*}
    \sup_{\kk'} r^\frac{s-1}{2}|\dk^{\leq 1}f|\les\ep_0.
\end{equation*}
This concludes the proof of Proposition \ref{estf}.
\end{proof}
\subsubsection{Estimate for \texorpdfstring{$\okk_{0,1}(\etabc)$}{}}
\begin{proposition}\label{OObetabc1}
We have the following estimate for $q=0,1$:
    \begin{equation}
        |r^{2-\frac{2}{p}}|u|^{\frac{s-3}{2}}(r\nab)^q\etabc|_{p,S_0(\ub)}\les\ep_0.\label{estOObetabc}
    \end{equation}
\end{proposition}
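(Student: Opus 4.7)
The plan is to exploit the null frame transformation formula for $\etab$ from Proposition \ref{transformation}, together with the bound on $f$ from Proposition \ref{estf} and the initial layer assumption $\OO_{(0)}\leq\ep_0$. Since $S_0(\ub)\subset\kk_{(0)}$ lies near $\Si_0$, we have $r\sim|u|\sim\ub$ and, modulo $\osc(r)$, $r\sim r_{(0)}$, so the target weight $r^{2-2/p}|u|^{(s-3)/2}$ coincides (up to a universal constant) with the weight $r_{(0)}^{(s+1)/2-2/p}$ appearing in the definition of $\OO_{(0)}$. Thus the $\OO_{(0)}$-norm of $\etabc'$ on $S_0(\ub)$ already controls the ``correct'' norm of $\etabc'$ in the $(u,\ub)$-foliation.

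Inverting the transformation formula $\etab'=\etab+\tfrac{1}{4}\trchb\,f+\Gab\c f+\O_2^1\c f$ and splitting into the linearized and Kerr-background parts gives
\[
\etabc \;=\; \etabc' \;+\; (\etab'_K-\etab_K) \;-\;\tfrac{1}{4}\trchb\,f \;-\;\Gab\c f \;-\;\O_2^1\c f.
\]
For $q=0$ each term is readily estimated: the first is bounded by $\ep_0$ from $\OO_{(0)}\leq\ep_0$ after converting the weight via $r\sim r_{(0)}\sim|u|$; the Kerr difference is bounded by $\ep_0/r^{(s+1)/2}$ via Lemma \ref{estKK'} applied to $\etab_K=\O_3^2$; the principal correction satisfies $|\trchb\,f|\les r^{-1}\cdot\ep_0/r^{(s-1)/2}=\ep_0/r^{(s+1)/2}$ by Proposition \ref{estf}; and the nonlinear terms $\Gab\c f$, $\O_2^1\c f$ are strictly smaller and absorbed into $\ep_0/r^{(s+1)/2}$ using the available bounds on $\Gab$ and the background.

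For $q=1$ I would apply $\nab$ to the decomposed identity. The derivative on $\etabc'$ is handled by commuting $\nab$ with $\nab'$ via \eqref{change'}, which introduces only a controlled correction $\tfrac{\la}{2}f_Ae'_3$, and then invoking the $\OO_{(0)}$-control of $\nab'\etabc'$. The derivative on the Kerr difference $\etab'_K-\etab_K$ is dealt with exactly as in the second half of the proof of Proposition \ref{estOOb0}: write $e_A=e'_A-\tfrac{\la}{2}f_Ae'_3$, apply this to each pullback, and then bound the resulting Kerr difference with Lemma \ref{estKK'}, gaining one extra $r^{-1}$. Finally, $\nab(\trchb\,f)$ splits into $\nab\trchb\cdot f$ and $\trchb\cdot\nab f$, the former controlled by Proposition \ref{propnabtrchtrchbc} and the latter by the $\dk^{\leq 1}f$ bound of Proposition \ref{estf}.

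The main technical obstacle is bookkeeping the equivalence of the two foliations on $S_0(\ub)$: one must check that translating all quantities referred to $(u_{(0)},\ub)$ (the norm $\OO_{(0)}$, the connection $\nab'$, the radius $r_{(0)}$) into the $(u,\ub)$-counterparts used in the statement introduces errors no worse than $\ep_0/r^{(s+1)/2}$. This is delivered by the combination of Propositions \ref{osc0ucontrol}, \ref{estOmc0}, \ref{estf} and Lemma \ref{estKK'}, which together yield the oscillation estimates $\osc(u)$, $\osc(r)$, $\osc(x)$, $\osc(f)\les\ep_0$ in the required range. Once this conversion is in place the estimate for $q=0,1$ follows by summation.
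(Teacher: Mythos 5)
Your proposal is correct and follows essentially the same route as the paper: the paper also starts from the transformation formula $\etab'=\etab+\frac{1}{4}\trchb\,f+\lot$ of Proposition \ref{transformation}, bounds $\etabc'$ by the initial layer assumption, the Kerr difference $\etab_K'-\etab_K$ by Lemma \ref{estKK'}, the $f$-terms by Proposition \ref{estf}, and for $q=1$ differentiates the identity, converts $e_B$ to $e_B'$ via \eqref{change'} and reapplies Lemma \ref{estKK'}, exactly as in the second half of the proof of Proposition \ref{estOOb0}. The only quibble is your citation of Proposition \ref{propnabtrchtrchbc} for $\nab\trchb\cdot f$: that proposition controls $\widecheck{\Om\trchb}$ itself rather than its derivative, but the term is harmless anyway since the bootstrap bound on $\nab\trchbc$ (together with $\nab_K\tr_K\chib_K=\O_4^2$) multiplied by the smallness of $f$ already yields an acceptable error, which is how the paper absorbs it.
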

\begin{proof}
We recall from Proposition \ref{transformation}
\begin{align}
\begin{split}\label{transetab}
\etab&=\etab'-\frac{1}{4}\trchb\,f+\O_1^1\c\Gaw+r\Gaw\c\Gaw.
\end{split}
\end{align}
Recall that we have from Lemma \ref{estKK'}
\begin{align*}
|\etab_K(e_A)-\etab_K'(e'_A)|\les \frac{\ep_0}{r^\frac{s+1}{2}}.
\end{align*}
Thus, we obtain
\begin{align}
\begin{split}\label{etabcinfty}
    |\etabc_A|&\les |\etabc'_A|+|\etab_K(e_A)-\etab_K'(e'_A)|+\frac{M}{r}|\Gaw|+r|\Gaw\c\Gaw|\\
    &\les\frac{\ep_0}{r^\frac{s+1}{2}}+\frac{M}{r}\frac{\ep}{r^\frac{s+1}{2}}+\frac{\ep^2}{r^s}\\
    &\les\frac{\ep_0}{r^\frac{s+1}{2}}.
\end{split}
\end{align}
Differentiating \eqref{transetab} by $e_B$, we obtain
\begin{align*}
    e_B(\etab_A)=e'_B(\etab'_A)+\O_2^0\c f^{(1)}+\O_2^1\c\Gaw^{(1)}+\Gaw\c\Gaw^{(1)}.
\end{align*}
Applying Lemma \ref{estKK'}, we have
\begin{align}\label{transetabK}
    |e_B(\etab_K)_A-e'_B(\etab'_K)_A|\les\frac{\ep_0}{r^\frac{s+3}{2}}.
\end{align}
Hence, we obtain
\begin{align*}
    &\;\;\;\,\,\,|r^{-\frac{2}{p}}e_B(\etabc_A)|_{p,S}\\
    &\les |r^{-\frac{2}{p}}e'_B(\etabc_A')|_{p,S}+|e_B(\etab_K)_A-e'_B(\etab'_K)_A|+r^{-2}|f^{(1)}|_{\infty,S}+M|r^{-2-\frac{2}{p}}\Gaw^{(1)}|_{p,S}+|r^{-\frac{2}{p}}\Gaw\c\Gaw^{(1)}|_{p,S}\\
    &\les \frac{\ep_0}{r^\frac{s+3}{2}}+\frac{M\ep}{r^\frac{s+5}{2}}+\frac{\ep^2}{r^{s+1}}\\
    &\les \frac{\ep_0}{r^\frac{s+3}{2}}.
\end{align*}
Combining with \eqref{etabcinfty}, we infer for $q=0,1$
\begin{equation*}
    |r^{\frac{s+1}{2}-\frac{2}{p}}(r\nab)^q\etabc|_{p,S}\les\ep_0.
\end{equation*}
This concludes the proof of Proposition \ref{OObetabc1}.
\end{proof}
\subsubsection{Estimate for \texorpdfstring{$\OO_{0,1}(\etabc)$}{}}
\begin{proposition}\label{propetabc1}
We have the following estimate for $q=0,1$:
    \begin{equation}
        |r^{2-\frac{2}{p}}|u|^{\frac{s-3}{2}}(r\nab)^q\etabc|_{p,S}\les\ep_0.\label{estetabc}
    \end{equation}
\end{proposition}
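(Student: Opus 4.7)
The plan is to mirror the proof of Proposition \ref{propetaetab}, interchanging the roles of $e_3$ and $e_4$ and the corresponding initial/final cones. The three ingredients are: (i) the transport equation for the linearized renormalized mass aspect function $\mubmc$ in Proposition \ref{equationsmumub}, (ii) the div--curl Hodge system for $\etabc$ in Proposition \ref{nullstructure}, and (iii) initial data for $\mubmc$ on the sphere $S_0(\ub)\subset\kk'$ obtained from the initial layer.

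\textbf{Step 1 (Initial data for $\mubmc$ on $S_0(\ub)$).} From Proposition \ref{nullstructure},
\[
\sdiv\etabc=-\mubc-\rhoc+\O_3^2\c\Gab^{(1)}+\O_3^2\c\Lc+\Gag\c\Gab,
\]
and from \eqref{auxillaryquantities},
\[
\mubmc=\mubc+\tfrac14\widecheck{\trch\,\trchb}=\mubc+\O_1^0\c(\widecheck{\Om\trch},\widecheck{\Om\trchb},r^{-1}\Omc)+\Gag\c\Gab.
\]
On $S_0(\ub)$ I then combine the bound for $|r^{1-2/p}\nab\etabc|_{p,S_0(\ub)}$ from Proposition \ref{OObetabc1}, the $\RR$-bound for $|r^{3-2/p}\rhoc|_{p,S_0(\ub)}$, the estimates of Propositions \ref{propOmtrch}, \ref{propnabtrchtrchbc}, \ref{estOmc0} for $\widecheck{\Om\trch},\widecheck{\Om\trchb},\Omc$, and Lemma \ref{estGagba} for the quadratic errors. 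This yields
\[
|r^{3-\frac{2}{p}}\mubmc|_{p,S_0(\ub)}\les\frac{\ep_0}{|u|^{(s-3)/2}}.
\]

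\textbf{Step 2 (Transport of $\mubmc$ along $\Cb_\ub$).} Dividing the equation
\[
\Om\nab_3\mubmc+\Om\trchb\,\mubmc=\Om\trchb\,\rhoc+\fl[\nab_3\mubmc]+\err[\nab_3\mubmc]
\]
of Proposition \ref{equationsmumub} by $\Om$ and applying Lemma \ref{evolution} on $\Cb_\ub$ with $\la_0=1$,
\[
|r^{3-\frac{2}{p}}\mubmc|_{p,S}\les|r^{3-\frac{2}{p}}\mubmc|_{p,S_0(\ub)}+\int_{u_0(\ub)}^{u}\bigl(|r^{2-\frac{2}{p}}\rhoc|_{p,S}+|r^{3-\frac{2}{p}}(\fl+\err)|_{p,S}\bigr)\,du'.
\]
The curvature integrand is bounded by $\ep_0 r^{-1}|u'|^{-(s-2)/2}$ using $\RR\les\ep_0$. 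The linear terms $\fl[\nab_3\mubmc]=\O_3^2\c(d_2^*\etabc,\nab\trchbc,\bbc)+\O_3^1\c(\Gag,\Lc)$ are controlled by the already-proved Propositions \ref{propetaetab}, \ref{propnabtrchtrchbc} (once transformed into an $\etabc$ bound, using $d_2^*\etabc$), the $\RR$-bound on $\bbc$, and the bootstrap bound on $\OO_\Ga$ and $\Gag$. The nonlinear errors $\err[\nab_3\mubmc]$ are pairwise products of elements of $\Gag,\Gab,\Gab^{(1)},\Lc$, which by Lemma \ref{estGagba} and Cauchy--Schwarz are integrable against the $u'$-weight. Putting it all together,
\[
|r^{3-\frac{2}{p}}\mubmc|_{p,S}\les\frac{\ep_0}{|u|^{(s-3)/2}}.
\]

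\textbf{Step 3 (Hodge estimates for $\etabc$).} I use the div--curl system
\[
\sdiv\etabc=-\mubmc-\rhoc+\O_1^0\c(\widecheck{\Om\trch},\widecheck{\Om\trchb},r^{-1}\Omc)+\O_3^2+\Gag\c\Gab,\qquad
\curl\etabc=-\sic+\O_3^2\c\Gab^{(1)}+\Gab\c\Gag,
\]
and apply the elliptic estimates of Proposition \ref{ellipticest} with $p\in[2,4]$. Every term on the right-hand side has already been estimated by $\ep_0 r^{-3+2/p}|u|^{-(s-3)/2}$ (Step 2 for $\mubmc$, $\RR$ for $\rhoc,\sic$, Propositions \ref{propOmtrch}, \ref{propnabtrchtrchbc}, \ref{estOmc0} for the metric-type terms), so we get
\[
|r^{2+q-\frac{2}{p}}\nab^q\etabc|_{p,S}\les\frac{\ep_0}{|u|^{(s-3)/2}},\qquad q=0,1,
\]
which is the desired estimate.

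\textbf{Main obstacle.} The only delicate point is Step 1: the initial bound for $\mubmc$ on $S_0(\ub)$ requires one derivative of $\etabc$ in $L^p$, so we really do need the $q=1$ estimate of Proposition \ref{OObetabc1}, and we must check that the Kerr-pullback piece of the product $\widecheck{\trch\,\trchb}$ (which \emph{a priori} only gains one power of $r$) is absorbed correctly. All other steps are straightforward integrations of already-established bounds, modeled on the proof of Proposition \ref{propetaetab}.
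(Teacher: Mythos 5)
Your proposal is correct and follows essentially the same route as the paper: an initial bound for $\mubmc$ on $S_0(\ub)$ from the $\sdiv\etabc$ equation together with Proposition \ref{OObetabc1}, transport of $\mubmc$ along $\Cb_\ub$ via Proposition \ref{equationsmumub} and Lemma \ref{evolution}, and then the div--curl elliptic estimate of Proposition \ref{ellipticest}. Two small corrections: the terms $\O_3^2\c(d_2^*\etabc,\nab\trchbc)$ in the transport step are not supplied by Propositions \ref{propetaetab} or \ref{propnabtrchtrchbc} (which concern $\etac$ and the undifferentiated $\trchbc$) but, exactly as in the paper, are absorbed into $\O_3^1\c\Gab^{(1)}$ and estimated with the bootstrap bound of Lemma \ref{estGagba}, the $M/r$ prefactor removing any circularity; and since the transport coefficient is $\trchb\,\mubmc$ (so $\la_0=1$), Lemma \ref{evolution} should be applied with the weight $r^{2-2/p}$, the gain of one power of $r$ coming from the decay of the sources, which yields the equivalent bound $|r^{2-\frac{2}{p}}\mubmc|_{p,S}\les\ep_0\,r^{-1}|u|^{-\frac{s-3}{2}}$.
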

\begin{proof}
We have from Propositions \ref{nullstructure}, \ref{propOmtrch}, \ref{OOb0Omtrchb} and \ref{OObetabc1} that
\begin{align}
\begin{split}\label{mumbcinitial}
    |r^{2-\frac{2}{p}}\mubmc|_{p,S_0(\ub)}&\les |r^{2-\frac{2}{p}}\mubc|_{p,S_0(\ub)}+|r^{1-\frac{2}{p}}(\trchc,\trchbc)|_{p,S_0(\ub)}\\
    &\les |r^{2-\frac{2}{p}}\sdiv\etabc|_{p,S_0(\ub)}+|r^{2-\frac{2}{p}}\rhoc|_{p,S_0(\ub)}+|r^{1-\frac{2}{p}}(\widecheck{\Om\trch},\widecheck{\Om^{-1}\trchb})|_{p,S_0(\ub)}\\
    &+M^2|r^{-1-\frac{2}{p}}\Gab^{(1)}|_{p,S_0(\ub)}+|r^{2-\frac{2}{p}}\Gab\c\Gag|_{p,S_0(\ub)}\\
    &\les \frac{\ep_0}{r^\frac{s-1}{2}}.
\end{split}
\end{align}
We recall from Proposition \ref{equationsmumub}
\begin{align*}
\nab_3\mubmc+\trchb\,\mubmc&=O(r^{-1})\rhoc+\O_3^1\c\Gab^{(1)}+r^{-1}\Gab\c\Gab^{(1)}.
\end{align*}
Applying Lemma \ref{evolution} and \eqref{mumbcinitial}, we obtain
\begin{align*}
    |r^{2-\frac{2}{p}}\widecheck{[\mub]}|_{p,S}&\les |r^{2-\frac{2}{p}}\widecheck{[\mub]}|_{p,S_0(\ub)}+\int_{u_0(\ub)}^u\left( |r^{1-\frac{2}{p}}\rhoc|_{p,S}+\frac{M}{r^2}|r^{1-\frac{2}{p}}\Gab^{(1)}|_{p,S}+|r^{1-\frac{2}{p}}\Gab\c\Gab^{(1)}|_{p,S}\right)du\\
    &\les \frac{\ep_0}{r^\frac{s-1}{2}}+\int_{u_0(\ub)}^u\left(\frac{\ep_0}{r^2|u|^\frac{s-3}{2}}+\frac{M}{r^2}\frac{\ep}{|u|^{\frac{s-1}{2}}}+\frac{\ep^2}{r^2|u|^{s-1}}\right)du\\
    &\les \frac{\ep_0}{r|u|^\frac{s-3}{2}}.
\end{align*}
We have from Proposition \ref{nullstructure}
\begin{align}
    \begin{split}\label{hodgeetabc}
        \sdiv\etabc=&-\widecheck{[\mub]}-\rhoc+\O_3^2\c\Gab^{(1)}+\O_1^0\c(\trchc,\trchbc,r^{-1}\Omc)+\Gag\c\Gab,\\
        \curl\etabc=&\sic+\O_3^2\c\Gab^{(1)}+\Gag\c\Gab.
    \end{split}
\end{align}
Applying Proposition \ref{ellipticest}, we deduce for $q=0,1$:
\begin{align*}
|r^{2+q-\frac{2}{p}}\nab^q\etabc|_{p,S}&\les|r^{3-\frac{2}{p}}(\mubmc,\rhoc)|_{p,S}+|r^{2-\frac{2}{p}}(\trchc,\trchbc,r^{-1}\Omc)|_{p,S}+M^2|r^{-\frac{2}{p}}\Gab^{(1)}|_{p,S}+|r^{3-\frac{2}{p}}\Gag\c\Gab|_{p,S}\\
&\les\frac{\ep_0}{|u|^{\frac{s-3}{2}}}+\frac{M^2}{r|u|^\frac{s-1}{2}}+\frac{\ep^2}{|u|^{s-2}}\\
&\les\frac{\ep_0}{|u|^{\frac{s-3}{2}}}.
\end{align*}
This concludes the proof of Proposition \ref{propetabc1}.
\end{proof}
\subsubsection{Estimate for \texorpdfstring{$\OO_{0,1,2}(\Omc)$}{}}\label{Omc}
\begin{proposition}\label{propOmc}
We have the following estimates:
\begin{align}
|r^{1-\frac{2}{p}}|u|^{\frac{s-3}{2}}(r\nab)^q\Omc|_{p,S}\les\ep_0,\qquad q=0,1,2.
\end{align}
\end{proposition}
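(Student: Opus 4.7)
The cases $q=0$ and $q=1$ are already Proposition \ref{estOmc0}, so the only new content is $q=2$. The plan is to bypass the transport equation $\nab_4\Omc=\O_0^0\,\Omomc+\O_1^1\c\Gag+r\Gag\c\Gag$, whose differentiation would require $\nab^2\Omomc$ — a quantity that has not been estimated anywhere in the paper — and instead to exploit the algebraic identity $\nab\log\Om=\frac{1}{2}(\eta+\etab)$ recalled in \eqref{6.6}. This identity holds identically for any double null foliation, and in particular for the pulled-back Kerr foliation in $\kk$. Subtracting the two versions, and using that $\Om$ is a scalar (so that $\nab\Om_K$ coincides with the ordinary exterior derivative and no connection-difference correction $\Lc$ appears), yields the exact pointwise relation
$$
\nab\Omc=\tfrac{1}{2}\,\Omc\,(\eta+\etab)+\tfrac{1}{2}\,\Om_K\,(\etac+\etabc).
$$

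Applying $r\nab$ once more gives schematically
$$
r\nab^2\Omc=\Omc\cdot r\nab(\eta+\etab)+r\nab\Omc\otimes(\eta+\etab)+r\nab\Om_K\otimes(\etac+\etabc)+\Om_K\cdot r\nab(\etac+\etabc).
$$
Every term on the right is under control: $\Omc$ and $r\nab\Omc$ via Proposition \ref{estOmc0}, $\etac$ and $r\nab\etac$ via Proposition \ref{propetaetab}, $\etabc$ and $r\nab\etabc$ via Proposition \ref{propetabc1}, together with the Kerr background bounds $\Om_K=\frac{1}{2}+\O_2^1$, $\eta_K,\etab_K=\O_3^2$ and Lemma \ref{estGagba} for the full $\eta+\etab$. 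The dominant contribution is $\Om_K\cdot r\nab(\etac+\etabc)$, which produces the target
$$
|r^{3-\frac{2}{p}}\nab^2\Omc|_{p,S}\les |r^{3-\frac{2}{p}}\nab(\etac+\etabc)|_{p,S}+\text{(better)}\les\frac{\ep_0}{|u|^{(s-3)/2}},
$$
while the remaining three terms are strictly better by a factor of $M/r$ or of $\ep/(r|u|^{(s-3)/2})$, and are therefore absorbed.

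The argument is essentially a single algebraic identity composed with estimates already in hand, so no new transport integration, elliptic Hodge system, or renormalization is required at this step. The only subtlety — and the one point where care is needed — is the observation that $\nab$ applied to the scalar $\Om_K$ produces no $\Lc$ correction, so that the linearized identity for $\nab\Omc$ is exact rather than approximate; once this is noted, the $q=2$ estimate of Proposition \ref{propOmc} follows by direct substitution from Propositions \ref{estOmc0}, \ref{propetaetab} and \ref{propetabc1}.
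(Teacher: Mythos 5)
Your proposal is correct, and it reaches the $q=2$ estimate by a more elementary endgame than the paper, although both arguments spring from the same identity \eqref{6.6}. The paper also starts from $\eta+\etab=2\nab\log\Om$, but then takes a divergence to get $\De\log\big(\tfrac{\Om}{\Om_K}\big)=\tfrac12\sdiv(\etac+\etabc)$, invokes the elliptic estimates of Proposition \ref{ellipticest} to control $\nab^{\le 2}\log\big(\tfrac{\Om}{\Om_K}\big)$, and finally converts back to $\Omc$ through the exponential series \eqref{expOmc} together with the Banach-algebra property \eqref{Banachalgebra} of $W^{2,4}(S)$. You instead differentiate the exact pointwise relation $\nab\Omc=\tfrac12\Omc(\eta+\etab)+\tfrac12\Om_K(\etac+\etabc)$ (your observation that no $\Lc$ correction enters for the scalar $\Om_K$ is right, since the spheres of the two foliations coincide and tangential gradients of scalars are connection-independent), and you close using only Propositions \ref{estOmc0}, \ref{propetaetab} and \ref{propetabc1} with $q\le 1$ — exactly the same inputs the paper uses — so no elliptic system and no exponentiation are needed. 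The trade-off: your route is shorter and makes the quadratic smallness explicit term by term, while the paper's log/Banach-algebra packaging handles all product terms automatically and would extend more mechanically to higher derivatives of $\Omc$; for the present statement both are equally valid. One small point worth stating explicitly in your write-up: in the term $\Omc\cdot r\nab(\eta+\etab)$ the background piece $r\nab(\eta_K+\etab_K)$ is computed via Lemma \ref{newuseful} and produces an $\O_4^2$ term plus an $\Lc\c\O_3^2$ correction; both are harmless because they are multiplied by $\Omc\les \ep_0 r^{-1}|u|^{-\frac{s-3}{2}}$ (from Proposition \ref{estOmc0} and the Sobolev inequality of Proposition \ref{standardsobolev}), but the correction should be acknowledged.
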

\begin{proof}
Recall that 
\begin{align*}
    \eta+\etab =2\nab\log\Om,
\end{align*}
and
\begin{align*}
    \eta_K+\etab_K=2\nab\log\Om_K.
\end{align*}
Taking the difference, we obtain
\begin{equation*}
    \De\log\left(\frac{\Om}{\Om_K}\right)=\f12\sdiv(\etac+\etabc),
\end{equation*}
Applying Propositions \ref{ellipticest}, \ref{propetaetab} and \ref{propetabc1}, we have for $q=1,2$:
\begin{align}
\begin{split}
\left|r^{1+q-\frac{2}{p}}\nab^q\left(\log\left(\frac{\Om}{\Om_K}\right)\right)\right|_{p,S}(u,\ub) \les\frac{\ep_0}{|u|^{\frac{s-3}{2}}}.\label{OmOmK}
\end{split}
\end{align}
Applying Proposition \ref{estOmc0}, we deduce that
\begin{align*}
    \left|r\log\left(\frac{\Om}{\Om_K}\right)\right|_{\infty,S}\leq \left|r\left(\frac{\Om}{\Om_K}-1\right)\right|_{\infty,S}\les |r\Omc|_{\infty,S}\les \frac{\ep_0}{|u|^\frac{s-3}{2}}
\end{align*}
so that \eqref{OmOmK} also holds true for $q=0$. Hence, we have
\begin{align*}
    \left\|\log\frac{\Om}{\Om_K}\right\|_{W^{2,4}(S)}\les \frac{\ep_0}{r^\f12 |u|^{\frac{s-3}{2}}}.
\end{align*}
We recall that the Sobolev space $W^{2,4}(S)$ forms a Banach algebra, i.e.
\begin{align}\label{Banachalgebra}
    \|uv\|_{W^{2,4}(S)}\les \|u\|_{W^{2,4}(S)}\|v\|_{W^{2,4}(S)},\qquad \forall u,v\in W^{2,4}(S),
\end{align}
see for example Theorem 4.39 in \cite{adams}. Notice that
\begin{align}\label{expOmc}
    \Omc=\Om_K\left(\frac{\Om}{\Om_K}-1\right)=\Om_K\left(\exp\left(\log\left(\frac{\Om}{\Om_K}\right)\right)-1\right)=\Om_K\sum_{k=1}^\infty \frac{1}{k!}\left(\log\frac{\Om}{\Om_K}\right)^k.
\end{align}
Combining \eqref{OmOmK}, \eqref{Banachalgebra} and \eqref{expOmc}, we deduce
\begin{align*}
    |(r\nab)^{\leq 2}\Omc|_{4,S}\les\sum_{k=1}^\infty \left|(r\nab)^{\leq 2}\log\left(\frac{\Om}{\Om_K}\right)\right|_{4,S}^k
    \les\frac{\ep_0}{r^\f12 |u|^{\frac{s-3}{2}}},
\end{align*}
which implies for $p\in [2,4]$:
\begin{align*}
    |r^{1-\frac{2}{p}}(r\nab)^q\Omc|_{p,S}\les \frac{\ep_0}{|u|^{\frac{s-3}{2}}},\qquad q=0,1,2.
\end{align*}
This concludes the proof of Proposition \ref{propOmc}.
\end{proof}
\subsubsection{Estimate for \texorpdfstring{$\okk_1(\widecheck{\Om^{-1}
\trchb})$}{}}
\begin{proposition}\label{OOb0Omtrchb1}
We have the following estimate:
\begin{align}
\begin{split}
    |r^{\frac{s+3}{2}-\frac{2}{p}}\nab(\widecheck{\Om^{-1}\trchb})|_{p,S_0(\ub)}\les \ep_0.
\end{split}
\end{align}
\end{proposition}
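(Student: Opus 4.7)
The strategy parallels that of Proposition \ref{OOb0Omtrchb}, now incorporating one tangential derivative. The starting point is the frame transformation identity ${\Om'}^{-1}\trchb' = \Om^{-1}\trchb$ from Proposition \ref{transformation}, which yields the scalar relation
\[
\widecheck{\Om^{-1}\trchb} = \widecheck{{\Om'}^{-1}\trchb'} + \left({\Om'_K}^{-1}\tr_K\chib'_K - \Om_K^{-1}\tr_K\chib_K\right) \qquad \text{on } S_0(\ub) \subset \kk_{(0)}.
\]
Since both sides are scalars on $S_0(\ub)$ and the intrinsic metric on $S_0(\ub)$ is frame-independent, the tangential gradient $\nab$ coincides with $\nab'$ when acting on scalars, and
\[
\nab\widecheck{\Om^{-1}\trchb} = \nab\widecheck{{\Om'}^{-1}\trchb'} + \nab\left({\Om'_K}^{-1}\tr_K\chib'_K - \Om_K^{-1}\tr_K\chib_K\right).
\]

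For the first term on the right, I would decompose $\widecheck{{\Om'}^{-1}\trchb'} = {\Om'}^{-1}\widecheck{\trchb'} + ({\Om'}^{-1} - {\Om'_K}^{-1})\tr_K\chib'_K$ and apply the Leibniz rule, using $\OOO_{0,1}(\trchbc')$ and $\OOO_{0,1}(\Omc')$ from $\OO_{(0)} \leq \ep_0$ together with a $W^{1,4}(S)$ Banach-algebra argument in the spirit of Proposition \ref{propOmc} to handle the products. This yields
\[
|r^{\frac{s+3}{2}-\frac{2}{p}}\nab\widecheck{{\Om'}^{-1}\trchb'}|_{p,S_0(\ub)} \les \ep_0.
\]

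For the Kerr difference term, I would exploit that both $u$ and $u'$ are constant on $S_0(\ub)$ (it is a leaf of both foliations), so $u - u'$ is constant on $S_0(\ub)$ and only the horizontal coordinate differences $x^B - {x'}^B$ enter the tangential derivative. Setting $G_{Kerr} := (\Om^{-1}\tr\chib)_{Kerr}$ and applying the chain rule,
\[
e_A\bigl(\Om_K^{-1}\tr_K\chib_K - {\Om'_K}^{-1}\tr_K\chib'_K\bigr) = \partial_{x^B_{Kerr}} G_{Kerr}\bigl(\Phi(p)\bigr)\,e_A(x^B - {x'}^B) + \text{remainder},
\]
where the remainder accounts for evaluating $\partial G_{Kerr}$ at $\Phi(p)$ versus $\Phi'(p)$ and carries an extra small factor from $\osc$. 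Combining $\partial^{\leq 2} G_{Kerr} = O(M/r^{3+\bullet})$ from Proposition \ref{decayGammaKerr}, the frame-change formula $e_A = e'_A - (\la/2)f_A\,e'_3$ from Lemma \ref{changelemma}, and the pointwise bound on $f$ and $\dk^{\leq 1}f$ from Propositions \ref{estf} and \ref{standardsobolev}, one obtains the pointwise estimate $|e_A(\text{Kerr difference})| \les \ep_0/r^{\frac{s+3}{2}}$, whence the required $L^p$ bound after weighting.

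The main obstacle is the Kerr difference term: it amounts to a one-derivative refinement of Lemma \ref{estKK'}, which is resolved by exploiting the fact that $u - u'$ is constant on $S_0(\ub)$ (thereby eliminating the most singular contribution) and by passing tangential derivatives of $x^B - {x'}^B$ through the frame change, reducing them to pointwise control of $f$ furnished by Proposition \ref{estf}.
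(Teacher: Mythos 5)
Your overall strategy---differentiate the transformation identity ${\Om'}^{-1}\trchb'=\Om^{-1}\trchb$, exploit the smallness of $f$, and run a one-derivative version of Lemma \ref{estKK'} on the Kerr difference---is the same as the paper's, but two of your justifications are incorrect, and both hide the same term. First, $S_0(\ub)$ is a leaf of the $(u,\ub)$ foliation only: it touches $\Si_0$ from the future, so $u'$ is not constant on it and it is not a leaf of the primed foliation; the claim that $u-u'$ is constant on $S_0(\ub)$ therefore fails (in fact $e_A(u')=-\frac{\la}{2\Om'}f_A\neq 0$). Second, and more seriously, the tangential gradients $\nab$ and $\nab'$ do \emph{not} coincide on scalars: they are projections onto different horizontal distributions, and by \eqref{change'} one has $e_A(h)=e'_A(h)-\frac{\la}{2}f_A\,e'_3(h)$ for any scalar $h$. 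Applied to ${\Om'}^{-1}\trchb'$ this produces the correction $-\frac{\la}{2}f_A\,e'_3({\Om'}^{-1}\trchb')$, which, since $e'_3({\Om'}^{-1}\trchb')=O(r^{-2})$ by the Raychaudhuri equation and $|f|\les\ep_0 r^{-\frac{s-1}{2}}$ by Proposition \ref{estf}, is of size $\ep_0 r^{-\frac{s+3}{2}}$---exactly the size of the bound you are proving. It is therefore not a negligible remainder to be waved away by a frame-independence argument; it is a term that must be estimated, and your proposal as written never does so.

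The repair is precisely the paper's proof, which is short: differentiate the identity along $e_A$ and substitute $e_A=e'_A-\frac{\la}{2}f_Ae'_3$ to get $e_A(\Om^{-1}\trchb)=e'_A({\Om'}^{-1}\trchb')-\frac{\la}{2}f_Ae'_3({\Om'}^{-1}\trchb')$; then bound the primed angular derivative using $\OO_{(0)}\leq\ep_0$, bound the $e'_3$ correction by Proposition \ref{standardnull} together with Proposition \ref{estf} (giving the $|r^{-2-\frac{2}{p}}f_A|_{p,S}$ contribution), and bound the Kerr difference $|e_A(\Om^{-1}\trchb)_K-e'_A(\Om^{-1}\trchb)'_K|$ by Lemma \ref{estKK'}, which already accounts for the $\pr_u$ contribution you tried to eliminate (it is controlled by $\osc(u)$, not by constancy of $u'$ on the sphere). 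Since you invoke exactly these ingredients for your Kerr-difference term, the fix is minor, but the step asserting $\nab=\nab'$ on scalars is false as stated and leaves a genuine gap in your treatment of the main term $\nab\widecheck{{\Om'}^{-1}\trchb'}$.
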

\begin{proof}
We have from Proposition \ref{transformation}
\begin{align*}
    {\Om'}^{-1}\trchb'=\Om^{-1}\trchb.
\end{align*}
Differentiating it by $e_A$ and applying \eqref{change'}, we obtain
\begin{align*}
e_A(\Om^{-1}\trchb)=e'_A({\Om'}^{-1}\trchb')-\frac{\la}{2}f_Ae'_3({\Om'}^{-1}\trchb').
\end{align*}
Applying Propositions \ref{standardnull}, \ref{estf} and Lemma \ref{estKK'}, we infer
\begin{align*}
    |r^{-\frac{2}{p}}e_A(\widecheck{\Om^{-1}\trchb})|_{p,S}&\les |r^{-\frac{2}{p}}e'_A(\widecheck{{\Om'}^{-1}\trchb'})|_{p,S}+|e_A(\Om^{-1}\trchb)_K-e'_A(\Om^{-1}\trchb)_K'|+|r^{-2-\frac{2}{p}}f_A|_{p,S}\\
    &\les \frac{\ep_0}{r^\frac{s+3}{2}}.
\end{align*}
This concludes the proof of Proposition \ref{OOb0Omtrchb1}.
\end{proof}
\subsubsection{Estimate for \texorpdfstring{$\OO_{1}(\widecheck{\Om\trchb})$}{}}
\begin{proposition}\label{proptrchbc1}
We have the following estimates:
\begin{align}
    |r^{3-\frac{2}{p}}|u|^{\frac{s-3}{2}}\nab\widecheck{\Om\trchb}|_{p,S}\les\ep_0.
\end{align}
\end{proposition}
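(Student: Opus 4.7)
The plan is to proceed in direct analogy with the $\OO_0$ case treated in Proposition \ref{propnabtrchtrchbc}, but now commuting the transport equation with one angular derivative $r\nab$. Starting from the linearized null structure equation
\begin{equation*}
\nab_3(\widecheck{\Om\trchb})+\trchb\,\widecheck{\Om\trchb}=\O_1^0\c\widecheck{\Om\omb}+\Gag\c\O_2^1+\Gag\c\Gab
\end{equation*}
of Proposition \ref{nullstructure}, I would apply $r\nab$ to both sides and invoke the commutator identity $[r\nab,\Om\nab_3]=\Gab\c r\nab+\O_3^2\c r\nab+\Gab^{(1)}+\O_3^2$ from Corollary \ref{commutation}, together with the formula $\nab\trchb=\O_2^1+r^{-1}\Gab^{(1)}$ from Lemma \ref{expressionGaR}. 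This should produce a transport equation of the schematic form
\begin{equation*}
\nab_3(r\nab\widecheck{\Om\trchb})+\trchb\,(r\nab\widecheck{\Om\trchb})=\O_0^0\c\nab\widecheck{\Om\omb}+\Gag^{(1)}\c\O_2^1+\Gag\c\Gag^{(1)},
\end{equation*}
where, as in Proposition \ref{propOmtrch}, all error contributions have been collected into $\Gag^{(1)}\c\O_2^1+\Gag\c\Gag^{(1)}$.

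The next step is to apply Lemma \ref{evolution} along $\Cb_\ub$, integrating from $S_0(\ub)$ down to $S(u,\ub)$ with $p$--weight $r^{3-\tfrac{2}{p}}$ (corresponding to $\la_0=1$). The initial data piece $|r^{3-\tfrac{2}{p}}\nab\widecheck{\Om\trchb}|_{p,S_0(\ub)}$ is not directly controlled by Proposition \ref{OOb0Omtrchb1}, which instead bounds $\nab\widecheck{\Om^{-1}\trchb}$. The passage from one to the other uses the algebraic identity $\Om\trchb=\Om^{2}\,(\Om^{-1}\trchb)$, differentiated by $\nab$, together with Proposition \ref{propOmc} to control $\Omc$ and $\nab\Omc$, plus Proposition \ref{OOb0Omtrchb} to control $\widecheck{\Om^{-1}\trchb}$ itself. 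Combined, these give $|r^{3-\tfrac{2}{p}}\nab\widecheck{\Om\trchb}|_{p,S_0(\ub)}\les\ep_0/|u|^{(s-3)/2}$.

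It then remains to bound the bulk integrals produced by Lemma \ref{evolution}. The term $|r^{2-\tfrac{2}{p}}\nab\widecheck{\Om\omb}|_{p,S}$ is estimated by Proposition \ref{propnabomcombc}, giving integrand $\les\ep_0/(r|u|^{(s-1)/2})$, which is integrable in $u$ and produces the desired bound. The term $(M/r^{2})|r^{1-\tfrac{2}{p}}\Gag^{(1)}|_{p,S}$ is controlled by Lemma \ref{estGagba}, and the quadratic term $|r^{2-\tfrac{2}{p}}\Gag\c\Gag^{(1)}|_{p,S}$ is handled using the bootstrap assumption $\OO\leq\ep$ exactly as in the proof of Proposition \ref{propnabtrchtrchbc}. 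Each of these integrals gives a contribution $\les\ep_0/|u|^{(s-3)/2}$.

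The main obstacle, in my view, is not the transport estimate itself (which is routine once the equation is derived) but the initial data reduction on $S_0(\ub)$: one must verify that the change from $\widecheck{\Om^{-1}\trchb}$ to $\widecheck{\Om\trchb}$ does not spoil the weight in $r$, which requires combining \emph{two} derivative-level estimates on $\Omc$ (Proposition \ref{propOmc}) with the available Proposition \ref{OOb0Omtrchb1}. Once the initial data bound is in hand, the remaining estimates follow the same pattern already implemented for $\widecheck{\Om\trch}$ in Proposition \ref{propOmtrch}.
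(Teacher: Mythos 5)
Your proposal follows essentially the same route as the paper: commute the $\nab_3$ equation for $\widecheck{\Om\trchb}$ with $r\nab$ via Corollary \ref{commutation}, reduce the initial data on $S_0(\ub)$ to Proposition \ref{OOb0Omtrchb1} through the identity $\Om\trchb=\Om^2(\Om^{-1}\trchb)$ together with the $\Omc$ estimates, and then integrate with Lemma \ref{evolution}, using Proposition \ref{propnabomcombc} for the $\nab\widecheck{\Om\omb}$ bulk term. The only (harmless) discrepancies are schematic: the commuted error term is $\Gab\c\Gab^{(1)}$ rather than $\Gag\c\Gag^{(1)}$, and Proposition \ref{propnabomcombc} gives $|r^{2-\frac{2}{p}}\nab\widecheck{\Om\omb}|_{p,S}\les\ep_0|u|^{-\frac{s-1}{2}}$ without the extra factor of $r^{-1}$ you claim, but these weaker (true) bounds are still integrable in $u$ and yield the stated estimate exactly as in the paper.
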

\begin{proof}
Notice that we have
\begin{align*}
    \widecheck{\Om\trchb}=\Om^2(\Om^{-1}{\trchb})-\Om^2_K(\Om^{-1}_K\tr_K\chib_K)=\widecheck{\Om^2}(\Om^{-1}\trchb)+\Om^2_K(\widecheck{\Om^{-1}\trchb}),
\end{align*}
which implies
\begin{align*}
{\nab\widecheck{\Om\trchb}}=\O_1^0\c\nab\Omc+\O_0^0\c\trchbc+\Gag\c\Gag^{(1)}.
\end{align*}
Hence, we have from Propositions \ref{estOmc0} and \ref{OOb0Omtrchb1}
\begin{align*}
    |r^{3-\frac{2}{p}}\nab\widecheck{\Om\trchb}|_{p,S_0(\ub)}&\les|r^{3-\frac{2}{p}}\nab\widecheck{\Om^{-1}\trchb}|_{p,S_0(\ub)}+|r^{2-\frac{2}{p}}\nab\Omc|_{p,S_0(\ub)}+|r^{3-\frac{2}{p}}\Gag\c\Gag^{(1)}|_{p,S_0(\ub)}\\
    &\les\frac{\ep_0}{|u|^{\frac{s-3}{2}}}.
\end{align*}
Next, we recall from Proposition \ref{nullstructure}
\begin{align*}
\nab_3(\widecheck{\Om\trchb})+\trchb\,\widecheck{\Om\trchb}=\O_1^0\c\widecheck{\Om\omb}+\Gag\c\O_2^1+\Gab\c\Gab.
\end{align*}
Differentiating it by $r\nab$ and applying Corollary \ref{commutation}, we deduce
\begin{align*}
\nab_3(r\nab\widecheck{\Om\trchb})+\trchb\,r\nab\widecheck{\Om\trchb}=\O_0^0\c\nab\widecheck{\Om\omb}+\Gag^{(1)}\c\O_2^1+\Gab\c\Gab^{(1)}.
\end{align*}
Applying Lemma \ref{evolution} and Proposition \ref{propnabomcombc}, we infer
\begin{align*}
    &\;\;\;\,\,\, |r^{3-\frac{2}{p}}\nab\widecheck{\Om\trchb}|_{p,S}\\
    &\les |r^{3-\frac{2}{p}}\nab\widecheck{\Om\trchb}|_{p,S_0(\ub)}+\int_{u_0(\ub)}^{u}\left(|r^{2-\frac{2}{p}}\nab\widecheck{\Om\omb}|_{p,S}+\frac{M}{r^2}|r^{2-\frac{2}{p}}\Gag|_{p,S}+|r^{2-\frac{2}{p}}\Gab\c\Gab^{(1)}|_{p,S}\right)du\\
    &\les\frac{\ep_0}{|u|^\frac{s-3}{2}}+\int_{u_0(\ub)}^{u}\left(\frac{\ep_0}{|u|^{\frac{s-1}{2}}}+\frac{M}{r^2}\frac{\ep}{|u|^\frac{s-3}{2}}+\frac{\ep^2}{|u|^{s-1}}\right)du\\
    &\les\frac{\ep_0}{|u|^{\frac{s-3}{2}}}.
\end{align*}
This concludes the proof of Proposition \ref{proptrchbc1}.
\end{proof}
\subsubsection{Estimates for \texorpdfstring{$\OO_{0,1}(\hchc)$}{} and \texorpdfstring{$\OO_{0,1}(\hchbc)$}{}}\label{hchc}
\begin{proposition}\label{prophchc}
We have the following estimates for $q=0,1$:
\begin{align}
\begin{split}\label{esthchc}
    |r^{2-\frac{2}{p}}|u|^{\frac{s-3}{2}}(r\nab)^q\hchc|_{p,S}&\les  \ep_0,\\
    |r^{1-\frac{2}{p}}|u|^{\frac{s-1}{2}}(r\nab)^q\hchbc|_{p,S}&\les\ep_0.
\end{split}
\end{align}
\end{proposition}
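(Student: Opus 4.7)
The plan is to obtain both estimates by applying elliptic estimates on the $2$--spheres $S(u,\ub)$ to the linearized Codazzi equations in Proposition \ref{nullstructure}, treating $\hchc,\hchbc\in\sk_2$ as solutions of $d_2$--Hodge systems. This is the natural approach since the transport equations for $\hch$ and $\hchb$ would require controlling $\ac$ and $\aac$ along null cones, which is not convenient here, whereas the Codazzi equations reduce the problem to $L^p$ bounds on quantities that have already been estimated or are directly part of the curvature flux.

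First, I would recall from Proposition \ref{nullstructure}
\begin{equation*}
    \sdiv(\hchc) = -\bc+\O_0^0\c\nab\trchc+\O_1^0\c\zec+\fl[\sdiv(\hchc)]+\err[\sdiv(\hchc)],
\end{equation*}
and the analogous equation for $\hchbc$ with $+\bbc$ in place of $-\bc$, where the error and lower-order terms are schematically of type $\Gag\c\O_3^2+\O_3^2\c\Lc$ and $\Gag\c\Gag$ (resp.\ $\Gab\c\O_3^2+\O_3^2\c\Lc$ and $\Gab\c\Gag$). Applying the third item of Proposition \ref{ellipticest} to each Codazzi equation yields, for $q=0,1$,
\begin{equation*}
    \bigl|(r\nab)^q\hchc\bigr|_{p,S}\les r\bigl|\sdiv\hchc\bigr|_{p,S},\qquad \bigl|(r\nab)^q\hchbc\bigr|_{p,S}\les r\bigl|\sdiv\hchbc\bigr|_{p,S},
\end{equation*}
so it suffices to establish $|\sdiv\hchc|_{p,S}\les \ep_0 r^{-3+\frac{2}{p}}|u|^{-\frac{s-3}{2}}$ and $|\sdiv\hchbc|_{p,S}\les \ep_0 r^{-2+\frac{2}{p}}|u|^{-\frac{s-1}{2}}$.

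Next, I would bound each term of the Codazzi equations. The curvature terms $\bc$ and $\bbc$ are handled directly by the hypothesis $\RR\les\ep_0$, which via the $\RR_0^S$ and $\RRb_0^S$ norms gives exactly the target decay (the mismatch between $r^{-\frac{7}{2}+\frac{2}{p}}|u|^{-\frac{s-4}{2}}$ for $\bc$ and the desired $r^{-3+\frac{2}{p}}|u|^{-\frac{s-3}{2}}$ is absorbed by the inequality $|u|\les r$ valid in the external region). The $\nab\trchc$ and $\nab\trchbc$ terms are controlled by Propositions \ref{propOmtrch} and \ref{proptrchbc1} together with Proposition \ref{propOmc} after relating $\nab\trchc$ to $\nab\widecheck{\Om\trch}$ and $\nab\Omc$. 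For $\zec$, I would use the identity $\ze=\tfrac{1}{2}(\eta-\etab)$ from \eqref{6.6}, so that $\zec$ is controlled by $\etac$ and $\etabc$ via Propositions \ref{propetaetab} and \ref{propetabc1} plus the $\Om_K$ Kerr difference handled by Lemma \ref{estKK'}. The linear terms $\fl[\sdiv(\hchc)]$, $\fl[\sdiv(\hchbc)]$ are schematically $\O_3^2\cdot(\Gag,\Lc)$ or $\O_3^2\cdot(\Gab,\Lc)$ and give a gain of a factor $M^2/r^2$ relative to the leading behavior of $\Gag,\Gab,\Lc$, hence are absorbed by $\EE_0^2 \ll 1$. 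The quadratic error terms $\Gag\c\Gag$ and $\Gab\c\Gag$ are bounded via the $L^\infty$-$L^p$ estimates of Lemma \ref{estGagba} and absorb one factor of $\ep$, leaving a residual $\ep^2$ which is $\les\ep_0$ by \eqref{epep0}.

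The main (minor) obstacle is bookkeeping the decay weights: the $|u|$--decay on the two sides of each Codazzi equation differs (e.g.\ $\bc$ has weight $|u|^{-(s-4)/2}$ while $\sdiv\hchc$ should have weight $|u|^{-(s-3)/2}$), and this mismatch is systematically paid for by the crude but correct inequality $|u|\les r$ on $\KK$. Once each term is shown to contribute at most $\ep_0 r^{-3+\frac{2}{p}}|u|^{-\frac{s-3}{2}}$ (resp.\ $\ep_0 r^{-2+\frac{2}{p}}|u|^{-\frac{s-1}{2}}$), inserting these bounds into the elliptic estimate above yields \eqref{esthchc}, concluding the proof. The argument is purely algebraic/elliptic on fixed spheres; no new transport step along $\nab_3$ or $\nab_4$ is required.
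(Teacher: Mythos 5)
Your proposal is correct and follows essentially the same route as the paper: the paper also treats $\hchc$ and $\hchbc$ as $d_2$--Hodge systems via the linearized Codazzi equations and applies Proposition \ref{ellipticest} on each sphere, bounding $\bc,\bbc$ by the curvature norms, $\nab\trchc,\nab\trchbc$ by Propositions \ref{propOmtrch} and \ref{proptrchbc1}, $\zec$ by Propositions \ref{propetaetab} and \ref{propetabc1}, and the linear/quadratic remainders by the $M^2/r^2$ gain and the bootstrap bounds. Your bookkeeping (including the use of $|u|\les r$ to reconcile the weight of $\bc$ with the target) matches the paper's estimates.
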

\begin{proof}
We recall from Proposition \ref{nullstructure}
\begin{align*}
    \sdiv\hchc=-\bc+\O_0^0\c\nab\trchc+\O_1^0\c\zec+\O_3^2\c\Gag^{(1)}+\Gag\c\Gag.
\end{align*}
Applying Propositions \ref{ellipticest}, \ref{propOmtrch}, \ref{propetaetab} and \ref{propetabc1}, we deduce for $q=0,1$:
\begin{align*}
    |r^{2+q-\frac{2}{p}}\nab^q\hchc|_{p,S}&\les |r^{3-\frac{2}{p}}\nab\trchc|_{p,S}+|r^{2-\frac{2}{p}}\zec|_{p,S}+|r^{3-\frac{2}{p}}\bc|_{p,S}+\frac{M^2}{r^2}|r^{2-\frac{2}{p}}\Gag^{(1)}|_{p,S}+|r^{3-\frac{2}{p}}\Gag\c\Gag|_{p,S} \\
    &\les\frac{\ep_0}{|u|^{\frac{s-3}{2}}}.
\end{align*}
Next, we recall from Proposition \ref{nullstructure}
\begin{align*}
\sdiv\hchbc=\bbc+\O_0^0\c\nab\trchbc+\O_1^0\c\zec+\O_3^2\c\Gab^{(1)}+\Gab\c\Gag.
\end{align*}
Applying Propositions \ref{ellipticest}, \ref{proptrchbc1}, \ref{propetaetab} and \ref{propetabc1}, we deduce for $q=0,1$:
\begin{align*}
    |r^{1+q-\frac{2}{p}}\nab^q\hchbc|_{p,S}&\les |r^{2-\frac{2}{p}}\nab\trchbc|_{p,S}+|r^{1-\frac{2}{p}}\zec|_{p,S}+|r^{2-\frac{2}{p}}\bbc|_{p,S}+\frac{M^2}{r^2}|r^{1-\frac{2}{p}}\Gab^{(1)}|_{p,S}+|r^{2-\frac{2}{p}}\Gab\c\Gag|_{p,S} \\
    &\les\frac{\ep_0}{|u|^{\frac{s-1}{2}}}.
\end{align*}
This concludes the proof of Proposition \ref{prophchc}.
\end{proof}
\subsubsection{Estimates for \texorpdfstring{$\OO_{0,1}(\bcc)$}{}}
\begin{proposition}\label{propbcc}
We have the following estimate for $q=0,1$:
\begin{align}
|r^{2-\frac{2}{p}}|u|^{\frac{s-3}{2}}(r\nab)^q\,\bcc^A|_{p,S}\les\ep_0.\label{estbcc}
\end{align}
\end{proposition}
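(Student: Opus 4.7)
The plan is to integrate the transport equation for $\bcc$ along outgoing null cones starting from the last slice $\Cb_*$, where $\bcc$ vanishes identically thanks to the geodesic type foliation (cf.\ the hypothesis $\Omc=0$ and $\bcc=0$ preceding the second set of equations in Proposition \ref{metriceqre}). From Proposition \ref{metriceqre} we have
\begin{equation*}
\nab_4(\bcc^A)=\O_1^0\c\zec+\O_3^2\c\Gag+\Gag\c\Gag,
\end{equation*}
with vanishing initial data on $\Cb_*$. Applying the evolution lemma (Lemma \ref{evolution}, part (1)) with $\lambda_0=0$ and $\lambda_1=-\frac{2}{p}$ in the direction of integration from $\Cb_*$ back to $S(u,\ub)$ gives
\begin{equation*}
|r^{-\frac{2}{p}}\bcc|_{p,S}\les\int_\ub^{\ub_*}\left(|r^{-\frac{2}{p}}\O_1^0\c\zec|_{p,S}+|r^{-\frac{2}{p}}\O_3^2\c\Gag|_{p,S}+|r^{-\frac{2}{p}}\Gag\c\Gag|_{p,S}\right)d\ub'.
\end{equation*}

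For the case $q=0$, we estimate each term on the right-hand side using the bounds already proved. Since $\zec=\tfrac12(\etac-\etabc)+\Gag\c\Gag$ by \eqref{6.6} and its Kerr counterpart, Propositions \ref{propetaetab} and \ref{propetabc1} give $|r^{-\frac{2}{p}}\zec|_{p,S}\les\ep_0 r^{-2}|u|^{-\frac{s-3}{2}}$. Combined with the bound $\O_1^0=O(M/r)$, the $\zec$ term contributes $M\ep_0 r^{-3}|u|^{-\frac{s-3}{2}}$; the $\O_3^2\c\Gag$ term contributes $M^2\ep r^{-5}|u|^{-\frac{s-3}{2}}$ via Lemma \ref{estGagba}; and the $\Gag\c\Gag$ term contributes $\ep^2 r^{-4}|u|^{-(s-3)}$. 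Each of these is $\ub$-integrable with the necessary decay, so after multiplying by $r^{2-\frac{2}{p}}|u|^{\frac{s-3}{2}}$ we obtain the desired bound $\les\ep_0$.

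For $q=1$, we differentiate the transport equation and apply the commutator identity for $[r\nab,\nab_4]$ from Corollary \ref{commutation}:
\begin{equation*}
\nab_4(r\nab\,\bcc^A)=\O_1^0\c r\nab\zec+\O_3^2\c\Gag^{(1)}+\Gag\c\Gag^{(1)}+(\text{commutator contributions}),
\end{equation*}
where the commutator contributions are absorbed into the schematic structure $\Gag\c r\nab\bcc + \O_3^2\c\bcc$ (the former produces a Gr\"onwall-type term whose smallness is controlled by $\EE_0$ and $\ep$). Invoking Lemma \ref{evolution} once more, still with vanishing data on $\Cb_*$, and using Propositions \ref{propetaetab}, \ref{propetabc1} for the $r\nab\zec$ bound and Gr\"onwall to handle the coupling with $r\nab\bcc$ itself, we obtain $|r^{1-\frac{2}{p}}r\nab\,\bcc|_{p,S}\les\ep_0 |u|^{-\frac{s-3}{2}}$.

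The estimates are entirely routine once Propositions \ref{propetaetab}, \ref{propetabc1} and the vanishing of $\bcc$ on $\Cb_*$ are in hand; the only mild subtlety is verifying that every term on the right-hand side is integrable in $\ub$ with the decay rate needed to match the claimed weight $r^{2-\frac{2}{p}}|u|^{\frac{s-3}{2}}$, which is the reason the factor $\O_1^0$ (decaying like $M/r$) in front of $\zec$ is crucial --- without it the $\ub$-integral would be logarithmically divergent.
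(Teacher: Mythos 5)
Your argument is correct and follows essentially the same route as the paper: the transport equation $\nab_4(\bcc^A)=\O_1^0\c\zec+\O_3^2\c\Gag+\Gag\c\Gag$ from Proposition \ref{metriceqre}, vanishing data on $\Cb_*$ coming from the geodesic type foliation, integration via Lemma \ref{evolution} combined with Propositions \ref{propetaetab} and \ref{propetabc1}, and commutation with $r\nab$ through Corollary \ref{commutation} for $q=1$. Two cosmetic points only: by Definition \ref{KerrO} the coefficient $\O_1^0$ decays like $1/r$ rather than $M/r$, and without that factor the $\ub$--integral would not diverge logarithmically but merely lose one power of $r$ (yielding weight $r^{1-\frac{2}{p}}$ instead of the required $r^{2-\frac{2}{p}}$); neither remark affects the validity of your estimates.
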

\begin{proof}
We have from Proposition \ref{metriceqre}
    \begin{align}\label{bccA}
        \nab_4(\bcc^A)=\O_1^0\c\zec+\O_3^2\c\Gag+\Gag\c\Gag.
    \end{align}
    Applying Lemma \ref{evolution}, we deduce from Propositions \ref{propetaetab} and \ref{propetabc1}\footnote{Recall that $\bcc^A=0$ on $\Cb_*$ and $\ze=\frac{\eta-\etab}{2}$.}
    \begin{align*}
        |r^{-\frac{2}{p}}\bcc^A|_{p,S}&\les \int_{\ub}^{\ub_*} \left(|r^{-1-\frac{2}{p}}\zec|_{p,S}+M^2|r^{-3-\frac{2}{p}}\Gag|_{p,S}+|r^{-\frac{2}{p}}\Gag\c\Gag|_{p,S}
        \right)d\ub\\
        &\les \int_{\ub}^{\ub_*} \left(\frac{\ep_0}{r^3|u|^\frac{s-3}{2}}+\frac{M^2\ep}{r^5|u|^\frac{s-3}{2}}+\frac{\ep^2}{r^4|u|^{s-3}}
        \right)d\ub\\
        &\les \frac{\ep_0}{r^2|u|^\frac{s-3}{2}}+\frac{M^2\ep}{r^4|u|^\frac{s-3}{2}}+\frac{\ep^2}{r^3|u|^{s-3}}\\
        &\les \frac{\ep_0}{r^2|u|^\frac{s-3}{2}},
    \end{align*}
    which implies \eqref{estbcc} in the case $q=0$. Differentiating \eqref{bccA} by $r\nab$ and applying Corollary \ref{commutation}, we obtain
    \begin{align*}
        \nab_4(r\nab\bcc^A)=\O_1^0\c r\nab\zec+\O_3^2\c\Gag^{(1)}+\Gag\c\Gag^{(1)}.
    \end{align*}
    Applying Lemma \ref{evolution}, we deduce from Propositions \ref{propetaetab} and \ref{propetabc1}
    \begin{align*}
        |r^{-\frac{2}{p}}(r\nab)\bcc^A|_{p,S}&\les \int_{\ub}^{\ub_*} \left(|r^{-\frac{2}{p}}\nab\zec|_{p,S}+M^2|r^{-3-\frac{2}{p}}\Gag^{(1)}|_{p,S}+|r^{-\frac{2}{p}}\Gag\c\Gag^{(1)}|_{p,S}
        \right)d\ub\\
        &\les \frac{\ep_0}{r^2|u|^\frac{s-3}{2}},
    \end{align*}
    which implies \eqref{estbcc} in the case $q=1$. This concludes the proof of Proposition \ref{propbcc}.
\end{proof}
\subsubsection{Estimate for \texorpdfstring{$\osc(x)$}{}}
\begin{proposition}\label{proposcx}
We have the following estimates
\begin{equation}
    \sup_{\kk'}r^\frac{s-1}{2}|{x'}^A-x^A|\les\ep_0.
\end{equation}
\end{proposition}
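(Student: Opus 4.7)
The plan is to adapt the argument of Proposition \ref{osc0ucontrol}: derive a transport equation for $x'^A - x^A$ using the frame transformation, bound its right-hand side, and integrate starting from $S_*$, where $x^A = x'^A$ holds by the construction of the last slice in Section \ref{bootregion}.

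Using $\Om e_4 = \pr_\ub$ and $\Om e_3 = \pr_u + \bu^A\pr_{x^A}$ (and likewise in the primed frame), one reads off $e_4(x^A) = 0$, $e_3(x^A) = \bu^A/\Om$, $e'_4(x'^A) = 0$, $e'_3(x'^A) = \bu'^A/\Om'$. Combined with the inverse frame transformation of Lemma \ref{changelemma}, a direct computation yields
\[
(\Om' e'_4 - \Om' e'_3)(x'^A - x^A) = \Big(\tfrac{\Om'^2}{\Om^2}\bu^A - \bu'^A\Big) - \Om f^B (e_B)^A - \tfrac{|f|^2}{4}\bu^A.
\]
Since $\Om'(e'_4 - e'_3)$ preserves $u' + \ub$, its flow stays on $\Si_0$ when starting from $S_*$ and, more generally, remains inside $\kk_{(0)}$. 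Integration along this flow connects any point of $\Si_0 \setminus K$ back to $S_*$; to extend the estimate from $\Si_0$ to all of $\kk_{(0)}$, I would integrate an analogous identity along $-L'$ (which preserves $u'$ and decreases $\ub$) over a distance bounded by $\de_0$.

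The right-hand side is bounded term by term. Writing $\bu = \bu_K + \bcc$ and $\bu' = \bu'_K + \bcc'$, the Kerr difference $\bu_K - \bu'_K$ is controlled by Lemma \ref{estKK'} applied to $\bu_K = \O_2^2$; the linearized parts $\bcc, \bcc'$ are estimated via Proposition \ref{propbcc}, its initial-layer analogue encoded in $\OO_{(0)}$, and the Sobolev embedding of Proposition \ref{standardsobolev}; the $f$-terms are controlled by Proposition \ref{estf}; and the factor $\Om'^2/\Om^2 - 1$ by Proposition \ref{propOmc}. On $\kk_{(0)}$, where $|u|\sim r$, the $|u|^{(s-3)/2}$ weights inherent in the $\OO$-norms convert into the corresponding $r$-decay, and the whole right-hand side is bounded by $\ep_0/r^{(s+1)/2}$. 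Integrating in the radial coordinate $w$ from $S_*$ inward then yields $|x'^A - x^A|\les \ep_0/r^{(s-1)/2}$ on $\Si_0\setminus K$, with the $\de_0$-bounded extension into the bulk of $\kk_{(0)}$ contributing only a subleading correction.

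The main obstacle is to gain an extra factor of $r^{-1}$ in the transport equation's right-hand side compared to what the $\OO_\ga$-norm directly provides. A crude bound $|\bcc|\les \ep_0/r^{(s-1)/2}$ would integrate only to $\ep_0/r^{(s-3)/2}$, matching $\osc(u)$ rather than $\osc(x)$. The required improvement relies on $\bcc$ being an $S$-tangent vector field, so that the Sobolev embedding of Proposition \ref{standardsobolev} extracts an additional factor of $r^{-1/2}$ from the $L^p(S)$ norms of Proposition \ref{propbcc}, combined with the identification $|u|\sim r$ on $\kk_{(0)}$; analogously, the Kerr piece $\bu_K - \bu'_K$ must be handled through the sharper form $M^k\ep/r^{q+(s-1)/2}$ of Lemma \ref{estKK'} and the smallness relations in \eqref{epep0}, to ensure its contribution to the integral stays below the target $\ep_0/r^{(s-1)/2}$.
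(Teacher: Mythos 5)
Your proposal is correct and takes essentially the same route as the paper's proof: you derive the identical transport identity $(\Om'e'_4-\Om'e'_3)({x'}^A-x^A)=\bu^A-{\bu'}^A-\Om f^Be_B(x^A)-\frac{1}{4}|f|^2\bu^A+\big(\frac{{\Om'}^2}{\Om^2}-1\big)\bu^A$, bound the right-hand side by $\ep_0 r^{-\frac{s+1}{2}}$ with the same ingredients (Lemma \ref{estKK'} for the Kerr difference, Proposition \ref{propbcc} and the $\OO_{(0)}$ norms for $\bcc,\bcc'$, Proposition \ref{estf} for $f$, and the lapse estimates for ${\Om'}^2/\Om^2-1$), and integrate along $e'_4-e'_3$ from $S_*$ where ${x'}^A=x^A$, exactly as in \eqref{xx'evo}. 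Your explicit two-step treatment of points off $\Si_0$ (a short integration in the outgoing direction over height $\les\de_0$) and your accounting of why the components $\bcc^A$ carry an extra power of $r^{-1}$ relative to the tensorial norm are harmless refinements consistent with the paper's argument and norms.
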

\begin{proof}
We have from \eqref{change} and Proposition \ref{decayGamma}
\begin{align}
\begin{split}\label{xx'evo}
    (\Om'e'_4-\Om'e'_3)({x'}^A-x^A)&=-{\bu'}^A-\Om'e'_4(x^A)+\Om'e'_3(x^A)\\
    &=-{\bu'}^A-\Om\left( e_4+f^B e_B +\frac{1}{4}|f|^2 e_3\right)x^A +\frac{{\Om'}^2}{\Om^2}\bu^A\\
    &=\bu^A-{\bu'}^A-\Om f^B e_B(x^A)-\frac{1}{4}|f|^2\bu^A+\left(\frac{{\Om'}^2}{\Om^2}-1\right)\bu^A\\
    &=\bu^A-{\bu'}^A+\O_1^0\c f+\O_1^1\c\Gaw+r\Gaw\c\Gaw.
\end{split}
\end{align}
Notice from Lemma \ref{estKK'} that
\begin{align*}
    |{b'}^A-b^A|\les|\bcc'^A|+|\bcc^A|+|(b'_K)^A-b_K^A|\les \frac{\ep_0}{r^\frac{s+1}{2}}.
\end{align*}
Hence, we obtain from Proposition \ref{estf}
\begin{align*}
|(\Om'e'_4-\Om'e'_3)({x'}^A-x^A)|\les r^{-1}|f|+\frac{\ep_0}{r^\frac{s+1}{2}}+\frac{M}{r}|\Gaw|+r|\Gaw\c\Gaw|\les \frac{\ep_0}{r^\frac{s+1}{2}}.
\end{align*}
Integrating it along $e'_4-e'_3$, we deduce\footnote{Recall that ${x'}^A=x^A$ on $\Cb_*\cap\Si_0$ which is chosen in Section \ref{bootregion}. {Together with the equation for $\Om'e_3'({x'}^A-x^A)$, we can easily deduce the estimate for ${x'}^A-x^A$ on $\Cb_*\cap\KK'$.}}
\begin{align*}
    |{x'}^A-x^A|_{\infty,S'}\les {|{x'}^A-x^A|_{\infty,\Cb_*\cap\KK'}+}\int_{w}^{\ub_*} \frac{\ep_0}{{r}^{\frac{s+1}{2}}}dw\les\frac{\ep_0}{r^\frac{s-1}{2}}.
\end{align*}
This concludes the proof of Proposition \ref{proposcx}.
\end{proof}
\subsubsection{Estimates for \texorpdfstring{$\OO_{0,1}(\gac)$}{} and \texorpdfstring{$\OO_{0,1}(\inc)$}{}}
\begin{proposition}\label{propgac}
We have the following estimates for $q=0,1$:
\begin{align}
|r^{-1+q-\frac{2}{p}}|u|^{\frac{s-3}{2}}\nab^q\gac_{AB}|_{p,S}\les&\ep_0,\label{estgac}\\
|r^{-1+q-\frac{2}{p}}|u|^{\frac{s-3}{2}}\nab^q\inc_{AB}|_{p,S}\les&\ep_0.\label{estinc}
\end{align}
\end{proposition}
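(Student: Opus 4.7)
The plan is to derive both estimates by integrating the transport equations of Proposition \ref{metriceqre} along the outgoing direction $e_4$, starting from the last slice $\Cb_*$. A small technical issue is that the coefficient in front of $\gac$ and $\inc$ in these equations is $-\trch$, which gives $\la_0=-1$ and thus does not directly fit the sign convention of Lemma \ref{evolution}. I would therefore first renormalize by setting $\widetilde{\gac}:=r^{-2}\gac$ and $\widetilde{\inc}:=r^{-2}\inc$. Using $e_4(r)=\tfrac{r}{2}\trch+O(r\Gag)+\O_2^1$ from Lemma \ref{dint}, one checks that
\begin{equation*}
\nab_4\widetilde{\gac}_{AB}=r^{-2}\bigl(\O_{-2}^0\!\cdot\!(\hchc,\trchc)+O(r)\Omc+r^3\Gag\!\cdot\!\Gag\bigr)+r^{-3}(\Gag+\O_2^1)\cdot\gac,
\end{equation*}
and analogously for $\widetilde{\inc}$. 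Both renormalized equations have $\la_0=0$, so Lemma \ref{evolution} applies with $\la_1=-2/p$, yielding
\begin{equation*}
|r^{-2-2/p}\gac|_{p,S}\les|r^{-2-2/p}\gac|_{p,S_*(u)}+\int_{\ub}^{\ub_*}\!\bigl|r^{-2-2/p}\,\mathrm{RHS}\bigr|_{p,S}\,d\ub'.
\end{equation*}

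For the initial data on $\Cb_*$, the bootstrap hypothesis \eqref{assM31} gives $\OO^{*}(\Cb_*)\les\ep_0$, hence $|r^{-2-2/p}\gac|_{p,S_*(u)}\les \ep_0 r^{-3}|u|^{-(s-3)/2}$, which after multiplication by $r$ is strictly better than the target. For the forcing, the dominant $O(r)\Omc$ term is bounded by Proposition \ref{propOmc} as $|r^{-1-2/p}\Omc|_{p,S}\les \ep_0\, r^{-2}|u|^{-(s-3)/2}$; integrating in $\ub'$ produces the extra $r^{-1}$ needed to reach $\ep_0|u|^{-(s-3)/2}$ after multiplying by $r$. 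The $\O_{-2}^0\!\cdot\!(\hchc,\trchc)$ contribution is controlled by Propositions \ref{propOmtrch} and \ref{prophchc}, the quadratic $r^3\Gag\!\cdot\!\Gag$ term by Lemma \ref{estGagba}, and the Gronwall-type correction $r^{-3}(\Gag+\O_2^1)\gac$ produced by the renormalization is absorbed since $\int_\ub^{\ub_*}r^{-2}d\ub'\sim r^{-1}$ is small. Putting everything together gives $|r^{-1-2/p}\gac|_{p,S}\les \ep_0|u|^{-(s-3)/2}$, i.e.\ \eqref{estgac} for $q=0$.

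For $q=1$, I would commute $r\nab$ through the renormalized equation using Corollary \ref{commutation}, which produces extra terms of the schematic form $\Gag\cdot\nab\gac+\O_3^2\cdot\nab\gac+(r^{-1}\Gag^{(1)}+\O_4^2)\cdot\gac$; the first two are treated by Gronwall, while the last is controlled by the $q=0$ bound just established. The forcing is differentiated as well, but after commutation its structure is identical to the undifferentiated one except that $(\hchc,\trchc,\Omc)$ get replaced by $(\nab\hchc,\nab\trchc,\nab\Omc)$, and these are bounded by the same propositions used for $q=0$. The proof of \eqref{estinc} is identical in form, with the simpler right-hand side $\O_{-2}^0\cdot\trchc+r^3\Gag\cdot\Gag$, and requires no new ingredient.

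The main obstacle is the wrong-sign issue: the equation $\nab_4\gac-\trch\gac=F$ is exponentially expanding if one tries to integrate it naively, so the renormalization $\widetilde{\gac}=r^{-2}\gac$ is essential. Once this is done, the hierarchy of Ricci/metric estimates already proved in Propositions \ref{estOmc0}--\ref{propbcc} provides exactly the forcing estimates needed, and the last-slice control $\OO^*(\Cb_*)\les\ep_0$ supplies data better than the target, so there is no borderline integration and no Gronwall loss.
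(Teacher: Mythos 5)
Your proposal is correct and follows essentially the same route as the paper: the paper also integrates the $\nab_4$ transport equations of Proposition \ref{metriceqre} backwards from the last slice via Lemma \ref{evolution}, uses the $\OO^*(\Cb_*)$ norm for the data term, the already-established bounds on $\trchc$, $\hchc$, $\Omc$ (Propositions \ref{propOmtrch}, \ref{prophchc}, \ref{propOmc}) and Lemma \ref{estGagba} for the forcing, and Corollary \ref{commutation} for $q=1$. Your explicit renormalization $r^{-2}\gac$ is only a cosmetic variant: the paper applies Lemma \ref{evolution} directly with $\la_0=-1$, i.e.\ with the weight $r^{-2-\frac{2}{p}}$, which is exactly the same cancellation your renormalization implements by hand.
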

\begin{proof}
We recall from Proposition \ref{metriceqre}
\begin{align*}
\nab_4(\gac_{AB})-\trch(\gac_{AB})=&\O_{-2}^0\c(\hchc,\trchc)+\O_{-1}^0\c\Omc+r^3\Gag\c\Gag.
\end{align*}
Differentiating it by $r\nab$ and applying Corollary \ref{commutation}, we deduce
\begin{align*}
    \nab_4(r\nab\gac_{AB})-\trch(r\nab\gac_{AB})=&\O_{-3}^0\c\nab(\hchc,\trchc)+\O_{-2}^0\c\nab\Omc+r^3\Gag\c\Gag^{(1)}.
\end{align*}
Applying Lemma \ref{evolution} and Propositions \ref{propOmtrch}, \ref{propOmc} and \ref{prophchc}, we obtain for $q=0,1$
\begin{align*}
    &|r^{-2-\frac{2}{p}}(r\nab)^q\gac_{AB}|_{p,S}\\
    \les&|r^{-2-\frac{2}{p}}(r\nab)^q\gac_{AB}|_{p,S_*(u)}+\int_{\ub}^{\ub_*}\left(|r^{-\frac{2}{p}}(r\nab)^q(\hchc,\trchc,r^{-1}\Omc)|_{p,S}+|r^{-1-\frac{2}{p}}\Gag\c\Gag^{(1)}|_{p,S}\right)d\ub\\
    \les & \frac{\ep_0}{r|u|^\frac{s-3}{2}}+\int_\ub^{\ub_*}\left(\frac{\ep_0}{r^2|u|^\frac{s-3}{2}}+\frac{\ep^2}{r^3|u|^{s-3}}\right)d\ub\\
    \les &\frac{\ep_0}{r|u|^\frac{s-3}{2}},
\end{align*}
which implies \eqref{estgac}. The proof of \eqref{estinc} is similar and left to the reader. This concludes the proof of Proposition \ref{propgac}.
\end{proof}
\subsubsection{Estimate for \texorpdfstring{$\osc(r)$}{}}
\begin{proposition}\label{proposcr}
We have the following estimate:
\begin{equation}
    \sup_{\kk'}r^\frac{s-3}{2}|{r'}-r|\les\ep_0.
\end{equation}
\end{proposition}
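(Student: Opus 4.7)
The plan is to mirror the strategy of Propositions \ref{osc0ucontrol} and \ref{proposcx}: derive a transport equation for $r'-r$ along a direction from the sphere $S_{*}=\Cb_{*}\cap\Si_{0}$, use the fact that the two foliations share this sphere so $r'=r$ there as an initial condition, and integrate.

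First I would compute $(\Om'e'_{4}-\Om'e'_{3})(r'-r)$. Applying Lemma \ref{dint} in the primed frame,
\begin{equation*}
(\Om'e'_{4}-\Om'e'_{3})(r')=\frac{\overline{\Om'\trch'}-\overline{\Om'\trchb'}}{2}\,r'.
\end{equation*}
For the action on $r$, Lemma \ref{changelemma} gives $\Om'e'_{4}=\Om(e_{4}+f^{A}e_{A}+\tfrac{1}{4}|f|^{2}e_{3})$ and $\Om'e'_{3}=(\Om'^{2}/\Om)e_{3}$; since $r$ depends only on $(u,\ub)$ we have $e_{A}(r)=0$, and Lemma \ref{dint} in the unprimed frame yields
\begin{equation*}
(\Om'e'_{4}-\Om'e'_{3})(r)=\frac{\overline{\Om\trch}}{2}r+\frac{|f|^{2}}{8}\overline{\Om\trchb}\,r-\frac{\Om'^{2}}{\Om^{2}}\cdot\frac{\overline{\Om\trchb}}{2}r.
\end{equation*}
Subtracting produces a transport equation of the form
\begin{equation*}
(\Om'e'_{4}-\Om'e'_{3})(r'-r)+A(r'-r)=F,\qquad A=-\tfrac{1}{2}(\overline{\Om'\trch'}-\overline{\Om'\trchb'})=-\tfrac{2}{r}+O(\ep_{0}r^{-(s-1)/2}),
\end{equation*}
with forcing $F$ involving the differences $\overline{\Om'\trch'}-\overline{\Om\trch}$, $\overline{\Om\trchb}(1-\Om'^{2}/\Om^{2})$, and a term quadratic in $f$.

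Each piece of $F$ is controlled by the already-established estimates: (i) the frame-transformation identity $\Om'\trch'-\Om\trch=\Om\,\sdiv' f+\lot$ from Proposition \ref{transformation}, combined with $|\dk^{\leq 1}f|\lesssim \ep_{0}r^{-(s-1)/2}$ from Proposition \ref{estf} and the bound on $\widecheck{\Om\trch}$ from Proposition \ref{propOmtrch}, gives $|\overline{\Om'\trch'}-\overline{\Om\trch}|\lesssim \ep_{0}r^{-(s+1)/2}$; (ii) $|\Om'-\Om|\lesssim \ep_{0}r^{-(s-1)/2}$ from Proposition \ref{estOmc0} together with Lemma \ref{estKK'} yields the same bound on the $(\Om'^{2}/\Om^{2}-1)\overline{\Om\trchb}r$ term; (iii) the quadratic term in $f$ decays like $\ep_{0}^{2}r^{-(s-2)}$. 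Collecting, $|F|\lesssim \ep_{0}r^{-(s-3)/2}$.

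The initial condition is $r'=r$ at $S_{*}$, since by the construction in Section \ref{bootregion} the last slice $\Cb_{*}$ is anchored at the sphere $S_{(0)}(-\ub_{*},\ub_{*})$ and $u=u'=-\ub_{*}$, $x^{A}=x^{A}_{(0)}$ on $S_{*}$, so both foliations realise $S_{*}$ as a common leaf. Integrating along $e'_{4}-e'_{3}$ from $S_{*}$, exactly as in Propositions \ref{osc0ucontrol} and \ref{proposcx}, and using a Gronwall argument for the $A\sim -2/r$ coefficient, we obtain
\begin{equation*}
|r'-r|_{\infty,S'}\lesssim\int_{w}^{\ub_{*}}\frac{\ep_{0}}{r^{(s-1)/2}}\,dw\lesssim\frac{\ep_{0}}{r^{(s-3)/2}},
\end{equation*}
which is the desired bound $\osc(r)\lesssim\ep_{0}$.

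The main subtlety is that $\overline{\Om\trch}$ and $\overline{\Om'\trch'}$ are averages over different spheres of the two foliations, so their difference is not immediately the averaged linearized quantity $\widecheck{\Om\trch}$. This is handled as in the earlier propositions by combining the pointwise frame-change formulae of Proposition \ref{transformation} (which produce only $f$ and lower-order terms) with the pointwise bound from Proposition \ref{propOmtrch}; the sphere-difference contributes only through $f$, which is already controlled by Proposition \ref{estf}. No new difficulty arises beyond what has already been addressed.
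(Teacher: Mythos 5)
Your route is genuinely different from the paper's, and in outline it could be made to work, but as written it has a real gap at exactly the point you flag and then wave away. The forcing $F$ in your transport equation is built from \emph{differences of $S$-averages over two different spheres}: $\overline{\Om'\trch'}$ is an average over the primed sphere $S'$ while $\overline{\Om\trch}$, $\overline{\Om\trchb}$ are averages over the unprimed sphere $S$ through the same point. Your claim that "the sphere-difference contributes only through $f$" is not correct, and there is no precedent for it in Propositions \ref{osc0ucontrol} and \ref{proposcx}, whose transport equations involve only pointwise quantities. To control these average-differences one must split into Kerr background plus linearized parts: the linearized averages are handled by $\OO$, $\OO_{(0)}$, but the Kerr-background averages over the two spheres must be compared using the fact that $\Phi$ and $\Phi'$ send them to the Kerr spheres $S_{Kerr}(u,\ub)$ and $S_{Kerr}(u',\ub)$, so their difference is governed by $|u'-u|$ and $|x'^A-x^A|$ (a Lemma \ref{estKK'}-type argument for averaged quantities), together with measure corrections controlled by $\gac$, $\gac'$; the frame-change term $\sdiv'f$ drops out of the average only because $\int_{S'}\sdiv'f\,d\ga'=0$, not because $f$ is small. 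In other words, the honest completion of your argument re-imports essentially all the ingredients of the paper's proof. There is also a quantitative inconsistency: your coefficient is $A=-\tfrac12(\overline{\Om'\trch'}-\overline{\Om'\trchb'})\approx-\tfrac1r$ (not $-\tfrac2r$), so homogeneous solutions grow linearly in $w$, and with your collected bound $|F|\les\ep_0 r^{-(s-3)/2}$ the Gronwall/integrating-factor step yields only $\ep_0 r^{-(s-5)/2}$, which does not close; you need $|F|\les\ep_0 r^{-(s-1)/2}$, which your itemized estimates (i)--(iii) do give once the explicit factor of $r$ multiplying the average-differences is kept, and which is what your final integral implicitly uses.

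For comparison, the paper avoids transport and averages altogether: it bounds $|r-r_K|$ sphere by sphere from $\det\ga-\det\ga_K=\O_{-2}^0\c\gac+r^6\Gag\c\Gag$ integrated over $S$ (using Proposition \ref{propgac}), bounds $|r'-r'_K|$ the same way from the initial-layer assumptions, and bounds $|r'_K-r_K|\les|\pr_u r_{Kerr}||u'-u|+|\pr_A r_{Kerr}||x'^A-x^A|\les\ep_0 r^{-\frac{s-3}{2}}$ using Proposition \ref{osc0ucontrol}, concluding by the triangle inequality. That argument is shorter, uses only already-proved estimates pointwise, and sidesteps the average-over-different-spheres issue that is the weak point of your proposal; if you want to keep your transport approach, you must add the average-comparison step described above and correct the Gronwall bookkeeping.
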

\begin{proof}
Notice that we have
\begin{align*}
    {\det\ga_{AB}}-{\det(\ga_{K})_{AB}}&=\left(\ga_{11}\ga_{22}-\ga_{12}^2\right)-\left((\ga_{K})_{11}(\ga_K)_{22}-(\ga_K)_{12}^2\right)\\
    &=\gac_{11}(\ga_K)_{22}+(\ga_K)_{11}\gac_{22}-2\gac_{12}(\ga_K)_{12}+\gac_{11}\gac_{22}-\gac_{12}^2\\
    &=\O_{-2}^0\c\gac_{AB}+r^6\Gag\c\Gag. 
\end{align*}
Hence, we obtain
\begin{align*}
    4\pi\left|r^2-r_K^2\right|&=\left|\int_{\mathbb{S}^2} \left(\sqrt{\det\ga_{AB}}-\sqrt{\det(\ga_K)_{AB}}\right)d\si_{\mathbb{S}^2}\right| \\
    &\les\int_{\mathbb{S}^2} \left|\frac{{\det\ga_{AB}}-{\det(\ga_{K})_{AB}}}{\sqrt{\det\ga_{AB}}+\sqrt{\det(\ga_{K})_{AB}}}\right|d\si_{\mathbb{S}^2}\\
    &\les |\gac_{AB}|+r^4|\Gag\c\Gag|\\
    &\les \frac{r\ep_0}{|u|^\frac{s-3}{2}}+r^4\frac{\ep^2}{r^4|u|^{s-3}}\\
    &\les \frac{r\ep_0}{|u|^\frac{s-3}{2}},
\end{align*}
which implies
\begin{equation}\label{rcheck}
    \sup_{\kk}|r-r_K|\les \frac{\ep_0}{|u|^\frac{s-3}{2}}.
\end{equation}
Similarly, we have
\begin{equation}\label{r'check}
    \sup_{\kk'}|r'-r_K'|\les \frac{\ep_0}{r^\frac{s-3}{2}}.
\end{equation}
Proceeding as in \eqref{estKK'scalar}, we obtain for any $p\in\kk'$
\begin{align*}
    |r'_K(p)-r_K(p)|&=|r_{Kerr}(\Phi'(p))-r_{Kerr}(\Phi(p))|\\
    &\les|\pr_u(r_{Kerr})||u'-u|+|\pr_A(r_{Kerr})||{x'}^A-x^A|\\
    &\les |u'-u|\\
    &\les \frac{\ep_0}{r^\frac{s-3}{2}},
\end{align*}
where we used Proposition \ref{osc0ucontrol}. Combining with \eqref{rcheck} and \eqref{r'check}, we infer
\begin{equation}
    \sup_{\kk'}r^\frac{s-3}{2}|r'-r|\les \ep_0.
\end{equation}
This concludes the proof of Proposition \ref{proposcr}.
\end{proof}
\subsection{Estimate for \texorpdfstring{$\OO_{\bGa}$}{}}\label{Ga}
\begin{proposition}\label{Ga0}
We have the following estimates:
\begin{align}
\begin{split}\label{estGa0}
    |r^{2-\frac{2}{p}}|u|^{\frac{s-3}{2}}(\Jc,\Lc)|_{p,S}&\les\ep_0,\\
    |r^{1-\frac{2}{p}}|u|^{\frac{s-1}{2}}\Jbc|_{p,S}&\les\ep_0.
\end{split}
\end{align}
\end{proposition}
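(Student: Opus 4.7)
\noindent\textbf{Proof plan for Proposition \ref{Ga0}.} The plan is to convert the tensorial differences $\Jc$, $\Jbc$, $\Lc$ into products of already–controlled linearized metric and Ricci coefficients via the explicit formulae of Lemma \ref{bGa}, and then invoke the estimates established in Section \ref{Ricci01}. The key observation is that Lemma \ref{bGa} expresses the Christoffel symbols in terms of $\ga_{AB}$, $\bu^A$, $\Om$ together with coordinate derivatives $\pr_u, \pr_\ub, \pr_{x^A}$, and that Proposition \ref{metricequations} lets us replace $\pr_\ub\ga_{AB}=2\Om\chi_{AB}$ (and similarly compute $\pr_u\ga_{AB}$), so every $\pr$–derivative translates into a Ricci coefficient (or a spatial derivative of $\gac$, $\bcc$, $\Omc$) on which the bounds of Propositions \ref{propOmtrch}, \ref{propnabtrchtrchbc}, \ref{prophchc}, \ref{propOmc}, \ref{propbcc} and \ref{propgac} apply.

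\medskip\noindent
First I would deal with $\Jc$. From Lemma \ref{bGa} and the identity $\pr_\ub\ga_{AC}=2\Om\chi_{AC}$ we get $\Ga_{\ub A}^{\,B}=\Om\,\chi_A{}^{B}$, and similarly $(\Ga_K)_{\ub A}^{\,B}=\Om_K(\chi_K)_A{}^{B}$. Decomposing the difference
\[
\Jc_A{}^{B} \;=\; \Omc\,\ga^{BC}\chi_{AC} \;+\; \Om_K\bigl(\ga^{BC}-\ga_K^{BC}\bigr)\chi_{AC} \;+\; \Om_K\ga_K^{BC}\widecheck{\chi}_{AC},
\]
and using $\widecheck{\chi}_{AC}=\hchc_{AC}+\tfrac12\de_{AC}\trchc$ (modulo $\Om$–rescalings that are harmless by $\OO_0(\Omc)$), the desired bound $|r^{2-2/p}|u|^{(s-3)/2}\Jc|_{p,S}\les\ep_0$ follows directly from $\OO_0(\Omc)$, $\OO_0(\gac)$ and $\OO_0(\hchc)+\OO_0(\widecheck{\Om\trch})$. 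Next I would treat $\Lc$. Lemma \ref{bGa} gives
\[
\Ga_{BC}^{\,A}\;=\;\widetilde{\Ga}(\ga)_{BC}^{\,A} \;+\; \frac{\bu^A}{2\Om}\,\chi_{BC},
\]
where $\widetilde{\Ga}(\ga)$ is the intrinsic Levi-Civita Christoffel of $\ga$ on $S$. Its difference with the Kerr counterpart is $\tfrac12\ga^{AD}(\nab^K_C\gac_{BD}+\nab^K_B\gac_{CD}-\nab^K_D\gac_{BC})$, estimated by $\OO_1(\gac)$; the remaining terms are products of $(\bcc,\Omc)$ with $\chi_K$ and of $\bu_K$ with $\widecheck{\chi}$, bounded by $\OO_0(\bcc)+\OO_0(\Omc)+\OO_0(\hchc)+\OO_0(\widecheck{\Om\trch})$. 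All these contributions carry the schematic decay $\Gag$ and sum to the claimed bound for $\Lc$.

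\medskip\noindent
The main obstacle is $\Jbc$, because the formula for $\Ga_{uA}^{\,B}$ in Lemma \ref{bGa} involves the four mixed terms $\pr_u\ga_{AC}$, $\pr_{x^A}(\ga_{CD}\bu^D)$, $\pr_A\Om^2$ and $\pr_\ub(\ga_{AD}\bu^D)$. I would convert each of them into controlled quantities as follows: $\pr_u$ is rewritten using $\pr_u=\Om e_3-\bu^D\pr_D$ and then Proposition \ref{metricequations} gives $\Om\nab_3\ga_{AC}=2\Om\chib_{AC}-\pr_A\bu^D\ga_{DC}-\pr_C\bu^D\ga_{DA}$; the $x^A$–derivatives of $\bu$ become $\nab\bu$ (controlled by $\OO_1(\bcc)$ after subtracting the Kerr value); the term $\pr_A\Om^2=2\Om\,\pr_A\Om$ uses the identity $\nab\log\Om=\tfrac12(\eta+\etab)$ of \eqref{6.6}, whose linearization is controlled by $\OO_0(\etac)+\OO_0(\etabc)+\OO_1(\Omc)$; finally $\pr_\ub(\ga_{AD}\bu^D)=2\Om\chi_{AD}\bu^D+\ga_{AD}\pr_\ub\bu^D$, and the transport equation $\nab_4\bu^A=-4\Om\ze^A$ from Proposition \ref{metricequations} turns the $\pr_\ub\bu$–term into $\zec$–type contributions, controlled by $\OO_0(\etac)+\OO_0(\etabc)$. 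After taking differences with the Kerr analogues (and absorbing purely Kerr discrepancies via Lemma \ref{estKK'} together with the already–proven $\osc$ bounds of Propositions \ref{osc0ucontrol}, \ref{estf}, \ref{proposcx}, \ref{proposcr}), every resulting term is a product of a factor in $\Gab$ (coming from $\hchbc$, $\widecheck{\Om\trchb}$ or $\etabc$) with a Kerr background of size $\O_q^p$, or a product of two $\Gag$ factors with a slightly better Kerr weight, all of which are compatible with the $\Gab$-level weight $r^{1-2/p}|u|^{(s-1)/2}$ demanded by $\OO_0(\Jbc)$. Summing these contributions yields $|r^{1-2/p}|u|^{(s-1)/2}\Jbc|_{p,S}\les\ep_0$ and completes the proof.
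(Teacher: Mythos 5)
Your plan is correct and is essentially the paper's own proof: express $\Jc,\Jbc,\Lc$ via Lemma \ref{bGa}, convert the coordinate derivatives $\pr_\ub\gac$, $\pr_u\gac$, $\pr_\ub\bcc$ into Ricci coefficients through the (linearized) transport equations of Propositions \ref{metricequations}/\ref{metriceqre}, and conclude with the bounds of Propositions \ref{propOmtrch}--\ref{propgac}. The only superfluous ingredient is your appeal to Lemma \ref{estKK'} and the $\osc$ bounds for $\Jbc$: since $\Ga_K$ is the Christoffel symbol of $\g_K=\Phi^*(\g_{Kerr})$ computed in the same coordinates with the single map $\Phi$, no comparison of the two pullbacks ever arises, and the difference $\Ga-\Ga_K$ is directly a combination of $\gac$, $\bcc$, $\Omc$ and their controlled derivatives.
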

\begin{proof}
We recall from Lemma \ref{bGa}
\begin{align*}
    \bGa_{\ub A}^B=&\f12 \ga^{BC}\pr_{\ub}(\ga_{AC}),\\
    \bGa_{uA}^B=&\f12\ga^{BC}\left(\pr_u(\ga_{AC})-\frac{\pr}{\pr x^A}\left(\ga_{CD} b^D\right)+\frac{\pr}{\pr x^C}\left(\ga_{AD}b^D\right)\right)+\frac{b^B}{4\Om^2}\left(2\frac{\pr}{\pr x^A}(\Om^2)-\frac{\pr}{\pr \ub}(\ga_{AD}b^D)\right),\\
    \bGa_{BC}^A=&\f12\ga^{AD}\left(\frac{\pr \ga_{BD}}{\pr x^C}+\frac{\pr \ga_{CD}}{\pr x^B}-\frac{\pr \ga_{BC}}{\pr x^D}\right) +\frac{b^A}{4\Om^2}\frac{\pr\ga_{BC}}{\pr\ub}.
\end{align*}
Hence, we have from Definition \ref{dfGamma}
\begin{align*}
    \Jc^B_A=\O_2^0\c \pr_\ub(\gac_{AB})+\O_3^0\c\gac_{AB}+r^{-1}\Gag\c\pr_\ub(\gac_{AB}).
\end{align*}
We have from Proposition \ref{metriceqre}
\begin{align*}
    \pr_{\ub}(\gac_{AB})=\trch(\gac_{AB})+\O_{-2}^0\c(\trchc,\hchc,r^{-1}\Omc)+r^3\Gag\c\Gag,
\end{align*}
which implies from Propositions \ref{propOmtrch}, \ref{prophchc}, \ref{propOmc} and \ref{propgac}
\begin{equation*}
    |r^{-\frac{2}{p}}\pr_{\ub}(\gac_{AB})|_{p,S}\les |r^{-1-\frac{2}{p}}\gac_{AB}|_{p,S}+ |r^{2-\frac{2}{p}}(\trchc,\hchc,r^{-1}\Omc)|_{p,S}+|r^{3-\frac{2}{p}}\Gag\c\Gag|_{p,S}\les \frac{\ep_0}{|u|^\frac{s-3}{2}}.
\end{equation*}
Thus, we infer from Proposition \ref{propgac}
\begin{align*}
    |r^{2-\frac{2}{p}}\Jc|_{p,S}&\les |r^{-1-\frac{2}{p}}\gac_{AB}|_{p,S}+|r^{-\frac{2}{p}}\pr_\ub(\gac_{AB})|_{p,S}\les\frac{\ep_0}{|u|^\frac{s-3}{2}}.
\end{align*}
Next, we have from Definition \ref{dfGamma}
\begin{align*}
\Lc_{BC}^A=\O_3^2\c\pr_\ub(\gac_{AB})+\O_2^0\c\dko\gac_{AB}+\O_{-1}^0\c\bcc^A+r^2\Gag\c\Gag^{(1)}.
\end{align*}
Thus, we deduce from Propositions \ref{propgac} and \ref{propbcc}
\begin{align*}
    |r^{1-\frac{2}{p}}\Lc_{BC}^A|_{p,S}&\les \frac{M^2}{r^2}|r^{-\frac{2}{p}}\pr_\ub(\gac_{AB})|_{p,S}+|r^{-1-\frac{2}{p}}\dko\gac_{AB}|_{p,S}+|r^{2-\frac{2}{p}}\bcc^A|_{p,S}+|r^{3-\frac{2}{p}}\Gag\c\Gag^{(1)}|_{p,S}\\
    &\les\frac{\ep_0}{|u|^\frac{s-3}{2}}+\frac{\ep^2}{r|u|^{s-3}}\\
    &\les\frac{\ep_0}{|u|^\frac{s-3}{2}},
\end{align*}
which implies
\begin{equation*}
    |r^{2-\frac{2}{p}}\Lc|_{p,S}\les \frac{\ep_0}{|u|^\frac{s-3}{2}}.
\end{equation*}
Finally, we have from Definition \ref{dfGamma}
\begin{align*}
    \Jbc_A^B=\O_{2}^0\c(\pr_u(\gac_{AB}),\pr_{\ub}(\gac_{AB}),r^{-1}\dko\gac_{AB})+\O_3^2\c\dko\Omc+\O_0^0\c\dko\bcc^A+\O_1^2\c\pr_{\ub}(\bcc^A)+r\Gag\c\Gag^{(1)}.
\end{align*}
We recall from Propositions \ref{metriceqre}, \ref{propOmtrch}, \ref{propOmc}, \ref{prophchc}, \ref{propbcc} and \ref{propgac} 
\begin{align*}
    |r^{-\frac{2}{p}}\pr_u(\gac_{AB})|_{p,S}&\les |r^{-1-\frac{2}{p}}\gac_{AB}|+|r^{2-\frac{2}{p}}(\hchbc,\trchbc,r^{-1}\Omc,\bcc^A)|_{p,S}+M^2|r^{-\frac{2}{p}}\Gag|_{p,S}+|r^{3-\frac{2}{p}}\Gag\c\Gag|_{p,S}\\
    &\les \frac{\ep_0}{|u|^\frac{s-3}{2}}+\frac{r\ep_0}{|u|^{\frac{s-1}{2}}}+\frac{M^2\ep}{r^2|u|^\frac{s-3}{2}}+\frac{\ep^2}{r|u|^{s-3}}\\
    &\les\frac{r\ep_0}{|u|^{\frac{s-1}{2}}},
\end{align*}
and
\begin{align*}
    |r^{-\frac{2}{p}}\pr_\ub(\bcc^A)|_{p,S}&\les |r^{-1-\frac{2}{p}}\zec|_{p,S}+M^2|r^{-3-\frac{2}{p}}\Gag|_{p,S}+|r^{-\frac{2}{p}}\Gag\c\Gag|_{p,S}\\
    &\les \frac{\ep_0}{r^3|u|^\frac{s-3}{2}}+\frac{M^2\ep}{r^5|u|^\frac{s-3}{2}}+\frac{\ep^2}{r^4|u|^{s-3}}\\
    &\les \frac{\ep_0}{r^3|u|^\frac{s-3}{2}}.
\end{align*}
Combining the above estimates, we deduce from Propositions \ref{propOmc}, \ref{propbcc} and \ref{propgac}
\begin{align*}
    |r^{1-\frac{2}{p}}\Jbc|_{p,S}&\les |r^{-1-\frac{2}{p}}(\pr_u(\gac_{AB}),\pr_{\ub}(\gac_{AB}),r^{-1}\dko\gac_{AB})|_{p,S}+M^2|r^{-2-\frac{2}{p}}\dko\Omc|_{p,S}\\
    &+|r^{1-\frac{2}{p}}\dko\bcc^A|_{p,S}+M^2|r^{-\frac{2}{p}}\bcc^A|_{p,S}+|r^2\Gag\c\Gag^{(1)}|_{p,S}\\
    &\les \frac{\ep_0}{|u|^\frac{s-1}{2}}+\frac{M^2\ep_0}{r^3|u|^\frac{s-3}{2}}+\frac{\ep_0}{r|u|^\frac{s-3}{2}}+\frac{M^2\ep_0}{r^2|u|^\frac{s-3}{2}}+\frac{\ep^2}{r^2|u|^{s-3}}\\
    &\les \frac{\ep_0}{|u|^\frac{s-1}{2}}.
\end{align*}
This concludes the proof of Proposition \ref{Ga0}.
\end{proof}
In view of Propositions \ref{estOOb0}-\ref{Ga0}, we obtain \eqref{conM34}. This concludes the proof of Theorem M3.
\section{Initialization and extension (Theorems M0, M2 and M4)}\label{initiallast}
\subsection{Control of the initial data layer (Theorem M0)}\label{osc}
The goal of this section is to prove Theorem M0, which we recall below for convenience.
\begin{thmM0}
Assume that\footnote{Recall that quantities with prime are associated to the foliation of the initial data layer, see \eqref{primenotation1} and \eqref{primenotation2}.}
\begin{equation}\label{assum1}
    \OO'\leq\ep_0,\qquad \Rk'\leq\ep_0,\qquad\OO\leq\ep,\qquad \osc\leq\ep.
\end{equation}
Then, we have
\begin{equation}\label{con1}
\Rk_0\les\ep_0.
\end{equation}
\end{thmM0}
\begin{proof}
We denote for $e_A$ an orthogonal frame on $S$
\begin{align*}
    R:=\{\a_{AB},\,\b_{A},\,\rho,\,\si,\,\bb_{A},\,\aa_{AB}\},
\end{align*}
and for $e_A'$ an orthogonal frame on $S'$
\begin{align*}
    R':=\{\a'_{AB},\,\b'_A,\,\rho',\,\si',\,\bb'_{A},\,\aa'_{AB}\}.
\end{align*}
The transformation formulae for curvature in Proposition \ref{transformation} can be written as follows:
\begin{align}\label{Rtrans}
    R=R'+f \c R'+\lot,
\end{align}
which implies
\begin{equation*}
    \Rc=\Rc'+R_K-R_K'+f\c (\Rc'+R_K')+\lot
\end{equation*}
We have from Lemma \ref{estKK'}
\begin{align*}
    |R_K-R'_K|\les \frac{M\ep}{r^\frac{s+5}{2}}.
\end{align*}
Hence, we obtain
\begin{align*}
    |\Rc|\les|\Rc'|+|f\c\Rc'|+\frac{M}{r^3}|f|+|R_K-R'_K|\les|\Rc'|+\frac{M\ep}{r^\frac{s+5}{2}},
\end{align*}
where we used $\osc(f)\leq\ep$. Then, we have
\begin{align*}
    \int_{\Si_0\setminus K}r^s|\Rc|^2\les\int_{\Si_0\setminus K} r^s|\Rc'|^2+r^s\left(\frac{M^2\ep^2}{r^{s+5}}\right)\les {\Rk'_0}^2+\frac{M^2\ep^2}{R_0^2}\les\ep_0^2.
\end{align*}
Moreover, we have from \eqref{Rtrans}
\begin{equation}\label{prRtrans}
    \pr R=(\pr R)'+\pr f\c R'+f\c(\pr R)'+\lot,
\end{equation}
where
\begin{equation*}
    \pr R:=\{(\nab_\mu\a)_{AB},\,(\nab_\mu\b)_{A},\,\nab_\mu\rho,\,\nab_\mu\si,\,(\nab_\mu\bb)_{A},\,(\nab_\mu\aa)_{AB}\},\qquad \mu=1,2,3,4.
\end{equation*}
We have from Lemma \ref{estKK'}
\begin{align*}
    |(\pr R)_K-(\pr R)'_K|\les\frac{M\ep}{r^\frac{s+7}{2}}.
\end{align*}
Moreover, we have from Proposition \ref{useful}
\begin{equation*}
    \widecheck{\pr R}=\pr\Rc+(\pr-\pr_K)R_K=\pr\Rc+(\Jbc,\Lc,\Jc)\c\O_3^1,
\end{equation*}
which implies
\begin{align*}
    \int_{\Si_0\sm K}r^{s+2}|\pr\Rc|^2&\les\int_{\Si_0\sm K}r^{s+2}|\widecheck{\pr R}|^2+r^{s+2}\frac{M^2}{r^6}|\Gaw^{(1)}|^2\\
    &\les\int_{\Si_0\sm K}r^{s+2}|\widecheck{\pr R}|^2+\int_{R_0}^\infty \frac{M^2}{r^{4-s}}|\Gaw^{(1)}|_{2,S}^2\\
    &\les\int_{\Si_0\sm K}r^{s+2}|\widecheck{\pr R}|^2+\int_{R_0}^\infty \frac{M^2}{r^{4-s}}\frac{\ep^2}{r^{s-1}}\\
    &\les\int_{\Si_0\sm K}r^{s+2}|\widecheck{\pr R}|^2+\ep_0^2.
\end{align*}
Similarly, we have
\begin{equation*}
    \int_{\Si_0\sm K}r^{s+2}|\widecheck{(\pr R)'}|^2\les\int_{\Si_0\sm K}r^{s+2}|\pr'\Rc'|^2+\ep_0^2\les\ep_0^2.
\end{equation*}
Hence, we obtain from \eqref{prRtrans} and $\osc(f)\leq\ep$ that
\begin{align*}
\int_{\Si_0\sm K}r^{s+2}|\pr\Rc|^2 &\les\ep_0^2+\int_{\Si_0\sm K}r^{s+2}|\widecheck{\pr R}|^2\\
&\les\ep_0^2+\int_{\Si_0\sm K}r^{s+2}|\widecheck{(\pr R)'}|^2+r^{s+2}\frac{M^2\ep^2}{r^{s+7}}+|\pr f|^2 r^{s+2}|R'|^2+|f|^2 r^{s+2}|\pr'R'|^2\\
&\les\ep_0^2+\int_{\Si_0\sm K}r^{s}|\Rc'|^2+r^{s+2}|\widecheck{(\pr R)'}|^2+r^{s+2}|f|^2 |(\pr'R')_K|^2\\
&\les\ep_0^2+\int_{\Si_0\sm K}r^{s+2}\frac{\ep^2}{r^{s-1}}\frac{M^2}{r^8}\\
&\les\ep_0^2.
\end{align*}
Thus, we deduce
\begin{align*}
\Rk_0^2=\sum_{R}\int_{\Si_0\setminus K}r^s|\dk^{\leq 1}\Rc|^2\les\sum_{R}\int_{\Si_0\setminus K}r^s|\Rc|^2+r^{s+2}|\pr\Rc|^2\les\ep_0^2.
\end{align*}
This concludes the proof of Theorem M0.
\end{proof}
\subsection{Control of the last slice (Theorem M2)}\label{lastslice}
The goal of this section is to prove Theorem M2 which we recall below for convenience.
\begin{thmM2}
Let $\Cb_*$ endowed with the geodesic type foliation of Definition \ref{geodesicfoliation}. We assume that
\begin{align*}
    \OO'(\Cb_*\cap\Si_0)\leq\ep_0,\qquad\RRb(\Cb_*)\les\ep_0,\qquad {\OO}^{*}(\Cb_*)\leq\ep.
\end{align*}
Then, we have
\begin{align*}
    \OO^*(\Cb_*)\les\ep_0.
\end{align*}
\end{thmM2}
\subsubsection{Geodesic type foliation on \texorpdfstring{$\Cb_*$}{}}
\begin{definition}\label{geodesicfoliation}
A foliation $(u,\ub_*)$ on the last slice $\Cb_*$ is called a geodesic type foliation if it satisfies the following conditions:
\begin{equation}\label{geodesicfo}
    \Omc=0,\qquad\quad \bcc^A=0.
\end{equation}
\end{definition}
\begin{remark}
Notice that \eqref{geodesicfo} is equivalent to
\begin{align*}
        e_3(u)=\Om_K^{-1},\qquad e_3(x^A)=\Om^{-1}_K\bu_K^A.
    \end{align*}
In particular, the choice of the foliation is prescribed by $\Omc=0$ while $\bcc^A=0$ corresponds in fact to the transport of the $x^A$ coordinates. The local existence of geodesic type foliations as in Definition \ref{geodesicfoliation} is completely analogous to the one for the standard geodesic foliation, i.e. $\Om=\frac{1}{2}$, with a transport of the $x^A$ coordinates by $e_3(x^A)=0$.
\end{remark}
\begin{lemma}\label{ombzero}
   Let $\Cb_*$ endowed with the geodesic type foliation of Definition \ref{geodesicfoliation}. Then, we have 
    \begin{equation*}
        \ombc=0,\quad \mbox{ on }\Cb_*.
    \end{equation*}
\end{lemma}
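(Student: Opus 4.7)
The plan is to reduce the identity $\ombc = 0$ on $\Cb_*$ to a pointwise equality $\omb = \omb_K$ along $\Cb_*$, using only the defining conditions $\Omc = 0$ and $\bcc^A = 0$ of the geodesic type foliation together with the universal identity from \eqref{6.6}
\begin{equation*}
\omb = -\tfrac{1}{2}\,e_3(\log\Omega),\qquad \omb_K = -\tfrac{1}{2}\,(e_3)_K(\log\Omega_K).
\end{equation*}

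First I would exploit Lemma \ref{checke4e3}, which states $\widecheck{\Omega e_3} = \bcc^A\partial_{x^A}$. Since $\bcc^A = 0$ on $\Cb_*$ by Definition \ref{geodesicfoliation}, this yields $\Omega e_3 = \Omega_K (e_3)_K$ pointwise on $\Cb_*$. Combined with $\Omc = 0$, i.e.\ $\Omega = \Omega_K$ on $\Cb_*$, I obtain the vector field identity
\begin{equation*}
e_3 = (e_3)_K \qquad \mbox{pointwise on }\Cb_*.
\end{equation*}

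Next, the key observation is that $e_3$ is tangent to $\Cb_*$, because $\Cb_*$ is a level set of the optical function $\ub$ and $e_3 = -2\Omega\,\grad\ub$ is null, hence tangent to this null hypersurface. From the previous step, $(e_3)_K$ is therefore also tangent to $\Cb_*$ along $\Cb_*$. Since $e_3$ is tangent to $\Cb_*$, the derivative $e_3(\log\Omega)|_{\Cb_*}$ only depends on the restriction $\log\Omega|_{\Cb_*}$. But $\Omc = 0$ on $\Cb_*$ means $\log\Omega|_{\Cb_*} = \log\Omega_K|_{\Cb_*}$, so
\begin{equation*}
e_3(\log\Omega)\big|_{\Cb_*} \;=\; e_3(\log\Omega_K)\big|_{\Cb_*} \;=\; (e_3)_K(\log\Omega_K)\big|_{\Cb_*},
\end{equation*}
where in the last step I used $e_3 = (e_3)_K$ on $\Cb_*$. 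Inserting this into the two formulas from \eqref{6.6} gives $\omb = \omb_K$ on $\Cb_*$, i.e.\ $\ombc = 0$ on $\Cb_*$, as claimed.

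There is essentially no obstacle here: the only point requiring a little care is the assertion that a quantity built from $e_3$ differentiation of a scalar function restricted to $\Cb_*$ is intrinsic to $\Cb_*$, which is immediate from the tangency of $e_3$ to the null hypersurface $\Cb_*$. Everything else is algebraic and uses only the two defining relations $\Omc = 0$, $\bcc^A = 0$ of Definition \ref{geodesicfoliation} together with Lemma \ref{checke4e3}.
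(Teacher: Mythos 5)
Your argument is correct and follows essentially the same route as the paper's proof: Lemma \ref{checke4e3} together with $\bcc^A=0$ and $\Omc=0$ gives $e_3=(e_3)_K$ on $\Cb_*$, and then $\omb=-\frac{1}{2}e_3(\log\Om)=-\frac{1}{2}(e_3)_K(\log\Om_K)=\omb_K$ via \eqref{6.6}. Your explicit remark that $e_3$ is tangent to the null hypersurface $\Cb_*$, so that $e_3(\log\Om)$ only sees the restriction of $\Om$ to $\Cb_*$ where $\Om=\Om_K$, is precisely the (implicit) justification behind the paper's one-line computation.
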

\begin{proof}
    We have from Lemma \ref{checke4e3} and \eqref{geodesicfo}
    \begin{equation*}
        \Om e_3-\Om_K (e_3)_K=\bcc^A\pr_{x^A}=0,
    \end{equation*}
    which implies from \eqref{geodesicfo}
    \begin{equation}\label{e3equality}
        e_3=(e_3)_K.
    \end{equation}
    Hence, we have from \eqref{geodesicfo} and \eqref{e3equality}
    \begin{align*}
        \omb=-\frac{1}{2}\nab_3(\log\Om)=-\frac{1}{2}\nab_{(e_3)_K}(\log\Om_K)=\omb_K.
    \end{align*}
    This concludes the proof of Lemma \ref{ombzero}.
\end{proof}
\begin{remark}\label{newGag}
     Under the assumption $\OO^*(\Cb_*)\leq\ep$, we have on $\Cb_*$
\begin{align*}
    \dko\trchbc&\in\Gag,\qquad\quad\, \dko\gac\in r\Gag,\qquad\, \dko(\inc)\in r\Gag,\\
    \Lc&\in\Gag,\qquad\qquad\;\,\,\, \Jbc\in\Gab,\qquad\qquad \,\;\; r\widecheck{\mub}\in\Gag.
\end{align*}
\end{remark}
\begin{proposition}\label{nab3geo}
We have the following linearized equations on $\Cb_*$:
\begin{align}
\begin{split}\label{nab3all}
\Om\nab_3(\widecheck{\Om\trch})+\f12\Om\trchb\,\widecheck{\Om\trch}&=-2\Om^2\muc+\Gag\c\O_3^2+\Gag\c\Gag,\\
\sdiv(\hchc)&=-\bc+\O_0^0\c\nab\trchc+\O_1^0\c\zec+\O_3^2\c\Gag+\Gag\c\Gag,\\
\nab_3\,\widecheck{\Om\trchb}+\trchb\,\widecheck{\Om\trchb}&=\Gag\c\O_2^1+\Gab\c\Gab,\\
\sdiv(\widecheck{\hchb})&=\bbc+\O_0^0\c\nab\trchbc+\O_1^0\c\zec+\O_3^2\c\Gab+\Gab\c\Gag,\\
\Om\nab_3\mubmc+\Om\trchb\,\mubmc&=\Om\trchb\,\rhoc+\O_3^2\c (\rhoc,\bbc)+\O_3^1\c\Gag+\O_0^0\c\etabc\c\bbc+r^{-1}\Gab\c\Gab.
\end{split}
\end{align}
We also have the following equation:
\begin{align}
\begin{split}\label{nab3trchbc}
&\nab_3(r\De\widecheck{\Om\trchb})+\frac{3}{2}\trchb (r\De\widecheck{\Om\trchb})\\
=&\O_2^2\c\nab^2\hchbc+\O_3^1\c\Gag^{(1)}+r\Gab\c\nab^2\hchbc+\O_{-1}^0\c|\nab\hchbc|^2+r^{-1}\Gab\c\Gag^{(1)}.
\end{split}
\end{align}
\end{proposition}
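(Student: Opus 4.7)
The proof splits naturally into two parts: the system \eqref{nab3all} of transport/constraint equations, and the higher-order equation \eqref{nab3trchbc}.

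For \eqref{nab3all}, the strategy is to specialize the general linearized equations of Propositions \ref{nullstructure}, \ref{cornull} and \ref{equationsmumub} to the geodesic type foliation. Definition \ref{geodesicfoliation} enforces $\Omc=0$ and $\bcc=0$, and Lemma \ref{ombzero} gives $\ombc=0$, so that $\widecheck{\Om\omb}=0$ on $\Cb_*$. The third equation is then read off directly from the linearized equation for $\nab_3\widecheck{\Om\trchb}$ in Proposition \ref{nullstructure} after deleting the $\O_1^0\c\widecheck{\Om\omb}$ term. The Codazzi equations for $\sdiv\hchc$ and $\sdiv\hchbc$ come from Proposition \ref{nullstructure} after using Remark \ref{newGag} to upgrade $\Lc\in\Gag$ on $\Cb_*$, which converts the $\O_3^2\Lc$ contribution into $\O_3^2\c\Gag$.

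The first equation is the linearization of the renormalized Raychaudhuri identity $\Om\nab_3(\Om\trch)=-2\Om^2[\mu]+2\Om^2|\eta|^2$ from Proposition \ref{cornull}. Expanding $[\mu]=\mu+\tfrac14\trch\trchb$ and linearizing the quadratic term $-\tfrac12\Om^2\trch\trchb$ produces, in addition to the expected $-2\Om^2\muc$, a contribution $-\tfrac12(\Om\trchb)\widecheck{\Om\trch}$ which is moved to the left-hand side, together with a remainder $-\tfrac12(\Om\trch)\widecheck{\Om\trchb}$ that by Remark \ref{newGag} (which classifies $\widecheck{\Om\trchb}$ as $\Gag$ on $\Cb_*$) is absorbed into $\Gag\c\O_3^2$. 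The mass-aspect equation is obtained from Proposition \ref{equationsmumub} along the same lines: the $\O_3^1\c\Lc$ contribution in the flux is absorbed in $\O_3^1\c\Gag$, the quadratic error $(\etabc-\etac)\c\nab\trchbc$ becomes a $\Gag\c\Gag$ term (since $\nab\trchbc\in\Gag$ on $\Cb_*$ by Remark \ref{newGag}), the $d_2^*\etabc$ piece is handled via the Hodge system for $\etabc$ in Proposition \ref{nullstructure} which replaces $d_2^*\etabc$ by combinations of $\mubmc,\rhoc,\sic$ up to quadratic errors that fit into the stated schematic types, and $2\widecheck{\Om\etab}\c\bbc$ is rewritten as $\O_0^0\c\etabc\c\bbc$ using $\widecheck{\Om\etab}=\Om_K\etabc$ on $\Cb_*$ (where $\Omc=0$).

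For \eqref{nab3trchbc}, the plan is to apply $r\De$ to the third equation of \eqref{nab3all} and then commute. Using the commutator identity from Corollary \ref{commutation} twice (since $\De=\sdiv\nab$) yields
\[
[\De,\nab_3]=-\trchb\,\De+(\text{terms involving }\Gab,\,\O_2^1,\,\nab),
\]
so that the $\trchb\widecheck{\Om\trchb}$ coefficient on the left is upgraded to $\tfrac32\trchb\,(r\De\widecheck{\Om\trchb})$, matching the left-hand side of \eqref{nab3trchbc}. To eliminate second angular derivatives of $\widecheck{\Om\trchb}$ on the right, the plan is to invoke the linearized Codazzi equation $\sdiv\hchbc=\bbc+\tfrac12\nab\trchbc+\O_1^0\c\zec+\O_3^2\c\Gab+\Gab\c\Gag$ to trade $\nab\trchbc$ for $\sdiv\hchbc-\bbc$ modulo lower order, so that an application of $\nab$ to this identity converts $r\nab\De\widecheck{\trchbc}$ into $r\nab\sdiv\nab\hchbc$ plus controlled errors; a further application of Codazzi absorbs the $\nab\bbc$ remainder. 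This produces the principal term $\O_2^2\c\nab^2\hchbc$. The quadratic error $\O_{-1}^0\,|\nab\hchbc|^2$ arises in two places: from differentiating the $\Gab\c\Gab$ remainder on the right of the third equation of \eqref{nab3all} via Corollary \ref{newuse}, and from the first-order commutator terms when contracted with $\hchbc$ coming from the $\trchb$–$\hchbc$ Codazzi substitution.

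The main obstacle is the bookkeeping in the last step: a priori, commuting $\De$ past $\nab_3$ and then applying Codazzi generates many first-order terms of the form $\nab\zec,\nab\bbc,\Gab\c\nab\hchbc$, etc. The nontrivial point is to verify that each such term can be absorbed into one of the schematic classes stated on the right-hand side of \eqref{nab3trchbc}, using crucially the Remark \ref{newGag} upgrade of $\trchbc$ and $\Lc$ into $\Gag$ on $\Cb_*$ (which is what gives the extra factor of $|u|^{-1}$ needed) and Corollary \ref{newuse} to differentiate $\Gag,\Gab$ schematic products. Once the bookkeeping is complete, the identification of the leading $\O_2^2\c\nab^2\hchbc$ term and the quadratic $\O_{-1}^0|\nab\hchbc|^2$ term follows mechanically.
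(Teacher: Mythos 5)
Your treatment of \eqref{nab3all} is essentially the paper's: all five equations are obtained by specializing Propositions \ref{nullstructure} and \ref{equationsmumub} to the foliation with $\Omc=0$, $\bcc=0$, $\ombc=0$ (Lemma \ref{ombzero}) and using Remark \ref{newGag} to reclassify $\Lc$, $\trchbc$ and the term $2\widecheck{\Om\etab}\c\bbc=\O_0^0\c\etabc\c\bbc$. The problem is your derivation of \eqref{nab3trchbc}.

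First, you cannot start from the schematic third equation of \eqref{nab3all}: its right-hand side $\Gag\c\O_2^1+\Gab\c\Gab$ has already forgotten \emph{which} quantities occur, and applying $r\De$ to it produces terms of the form $\O_1^1\c\nab^2\Gag$ for a generic $\Gag$ (e.g.\ $\nab^2\zec$, $\nab^2\hchc$), which are neither controlled by the $\OO^*$ norms nor admissible in the stated class $\O_3^1\c\Gag^{(1)}$, which contains only first derivatives. This is exactly why the paper's proof returns to the precise equation of Proposition \ref{nullstructureA}: on $\Cb_*$ the linear source is explicitly $-2\Om_K\hchb_K\,\widecheck{\Om\hchb}-4\Om_K\omb_K\widecheck{\Om\trchb}$ and the quadratic error is $\f12(\widecheck{\Om\trchb})^2-|\widecheck{\Om\hchb}|^2$. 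One then differentiates once by $\nab$ (Proposition \ref{commkn}), reclassifies $\O_2^1\c\nab\trchbc$ as $\O_3^1\c\Gag$ using $\dko\trchbc\in\Gag$ — this is the real role of Remark \ref{newGag}, and it is legitimate only because $\OO_2^*(\trchbc)$ is part of the bootstrap norm on $\Cb_*$ — and only then applies $r\sdiv$. In this way the only quantity hit by two angular derivatives is $\hchbc$: the term $\O_2^2\c\nab^2\hchbc$ is produced by two derivatives falling on $\hchbc$ in the source $\O_3^2\c\hchbc$ (times the weight $r$), and $r\Gab\c\nab^2\hchbc+\O_{-1}^0\c|\nab\hchbc|^2$ comes from $\nab^2|\hchbc|^2$.

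Second, your mechanism for the principal term is wrong. You propose to trade $\nab\trchbc$ for $\sdiv\hchbc-\bbc$ via Codazzi so that $r\nab\De\widecheck{\Om\trchb}$ becomes $r\nab\sdiv\nab\hchbc$, with ``a further application of Codazzi'' absorbing $\nab\bbc$. But $r\De\widecheck{\Om\trchb}$ is precisely the unknown transported by \eqref{nab3trchbc} (it is renormalized into $\Xib$ in Step 1 of the proof of Theorem M2), so it must be left intact on the left; rewriting it through Codazzi would instead produce $r\,\nab^2\hchbc$ and $r\,\nab\bbc$ with $O(r)$ coefficients, not $\O_2^2\c\nab^2\hchbc=O(M^2/r^2)\,r\nab^2\hchbc$, and a term $r\nab\bbc$ fits none of the stated classes (note $r\c r\nab\bbc\notin\Gab^{(1)}$), nor can Codazzi ``absorb'' a curvature derivative. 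The substitutions you describe — $\nab^2\hchbc=\nab\bbc+r^{-2}\Gag^{(1)}$ and the Bianchi equation for $\nab_3(\rhoc,\sic)$ — belong to the subsequent \emph{use} of \eqref{nab3trchbc} in Theorem M2, not to its derivation.
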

\begin{proof}
Notice that \eqref{nab3all} follows directly from Proposition \ref{nullstructure} and Remark \ref{newGag}. Next, we have from Proposition \ref{nullstructureA}
\begin{align*}
\Om\nab_3\,\widecheck{\Om\trchb}+(\Om\tr\chib)\,\widecheck{\Om\trchb}=&-4\Om\tr\chib\,\widecheck{\Om\omb}+\fl[\nab_3\,\trchc]+\err[\nab_3\,\trchc],\\
\fl[\nab_3\,\trchbc]:=&-2\widecheck{\Om\hchb}(\Om_K\hchb_K)-4\Om_K\omb_K\widecheck{\Om\trchb}-\bcc^A\nab_{\pr_{x^a}}\tr_K\chib_K,\\
\err[\nab_3\,\trchbc]:=&\f12(\widecheck{\Om\trchb})^2-|\widecheck{\Om\hchb}|^2.
\end{align*}
Recalling from Lemma \ref{ombzero} that $\Omc=0$, $\ombc=0$ and $\bcc^A=0$ on $\Cb_*$, we obtain
\begin{align}
\Om\nab_3\,\widecheck{\Om\trchb}+(\Om\tr\chib)\,\widecheck{\Om\trchb}=&\O_3^2\c\hchbc+\O_2^1\c\trchbc+\f12(\widecheck{\Om\trchb})^2-|\widecheck{\Om\hchb}|^2.
\end{align}
Differentiating it by $\nab$ and applying Proposition \ref{commkn}, we deduce
\begin{align*}
    \Om\nab_3(\nab\,\widecheck{\Om\trchb})+(\Om\trchb)\nab\,\widecheck{\Om\trchb}&=\O_3^2\c\nab\hchbc+\O_2^1\c\nab\trchbc+\O_0^0\c \nab|\hchbc|^2+r^{-1}\Gag\c\Gag+[\Om\nab_3,\nab]\widecheck{\Om\trchb}\\
    &=\O_3^2\c\nab\hchbc+\O_3^1\c\Gag+\O_0^0\c \nab|\hchbc|^2-\f12\Om\trchb\nab\,\widecheck{\Om\trchb}+r^{-1}\Gab\c\Gag,
\end{align*}
which implies
\begin{align*}
    \Om\nab_3(\nab\,\widecheck{\Om\trchb})+\frac{3}{2}\Om\trchb\nab\,\widecheck{\Om\trchb}=\O_3^2\c\nab\hchbc+\O_3^1\c\Gag+\O_0^0\c\nab|\hchbc|^2+r^{-1}\Gab\c\Gag.
\end{align*}
Differentiating it by $r\sdiv$ and applying Corollary \ref{commutation}, we infer
\begin{align*}
    \Om\nab_3(r\De\widecheck{\Om\trchb})+\frac{3}{2}\Om\trchb (r\De\widecheck{\Om\trchb})=\O_2^2\c\nab^2\hchbc+\O_3^1\c\Gag^{(1)}+r\Gab\c\nab^2\hchbc+\O_{-1}^0\c|\nab\hchbc|^2+r^{-1}\Gab\c\Gag^{(1)}.
\end{align*}
This concludes the proof of Proposition \ref{nab3geo}.
\end{proof}
\subsubsection{Estimate for \texorpdfstring{$\OO^*_1$}{}}
In the remainder of Section \ref{lastslice}, we denote
\begin{equation}
    S:=S(u,\ub_*),\qquad\quad S_*:=\Cb\cap\Si_0.
\end{equation}
\begin{proposition}\label{OO*}
For a geodesic type foliation on $\Cb_*$ satisfying
\begin{equation}\label{assumOO*1}
    \OO'(\Cb_*\cap\Si_0)\leq\ep_0,\qquad \RRb(\Cb_*)\les\ep_0,\qquad \OO^*(\Cb_*)\leq\ep,
\end{equation}
we have
\begin{align}
    \OO^*_1\les\ep_0.
\end{align}
\end{proposition}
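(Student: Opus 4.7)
\medskip
\noindent\textbf{Proof proposal for Proposition \ref{OO*}.}
The plan is to integrate the transport equations of Proposition \ref{nab3geo} along the ingoing null generator of $\Cb_*$, starting from the initial sphere $S_*=\Cb_*\cap\Si_0$, and then recover derivatives on each leaf $S(u,\ub_*)$ by elliptic Hodge estimates (Proposition \ref{ellipticest}). Because the geodesic type foliation forces $\Omc=0$, $\bcc=0$, $\ombc=0$ on $\Cb_*$ (Lemma \ref{ombzero} and Remark \ref{newGag}), and because the curvature flux $\RRb(\Cb_*)\les\ep_0$ is already available by assumption, the only genuinely dynamical objects are the Ricci coefficients, the metric coefficients $\gac,\inc$, and the connection differences $\Lc,\Jbc$. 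The order of estimates is chosen to avoid derivative loss and to break the obvious coupling.

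First I would estimate $\widecheck{\Om\trchb}$: applying Lemma \ref{evolution} to the third line of \eqref{nab3all}, the source is $\Gag\c\O_2^1+\Gab\c\Gab$, and the initial value at $S_*$ is controlled by $\OO'(\Cb_*\cap\Si_0)\leq\ep_0$ using a null frame transformation argument analogous to Proposition \ref{OOb0Omtrchb}. Next, $\mubmc$ is transported by the last equation of \eqref{nab3all} with forcing by $\rhoc$, $\bbc$, $\etabc\c\bbc$; the curvature terms are bounded by $\RRb(\Cb_*)\les\ep_0$ after a Cauchy--Schwarz in $u$, while nonlinear pieces use the bootstrap $\OO^*(\Cb_*)\leq\ep$. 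Then I would insert $\mubmc$ and $\trchbc$ into the Hodge system \eqref{hodgeetabc} to deduce $\OO_1^*(\etabc)$, from which $\zec$ and hence $\OO_1^*(\etac)$ follow via $\widecheck{\Om\trch}$ (transported from $S_*$ by the first equation of \eqref{nab3all} once $\muc$ is rewritten via $\mumc$ plus $\trchc\trchbc$). The Codazzi equations (second and fourth lines of \eqref{nab3all}) then yield $\OO_1^*(\hchc)$ and $\OO_0^*(\hchbc)$ by Proposition \ref{ellipticest}.

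The delicate step is the control of $\OO_1^*(\hchbc)$, i.e.\ $\nab\hchbc$, which requires $\nab^2\widecheck{\Om\trchb}$ via the Codazzi equation for $\hchbc$. Here I would use the renormalized transport equation \eqref{nab3trchbc} for $r\De\widecheck{\Om\trchb}$: integrating along $e_3$ gives control of $\De\widecheck{\Om\trchb}$ in terms of the quantities $\nab^2\hchbc$, $\Gag^{(1)}$ and the quadratic term $|\nab\hchbc|^2$. This is coupled, since $\nab^2\hchbc$ is itself estimated by $\nab^2\widecheck{\Om\trchb}$ via Codazzi; the coupling is closed by a Gronwall argument in $u$, exactly as in Section \ref{Ricci01} for the $\widecheck{\Om\om}$ estimate. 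Once $\nab^2\widecheck{\Om\trchb}$ is bounded, elliptic estimates on $S(u,\ub_*)$ for the Codazzi system return $\OO_1^*(\hchbc)$. The metric coefficient norms $\OO_\ga^*$ and the connection differences $\OO_\Ga^*$ are then obtained exactly as in Propositions \ref{propgac}--\ref{Ga0}, by integrating their transport equations (Proposition \ref{metriceqre}) along $e_3$ on $\Cb_*$, using the Ricci bounds just obtained.

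The main obstacle is threefold: (i) the derivative-loss coupling between $\nab^2\widecheck{\Om\trchb}$ and $\nab^2\hchbc$, which is overcome by the renormalization in \eqref{nab3trchbc} combined with Gronwall; (ii) the initialization at $S_*$, where the foliation on $\Cb_*$ differs from the initial layer foliation and one must compare them via the null frame transformation of Proposition \ref{transformation} (with $f$ and $\la$ controlled by $\osc$ and by $\OO'$ on $\Cb_*\cap\Si_0$); and (iii) the degeneracy of weights in $|u|$: every integration in $u$ from $u=-\ub_*$ up to $u$ must produce the weight $|u|^{-(s-3)/2}$, which is achieved by carefully exploiting the slow decay rate dictated by the norm definitions in Section \ref{Ostar} together with the sharp $\EE_0$ smallness in \eqref{epep0}. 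All remaining nonlinear error terms are absorbed using $\ep^2\ll\ep\EE_0\ll\ep_0$, which yields \eqref{assumOO*1} upgraded to $\OO_1^*\les\ep_0$.
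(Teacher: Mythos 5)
Your overall skeleton --- transport along $e_3$ on $\Cb_*$ from $S_*$, elliptic Hodge estimates on each leaf, initialization via a frame transformation, absorption of nonlinear terms by the smallness hierarchy --- is the same as the paper's. But there is a genuine gap precisely at the step you single out as delicate. First, a bookkeeping point: $\OO_1^*(\hchbc)$ does not require $\nab^2\widecheck{\Om\trchb}$; the $d_2$ elliptic estimate of Proposition \ref{ellipticest} gains one derivative, so the Codazzi equation only needs $\nab\trchbc$, $\bbc$, $\zec$ in $L^p(S)$. The second derivative of $\trchbc$ is needed for the $\OO_2^*(\trchbc)$ component of $\OO^*(\Cb_*)$ (and later for the $\gac$ estimates), not for $\OO_1^*(\hchbc)$. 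More seriously, your mechanism --- substitute Codazzi for $\nab^2\hchbc$ in \eqref{nab3trchbc} and close the coupling with $\nab^2\widecheck{\Om\trchb}$ by Gronwall --- does not close as stated, because the substitution produces the source term $\nab\bbc$ (and the analogous steps for $\nab\mubmc$ and $\nab\trchc$ produce $\nab\bbc$, $\nab\rhoc$, $\nab\sic$). On $\Cb_*$ these first angular derivatives of curvature are controlled only in flux, i.e. in $L^2_u L^2(S)$ through $\RRb_1$, not sphere by sphere in $L^p(S)$ for $p$ up to $4$ as the $\OO^*$ norms demand; hence they cannot simply be integrated in $u$ via Lemma \ref{evolution}, and Gronwall is irrelevant to this difficulty (it would only handle the linear reappearance of $\nab^2\trchbc$, which the paper absorbs anyway through the bootstrap $\OO^*(\Cb_*)\leq\ep$, Remark \ref{newGag}, and the prefactors $\O_2^2$, $r\Gab$).

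What is missing is the curvature renormalization of the transported quantities using the Bianchi equations: the paper transports $\Xib=r\De\widecheck{\Om\trchb}+\O_2^2\c(\rhoc,\sic)+r\Gab\c(\rhoc,\sic)$, exploiting $\nab_3(\rhoc,\sic)+\frac{3}{2}\trchb(\rhoc,\sic)=-d_1\bbc+\dots$ so that $\nab\bbc$ disappears from the source and only undifferentiated curvature (plus $\aac$ and $|\bbc|^2$, handled by flux and Sobolev) remains; similarly $\Ub=r\nab\mubmc+\O_0^0\c(\bc,\rhoc,\sic)+r\Gag\c(\rhoc,\sic)$ for the mass aspect, and $X=\nab\trchc+{^*\nab}\chi^\dagger-4\bc$ with an auxiliary $\chi^\dagger$ transported by $\sic$ for $\nab\trchc$ --- a quantity your outline skips entirely even though $\OO_1^*(\trchc)$ is part of $\OO_1^*$. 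Without these renormalizations the scheme does not close in $L^4(S)$. A smaller but real imprecision: to transport $\widecheck{\Om\trch}$ you propose rewriting $\muc$ via $\mumc$ plus $\trchc\,\trchbc$, but $\mumc$ has no usable transport along $e_3$ on $\Cb_*$; the paper instead uses the identity $[\mu]+[\mub]=\f12\trch\trchb-2\De\log\Om+\hch\c\hchb-2\rho$ to trade $\muc$ for the already controlled $\mubmc$ plus $\rhoc$ (alternatively, one can first deduce $\etac=-\etabc$ from $\Omc=0$ and recover $\muc$ from $\sdiv\etac$, but this must be said explicitly).
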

\begin{proof}
Notice that the assumption $\OO'(\Cb_*\cap\Si_0)\leq \ep_0$ controls the initial data on the last sphere $S_*$. The idea of the proof is to transport the estimates in the direction of $\nab_3$ along $\Cb_*$ using Proposition \ref{nab3geo} and Lemma \ref{evolution}. The proof is similar to Section \ref{Ricciestimates}, and in fact easier since $\Omc=0$, $\ombc=0$ and $\bcc^A=0$ on $\Cb_*$.\\ \\
{\bf Step 1.} Estimate for $\OO^*_{0,1,2}(\trchbc)$.\\ \\
The estimates for $\OO_0^*(\trchbc)$ and $\OO_1^*(\trchbc)$ follow directly from Propositions \ref{OOb0Omtrchb} and \ref{proptrchbc1} in the particular case $\Omc=0$ and $\ombc=0$. Then, we focus on $\OO_2^*(\trchbc)$.\\\\
We recall from \eqref{nab3trchbc}
\begin{align*}
\nab_3(r\De\widecheck{\Om\trchb})+\frac{3}{2}\trchb (r\De\widecheck{\Om\trchb})=\O_2^2\c\nab^2\hchbc+\O_3^1\c\Gag^{(1)}+r\Gab\c\nab^2\hchbc+\O_{-1}^0\c|\nab\hchbc|^2+r^{-1}\Gab\c\Gag^{(1)}.
\end{align*}
Moreover, we have from Proposition \ref{nab3geo} and Remark \ref{newGag}
\begin{align*}
\nab^2\hchbc=\nab\bbc+r^{-2}\Gag^{(1)},\qquad\quad\nab\hchbc=\bbc+r^{-1}\Gag.
\end{align*}
Hence, we obtain
\begin{align*}
\nab_3(r\De\widecheck{\Om\trchb})+\frac{3}{2}\trchb (r\De\widecheck{\Om\trchb})=\O_2^2\c\nab\bbc+r\Gab\c\nab\bbc+\O_{-1}^0\c|\bbc^{(0)}|^2+\O_3^1\c\Gag^{(1)}+r^{-1}\Gab\c\Gag^{(1)}.
\end{align*}
We recall from Proposition \ref{Bianchieq}
\begin{align*}
    \nab_3(\rhoc,\sic)+\frac{3}{2}\trchb\,(\rhoc,\sic)=-d_1\bbc+\Gag^{(1)}\c\O_3^1+\O_3^2\c\aac+\Gag\c\aac.
\end{align*}
We deduce
\begin{align*}
    \nab_3\,\Xib+\frac{3}{2}\trchb\,\Xib=\O_3^2\c\aac+\O_{-1}^0\c|\bbc^{(0)}|^2+\O_3^1\c\Gag^{(1)}+\Gag\c\aac+r^{-1}\Gab\c\Gag^{(1)},
\end{align*}
where $\Xib$ is the following renormalized quantity
\begin{align*}
    \Xib=r\De\widecheck{\Om\trchb}+\O_2^2\c(\rhoc,\sic)+r\Gab\c(\rhoc,\sic).
\end{align*}
Applying Lemma \ref{evolution}, we deduce for $p\in [2,4]$
\begin{align*}
    &|r^{3-\frac{2}{p}}\Xib|_{p,S}\\
    \les& |r^{3-\frac{2}{p}}\Xib|_{p,S_*}+\int_{u_0(\ub_*)}^u M^2|r^{-\frac{2}{p}}\aac|_{p,S}+|r^{4-\frac{2}{p}}|\bbc^{(0)}|^2|_{p,S}+M|r^{-\frac{2}{p}}\Gag^{(1)}|+|r^{3-\frac{2}{p}}\Gag\c\aac|_{p,S}+|r^{2-\frac{2}{p}}\Gab\c\Gag^{(1)}|_{p,S}\\
    \les&\frac{\ep_0}{r^\frac{s-3}{2}}+\int_{u_0(\ub_*)}^u \frac{M^2\ep_0}{r|u|^\frac{s+1}{2}}+\frac{M}{r^2|u|^\frac{s-3}{2}}+\frac{\ep_0}{|u|^{s-1}}+\frac{\ep^2}{r|u|^{s-2}}+|r^{4-\frac{2}{p}}|\bbc^{(0)}|^2|_{p,S}\\
    \les&\frac{\ep_0}{|u|^\frac{s-3}{2}}+\int_{u_0(\ub_*)}^u r^{2}|r^\frac{3}{4}\bbc^{(0)}|_{8,S}^2\\
    \les&\frac{\ep_0}{|u|^\frac{s-3}{2}}+\int_{u_0(\ub_*)}^u |r\bbc^{(1)}|_{2,S}^2\\
    \les&\frac{\ep_0}{|u|^\frac{s-3}{2}}+\frac{\ep^2}{|u|^{s-2}}\\
    \les&\frac{\ep_0}{|u|^\frac{s-3}{2}},
\end{align*}
where we used the Sobolev inequality $|r^{-\frac{1}{4}}U|_{8,S}\les|r^{-1}U^{(1)}|_{2,S}$ for any $S$--tangent tensor $U$. Hence, we infer from Proposition \ref{ellipticest}
\begin{align*}
    |r^{4-\frac{2}{p}}\nab^2\widecheck{\Om\trchb}|_{p,S}\les|r^{3-\frac{2}{p}}\Xib|_{p,S}+\frac{M^2}{r^2}|r^{3-\frac{2}{p}}(\rhoc,\sic)|_{p,S}+|r^{4-\frac{2}{p}}\Gab\c(\rhoc,\sic)|_{p,S}\les \frac{\ep_0}{|u|^\frac{s-3}{2}}.
\end{align*}
Thus, we obtain
\begin{align}\label{esttrchbc012}
    \OO^*_{0,1,2}(\trchbc)\les\ep_0.
\end{align}
{\bf Step 2.} Estimate for $\OO_{0,1}^*(\mubmc)$.\\ \\
The estimate for $\OO_0^*(\mubmc)$ follows directly from Proposition \ref{propetabc1} in the particular case $\Omc=0$ and $\ombc=0$. Next, we recall from \eqref{nab3all}
\begin{align*}
    \Om\nab_3\mubmc+\Om\trchb\mubmc=\Om\trchb\,\rhoc+\O_3^2\c (\rhoc,\bbc)+\O_3^1\c\Gag+\O_0^0\c\etabc\c\bbc+r^{-1}\Gab\c\Gab.
\end{align*}
Differentiating it by $r\nab$ and applying Corollary \ref{commutation}, we infer
\begin{align*}
\nab_3(r\nab\mubmc)+\trchb(r\nab\mubmc)=\O_0^0\c(\nab\rhoc,\nab\bbc)+\O_0^0\c\rhoc\c\bbc+r\Gag\c\nab\bbc+\O_3^1\c\Gag^{(1)}+r^{-1}\Gab\c\Gab^{(1)}.
\end{align*}
We deduce
\begin{align*}
\nab_3\Ub+\trchb\,\Ub=\O_0^0\c\rhoc\c\bbc+\O_3^1\c\Gag^{(1)}+r^{-1}\Gab\c\Gab^{(1)},
\end{align*}
where the renormalized quantity $\Ub$ is given by
\begin{align*}
    \Ub=r\nab\mubmc+\O_0^0\c(\bc,\rhoc,\sic)+r\Gag\c(\rhoc,\sic).
\end{align*}
Applying Lemma \ref{evolution}, we have
\begin{align*}
    |r^{2-\frac{2}{p}}\Ub|_{p,S}&\les |r^{2-\frac{2}{p}}\Ub|_{p,S_*}+\int_{u_0(\ub_*)}^u |r^{2-\frac{2}{p}}\rhoc\c\bbc|_{p,S}+M|r^{-1-\frac{2}{p}}\Gag^{(1)}|_{p,S}+|r^{1-\frac{2}{p}}\Gab\c\Gab^{(1)}|_{p,S}\\
    &\les \frac{\ep_0}{r^\frac{s-1}{2}}+\frac{M\ep}{r^2|u|^\frac{s-3}{2}}+\frac{\ep^2}{r|u|^{s-2}}+\int_{u_0(\ub_*)}^u |r^\frac{3}{2}\rhoc\c\bbc|_{4,S}\\
    &\les\frac{\ep_0}{r|u|^\frac{s-3}{2}}+\int_{u_0(\ub_*)}^u r^2|r^{-\frac{1}{4}}\rhoc|_{8,S}|r^{-\frac{1}{4}}\bbc|_{8,S}\\
    &\les \frac{\ep_0}{r|u|^\frac{s-3}{2}}+\int_{u_0(\ub_*)}^u |\rhoc^{(1)}|_{2,S}|\bbc^{(1)}|_{2,S}\\
    &\les \frac{\ep_0}{r|u|^\frac{s-3}{2}}+r^{-2}\left(\int_{u_0(\ub_*)}^u |r\rhoc^{(1)}|_{2,S}^2\right)^\f12 \left(\int_{u_0(\ub_*)}^u |r\bbc^{(1)}|_{2,S}^2\right)^\f12 \\
    &\les \frac{\ep_0}{r|u|^\frac{s-3}{2}}+\frac{\ep^2}{r^2|u|^{s-2}}\\
    &\les \frac{\ep_0}{r|u|^\frac{s-3}{2}},
\end{align*}
where we used the Sobolev inequality $|r^{-\frac{1}{4}}U|_{8,S}\les|r^{-1}U^{(1)}|_{2,S}$ for any $S$--tangent tensor $U$. Hence, we obtain
\begin{align*}
    |r^{3-\frac{2}{p}}\nab\mubmc|_{p,S}\les |r^{2-\frac{2}{p}}\Ub|_{p,S}+|r^{2-\frac{2}{p}}(\bc,\rhoc,\sic)|_{p,S}+|r^{3-\frac{2}{p}}\Gag\c(\rhoc,\sic)|_{p,S}\les \frac{\ep_0}{r|u|^\frac{s-3}{2}}.
\end{align*}
Thus, we have
\begin{equation}\label{estmubc}
    \OO_{0,1}^*(\mubmc)\les\ep_0.
\end{equation}
{\bf Step 3.} Estimate for $\OO^*_{0,1}(\trchc)$.\\ \\
We recall from \eqref{nab3all} that
\begin{align}\label{nab3eqOmtrch}
\Om\nab_3(\widecheck{\Om\trch})=-2\widecheck{\Om^2[\mu]}+\Gag\c\O_3^2+\Gag\c\Gag.
\end{align}
We recall from \eqref{defmassaspect} and \eqref{auxillaryquantities}
\begin{align*}
    [\mu]+[\mub]=\f12\trch\,\trchb-2\De\log\Om+\hch\c\hchb-2\rho,
\end{align*}
which implies
\begin{align*}
    \widecheck{\Om^2[\mu]}=-\widecheck{\Om^2[\mub]}+\f12\widecheck{\Om\trch}\,\Om\trchb+\O_1^0\c\trchbc-2\Om^2\rhoc+\O_3^2\c\Gab+\Gag\c\Gab.
\end{align*}
Injecting it into \eqref{nab3eqOmtrch}, we deduce
\begin{align}\label{nab3Omtrchmub}
\Om\nab_3(\widecheck{\Om\trch})+\Om\trchb\,\widecheck{\Om\trch}=2\widecheck{\Om^2[\mub]}+4\Om^2\rhoc+\O_1^0\c\trchbc+\O_3^2\c\Gab+\Gab\c\Gag.
\end{align}
Applying Lemma \ref{evolution}, we infer
\begin{align*}
    &\;\;\;\,\,\,|r^{2-\frac{2}{p}}\widecheck{\Om\trch}|_{p,S}\\
    &\les |r^{2-\frac{2}{p}}\widecheck{\Om\trch}|_{p,S_*}+\int_{u_0(\ub_*)}^u |r^{2-\frac{2}{p}}\mubmc|_{p,S}+|r^{2-\frac{2}{p}}\rhoc|_{p,S}+|r^{1-\frac{2}{p}}\trchbc|_{p,S}+M^2|r^{-1-\frac{2}{p}}\Gab|_{p,S}+|r^{2-\frac{2}{p}}\Gab\c\Gag|_{p,S}\\
    &\les \frac{\ep_0}{r^\frac{s-1}{2}}+\int_{u_0(\ub_*)}^u \frac{\ep_0}{r|u|^\frac{s-3}{2}}+\frac{M^2\ep}{r^2|u|^\frac{s-1}{2}}+\frac{\ep^2}{r|u|^{s-2}}\\
    &\les \frac{\ep_0}{|u|^\frac{s-3}{2}}.
\end{align*}
Next, we differentiate \eqref{nab3Omtrchmub} by $\nab$ and apply Proposition \ref{commkn} to deduce
\begin{align*}
&\;\;\;\,\,\,\Om\nab_3(\nab\widecheck{\Om\trch})+\Om\trchb(\nab\widecheck{\Om\trch})\\
&=2\Om^2\nab\mubmc+4\Om^2\nab\rhoc-\Om\chib\c\nab\widecheck{\Om\trch}+\O_1^0\c\nab\trchbc+\Gab^{(1)}\c\O_4^2+r^{-1}\Gab\c\Gag^{(1)}\\
&=2\Om^2\nab\mubmc+4\Om^2\nab\rhoc-\f12\Om\trchb(\nab\widecheck{\Om\trch})+\O_1^0\c\nab\trchbc+\Gab^{(1)}\c\O_4^2+r^{-1}\Gab\c\Gag^{(1)},
\end{align*}
which implies from $\Omc=0$ that
\begin{equation}\label{nab3trchre}
\nab_3(\nab\widecheck{\trch})+\frac{3}{2}\trchb(\nab\widecheck{\trch})=2\nab\mubmc+4\nab\rhoc+\O_1^0\c\nab\trchbc+\Gab^{(1)}\c\O_4^2+r^{-1}\Gab\c\Gag^{(1)}.
\end{equation}
Next, we define $\chi^\dagger$ by the following transport equation:
\begin{align}\label{transchidagger}
    \Om\nab_3\chi^\dagger+\f12\Om\trchb \,\chi^\dagger =4\Om\sic\quad\mbox{ on } \Cb_*,\qquad \chi^\dagger=0\quad \mbox{ on }S_*.
\end{align}
Differentiating it by ${^*\nab}$ and applying Proposition \ref{commkn}, we infer
\begin{align*}
    \Om\nab_3 {^*\nab\chi^\dagger}+\f12\Om\trchb({^*\nab\chi^\dagger})+\f12{^*\nab}(\Om\trchb)\chi^\dagger&=4{^*\nab}(\Om\sic)+[\Om\nab_3,{^*\nab}]\chi^\dagger\\
    &=4{^*\nab}(\Om\sic)-\f12\Om\trchb({^*\nab\chi^\dagger})-\Om\hchb\c{^*\nab\chi^\dagger},
\end{align*}
which implies\footnote{We assume here that $\chi^\dagger\in\Gag$. This assumption will be improved at the end of Step 3.}
\begin{equation}\label{nab3chdagger}
    \nab_3 {^*\nab\chi^\dagger}+\trchb{^*\nab\chi^\dagger}=4{^*\nab}\sic+\O_4^2\c\Gag^{(1)}+r^{-1}\Gab\c\Gag^{(1)}.
\end{equation}
We recall from Proposition \ref{Bianchieq}
\begin{equation}\label{nab3b}
    \nab_3\bc+\frac{3}{2}\trchb\,\bc=\frac{1}{2}\trchb\,\bc+\nab\rhoc+{^*\nab\sic}+\Gag^{(1)}\c\O_3^1+r^{-1}\Gag\c\Gab^{(1)}.
\end{equation}
Combining \eqref{nab3trchre}, \eqref{nab3chdagger} and \eqref{nab3b}, we deduce
\begin{align*}
    \nab_3X +\trchb X=-2\trchb\,\bc+2\nab\mubmc+\O_1^0\c\nab\trchbc+\Gag^{(1)}\c\O_3^1+r^{-1}\Gab\c\Gag^{(1)},
\end{align*}
where
\begin{equation*}
    X:=\nab\trchc+{^*\nab}\chi^\dagger-4\bc.
\end{equation*}
Applying Lemma \ref{evolution} and \eqref{estmubc}, we obtain
\begin{align*}
    &\;\;\;\,\,\, |r^{3-\frac{2}{p}}X|_{p,S}\\
    &\les |r^{3-\frac{2}{p}}X|_{p,S_*}+\int_{u_0(\ub_*)}^u |r^{2-\frac{2}{p}}\bc|_{p,S}+|r^{3-\frac{2}{p}}\nab\mubmc|_{p,S}+|r^{3-\frac{2}{p}}\nab\trchbc|_{p,S}+M|r^{-\frac{2}{p}}\Gag^{(1)}|_{p,S}+|r^{2-\frac{2}{p}}\Gab\c\Gag^{(1)}|_{p,S}\\
    &\les \frac{\ep_0}{r^\frac{s-3}{2}}+\int_{u_0(\ub_*)}^u \frac{\ep_0}{r|u|^\frac{s-3}{2}}+\frac{M\ep}{r^2|u|^\frac{s-3}{2}}+\frac{\ep^2}{r|u|^{\frac{s-1}{2}}}\\
    &\les \frac{\ep_0}{|u|^\frac{s-3}{2}}.
\end{align*}
Then, we have from Proposition \ref{ellipticest}
\begin{align*}
    |r^{3-\frac{2}{p}}\nab(\trchc,\chi^\dagger)|_{p,S}\les |r^{3-\frac{2}{p}}X|_{p,S}+|r^{3-\frac{2}{p}}\bc|_{p,S}\les\frac{\ep_0}{|u|^\frac{s-3}{2}}.
\end{align*}
Next, applying Lemma \ref{evolution} to \eqref{transchidagger}, we deduce
\begin{align*}
    |r^{1-\frac{2}{p}}\chi^\dagger|_{p,S}\les \int_{u_0(\ub_*)}^u |r^{1-\frac{2}{p}}\sic|_{p,S}\les \frac{\ep_0}{r|u|^\frac{s-3}{2}}.
\end{align*}
Hence, we improve the assumption $\chi^\dagger\in\Gag$ and obtain
\begin{equation}\label{trchcstar}
    \OO_{0,1}(\trchc)\les\ep_0.
\end{equation}
{\bf Step 4.} Estimates for $\OO^*_{0,1}(\zec)$, $\OO_{0,1}^*(\hchc)$ and $\OO_{0,1}^*(\hchbc)$.\\ \\
We have from \eqref{estmubc}
\begin{align*}
    |r^{4-\frac{2}{p}}\nab\mubmc|_{p,S}\les \frac{\ep_0}{|u|^\frac{s-3}{2}},
\end{align*}
which implies from \eqref{auxillaryquantities} and \eqref{trchcstar}
\begin{equation*}
    |r^{4-\frac{2}{p}}\nab\mubc|_{p,S}\les |r^{4-\frac{2}{p}}\nab\mubmc|_{p,S}+|r^{3-\frac{2}{p}}\nab(\trchc,\trchbc)|_{p,S}\les\frac{\ep_0}{|u|^\frac{s-3}{2}}.
\end{equation*}
We proceed as in Proposition \ref{propetabc1} to deduce
\begin{align*}
    |r^{2-\frac{2}{p}}(r\nab)^q\etabc|_{p,S}\les\frac{\ep_0}{|u|^\frac{s-3}{2}},\qquad q=0,1.
\end{align*}
Recalling that
\begin{align*}
    2\zec=\etac-\etabc,\qquad \etac+\etabc=\widecheck{\De\log\Om}=0,
\end{align*}
we obtain
\begin{align*}
    |r^{2-\frac{2}{p}}(r\nab)^q\zec|_{p,S}=|r^{2-\frac{2}{p}}(r\nab)^q\etab|_{p,S}\les\frac{\ep_0}{|u|^\frac{s-3}{2}},\qquad q=0,1.
\end{align*}
Hence, we have
\begin{equation}\label{estzec}
    \OO^*_{0,1}(\zec)\les\ep_0.
\end{equation}
Finally, we apply Proposition \ref{prophchc} in the particular case of $\Omc=0$ to deduce
\begin{equation}\label{OOhchchchbc}
    \OO_{0,1}^*(\hchc)+\OO_{0,1}^*(\hchbc)\les\ep_0.
\end{equation}
Combining the estimates \eqref{esttrchbc012}, \eqref{estmubc}, \eqref{trchcstar}, \eqref{estzec} and \eqref{OOhchchchbc}, this concludes the proof of Proposition \ref{OO*}.
\end{proof}
\subsubsection{Estimate for \texorpdfstring{$\OO_\ga^*$}{}}
\begin{proposition}\label{propgastar}
For a geodesic type foliation on $\Cb_*$ in the sense of Definition \ref{geodesicfoliation}, we assume that
\begin{equation}\label{assumOO*ga}
    \OO'(\Cb_*\cap\Si_0)\leq\ep_0,\qquad \RRb(\Cb_*)\les\ep_0,\qquad \OO^*(\Cb_*)\leq\ep.
\end{equation}
Then, we have
\begin{align}
    \OO^*_\ga\les\ep_0.
\end{align}
\end{proposition}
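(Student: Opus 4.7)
The plan is to integrate the transport equations for $\gac$ and $\inc$ from Proposition \ref{metriceqre} along the $\nab_3$ direction on the last slice $\Cb_*$, starting from initial data on $S_*:=\Cb_*\cap\Si_0$ that are controlled by the assumption $\OO'(\Cb_*\cap\Si_0)\leq\ep_0$, and using the already-established bounds from Proposition \ref{OO*} to handle the source terms. Since $\Cb_*$ carries the geodesic type foliation, we have $\Omc=0$ and $\bcc=0$, so Proposition \ref{metriceqre} gives the closed transport equations
\begin{align*}
\nab_3(\gac_{AB})-\trchb(\gac_{AB}) &=\O_{-2}^0\c(\trchbc,\hchbc)+\O_3^2\c\gac_{AB},\\
\nab_3(\inc_{AB})-\trchb(\inc_{AB}) &=\O_{-2}^0\c\trchbc+\O_3^2\c\inc_{AB}.
\end{align*}

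For the case $q=0$, I will apply Lemma \ref{evolution} directly with $\la_0=-1$ (so that $\la_1=-2-\frac{2}{p}$ and the Kerr term $-\trchb\sim\frac{2}{r}$ is absorbed into the weight), obtaining
\[
|r^{-2-\frac{2}{p}}\gac|_{p,S}\les|r^{-2-\frac{2}{p}}\gac|_{p,S_*}+\int_{u_0(\ub_*)}^{u}\Big(\frac{1}{r^2}|r^{-\frac{2}{p}}(\trchbc,\hchbc)|_{p,S}+\frac{M^2}{r^3}|r^{-2-\frac{2}{p}}\gac|_{p,S}\Big)du'.
\]
The initial term on $S_*$ is bounded by $\ep_0 r^{-1}$ via $\OO'(\Cb_*\cap\Si_0)\leq\ep_0$, while the source term is controlled by Proposition \ref{OO*}; a Gronwall argument then yields $\OO_0^*(\gac)\les\ep_0$. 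The case of $\inc$ is entirely analogous.

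For $q=1,2$, I will commute the transport equations with $(r\nab)^q$ using Corollary \ref{commutation}, which produces (up to lower order terms involving $\trchbc$ and curvature decaying as $\O_4^2$) the schematic equation
\[
\nab_3\big((r\nab)^q\gac\big)-\trchb\big((r\nab)^q\gac\big)=\O_{-2}^0\c(r\nab)^q(\trchbc,\hchbc)+\O_3^2\c(r\nab)^q\gac+(\mbox{lower order}),
\]
where the lower order terms involve $(r\nab)^{\leq q-1}\gac$ already controlled by the previous step, and the derivatives of $\trchbc$, $\hchbc$ on the RHS are covered by $\OO_{0,1,2}^*(\trchbc)\les\ep_0$ and $\OO_{0,1}^*(\hchbc)\les\ep_0$ from Proposition \ref{OO*}. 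Applying Lemma \ref{evolution} once more and using Gronwall gives $\OO_q^*(\gac)+\OO_q^*(\inc)\les\ep_0$ for $q=1,2$.

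The only mildly subtle point is that at $q=2$ we need $\nab^2\hchbc$, which is not directly listed in $\OO^*$; however, the Codazzi equation for $\hchbc$ in Proposition \ref{nullstructure} expresses $\sdiv\hchbc$ in terms of $\bbc$, $\nab\trchbc$ and $\zec$, so two-derivative control of $\hchbc$ follows from Proposition \ref{OO*} combined with $\RRb(\Cb_*)\les\ep_0$ via Proposition \ref{ellipticest}, exactly as in Step 1 of the proof of Proposition \ref{OO*}. This closes all the estimates and establishes $\OO^*_\ga\les\ep_0$.
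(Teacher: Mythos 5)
Your overall strategy (transport of $\gac$, $\inc$ along $\Cb_*$ via Lemma \ref{evolution} after commuting with $\nab$, with sources fed by Proposition \ref{OO*}) is exactly the paper's, and your $q=0,1$ steps go through, up to bookkeeping of the coefficient of $\trchb$ produced by each commutation (it becomes $-\f12\trchb$ after one commutation, which shifts the weights in Lemma \ref{evolution} but is harmless). The genuine gap is at $q=2$, precisely at the point you label ``mildly subtle''. The elliptic estimate of Proposition \ref{ellipticest} applied to the Codazzi system $d_2\hchbc=\bbc+\O_0^0\c\nab\trchbc+\O_1^0\c\zec+\dots$ gains exactly one derivative: to conclude $|\nab^2\hchbc|_{p,S}$ you would need the right-hand side, in particular $\nab\bbc$, in $L^p(S)$ on each sphere of $\Cb_*$. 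This is not available from the hypotheses: $\RRb(\Cb_*)\les\ep_0$ controls $\bbc$ itself sphere-by-sphere (through $\RRb_0^S[\bbc]$) but its first angular derivative only as an $L^2$ flux along $\Cb_*$ (through $\RRb_1[\bbc]$), which cannot yield $|\nab\bbc|_{p,S}$ for $p\in[2,4]$ (a Sobolev argument on the cone would require flux control of a second derivative of $\bbc$, which is not part of $\RRb$). Note also that Step 1 of Proposition \ref{OO*} does \emph{not} produce pointwise two-derivative control of $\hchbc$; it avoids exactly this difficulty, so the appeal to it does not close your argument.

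What the paper does instead at $q=2$ for $\gac$ is: substitute the differentiated Codazzi relation $\nab^2\hchbc=\nab\bbc+\O_0^0\c\nab^2\trchbc+\O_1^0\c\nab\zec+\O_4^2\c\Gab^{(1)}+r^{-1}\Gab\c\Gag^{(1)}$ \emph{into the transport equation} for $r\nab^2\gac$, and then eliminate the resulting $\nab\bbc$ source by renormalization with the Bianchi equation $\nab_3(\rhoc,\sic)+\frac{3}{2}\trchb(\rhoc,\sic)=-d_1\bbc+\dots$: setting $G=r\nab^2\gac_{AB}+\O_{-3}^0\c(\rhoc,\sic)$, the quantity $G$ satisfies a transport equation $\nab_3G-\f12\trchb\,G=\dots$ whose sources involve only $\nab^2\trchbc$, $\nab\zec$, $(\rhoc,\sic)$, $\aac$ and $\Ga$-type terms, all of which are controlled sphere-by-sphere by Proposition \ref{OO*}, $\RRb_0^S$ and the flux norms. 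Integrating $G$ with Lemma \ref{evolution} and then removing the $(\rhoc,\sic)$ correction gives $\OO_2^*(\gac)\les\ep_0$. (For $\inc$ no renormalization is needed, since its transport equation involves only $\trchbc$, whose two derivatives are controlled by $\OO_2^*(\trchbc)$; your argument is fine there.) Without this renormalization, or an equivalent mechanism trading $\nab\bbc$ for a $\nab_3$-derivative, your $q=2$ estimate for $\gac$ does not close.
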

\begin{proof}
We recall from Proposition \ref{metriceqre}
\begin{align}\label{dugac}
    \nab_3(\gac_{AB})-\trchb(\gac_{AB})=&\O_{-2}^0\c(\hchbc,\trchbc)+\O_3^2\c\gac_{AB}.
\end{align}
Differentiating it by $\nab$ and applying Corollary \ref{commutation}, we deduce
\begin{align*}
     \Om\nab_3(\nab\gac_{AB})-\Om\trchb(\nab\gac_{AB})&=\nab(\Om\trchb)\gac_{AB}+\O_{-2}^0\c(\nab\hchbc,\nab\trchbc)+\O_3^2\c\nab\gac_{AB}+[\Om\nab_3,\nab]\gac_{AB}\\
     &=-\f12\Om\trchb\nab\gac_{AB}+\O_{-2}^0\c(\nab\hchbc,\nab\trchbc)+\O_1^2\c\Gag+r^2\Gag\c\Gag,
\end{align*}
which implies\footnote{We use the convention introduced in Remark \ref{newGag}.}
\begin{align}\label{nab3nabgac}
    \nab_3(\nab\gac_{AB})-\f12\trchb(\nab\gac_{AB})=\O_{-2}^0\c(\nab\hchbc,\nab\trchbc)+\O_1^2\c\Gag+r^2\Gag\c\Gag.
\end{align}
Applying Lemma \ref{evolution}, we obtain
\begin{align*}
    &\;\;\;\,\,\,|r^{-1-\frac{2}{p}}\nab\gac_{AB}|_{p,S}\\
    &\les |r^{-1-\frac{2}{p}}\nab\gac_{AB}|_{p,S_*}+\int_{u_0(\ub_*)}^u\left(|r^{1-\frac{2}{p}}\nab(\hchbc,\trchbc)|_{p,S}+M^2|r^{-2-\frac{2}{p}}\Gag|_{p,S}+|r^{1-\frac{2}{p}}\Gag\c\Gag|_{p,S}\right)du\\
    &\les \frac{\ep_0}{r^\frac{s-1}{2}}+\int_{u_0(\ub_*)}^u\left(\frac{\ep_0}{r|u|^\frac{s-1}{2}}+\frac{M^2}{r^4|u|^\frac{s-3}{2}}+\frac{\ep^2}{r^3|u|^{s-3}}\right)du\\
    &\les\frac{\ep_0}{r|u|^\frac{s-3}{2}}.
\end{align*}
Differentiating \eqref{nab3nabgac} by $r\nab$ and applying Corollary \ref{commutation}, we infer
\begin{align*}
    \nab_3(r\nab^2\gac_{AB})-\f12\trchb(r\nab^2\gac_{AB})=\O_{-3}^0\c(\nab^2\hchbc,\nab^ 2\trchbc)+\O_1^2\c\Gag^{(1)}+r^2\Gag\c\Gag^{(1)}.
\end{align*}
Next, we have from \eqref{nab3all}
\begin{align*}
\nab^2\hchbc=\nab\bbc+\O_0^0\c\nab^2\trchbc+\O_1^0\c\nab\zec+\O_4^2\c\Gab^{(1)}+r^{-1}\Gab\c\Gag^{(1)}.
\end{align*}
Hence, we obtain
\begin{align*}
    \nab_3(r\nab^2\gac_{AB})-\f12\trchb(r\nab^2\gac_{AB})=\O_{-3}^0\c(\nab\bbc,\nab^ 2\trchbc,r^{-1}\nab\zec)+\O_1^2\c\Gab^{(1)}+r^2\Gab\c\Gag^{(1)}.
\end{align*}
We recall from Proposition \ref{Bianchieq}
\begin{equation*}
    \nab_3(\rhoc,\sic)+\frac{3}{2}\trchb\,(\rhoc,\sic)=-d_1\bbc+\Gag^{(1)}\c\O_3^1+\O_3^2\c\aac+\Gag\c\aac.
\end{equation*}
We obtain
\begin{equation*}
    \nab_3 G-\f12 \trchb\, G =\O_{-3}^0\c(\nab^ 2\trchbc,r^{-1}\nab\zec,r^{-1}\rhoc,r^{-1}\sic)+\O_0^1\c\Gag^{(1)}+\O_0^2\c\aac+r^3\Gag\c\aac+r^2\Gab\c\Gag^{(1)},
\end{equation*}
where the renormalized quantity $G$ is given by 
\begin{equation*}
    G=r\nab^2\gac_{AB}+\O_{-3}^0\c(\rhoc,\sic).
\end{equation*}
Applying Lemma \ref{evolution}, we infer
\begin{align*}
    |r^{-1-\frac{2}{p}}G|_{p,S}&\les|r^{-1-\frac{2}{p}}G|_{p,S_*}+\int_{u_0(\ub_*)}^u |r^{2-\frac{2}{p}}\nab^2\trchbc|_{p,S}+|r^{1-\frac{2}{p}}(\nab\zec,\rhoc,\sic)|_{p,S}\\
    &+\int_{u_0(\ub_*)}^u M|r^{-1-\frac{2}{p}}\Gag^{(1)}|_{p,S}+M^2|r^{-1-\frac{2}{p}}\aac|_{p,S}+|r^{2-\frac{2}{p}}\Gag\c\aac|_{p,S}+|r^{1-\frac{2}{p}}\Gab\c\Gag^{(1)}|_{p,S}\\
    &\les \frac{\ep_0}{r^\frac{s-1}{2}}+\int_{u_0(\ub_*)}^u \frac{\ep_0}{r^2|u|^\frac{s-3}{2}}+\frac{M\ep}{r^3|u|^\frac{s-3}{2}}+\frac{M^2\ep}{r^2|u|^\frac{s+1}{2}}+\frac{\ep^2}{r|u|^{s-1}}+\frac{\ep^2}{r^2|u|^{s-2}}\\
    &\les \frac{\ep_0}{r|u|^\frac{s-3}{2}}.
\end{align*}
Hence, we deduce
\begin{equation*}
    |r^{1-\frac{2}{p}}\nab^2\gac_{AB}|_{p,S}\les |r^{-\frac{2}{p}}G|_{p,S}+|r^{3-\frac{2}{p}}(\rhoc,\sic)|_{p,S}\les \frac{\ep_0}{|u|^\frac{s-3}{2}}.
\end{equation*}
Thus, we have
\begin{equation}
    \OO_{0,1,2}^*(\gac)\les\ep_0.
\end{equation}
The estimate for $\OO_{0,1,2}(\inc)$ is similar. This concludes the proof of Proposition \ref{propgastar}.
\end{proof}
\subsubsection{Estimate for \texorpdfstring{$\OO^*_\bGa$}{}}
\begin{proposition}\label{propGastar}
For a geodesic type foliation on $\Cb_*$ in the sense of Definition \ref{geodesicfoliation}, we assume that
\begin{equation}
    \OO'(\Cb_*\cap\Si_0)\leq\ep_0,\qquad \RRb(\Cb_*)\les\ep_0,\qquad \OO^*(\Cb_*)\leq\ep.
\end{equation}
Then, we have
\begin{align}
    \OO^*_\Ga\les\ep_0.
\end{align}
\end{proposition}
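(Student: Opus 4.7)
The plan is to follow the strategy of Proposition \ref{Ga0}, but now exploiting the two crucial simplifications available on $\Cb_*$: the geodesic conditions $\Omc=0$ and $\bcc^A=0$ hold \emph{identically} on $\Cb_*$ (Definition \ref{geodesicfoliation}), and the stronger norms $\OO^*_1$, $\OO^*_\ga$, $\OO^*_2(\trchbc)$, $\OO^*_1(\mubmc)$ already proved in Propositions \ref{OO*} and \ref{propgastar} give us one more derivative of control on the relevant metric coefficients than in the bulk.

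First, I would revisit the schematic formulae derived from Lemma \ref{bGa} for $\Lc = \widecheck{\bGa}_{BC}^{A}$ and $\Jbc = \widecheck{\bGa}_{uA}^{B}$. Since $\nab\Omc$ and $\nab\bcc$ vanish tangentially on $\Cb_*$ (because $\Omc$ and $\bcc$ vanish identically there), the expressions reduce on $\Cb_*$ to
\begin{align*}
    \Lc_{BC}^A &= \O_2^0\c \pr_\ub(\gac_{AB}) + \O_2^0\c \dko\gac_{AB} + r^2\Gag\c\Gag^{(1)},\\
    \Jbc_A^B &= \O_2^0\c\big(\pr_u(\gac_{AB}),\, \pr_{\ub}(\gac_{AB}),\, r^{-1}\dko\gac_{AB}\big) + \O_0^0\c\dko\bcc^A + \O_1^2\c\pr_{\ub}(\bcc^A) + r\Gag\c\Gag^{(1)},
\end{align*}
where the tangential $\dko\bcc^A$ piece is in fact zero on $\Cb_*$ but I keep it since we will need its derivative.

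Second, I would control the quantities $\pr_u\gac$, $\pr_\ub\gac$, and $\pr_\ub\bcc$ on $\Cb_*$ directly from Proposition \ref{metriceqre}. With $\Omc=0$ on $\Cb_*$, the $\nab_4$--equations give
\begin{equation*}
    \nab_4(\gac_{AB}) = \trch(\gac_{AB}) + \O_{-2}^0\c(\hchc,\trchc) + r^3\Gag\c\Gag,
\end{equation*}
and similarly for $\nab_4\bcc^A$ (using $\zec$), while the $\nab_3$--equation on $\Cb_*$ reads
\begin{equation*}
    \nab_3(\gac_{AB}) = \trchb(\gac_{AB}) + \O_{-2}^0\c(\trchbc,\hchbc) + \O_3^2\c\gac_{AB}.
\end{equation*}
The RHS of each is pointwise bounded in $L^p$ via $\OO^*_1$ and $\OO^*_\ga$, giving
\begin{equation*}
    \left|r^{-\tfrac{2}{p}}\pr_u(\gac_{AB})\right|_{p,S} \les \frac{r\,\ep_0}{|u|^{\tfrac{s-1}{2}}},\qquad \left|r^{-\tfrac{2}{p}}\pr_\ub(\gac_{AB})\right|_{p,S}\les \frac{\ep_0}{|u|^{\tfrac{s-3}{2}}},\qquad \left|r^{-\tfrac{2}{p}}\pr_\ub(\bcc^A)\right|_{p,S}\les\frac{\ep_0}{r^3|u|^{\tfrac{s-3}{2}}},
\end{equation*}
exactly as in the proof of Proposition \ref{Ga0} but without loss coming from $\Omc$ or $\bcc$. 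Combined with $\OO^*_\ga\les\ep_0$, this yields the $q=0$ bounds of $\OO^*_\Ga\les\ep_0$.

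Third, for the $q=1$ bounds I would differentiate the schematic formulae above by $\nab$, differentiate the transport equations for $\gac$ and $\bcc$ by $r\nab$ using Corollary \ref{commutation}, and reuse the higher norms: $\OO^*_2(\gac)$, $\OO^*_2(\inc)$ from $\OO^*_\ga$ give the tangential second derivatives of the metric, while the differentiated $\nab_3$--equation for $\nab\gac$ combined with the second derivative estimate on $\trchbc$ from $\OO^*_2(\trchbc)$ (Step 1 of Proposition \ref{OO*}) controls $\nab\pr_u\gac$. The analogous step for $\pr_\ub\gac$ uses the differentiated $\nab_4$--equation together with $\OO^*_1$. The only nontrivial point is the $\nab\pr_\ub\bcc$ estimate, which comes from commuting $[\nab,\nab_4]$ in the $\bcc^A$--transport and using $\OO_1^*(\zec)$ established in Step 4 of Proposition \ref{OO*}. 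Putting everything together yields the first-order estimates with the correct weights, and concludes the proof of Proposition \ref{propGastar}. The main bookkeeping difficulty—not a conceptual obstacle—is to verify that the weight in $r$ and $|u|$ matches at every stage; this is guaranteed because the missing weight in $\Lc$ versus $\Jbc$ precisely reflects the weight difference between $\pr_\ub$ and $\pr_u$ on $\gac$.
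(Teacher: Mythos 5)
Your proposal is correct and follows essentially the same route as the paper: express $\Lc$ and $\Jbc$ via Lemma \ref{bGa} with the simplifications $\Omc=\bcc=0$ on $\Cb_*$, convert $\pr_u\gac$ and $\pr_\ub\gac$ (and $\pr_\ub\bcc$) through the transport equations of Proposition \ref{metriceqre} into Ricci coefficients, and close with the already-established bounds of Propositions \ref{OO*} and \ref{propgastar}, differentiating once by $r\nab$ for the $q=1$ case. The only cosmetic difference is that the paper substitutes the transport equations directly into the schematic identities before differentiating (so that only $\trchc,\hchc,\trchbc,\hchbc,\gac,\nab\gac$ and their first derivatives appear), whereas you keep $\pr_u\gac$, $\pr_\ub\gac$, $\pr_\ub\bcc$ as intermediate quantities; both are equivalent and your invocations of $\OO_2^*(\trchbc)$ and $\OO_1^*(\zec)$ are harmless though not strictly needed.
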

\begin{proof}
    We recall from Lemma \ref{bGa}
    \begin{align*}
    \bGa_{uA}^B=&\f12\ga^{BC}\left(\pr_u(\ga_{AC})-\frac{\pr}{\pr x^A}\left(\ga_{CD} b^D\right)+\frac{\pr}{\pr x^C}\left(\ga_{AD}b^D\right)\right)+\frac{b^B}{4\Om^2}\left(2\frac{\pr}{\pr x^A}(\Om^2)-\frac{\pr}{\pr \ub}(\ga_{AD}b^D)\right),\\
    \bGa_{BC}^A=&\f12\ga^{AD}\left(\frac{\pr \ga_{BD}}{\pr x^C}+\frac{\pr \ga_{CD}}{\pr x^B}-\frac{\pr \ga_{BC}}{\pr x^D}\right) +\frac{b^A}{4\Om^2}\frac{\pr\ga_{BC}}{\pr\ub}.
    \end{align*}
Applying Definition \ref{dfGamma}, and since $\Omc=0$ and $\bcc=0$, we infer
\begin{align*}
\Jbc&=\O_0^0\c\left(\pr_u\gac,\nab\gac,\pr_\ub\gac,r^{-1}\gac\right)+r\Gag\c\Gab,\\
\Lc&=\O_0^0\c(\nab\gac,\pr_\ub\gac,r^{-1}\gac)+r\Gag\c\Gag,
\end{align*}
where we used Remark \ref{newGag}. Applying Proposition \ref{metriceqre}, we deduce
\begin{align*}
\Jbc&=\O_0^0\c\left(\trchbc,\hchbc,\trchc,\hchc,r^{-1}\gac,\nab\gac\right)+r\Gag\c\Gab,\\
\Lc&=\O_0^0\c(\trchc,\hchc,r^{-1}\gac,\nab\gac)+r\Gag\c\Gag.
\end{align*}
Differentiating it by $r\nab$, we obtain
\begin{align*}
r\nab\Jbc&=\O_0^0\c\left(r\nab\trchbc,r\nab\hchbc,r\nab\trchc,r\nab\hchc,\nab\gac,r\nab^2\gac\right)+r\Gag\c\Gab^{(1)},\\
r\nab\Lc&=\O_0^0\c(r\nab\trchc,r\nab\hchc,\nab\gac,r\nab^2\gac)+r\Gag\c\Gag^{(1)}.
\end{align*}
Combining with Propositions \ref{OO*} and \ref{propgastar}, we deduce for $q=0,1$
\begin{align*}
    |r^{1-\frac{2}{p}}(r\nab)^q\Jbc|_{p,S}&\les\frac{\ep_0}{|u|^\frac{s-1}{2}}+|r^{2-\frac{2}{p}}\Gag\c\Gab^{(1)}|_{p,S}\les \frac{\ep_0}{|u|^\frac{s-1}{2}},\\
    |r^{2-\frac{2}{p}}(r\nab)^q\Lc|_{p,S}&\les\frac{\ep_0}{|u|^\frac{s-3}{2}}+|r^{3-\frac{2}{p}}\Gag\c\Gag^{(1)}|_{p,S}\les\frac{\ep_0}{|u|^\frac{s-3}{2}}.
\end{align*}
This concludes the proof of Proposition \ref{propGastar}.
\end{proof}
Combining Propositions \ref{OO*}, \ref{propgastar} and \ref{propGastar}, this concludes the proof of Theorem M2.
\subsection{Extension argument (Theorem M4)}\label{ext}
In this section, we prove Theorem M4 which we recall below for convenience.
\begin{thmM4}
We consider the spacetime $\KK$ and its double null foliation $(u,\ub)$ which satisfies the assumptions:
\begin{equation}\label{assumptiontilde}
\OO\les\ep_0,\qquad \RR\les \ep_0,\qquad \osc\les\ep_0,\qquad \OO^*(\Cb_*)\les\ep_0.
\end{equation}
Then, we can extend the spacetime $\KK=V(u_0,\ub_*)$ and the double null foliation to a new spacetime $\wideparen{\KK}=\wideparen{V}(u_0,\ub_*+\nu)$, where $\nu>0$ is sufficiently small, and an associated double null foliation $(\upa,\ub)$. Moreover, the new foliation $(\upa,\ub)$ is geodesic on the new last slice $\Cb_{**}:=\Cb_{\ub_*+\nu}$ and the new corresponding norms satisfy
\begin{align}
    \wideparen{\OO}\les\ep_0,\qquad\wideparen{\RR}\les\ep_0,\qquad \wideparen{\osc}\les\ep_0,\qquad \wideparen{\OO}^*(\Cb_{**})\les\ep_0.
\end{align}
\end{thmM4}
\begin{proof}
\begin{figure}
  \centering
  \includegraphics[width=1\textwidth]{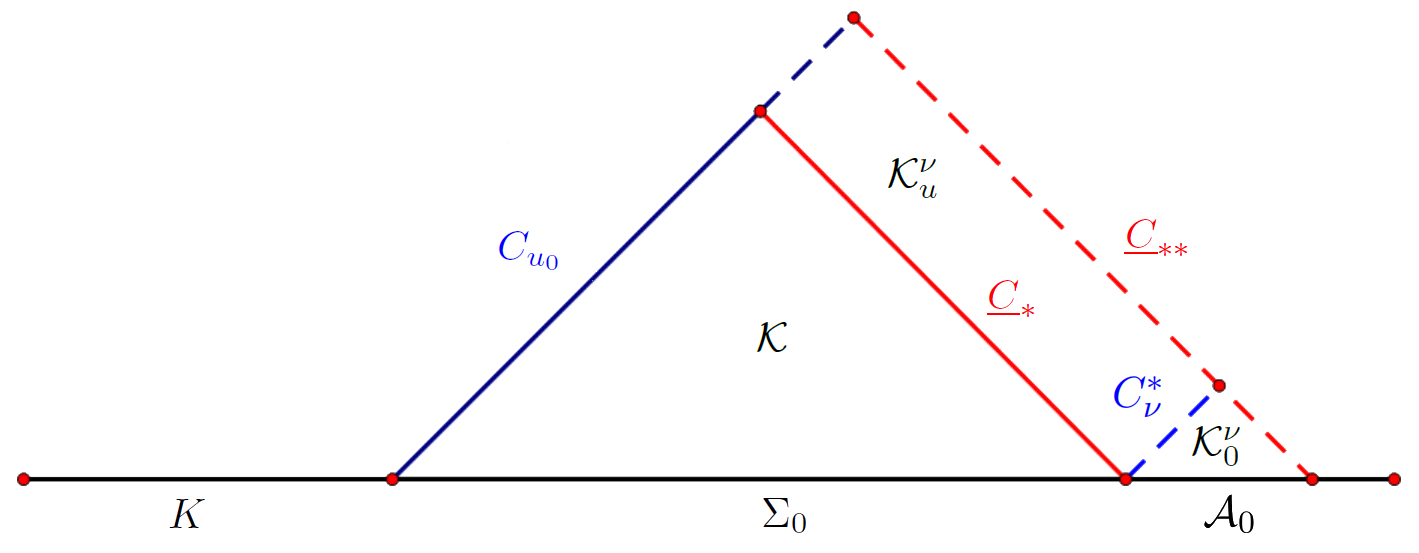}
  \caption{Extension argument}\label{fig5}
\end{figure}
We define the region
\begin{align*}
    \AA_0:=\{p\in\Si_0/\,w(p)\in[\ub_*,\ub_*+\nu]\}.
\end{align*}
For $\nu>0$ small enough, we consider the future domain of dependence $\kk_0^\nu$ of $\AA_0$, with
\begin{equation*}
    \kk_0^\nu:=V'(u_*,\ub_*+\nu)\subset \kk'.
\end{equation*}
The boundary of $\KK_0^\nu$ contains, see Figure \ref{fig5}:
\begin{enumerate}
    \item the part $\AA_0$ of $\Si_0$;
    \item a part of an ingoing cone denoted by
\begin{equation*}
    \Cb_{**}(u_*-\nu):=\Cb(\ub_*+\nu;[u_*-\nu,u_*]),
\end{equation*}
which is a part of the new last slice $\Cb_{**}:=\Cb_{\ub_*+\nu}$, where
\begin{equation*}
    u_*-\nu:=u|_{\Cb_{\ub_*+\nu}\cap \Sigma_0},\quad u_*=u|_{\Cb_{\ub_*}\cap\Sigma_0};
\end{equation*}
\item a part of the outgoing cone emanating from $S_{0}(u_*)=\Cb_{\ub_*}\cap\Sigma_0$ and in $V'(u_*,\ub_*+\nu)$, denoted by $C^*_\nu$.
\end{enumerate}
For the double null foliation $(u',\ub)$ in $\kk_0^\nu$, we have:
\begin{equation}\label{tildeOOnu}
    \OO'(\kk_0^\nu)\les\ep_0,\qquad\qquad \RR'(\kk_0^\nu)\les\ep_0,
\end{equation}
for $\nu>0$ small enough by local existence. By the local existence theorem for the characteristic Cauchy problem in \cite{luk}, we can extend the spacetime $\KK\cup\KK_0^\nu$ to the future domain of dependence of $C^*_\nu\cup\Cb_*$.\footnote{Even if it means reducing the value of $\nu$.} Moreover, we extend the outgoing cones of $\KK$ to this new domain, construct the ingoing cones from $C_\nu^*$ and denote $(\widetilde{u},\ub)$ the global foliation in $\widetilde{\KK}:=\KK\cup \kk_u^\nu\cup \kk_0^\nu$. As a consequence of Theorem 1 of \cite{luk}, we have
\begin{equation}\label{tildeOOnuu}
    \widetilde{\OO}(\kk^\nu_u)\les \ep_0,\qquad\qquad \widetilde{\RR}(\kk^\nu_u)\les \ep_0.
\end{equation}
Combining \eqref{assumptiontilde}, \eqref{tildeOOnu} and \eqref{tildeOOnuu}, we obtain
\begin{align*}
    \widetilde{\OO}(\widetilde{\kk})\les\ep_0,\qquad\qquad\widetilde{\RR}(\widetilde{\kk})\les\ep_0.
\end{align*}\\
The foliation $(\widetilde{u},\ub)$ on the new last slice $\Cb_{**}$ of $V(u_0,\ub_*+\nu)$ is in general not of the geodesic type of Definition \ref{geodesicfoliation}. Starting from $S_{**}:=\Cb_{**}\cap\Si_0$, we construct a geodesic type foliation $(\upa,\ub_*)$ on $\Cb_{**}(-\ub_*-\nu<\upa<u_1)$, where $u_1$ is sufficiently close to $-\ub_*-\nu$. We also denote
\begin{align*}
\Vpa(u_1):=\Vpa(u_1,\ub_*+\nu):=J^-(\wideparen{S}(u_1,\ub_*+\nu)).
\end{align*}
Proceeding as in Section \ref{bootregion}, we can construct a double null foliation $(\upa,\ub)$ in $\Vpa(u_1)$. Let $U$ the set of $u_1\leq u_0$ such that we have the following estimates:
\begin{align}
    \wideparen{\OO}(\Vpa(u_1))\leq\ep,\qquad\wideparen{\RR}(\Vpa(u_1))\leq\ep,\qquad \wideparen{\osc}(\Vpa(u_1))\leq\ep,\qquad \wideparen{\OO}^*(\Cb_{**}\cap\Vpa(u_1))\leq\ep.\label{B2paren}
\end{align}
We also denote
\begin{equation}\label{u2sup}
    u_2:=\sup U.
\end{equation}
Assume that $u_2<u_0$. Applying Theorems M0--M3 in $\Vpa(u_2)$, we obtain
\begin{align*}
    \wideparen{\OO}(\Vpa(u_2))\les\ep_0,\qquad \wideparen{\RR}(\Vpa(u_2))\les\ep_0,\qquad \wideparen{\osc}(\Vpa(u_2))\les\ep_0,\qquad \wideparen{\OO}^*(\Cb_{**}\cap\Vpa(u_2))\les\ep_0.
\end{align*}
Thus, by the continuity of the norms, we obtain $u_2+\de\in U$ for $\de>0$ small enough, a contradiction with \eqref{u2sup}. Hence, we have $u_2=u_0$ which implies
\begin{align*}
    \wideparen{\OO}(\Vpa(u_0))\les\ep_0,\qquad \wideparen{\RR}(\Vpa(u_0))\les\ep_0,\qquad \wideparen{\osc}(\Vpa(u_0))\les\ep_0,\qquad \wideparen{\OO}^*(\Cb_{**}\cap\Vpa(u_0))\les\ep_0.
\end{align*}
This concludes the proof of Theorem M4.
\end{proof}
\appendix
\renewcommand{\appendixname}{Appendix~\Alph{section}}
\section{Proof of results in Section \ref{seclinearequation}}\label{appa}
\subsection{Proof of Proposition \ref{nullstructure}}\label{pfnull}
We prove below a more precise version of Proposition \ref{nullstructure}.
\begin{proposition}\label{nullstructureA}
We have the following linearized null structure equations:
\begin{align*}
\Om\nab_4\widecheck{\Om\trch}+(\Om\tr\chi)\,\widecheck{\Om\trch}=&-4\Om\tr\chi\,\widecheck{\Om\om}+\fl[\nab_4\,\trchc]+\err[\nab_4\,\trchc],\\
\fl[\nab_4\,\trchc]:=&-2\widecheck{\Om\hch}(\Om_K\hch_K)-4\Om_K\om_K\widecheck{\Om\trch},\\
\err[\nab_4\,\trchc]:=&\f12(\widecheck{\Om\trch})^2-|\widecheck{\Om\hch}|^2,\\ \\
\Om\nab_3\,\widecheck{\Om\trchb}+(\Om\tr\chib)\,\widecheck{\Om\trchb}=&-4\Om\tr\chib\,\widecheck{\Om\omb}+\fl[\nab_3\,\trchc]+\err[\nab_3\,\trchc],\\
\fl[\nab_3\,\trchbc]:=&-2\widecheck{\Om\hchb}(\Om_K\hchb_K)-4\Om_K\omb_K\widecheck{\Om\trchb}-\bcc^A\pr_A(\Om_K\tr_K\chib_K),\\
\err[\nab_3\,\trchbc]:=&\f12(\widecheck{\Om\trchb})^2-|\widecheck{\Om\hchb}|^2,\\ \\
\Om\nab_4(\widecheck{\Om\trchb})=&-2\widecheck{\Om^2[\mub]}+\fl[\nab_4(\trchbc)]+\err[\nab_4(\trchbc)],\\
\fl[\nab_4(\trchbc)]:=&4\Om_K\etab_K(\widecheck{\Om\etab}),\\
\err[\nab_4(\trchbc)]:=&2|\widecheck{\Om\etab}|^2,\\\\
\Om\nab_3(\widecheck{\Om\trch})=&-2\widecheck{\Om^2[\mu]}+\fl[\nab_4(\trchc)]+\err[\nab_4(\trchc)],\\
\fl[\nab_4(\trchc)]:=&4\Om_K\eta_K(\widecheck{\Om\eta})+\bcc^A\pr_A(\Om_K\tr_K\chi_K),\\
\err[\nab_4(\trchc)]:=&2|\widecheck{\Om\eta}|^2.
\end{align*}
We also have the linearized Codazzi equations:
\begin{align*}
\sdiv(\hchc)=&\f12\nab\widecheck{\trch}+\f12\trch\,\zec-\bc+\fl[\sdiv(\widecheck{\hch})]+\err[\sdiv(\widecheck{\hch})],\\
\fl[\sdiv(\widecheck{\hch})]:=&\f12\trchc\,\ze_K-\zec\c\hch_K-\ze_K\c\hchc+(\ga^{AB}-(\ga_K)^{AB})\nab_A(\hch_K)_{B\bullet}-\ga_K^{AB}\Lc_{AB}^C(\hch_K)_{C\bullet},\\
\err[\sdiv(\widecheck{\hch})]:=&-\zec\cdot\hchc,\\ \\
\sdiv(\hchbc)=&\f12\nab\trchbc-\f12\trchb\,\zec+\bbc+\fl[\sdiv(\widecheck{\hch})]+\err[\sdiv(\widecheck{\hch})],\\
\fl[\sdiv(\widecheck{\hch})]:=&-\f12\trchbc\,\ze_K+\zec\c\hchb_K+\ze_K\c\hchbc+(\ga^{AB}-(\ga_K)^{AB})\nab_A(\hchb_K)_{B\bullet}-\ga_K^{AB}\Lc_{AB}^C(\hchb_K)_{C\bullet},\\
\err[\sdiv(\widecheck{\hch})]:=&\zec\cdot\hchbc.
\end{align*}
The linearized mass aspect functions are given by:
\begin{align*}
    \sdiv\etac&=-\muc-\rhoc+\fl[\sdiv(\etac)]+\err[\sdiv(\etac)],\\
    \fl[\sdiv(\etac)]&=(\ga^{AB}-(\ga_K)^{AB})\nab_A(\eta_K)_{B}-(\ga_K)^{AB}\Lc_{AB}^C(\eta_K)_{C}+\f12\hch_K\c\hchb+\f12\hchc\c\hchb_K,\\
    \err[\sdiv(\etac)]&=\f12\hch\c\hchbc,\\ \\
    \sdiv\etabc&=-\mubc-\rhoc+\fl[\sdiv(\etabc)]+\err[\sdiv(\etabc)],\\
    \fl[\sdiv(\etabc)]&=(\ga^{AB}-(\ga_K)^{AB})\nab_A(\etab_K)_{B}-(\ga_K)^{AB}\Lc_{AB}^C(\etab_K)_{C}+\f12\hch_K\c\hchb+\f12\hchc\c\hchb_K,\\
    \err[\sdiv(\etabc)]&=\f12\hch\c\hchbc.
\end{align*}
The linearized torsion equations are given by:
\begin{align*}
\curl\etac=&\widecheck{\si}+\fl[\curl\etac]+\err[\curl\etac],\\
\fl[\curl\etac]:=& \f12\widecheck{\hchb}\wedge\hch_K+\f12\hchb_K\wedge\widecheck{\hch}-(\in^{AB}-\in_K^{AB})\nab_A(\eta_K)_B+(\in_K)^{AB}\Lc_{AB}^C(\eta_K)_C,\\
\err[\curl\etac]:=&\f12\widecheck{\hchb}\wedge\widecheck{\hch},\\ \\
\curl\etabc=&-\widecheck{\si}+\fl[\curl\etabc]+\err[\curl\etabc],\\
\fl[\curl\etabc]:=&-\f12\widecheck{\hchb}\wedge\hch_K-\f12\hchb_K\wedge\widecheck{\hch}-(\in^{AB}-\in_K^{AB})\nab_A(\etab_K)_B+(\in_K)^{AB}\Lc_{AB}^C(\etab_K)_C,\\
\err[\curl\etabc]:=&-\f12\hchbc\wedge\hchc.
\end{align*}
Moreover, we have
\begin{align*}
\Om\nab_4(\widecheck{\Om\omb})=&\f12\widecheck{\Om^2\rho}+\fl[\nab_4\ombc]+\err[\nab_4\ombc],\\
    \fl[\nab_4\ombc]:=&\frac{3}{2}(\widecheck{\Om\eta}-\widecheck{\Om\etab})\cdot(\Om_K\eta_K-\Om_K\etab_K)-\frac{1}{4}(\widecheck{\Om\eta}-\widecheck{\Om\etab})\cdot(\Om_K\eta_K+\Om_K\etab_K)\\
    &-\frac{1}{4}(\Om_K\eta_K-\Om_K\etab_K)\cdot(\widecheck{\Om\eta}+\widecheck{\Om\etab})-\frac{1}{4}(\widecheck{\Om\eta}+\widecheck{\Om\etab})\cdot(\eta_K+\etab_K),\\
    \err[\nab_4\ombc]:=&\frac{3}{4}|\widecheck{\Om\eta}-\widecheck{\Om\etab}|^2-\frac{1}{4}(\widecheck{\Om\eta}-\widecheck{\Om\etab})\cdot(\widecheck{\Om\eta}+\widecheck{\Om\etab})-\frac{1}{8}|\widecheck{\Om\eta}+\widecheck{\Om\etab}|^2,\\ \\
\Om\nab_3(\widecheck{\Om\om})=&\f12\widecheck{\Om^2\rho}+\fl[\nab_3\omc]+\err[\nab_3\omc],\\
    \fl[\nab_3\omc]:=&\frac{3}{2}(\widecheck{\Om\eta}-\widecheck{\Om\etab})\cdot(\Om_K\eta_K-\Om_K\etab_K)+\frac{1}{4}(\widecheck{\Om\eta}-\widecheck{\Om\etab})\cdot(\Om_K\eta_K+\Om_K\etab_K)\\
    &+\frac{1}{4}(\Om_K\eta_K-\Om_K\etab_K)\cdot(\widecheck{\Om\eta}+\widecheck{\Om\etab})-\frac{1}{4}(\widecheck{\Om\eta}+\widecheck{\Om\etab})\cdot(\eta_K+\etab_K)+\bcc^A\nab_{\pr_{A}}(\Om_K\om_K),\\
    \err[\nab_3\omc]:=&\frac{3}{4}|\widecheck{\Om\eta}-\widecheck{\Om\etab}|^2+\frac{1}{4}(\widecheck{\Om\eta}-\widecheck{\Om\etab})\cdot(\widecheck{\Om\eta}+\widecheck{\Om\etab})-\frac{1}{8}|\widecheck{\Om\eta}+\widecheck{\Om\etab}|^2.
\end{align*}
\end{proposition}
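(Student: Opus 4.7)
The strategy is to derive each linearized equation by subtracting from the corresponding equation in Proposition \ref{standardnull} (or Proposition \ref{cornull}) its Kerr analogue, and then reorganizing the resulting terms so that every piece is either (i) manifestly linear in the check quantities with an explicit Kerr coefficient, which goes into $\fl[\cdots]$, or (ii) quadratic in check quantities, which goes into $\err[\cdots]$. Since the equations of Proposition \ref{standardnull} are geometric, they hold verbatim for the Kerr null frame on $\MM_{Kerr}$; pulling back by $\Phi$, they hold in $\KK$ with all quantities and operators replaced by their $K$--versions, so every Kerr block subtracts cleanly.

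For the transport equations such as $\Om\nab_4(\widecheck{\Om\trch})$, I would write $\Om\trch = \widecheck{\Om\trch}+\Om_K\tr_K\chi_K$, apply $\Om\nab_4$, and compare with $\Om_K(\nab_K)_{(e_4)_K}(\Om_K\tr_K\chi_K)$. By Lemma \ref{checke4e3} we have $\widecheck{\Om e_4}=0$, so no $\bcc$ correction appears in the $\nab_4$ direction; for the $\nab_3$--transport equations the identity $\Om e_3 - \Om_K(e_3)_K = \bcc^A\pr_{x^A}$ contributes the extra first-order term $\bcc^A\pr_A(\text{Kerr quantity})$ that one sees in $\fl[\nab_3\trchc]$ and $\fl[\nab_3\omc]$. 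The right-hand sides are then expanded by writing every quadratic product like $|\hch|^2$ as $|\hch_K|^2+2\hch_K\cdot\widecheck{\hch}+|\widecheck{\hch}|^2$; the $|\hch_K|^2$ piece cancels against the Kerr Raychaudhuri equation, the cross term is linear and lands in $\fl$, the square is quadratic and lands in $\err$. The equations for $\mumc$ and $\mubmc$ are handled identically, using Proposition \ref{cornull}.

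For the Codazzi, torsion, and mass-aspect identities, which are constraints on each $S(u,\ub)$, I would directly apply Proposition \ref{useful}: the differences $(\sdiv-\sdiv_K)$, $(\curl-\curl_K)$, $(\nab\hot-\nab_K\hot)$ produce precisely the linear terms involving $\Lc$ and $(\ga^{AB}-\ga_K^{AB})$ appearing in the listed $\fl[\cdots]$ expressions, while quadratic products of check quantities split off as $\err[\cdots]$. For instance, starting from $\sdiv\hch = \frac{1}{2}\nab\trch - \ze\cdot(\hch-\frac{1}{2}\trch)-\b$ and its Kerr analogue, the difference immediately produces $\frac{1}{2}\nab\trchc$, $\frac{1}{2}\trch\,\zec$, $-\bc$ plus the $\Lc$--correction coming from $(\sdiv-\sdiv_K)\hch_K$ and the bilinear terms $\zec\cdot\hch_K$, $\ze_K\cdot\hchc$, $\zec\cdot\hchc$.

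The main obstacle is not conceptual but bookkeeping: for each of the dozen equations one must verify that the sum of the linearization of each RHS term and the corrections coming from $\Lc$, $\Jc$, $\Jbc$, $\bcc$ for the derivative operators reassembles exactly into the listed $\fl$ and $\err$. The computation for $\nab_4\ombc$ and $\nab_3\omc$ is the most painful, because one has to expand the quadratic combination $\frac{3}{4}|\eta-\etab|^2\pm\frac{1}{4}(\eta-\etab)\cdot(\eta+\etab)-\frac{1}{8}|\eta+\etab|^2+\frac{1}{2}\rho$ from Proposition \ref{standardnull} into $2\cdot 4$ bilinear Kerr--check pairs and one $\widecheck{\Om^2\rho}$ term, plus quadratic remainders; nevertheless the structure is identical to the simpler cases, and the schematic versions in the main body of Proposition \ref{nullstructure} follow at once from the precise identities of Proposition \ref{nullstructureA} by using Proposition \ref{decayGamma} to bound the Kerr coefficients in terms of $\O^p_q$.
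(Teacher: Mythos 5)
Your proposal matches the paper's own proof: the paper likewise subtracts the pulled-back Kerr version of each equation from Proposition \ref{standardnull} and Corollary \ref{cornull}, uses Lemma \ref{checke4e3} (so that $\widecheck{\Om e_4}=0$ and $\widecheck{\Om e_3}=\bcc^A\pr_{x^A}$ produces exactly the $\bcc^A\pr_A(\cdot)$ terms in the $\nab_3$ equations), invokes Proposition \ref{useful} for the differences $(\sdiv-\sdiv_K)$, $(\curl-\curl_K)$ generating the $\Lc$ and $(\ga^{AB}-\ga_K^{AB})$ corrections, and splits each expanded quadratic into Kerr--check cross terms ($\fl$) and check--check terms ($\err$). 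The bookkeeping you flag, including the $\om$/$\omb$ equations, is carried out in the paper exactly along these lines, so your approach is correct and essentially identical.
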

\begin{remark}
    Combining Propositions \ref{nullstructureA} and \ref{decayGamma} and Definitions \ref{bigOnotation} and \ref{gammag}, we obtain immediately Proposition \ref{nullstructure}.
\end{remark}
\begin{proof}[Proof of Proposition \ref{nullstructure}]
We recall from Corollary \ref{cornull}
\begin{align*}
\Om\nab_4(\Om\tr\chi) +\frac{1}{2}(\Om\trch)^2=&-\Om^2|\hch|^2-4\Om\om(\Om\trch),
\end{align*}
which implies
\begin{align*}
\Om_K\nab_{(e_4)_K}(\Om_K\tr_K\chi_K)+\frac{1}{2}(\Om_K\tr_K\chi_K)^2 =&-\Om_K^2|\hch_K|^2-4\Om_K\om_K(\Om_K\tr_K\chi_K).
\end{align*}
Taking the difference, we infer
\begin{align*}
\Om\nab_4(\widecheck{\Om\tr\chi})=&-\frac{1}{2}(\Om\tr\chi)^2-\Om^2|\hch|^2-4\Om\om (\Om\trch)+\f12(\Om_K\tr_K\chi_K)^2+\Om_K^2|\hch_K|^2+4\Om_K\om_K(\tr_K\chi_K) \\
=&-\Om\trch\widecheck{\Om\trch}+\f12\widecheck{\Om\trch}^2-2\widecheck{\Om\hch}(\Om_K\hch_K)-|\widecheck{\Om\hch}|^2-4\Om_K\om_K\widecheck{\Om\trch}\\
&-4\Om_K\tr_K\chi_K\widecheck{\Om\om}-4\widecheck{\Om\om}\,\widecheck{\Om\trch}.
\end{align*}
Similarly, we have
\begin{align*}
\Om\nab_3(\widecheck{\Om\trchb})=&-\Om\trchb\widecheck{\Om\trchb}+\f12\widecheck{\Om\trchb}^2-2(\widecheck{\Om\hchb})\Om_K\hchb_K-|\widecheck{\Om\hchb}|^2-4\Om_K\om_K\widecheck{\Om\trchb}\\
&-4\Om_K\tr_K\chib_K\widecheck{\Om\omb}-4\widecheck{\Om\omb}\,\widecheck{\Om\trchb}-\bcc^A\pr_A(\Om_K\tr_K\chib_K).
\end{align*}
Next, we have from Corollary \ref{cornull}
\begin{align*}
\Om\nab_4(\Om\tr\chib)=-2\Om^2[\mub]+2\Om^2|\etab|^2,
\end{align*}
which implies
\begin{align*}
\Om_K\nab_{(e_4)_K}(\Om_K\tr_K\chib_K)=-2\Om_K^2[\mub]_K+2\Om_K^2|\etab_K|^2.
\end{align*}
Taking the difference, we infer
\begin{align*}
\Om\nab_4(\widecheck{\Om\trchb})=-2\widecheck{\Om^2[\mub]}+4\Om_K\etab_K(\widecheck{\Om\etab})+2|\widecheck{\Om\etab}|^2.
\end{align*}
Similarly, we have
\begin{align*}
\Om\nab_3(\widecheck{\Om\trch})=-2\widecheck{\Om^2[\mu]}+4\Om_K\eta_K(\widecheck{\Om\eta})+2|\widecheck{\Om\eta}|^2+\bcc^A\pr_A(\Om_K\tr_K\chi_K).
\end{align*}
Now we consider the Codazzi equation
\begin{align*}
    \sdiv\hch=\f12(\nab\trch+\trch\,\ze)-\ze\c\hch-\b,
\end{align*}
which implies
\begin{align*}
\sdiv_K\hch_K=\f12(\nab_K\tr_K\chi_K+\tr_K\chi_K\,\ze_K)-\ze_K\c\hch_K-\b_K.
\end{align*}
Taking the difference, we obtain from Proposition \ref{useful}
\begin{align*}
    \sdiv(\hchc)=&\sdiv\hch-\sdiv\hch_K\\
    =&\sdiv\hch-\sdiv_K\hch_K-(\sdiv-\sdiv_K)\hch_K \\
    =&\f12\nab\widecheck{\trch}+\f12\trch\,\zec+\f12\trchc\,\ze_K-\zec\c\hch_K-\ze_K\c\hchc-\zec\c\hchc-\bc-\widecheck{\ga^{AB}}\nab_A(\hch_K)_{B\bullet}\\
    &-\ga_K^{AB}\Lc_{AB}^D\,(\hch_K)_{\bullet D}-\ga_K^{AB}\Lc_{A\bullet}^D\,(\hch_K)_{BD}.
\end{align*}
Similarly, we have
\begin{align*}
    \sdiv(\widecheck{\hchb})=&\f12\nab\trchbc-\f12\trchb\,\zec-\f12\trchbc\,\ze_K+\zec\c\hchb_K+\ze_K\c\hchbc+\zec\c\hchbc+\bbc-\widecheck{\ga^{AB}}\nab_A(\hchb_K)_{B\bullet}\\
    &-\ga_K^{AB}\Lc_{AB}^D(\hchb_K)_{\bullet D}-\ga_K^{AB}\Lc_{A\bullet}^D(\hchb_K)_{BD}.
\end{align*}
Next, we recall from \eqref{defmassaspect}
\begin{equation*}
    \mu=-\sdiv\eta+\frac{1}{2}\hch\c\hchb-\rho.
\end{equation*}
Hence, we have
\begin{align*}
    \mu_K=-\sdiv_K\eta_K+\f12 \hch_K\c\hchb_K-\rho_K.
\end{align*}
Taking the difference, we obtain
\begin{align*}
    \muc&=-\sdiv\etac+(\sdiv-\sdiv_K)\eta_K+\f12\hch_K\c\hchb+\f12\hchc\c\hchb_K+\f12\hch\c\hchbc-\rhoc\\
    &=-\sdiv\etac-\rhoc+(\ga^{AB}-(\ga_K)^{AB})\nab_A(\eta_K)_{B}-(\ga_K)^{AB}\Lc_{AB}^C(\eta_K)_{C}\\
    &+\f12\hch_K\c\hchb+\f12\hchc\c\hchb_K+\f12\hch\c\hchbc.
\end{align*}
Similarly, we have
\begin{align*}
    \mubc&=-\sdiv\etabc-\rhoc+(\ga^{AB}-(\ga_K)^{AB})\nab_A(\etab_K)_{B}-(\ga_K)^{AB}\Lc_{AB}^C(\etab_K)_{C}\\
    &+\f12\hch_K\c\hchb+\f12\hchc\c\hchb_K+\f12\hch\c\hchbc.
\end{align*}
We consider the torsion equation
\begin{align*}
\curl\eta=-\curl\etab=\sigma+\frac{1}{2}\hchb\wedge\hch,
\end{align*}
which implies
\begin{align*}
    \curl_K\eta_K=-\curl_K\etab_K=\sigma_K+\frac{1}{2}\hchb_K\wedge\hch_K.
\end{align*}
Taking the difference, we infer from Proposition \ref{useful} that
\begin{align*}
    \curl\etac=&\curl\eta-\curl\eta_K=\curl\eta-\curl_K\eta_K-(\curl-\curl_K)\eta_K\\
    =&\widecheck{\si}+\f12 \widecheck{\hchb}\wedge\widecheck{\hch}+ \f12\widecheck{\hchb}\wedge\hch_K+\f12\hchb_K\wedge\widecheck{\hch}-\widecheck{\in^{AB}}\nab_A(\eta_K)_B+\in_K^{AB}\Lc_{AB}^C(\eta_K)_C.
\end{align*}
Similarly, we have
\begin{align*}
    -\curl\etabc=&\widecheck{\si}+\f12 \widecheck{\hchb}\wedge\widecheck{\hch}+ \f12\widecheck{\hchb}\wedge\hch_K+\f12\hchb_K\wedge\widecheck{\hch}+\widecheck{\in^{AB}}\nab_A(\etab_K)_B-\in^{AB}_K\Lc_{AB}^C(\etab_K)_C.
\end{align*}
Finally, we recall from Proposition \ref{standardnull}
\begin{align*}
    \Om\nab_4(\Om\omb)=&\f12\Om^2\rho+\frac{3}{4}|\Om\eta-\Om\etab|^2-\frac{1}{4}(\Om\eta-\Om\etab)\cdot(\Om\eta+\Om\etab)-\frac{1}{8}|\Om\eta+\Om\etab|^2,
\end{align*}
which implies
\begin{align*}
    \Om_K\nab_{(e_4)_K}\omb_K=&\frac{1}{2}\Om_K^2\rho_K+\frac{3}{4}|\Om_K\eta_K-\Om_K\etab_K|^2-\frac{1}{4}(\Om_K\eta_K-\Om_K\etab_K)\cdot(\Om_K\eta_K+\Om_K\etab_K)\\
    &-\frac{1}{8}|\Om_K\eta_K+\Om_K\etab_K|^2.
\end{align*}
Taking the difference, we obtain
\begin{align*}
        \Om\nab_4(\widecheck{\Om\omb})=&\frac{3}{4}\Om^2|\eta-\etab|^2-\frac{1}{4}\Om^2(\eta-\etab)\cdot(\eta+\etab)-\frac{1}{8}\Om^2|\eta+\etab|^2+\frac{1}{2}\Om^2\rho \\
        &-\frac{3}{4}\Om_K^2|\eta_K-\etab_K|^2+\frac{1}{4}\Om_K^2(\eta_K-\etab_K)\cdot(\eta_K+\etab_K)+\frac{1}{8}\Om_K^2|\eta_K+\etab_K|^2-\frac{1}{2}\Om_K^2\rho_K \\
        =&\frac{3}{4}|\widecheck{\Om\eta}-\widecheck{\Om\etab}|^2+\frac{3}{2}(\widecheck{\Om\eta}-\widecheck{\Om\etab})\cdot(\Om_K\eta_K-\Om_K\etab_K)-\frac{1}{4}(\widecheck{\Om\eta}-\widecheck{\Om\etab})\cdot(\widecheck{\Om\eta}+\widecheck{\Om\etab})\\
        &-\frac{1}{4}(\widecheck{\Om\eta}-\widecheck{\Om\etab})\cdot(\Om_K\eta_K+\Om_K\etab_K)-\frac{1}{4}(\Om_K\eta_K-\Om_K\etab_K)\cdot(\widecheck{\Om\eta}+\widecheck{\Om\etab})-\frac{1}{8}|\widecheck{\Om\eta}+\widecheck{\Om\etab}|^2\\
        &-\frac{1}{4}(\widecheck{\Om\eta}+\widecheck{\Om\etab})\cdot(\eta_K+\etab_K)+\f12\widecheck{\Om^2\rho}.
\end{align*}
Similarly, we have
\begin{align*}
        \Om\nab_3(\widecheck{\Om\om})=&\frac{3}{4}|\widecheck{\Om\eta}-\widecheck{\Om\etab}|^2+\frac{3}{2}(\widecheck{\Om\eta}-\widecheck{\Om\etab})\cdot(\Om_K\eta_K-\Om_K\etab_K)+\frac{1}{4}(\widecheck{\Om\eta}-\widecheck{\Om\etab})\cdot(\widecheck{\Om\eta}+\widecheck{\Om\etab})\\
        &+\frac{1}{4}(\widecheck{\Om\eta}-\widecheck{\Om\etab})\cdot(\Om_K\eta_K+\Om_K\etab_K)+\frac{1}{4}(\Om_K\eta_K-\Om_K\etab_K)\cdot(\widecheck{\Om\eta}+\widecheck{\Om\etab})-\frac{1}{8}|\widecheck{\Om\eta}+\widecheck{\Om\etab}|^2\\
        &-\frac{1}{4}(\widecheck{\Om\eta}+\widecheck{\Om\etab})\cdot(\eta_K+\etab_K)+\f12\widecheck{\Om^2\rho}+\bcc^A\nab_{\pr_{x^a}}(\Om_K\om_K).
\end{align*}
This concludes the proof of Proposition \ref{nullstructureA}.
\end{proof}
\subsection{Proof of Proposition \ref{equationsmumub}}\label{equationsmu}
We recall from Corollary \ref{cornull} that
\begin{align*}
    \Om\nab_4\mum+\Om\trch\mum&=\Om [F]+\Om(\trch\,\rho-2\eta\c\b),\\
    [F]&:=\hch\c(\nab\hot\eta)+(\eta-\etab)\c(\nab\trch+\trch\,\ze)+\f12\trch(|\eta|^2-|\etab|^2)\\
    &-\f12\left(\trchb|\hch|^2+\trch(\hch\c\hchb)\right)+2\eta\c\hch\c\etab,
\end{align*}
which implies
\begin{align*}
    \Om_K(\nab_K)_{(e_4)_K}\mum_K+\Om_K\tr_K\chi_K\mum_K=\Om_K [F]_K+\Om_K(\trch_K\,\rho_K-2\eta_K\c\b_K).
\end{align*}
Taking the difference, we obtain
\begin{align*}
    \Om\nab_4\mumc+\Om\trch\mumc&=\widecheck{\Om [F]}+\Om(\trch\,\rho-2\eta\c\b)-\Om_K(\trch_K\,\rho_K-2\eta_K\c\b_K)\\
    &=\widecheck{\Om[F]}+\widecheck{\Om\trch}\,\rho_K+\Om\trch\,\rhoc-2\widecheck{\Om\eta}\c\b_K-2\Om_K\eta_K\c\bc-2\widecheck{\Om\eta}\c\bc\\
    &=\Om\trch\,\rhoc+\widecheck{\Om[F]}+\O_3^1\c\Gag+\O_3^2\c\bc-2\widecheck{\Om\eta}\c\bc.
\end{align*}
Next, we compute
\begin{align*}
    {\Om [F]}=&-2\hch\c d_2^*\eta+(\eta-\etab)\c\nab\trch+(\O_3^2+\Gag)\c(\O_1^0+\Gag)\c(\O_3^2+\Gag)\\
    &+(\O_1^0+\Gag)\c(\O_3^2+\Gag)\c(\O_3^2+\Gab)+(\O_3^2+\Gag)\c(\O_3^2+\Gag)\c(\O_3^2+\Gag)\\
    =&-2\hch\c d_2^*\eta+(\eta-\etab)\c\nab\trch+\O_7^4+\O_4^2\c\Gab+r^{-1}\Gag\c\Gab.
\end{align*}
Hence, we deduce from Proposition \ref{useful} that
\begin{align*}
\widecheck{\Om[F]}=&-2\hchc\c (d_2^*)_K\eta_K-2\hch_K\c\widecheck{d_2^*\eta}-2\hchc\c\widecheck{d_2^*\eta}+(\etac-\etabc)\c\nab_K\tr_K\chi_K+(\eta_K-\etab_K)\c\nab\trchc\\
&+(\etac-\etabc)\c\nab\trchc+\O_4^2\c\Gab+r^{-1}\Gag\c\Gab\\
=&\O_3^2\c\widecheck{d_2^*\eta}-2\hchc\c\widecheck{d_2^*\eta}+\O_3^2\c\nab\trchc+(\etac-\etabc)\c\nab\trchc+\O_4^2\c\Gab+r^{-1}\Gag\c\Gab\\
=&\O_3^2\c (d_2^*\etac+\Lc\c\O_3^2)-2\hchc\c(d_2^*\etac+\Lc\c\O_3^2)+\O_3^2\c\nab\trchc+(\etac-\etabc)\c\nab\trchc+\O_4^2\c\Gab+r^{-1}\Gag\c\Gab\\
=&\O_3^2\c (d_2^*\etac)-2\hchc\c(d_2^*\etac)+\O_6^4\c\Lc+r^{-1}\Gag\c\Lc+\O_3^2\c\nab\trchc+(\etac-\etabc)\c\nab\trchc+\O_4^2\c\Gab+r^{-1}\Gag\c\Gab.
\end{align*}
Thus, we obtain
\begin{align*}
\Om\nab_4\mumc+\Om\trch\mumc&=\Om\trch\,\rhoc+\O_3^2\c (d_2^*\etac,\nab\trchc,\bc)+\O_3^1\c(\Gag,\Lc)-2\hchc\c(d_2^*\etac)+r^{-1}\Gag\c\Lc\\
&+(\etac-\etabc)\c\nab\trchc+r^{-1}\Gag\c\Gab-2\widecheck{\Om\eta}\c\bc.
\end{align*}
Similarly, we have
\begin{align*}
\Om\nab_3\mubmc+\Om\trchb\mubmc&=\Om\trchb\,\rhoc+\O_3^2\c (d_2^*\etabc,\nab\trchbc,\bbc)+\O_3^1\c(\Gag,\Lc)-2\hchbc\c(d_2^*\etabc)+r^{-1}\Gab\c\Lc\\
&+(\etabc-\etac)\c\nab\trchbc+r^{-1}\Gab\c\Gab+2\widecheck{\Om\etab}\c\bbc.
\end{align*}
This concludes the proof of Proposition \ref{equationsmumub}.
\subsection{Proof of Propositions \ref{Bianchieq} and \ref{Bianchieqdkb}}\label{pfBianchi}
We first prove the following lemma.
\begin{lemma}\label{dkbcomm}
    Let $k,k'=0,1,2$, $\la_0$ a real number and $\nab$ a first order elliptic operator. Then, we have the following properties:
\begin{enumerate}
    \item Assume that $\psi,h\in\sk_k$ and $\psi'\in\sk_{k'}$ satisfy the following equation:
    \begin{equation}\label{A1eq}
        \nab_3\psi+\la_0\trchb\,\psi=\nab\psi'+h.
    \end{equation}
        Then, we have
    \begin{equation}\label{nab3dkbpsi}
        \nab_3\dkb\psi+\la_0\trchb\,\dkb\psi=\nab\dkb\psi'+h^{[1]},
    \end{equation}
    where
    \begin{equation*}
        h^{[1]}=\dkb h+(\Gag+\O_3^2)h+(\Gag+\O_3^2)\c{\psi'}^{(1)}+(\Gab+\O_3^2)\c\psi^{(1)}+\Gab^{(1)}\c\psi.
    \end{equation*}
    \item Assume that $\psi,h\in \sk_k$ and $\psi'\in \sk_{k'}$ satisfy the following equation:
    \begin{equation}
        \nab_4\psi +\la_0\trch\,\psi=\nab\psi'+h.
    \end{equation}
Then, we have
\begin{equation}\label{nab4dkbpsi}
\nab_4\dkb\psi+\la_0\trch\,\dkb\psi=\nab\dkb\psi'+h^{[1]},
\end{equation}
where
\begin{equation*}
    h^{[1]}=\dkb h+(\Gag+\O_3^2)h+(\Gag+\O_3^2)\c({\psi'}^{(1)},\psi^{(1)})+\Gag^{(1)}\c\psi.
    \end{equation*}
\end{enumerate}
\end{lemma}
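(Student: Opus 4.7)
\textbf{Proof plan for Lemma \ref{dkbcomm}.} The plan is to apply $\dkb=rd$ to the given transport equation and to commute it past $\nab_3$ (resp.\ $\nab_4$) using Corollary \ref{commutation}, past the angular operator $\nab$ on the right using the Hodge identities \eqref{dddd}, and past the $\trchb$ (resp.\ $\trch$) factor by Leibniz. All commutator errors will then be repackaged into the stated schematic form of $h^{[1]}$. A key simplification throughout is that, since the area radius $r(u,\ub)$ depends only on $u,\ub$, one has $\nab r=0$; hence $\dkb$ commutes with $\nab$ on $S$ modulo a curvature term involving the Gauss curvature $\mathbf{K}$, and $\dkb$ commutes with $\nab_3,\nab_4$ only up to terms coming from $\nab_3 r$ and $\nab_4 r$ which, by Lemma \ref{dint} together with $\trch-2/r,\trchb+2/r\in\O_2^1+\Gag,\Gab$, sit inside $\O_3^2+\Gag$ and $\O_3^2+\Gab$ respectively.

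For part (1), I first apply $r\nab$ to \eqref{A1eq}. The left-hand side yields
\[
\nab_3(\dkb\psi)+[\dkb,\nab_3]\psi+\la_0\trchb\,\dkb\psi+\la_0\,r(\nab\trchb)\,\psi.
\]
Corollary \ref{commutation} gives $[\dkb,\nab_3]=\Gab\cdot r\nab+\O_3^2\cdot r\nab+\Gab^{(1)}+\O_3^2$, after converting $[r\nab,\Om\nab_3]$ to $[r\nab,\nab_3]$ by means of $\Om^{-1}(r\nab\Om)\in\O_1^1+r\Gag^{(1)}$ and substituting $\nab_3\psi=\nab\psi'+h-\la_0\trchb\psi$ from the equation itself, which is the source of the $(\Gag+\O_3^2)h$ and $(\Gag+\O_3^2)\cdot(\psi')^{(1)}$ contributions in $h^{[1]}$. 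The factor $r\nab\trchb$ is handled by writing $\trchb=-2/r+(\O_2^1+\Gab)$ and using $\nab r=0$, so $r\nab\trchb\in\O_3^2+\Gab^{(1)}$, producing the $\Gab^{(1)}\cdot\psi$ and $\O_3^2\cdot\psi^{(1)}$ contributions. The right-hand side becomes $r\nab(\nab\psi')+r\nab h=\nab(\dkb\psi')+[\dkb,\nab]\psi'+\dkb h$. Since $[\dkb,\nab]$ between two Hodge-type operators reduces, via \eqref{dddd}, to a multiple of $r^2\mathbf{K}$; and by the Gauss equation in Proposition \ref{standardnull}, $r^2\mathbf{K}\in\O_0^0+\Gag^{(1)}+\O_3^2$, the commutator contributes only $(\Gag+\O_3^2)\cdot(\psi')^{(1)}$-type errors. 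Collecting everything and moving the nonprincipal terms to the right gives \eqref{nab3dkbpsi} with $h^{[1]}$ of the advertised form.

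Part (2) is carried out by the same scheme, applying $r\nab$ and using $[\dkb,\nab_4]=\Gag\cdot r\nab+\O_3^2\cdot r\nab+\Gag^{(1)}+\O_3^2$ from Corollary \ref{commutation}, together with $r\nab\trch\in\O_3^2+\Gag^{(1)}$. In this case all commutator and lower-order contributions are built purely from $\Gag$ (and $\O_3^2$), which matches the absence of $\Gab$-type terms in the stated expression for $h^{[1]}$ in part (2).

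The main obstacle is purely bookkeeping: ensuring that every commutator error is tagged with the correct schematic class, so that the $\Gab$-type contributions that appear only in part (1) are cleanly separated from the $\Gag$-type ones, and that the substitution of $\nab_3\psi$ (resp.\ $\nab_4\psi$) back into the commutator produces exactly the $(\Gag+\O_3^2)h$ term with no leftover principal piece. The remaining verifications are routine applications of $\nab r=0$, the Gauss equation, and Remark \ref{ignoreremark} to absorb better-decaying terms.
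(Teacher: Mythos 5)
Your overall route is the same as the paper's: apply $\dkb$ to the transport equation, commute with the null derivative using Corollary \ref{commutation}, and sort the coefficient terms ($r\nab\trchb$, $r\nab\Om$, etc.) into the schematic classes. The one benign difference is that the paper first multiplies the equation by $\Om$, so that the commutator $[r\nab,\Om\nab_3]$ of Corollary \ref{commutation} applies directly and produces no $\nab_3\psi$ term, whereas you work with the unweighted equation and remove the resulting $\nab_3\psi$ by substituting the equation back in; this is equivalent bookkeeping (in the paper the $(\Gag+\O_3^2)h$ and $(\Gag+\O_3^2)\c{\psi'}^{(1)}$ contributions arise instead from $\dkb(\Om h)$ and $r\nab\Om\c\nab\psi'$).

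There is, however, a genuine flaw in your treatment of the angular commutator on the right-hand side. You assert that $[\dkb,\nab]\psi'$ reduces via \eqref{dddd} to a multiple of $r^2\mathbf{K}$ and that, since $r^2\mathbf{K}\in\O_0^0+\dots$, it contributes only $(\Gag+\O_3^2)\c{\psi'}^{(1)}$-type errors. This is a non sequitur: the $\O_0^0$ part of $r^2\mathbf{K}$ is of order one, so an actual swap of two Hodge operators (e.g. trading $d_2d_2^*$ for $d_1^*d_1$ through \eqref{dddd}) produces a zeroth-order term of size $r^{-1}|\psi'|$, comparable to the $\trchb\,\dkb\psi$ terms of the equation itself and far larger than anything of the form $(\Gag+\O_3^2)\c{\psi'}^{(1)}$; such a term cannot be hidden in $h^{[1]}$. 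What the paper uses (implicitly, when it writes $\dkb(\Om\nab\psi')=r\nab\Om\c\nab\psi'+\Om\nab(\dkb\psi')$ with no curvature term) is that, with $\dkb$ defined as $r$ times the type-appropriate Hodge operator (Definition \ref{dfdkb}) and $\nab r=0$ on $S$, the operator $\nab$ acting on $\dkb\psi'$ in \eqref{nab3dkbpsi} is simply the next operator in the same composition, so $\dkb(\nab\psi')=\nab(\dkb\psi')$ holds exactly and no Gauss-curvature commutator ever appears in the lemma. If one does insist on rewriting the right-hand side with the adjoint operator (to restore the precise Bianchi-pair structure used later), the $\mathbf{K}$-term must be carried as a main term, not discarded as a small error. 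Apart from this point, your handling of the null commutator, the $r\nab\trchb$ coefficient, and the separation of $\Gag$- versus $\Gab$-type contributions matches the paper's argument.
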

\begin{proof}
We have from \eqref{A1eq}
    \begin{align*}
        \Om\nab_3\psi+\la_0\Om\trchb\,\psi=\Om\nab\psi'+\Om h.
    \end{align*}
Differentiating it by $\dkb$, we obtain
\begin{align*}
\Om\nab_3(\dkb\psi)+\la_0r\nab(\Om\trchb)\psi+\la_0\Om\trchb(\dkb\psi)=r\nab\Om\c\nab\psi'+\Om\nab(\dkb\psi')+\dkb(\Om h)+[\Om\nab_3,\dkb]\psi,
\end{align*}
which implies from Corollary \ref{commutation}
\begin{align*}
    &\;\;\;\,\,\,\Om \nab_3(\dkb\psi)+\la_0\Om\trchb (\dkb\psi)\\
    &=\Om \nab(\dkb\psi')+(\Gag+\O_3^2)\c\nab\psi'+(\Gag^{(1)}+\O_3^2)\c\psi+(\Gag+\O_3^2)h+\Om\dkb h\\
    &+(\Gab+\O_3^2)\c\dkb\psi+(\Gab^{(1)}+\O_3^2)\c\psi\\
    &=\Om\nab(\dkb\psi')+(\Gag+\O_3^2)h+(\Gab+\O_3^2)\c{\psi'}^{(1)}+(\Gag+\O_3^2)\c\psi^{(1)}+\Gab^{(1)}\c\psi+\Om\dkb h.
\end{align*}
Hence, we obtain
\begin{align}
\begin{split}\label{nab3psieq}
&\nab_3(\dkb\psi)+\la_0\trchb (\dkb\psi)\\
=&\nab(\dkb\psi')+(\Gag+\O_3^2)h+(\Gag+\O_3^2)\c{\psi'}^{(1)}+(\Gab+\O_3^2)\c\psi^{(1)}+\Gab^{(1)}\c\psi+\dkb h,
\end{split}
\end{align}
which implies \eqref{nab3dkbpsi}. The proof of \eqref{nab4dkbpsi} is similar. This concludes the proof of Lemma \ref{dkbcomm}.
\end{proof}
We next prove the following lemma concerning the linearization of Bianchi equations.
\begin{lemma}\label{linbianchi}
    Let $k,k'=0,1,2$, $\la_0$ a real number and $\nab$ a first order elliptic operator. Then, we have the following properties:
\begin{enumerate}
    \item Assume that $\psi,h\in\sk_k$ and $\psi'\in\sk_{k'}$ satisfy the following equation:
    \begin{equation}
        \nab_3\psi+\la_0\trchb\,\psi=\nab\psi'+h.
    \end{equation}
        Then, we have
    \begin{equation}\label{nab3psic}
\nab_3\psic+\la_0\trchb\,\psic=\nab\psi'+\widecheck{h}+\Jbc\c\psi_K+\Lc\c\psi_K'+\Gag\c\dk^{\leq 1}\psi_K.
    \end{equation}
    \item Assume that $\psi,h\in \sk_k$ and $\psi'\in \sk_{k'}$ satisfy the following equation:
    \begin{equation}
        \nab_4\psi +\la_0\trch\,\psi=\nab\psi'+h.
    \end{equation}
Then, we have
\begin{equation}\label{nab4psic}
\nab_4\psic+\la_0\trch\,\psic=\nab\psi'+\widecheck{h}+\Jc\c\psi_K+\Lc\c\psi_K'+\Gag\c\dk^{\leq 1}\psi_K.
\end{equation}
\end{enumerate}
\end{lemma}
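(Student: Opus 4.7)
The proof is a direct linearization: we subtract the Kerr version of each given equation from the full version and then rewrite all derivative-difference operators using the material already developed in Section \ref{seclinearequation}.

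For part (1), the Kerr background satisfies
\begin{equation*}
(\nab_K)_3 \psi_K + \la_0 \tr_K\chib_K\,\psi_K = \nab_K \psi'_K + h_K.
\end{equation*}
Subtracting from the assumed equation for $\psi$ yields
\begin{equation*}
\nab_3 \psic + \la_0 \trchb\,\psic = \bigl(\nab\psi' - \nab_K\psi'_K\bigr) + \widecheck{h} - \bigl(\nab_3 - (\nab_K)_3\bigr)\psi_K - \la_0\,\widecheck{\trchb}\,\psi_K.
\end{equation*}
The horizontal-derivative difference is handled by Proposition \ref{useful}, which gives $\nab\psi' - \nab_K\psi'_K = \nab\widecheck{\psi'} + \Lc\c\psi'_K$; the first term is the main $\nab\psi'$ on the right-hand side (in the schematic sense used throughout the paper) and the second produces the stated $\Lc\c\psi'_K$ contribution. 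The ingoing-derivative difference is handled by Lemma \ref{checke4e3}, Proposition \ref{useful} and Lemma \ref{newuseful}: writing $\Om\nab_3 - \Om_K(\nab_K)_{(e_3)_K}$ as the sum of $\nab_{\widecheck{\Om e_3}}$ and $(\nab - \nab_K)_{\Om_K(e_3)_K}$, one gets
\begin{equation*}
\Om\nab_3\psi_K - \Om_K(\nab_K)_{(e_3)_K}\psi_K = \bcc^A\nab_{\pr_{x^A}}\psi_K + (\nab-\nab_K)_{\pr_u}\psi_K + \bu_K^A(\nab-\nab_K)_{\pr_{x^A}}\psi_K,
\end{equation*}
whose three summands are respectively of the schematic form $\Gag\c\dk^{\leq 1}\psi_K$ (since $r^{-1}\bcc\in\Gag$), $\Jbc\c\psi_K$ and $\O^2_2\c\Lc\c\psi_K$, the last of which is subsumed in $\Lc\c\psi'_K$ after acknowledging the better decay. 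Finally, the term $\la_0\widecheck{\trchb}\,\psi_K$ is of type $\Gag\c\psi_K$ by Lemma \ref{expressionGaR}, which fits into $\Gag\c\dk^{\leq 1}\psi_K$. Collecting the pieces gives \eqref{nab3psic}.

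The proof of \eqref{nab4psic} is completely analogous, with the crucial simplification that $\widecheck{\Om e_4} = 0$ by Lemma \ref{checke4e3}. Hence no $\bcc$-correction appears and the difference reduces to
\begin{equation*}
\Om\nab_4\psi_K - \Om_K(\nab_K)_{(e_4)_K}\psi_K = (\nab-\nab_K)_{\pr_\ub}\psi_K = \Jc\c\psi_K,
\end{equation*}
which supplies the $\Jc\c\psi_K$ term in place of $\Jbc\c\psi_K$; the remaining $\Lc\c\psi'_K$ and $\Gag\c\dk^{\leq 1}\psi_K$ contributions are produced exactly as in part (1). There is no genuine obstacle here beyond careful bookkeeping: the only non-routine point is recognizing which of the many lower-order remainders produced by the expansion of $\nab-\nab_K$ and $\nab_3-(\nab_K)_3$ on a $\psi_K$-type background are absorbed into $\Jbc\c\psi_K$, which into $\Lc\c\psi'_K$, and which into the omnibus class $\Gag\c\dk^{\leq 1}\psi_K$. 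Once this is verified the two identities follow without further estimation.
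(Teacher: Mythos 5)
Your proposal is correct and follows essentially the same route as the paper's own proof in Appendix \ref{pfBianchi}: subtract the pulled-back Kerr equation and convert the derivative differences into $\Jbc$ (resp.\ $\Jc$), $\Lc\c\psi'_K$ and $\Gag\c\dk^{\leq 1}\psi_K$ contributions via Lemma \ref{checke4e3} and Proposition \ref{useful}. The only cosmetic discrepancies are that your frame decomposition is stated for the $\Om$-weighted operators, so the unweighted difference $\nab_3-(\nab_K)_{(e_3)_K}$ additionally produces an $\Omc$-term which lands in $\Gag\c\dk^{\leq 1}\psi_K$ since $r^{-1}\Omc\in\Gag$, and the summand $\bu_K^A(\nab-\nab_K)_{\pr_{x^A}}\psi_K$ should be filed as $\Lc\c\psi_K$ (absorbed thanks to the extra $\O_2^2$ decay, exactly as the paper silently does) rather than $\Lc\c\psi'_K$; neither point affects the stated schematic identities.
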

\begin{proof}
    Assume that we have
    \begin{align*}
        \nab_3\psi+\la_0\trchb\,\psi=\nab\psi'+h.
    \end{align*}
    Then, we obtain
    \begin{align*}
        (\nab_K)_{(e_3)_K}\psi_K+\la_0\tr_K\chib_K\,\psi_K=\nab_K\psi_K'+h_K.
    \end{align*}
    Taking the difference, we deduce
    \begin{align}\label{nab3psiK}
        \nab_3\psi-(\nab_K)_{(e_3)_K}\psi_K+\la_0(\trchb\,\psi-\tr_K\chib_K\,\psi_K)=\nab\psi'-\nab_K\psi_K'+\widecheck{h}.
    \end{align}
    Recall from \eqref{diff3} that
    \begin{align*}
        e_3-(e_3)_K&=\Om^{-1}(\Om e_3-\Om_K (e_3)_K)+\left(\frac{\Om_K}{\Om}-1\right)(e_3)_K\\
        &=\Om^{-1}\bcc^A\pr_{x^A}+\Om^{-1}\Omc(e_3)_K.
    \end{align*}
    Thus, we obtain
    \begin{align*}
        (\nab_K)_{e_3-(e_3)_K}\psi_K=\Gag\c\pr_{x^A} (\psi_K)+r\Gag\c (\nab_K)_{(e_3)_K}\psi_K=\Gag\c\dk\psi_K.
    \end{align*}
    Combining with \eqref{nab3psiK}, we deduce
    \begin{align*}
        \nab_3\psic+\la_0\trchb\,\psic&=\nab_3(\psi-\psi_K)+\la_0\trchb\,(\psi-\psi_K)\\
        &=\left((\nab_K)_{(e_3)_K}-\nab_3\right)\psi_K+\la_0(\tr_K\chib_K-\trchb)\psi_K+\nab\psic'+(\nab-\nab_K)\psi'_K+\widecheck{h}\\
        &=\left((\nab_K)_{(e_3)_K-e_3}+(\nab_K-\nab)\right)\psi_K-\la_0\,\trchbc\,\psi_K+\nab\psic'+(\nab-\nab_K)\psi'_K+\widecheck{h}\\
        &=\nab\psi'+\widecheck{h}-(\nab-\nab_K)_{e_3}\psi_K+(\nab-\nab_K)\psi'_K+\Gag\c\dk^{\leq 1}\psi_K.
    \end{align*}
     Recalling Lemma \ref{diffnab}, we have
     \begin{align*}
         (\nab-\nab_K)_{e_3}\psi_K&=\Jbc\c\psi_K+\Lc\c\psi_K,\\
         (\nab-\nab_K)\psi'_K&=\Lc\c\psi_K.
     \end{align*}
     Thus, we obtain
     \begin{align*}
\nab_3\psic+\la_0\trchb\,\psic=\nab\psi'+\widecheck{h}+(\Jbc,\Lc)\c\psi_K+\Lc\c\psi_K'+\Gag\c\dk^{\leq 1}\psi_K,
     \end{align*}
     which implies \eqref{nab3psic}. The proof of \eqref{nab4psic} is similar. This concludes the proof of Lemma \ref{linbianchi}.
\end{proof}
We are now ready to prove the following proposition, which implies directly Propositions \ref{Bianchieq} and \ref{Bianchieqdkb}.
\begin{proposition}\label{BianchieqA}
We have the following linearized Bianchi equations:
\begin{align*}
\nab_4\aac+\f12\tr\chi\,\aac
=&\nab\hot\bbc+\fl[\nab_4\aac]+\err[\nab_4\aac],\\
\fl[\nab_4\aac]:=&\Gab^{(1)}\c\O_3^1+\O_2^1\c\aac+\O_3^2\c(\bbc,\rhoc,\sic),\\
\err[\nab_4\aac]:=&\Gag\c(\aac,\bbc)+\Gab\c(\rhoc,\sic),\\ \\
\nab_4\dkbaac+\f12\trchb\dkbaac=&-\nab\hot\dkbbbc+\fl[\nab_4\dkbaac]+\err[\nab_4\dkbaac],\\
\fl[\nab_4\dkbaac]=&\Gab^{(1)}\c\O_3^1+\O_2^1\c\aac^{(1)}+\O_3^2\c(\bbc^{(1)},\rhoc^{(1)},\sic^{(1)}),\\
\err[\nab_4\dkbaac]=&\Gag^{(1)}\c(\aac,\bbc)+\Gab^{(1)}\c(\rhoc,\sic)+\Gag\c(\aac^{(1)},\bbc^{(1)})+\Gab\c(\rhoc^{(1)},\sic^{(1)}),\\ \\
\nab_{3}\bbc+2\tr\chib\,\bbc=&-\sdiv\aac+\fl[\nab_{3}\bbc]+\err[\nab_{3}\bbc],\\
\fl[\nab_{3}\bbc]:=&\Gab^{(1)}\c\O_4^2+\O_2^1\c\bbc+\O_3^2\c\aac,\\
\err[\nab_3\bbc]:=&\Gab\c\bbc+\Gag\c\aac,\\ \\
\nab_3\widecheck{\dkb\bb}+2\trchb\widecheck{\dkb\bb}=&-\sdiv\widecheck{\dkb\aa}+\fl[\nab_3\widecheck{\dkb\bb}]+\err[\nab_3\widecheck{\dkb\bb}],\\
\fl[\nab_3\widecheck{\dkb\bb}]:=&\Gab^{(1)}\c\O_4^2+\O_2^1\c\bbc^{(1)}+\O_3^2\c\aac^{(1)},\\
\err[\nab_3\widecheck{\dkb\bb}]:=&\Gab^{(1)}\c\bbc+\Gab\c\bbc^{(1)}+\Gag^{(1)}\c\aac+\Gag\c\aac^{(1)},\\\\
\nab_4\bbc+\trch\,\bbc=&-\nab\rhoc+{^*\nab}\sic+\fl[\nab_4\bbc]+\err[\nab_4\bbc],\\
\fl[\nab_4\bbc]:=&\Gag^{(1)}\c\O_3^1+\O_2^1\c\bbc+\O_3^2\c(\rhoc,\sic,\bc),\\
\err[\nab_4\bbc]:=&\Gag\c(\bbc,\rhoc,\sic)+\Gab\c\bc,\\ \\
\nab_4\dkbbbc+\trch\dkbbbc:=&-\nab\dkbrhoc+{^*\nab}\dkbsic+\fl[\nab_4\dkbbbc]+\err[\nab_4\dkbbbc],\\
\fl[\nab_4\dkbbbc]:=&\Gag^{(1)}\c\O_3^1+\O_2^1\c\bbc^{(1)}+\O_3^2\c(\rhoc^{(1)},\sic^{(1)},\bc^{(1)}),\\
\err[\nab_4\dkbbbc]:=&\Gag^{(1)}\c(\bbc,\rhoc,\sic)+\Gag\c(\bbc^{(1)},\rhoc^{(1)},\sic^{(1)})+\Gab^{(1)}\c\bc+\Gab\c\bc^{(1)},\\ \\
\nab_3(\rhoc,\sic)+\frac{3}{2}\trchb\,(\rhoc,\sic)&=-d_1\bbc+\fl[\nab_3\rhoc]+\err[\nab_3\rhoc],\\
\fl[\nab_3\rhoc]:=&\Gag^{(1)}\c\O_3^1+\O_3^2\c(\aac,\bbc), \\
\err[\nab_3\rhoc]:=&\Gag\c(\aac,\bbc),\\ \\
\nab_3(\dkbrhoc,\dkbsic)+\frac{3}{2}\trchb(\dkbrhoc,\dkbsic)=&-d_1\dkb\bbc+\fl[\nab_3(\dkbrhoc,\dkbsic)]+\err[\nab_3(\dkbrhoc,\dkbsic)],\\
\fl[\nab_3(\dkbrhoc,\dkbsic)]:=&\Gab^{(1)}\c\O_3^1+\O_3^2\c(\aac^{(1)},\bbc^{(1)},\rhoc^{(1)},\sic^{(1)}),\\
\err[\nab_3(\dkbrhoc,\dkbsic)]:=&\Gag^{(1)}\c(\aac,\bbc)+\Gag\c(\aac^{(1)},\bbc^{(1)})+\Gab\c(\rhoc^{(1)},\sic^{(1)})+\Gab^{(1)}\c(\rhoc,\sic),\\\\
\nab_4(\rhoc,-\sic)+\frac{3}{2}\trch(\rhoc,-\sic)
=&d_1\bc+\fl[\nab_4\rhoc]+\err[\nab_4\rhoc],\\
\fl[\nab_4\rhoc]:=&\Gag^{(1)}\c\O_3^1+\O_3^2\c(\ac,\bc),\\
\err[\nab_4\rhoc]:=&\Gab\c\ac+\Gag\c\bc,\\ \\
\nab_4(\dkbrhoc,-\dkbsic)+\frac{3}{2}\trch(\dkbrhoc,-\dkbsic)=&d_1\dkbbc+\fl[\nab_4(\dkbrhoc,-\dkbsic)]+\err[\nab_4(\dkbrhoc,\dkbsic)],\\
\fl[\nab_4(\dkbrhoc,-\dkbsic)]:=&\Gag^{(1)}\c\O_3^1+\O_3^2\c(\ac^{(1)},\bc^{(1)},\rhoc^{(1)},\sic^{(1)}),\\
\err[\nab_4(\dkbrhoc,-\dkbsic)]:=&\Gab^{(1)}\c\ac+\Gab\c\ac^{(1)}+\Gag^{(1)}\c(\bc,\rhoc,\sic)+\Gag\c(\bc^{(1)},\rhoc^{(1)},\sic^{(1)}),\\ \\
\nab_3\bc+\trchb\,\bc=&\nab\rhoc+{^*\nab\sic}+\fl[\nab_3\bc]+\err[\nab_3\bc],\\
\fl[\nab_3\bc]:=&\Gag^{(1)}\c\O_3^1+\O_2^1\c\bc+\O_3^2\c(\bbc,\rhoc,\sic),\\
\err[\nab_3\bc]:=&\Gab\c\bc+\Gag\c(\bbc,\rhoc,\sic),\\ \\
\nab_3\dkb\bc+\trchb\,\dkb\bc=&\nab\dkb\rhoc+{^*\nab\dkb\sic}+\fl[\nab_3\dkbbc]+\err[\nab_3\dkbbc],\\
\fl[\nab_3\dkbbc]:=&\Gag^{(1)}\c\O_3^1+\O_2^1\c\bc^{(1)}+\O_3^2\c(\bbc^{(1)},\rhoc^{(1)},\sic^{(1)}),\\
\err[\nab_3\dkbbc]:=&\Gab^{(1)}\c\bc+\Gab\c\bc^{(1)}+\Gag^{(1)}\c(\bbc,\rhoc,\sic)+\Gag\c(\bbc^{(1)},\rhoc^{(1)},\sic^{(1)}),\\ \\
\nab_4\bc+2\trch\,\bc=&\sdiv\ac+\fl[\nab_4\bc]+\err[\nab_4\bc],\\
\fl[\nab_4\bc]:=&\Gag^{(1)}\c\O_4^2+\O_2^1\c(\bc,\ac),\\
\err[\nab_4\bc]:=&\Gag\c(\bc,\ac),\\ \\
\nab_4\dkb\bc+2\trch\dkb\bc=&\sdiv\dkb\ac+\fl[\nab_4\dkbbc]+\err[\nab_4\dkbbc],\\
\fl[\nab_4\dkbbc]:=&\Gag^{(1)}\c\O_4^2+\O_2^1\c(\bc^{(1)},\ac^{(1)}),\\
\err[\nab_4\dkbbc]:=&\Gag^{(1)}\c(\bc,\ac)+\Gag\c(\bc^{(1)},\ac^{(1)}),\\ \\
\nab_3\ac+\f12\trchb\,\ac=&\nab\hot\bc+\fl[\nab_3\ac]+\err[\nab_3\ac],\\
\fl[\nab_3\ac]:=&\Gag^{(1)}\c\O_3^1+\O_2^1\c\ac+\O_3^2\c(\rhoc,\sic,\bc),\\
\err[\nab_3\ac]:=&\Gab\c\ac+\Gag\c\bc,\\ \\
\nab_3\dkb\ac+\f12\trchb\dkb\ac=&\nab\hot\dkb\bc+\fl[\nab_3\dkbac]+\err[\nab_3\dkbac],\\
\fl[\nab_3\dkbac]:=&\Gag^{(1)}\c\O_3^1+\O_2^1\c\ac^{(1)}+\O_3^2\c(\rhoc^{(1)},\sic^{(1)},\bc^{(1)}),\\
\err[\nab_3\dkbac]:=&\Gab^{(1)}\c\ac+\Gab\c\ac^{(1)}+\Gag^{(1)}\c\bc+\Gag\c\bc^{(1)}.
\end{align*}
\end{proposition}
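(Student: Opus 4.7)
The plan is to prove each equation in Proposition \ref{BianchieqA} by systematic application of Lemmas \ref{linbianchi} and \ref{dkbcomm} to the standard Bianchi equations of Proposition \ref{standardbianchi}. The proof splits into two parts: the eight equations for the linearized curvature components $\ac, \bc, (\rhoc,\sic), \bbc, \aac$, and the eight companion equations for the quantities $\widecheck{\dkb\aa}, \widecheck{\dkb\bb}, (\widecheck{\dkb\rho},\widecheck{\dkb\si}), \widecheck{\dkb\b}, \widecheck{\dkb\a}$ obtained by commuting with $\dkb$.

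For the first part, I would start from each Bianchi equation in Proposition \ref{standardbianchi}, written schematically in the form
$$\nab_3 \psi + \la_0 \trchb\, \psi = \nab \psi' + h, \qquad \text{or} \qquad \nab_4 \psi + \la_0 \trch\, \psi = \nab \psi' + h,$$
where $h$ collects the lower-order nonlinear coupling between Ricci coefficients and curvature. The central task is to split $h$ into a linear part capturing Kerr-valued Ricci coefficients (or Kerr-valued curvature components) multiplying linearized quantities, and a purely quadratic part consisting of two linearized factors. The splitting is performed for each product $\Ga \cdot R$ via the identity
$$\Ga \cdot R = \Ga_K \cdot R_K + \Gac \cdot R_K + \Ga_K \cdot \Rc + \Gac \cdot \Rc,$$
in which $\Ga_K \cdot R_K$ is absorbed by the Kerr analog of the Bianchi equation and the remaining three products are classified using Lemma \ref{expressionGaR} and Proposition \ref{decayGamma}. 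One then invokes Lemma \ref{linbianchi} to produce the linearized equation, appending the extra $\Jbc \cdot \psi_K$, $\Lc \cdot \psi'_K$ and $\Gag \cdot \dk^{\leq 1}\psi_K$ correction terms from the connection and frame discrepancies; these are bounded by Proposition \ref{decayGamma} and absorbed into $\fl[\cdot]$.

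For the second part, I would apply Lemma \ref{dkbcomm} directly to the un-linearized Bianchi equations to obtain transport equations of identical structural form for $\dkb\psi$, with modified source $h^{[1]}$ containing the commutator contributions $\dkb h$, $\Gag \cdot h$, $\Gag \cdot \psi'^{(1)}$, $\Gab \cdot \psi^{(1)}$, etc. Then applying Lemma \ref{linbianchi} a second time, with the substitutions $\psi \to \dkb\psi$, $\psi' \to \dkb\psi'$, $h \to h^{[1]}$, yields the desired equation for $\widecheck{\dkb\psi}$. This two-step order — commute first with $\dkb$ on the unlinearized equation, then linearize — is the key choice that avoids a loss of derivative on $\Jc, \Jbc, \Lc$ flagged in the remark following Proposition \ref{Bianchieqdkb}, since $\dkb$ is only ever applied to $\psi_K$ and $h$, never to the connection-difference tensors themselves.

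The main obstacle is bookkeeping rather than conceptual: every Bianchi equation has numerous source terms and each resulting piece of $\widecheck{h}$ or $\widecheck{h^{[1]}}$ must be re-classified according to the scheme of Definition \ref{gammag}, carefully distinguishing $\Gag$ from $\Gab$ (which carry different decay weights by Lemma \ref{estGagba}), assigning the sharp $\O^p_q$ weight to each Kerr-valued factor via Propositions \ref{decayGammaKerr}--\ref{decayRKerr}, and then slotting the outcome into $\fl[\cdot]$ or $\err[\cdot]$. A subsidiary verification, important for closing the $r^p$-weighted estimates in Section \ref{curvatureestimates}, is to check that every $\fl[\cdot]$ contribution really does carry one extra power of $M/r$ compared with the corresponding $\err[\cdot]$ term, which follows automatically once each $\O^p_q$ weight is correctly identified through Proposition \ref{decayGamma} and the convention of Remark \ref{ignoreremark}.
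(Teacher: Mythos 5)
Your plan follows the paper's own proof essentially verbatim: the paper likewise first establishes Lemma \ref{dkbcomm} and Lemma \ref{linbianchi}, commutes each unlinearized Bianchi equation of Proposition \ref{standardbianchi} with $\dkb$ before linearizing (precisely to avoid differentiating $\Jc$, $\Jbc$, $\Lc$), splits each product into Kerr and checked factors classified via Lemma \ref{expressionGaR} and Proposition \ref{decayGamma}, and absorbs the $\Jc\c\psi_K$, $\Jbc\c\psi_K$, $\Lc\c\psi'_K$, $\Gag\c\dk^{\leq 1}\psi_K$ corrections into the $\fl[\cdot]$ terms. The remaining work is exactly the bookkeeping you describe, so the proposal is correct and not materially different from the paper.
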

\begin{proof}
We recall from Proposition \ref{standardbianchi}
\begin{align*}
\nab_4\aa+\f12\tr\chi\,\aa&=-\nab\hot\bb+h[\aa_4],\\
h[\aa_4]&:=4\om\aa-3(\hchb\rho-{^*\hchb}\si)+(\ze-4\etab)\hot\bb.
\end{align*}
Applying Lemma \ref{dkbcomm}, we deduce
\begin{align*}
    \nab_4\dkb\aa+\f12\trch\dkb\aa&=-\nab\hot\dkb\bb+h[\aa_4]^{[1]},
\end{align*}
where
\begin{align*}
    h[\aa_4]^{[1]}&=\dkb h[\aa_4]+(\Gag+\O_3^2)\c h[\aa_4]+(\Gag+\O_3^2)\c(\aa^{(1)},\bb^{(1)})+\Gag^{(1)}\c\aa\\
    &=\dkb h[\aa_4]+(\Gag+\O_3^2)\c h[\aa_4]+(\Gag+\O_3^2)\c(\aac^{(1)},\bbc^{(1)})+\Gag^{(1)}\c\aac\\
    &+\Gag\c\O_5^3+\Gag\c\O_4^2+\O_7^4+\Gag^{(1)}\c\O_5^3.
\end{align*}
We compute
\begin{align}
\begin{split}\label{haa4compute}
h[\aa_4]&=4\om\aa-3(\hchb\rho-{^*\hchb}\si)+(\ze-4\etab)\hot\bb\\
&=(\O_2^1+\Gag)(\O_5^3+\aac)+(\O_3^2+\Gab)\c(\O_3^1+\rhoc+\sic)+(\O_3^2+\Gag)\c(\O_4^2+\bbc)\\
&=\O^4_7+\Gag\c \O_5^3+\O_2^1\c\aac+\Gag\c\aac+\O_6^3+\Gab\c\O_3^1+\O_3^2\c(\rhoc,\sic)+\Gab\c(\rhoc,\sic)\\
&+\O_7^4+\Gag\c\O_4^2+\O_3^2\c\bbc+\Gag\c\bbc\\
&=\O_6^3+\Gab\c\O_3^1+\O_2^1\c\aac+\O_3^2\c(\bbc,\rhoc,\sic)+\Gag\c(\aac,\bbc)+\Gab\c(\rhoc,\sic),
\end{split}
\end{align}
which implies
\begin{align*}
h[\aa_4]^{[1]}&=\O_6^3+\Gab^{(1)}\c\O_3^1+\O_2^1\c\aac^{(1)}+\O_3^2\c(\bbc^{(1)},\rhoc^{(1)},\sic^{(1)})\\
&+\Gag^{(1)}\c(\aac,\bbc)+\Gab^{(1)}\c(\rhoc,\sic)+\Gag\c(\aac^{(1)},\bbc^{(1)})+\Gab\c(\rhoc^{(1)},\sic^{(1)}).
\end{align*}
Applying Lemma \ref{linbianchi}, we obtain
\begin{align*}
\nab_4\aac+\f12\trch\,\aac&=-\nab\hot\bbc+\widecheck{h[\aa_4]}+\Jc\c\aa_K+\Lc\c\bb_K+\Gag\c\dk^{\leq 1}\aa_K\\
&=-\nab\hot\bbc+\widecheck{h[\aa_4]}+\Gag^{(1)}\c\O_5^3+\Gag^{(1)}\c\O_4^2+\Gag\c\O_5^3\\
&=-\nab\hot\bbc+\widecheck{h[\aa_4]}+\Gag^{(1)}\c\O_4^2,
\end{align*}
and similarly
\begin{align*}
\nab_4\widecheck{\dkb\aa}+\f12\trchb\widecheck{\dkb\aa}=-\nab\hot\widecheck{\dkb\bb}+\widecheck{h[\aa_4]^{[1]}}+\Gag^{(1)}\c\O_4^2.
\end{align*}
Hence, we infer
\begin{align*}
\nab_4\aac+\f12\trchb\aac&=-\nab\hot\bbc+\widecheck{h[\aa_4]}+\Gag^{(1)}\c\O_4^2\\
&=-\nab\hot\bbc+\Gab^{(1)}\c\O_3^1+\O_2^1\c\aac+\O_3^2\c(\bbc,\rhoc,\sic)+\Gag\c(\aac,\bbc)+\Gab\c(\rhoc,\sic).
\end{align*}
Similarly, we have
\begin{align*}
    \nab_4\widecheck{\dkb\aa}+\f12\trchb\widecheck{\dkb\aa}&=-\nab\hot\widecheck{\dkb\bb}+\widecheck{h[\aa_4]^{[1]}}+\Gag^{(1)}\c\O_4^2\\
    &=-\nab\hot\widecheck{\dkb\bb}+\Gab^{(1)}\c\O_3^1+\O_2^1\c\aac^{(1)}+\O_3^2\c(\bbc^{(1)},\rhoc^{(1)},\sic^{(1)})\\
&+\Gag^{(1)}\c(\aac,\bbc)+\Gab^{(1)}\c(\rhoc,\sic)+\Gag\c(\aac^{(1)},\bbc^{(1)})+\Gab\c(\rhoc^{(1)},\sic^{(1)}).
\end{align*}
Next, we recall from Proposition \ref{standardbianchi}
\begin{align*}
\nab_3\bb+2\trchb\,\bb&=-\sdiv\aa+h[\bb_3],\\
h[\bb_3]&:=-2\omb\bb+(2\zeta-\eta)\cdot\aa.
\end{align*}
Applying Lemma \ref{dkbcomm}, we deduce
\begin{align*}
    \nab_3\dkb\bb+2\trchb\dkb\bb&=-\sdiv\dkb\aa+h[\bb_3]^{[1]},
\end{align*}
where
\begin{align*}
    h[\bb_3]^{[1]}&=\dkb h[\bb_3]+(\Gag+\O_3^2)\c h[\bb_3]+(\Gag+\O_3^2)\c\aa^{(1)}+(\Gab+\O_3^2)\c\bb^{(1)}+\Gab^{(1)}\c\bb\\
    &=\dkb h[\bb_3]+(\Gag+\O_3^2)\c h[\bb_3]+(\Gag+\O_3^2)\c\aac^{(1)}+(\Gab+\O_3^2)\c\bbc^{(1)}+\Gab^{(1)}\c\bbc\\
    &+\Gab\c\O_5^3+\Gab\c\O_4^2+\O_7^4+\Gab^{(1)}\c\O_4^2.
\end{align*}
We compute
\begin{align*}
h[\bb_3]&=-2\omb\bb+(2\zeta-\eta)\cdot\aa\\
&=(\O_2^1+\Gab)\c(\O_4^2+\bbc)+(\O_3^2+\Gag)\c(\O_5^3+\aac)\\
&=\O_6^3+\Gab\c\O_4^2+\O_2^1\c\bbc+\Gab\c\bbc+\O_8^5+\Gag\c\O_5^3+\O_3^2\c\aac+\Gag\c\aac\\
&=\O_6^3+\Gab\c\O_4^2+\O_2^1\c\bbc+\O_3^2\c\aac+\Gab\c\bbc+\Gag\c\aac,
\end{align*}
which implies
\begin{align*}
h[\bb_3]^{[1]}&=\O_6^3+\Gab^{(1)}\c\O_4^2+\O_2^1\c\bbc^{(1)}+\O_3^2\c\aac^{(1)}+\Gab^{(1)}\c\bbc+\Gab\c\bbc^{(1)}+\Gag^{(1)}\c\aac+\Gag\c\aac^{(1)}.
\end{align*}
Applying Lemma \ref{linbianchi}, we obtain
\begin{align*}
    \nab_3\bbc+2\trchb\,\bbc&=-\sdiv\aac+\widecheck{h[\bb_3]}+\Jbc\c\bb_K+\Lc\c\aac_K+\Gag\c\dk^{\leq 1}\bb_K\\
    &=-\sdiv\aac+\widecheck{h[\bb_3]}+\Gab^{(1)}\c\O_4^2+\Gag^{(1)}\c\O_5^3+\Gag\c\O_4^2\\
    &=-\sdiv\aac+\widecheck{h[\bb_3]}+\Gab^{(1)}\c\O_4^2,
\end{align*}
and similarly
\begin{equation*}
    \nab_3\widecheck{\dkb\bb}+2\trchb\widecheck{\dkb\bb}=-\sdiv\widecheck{\dkb\aa}+\widecheck{h[\bb_3]^{[1]}}+\Gab^{(1)}\c\O_4^2.
\end{equation*}
Hence, we deduce
\begin{align*}
\nab_3\bbc+2\trchb\,\bbc=-\sdiv\aac+\Gab^{(1)}\c\O_4^2+\O_2^1\c\bbc+\O_3^2\c\aac+\Gab\c\bbc+\Gag\c\aac.
\end{align*}
Similarly, we have
\begin{align*}
    \nab_3\widecheck{\dkb\bb}+2\trchb\widecheck{\dkb\bb}&=-\sdiv\widecheck{\dkb\aa}+\Gab^{(1)}\c\O_4^2+\O_2^1\c\bbc^{(1)}+\O_3^2\c\aac^{(1)}\\
    &+\Gab^{(1)}\c\bbc+\Gab\c\bbc^{(1)}+\Gag^{(1)}\c\aac+\Gag\c\aac^{(1)}.
\end{align*}
Next, we recall from Proposition \ref{standardbianchi}
\begin{align*}
    \nab_4\bb+\tr\chi\,\bb&=-\nabla\rho+{^*\nabla\sigma}+h[\bb_4],\\
    h[\bb_4]&:=2\omega\bb+2\hchb\cdot\beta-3(\etab\rho-{^*\etab}\sigma).
\end{align*}
Applying Lemma \ref{dkbcomm}, we infer
\begin{align*}
    \nab_4\dkb\bb+\trch\dkb\bb&=-\nab\dkb\rho+{^*\nab\dkb\si}+h[\bb_4]^{[1]},
\end{align*}
where
\begin{align*}
    h[\bb_4]^{[1]}&=\dkb h[\bb_4]+(\Gag+\O_3^2)\c h[\bb_4]+(\Gag+\O_3^2)\c (\bb^{(1)},\rho^{(1)},\si^{(1)})+\Gag^{(1)}\c \bb^{(1)}\\
    &=\dkb h[\bb_4]+(\Gag+\O_3^2)\c h[\bb_4]+(\Gag+\O_3^2)\c (\bbc^{(1)},\rhoc^{(1)},\sic^{(1)})+\Gag^{(1)}\c \bbc^{(1)}\\
    &+\Gag\c \O_4^2+\Gag\c \O_3^1+\O_6^3+\Gag^{(1)}\c\O_4^2.
\end{align*}
We compute
\begin{align*}
{h[\bb_4]}&=(\O_2^1+\Gag)\c(\O_4^2+\bbc)+(\O_3^2+\Gab)\c(\O_4^2+\bc)+(\O_3^2+\Gag)\c(\O_3^1+\rhoc+\sic)\\
&=\O_6^3+\Gag\c\O_4^2+\O_2^1\c\bbc+\Gag\c\bbc+\O_7^4+\Gab\c\O_4^2+\O_3^2\c\bc+\Gab\c\bc\\
&+\O_6^3+\Gag\c\O_3^1+\O_3^2\c(\rhoc,\sic)+\Gag\c(\rhoc,\sic)\\
&=\O_6^3+\Gag\c\O_3^1+\O_2^1\c\bbc+\O_3^2\c(\rhoc,\sic,\bc)+\Gag\c(\bbc,\rhoc,\sic)+\Gab\c\bc,
\end{align*}
which implies
\begin{align*}
h[\bb_4]^{[1]}&=\O_6^3+\Gag^{(1)}\c\O_3^1+\O_2^1\c\bbc^{(1)}+\O_3^2\c(\rhoc^{(1)},\sic^{(1)},\bc^{(1)})\\
&+\Gag^{(1)}\c(\bbc,\rhoc,\sic)+\Gag\c(\bbc^{(1)},\rhoc^{(1)},\sic^{(1)})+\Gab^{(1)}\c\bc+\Gab\c\bc^{(1)}.
\end{align*}
Applying Lemma \ref{linbianchi}, we obtain
\begin{align*}
    \nab_4\bbc+\trch\,\bbc&=-\nab\rhoc+{^*\nab}\sic+\widecheck{h[\bb_4]}+\Jc\c\bb_K+\Lc\c(\rho_K,\si_K)+\Gag\c\dk^{\leq 1}\bb_K\\
    &=-\nab\rhoc+{^*\nab}\sic+\widecheck{h[\bb_4]}+\Gag^{(1)}\c\O_3^1.
\end{align*}
Hence, we infer
\begin{align*}
     \nab_4\bbc+\trch\,\bbc=-\nab\rhoc+{^*\nab}\sic+\Gag^{(1)}\c\O_3^1+\O_2^1\c\bbc+\O_3^2\c(\rhoc,\sic,\bc)+\Gag\c(\bbc,\rhoc,\sic)+\Gab\c\bc.
\end{align*}
Similarly, we have
\begin{align*}
\nab_4\dkbbbc+\trch\dkbbbc&=-\nab\dkbrhoc+{^*\nab}\dkbsic+\Gag^{(1)}\c\O_3^1+\O_2^1\c\bbc^{(1)}+\O_3^2\c(\rhoc^{(1)},\sic^{(1)},\bc^{(1)})\\
&+\Gag^{(1)}\c(\bbc,\rhoc,\sic)+\Gag\c(\bbc^{(1)},\rhoc^{(1)},\sic^{(1)})+\Gab^{(1)}\c\bc+\Gab\c\bc^{(1)}.
\end{align*}
Next, we recall from Proposition \ref{standardbianchi}
\begin{align*}
    \nab_3(\rho,\si)+\frac{3}{2}\tr\chib(\rho,\si)&=-d_1\bb+h[\rho_3,\si_3],\\
    h[\rho_3,\si_3]&=-\f12\hch\cdot(\aa,-{^*\aa})+\ze\cdot(\bb,-{^*\bb})-2\eta\cdot(\bb,-{^*\bb}).
\end{align*}
Applying Lemma \ref{dkbcomm}, we obtain
\begin{align*}
    \nab_3\dkb(\rho,\si)+\frac{3}{2}\trchb\dkb(\rho,\si)=-d_1\dkb\bb+h[\rho_3,\si_3]^{[1]},
\end{align*}
where
\begin{align*}
h[\rho_3,\si_3]^{[1]}&=\dkb h[\rho_3,\si_3]+(\Gag+\O_3^2)\c h[\rho_3,\si_3]+(\Gab+\O_3^2)\c(\rho^{(1)},\si^{(1)})+(\Gag+\O_3^2)\c\bb^{(1)}+\Gab^{(1)}\c(\rho,\si)\\
    &=\dkb h[\rho_3,\si_3]+(\Gag+\O_3^2)\c h[\rho_3,\si_3]+(\Gab+\O_3^2)\c(\rhoc^{(1)},\sic^{(1)})+(\Gag+\O_3^2)\c\bbc^{(1)}+\Gab^{(1)}\c(\rhoc,\sic)\\
    &+\Gab\c \O_3^1+\O_6^3+\Gag\c\O_4^2+\O_7^4+\Gab^{(1)}\c\O_3^1.
\end{align*}
We compute
\begin{align*}
    h[\rho_3,\si_3]&=(\O_3^2+\Gag)\c(\O_5^3+\aac)+(\O_3^2+\Gag)\c(\O_4^2+\bbc)\\
&=\O_8^5+\Gag\c\O_5^3+\O_3^2\c\aac+\Gag\c\aac+\O_7^4+\Gag\c\O_4^2+\O_3^2\c\bbc+\Gag\c\bbc\\
    &=\O_7^4+\Gag\c\O_4^2+\O_3^2\c(\aac,\bbc)+\Gag\c(\aac,\bbc),
\end{align*}
which implies
\begin{align*}
h[\rho_3,\si_3]^{[1]}&=\O_6^3+\Gab^{(1)}\c\O_3^1+\O_3^2\c(\aac^{(1)},\bbc^{(1)},\rhoc^{(1)},\sic^{(1)})\\
&+\Gag^{(1)}\c(\aac,\bbc)+\Gag\c(\aac^{(1)},\bbc^{(1)})+\Gab\c(\rhoc^{(1)},\sic^{(1)})+\Gab^{(1)}\c(\rhoc,\sic).
\end{align*}
Applying Lemma \ref{linbianchi}, we deduce
\begin{align*}
    \nab_3(\rhoc,\sic)+\frac{3}{2}\trchb\,(\rhoc,\sic)&=-d_1\bbc+\widecheck{h[\rho_3,\si_3]}+\Lc\c\bb_K+\Gag\c(\rho_K,\si_K)\\
    &=-d_1\bbc+\widecheck{h[\rho_3,\si_3]}+\Gag^{(1)}\c\O_3^1\\
&=-d_1\bbc+\Gag^{(1)}\c\O_3^1+\O_3^2\c(\aac,\bbc)+\Gag\c(\aac,\bbc).
\end{align*}
Similarly, we have
\begin{align*}
\nab_3(\dkbrhoc,\dkbsic)+\frac{3}{2}\trchb(\dkbrhoc,\dkbsic)&=-d_1\dkbbbc+\Gab^{(1)}\c\O_3^1+\O_3^2\c(\aac^{(1)},\bbc^{(1)},\rhoc^{(1)},\sic^{(1)})\\
&+\Gag^{(1)}\c(\aac,\bbc)+\Gag\c(\aac^{(1)},\bbc^{(1)})+\Gab\c(\rhoc^{(1)},\sic^{(1)})+\Gab^{(1)}\c(\rhoc,\sic).
\end{align*}
We have from Proposition \ref{standardbianchi}
\begin{align*}
    \nab_4(\rho,-\si)+\frac{3}{2}\tr\chi\,(\rho,-\si)&=d_1\b+h[\rho_4,\si_4],\\
    h[\rho_4,\si_4]&=-\frac{1}{2}\hchb\cdot(\a,{^*\a})+(\ze+2\etab)\cdot(\b,{^*\b}).
\end{align*}
Applying Lemma \ref{dkbcomm}, we have
\begin{align*}
    \nab_4\dkb(\rho,-\si)+\frac{3}{2}\trch\dkb(\rho,-\si)=d_1\dkb\b+h[\rho_4,\si_4]^{[1]},
\end{align*}
where
\begin{align*}
    h[\rho_4,\si_4]^{[1]}&=\dkb h[\rho_4,\si_4]+(\Gag+\O_3^2)\c h[\rho_4,\si_4]+(\Gag+\O_3^2)\c (\b^{(1)},\rho^{(1)},\si^{(1)})+\Gag^{(1)}\c (\rho,\si)\\
    &=\dkb h[\rho_4,\si_4]+(\Gag+\O_3^2)\c h[\rho_4,\si_4]+(\Gag+\O_3^2)\c (\bc^{(1)},\rhoc^{(1)},\sic^{(1)})+\Gag^{(1)}\c (\rhoc,\sic)\\
    &+\Gag\c \O_3^1+\O_6^3+\Gag^{(1)}\c\O_3^1.
\end{align*}
We compute
\begin{align*}
h[\rho_4,\si_4]&=(\O_3^2+\Gab)\c(\O_5^3+\ac)+(\O_3^2+\Gag)\c(\O_4^2+\bc)\\
&=\O_8^5+\Gab\c\O_5^3+\O_3^2\c\ac+\Gab\c\ac+\O_7^4+\Gag\c\O_4^2+\O_3^2\c\bc+\Gag\c\bc\\
&=\O_7^4+\Gag\c\O_4^2+\O_3^2\c(\ac,\bc)+\Gab\c\ac+\Gag\c\bc,
\end{align*}
which implies
\begin{align*}
h[\rho_4,\si_4]^{[1]}&=\O_6^3+\Gag^{(1)}\c\O_3^1+\O_3^2\c(\ac^{(1)},\bc^{(1)},\rhoc^{(1)},\sic^{(1)})\\
&+\Gab^{(1)}\c\ac+\Gab\c\ac^{(1)}+\Gag^{(1)}\c(\bc,\rhoc,\sic)+\Gag\c(\bc^{(1)},\rhoc^{(1)},\sic^{(1)}).
\end{align*}
Applying Lemma \ref{linbianchi}, we deduce
\begin{align*}
    \nab_4(\rhoc,-\sic)+\frac{3}{2}\trch(\rhoc,-\sic)&=d_1\bc+\widecheck{h[\rho_4,\si_4]}+\Jc\c(\rho_K,\si_K)+\Lc\c\b_K+\Gag\c\dk^{\leq 1}(\rho_K,\si_K)\\
    &=d_1\bc+\widecheck{h[\rho_4,\si_4]}+\Gag^{(1)}\c\O_3^1\\
    &=d_1\bc+\Gag^{(1)}\c\O_3^1+\O_3^2\c(\ac,\bc)+\Gab\c\ac+\Gag\c\bc.
\end{align*}
Similarly, we have
\begin{align*}
\nab_4(\dkbrhoc,-\dkbsic)+\frac{3}{2}\trch(\dkbrhoc,-\dkbsic)&=d_1\dkbbc+\Gag^{(1)}\c\O_3^1+\O_3^2\c(\ac^{(1)},\bc^{(1)},\rhoc^{(1)},\sic^{(1)})\\
&+\Gab^{(1)}\c\ac+\Gab\c\ac^{(1)}+\Gag^{(1)}\c(\bc,\rhoc,\sic)+\Gag\c(\bc^{(1)},\rhoc^{(1)},\sic^{(1)}).
\end{align*}
Next, we recall from Proposition \ref{standardbianchi}
\begin{align*}
    \nab_3\b+\tr\chib\,\b&=\nab\rho+{^*\nabla}\si+h[\b_3],\\
    h[\b_3]&=2\omb\b+2\hch\cdot\bb+3(\eta\rho+{^*\eta}\si).
\end{align*}
Applying Lemma \ref{dkbcomm}, we deduce
\begin{align*}
    \nab_3\dkb\b+\trchb\dkb\b=\nab\dkb\rho+{^*\nab\dkb\si}+h[\b_3]^{[1]},
\end{align*}
where
\begin{align*}
    h[\b_3]^{[1]}&=\dkb h[\b_3]+(\Gag+\O_3^2)\c h[\b_3]+(\Gab+\O_3^2)\c \b^{(1)}+(\Gag+\O_3^2)\c (\rho^{(1)},\si^{(1)})+\Gag^{(1)}\c\b \\
    &=\dkb h[\b_3]+(\Gag+\O_3^2)\c h[\b_3]+(\Gab+\O_3^2)\c\bc^{(1)}+(\Gag+\O_3^2)\c (\rhoc^{(1)},\sic^{(1)})+\Gag^{(1)}\c\bc \\
    &+\Gab\c\O_4^2+\O_7^4+\Gag\c\O_3^1+\O_6^3+\Gag^{(1)}\c\O_4^2.
\end{align*}
We compute
\begin{align*}
{h[\b_3]}&=(\O_2^1+\Gab)\c(\O_4^2+\bc)+(\O_3^2+\Gag)\c(\O_4^2+\bbc)+(\O_3^2+\Gag)\c(\O_3^1+\rhoc+\sic)\\
&=\O_6^3+\Gab\c\O_4^2+\O_2^1\c\bc+\Gab\c\bc+\O_7^4+\Gag\c\O_4^2+\O_3^2\c\bbc+\Gag\c\bbc\\
&+\O_6^3+\Gag\c\O_3^1+\O_3^2\c(\rhoc,\sic)+\Gag\c(\rhoc,\sic)\\
&=\O_6^3+\Gag\c\O_3^1+\O_2^1\c\bc+\O_3^2\c(\bbc,\rhoc,\sic)+\Gab\c\bc+\Gag\c(\bbc,\rhoc,\sic),
\end{align*}
which implies
\begin{align*}
h[\b_3]^{[1]}&=\O_6^3+\Gag^{(1)}\c\O_3^1+\O_2^1\c\bc^{(1)}+\O_3^2\c(\bbc^{(1)},\rhoc^{(1)},\sic^{(1)},\bc^{(1)})\\
&+\Gab^{(1)}\c\bc+\Gab\c\bc^{(1)}+\Gag^{(1)}\c(\bbc,\rhoc,\sic)+\Gag\c(\bbc^{(1)},\rhoc^{(1)},\sic^{(1)}).
\end{align*}
Applying Lemma \ref{linbianchi}, we infer
\begin{align*}
    \nab_3\bc+\trchb\,\bc&=\nab\rhoc+{^*\nab\sic}+\widecheck{h[\b_3]}+\Lc\c(\rho_K,\si_K)+\Jbc\c\b_K+\Gag\c\dk^{\leq 1}\b_K\\
    &=\nab\rhoc+{^*\nab\sic}+\widecheck{h[\b_3]}+\Gag^{(1)}\c\O_3^1.
\end{align*}
Hence, we obtain
\begin{align*}
    \nab_3\bc+\trchb\,\bc=\nab\rhoc+{^*\nab\sic}+\Gag^{(1)}\c\O_3^1+\O_2^1\c\bc+\O_3^2\c(\bbc,\rhoc,\sic)+\Gab\c\bc+\Gag\c(\bbc,\rhoc,\sic).
\end{align*}
Similarly, we have
\begin{align*}
    \nab_3\dkbbc+\trchb\,\dkbbc&=\nab\dkbrhoc+{^*\nab\dkbsic}+\Gag^{(1)}\c\O_3^1+\O_2^1\c\bc^{(1)}+\O_3^2\c(\bbc^{(1)},\rhoc^{(1)},\sic^{(1)})\\
&+\Gab^{(1)}\c\bc+\Gab\c\bc^{(1)}+\Gag^{(1)}\c(\bbc,\rhoc,\sic)+\Gag\c(\bbc^{(1)},\rhoc^{(1)},\sic^{(1)}).
\end{align*}
Next, we recall from Proposition \ref{standardbianchi}
\begin{align*}
\nab_4\b+2\tr\chi\,\b&=\sdiv\alpha+h[\b_4]\\
h[\b_4]&=-2\om\b+(2\ze+\etab)\a.
\end{align*}
Applying Lemma \ref{dkbcomm}, we deduce
\begin{align*}
    \nab_4\dkb\b+2\trch\dkb\b=\sdiv\dkb\a+h[\b_4]^{[1]},
\end{align*}
where
\begin{align*}
    h[\b_4]^{[1]}&=\dkb h[\b_4]+(\Gag+\O_3^2)\c h[\b_4]+(\Gag+\O_3^2)\c(\b^{(1)},\a^{(1)})+\Gag^{(1)}\c\b\\
    &=\dkb h[\b_4]+(\Gag+\O_3^2)\c h[\b_4]+(\Gag+\O_3^2)\c(\bc^{(1)},\ac^{(1)})+\Gag^{(1)}\c\bc\\
    &+\Gag\c\O_4^2+\O_7^4+\Gag^{(1)}\c\O_4^2.
\end{align*}
We compute
\begin{align*}
    h[\b_4]&=(\O_2^1+\Gag)\c(\O_4^2+\bc)+(\O_3^2+\Gag)\c(\O_5^3+\ac)\\
&=\O_6^3+\Gag\c\O_4^2+\O_2^1\c\bc+\Gag\c\bc+\O_8^5+\Gag\c\O_5^3+\O_3^2\c\ac+\Gag\c\ac\\
    &=\O_6^3+\Gag\c\O_4^2+\O_2^1\c(\bc,\ac)+\Gag\c(\bc,\ac),
\end{align*}
which implies
\begin{align*}
h[\b_4]^{[1]}=\O_6^3+\Gag^{(1)}\c\O_4^2+\O_2^1\c(\bc^{(1)},\ac^{(1)})+\Gag\c(\bc^{(1)},\ac^{(1)})+\Gag^{(1)}\c(\bc,\ac).
\end{align*}
Applying Lemma \ref{linbianchi}, we obtain
\begin{align*}
\nab_4\bc+2\trch\,\bc&=\sdiv\ac+\widecheck{h[\b_4]}+\Jc\c\b_K+\Lc\c\a_K+\Gag\c\b_K\\
&=\sdiv\ac+\widecheck{h[\b_4]}+\Gag^{(1)}\c\O_4^2\\
&=\sdiv\ac+\Gag^{(1)}\c\O_4^2+\O_2^1\c(\bc,\ac)+\Gag\c(\bc,\ac).
\end{align*}
Similarly, we have
\begin{align*}
\nab_4\dkbbc+2\trch\dkbbc=\sdiv\dkbac+\Gag^{(1)}\c\O_4^2+\O_2^1\c(\bc^{(1)},\ac^{(1)})+\Gag^{(1)}\c(\bc,\ac)+\Gag\c(\bc^{(1)},\ac^{(1)}).
\end{align*}
Finally, we recall from Proposition \ref{standardbianchi}
\begin{align*}
    \nab_3\a+\f12\trchb\,\a&=\nab\hot\b+h[\a_3],\\
    h[\a_3]&=4\omb\a-3(\hch\rho+{^*\hch}\si)+(\ze+4\eta)\hot\b.
\end{align*}
Applying Lemma \ref{dkbcomm}, we obtain
\begin{align*}
    \nab_3\dkb\a+\f12\trchb\dkb\a=\nab\hot\dkb\b+h[\a_3]^{[1]},
\end{align*}
where
\begin{align*}
    h[\a_3]^{[1]}&=\dkb h[\a_3]+(\Gag+\O_3^2)\c h[\a_3]+(\Gab+\O_3^2)\c\a^{(1)}+(\Gag+\O_3^2)\c\b^{(1)}+\Gab^{(1)}\c\a\\
    &=\dkb h[\a_3]+(\Gag+\O_3^2)\c h[\a_3]+(\Gab+\O_3^2)\c\ac^{(1)}+(\Gag+\O_3^2)\c\bc^{(1)}+\Gab^{(1)}\c\ac\\
    &+\Gab\c\O_5^3+\O_8^5+\Gag\c\O_4^2+\O_7^4+\Gab^{(1)}\c\O_5^3.
\end{align*}
We compute
\begin{align*}
h[\a_3]&=(\O_2^1+\Gab)\c(\O_5^3+\ac)+(\O_3^2+\Gag)\c(\O_3^1+\rhoc+\sic)+(\O_3^2+\Gag)\c(\O_4^2+\bc)\\
&=\O_7^4+\Gab\c\O_5^3+\O_2^1\c\ac+\Gab\c\ac+\O_6^3+\Gag\c\O_3^1+\O_3^2\c(\rhoc,\sic)\\
&+\O_7^4+\Gag\c\O_4^2+\O_3^2\c\bc+\Gag\c\bc\\
&=\O_6^3+\Gag\c\O_3^1+\O_2^1\c\ac+\O_3^2\c(\rhoc,\sic,\bc)+\Gab\c\ac+\Gag\c\bc,
\end{align*}
which implies
\begin{align*}
h[\a_3]^{[1]}&=\O_6^3+\Gag^{(1)}\c\O_3^1+\O_2^1\c\ac^{(1)}+\O_3^2\c(\rhoc^{(1)},\sic^{(1)},\bc^{(1)})\\
&+\Gab^{(1)}\c\ac+\Gab\c\ac^{(1)}+\Gag^{(1)}\c\bc+\Gag\c\bc^{(1)}.
\end{align*}
Applying Lemma \ref{linbianchi}, we infer
\begin{align*}
\nab_3\ac+\f12\trchb\,\ac&=\nab\hot\bc+\widecheck{h[\a_3]}+\Jbc\c\a_K+\Lc\c\b_K+\Gag\c\a_K\\
&=\nab\hot\bc+\widecheck{h[\a_3]}+\Gab^{(1)}\c\O_5^3+\Gag^{(1)}\c\O_4^2+\Gag\c\O_5^3\\
&=\nab\hot\bc+\widecheck{h[\a_3]}+\Gag^{(1)}\c\O_4^2\\
&=\nab\hot\bc+\Gag^{(1)}\c\O_3^1+\O_2^1\c\ac+\O_3^2\c(\rhoc,\sic,\bc)+\Gab\c\ac+\Gag\c\bc.
\end{align*}
Similarly, we have
\begin{align*}
\nab_3\dkbac+\f12\trchb\dkbac&=\nab\hot\dkbbc+\Gag^{(1)}\c\O_3^1+\O_2^1\c\ac^{(1)}+\O_3^2\c(\rhoc^{(1)},\sic^{(1)},\bc^{(1)})\\
&+\Gab^{(1)}\c\ac+\Gab\c\ac^{(1)}+\Gag^{(1)}\c\bc+\Gag\c\bc^{(1)}.
\end{align*}
This concludes the proof of Proposition \ref{BianchieqA}.
\end{proof}
\subsection{Proof of Proposition \ref{metriceqre}}\label{pfmetric}
\begin{proposition}\label{metriceq}
We have the following equations:
\begin{align*}
\Om\nab_4(\gac_{AB})=&\Om\trch\,\gac_{AB}+\widecheck{\Om\trch}(\ga_K)_{AB}+2\Om\hchc_{AB}+2\Omc(\hch_K)_{AB},\\
\Om\nab_4(\Omc)=&-2\Om(\widecheck{\Om\om})-2\Omc(\Om_K\om_K),\\
\Om\nab_4(\bcc^A)=&-4\left(\widecheck{\Om^2}\ze_B\ga^{AB}+\Om_K^2\zec_B\ga^{AB}+\Om_K^2(\ze_K)_B
\widecheck{\ga^{AB}}\right),\\
\Om\nab_4(\inc_{AB})=&\Om\trch\,\inc_{AB}+\widecheck{\Om\trch}(\in_K)_{AB}.
\end{align*}
In the case $\Omc=0$ and $\bcc=0$, we have the following equations:
\begin{align*}
\Om\nab_3(\gac_{AB})-\Om\trchb\,\gac_{AB}=&\widecheck{\Om\trchb}(\ga_K)_{AB}+2\Om\hchbc_{AB}-\pr_A(\bu_K^C)\gac_{BC}-\pr_B(\bu_K^C)\gac_{AC},\\
\Om\nab_3(\inc_{AB})-\Om\trchb\,\inc_{AB}=&\widecheck{\Om\trchb}(\in_K)_{AB}-\pr_A(\bu_K^C)\inc_{BC}-\pr_B(\bu_K^C)\inc_{AC}.
\end{align*}
\end{proposition}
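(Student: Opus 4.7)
The strategy is entirely elementary: each equation is a termwise linearization of the corresponding equation in Proposition \ref{metricequations}, made systematic by the observation that $\Om \nab_4$ and $\Om_K (\nab_K)_{(e_4)_K}$ agree when acting on scalars. Indeed, Lemma \ref{checke4e3} gives $\widecheck{\Om e_4} = 0$, so on any scalar $F$ (for instance each component $\ga_{AB}$, $\in_{AB}$, $\bu^A$, or $\Om$ in the coordinate chart $(u,\ub,x^1,x^2)$) one has
\begin{equation*}
    \Om \nab_4 F = \pr_\ub F = \Om_K (\nab_K)_{(e_4)_K} F.
\end{equation*}
Thus taking the difference of Proposition \ref{metricequations} with its pulled-back Kerr counterpart converts each left-hand side directly into $\pr_\ub$ of the linearized quantity, i.e.\ $\Om \nab_4$ of the checked quantity.

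For the $\nab_4$ block, first I would treat $\gac_{AB}$: starting from $\pr_\ub \ga_{AB} = 2\Om \chi_{AB}$ and its Kerr analogue, the difference of right-hand sides is decomposed via $\Om\chi_{AB} = \tfrac{1}{2}(\Om\trch)\ga_{AB} + \Om\hch_{AB}$, which yields
\begin{equation*}
    2\Om \chi_{AB} - 2\Om_K(\chi_K)_{AB} = \widecheck{\Om\trch}(\ga_K)_{AB} + (\Om\trch)\gac_{AB} + 2\Om\hchc_{AB} + 2\Omc (\hch_K)_{AB},
\end{equation*}
which is the claimed identity. The equation for $\Omc$ follows from the relation $\omega = -\tfrac{1}{2}\Om^{-1}\nab_4 \Om$ in \eqref{6.6}, rewritten as $\pr_\ub \Om = -2\Om \cdot (\Om\om)$; subtracting the Kerr identity and splitting $\Om \cdot (\Om\om) - \Om_K \cdot (\Om_K \om_K) = \Om \widecheck{\Om\om} + \Omc (\Om_K \om_K)$ gives the stated result. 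The equation for $\bcc^A$ uses $\pr_\ub \bu^A = -4 \Om^2 \ze^A = -4 \Om^2 \ga^{AB}\ze_B$ and the three-fold product decomposition
\begin{equation*}
    \Om^2 \ga^{AB}\ze_B - \Om_K^2 \ga_K^{AB}(\ze_K)_B = \widecheck{\Om^2}\,\ga^{AB}\ze_B + \Om_K^2 \ga^{AB}\zec_B + \Om_K^2 (\ze_K)_B \widecheck{\ga^{AB}}.
\end{equation*}
The $\inc_{AB}$ equation is identical in form to the $\gac_{AB}$ one, since $\pr_\ub \in_{AB} = (\Om\trch)\in_{AB}$ linearizes as $(\Om\trch)\inc_{AB} + \widecheck{\Om\trch}(\in_K)_{AB}$.

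For the $\nab_3$ block, the assumption $\Omc = 0$ and $\bcc = 0$ is used via Lemma \ref{checke4e3} to conclude $\widecheck{\Om e_3} = 0$, so again $\Om \nab_3 = \pr_u + \bu^A \pr_{x^A} = \Om_K (\nab_K)_{(e_3)_K}$ on scalars and one may subtract the Kerr version of
\begin{equation*}
    \Om \nab_3 \ga_{AB} = 2 \Om \chib_{AB} - \pr_A(\bu^C)\ga_{BC} - \pr_B(\bu^C)\ga_{AC}
\end{equation*}
termwise. Because $\bu = \bu_K$ under $\bcc = 0$, the coordinate terms collapse to $\pr_A(\bu_K^C)\gac_{BC} + \pr_B(\bu_K^C)\gac_{AC}$, while the $\Om\chib$ term linearizes exactly as in the $\nab_4$ case. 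The $\inc_{AB}$ equation is obtained by the same procedure. No delicate estimate or commutator identity is required; the only point requiring care is the systematic bookkeeping of how each Kerr term splits into a linearized contribution plus a background factor, which is handled by repeatedly writing $AB - A_K B_K = \widecheck{A}B + A_K \widecheck{B}$. This bookkeeping, rather than any analytic difficulty, is the main (minor) obstacle.
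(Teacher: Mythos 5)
Your proposal is correct and follows essentially the same route as the paper: subtract the Kerr analogue of each equation in Proposition \ref{metricequations}, use Lemma \ref{checke4e3} to identify $\Om\nab_4$ with $\Om_K(\nab_K)_{(e_4)_K}$ (and, under $\bcc=0$, $\Om\nab_3$ with $\Om_K(\nab_K)_{(e_3)_K}$) on components, and split each product via $AB-A_KB_K=\widecheck{A}B+A_K\widecheck{B}$. The only cosmetic difference is that the paper derives the $\nab_3$ identities in general (keeping the $\bcc^C\pr_C(\ga_K)_{AB}$ and $\Omc$ terms) and specializes afterwards, whereas you impose $\Omc=0$, $\bcc=0$ from the start; both are fine.
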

\begin{remark}
    Note that Proposition \ref{metriceqre} follows directly from Propositions \ref{decayGamma} and \ref{metriceq} and Definitions \ref{bigOnotation} and \ref{gammag}.
\end{remark}
\begin{proof}[Proof of Proposition \ref{metriceq}]
Firstly, recall from Proposition \ref{metricequations} that
\begin{align*}
    \Om\nab_4 (\ga_{AB})=2\Om\chi_{AB},
\end{align*}
which implies
\begin{align*}
        \Om\nab_4(\gac_{AB})=&\Om\nab_4(\ga_{AB}-(\ga_K)_{AB})\\
        =&\Om\nab_4(\ga_{AB})-\Om\nab_{(e_4)_K}(\ga_K)_{AB}\\
        =&2\Om\chi_{AB}-2\Om_K(\chi_K)_{AB}\\
        =&\Om\trch \,\gac_{AB}+\widecheck{\Om\trch}(\ga_K)_{AB}+2\Om\hch_{AB}-2\Om_K(\hch_K)_{AB}\\
        =&\Om\trch\,\gac_{AB}+\widecheck{\Om\trch}(\ga_K)_{AB}+2\Om\hchc_{AB}+2\Omc(\hch_K)_{AB}.
\end{align*}
Next, we compute
\begin{align*}
\Om\nab_4(\Omc)=&\Om\nab_4(\Om-\Om_K)\\
=&\Om\nab_4\Om-\Om_K\nab_{(e_4)_K}\Om_K\\
    =&-2\om\Om^2+2\om_K\Om_K^2\\
    =&-2\widecheck{\Om\om}\Om-2\Omc(\Om_K\om_K).
\end{align*}
Recall from Proposition \ref{metricequations} that
\begin{align*}
    \nab_4 (\bu^A) =-4\Om\ze^A=-4\Om\ze_B\ga^{AB}.
\end{align*}
Hence, we have
\begin{align*}
    \Om\nab_4\bcc^A=&\Om\nab_4(\bu^A)-\Om_K\nab_{(e_4)_K}(\bu^A_K)\\
   =&-4\Om^2\ze_B\ga^{AB}+4\Om_K^2(\ze_K)_B(\ga_K)^{AB}\\
    =&-4\left(\widecheck{\Om^2}\ze_B\ga^{AB}+\Om_K^2\zec_B\ga^{AB}+\Om_K^2(\ze_K)_B
    \widecheck{\ga^{AB}}\right).
\end{align*}
Next, we recall from Proposition \ref{metricequations}
\begin{equation*}
    \Om\nab_4(\in_{AB})=\Om\trch \in_{AB},
\end{equation*}
which implies
\begin{align*}
    \Om\nab_4(\inc_{AB})=&\Om\nab_4(\in_{AB})-\Om_K\nab_{(e_4)_K}(\in_K)_{AB}\\
    =&\Om\trch\in_{AB}-\Om_K\tr_K\chi_K(\in_K)_{AB}\\
    =&\Om\trch\,\inc_{AB}+\widecheck{\Om\trch}(\in_K)_{AB}.
\end{align*}
Then, we recall from Proposition \ref{metricequations}
\begin{equation*}
    \Om e_3(\ga_{AB})=2\Om\chib_{AB}-\pr_A(\bu^C)\ga_{BC}-\pr_B(\bu^C)\ga_{AC},
\end{equation*}
which implies
\begin{equation*}
    \Om_K(e_3)_K((\ga_K)_{AB})=2\Om_K(\chib_K)_{AB}-\pr_A(b_K^c)(\ga_K)_{BC}-\pr_B(b_K^c)(\ga_K)_{AC}.
\end{equation*}
Thus, we infer
\begin{align*}
    \Om e_3(\gac_{AB})=&\Om e_3(\ga_{AB})-\Om_K(e_3)_K(\ga_K)_{AB}-\widecheck{\Om e_3}(\ga_K)_{AB}\\
    =&2\Om\chib_{AB}-\pr_A(\bu^C)\ga_{BC}-\pr_B(\bu^C)\ga_{AC}-2\Om_K(\chib_K)_{AB}\\
    &+\pr_A(b_K^C)(\ga_K)_{BC}+\pr_B(b_K^C)(\ga_K)_{AC}-\bcc^C\pr_{C} (\ga_K)_{AB}\\
    =&\Om\trchb\,\ga_{AB}-\Om_K\tr_K\chib_K\,(\ga_K)_{AB}+2(\Om\hchb_{AB}-\Om_K(\hchb_K)_{AB})\\
    &-\pr_A(\bu^C)\ga_{BC}-\pr_B(\bu^C)\ga_{AC}+\pr_A(b_K^C)(\ga_K)_{BC}+\pr_B(b_K^C)(\ga_K)_{AC}-\bcc^C\pr_{C} (\ga_K)_{AB}\\
    =&\Om\trchb\,\gac_{AB}+\widecheck{\Om\trchb}(\ga_K)_{AB}+2\Om\hchbc_{AB}+2\Omc(\hchb_K)_{AB}\\
    &-\pr_A(\bcc^C)\ga_{BC}-\pr_A(\bu_K^C)\gac_{BC}-\pr_B(\bcc^C)\ga_{AC}-\pr_B(\bu_K^C)\gac_{AC}-\bcc^C\pr_C(\ga_K)_{AB}.
\end{align*}
In the particular case $\Omc=0$ and $\bcc=0$, we deduce
\begin{equation*}
    \Om e_3(\gac_{AB})-\Om\trchb\,\gac_{AB}=\widecheck{\Om\trchb}(\ga_K)_{AB}+2\Om\hchbc_{AB}-\pr_A(\bu_K^C)\gac_{BC}-\pr_B(\bu_K^C)\gac_{AC}.
\end{equation*}
Similarly, we have from Proposition \ref{metricequations}
\begin{align*}
    \Om\nab_3(\in_{AB})=\Om\trchb\in_{AB}-\pr_A(\bu^C)\in_{BC}-\pr_B(\bu^C)\in_{BC},
\end{align*}
which yields
\begin{align*}
    \Om_K(e_3)_K(\in_K)_{AB}=\Om_K\tr_K\chib_K(\in_K)_{AB}-\pr_A(\bu_K^C)(\in_K)_{BC}-\pr_B(\bu_K^C)(\in_K)_{BC}.
\end{align*}
Hence, we have
\begin{align*}
    \Om\nab_3(\inc_{AB})=&\Om e_3(\in_{AB})-\Om_K (e_3)_K (\in_K)_{AB}-\widecheck{\Om e_3}(\in_K)_{AB}\\
    =&\Om\trchb\in_{AB}-\pr_A(\bu^C)\in_{BC}-\pr_B(\bu^C)\in_{BC}-\Om_K\tr_K\chib_K(\in_K)_{AB}\\
    &+\pr_A(\bu_K^C)(\in_K)_{BC}+\pr_B(\bu_K^C)(\in_K)_{BC}-\bcc^C\pr_C(\in_K)_{AB}\\
    =&\Om\trchb\,\inc_{AB}+\widecheck{\Om\trchb}(\in_K)_{AB}-\pr_A(\bcc^C)\in_{BC}-\pr_A(\bu_K^C)\inc_{BC}\\
    &-\pr_B(\bcc^C)\in_{AC}-\pr_B(\bu_K^C)\inc_{AC}-\bcc^C\pr_C(\ga_K)_{AB},
\end{align*}
which implies from $\Omc=0$ and $\bcc=0$
\begin{align*}
    \Om\nab_3(\inc_{AB})=\Om\trchb\,\inc_{AB}+\widecheck{\Om\trchb}(\in_K)_{AB}-\pr_A(\bu_K^C)\inc_{BC}-\pr_B(\bu_K^C)\inc_{AC}.
\end{align*}
This concludes the proof of Proposition \ref{metriceq}.
\end{proof}
\section{Proof of the case \texorpdfstring{$s\in(3,4)$}{}}\label{secc}
In Sections \ref{curvatureestimates}--\ref{initiallast}, we have provided the proof of Theorem \ref{maintheorem} in the case $s\in[4,6]$. In this appendix, we outline the proof in the case $s\in(3,4)$, which is similar to the case of $s\in[4,6]$.
\subsection{Fundamental norms}\label{sec34}
The definitions of some of the norms are different from Section \ref{fundamentalnorms}. In the sequel, we only mention the norms that differ from the ones in Section \ref{fundamentalnorms}.\\ \\
We define the $\RR_0^S[\bc]$ and $\RR_0^S[\rhoc,\sic]$ as follows:
\begin{align*}
    \RR_0^{S}[\bc]&:=\sup_{\KK}\sup_{p\in [2,4]}|r^{\frac{s+3}{2}-\frac{2}{p}}\bc|_{p,S},\\
    \RRb_0^S[\rhoc,\sic]&:=\sup_{\KK}\sup_{p\in [2,4]}|r^{\frac{s+2}{2}-\frac{2}{p}}|u|^{\frac{1}{2}} (\rhoc,\sic)|_{p,S}.
\end{align*}
The following flux of curvature are different from the case $s\in[4,6]$ for $q=0,1$:
\begin{align*}
         \RR_{q}[\bc](u,\ub)&:= \Vert r^{\frac{s}{2}}(r\nab)^q\bc\Vert_{2,\cuv},\\
         \RRb_{q}[(\rhoc,\sic)](u,\ub)&:=\Vert r^{\frac{s}{2}}(r\nab)^q(\rhoc,\sic)\Vert_{2,\ucuv}.
\end{align*}
The other norms are the same as in the case $s\in[4,6]$. 
\subsection{Estimates for Ricci coefficients and curvature components}\label{ssecc2}
We discuss the curvature estimates of Section \ref{curvatureestimates} in this case. Recall that we have four Bianchi pairs: $(\ac,\bc)$, $(\bc,(\rhoc,\sic))$, $((\rhoc,\sic),\bbc)$ and $(\bbc,\aac)$. In Section \ref{curvatureestimates}, we took respectively $p=s,4,2,0$, to estimate the Bianchi pairs. In the case $s\in(3,4)$, we take $p=s,s,2,0$ to estimate $\RR$. The method is then exactly the same as in Section \ref{curvatureestimates}. \\ \\
Concerning the control of Ricci coefficients, we can proceed by the same method as in Section \ref{Ricciestimates}. Recall that Proposition \ref{propnabomcombc} used the fact that there exists a constant $\de>0$ such that
\begin{align*}
    \sup_{p\in [2,4]}|r^{3+\de-\frac{2}{p}}\bc|_{p,S}\les\ep_0.
\end{align*}
This still holds true since we have $\frac{s+3}{2}>3$ for $s>3$. The other propositions still hold true since we only used $s>3$ in their proofs. 
\subsection{Conclusion}\label{ssecc3}
The proof of Theorems M0, M2 and M4 remain exactly the same since all the arguments also apply to the case $s>3$. Consequently, we have Theorem \ref{maintheorem} in this case. Hence, we deduce that Theorem \ref{maintheorem} holds true for $s\in(3,6]$.
\section{Proof of the case \texorpdfstring{$s>6$}{}}\label{secd}
In this appendix, we prove Theorem \ref{maintheorem} in the case $s>6$. Then, we compare the result to the peeling decay for curvature components obtained in \cite{Caciotta,knpeeling}.
\subsection{Fundamental norms}\label{ssecd1}
The definitions of $\RR$--norms for $\ac$ and $\bc$ are different from Section \ref{Rnorms}. We denote
\begin{align}
    s_0:=\min\left\{s,\frac{29}{4}\right\}.
\end{align}
We define for $q=0,1$:
\begin{align}
    \begin{split}\label{newbeta}
    \RR_q[\bc]&:=|u|^\frac{s-4}{2}\|r^2(r\nab)^q\bc\|_{2,\cuv},\\
    \RRb_q[\bc]&:=\|r^{3}|u|^{\frac{s-6}{2}}(r\nab)^q\bc\|_{2,\ucuv},\\
    \RR_q[\ac]&:=\|r^3|u|^\frac{s-6}{2}(r\nab)^q\ac\|_{2,\cuv},\\
    \RRb_q[\ac]&:=|u|^\frac{s-s_0}{2}\|r^{\frac{s_0}{2}}(r\nab)^q\ac\|_{2,\ucuv}.
    \end{split}
\end{align}
We also define:
\begin{equation}
    \RRb_0^S[\bc]:=\sup_\kk\sup_{p\in [2,4]}|r^{4-\frac{2}{p}}|u|^\frac{s-5}{2}\bc|_{p,S}.
\end{equation}
The norms $\RR_0^S[\ac]$ are defined as follows:
\begin{align}
\begin{split}\label{peelinga}
    \RRb_0^S[\ac]&:=\sup_\kk|r^{\frac{s+1}{2}}\ac|_{2,S},\qquad\qquad\quad\, s\in(6,7),\\
    \RR_0^S[\ac]&:=\sup_\kk|r^{4}(\log r)^{-\frac{1}{2}}\ac|_{2,S},\qquad\,\, s=7, \\
    \RR_0^S[\ac]&:=\sup_\kk|r^{4}|u|^\frac{s-7}{2}\ac|_{2,S},\qquad\quad\;\;\, s>7.
\end{split}
\end{align}
We also define:
\begin{equation}
    \RRb_0^S[\aac]:=\sup_\kk\sup_{p\in [2,4]} |r^{1-\frac{2}{p}}|u|^\frac{s+1}{2}\aac|_{p,S}.
\end{equation}
All the other norms are defined as in Section \ref{Rnorms}.
\subsection{Curvature estimates}\label{secd3}
In Section \ref{curvatureestimates}, we took respectively $p=s,4,2,0$, to estimate the Bianchi pairs $(\ac,\bc)$, $(\bc,(\rhoc,-\sic))$, $((\rhoc,\sic),\bbc)$ and $(\bbc,\aac)$. In the case $s>6$, we take respectively $p=6,4,2,0$ for the Bianchi pairs. Proceeding as in Section \ref{curvatureestimates}, we deduce from Propositions \ref{estab}--\ref{estba} that
\begin{equation}
\sum_{q=0}^1\left(\RR_q[\ac]+\RR_q[\bc]+\RRb_q[\bc]+\RR_q[\rhoc,\sic]+\RR_q[\bbc]+\RRb_q[\rhoc,\sic]+\RRb_q[\bbc]+\RRb_q[\aac]\right)\les\ep_0,
\end{equation}\label{oldest}
which implies from Proposition \ref{sobolevkn} that
\begin{equation}\label{C8}
    \RR_0^S[\bc]+\RR_0^S[\rhoc,\sic]+\RR_0^S[\bbc]\les\ep_0.
\end{equation}
In view of \eqref{C8}, it remains to estimate $\ac$ and $\aac$. Next, we state the following \emph{Teukolsky equation} first derived by Teukolsky in the linearized setting in \cite{teukolsky}.
\begin{proposition}
We have the following Teukolsky equations for $\a$ and $\aa$:
\begin{equation}
\begin{split}\label{teukolsky}
\Om\nab_3(\Om\nab_4(r\Om^2\a))+2r\Om^2d_2^*d_2(\Om^2\a)&=-\frac{2\Om}{r}\nab_3(r\Om^2\a)+\dko\left((\O_3^2+\Gag)\cdot(\O_4^2+\b)\right), \\
\Om\nab_4(\Om\nab_3(r\Om^2\aa))+2r\Om^2d_2^*d_2(\Om^2\aa)&=\frac{2\Om}{r}\nab_4(r\Om^2\aa)+\dko\left((\O_3^2+\Gab)\cdot(\O_4^2+\aa)\right).
\end{split}
\end{equation}
\end{proposition}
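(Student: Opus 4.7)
The plan is to derive the two Teukolsky equations by combining the Bianchi equations for the pairs $(\a,\b)$ and $(\aa,\bb)$ from Proposition \ref{standardbianchi} and eliminating the lower-spin component through differentiation. I focus on the equation for $\a$; the one for $\aa$ is obtained by the exact symmetric procedure, exchanging the roles of $\nab_3$ and $\nab_4$, which is why $\Gag$ in the error is replaced by $\Gab$.

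Writing $\nab\hot\b = -2d_2^*\b$ and $\sdiv\a = d_2\a$, the relevant Bianchi equations from Proposition \ref{standardbianchi} take the form
\begin{align*}
\nab_3\a + \tfrac{1}{2}\trchb\,\a &= -2d_2^*\b + h_1,\\
\nab_4\b + 2\trch\,\b &= d_2\a + h_2,
\end{align*}
where $h_1,h_2$ are the lower-order terms that can be read off directly and bounded schematically, using Lemma \ref{expressionGaR}, by $(\O_3^2+\Gag)\cdot(\b,\aa,\rho,\si,\dots)$. First I would compute $\Om\nab_4(r\Om^2\a)$ from the first Bianchi equation, using the transport identities $\Om\nab_4 r = (r/2)\overline{\Om\trch}$ (from Lemma \ref{dint}) and $\nab_4\Om = -2\om\Om$ to move the derivative past the conformal factor. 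This yields
\[
\Om\nab_4(r\Om^2\a) = -2 r\Om^3 d_2^*\b + (\text{linear corrections in }\a) + \dko(\text{err}).
\]
Then I would apply $\Om\nab_3$ to this expression and commute: use $[\Om\nab_3,d_2^*]$ from Corollary \ref{commutation} to turn $\Om\nab_3 d_2^*\b$ into $d_2^*(\Om\nab_3\b)$ plus a commutator, and then use $[\Om\nab_3,\Om\nab_4]$ (also from Corollary \ref{commutation}) to turn $\Om\nab_3\b$ into $\Om\nab_4\b$ modulo errors. Substituting the second Bianchi equation for $\Om\nab_4\b$ now produces the principal angular term $2r\Om^2 d_2^*d_2(\Om^2\a)$ after absorbing the conformal factors.

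The specific choice $r\Om^2$ is tuned to make the zeroth-order linear terms generated by $\trchb,\trch,\om,\omb$ collapse into the single right-hand side contribution $-\frac{2\Om}{r}\nab_3(r\Om^2\a)$. To see this, after differentiating, the surviving linear terms contain factors of $r\trch$, $r\trchb$, and $\om,\omb$ which must be regrouped using Proposition \ref{standardnull} (in particular the defining identities $\nab_4\log\Om=-2\om$, $\nab_3\log\Om=-2\omb$); all residual pieces either cancel against each other or contribute to the error. Finally, I would verify that all remaining terms -- from the Bianchi remainders $h_1,h_2$, from the commutators above, from the rescaling by $r\Om^2$, and from the mass-aspect substitutions -- fit into the schematic form $\dko((\O_3^2+\Gag)\cdot(\O_4^2+\b))$, using Lemma \ref{expressionGaR} to classify each linearized quantity as $\Gag$ or $\Gab$ and to check the derivative count.

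The main obstacle is not conceptual but combinatorial: the double commutation $[\Om\nab_3,\Om\nab_4]$ together with $[\Om\nab_{3,4},d_2^*]$ produces many lower-order terms, and tracking them carefully to confirm both the precise linear coefficient $-2/r$ on the right and the schematic form of the error demands meticulous bookkeeping. The delicate part is verifying that the choice of conformal weight $r\Om^2$ really does eliminate all other first-order corrections; this relies on the exact coefficient of $\trch$ in the second Bianchi equation (namely $2$, not $3/2$ as for $\rho$ or $1$ as for $\bb$), which is precisely what distinguishes $\a$ as an ``extreme'' curvature component and produces a clean Teukolsky structure.
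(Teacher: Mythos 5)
Your overall strategy (cross-differentiating the Bianchi pair $(\a,\b)$ with the weight $r\Om^2$ and pushing everything else into the schematic error) is the standard route, and it attempts more than the paper does: the paper's proof of \eqref{teukolsky} is just a citation of Propositions 3.4.6 and 3.4.7 of \cite{DHRT}. However, as written your derivation contains a structural error. First, there is no Bianchi equation for $\nab_4\a$: Proposition \ref{standardbianchi} gives only a $\nab_3$ equation for $\a$, so your opening identity $\Om\nab_4(r\Om^2\a)=-2r\Om^3 d_2^*\b+\dots$ cannot be obtained from the first Bianchi equation; what that equation actually yields is $\Om\nab_3(r\Om^2\a)=-2r\Om^3 d_2^*\b+\lot$, and it is there that one needs $e_3(r)$, $\nab_3\Om=-2\omb\,\Om$ and the cancellation of the $4\omb\,\a$ term by the $\Om^2$ weight (your citations of $e_4(r)$ and $\nab_4\Om=-2\om\Om$ are the mirror identities). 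Second, and more seriously, you cannot ``use $[\Om\nab_3,\Om\nab_4]$ to turn $\Om\nab_3\b$ into $\Om\nab_4\b$ modulo errors'': a commutator exchanges the order of two derivatives applied to the same quantity, it does not relate the single derivatives $\nab_3\b$ and $\nab_4\b$, which have different principal content ($\nab_3\b\sim\nab\rho+{}^*\nab\si$, while $\nab_4\b\sim\sdiv\a$). If at that point you substitute the true $\nab_3\b$ Bianchi equation, the principal term becomes $d_2^*(\nab\rho+{}^*\nab\si)$, i.e.\ two angular derivatives of mid curvature with no small prefactor, which is neither the desired $d_2^*d_2\a$ nor absorbable into $\dko\big((\O_3^2+\Gag)\cdot(\O_4^2+\b)\big)$, since that error allows at most one derivative of a curvature component multiplied by a decaying factor. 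So the proof as submitted does not go through.

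The repair is the standard derivation: start from $\Om\nab_3(r\Om^2\a)=-2r\Om^3 d_2^*\b+\lot$, apply $\Om\nab_4$, commute $\Om\nab_4$ past $d_2^*$ using Corollary \ref{commutation}, substitute the $\nab_4\b$ Bianchi equation (whose coefficient $2\trch$ is indeed what makes the structure close, as you observe) to produce $2r\Om^2 d_2^*d_2(\Om^2\a)$ together with the linear term $-\frac{2\Om}{r}\nab_3(r\Om^2\a)$, and only at the end use $[\Om\nab_3,\Om\nab_4]$ acting on $r\Om^2\a$ to pass from $\Om\nab_4\Om\nab_3$ to the stated order $\Om\nab_3\Om\nab_4$, checking that this commutator (which contains $\Gab\cdot\nab$ acting on $r\Om^2\a$) is admissible in the error; the $\aa$ equation then follows by exchanging $3\leftrightarrow4$ and $\Gag\leftrightarrow\Gab$, as you say. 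With that reordering your bookkeeping plan is reasonable, but the chain of identities you actually wrote would fail at the two points above.
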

\begin{proof}
    See for example Propositions 3.4.6 and 3.4.7 in \cite{DHRT}. 
\end{proof}
\begin{lemma}\label{teulm}
We define the following quantities:
\begin{align}
    \begin{split}
        \amr&:=\frac{1}{r^4}\nab_4(r^5\a)\in \sk_2,\qquad\quad \as:=rd_2\a \in \sk_1,\\
        \aamr&:=\frac{1}{r^4}\nab_3(r^5\aa)\in \sk_2,\qquad\quad \aas:=rd_2\aa \in \sk_1.
    \end{split}
\end{align}
Then, we have the following equations:
\begin{align}
    \begin{split}\label{teu}
    \nab_3\amr&=-2d_2^*\as+\frac{4\a}{r}+\dko\left((\O_2^1+\Gag)\c(\O_4^2+\bc)\right),\\
    \nab_4\as+\frac{5}{2}\trch\,\as&=d_2\amr+\dko\left((\O_2^1+\Gag)\c(\O_4^2+\bc)\right),
    \end{split}
\end{align}
and
\begin{align}
\begin{split}\label{teuaa}
\nab_4\aamr&=-2d_2^*\aas+\frac{4\aa}{r}+\dko\left((\O_2^1+\Gab)\c(\O_4^2+\aac)\right),\\
\nab_3\aas+\frac{5}{2}\trchb\,\aas&=d_2\aamr+\dko\left((\O_2^1+\Gab)\c(\O_4^2+\aac)\right).
\end{split}
\end{align}
\end{lemma}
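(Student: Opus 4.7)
The plan is to derive the system \eqref{teu} directly from the standard Bianchi equations of Proposition \ref{standardbianchi} together with the commutator identities of Lemma \ref{commck}, rather than from the Teukolsky equation \eqref{teukolsky} (which itself can be seen as the compatibility condition of the first-order system). The analog equations \eqref{teuaa} for $(\aamr, \aas)$ then follow by the $e_3 \leftrightarrow e_4$ symmetry applied to the underlying Bianchi equations, exchanging the roles of $\chi/\chib$, $\eta/\etab$, $\om/\omb$, $\b/\bb$ and $\a/\aa$; we therefore focus on \eqref{teu}.

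For the first equation, the starting point is the identity $\amr = \nab_4\a + \frac{5e_4(r)}{r}\a$, obtained by expanding the definition of $\amr$ and using Lemma \ref{dint} (which gives $e_4(r) = \overline{\Om\trch}\,r/(2\Om)$). Applying $\nab_3$, I commute $\nab_3\nab_4\a = \nab_4\nab_3\a + [\nab_3,\nab_4]\a$ and then substitute the Bianchi equation $\nab_3\a = -2d_2^*\b - \tfrac12\trchb\,\a + h[\a_3]$. Applying $\nab_4$ to this equation produces $-2\nab_4(d_2^*\b) = -2d_2^*(\nab_4\b) - 2[\nab_4, d_2^*]\b$; substituting the companion Bianchi equation $\nab_4\b + 2\trch\,\b = d_2\a + h[\b_4]$ yields a principal elliptic contribution $-2d_2^*d_2\a$. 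Since the horizontal gradient of the area radius vanishes identically on each sphere $S(u,\ub)$, I have the identity $d_2^*(rd_2\a) = r\,d_2^*(d_2\a)$, which rewrites this principal term in terms of $d_2^*\as$. All remaining contributions --- from $[\nab_3,\nab_4]\a$, $[\nab_4, d_2^*]\b$, $-\tfrac12\nab_4(\trchb\,\a)$, $\nab_4 h[\a_3]$, $-2d_2^* h[\b_4]$, the derivative $\nab_3(e_4(r)/r)\cdot\a$ and the extra $(5e_4(r)/r)\nab_3\a$ term --- are collected either into the explicit linear correction $\frac{4\a}{r}$ (after using $\trch\trchb/2 + 2\rho \sim 4/r^2$ at the Kerr level) or into the quadratic error $\dko\big((\O_2^1+\Gag)\c(\O_4^2+\bc)\big)$ upon decomposing each Ricci coefficient via Lemma \ref{expressionGaR} into its Kerr background in $\O_2^1$ plus a $\Gag$-perturbation, and writing $\b = \O_4^2 + \bc$.

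For the second equation I expand $\nab_4\as = e_4(r)\,d_2\a + r\,\nab_4(d_2\a)$ and commute $\nab_4(d_2\a) = d_2(\nab_4\a) + [\nab_4, d_2]\a$ using Lemma \ref{commck}; the leading term of $[\nab_4, d_2]\a$ arises from both $\nab_4\ga^{AB} = -2\chi^{AB}$ and $[\nab_4,\nab_A]$, producing a $\trch\,d_2\a$-multiple. Substituting $\nab_4\a = \amr - \frac{5e_4(r)}{r}\a$ and using $d_2(\trch\a) = \trch\,d_2\a + (\nab\trch)\cdot\a$ reorganises the identity into the form $\nab_4\as + \tfrac{5}{2}\trch\,\as = d_2\amr + (\textrm{error})$, with the coefficient $5/2$ emerging from the cancellation between the $\trch\,d_2\a$ contributions of the commutator, of $d_2\nab_4\a$, and of $e_4(r)d_2\a$. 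The error is again shown to fit the schematic form $\dko\big((\O_2^1+\Gag)\c(\O_4^2+\bc)\big)$ by using Lemma \ref{expressionGaR} to split the Ricci coefficients and $\b$.

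The main obstacle will be bookkeeping rather than any conceptual difficulty: one must carefully track every contribution coming from (i) the commutators $[\nab_3,\nab_4]$, $[\nab_4, d_2]$ and $[\nab_4, d_2^*]$, each of which carries the full tensor $F$ of Lemma \ref{commck} built from $\eta,\etab,\ze,\b,\bb,\om,\omb,\chi,\chib$; (ii) the lapse $\Om$, which differs from $1/2$ by a $\Gag$-quantity and affects every occurrence of $e_4(r)/r = \overline{\Om\trch}/(2\Om)$; and (iii) the non-vanishing difference $\overline{\trch}-\trch \in \Gag$ that distinguishes $e_4(r)/r$ from $\trch/2$. The delicate point is to verify that after all cancellations the residual pieces genuinely collapse into the tight schematic form $\dko\big((\O_2^1+\Gag)\c(\O_4^2+\bc)\big)$ and do not involve $\O_3^2\cdot\aac$ or $\Gab\cdot\a$ terms, which would spoil the subsequent $r^p$--weighted estimates applied to the pair $(\amr,\as)$.
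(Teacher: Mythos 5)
Your route --- deriving \eqref{teu} directly from the Bianchi pair $(\a,\b)$ of Proposition \ref{standardbianchi} together with the commutation identities of Lemma \ref{commck}, and obtaining \eqref{teuaa} by the $e_3\leftrightarrow e_4$ exchange --- is exactly the computation that underlies the paper's one-line proof (the paper simply cites Lemma C.3 of \cite{ShenMink}), so re-deriving it is legitimate and makes the argument self-contained. But as written there are two concrete problems. First, your starting identity is off by a factor of $r$: from the definition, $\amr=\frac{1}{r^4}\nab_4(r^5\a)=r\nab_4\a+5e_4(r)\a$, not $\nab_4\a+\frac{5e_4(r)}{r}\a$, and correspondingly one must substitute $r\nab_4\a=\amr-5e_4(r)\a$ in the second equation. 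With your version the claimed equations cannot balance (e.g.\ $\nab_3\nab_4\a$ cannot match $-2d_2^*(rd_2\a)$), and the slip propagates through both derivations.

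Second, and more importantly, the heart of the first equation is a cancellation that you do not establish and in fact mis-describe. After substituting $\nab_3\a=-\tfrac12\trchb\,\a-2d_2^*\b+\dots$ and then $\nab_4\b=-2\trch\,\b+d_2\a+\dots$, the commutator $r[\nab_4,d_2^*]\b$ contributes $2\,d_2^*\b$ and the $-2\trch\,\b$ substitution contributes $8\,d_2^*\b$ at leading order, and these must cancel \emph{exactly} against the $-10\,d_2^*\b$ coming from $5e_4(r)\nab_3\a$; this is precisely what the weight $r^5$ is engineered for. These terms are neither part of $\frac{4\a}{r}$ nor quadratic, so they cannot simply be ``collected'' into the error as you assert: any uncancelled $O(1/r)$ multiple of $d_2^*\bc$ would violate the schema $\dko\big((\O_2^1+\Gag)\c(\O_4^2+\bc)\big)$ and ruin the subsequent $r^p$--estimates. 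One must check that the surviving $\b$--coefficients are of size $\O_2^1+\Gag$, e.g.\ differences such as $\frac{r\trch}{2}-e_4(r)=\frac{r}{2\Om}\big(\Om\trch-\overline{\Om\trch}\big)$, and verify the analogous exact background balance behind the coefficient $\frac52\trch$ in the second equation. Conversely, your worry about $\Gab\c\a$--type contributions is somewhat misplaced: terms such as $4\omb\,\a$ (with $\omb\in\O_2^1+\Gab$), $(\ze+\etab)\c r\nab_4\a$ and $(\O_2^1+\Gag)\c\dkb\a$ \emph{do} arise; they are tolerated by the paper's convention of absorbing comparably or better decaying terms (Remark \ref{ignoreremark}) and are in any case accommodated by the quadratic terms already present in Lemma \ref{keyidentity}. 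Note also that for the $(\aamr,\aas)$ system the stated schema keeps $\aac$ rather than $\bbc$ in the second factor, so the naive $e_3\leftrightarrow e_4$ symmetry does not literally reproduce \eqref{teuaa}; the error class is adjusted to the worst-decaying linearized quantity. In short, the plan is the right one, but the factor-of-$r$ identity must be corrected and the $d_2^*\b$ cancellation (the raison d'\^etre of the $r^5$ weight) must actually be carried out before the error terms can be claimed to fit the stated schematic form.
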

\begin{proof}
    It follows directly from Lemma C.3 in \cite{ShenMink}.\footnote{Note that $\Gaa$ in \cite{ShenMink} decays better than $\O_2^1+\Gab$.}
\end{proof}
\begin{corollary}\label{coroteu}
We have the following linearized equations:
\begin{align*}
    \nab_3\amrc&=-2d_2^*\asc+\frac{4\ac}{r}+\fl[\a]+\err[\a],\\
    \nab_4\asc+\frac{5}{2}\trch\,\asc&=d_2\amrc+\fl[\a]+\err[\a],\\
    \fl[\a]&=\O_4^2\c\Gag^{(1)}+\O_2^1\c\bc^{(1)},\\
    \err[\a]&=\Gag^{(1)}\c\bc+\Gag\c\bc^{(1)},
\end{align*}
and
\begin{align*}
    \nab_4\aamrc&=-2d_2^*\aasc+\frac{4\aasc}{r}+\fl[\aa]+\err[\aa],\\
    \nab_3\aasc+\frac{5}{2}\trchb\,\aasc&=d_2\aamrc+\fl[\aa]+\err[\aa],\\
    \fl[\aa]&=\O_4^2\c\Gab^{(1)}+\O_2^1\c\aac^{(1)},\\
    \err[\aa]&=\Gab^{(1)}\c\aac+\Gab\c\aac^{(1)}.
\end{align*}
\end{corollary}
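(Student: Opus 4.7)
The plan is to derive Corollary \ref{coroteu} directly from Lemma \ref{teulm} by subtracting the corresponding identities satisfied by the Kerr pull-back quantities $\amr_K$, $\as_K$, $\aamr_K$, $\aas_K$, and then organizing the resulting difference terms according to the $\fl/\err$ convention of Proposition \ref{nullstructure}. Since Lemma \ref{teulm} is a consequence of the Teukolsky equations and only uses the underlying null structure, the same derivation applied with all operators and quantities replaced by their $K$-versions yields
\begin{align*}
(\nab_K)_3\amr_K &= -2(d_2^*)_K\as_K+\tfrac{4\a_K}{r_K}+\dko_K\bigl(\O_2^1\c\O_4^2\bigr),\\
(\nab_K)_4\as_K+\tfrac{5}{2}(\trch)_K\as_K &= (d_2)_K\amr_K+\dko_K\bigl(\O_2^1\c\O_4^2\bigr),
\end{align*}
and similarly for $\aamr_K$, $\aas_K$. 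Subtracting these from \eqref{teu} and \eqref{teuaa} gives transport equations for $\amrc$, $\asc$, $\aamrc$, $\aasc$ whose right-hand sides are sums of three kinds of discrepancies: (i) frame mismatches between $\nab_3,\nab_4$ and their Kerr counterparts, (ii) operator mismatches between $d_2,d_2^*$ and $(d_2)_K,(d_2^*)_K$, and (iii) the differences of the schematic source terms $(\O_2^1+\Gag)\c(\O_4^2+\bc)$ and $(\O_2^1+\Gab)\c(\O_4^2+\aac)$.

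For (i), Lemma \ref{newuseful} (and its proof, which uses Lemma \ref{checke4e3}) shows that $\nab_3-\nab_K{}_{(e_3)_K}$ and $\nab_4-\nab_K{}_{(e_4)_K}$, when applied to a quantity of type $\O^p_q$, produce terms absorbed into $\Gag\c\O^p_{q}$, $\Jc\c\O^p_q$, $\Jbc\c\O^p_q$, $\Lc\c\O^p_q$, all of which fall into the desired $\fl$ or $\err$ schematic after noting that $\Jc,\Jbc,\Lc\in\Gag^{(1)}\cup\Gab^{(1)}$ (cf.\ Definition \ref{gammag}). For (ii), Proposition \ref{useful} gives
\[
(d_2-d_2{}_K)\amr_K=\O^p_q\c\Lc,\qquad (d_2^*-d_2^*{}_K)\as_K=\O^p_q\c\Lc,
\]
producing contributions of type $\O_4^2\c\Gag^{(1)}$ (resp.\ $\O_4^2\c\Gab^{(1)}$), which fit exactly into $\fl[\a]$ (resp.\ $\fl[\aa]$). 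For (iii), the schematic computation
\begin{align*}
(\O_2^1+\Gag)\c(\O_4^2+\bc)-\O_2^1\c\O_4^2 &=\O_2^1\c\bc+\Gag\c\O_4^2+\Gag\c\bc
\end{align*}
splits naturally into a linear-in-curvature piece $\O_2^1\c\bc^{(1)}$, a linear-in-Ricci piece $\O_4^2\c\Gag^{(1)}$ (absorbed into $\fl[\a]$), and a genuinely nonlinear piece $\Gag\c\bc+\Gag^{(1)}\c\bc$, which is $\err[\a]$; the derivative $\dko$ is then distributed using Corollary \ref{newuse} and absorbed into the $(\cdot)^{(1)}$ superscripts. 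The analogous splitting with $\Gag\leftrightarrow\Gab$ and $\bc\leftrightarrow\aac$ handles the $\aa$ system.

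The routine part is bookkeeping: one writes $\amr=\amr_K+\amrc$ and similarly for the other four quantities, expands, and collects. The main obstacle, as usual in this paper, is ensuring the correct classification of each residual term into $\fl$ (linear, with extra decay in $M/r$) versus $\err$ (nonlinear in the checked quantities), since several terms of the form $\Gag\c\O^p_q$ are borderline and must be matched against the specific weights $\O_3^2,\O_4^2,\O_2^1$ dictated by the statement. Once this accounting is carried out, using Lemma \ref{expressionGaR} to replace $\Om,\trch,\trchb,\eta,\etab,\hch,\hchb$ by $\O$-plus-$\Gag$ or $\Gab$ expressions wherever they appear as coefficients in \eqref{teu} and \eqref{teuaa}, the stated $\fl[\a],\err[\a],\fl[\aa],\err[\aa]$ emerge and the corollary follows.
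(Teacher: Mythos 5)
Your proposal is correct and follows essentially the same route as the paper, whose proof of Corollary \ref{coroteu} is precisely to linearize the equations of Lemma \ref{teulm} using the operator-difference formulas of Proposition \ref{useful} (together with the schematic calculus of Lemma \ref{newuseful}, Corollary \ref{newuse} and Lemma \ref{expressionGaR}); your write-up simply makes the bookkeeping of the frame, operator and source-term discrepancies explicit.
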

\begin{proof}
    It follows directly from Lemmas \ref{teulm} and \ref{useful}.
\end{proof}
\begin{lemma}\label{keyidentity}
We have the following identity for any real number $p$:
\begin{align}
\begin{split}\label{integralkey}
&\Div(r^p|\amrc|^2e_3)+2\Div(r^p|\asc|^2e_4)+(p+2)r^{p-1}|\amrc|^2+2(8-p)r^{p-1}|\asc|^2\\
=&4r^pd_1(\amrc\c\asc)+8r^{p-1}\amrc\c\ac+2r^p(\amrc+2\asc)\c(\fl[\a]+\err[\a])+r^p(\O_2^1+\Gab)\c(|\amrc|^2,|\asc|^2).
\end{split}
\end{align}
\end{lemma}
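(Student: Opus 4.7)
\medskip\noindent
\textbf{Proof proposal.} My plan is to derive \eqref{integralkey} by applying the general Bianchi--pair identity \eqref{div} of Lemma \ref{keypoint} to the system of Corollary \ref{coroteu}, and then to absorb the off--diagonal source term $\tfrac{4\ac}{r}$ in the $\nab_3\amrc$ equation into an $e_4$--divergence involving $|\ac|^2$. Concretely, the equations of Corollary \ref{coroteu} fit the template \eqref{bianchi1} with $k=2$, $a_{(1)}=0$, $a_{(2)}=5/2$, $\psi_{(1)}=\amrc\in\sk_2$, $\psi_{(2)}=\asc\in\sk_1$, and with sources
\[
h_{(1)}=\tfrac{4\ac}{r}+\fl[\a]+\err[\a],\qquad h_{(2)}=\fl[\a]+\err[\a].
\]
Plugging into \eqref{div} and using $\trchb=-\tfrac{2}{r}+(\Gag+\O_2^1)$, $\trch=\tfrac{2}{r}+(\Gag+\O_2^1)$, $\om,\omb\in\O_2^1+\Gab$, together with \eqref{e3r}--\eqref{e4r}, produces the identity
\begin{align*}
&\Div(r^p|\amrc|^2 e_3)+2\Div(r^p|\asc|^2 e_4)+(p+2)r^{p-1}|\amrc|^2+2(8-p)r^{p-1}|\asc|^2\\
&=4r^p d_1(\amrc\cdot\asc)+8r^{p-1}\amrc\cdot\ac+2r^p\amrc\cdot(\fl[\a]+\err[\a])+4r^p\asc\cdot(\fl[\a]+\err[\a])\\
&\quad+r^p(\O_2^1+\Gab)\cdot(|\amrc|^2,|\asc|^2).
\end{align*}

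\medskip\noindent
The next step is to rewrite $8r^{p-1}\amrc\cdot\ac$. From the definition $\amr=r^{-4}\nab_4(r^5\a)$ I get $\amr=r\nab_4\a+5r^{-1}e_4(r)\,\a$; linearizing and using Lemma \ref{newuseful} to handle the difference between $\nab_4\a_K$ and $(\nab_K)_{(e_4)_K}\a_K$ gives
\[
\amrc=r\nab_4\ac+\tfrac{5r}{2}\trch\,\ac+\O_5^3\cdot\Gag^{(1)}+r^2(\O_2^1+\Gag)\cdot\ac.
\]
Pairing with $\ac$ and writing $r\nab_4\ac\cdot\ac=\tfrac{r}{2}e_4(|\ac|^2)$, I obtain
\[
8r^{p-1}\amrc\cdot\ac=4r^p e_4(|\ac|^2)+20r^{p-1}\cdot r\trch\,|\ac|^2+\O_{5-p}^3\cdot\Gag^{(1)}\cdot\ac+r^p(\O_2^1+\Gab)\cdot|\ac|^2.
\]
Now the scalar divergence formula used implicitly in Lemma \ref{keypoint} yields
\[
\Div(r^p|\ac|^2 e_4)=r^p e_4(|\ac|^2)+(p+2)r^{p-1}|\ac|^2+r^p(\O_2^1+\Gab)\cdot|\ac|^2,
\]
and substituting this gives
\[
8r^{p-1}\amrc\cdot\ac=4\Div(r^p|\ac|^2 e_4)+4(8-p)r^{p-1}|\ac|^2+\O_{5-p}^3\cdot\Gag^{(1)}\cdot\ac+r^p(\O_2^1+\Gab)\cdot|\ac|^2.
\]

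\medskip\noindent
Transferring the divergence and bulk $|\ac|^2$--terms to the left--hand side and using
\[
2\Div(r^p|\asc|^2 e_4)-4\Div(r^p|\ac|^2 e_4)=2\Div\bigl(r^p(|\asc|^2-2|\ac|^2)e_4\bigr),
\]
together with $2(8-p)r^{p-1}|\asc|^2-4(8-p)r^{p-1}|\ac|^2=2(8-p)r^{p-1}(|\asc|^2-2|\ac|^2)$, reproduces exactly the structure of \eqref{integralkey}, with the remaining right--hand side packaged into the schematic classes stated there.

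\medskip\noindent
\textbf{Main obstacle.} The only delicate point is Step 2: justifying the expansion $\amrc=r\nab_4\ac+\tfrac{5r}{2}\trch\,\ac+\O_5^3\cdot\Gag^{(1)}+\ldots$ with the correct schematic remainder. This requires expanding $\amr_K=\Phi^*(\amr_{Kerr})$ and carefully comparing $\nab_4$ to $(\nab_K)_{(e_4)_K}$ via Lemma \ref{newuseful}, so that the extra Kerr--correction term lands exactly in $\O_{5-p}^3\cdot\Gag^{(1)}\cdot\ac$. Everything else is an application of \eqref{div}, the leading asymptotics of $\trch,\trchb,\om,\omb$, and the identities \eqref{e3r}--\eqref{e4r}.
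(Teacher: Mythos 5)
Your proposal is correct and follows essentially the same route as the paper: apply Lemma \ref{keypoint} to the pair $(\amrc,\asc)$ with $k=2$, $a_{(1)}=0$, $a_{(2)}=\tfrac{5}{2}$, then absorb the coupling term $8r^{p-1}\amrc\cdot\ac$ into $4\Div(r^p|\ac|^2e_4)+4(8-p)r^{p-1}|\ac|^2$ using the relation $\amrc=\tfrac{1}{r^4}\nab_4(r^5\ac)+\O_4^3\cdot\Gag^{(1)}$ (the paper's \eqref{totallye4a}); the only difference is cosmetic — the paper expands $\Div(r^p|\ac|^2e_4)$ and reads off the $\amrc\cdot\ac$ term, while you expand $\amrc\cdot\ac$ and substitute the scalar divergence formula, which is the same computation in reverse. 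Note only that your intermediate remainder $\O_5^3\cdot\Gag^{(1)}$ should be $\O_4^3\cdot\Gag^{(1)}$ (Kerr corrections from $\Jc$ and $\rc$ cost a power of $r$), but your final class $\O_{5-p}^3\cdot\Gag^{(1)}\cdot\ac$ is exactly consistent with that.
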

\begin{proof}
Applying Lemma \ref{keypoint} with $\psi_{(1)}=\amrc$, $\psi_{(2)}=\asc$, $a_{(1)}=0$, $a_{(2)}=\frac{5}{2}$, $h_{(1)}=\frac{4\ac}{r}+\fl[\a]+\err[\a]$, $h_{(2)}=\fl[\a]+\err[\a]$ and $k=2$, we infer
\begin{align*}
    &\Div(r^p|\amrc|^2e_3)+2\Div(r^p|\asc|^2e_4)-\left(1+\frac{p}{2}\right)r^p\trchb|\amrc|^2+2\left(4-\frac{p}{2}\right)r^p\trch|\asc|^2\\
    =&4r^pd_1(\amrc\c\asc)+8r^{p-1}\amrc\c\ac+2r^p(\amrc+2\asc)\c(\fl[\a]+\err[\a])+r^p(\O_2^1+\Gab)\c(|\amrc|^2,|\asc|^2),
\end{align*}
which concludes the proof of Lemma \ref{keyidentity}.
\end{proof}
We recall the following Poincar\'e inequality.
\begin{proposition}\label{poincare}
    For any $\a\in\sk_2$, we have the following Poincar\'e inequality:
    \begin{align*}
        |rd_2\a|^2_{2,S} \geq c_2 |\a|^2_{2,S},
    \end{align*}
    for sphere $S=S(u,\ub)$, where $c_2$ is a constant satisfying:
    \begin{equation}\label{poinc2}
        c_2=2-O(\ep).
    \end{equation}
\end{proposition}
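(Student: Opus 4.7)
The plan is to first establish the sharp inequality on the round sphere $\mathbb{S}^2_r$ of area radius $r$ (where the constant is exactly $4$), and then transfer the estimate to the perturbed sphere $S=S(u,\ub)$ using the bootstrap control $\OO \leq \ep$ on the metric perturbation.

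On the round sphere, the strategy is to combine the identity $d_2^* d_2 = -\tfrac{1}{2}\De_2 + \mathbf{K}$ from \eqref{dddd} (with $\mathbf{K} = 1/r^2$) with a spectral-gap computation. Integration by parts yields
\begin{equation*}
\int_{\mathbb{S}^2_r}|d_2 U|^2 \,=\, \frac{1}{2}\int_{\mathbb{S}^2_r}|\nab U|^2 + \frac{1}{r^2}\int_{\mathbb{S}^2_r}|U|^2.
\end{equation*}
The sharp lower bound for $\int|d_2 U|^2 / \int|U|^2$ is then obtained by spectral decomposition of $d_2^* d_2$ on $\sfr_2$. Using Hodge theory, every $U\in\sfr_2$ on $\mathbb{S}^2_r$ can be written as $U = d_2^*\xi$ for a $1$-form $\xi$ orthogonal to the (six-dimensional) kernel of $d_2^*$, which consists of conformal Killing fields corresponding to the $\ell=0,1$ scalar spherical harmonics. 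The second identity in \eqref{dddd}, $d_2 d_2^* = -\tfrac{1}{2}(\De_1 + \mathbf{K})$, together with the commutator formula $\De_1 \nab f = \nab \De f + K \nab f$ and its Hodge-dual analogue, reduces the calculation of the spectrum of $d_2 d_2^*$ on $\sfr_1$ to that of the scalar Laplacian acting on $f$ and $g$ in $\xi = d_1^*(f,g)$; the lowest mode contributing nontrivially is $\ell=2$, yielding the sharp eigenvalue and therefore $\int|rd_2 U|^2 \geq 4 \int |U|^2$.

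To transfer to the perturbed sphere $S(u,\ub)$, the plan is to use the identification map $\Phi:\kk\to\MM_{Kerr}$ of Section \ref{bootregion} to pull back both the Kerr round-sphere metric and the Hodge operators. By Lemma \ref{expressionGaR} and the bootstrap assumption $\OO\leq\ep$, the linearized metric component $\gac$, the connection difference $\Lc$, and the Gauss curvature difference $\widecheck{\mathbf{K}}$ all belong to the $\Gag$-hierarchy and carry a factor of $\ep$. Writing the two quadratic forms $\int_S|rd_2 U|^2$ and $\int_S|U|^2$ in the $(u,\ub,x^A)$ coordinates and expanding in the Kerr background plus perturbation, every correction is of the form (round-sphere quadratic form in $U$)$\times(\Gag\text{-terms})$ and is therefore bounded by $O(\ep) \int_S|U|^2$. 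The round-sphere inequality for $\a_K$, together with these $O(\ep)$ corrections, delivers $c_2 = 4 - O(\ep)$.

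The main obstacle is the sharp constant on the round sphere: one needs to carefully exclude the conformal Killing modes (which sit in $\ker d_2^*$) to identify $\ell=2$ as the lowest nontrivial mode and pin down the eigenvalue $4/r^2$ exactly. The perturbation step, by contrast, is essentially bookkeeping, since the relevant perturbations are already controlled in $L^\infty$ by Lemma \ref{estGagba} and the definition of $\OO$; the only mild subtlety is handling the difference between $\nab$ and $\nab_K$ inside $d_2$, which is absorbed into the $O(\ep)$ correction through the tensor $\Lc$.
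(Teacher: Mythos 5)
The overall structure you propose (sharp inequality on the round sphere, then an $O(\ep)$ comparison using $\gac$, $\Lc$ and Lemma \ref{estGagba}) is reasonable, and the paper itself offers no proof here, only a citation of Proposition C.1 of \cite{ShenMink}. The genuine gap is the sharp round-sphere constant: you assert it rather than compute it, and with the conventions of this paper it equals $2$, not $4$. With $(d_2U)_A=\nab^BU_{AB}$ and $d_2^*\xi=-\f12\nab\hot\xi$, the identity $d_2d_2^*=-\f12(\De_1+\mathbf{K})$ from \eqref{dddd} together with the Bochner commutation you invoke gives, on the unit round sphere, that the eigenvalues of $d_2^*d_2$ on $\sk_2$ are $\f12\big(\ell(\ell+1)-2\big)=\f12(\ell-1)(\ell+2)$ for $\ell\ge 2$ (the $\ell\le1$ potentials span exactly the conformal Killing kernel of $d_2^*$ you identify), so the lowest eigenvalue is $2$. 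An explicit witness confirms this: for an $\ell=2$ spherical harmonic $f$, the traceless Hessian $U_{AB}=\nab_A\nab_Bf-\f12\ga_{AB}\De f$ satisfies $d_2U=-2\nab f$, and for $f=\f12(3\cos^2\th-1)$ one finds $U_{\th\th}=\frac{3}{2}\sin^2\th$, $U_{\phi\phi}=-\frac{3}{2}\sin^4\th$, $U_{\th\phi}=0$, whence $\int_{\mathbb{S}^2}|d_2U|^2=2\int_{\mathbb{S}^2}|U|^2$ exactly. Pulling this tensor back to $S(u,\ub)$, whose geometry is $O(\ep)$-close to round under the bootstrap assumptions, yields $\a\in\sk_2$ with $|rd_2\a|^2_{L^2(S)}=(2+O(\ep))\,|\a|^2_{L^2(S)}$, so the argument you sketch cannot deliver $c_2=4-O(\ep)$; at best it gives $c_2=2-O(\ep)$.

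This missing factor of $2$ is not cosmetic. The value $4$ is the lowest eigenvalue of $2d_2^*d_2=-\De_2+2\mathbf{K}$, i.e. it corresponds to the inequality $2|rd_2\a|^2_{L^2(S)}\ge(4-O(\ep))|\a|^2_{L^2(S)}$, or to an operator which, like $d_1$, carries both a divergence and a curl component; for the single-component divergence $d_2$ defined in this paper the sharp constant is $2$. Before running the perturbation step (which is fine in spirit: the corrections through $\gac$, $\Lc$ and the difference $\nab-\nab_K$ are indeed $O(\ep)$ relative errors of the two quadratic forms), you must either locate this factor of $2$ in the normalization used in \cite{ShenMink} or settle for the constant $2-O(\ep)$ — and note that the size of $c_2$ matters downstream, since the proof of Proposition \ref{keyint} invokes $c_2\ge\frac{7}{2}$ to absorb the $-2|\ac|^2$ term produced in Lemma \ref{keyidentity}. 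Your spectral reduction via $\xi=d_1^*(f,g)$ and the Bochner formula is the right tool; it simply has to be carried to the end, where it produces $\f12(\ell-1)(\ell+2)$ rather than the value you claim.
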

\begin{proof}
See for example Proposition 9.3.2 in \cite{DHRT}.
\end{proof}
\begin{proposition}\label{keyint}
We have the following estimate:
\begin{align*}
    \int_{\cuv}r^{s_0}|\amrc|^2 +\int_{\ucuv}r^{s_0}|\dko\ac|^2+\int_V r^{s_0-1}\left(|\amrc|^2+|\dko\ac|^2\right)\les\frac{\ep_0^2}{|u|^{s-s_0}}.
\end{align*}
\end{proposition}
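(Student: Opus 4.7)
The strategy is to apply Lemma \ref{keyidentity} with weight $p=s_0$, integrate over $V(u,\ub)$, apply the divergence theorem, and exploit the sign of the bulk coefficients together with the Poincar\'e inequality. Note first that since $s_0\leq \frac{15}{2}<8$, one has $p+2>0$ and $2(8-p)>0$, so both bulk coefficients in \eqref{integralkey} are positive. Moreover, by Proposition \ref{poincare}, $|\asc|^2\geq c_2|\ac|^2$ with $c_2=4-O(\ep)>2$, so by interpolating one obtains
\begin{equation*}
|\asc|^2-2|\ac|^2\;\geq\; \tfrac{1}{2}|\asc|^2+\bigl(\tfrac{1}{2}c_2-2\bigr)|\ac|^2\;\geq\; c\bigl(|\asc|^2+|\ac|^2\bigr),
\end{equation*}
for a universal constant $c>0$. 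This is what converts the $|\asc|^2-2|\ac|^2$ combination appearing in the divergence identity into usable control of $|\asc|^2$ on $\ucuv$ and of both $|\asc|^2$ and $|\ac|^2$ in the bulk. The boundary contributions of $\Div(r^p|\amrc|^2 e_3)$ and $2\Div(r^p(|\asc|^2-2|\ac|^2)e_4)$ produce, via $e_3(r)\sim -1$ and $e_4(r)\sim 1$ combined with \eqref{e3r}--\eqref{e4r}, the fluxes $\int_{\cuv}r^{s_0}|\amrc|^2$ and $\int_{\ucuv}r^{s_0}|\asc|^2$ modulo terms $\O_2^1\cdot(|\amrc|^2,|\asc|^2,|\ac|^2)$ which are either absorbable by smallness or comparable to the bulk terms we already retain.

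Next I would control the four types of right-hand side contributions in \eqref{integralkey}. First, the initial data term $\int_{\Si_0\cap V}r^{s_0}(|\amrc|^2+|\asc|^2+|\ac|^2)$ is bounded using \eqref{totallye4a} together with Theorem M0 (giving $\Rk_0\les\ep_0$ with the stronger weight $r^s$): since on $\Si_0\cap V$ one has $r\les |u|$, the extra factor $r^{s_0-s}\les |u|^{s_0-s}$ produces exactly the desired $\ep_0^2/|u|^{s-s_0}$. Second, the total divergence term $4r^p d_1(\amrc\c\asc)$ integrates to zero on each sphere. Third, the pure curvature terms $r^p(\amrc,\asc)\c(\fl[\a]+\err[\a])$, whose source involves $\bc^{(1)}$ via Corollary \ref{coroteu}, are handled by Cauchy--Schwarz: the $\bc$ piece uses the flux bound \eqref{newbeta} already established in Section \ref{secd3}, while the $\Gag$ pieces use $\OO\les\ep_0$ from Theorem M3, losing at most $\ep_0^2/|u|^{s-s_0}$ after integration. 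Finally, the purely quadratic error $r^p(\O_2^1+\Gab)\c(|\amrc|^2,|\asc|^2,|\ac|^2)$ is absorbed by smallness in $\EE_0$ and $\ep$ exactly as in the proof of Proposition \ref{keyintegral} (estimate \eqref{Gaapsi}), after taking the supremum in $u$ and $\ub$.

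The step that I expect to be the main obstacle is the term $\O_{5-p}^3\c\Gag^{(1)}\c\ac$ on the right-hand side of \eqref{integralkey}. This term is only linear in $\ac$, so Cauchy--Schwarz against itself does not close: a naive split gives $\int_V r^{s_0-1}|\ac|^2$ on the good side (which we have via the bulk term) but leaves $\int_V r^{s_0-1}|\Gag^{(1)}|^2\O_{11-2p}^{6}$ on the error side, and to make this consistent with the target $\ep_0^2/|u|^{s-s_0}$ one must use the enhanced decay of $\Gag^{(1)}$ in $|u|$ provided by Theorem M3 ($|r\Gag^{(1)}|_{2,S}\les \ep/|u|^{(s-3)/2}$). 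This is precisely where the cap $s_0\leq \frac{15}{2}$ becomes relevant: the power $5-p=5-s_0$ combined with the $|u|^{-(s-3)}$ decay of $|\Gag^{(1)}|^2$ produces integrability in $r$ along $\ucuv$ exactly when $s_0\leq \frac{15}{2}$, and at the threshold one just barely recovers $\ep_0^2/|u|^{s-s_0}$ after a $d\ub$ integration. Once this term is controlled, collecting all the estimates and applying Gr\"onwall in the $\ub$-direction for the absorbable quadratic terms yields the stated inequality, completing the proof of Proposition \ref{keyint}.
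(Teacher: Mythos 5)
Your proposal follows essentially the same route as the paper's proof: integrate the identity \eqref{integralkey} with $p=s_0$, use the Poincar\'e inequality to make the combination $|\asc|^2-2|\ac|^2$ coercive, bound the initial-data term by converting $r^{s_0}$ into $r^{s}|u|^{s_0-s}$, treat the linear terms by Cauchy--Schwarz against the bootstrap curvature fluxes, and absorb the quadratic $(\O_2^1+\Gab)$ terms by smallness in $\ep+\EE_0$. A few points should be corrected, though none changes the architecture. First, inside the proof of Theorem M1 you cannot invoke Theorem M3 (whose proof uses $\RR\les\ep_0$, i.e.\ M1); only the bootstrap assumption $\OO\leq\ep$ is available. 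This is harmless: the $\Gag^{(1)}$ contributions carry factors $M^2r^{-4}$ or $M^3r^{-5}$, and $\EE_0\ep\ll\ep_0$ converts the resulting $M^2\ep^2|u|^{-(s-s_0+2)}$, $M\ep^2|u|^{-(s-s_0+1)}$ into $\ep_0^2|u|^{-(s-s_0)}$, exactly as in the paper. Second, Proposition \ref{poincare} is an $L^2(S)$ statement for $rd_2\ac$, not a pointwise one for $\asc$: since $\asc=rd_2\ac+\Gag^{(1)}\c\O_4^3$, the coercivity must be run after integration on spheres and the correction $\Gag^{(1)}\c\O_4^3$ estimated separately (the paper bounds the corresponding flux and bulk terms by $M^6\ep^2|u|^{-(6+s-s_0)}$). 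Third, on $\Si_0\cap V$ the inequality you need is $r\gtrsim|u|$, not $r\les|u|$, to get $r^{s_0-s}\les|u|^{s_0-s}$. Finally, your sharpness claim about $s_0\le\frac{15}{2}$ is off: after Cauchy--Schwarz, the term $\O^3_{5-s_0}\c\Gag^{(1)}\c\ac$ only requires $s_0<10$ for the $d\ub$ integral $\int r^{s_0-11}\,d\ub$ to converge, while the tighter constraint (of the form $s_0<8$) comes from the $d\ub$ integrals $\int r^{\frac{s_0}{2}-5}\,d\ub$ generated by the linear $\fl[\a]$ terms; so the cap $s_0\le\frac{15}{2}$ is a safe margin rather than the exact threshold for that particular term.
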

\begin{proof}
Integrating \eqref{integralkey} with $p=s_0$ and proceeding as in Proposition \ref{estab}, we obtain
\begin{align*}
    &\int_{C_u}r^{s_0}|\amrc|^2 +\int_{\Cb_\ub}r^{s_0}|\asc|^2+\int_V r^{s_0-1}\left((s_0+2)|\amrc|^2+2(8-s_0)|\asc|^2-8\amrc\c\ac\right)\\
    \les&\int_{\Si_0\cap V}r^{s_0}(|\amrc|^2+|\asc|^2)+\int_V r^{s_0}|(\amrc,\asc)||\fl[\a]+\err[\a]|+r^{s_0}(\O_2^1+\Gab)(|\amrc|^2+|\asc|^2).
\end{align*}
First, we have
\begin{align*}
    \int_{\Si_0\cap V}r^{s_0}(|\amrc|^2+|\asc|^2)\les|u|^{s_0-s}\int_{\Si_0\cap V}r^s(|\amrc|^2+|\asc|^2)\les\frac{\ep_0^2}{|u|^{s-s_0}}.
\end{align*}
Next, we estimate
\begin{align*}
    &\int_Vr^{s_0}|(\amrc,\asc)||\fl[\a]|\\
    \les&\int_Vr^{s_0}|(\amrc,\asc)|\left|\O_4^2\c\Gag^{(1)}+\O_2^1\c\bc^{(1)}\right|\\
    \les&\int_VM^2r^{s_0-4}|(\amrc,\asc)||\Gag^{(1)}|+Mr^{s_0-2} |(\amrc,\asc)||\bc^{(1)}|\\
    \les&M^2\int_{-\ub}^u \frac{du}{r^{4-\frac{s_0}{2}}}\left(\int_\cuv r^{s_0}|\amrc|^2\right)^\f12\left(\int_\cuv |\Gag^{(1)}|^2\right)^\f12+M^2\int_{|u|}^\ub\frac{d\ub}{r^{5-\frac{s_0}{2}}}\left(\int_\ucuv r^{s_0}|\asc|^2\right)^\f12\left(\int_\ucuv r^2|\Gag^{(1)}|^2\right)^\f12\\
    &+M\int_{-\ub}^u \frac{du}{r^{4-\frac{s_0}{2}}}\left(\int_\cuv r^{s_0}|\amrc|^2\right)^\f12\left(\int_\cuv r^4|\bc^{(1)}|^2\right)^\f12+M\int_{|u|}^\ub \frac{d\ub}{r^{5-\frac{s_0}{2}}}\left(\int_\ucuv r^{s_0}|\asc|^2\right)^\f12\left(\int_\ucuv r^6|\bc^{(1)}|^2\right)^\f12\\
    \les& M^2\int_{-\ub}^u \frac{du}{r^{4-\frac{s_0}{2}}} \frac{\ep}{|u|^{\frac{s-s_0}{2}}}\left(\int_{|u|}^\ub d\ub |\Gag^{(1)}|^2_{2,S}\right)^\f12+M^2\int_{|u|}^\ub \frac{d\ub}{r^{5-\frac{s_0}{2}}}\frac{\ep}{|u|^\frac{s-s_0}{2}}\left(\int_{-\ub}^u du |r\Gag^{(1)}|_{2,S}^2\right)^\f12\\
    &+M\int_{-\ub}^u \frac{du}{r^{4-\frac{s_0}{2}}}\frac{\ep}{|u|^\frac{s-s_0}{2}}\frac{\ep}{|u|^\frac{s-4}{2}}+M\int_{|u|}^\ub \frac{d\ub}{r^{5-\frac{s_0}{2}}}\frac{\ep}{|u|^\frac{s-s_0}{2}}\frac{\ep}{|u|^{\frac{s-6}{2}}}\\
    \les& M^2 \int_{-\ub}^u \frac{du}{|u|^{4-\frac{s_0}{2}}} \frac{\ep}{|u|^{\frac{s-s_0}{2}}} \frac{\ep}{|u|^\frac{s-2}{2}}+M^2\int_{|u|}^\ub \frac{d\ub}{r^{5-\frac{s_0}{2}}}\frac{\ep}{|u|^\frac{s-s_0}{2}}\frac{\ep}{|u|^\frac{s-4}{2}}+\frac{\ep^2M}{|u|^{s-s_0+1}}\\
    \les&\frac{\ep^2 M^2}{|u|^{s-s_0+2}}+\frac{\ep^2 M}{|u|^{s-s_0+1}}\\
    \les&\frac{\ep_0^2}{|u|^{s-s_0}}.
\end{align*}
Proceeding as in Proposition C.5 in \cite{ShenMink}, we infer
\begin{align*}
    \int_V r^{s_0}|(\amrc,\asc)||\err[\a]|\les\frac{\ep_0^2}{|u|^{s-s_0}}.
\end{align*}
We have from Proposition \ref{useful}
\begin{equation*}
    \asc=\widecheck{rd_2\a}=rd_2\ac+(rd_2-r_K(d_2)_K)\a_K=rd_2\ac+\Gag^{(1)}\c\O_4^3.
\end{equation*}
Combining with Proposition \ref{poincare}, we obtain
\begin{align*}
     &\int_V r^{s_0-1}\left((s_0+2)|\amrc|^2+2(8-s_0)|\asc|^2-8\amrc\c\ac\right)\\
     \gtrsim&\int_V r^{s_0-1}\left((s_0+2)|\amrc|^2+2c_2(8-s_0)|\ac|^2-8\amrc\c\ac\right)-\int_V r^{s_0-1}|\Gag^{(1)}\c\O_4^3|^2.
\end{align*}
Since the discriminant satisfies\footnote{Here, we used $6<s_0\leq\frac{29}{4}$ and $c_2=2-O(\ep)$.}
\begin{align*}
    \De=8^2-4(s_0+2)\c 2c_2(8-s_0)<0,
\end{align*}
we obtain
\begin{equation*}
    \int_V r^{s_0-1}\left((s_0+2)|\amrc|^2+2c_2(8-s_0)|\ac|^2-8\amrc\c\ac\right)\gtrsim\int_V r^{s_0-1}(|\amrc|^2+|\dko\ac|^2).
\end{equation*}
Finally, we also have
\begin{align*}
    \int_V r^{s_0}(\O_2^1+\Gab)(|\amrc|^2+|\asc|^2)\les (\ep+\EE_0)\int_V r^{s_0-1}(|\amrc|^2+|\asc|^2).
\end{align*}
Combining the above estimates, we deduce
\begin{align*}
    &\int_{C_u}r^{s_0}|\amrc|^2 +\int_{\Cb_\ub}r^{s_0}|\dko\ac|^2+\int_V r^{s_0-1}\left(|\amrc|^2+|\dko\ac|^2\right)\\
    \les& \frac{\ep_0^2}{|u|^{s-s_0}}+(\ep+\EE_0)\int_V r^{s_0-1}(|\amrc|^2+|\asc|^2)+\int_{\ucuv} r^{s_0} |\Gag^{(1)}\c\O_4^3|^2+\int_V r^{s_0-1}|\Gag^{(1)}\c\O_4^3|^2\\
    \les&\frac{\ep_0^2}{|u|^{s-s_0}}+(\ep+\EE_0)\int_V r^{s_0-1}(|\amrc|^2+|\asc|^2)+M^6\int_{-\ub}^u \frac{du}{r^{8-s_0}}|\Gag^{(1)}|^2_{2,S}+M^6\int_{-\ub}^u du\int_{|u|}^\ub  \frac{d\ub}{r^{9-s_0}}|\Gag^{(1)}|^2_{2,S}\\
    \les&\frac{\ep_0^2}{|u|^{s-s_0}}+(\ep+\EE_0)\int_V r^{s_0-1}(|\amrc|^2+|\asc|^2)+\frac{M^6\ep^2}{|u|^{6+s-s_0}}\\
    \les&\frac{\ep_0^2}{|u|^{s-s_0}}+(\ep+\EE_0)\int_V r^{s_0-1}(|\amrc|^2+|\asc|^2),
\end{align*}
which implies for $\ep$ and $\EE_0$ small enough
\begin{equation*}
    \int_{C_u}r^{s_0}|\amrc|^2 +\int_{\Cb_\ub}r^{s_0}|\dko\ac|^2+\int_V r^{s_0-1}\left(|\amrc|^2+|\dko\ac|^2\right)\les \frac{\ep_0^2}{|u|^{s-s_0}}.
\end{equation*}
This concludes the proof of Proposition \ref{keyint}.
\end{proof}
\begin{proposition}\label{finalpeeling}
    We have the following estimate:
    \begin{equation}
        \RRb_0[\ac]+\RRb_1[\ac]+\RR_0^S[\ac]\les\ep_0.
    \end{equation}
\end{proposition}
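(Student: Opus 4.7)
The plan is to combine the flux and bulk controls of Proposition \ref{keyint} with Poincar\'e and Hodge elliptic estimates on each 2--sphere and with an outgoing transport equation for $r^5\ac$. From Proposition \ref{useful} the linearization of $\as = rd_2\a$ gives $\asc = rd_2\ac + r\Lc\cdot\a_K$, where $r\Lc\cdot\a_K \in \Gag^{(1)}\cdot\O^3_5$ is harmless. Using \eqref{totallye4a}, the Bianchi equation for $\ac$ takes the form
\begin{equation*}
\nab_4\ac + \tfrac{5}{2}\trch\,\ac = r^{-1}\amrc + \Gag^{(1)}\cdot\O^3_4.
\end{equation*}

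For $\RRb_0[\ac]$, I apply the Poincar\'e inequality of Proposition \ref{poincare} on each sphere to obtain $|rd_2\ac|^2_{2,S} \geq c_2|\ac|^2_{2,S}$, which together with the relation above gives $c_2|\ac|^2_{2,S} \lesssim |\asc|^2_{2,S} + \mbox{l.o.t.}$. Multiplying by $r^{s_0}$, integrating along $\Cb_\ub$, and invoking Proposition \ref{keyint} yields $\int_\ucuv r^{s_0}|\ac|^2 \lesssim \ep_0^2/|u|^{s-s_0}$, i.e.\ $\RRb_0[\ac] \lesssim \ep_0$. For $\RRb_1[\ac]$, the Hodge elliptic estimate of Proposition \ref{ellipticest}, item 3, applied with $d_2\ac = r^{-1}\asc + \mbox{l.o.t.}$, gives $|\nab\ac|^2_{2,S} \lesssim |d_2\ac|^2_{2,S}$, so that
\begin{equation*}
\int_\ucuv r^{s_0+2}|\nab\ac|^2 \lesssim \int_\ucuv r^{s_0}|\asc|^2 + \mbox{l.o.t.} \lesssim \frac{\ep_0^2}{|u|^{s-s_0}},
\end{equation*}
giving $\RRb_1[\ac] \lesssim \ep_0$.

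For $\RR_0^S[\ac]$, I apply Lemma \ref{evolution} to the transport equation above with $\la_0 = 5/2$ and $p = 2$ (so $\la_1 = 4$), integrating backwards in $\ub$ along $C_u$ from the last slice $\Cb_*$:
\begin{equation*}
|r^4\ac|_{2,S(u,\ub)} \lesssim |r^4\ac|_{2,S_*(u)} + \int_\ub^{\ub_*}|r^3\amrc|_{2,S}\,d\ub' + \mbox{err}.
\end{equation*}
Cauchy--Schwarz bounds the bulk integral by $\bigl(\int_\ub^{\ub_*}r^{6-s_0}d\ub'\bigr)^{1/2}\bigl(\int_\cuv r^{s_0}|\amrc|^2\bigr)^{1/2}$, and Proposition \ref{keyint} bounds the second factor by $\ep_0/|u|^{(s-s_0)/2}$. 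Evaluating the first factor produces the three regimes of \eqref{peelinga}: for $s\in(6,7)$ it grows like $r^{(7-s)/2}$; at $s=7$ the integral of $r^{-1}$ gives the logarithmic factor $(\log r)^{1/2}$; and for $s>7$ (in both subregimes $s_0 = s$ and $s_0 = 15/2$) the integral is finite and, after absorbing a residual fractional power of $r$ by the external--region inequality $r\geq |u|$, bounded by $|u|^{(7-s)/2}$. Dividing by the appropriate weight and controlling $|r^4\ac|_{2,S_*(u)}$ by $\RRb(\Cb_*)$ via the Sobolev trace inequality of Proposition \ref{sobolevkn} then yields $\RR_0^S[\ac]\lesssim\ep_0$.

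The main obstacle is the case--split for $\RR_0^S[\ac]$: the critical decay $s=7$ forces the logarithmic weight $(\log r)^{-1/2}$ into the norm, and in the saturated regime $s>15/2$ the bulk integral leaves a residual $r^{-1/4}$ factor that must be absorbed via $r\geq |u|$ in order to combine with the flux decay $|u|^{-(s-15/2)/2}$ and produce exactly the $|u|^{-(s-7)/2}$ rate demanded by $\RR_0^S[\ac]$. A secondary technical point is to extract the sphere bound $|r^4\ac|_{2,S_*(u)}$ from the cone flux $\RRb(\Cb_*)$ with weights matching each of the three regimes, which is done by a standard trace argument on $\Cb_*$.
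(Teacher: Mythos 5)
Your treatment of $\RRb_0[\ac]$ and $\RRb_1[\ac]$ is essentially the paper's argument: the Poincar\'e inequality and the Hodge elliptic estimate for $d_2$ applied spherewise, combined with the flux and bulk bounds of Proposition \ref{keyint}, and this part is fine. The gap is in the $\RR_0^S[\ac]$ part, and it is a genuine one. You integrate the transport identity for $r^4\ac$ backwards in $\ub$ from the last slice, initializing with $|r^4\ac|_{2,S_*(u)}$ controlled by $\RRb(\Cb_*)$. But within the curvature estimates for $s>6$ (this proposition is part of Theorem M1), the only information available on $\Cb_*$ is the bootstrap bound $\RR\leq\ep$ together with the flux bound \eqref{RRbqalpha} just established; neither yields a sphere estimate $|r^4\ac|_{2,S(u,\ub_*)}\les\ep_0\,|u|^{-\frac{s-7}{2}}$ (or its analogue in the other regimes). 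A trace of the type in Proposition \ref{sobolevkn} would require the flux of a transversal derivative of $\ac$ along $\Cb_*$ (or of $\nab_4\ac$ along $C_u$ at matching weights), which is not among the controlled norms; moreover the available flux weight $r^{\frac{s_0}{2}}\leq r^{\frac{15}{4}}$ on the last slice, where $r\sim\ub_*$, falls short of the $r^4$ sphere weight you need, and using the bootstrap level would in any case only produce $\ep$, not $\ep_0$. In this scheme the curvature is initialized on $\Si_0$ through $\Rk_0\les\ep_0$; the last slice only initializes the Ricci coefficients, and $\RRb(\Cb_*)\les\ep_0$ is an output of Theorem M1, not an input, so your initialization is circular.

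Even granting last-slice data, the backwards Cauchy--Schwarz does not give the right weights when $6<s\leq 7$: there $\int_\ub^{\ub_*}r'^{\,6-s_0}\,d\ub'$ is dominated by the upper endpoint, of size $\ub_*^{\,7-s_0}$ (or $\log(\ub_*/\ub)$ at $s_0=7$), so your bound reads $|r^4\ac|_{2,S}\les\ep_0\,\ub_*^{\frac{7-s}{2}}$, respectively $\ep_0(\log\ub_*)^{\frac12}$, uniformly along $C_u$; this is not $\les\ep_0\,r^{\frac{7-s}{2}}$, respectively $\ep_0(\log r)^{\frac12}$, at spheres with $r\ll\ub_*$, hence does not control $\RR_0^S[\ac]$ as defined in \eqref{peelinga}. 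The absorption via $r\geq|u|$ only helps when $s_0>7$. The paper instead derives $\left|e_4\big(|r^4\ac|_{2,S}\big)\right|\les |r^3\amrc|_{2,S}+(M+\ep)r^{-2}|r^4\ac|_{2,S}+|\O_1^3\c\Gag^{(1)}|_{2,S}$ and integrates forwards in $\ub$ from the sphere near $\Si_0$, where $r\sim|u|$ and the initial data give $|r^4\ac|_{2,S}\les\ep_0\,|u|^{-\frac{s-7}{2}}$, absorbs the linear term by Gronwall, and then $\big(\int_{|u|}^{\ub}r^{6-s_0}d\ub'\big)^{\frac12}\big(\int_{\cuv}r^{s_0}|\amrc|^2\big)^{\frac12}$ produces exactly the three regimes of \eqref{peelinga}. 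Redirect your transport argument to this forward integration (with initialization from the initial data rather than from $\Cb_*$) and the rest of your computation goes through.
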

\begin{proof}
We have from Propositions \ref{ellipticest} and \ref{keyint}
\begin{equation}\label{RRbqalpha}
    \RRb_0[\ac]^2+\RRb_1[\ac]^2\les |u|^{s-s_0}\int_{\ucuv} |\ac|^2+|\dkb\ac|^2\les\ep_0^2.
\end{equation}
Recalling that
    \begin{align*}
        \amrc=\frac{1}{r^4}\nab_4(r^5\ac)+\O_4^3\c\Gag^{(1)},
    \end{align*}
we infer from Lemma \ref{dint} that
\begin{align*}
\Om e_4\left(\int_S|r^4\ac|^2\right)&=\int_S 2\Om\nab_4(r^4\ac)\cdot (r^4\ac)+\Om\trch |r^4\ac|^2\\
&=\int_S 2\Om r^4\ac\cdot(\nab_4(r^4\ac)+r^3\ac)+\Om\left(\trch-\frac{2}{r}\right)|r^4\ac|^2\\
&=\int_S 2\Om r^4\ac\cdot (\nab_4(r^5\ac )r^{-1}+e_4(r^{-1})r^5\ac+r^3\ac )+(\O_2^1+\Gag)|r^4\ac|^2\\
&=\int_S 2\Om r^4\ac\cdot (\nab_4(r^5\ac)r^{-1})+(\O_2^1+\Gag)|r^4\ac|^2\\
&=\int_S 2\Om r^4\ac\cdot r^3\amrc+(\O_2^1+\Gag)|r^4\ac|^2+ (r^4\ac)\c\O^3_1\c\Gag^{(1)}.
\end{align*}
Hence, we deduce
\begin{align*}
    \left|e_4\left(|r^4\ac|_{2,S}^2\right)\right|\les |r^4\ac|_{2,S}|r^3\amr|_{2,S}+(M+\ep)r^{-2}|r^4\ac|_{2,S}^2+|r^4\ac|_{2,S}|\O_1^3\c\Gag^{(1)}|_{2,S},
\end{align*}
which implies
\begin{equation}\label{e4acest}
    \left|e_4(|r^4\ac|_{2,S})\right|\les |r^3\amr|_{2,S}+(M+\ep)r^{-2}|r^4\ac|_{2,S}+|\O_1^3\c\Gag^{(1)}|_{2,S}.
\end{equation}
Integrating \eqref{e4acest} along $C_u$ and applying the initial assumption and Proposition \ref{keyint}, we infer
\begin{align*}
    |r^4\ac|_{2,S}&\les \frac{\ep_0}{r^\frac{s-7}{2}}+\int_{|u|}^\ub d\ub |r^3\amrc|_{2,S}+\int_{|u|}^\ub \frac{(M+\ep)d\ub}{r^2}|r^4\ac|_{2,S}+\int_{|u|}^\ub d\ub \frac{M^3}{r^2|u|^\frac{s-3}{2}}\\
    &\les \frac{\ep_0}{|u|^\frac{s-7}{2}}+\left(\int_{|u|}^\ub r^{6-s_0} d\ub \right)^\f12 \left(\int_{|u|}^\ub r^{s_0}|\amrc|^2\right)^\f12 +\int_{|u|}^\ub \frac{(M+\ep)d\ub}{r^2}|r^4\ac|_{2,S}.
\end{align*}
Applying Gronwall inequality, we obtain
\begin{equation}\label{eq5}
    |r^4\ac|_{2,S}\les\frac{\ep_0}{|u|^\frac{s-7}{2}}+\left(\int_{|u|}^\ub r^{6-s_0} d\ub \right)^\f12 \left(\int_{|u|}^\ub r^{s_0}|\amrc|^2\right)^\f12 .
\end{equation}
Notice that we have
$$
\int_{|u|}^\ub r^{6-s_0}d\ub=\left\{
\begin{aligned}
&\frac{1}{7-s_0}(\ub^{7-s_0}-|u|^{7-s_0}),   \qquad\qquad s_0\in (6,7),\\
&\log\left(\frac{\ub}{|u|}\right), \qquad\qquad\qquad\qquad\quad\;\;\, s_0=7,\\
&\frac{1}{s_0-7}(|u|^{7-s_0}-\ub^{7-s_0}), \qquad\qquad\,  s_0>7.
\end{aligned}
\right.
$$
Hence, we obtain from \eqref{eq5}
\begin{equation}\label{eq6}
|r^4\a|_{2,S}\les\left\{
\begin{aligned}
&\ep_0 r^{\frac{7-s}{2}},   \qquad\qquad s\in (6,7),\\
&\ep_0 (\log r)^\frac{1}{2}, \qquad\;\;\, s=7,\\
&\ep_0 |u|^{\frac{7-s}{2}}, \qquad\quad\;\,  s>7,
\end{aligned}
\right.
\end{equation}
which implies
\begin{equation}\label{RRSac}
    \RR_0^S[\ac]\les\ep_0.
\end{equation}
Combining \eqref{RRbqalpha} and \eqref{RRSac}, this concludes the proof of Proposition \ref{finalpeeling}.
\end{proof}
\begin{proposition}\label{finalaa}
    We have the following estimate:
    \begin{equation*}
        \RRb_0^S[\aac]\les\ep_0.
    \end{equation*}
\end{proposition}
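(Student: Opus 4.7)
The plan is to transpose the proof of Proposition \ref{finalpeeling} to $\aac$, exploiting the $\a \leftrightarrow \aa$ symmetry of the Teukolsky system in Lemma \ref{teulm}: the roles of $\nab_3$ and $\nab_4$, of $\cuv$ and $\ucuv$, and of $C_u$ and $\Cb_\ub$ are interchanged throughout, so integrations along $C_u$ become integrations along $\Cb_\ub$. The argument splits into two steps mirroring exactly the two ingredients of Proposition \ref{finalpeeling}.

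First, I would establish an $r^p$-weighted flux estimate for the Teukolsky pair $(\aasc, \aamrc)$, the $\aa$-analog of Proposition \ref{keyint}. Applying the divergence identity \eqref{div2} of Lemma \ref{keypoint} with $k=2$, $a_{(1)}=5/2$, $a_{(2)}=0$ and absorbing the correction term $4\aa/r$ appearing in \eqref{teuaa} via an integration by parts on $\Div(r^p|\aac|^2 e_3)$ — in analogy with the $\Div(r^p|\ac|^2 e_4)$ step in the proof of Lemma \ref{keyidentity} — I aim for a bound of the form
\[
    \int_{\cuv} r^{p_1}|\aasc|^2 + \int_{\ucuv} r^{p_1}|\aamrc|^2 + \int_V r^{p_1-1}\bigl(|\aasc|^2 + |\aamrc|^2 + |\aac|^2\bigr) \les \frac{\ep_0^2}{|u|^{s+2-p_1}}
\]
for a suitable weight $p_1$, with the Poincar\'e inequality of Proposition \ref{poincare} used to handle $|\aasc|^2 \ge (4-O(\ep))|\aac|^2$ on each $S$. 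The nonlinear errors $\fl[\aa]$, $\err[\aa]$ of Corollary \ref{coroteu} will be treated exactly as in the proof of Proposition \ref{keyint}, using Lemma \ref{estGagba} together with the curvature and Ricci-coefficient flux bounds already established in Section \ref{secd3}.

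Second, I would convert this Teukolsky flux into the desired sphere estimate on $\aac$. From the definition $\aamr = r^{-4}\nab_3(r^5\aa)$ and the arguments leading to \eqref{totallye4a}, I expect the linearized identity
\[
    \aamrc = \frac{1}{r^4}\nab_3(r^5\aac) + \O_4^3 \cdot \Gag^{(1)}.
\]
Combined with Lemma \ref{dint}, this produces a transport equation for $||u|^{(s-1)/2} r\aac|_{2,S}$ (and similarly for the $L^4$ norm) along the null generator of $\Cb_\ub$. Integrating from the initial sphere $S_0(\ub) = \Cb_\ub \cap \Si_0$, where the initial value is controlled by $\Rk_0 \les \ep_0$ (a consequence of Theorem~M0 and Proposition \ref{standardsobolev}), and applying Cauchy--Schwarz with the Step~1 flux bound on $\aamrc$ over $\ucuv$ followed by Gronwall, will give $\RRb_0^S[\aac] \les \ep_0$.

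The main obstacle is Step~1, and specifically the signs of the bulk coefficients in the energy identity. In the $\a$-case, Lemma \ref{keyidentity} yields bulk terms $(p+2)r^{p-1}|\amrc|^2 + 2(8-p)r^{p-1}(|\asc|^2 - 2|\ac|^2)$ that are positive-definite for $p \in (-2,8]$. In the $\aa$-case the role-reversed values $a_{(1)}=5/2$, $a_{(2)}=0$, combined with $\trchb \sim -2/r$ and $\trch \sim 2/r$, produce bulks of opposite sign. Closing the estimate therefore forces either inserting an extra $|u|^q$ weight (whose $\nab_3$-derivative generates a positive-definite term at the cost of one power of $|u|$), or absorbing the wrong-sign bulks using the already-established fluxes $\RRb_q[\aac] \les \ep_0$ of Section \ref{secd3}. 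This $|u|$-trick is precisely where the improved peeling weight $|u|^{(s+1)/2}$ in $\RRb_0^S[\aac]$ — compared with the weaker $|u|^{s/2}$ in the baseline flux $\RRb_q[\aac]$ — has to be generated.
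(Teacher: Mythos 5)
Your Step 1 is essentially the paper's argument: the paper applies Lemma \ref{keypoint} to the pair $(\aasc,\aamrc)$ with $k=2$, $a_{(1)}=\tfrac52$, $a_{(2)}=0$ and the low weight $p=-2$, renormalizes the $4\aa/r$ term by adding $4\Div(r^{-2}|\aac|^2e_3)$ exactly as you propose (using $\aamrc=\tfrac{1}{r^4}\nab_3(r^5\aac)+\O_4^3\c\Gab^{(1)}$ -- note $\Gab^{(1)}$, not $\Gag^{(1)}$), and resolves the sign problem you correctly identify by the second of your two options: the wrong-sign bulk $10r^{-2}\trchb|\aasc|^2$ is moved to the right and absorbed with the already-established flux $\RRb_1[\aac]\les\ep_0$, yielding $\int_\cuv r^{-2}|\aasc|^2+\int_\ucuv r^{-2}|\aamrc|^2\les\ep_0^2|u|^{-(s+2)}$. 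No extra $|u|^q$ multiplier is needed, and the improved weight is not ``generated'' at this stage: since $r\gtrsim|u|$ on the ingoing cones, the negative $r$-powers themselves convert into the extra $|u|^{-2}$ (so your template $\ep_0^2|u|^{-(s+2-p_1)}$ over-predicts the gain for negative $p_1$; with $p_1=-2$ the correct outcome is $|u|^{-(s+2)}$, not $|u|^{-(s+4)}$).

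The genuine divergence, and the gap, is in Step 2. The paper does not transport sphere norms along $\Cb_\ub$: it converts the flux of $\aamrc$ into $\int_\ucuv r^{-2}|\nab_3\aac|^2\les\ep_0^2|u|^{-(s+2)}$ and then applies the second Sobolev inequality of Proposition \ref{sobolevkn} on $\Cb_\ub$, combined with the tangential fluxes $\RRb_{0}[\aac],\RRb_1[\aac]$ from \eqref{oldest}, which produces the full range $p\in[2,4]$ of $\RRb_0^S[\aac]$ in one stroke. Your transport-plus-Gronwall route can plausibly deliver the $p=2$ part (the coefficient bookkeeping is in your favour: the $|u|^{(s+1)/2}$ weight produces a damping $-(s+1)/|u|$ which dominates the $+4/r$ coming from $\trchb$ and $e_3(r)$, and with the flux bound used at every intermediate $u'$ the dyadic sum of source contributions converges thanks to this damping), but the claim ``and similarly for the $L^4$ norm'' does not close at the available regularity: transporting $|\cdot|_{4,S}$ requires $L^4(S)$ control of the source $\aamrc$ along $\Cb_\ub$ (or, equivalently, one more angular derivative of the Teukolsky quantities than the norms contain), and you cannot fall back on the first Sobolev inequality of Proposition \ref{sobolevkn} either, since the norms contain no flux of $\aac$ along the outgoing cones $C_u$. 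The natural repair is precisely the paper's trace step: extract the $\nab_3\aac$ flux from $\aamrc$ and invoke the $\Cb_\ub$--Sobolev inequality, rather than integrating sphere norms.
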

\begin{proof}
We recall from Corollary \ref{coroteu}
\begin{align*}
    \nab_4\aamrc&=-2d_2^*\aasc+\frac{4\aasc}{r}+\fl[\aa]+\err[\aa],\\
    \nab_3\aasc+\frac{5}{2}\trchb\,\aasc&=d_2\aamrc+\fl[\aa]+\err[\aa],\\
    \fl[\aa]&=\O_4^2\c\Gab^{(1)}+\O_2^1\c\aac^{(1)},\\
    \err[\aa]&=\Gab^{(1)}\c\aac+\Gab\c\aac^{(1)}.
\end{align*}
Applying Lemma \ref{keypoint} with $\psi_{(1)}=\aamrc$, $\psi_{(2)}=\aasc$, $a_{(1)}=\frac{5}{2}$, $a_{(2)}=0$, $h_{(1)}=\fl[\aa]+\err[\aa]$, $h_{(2)}=\frac{4\aasc}{r}+\fl[\aa]+\err[\aa]$, $k=2$ and $p=-2$, we obtain
\begin{align*}
    &2\Div(r^{-2}|\aasc|^2e_3)+\Div(r^{-2}|\aamrc|^2e_4)
       +10r^{-2}\trchb|\aasc|^2\\
       =&2 r^{-2}d_1(\aasc\cdot\aamrc)
       +r^{-2}(\aasc,\aamrc)\c(\fl[\aa]+\err[\aa])+8r^{-3}\aamrc\cdot\aac+r^{-2}(\O_2^1+\Gab)(|\aamrc|^2,|\aasc|^2).
\end{align*}
We compute for $c_2$ defined in Proposition \ref{poincare}
\begin{align*}
2c_2\Div(r^{-2}|\aac|^2e_3)&=2c_2\nab_3(r^{-2}|\aac|^2)+2c_2r^{-2}|\aac|^2\Div(e_3)\\
&=2c_2\nab_3(r^{-12}|r^5\aac|^2)+2c_2r^{-2}|\aac|^2(\trchb-2\omb)\\
&=20c_2r^{-3}|\aac|^2+4c_2r^{-3}\aamrc\cdot\aac+r^{-2}(\O_2^1+\Gab)|\aac|^2+\O_7^3\c\Gab^{(1)}\c\aac,
\end{align*}
 where we used
\begin{align}
\begin{split}\label{totallye3aa}
    \aamrc&=\frac{1}{r^4}\nab_3(r^5\aa)-\frac{1}{r_K^4}(\nab_K)_{(e_3)_K}(r_K^5\aa_K)\\
    &=\frac{1}{r^4}\nab_3(r^5\aa)-\frac{1}{r^4}\nab_3(r^5\aa_K)+\frac{1}{r^4}\nab_3(r^5\aa_K)-\frac{1}{r_K^4}(\nab_K)_{(e_3)_K}(r_K^5\aa_K)\\
    &=\frac{1}{r^4}\nab_3(r^5\aac)+\O_4^3\c\Gab^{(1)}.
\end{split}
\end{align}
Combining the above identities, we deduce
\begin{align*}
    &2\Div(r^{-2}(|\aasc|^2-c_2|\aac|^2)e_3)+\Div(r^{-2}|\aamrc|^2)+10r^{-2}\trchb|\aasc|^2+20c_2r^{-3}|\aac|^2\\
    =&2r^{-2}d_1(\aasc\c\aamrc)+(8-4c_2)r^{-3}\aamrc\c\aac+r^{-2}(\aasc,\aamrc)\c(\fl[\aa]+\err[\aa])+\O_7^3\c\Gab^{(1)}\c\aac+r^{-2}(\O_2^1+\Gab)|\aac|^2.
\end{align*}
Integrating it in $V$, we obtain from Proposition \ref{poincare}
\begin{align*}
    \int_\cuv r^{-2}(|\aasc|^2-c_2|\aac|^2)+\int_{\ucuv}r^{-2}|\aamrc|^2&\les\int_{\Si_0\cap V} r^{-2}(|\aasc|^2 +|\aamrc|^2)+\int_V r^{-3}|\aasc|^2+\ep r^{-3}|\aamrc||\aac|\\
    &+\int_V r^{-2}|(\aamrc,\aasc)||\fl[\aa]+\err[\aa]|+|\O_7^3\c\Gab^{(1)}||\aac|+ r^{-2}(\O_2^1+\Gab)|\aac|^2.
\end{align*}
We first have from Proposition \ref{useful}
\begin{equation*}
    \aasc=\widecheck{rd_2\aa}=rd_2\aac+(rd_2-r_K(d_2)_K)\aa_K=rd_2\aac+\Gag^{(1)}\c\O_4^3.
\end{equation*}
Noticing that
\begin{align*}
    \int_\cuv r^{-2}|\Gag^{(1)}\c\O_4^3|^2\les\int_{|u|}^\ub d\ub \frac{M^6}{r^{12}}|r\Gag^{(1)}|^2_{2,S}\les M^6\int_{|u|}^\ub d\ub\frac{\ep^2}{r^{12}|u|^{s-3}}\les\frac{\ep_0^2}{|u|^{s+2}},
\end{align*}
Hence, we obtain from Proposition \ref{poincare}
\begin{align*}
    \int_\cuv r^{-2}(|\aasc|^2-c_2|\aac|^2)&\geq \int_\cuv r^{-2}(|rd_2\aac|^2-c_2|\aac|^2)-\int_\cuv r^{-2}|\Gag^{(1)}\c\O_4^3|^2\gtrsim-\frac{\ep_0^2}{|u|^{s+2}}.
\end{align*}
Next, we have
\begin{align*}
    \int_{\Si_0\cap V} r^{-2}(|\aasc|^2 +|\aamrc|^2) \les \frac{1}{|u|^{s+2}} \int_{\Si_0\cap V} r^{s}(|\aasc|^2 +|\aamrc|^2)\les\frac{\ep_0^2}{|u|^{s+2}}.
\end{align*}
Then, we compute
\begin{align*}
\int_V r^{-3}|\aasc|^2+\ep\int_V r^{-3}|\aamrc||\aac| &\les \int_{|u|}^\ub \frac{d\ub}{r^3}\int_\ucuv |\aasc|^2+\ep\int_{|u|}^\ub r^{-2} \left(\int_\ucuv r^{-2}|\aamrc|^2 \right)^\frac{1}{2}\left(\int_\ucuv |\aac|^2\right)^\frac{1}{2}d\ub\\
&\les \ep_0^2\int_{|u|}^\ub \frac{d\ub}{r^3|u|^s}+\ep\int_{|u|}^\ub r^{-2}\left(\frac{\ep^2}{|u|^{s+2}}\right)^\frac{1}{2}\left(\frac{\ep_0^2}{|u|^{s}}\right)^\frac{1}{2}d\ub\\
&\les\frac{\ep_0^2}{|u|^{s+2}}.
\end{align*}
We also have
\begin{align*}
    &\int_V r^{-2}|\aamrc,\aasc||\fl[\aa]|\\
    \les&\int_V r^{-2}|\aamrc,\aasc|\left|\O_4^2\c\Gab^{(1)}+\O_2^1\c\aac^{(1)}\right|\\
    \les&\int_V M^2 r^{-6}|\aamrc||\Gab^{(1)}|+M^2 r^{-6}|\aasc||\Gab^{(1)}|+Mr^{-4}|\aamrc||\aac^{(1)}|+Mr^{-4}|\aasc||\aac^{(1)}|\\
    \les&M^2\int_{|u|}^\ub \frac{d\ub}{r^5}\left(\int_\ucuv r^{-2}|\aamrc|^2 \right)^\f12\left(\int_\ucuv |\Gab^{(1)}|^2\right)^\f12+M^2\int_{|u|}^\ub \frac{d\ub}{r^5}\left(\int_\ucuv|\aasc|^2\right)^\f12\left(\int_\ucuv |r^{-1}\Gab^{(1)}|^2\right)^\f12\\
    &+M\int_{|u|}^\ub\frac{d\ub}{r^3}\left(\int_\ucuv r^{-2}|\aamrc|^2\right)^\f12\left(\int_\ucuv|\aac^{(1)}|^2\right)^\f12+M\int_{|u|}^\ub\frac{d\ub}{r^4}\left(\int_\ucuv|\aasc|^2 \right)^\f12\left(\int_\ucuv |\aac^{(1)}|^2\right)^\f12\\
    \les& M^2\int_{|u|}^\ub \frac{d\ub}{r^5}\frac{\ep}{|u|^\frac{s+2}{2}}\left(\int_{-\ub}^u du|\Gab^{(1)}|^2_{2,S}\right)^\f12+M^2\int_{|u|}^\ub\frac{d\ub}{r^5}\frac{\ep}{|u|^\frac{s}{2}}\left(\int_{-\ub}^u du |r^{-1}\Gab^{(1)}|^2_{2,S}\right)^\f12\\
    &+M\int_{|u|}^\ub \frac{d\ub}{r^3}\frac{\ep}{|u|^\frac{s+2}{2}}\frac{\ep}{|u|^\frac{s}{2}}+M\int_{|u|}^\ub\frac{d\ub}{r^4}\frac{\ep}{|u|^\frac{s}{2}}\frac{\ep}{|u|^\frac{s}{2}}\\
    \les&\frac{M^2\ep^2}{r^4|u|^s}+\frac{M\ep^2}{r^2|u|^{s+1}}+\frac{M\ep^2}{r^3|u|^s}\\
    \les&\frac{\ep_0^2}{|u|^{s+2}}.
\end{align*}
We proceed as in Proposition C.7 in \cite{ShenMink} to deduce
\begin{align*}
    \int_V r^{-2}|\aamrc,\aasc||\err[\aa]|\les\frac{\ep_0^2}{|u|^{s+2}}.
\end{align*}
Moreover, we have
\begin{align*}
    \int_V|\aac||\O_7^3\c\Gab^{(1)}|&\les M^3\int_{|u|}^\ub \frac{d\ub}{r^7}\left(\int_\ucuv |\aac|^2\right)^\f12 \left(\int_\ucuv |\Gab^{(1)}|^2\right)^\f12 \\
    &\les M^3 \int_{|u|}^\ub \frac{d\ub}{r^7}\frac{\ep}{|u|^\frac{s}{2}}\left(\int_{-\ub}^u du |\Gab^{(1)}|^2_{2,S}\right)^\f12\\
    &\les M^3\ep\int_{|u|}^\ub \frac{d\ub}{r^7|u|^\frac{s}{2}}\frac{\ep}{|u|^\frac{s-2}{2}}\\
    &\les\frac{\ep_0^2}{|u|^{s+2}}.
\end{align*}
Finally, we have
\begin{align*}
    \int_V r^{-2}(\O_2^1+\Gab)|\aac|^2\les\int_V(\ep+\EE_0)r^{-3}|\aac|^2\les \frac{\ep_0^2}{|u|^{s+2}}.
\end{align*}
Combining the above estimates, we deduce
\begin{align*}
    \int_\ucuv r^{-2}|\aamrc|^2\les \frac{\ep_0^2}{|u|^{s+2}},
\end{align*}
which implies from \eqref{totallye3aa} and \eqref{e3r}
\begin{align}\label{e3aac}
    \int_\ucuv r^{-2}|\nab_3\aac|^2\les\frac{\ep_0^2}{|u|^{s+2}}.
\end{align}
Combining \eqref{e3aac} and \eqref{oldest} and applying Proposition \ref{sobolevkn}, we deduce
\begin{equation*}
    \RRb_0^S[\aac]\les\ep_0.
\end{equation*}
This concludes the proof of Proposition \ref{finalaa}.
\end{proof}
Combining Propositions \ref{finalpeeling} and \ref{finalaa} with \eqref{oldest}, this concludes the proof of Theorem M1 in the case $s>6$. Notice that Theorems M0, M2, M3, M4 also hold true in the case $s>6$ since we only used $s>3$ in their proofs\footnote{See the discussion in Section \ref{ssecc2}.}. Combining with Section \ref{ssecc3}, this concludes the proof of Theorem \ref{maintheorem} for $s>3$ as stated.
\subsection{Peeling decay for curvature components}
As a consequence of Theorem M1 in the case $s>7$, the curvature components satisfy the following decay:
\begin{align}
    \begin{split}\label{subpeeling}
        |r^{5-\frac{2}{p}}|u|^\frac{s-7}{2}\ac|_{p=2,S}&\les\ep_0,\\
        \sup_{p\in[2,4]}|r^{4-\frac{2}{p}}|u|^{\frac{s-5}{2}}\bc|_{p,S}&\les\ep_0,\\
        \sup_{p\in[2,4]}|r^{3-\frac{2}{p}}|u|^{\frac{s-3}{2}}(\rhoc,\sic)|_{p,S}&\les \ep_0,\\
        \sup_{p\in[2,4]}|r^{2-\frac{2}{p}}|u|^{\frac{s-1}{2}}\bbc|_{p,S}&\les\ep_0,\\
        \sup_{p\in[2,4]}|r^{1-\frac{2}{p}} |u|^{\frac{s+1}{2}}\aac|_{p,S}&\les\ep_0.\\
    \end{split}
\end{align}
\begin{remark}
    Assume $s>7$ and that the initial data in $\kk_{(0)}$ has sufficient regularity properties. Then, commuting $r\nab$ with the Bianchi equations in Propositions \ref{Bianchieq} and \ref{Bianchieqdkb} and the Teukolsky equation \eqref{teukolsky}, proceeding as in Sections \ref{curvatureestimates} and \ref{secd3}, we deduce the following strong peeling properties:
    \begin{align}
    \begin{split}\label{strongpeeling}
        |\ac|_{\infty,S(u,\ub)}&\les \frac{\ep_0}{r^{5}|u|^\frac{s-7}{2}},\qquad\quad \,|\bc|_{\infty,S(u,\ub)}\les \frac{\ep_0}{r^{4}|u|^\frac{s-5}{2}},\\
        |\rhoc,\sic|_{\infty,S(u,\ub)}&\les\frac{\ep_0}{r^3|u|^\frac{s-3}{2}},\qquad\quad\; |\bbc|_{\infty,S(u,\ub)}\les \frac{\ep_0}{r^2|u|^\frac{s-1}{2}},\qquad \quad|\aac|_{\infty,S(u,\ub)}\les \frac{\ep_0}{r|u|^\frac{s+1}{2}},
    \end{split}
\end{align}
which recovers the results obtained in \cite{Caciotta} by the vectorfield method. See also Theorem 1 in \cite{knpeeling} and Remark C.8 in \cite{ShenMink} for the particular case of Minkowski stability.
\end{remark}
\section*{Declarations}
\addcontentsline{toc}{section}{Declarations}
\noindent{\bf Acknowledgements.} The author is very grateful to J\'er\'emie Szeftel for his support, discussions, encouragements and patient guidance.\\ \\
{\bf Funding.} No funding was received to assist with the preparation of this manuscript.\\ \\
{\bf Competing Interest statements.} Conflict of interest does not exist in the manuscript.

\end{document}